\documentclass[a4paper,11pt,headsepline]{scrbook}

\usepackage[english]{babel}
\usepackage[T1]{fontenc}
\usepackage[utf8]{inputenc}
\usepackage[autostyle]{csquotes}
\usepackage{bera}

\usepackage[sort,numbers]{natbib}

\usepackage[final]{microtype}

\usepackage{setspace}
\linespread{1.15}
\usepackage[left=2.65cm,right=3.35cm,top=2.5cm,bottom=2cm,includeheadfoot]{geometry}
\setlength{\parindent}{1em}
\setlength{\parskip}{0.33\baselineskip}

\usepackage{mathtools}
\mathtoolsset{showonlyrefs,showmanualtags}

\usepackage{graphicx}
\usepackage{color}
\usepackage{subfigure} 
\usepackage{caption}
\usepackage{pgf}
\usepackage{tikz}
\usepackage{stmaryrd}
\usetikzlibrary{arrows,shapes,automata}
\usetikzlibrary{shapes.symbols}
\usetikzlibrary{shapes.callouts}
\usetikzlibrary{positioning}
\usepackage{multirow}
\usepackage{multicol}
\usepackage{booktabs}
\usepackage{url}

\usepackage{amsmath}
\usepackage{amsfonts}
\usepackage{bbm}
\usepackage[mathscr]{eucal}
\usepackage{nicefrac}
\usepackage{dsfont}
\usepackage{amssymb}
\usepackage{amsthm}
\usepackage{leftidx}

\usepackage{ifthen}
\usepackage{calc}
\usepackage{nomencl}

\usepackage{scrpage2}

\theoremstyle{plain}
\newtheorem{thm}{Theorem}[section]
\newtheorem{lem}[thm]{Lemma}
\newtheorem{prop}[thm]{Proposition}
\newtheorem{cor}[thm]{Corollary}

\theoremstyle{definition}
\newtheorem{defn}[thm]{Definition}

\newtheorem{exmp}[thm]{Example}

\newtheoremstyle{remark}
{10pt \topsep}   
{7pt \topsep}   
{\normalfont}  
{0pt}       
{\bfseries} 
{.}         
{5pt} 
{}          

\theoremstyle{remark}
\newtheorem{rem}[thm]{Remark}


\newtheoremstyle{not}
{10pt \topsep}   
{5pt \topsep}   
{\normalfont}  
{0pt}       
{\bfseries} 
{: \vspace{0.05 cm}}         
{5pt} 
{}          

\theoremstyle{not}
\newtheorem*{notation}{Notation}

\newtheoremstyle{conv}
{10pt \topsep}   
{5pt \topsep}   
{\normalfont}  
{0pt}       
{\bfseries} 
{: \vspace{0.05 cm}}         
{5pt} 
{}          

\theoremstyle{conv}
\newtheorem*{conv}{Convention}

\pagestyle{scrheadings}
\automark[section]{chapter}
\ihead{}
\chead{}
\ohead{\headmark}
\ofoot{\pagemark}
\cfoot{}


%

\renewcommand{\O}[1]{\Omega_{#1}}

\newcommand{\Vad}[2]{\mathcal{V}^{ad}_{#1}(#2)}

\newcommand{\Oext}{\Omega^{ext}}

\newcommand{\se}{\sigma}
\newcommand{\ve}{\varepsilon}
\newcommand{\Div}{\mathrm{div}}
\newcommand{\tr}{\mathrm{tr}}

\newcommand{\Umg}[2]{B_{#1}(#2)}


\newcommand{\T}[1]{T_{#1}}

\newcommand{\rk}{\rbrace}
\newcommand{\lk}{\lbrace}
\newcommand{\lgk}{\left\lbrace}
\newcommand{\rgk}{\right\rbrace}

\newcommand{\Menge}[2]{\lgk #1 \middle| #2 \rgk}

\newcommand{\skp}[2]{\left\langle #1,#2 \right\rangle}

\newcommand{\R}[1]{\mathbb{R}^{#1}}
\newcommand{\N}[1]{\mathbb{N}^{#1}}
\newcommand{\C}[1]{C^{#1}}
\newcommand{\Cm}[2]{C^{#1}(#2)}		






\newcommand{\Norm}[2]{\left\Vert #1 \right\Vert_{#2}} 
\newcommand{\Hnorm}[2]{\left\vert #1 \right\vert_{#2}}

\newcommand{\dist}{\text{dist}}
\newcommand{\diam}{\text{diam}}
\newcommand{\supp}{\text{supp}}

\newcommand{\ind}[1]{\text{ind}(#1)}
\newcommand{\coker}[1]{\text{coker}(#1)}
\newcommand{\im}[1]{\text{im}(#1)}

\newcommand{\Varlambda}{{\scriptstyle \varLambda}}

\newcommand{\sbt}{\,\begin{picture}(-1,1)(-1,-3)\circle*{1.4}\end{picture}\ }

\makeindex 
\allowdisplaybreaks[1]

\begin{document}

\begin{titlepage}
	\vspace*{-7em}
\begin{center}
\textcolor{black!40}{\rule{\textwidth}{1pt}}\\ \vspace*{-1.25em}
\textcolor{black!40}{\rule{\textwidth}{1pt}}\\ \vspace*{-0.5ex}
\Huge \textbf{On Shape Calculus with Elliptic PDE Constraints in Classical Function Spaces}\\
\vspace*{-1.6ex}
\textcolor{black!40}{\rule{\textwidth}{1pt}}\\ \vspace*{-1.35em}
\textcolor{black!40}{\rule{\textwidth}{1pt}}
\\[1.5em]
\includegraphics[width=0.3\textwidth]{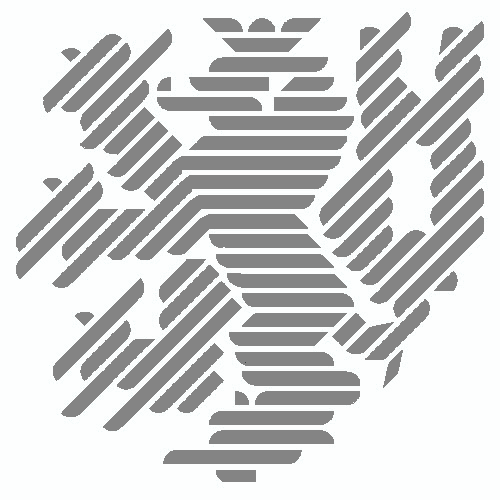} \\[1.5em]
\LARGE \textbf{DISSERTATION} \\[1em]
\large zur Erlangung des\\ akademischen Grades \\[1.5em] 
\textbf{Doktor der Naturwissenschaften} \\ \textbf{-- Dr. rer. nat. --}\\[1.5em]
Vorgelegt  and der Fakultät für Mathematik und Naturwissenschaften \\ der  Bergischen Universität Wuppertal  von \\[1.5em]
\textbf{Laura Bittner }\\[1.5em]
im Oktober 2018 \\[4em]

\begin{tabular}{l l} 
	\large \textbf{Betreut von:} & Prof. Dr. Hanno Gottschalk  \\
								 & Prof. Dr. Balint Farkas  
\end{tabular} 
\end{center} 
	
	
	
	
	\vfill 
	
\end{titlepage}

\renewcommand{\arraystretch}{1.4}

\frontmatter    



\section*{{\huge Abstract}}
\addcontentsline{toc}{section}{Abstract}
\thispagestyle{plain}
\noindent In this thesis we develop a functional analytic framework for shape optimization with elliptic partial differential equation (PDE) constraints in classical function spaces. Here in particular Hölder spaces have to be mentioned.
\\
\indent  This approach is motivated by shape optimization problems, which involve a special class of shape functionals, called \textit{reliability functionals}, and are subjected to linear elasticity constraints. The functionals we consider calculate the failure rate of a mechanically loaded device with respect to the component shape $ \Omega $. They reflect the physics of crack formation and depend highly nonlinear and non-quadratic on the stress field, i.e. on the first order derivatives of the state. Therefore, these objectives are ill-defined for $ H^1 $-solutions of the state equation and the shape derivatives are not defined for $ H^1 $-material derivatives.  Therefore, the resulting \textit{optimal reliability problems} can not be solved by the already existing methods of shape calculus and it becomes unavoidable to involve regularity theory for elliptic boundary value problems, Schauder estimates and classical PDE solutions.
\\
\indent
We develop a general concept on Banach and Hilbert spaces which is based on parameter depending variational equations and compact embeddings and which allows to transfer differentiability in lower Banach space topologies to higher ones.
We show, that this framework can particularly be applied to the variational formulation of the linear elasticity equation, given that the domain is transformed according to the speed method.
Once the existence of material and shape derivatives in Hölder spaces has been proved, we also show the existence of shape derivatives and derive so called adjoint equations. These equations allow to consider decent directions e.g. for iterative descent methods. However, these equations can not be derived strayightly since known approaches lead again to $ H^1 $-ill-defined equations. \\
\indent A crucial part of this work is the classification of the classical $ L^2 $ -shape gradient with respect to its regularity and thus with respect to its potential to sustain the domain regularity along a descent flow. In the scope of the presented concept, we proof what has numerically been observed for a long time: without regularization shape degeneration is predestined. We give an outlook on the existing regularization methods, illustrate their degree of smoothing and propose an approach that will hopefully prepare the ground to proof the existence of descent flows with respect to other metrics. 
\newpage 
\thispagestyle{plain}
$  $

\newpage
\section*{{\huge Zusammenfassung}}
\addcontentsline{toc}{section}{Zusammenfassung}
\thispagestyle{plain}

In dieser Arbeit wird ein funktionalanalytisches Konzept für Formoptimierung mit partiellen Differentialgleichungen (PDE) in klassischen Funktionenräumen, allen voran Hölder-Räumen, entwickelt. \\
\indent Dieser Ansatz ist motiviert durch Formoptimierungsprobleme mit speziellen Zielfunktionalen, die unter der Nebenbedingung der linearisierten Elastizitätsgleichung zu lösen sind. Diese sogenannten \textit{Zuverlässigkeits-Funktionale} berechnen die Versagenswahrscheinlichkeit von mechanisch belasteten Bauteilen in Abhängigkeit von deren Form. Sie berücksichtigen das pysikalische Werkstoffverhalten und hängen höchst nichtlinear und nicht quadratisch von den Spannungszuständen und damit von den Ableitungen erster Ordnung des Zustands ab.  
Daher sind diese Zielfunktionen ebenso wenig für Funktionen aus dem Sobolev-Raum $ H^1 $ definiert, wie auch die Formableitungen nicht für Materialableitungen in diesem Raum definiert sind. Somit werden diese Probleme nicht vom bestehenden Formkalkül abgedeckt, wodurch es unweigerlich notwendig wird, sich der Regularitätstheorie für elliptische Randwertprobleme und klassischen PDE-Lösungen zuzuwenden. 
\\
\indent Wir entwickeln ein allgemeines Konzept für Banach- und Hilberträume, das zulässt Differenzierbarkeit mittels kompakter Einbettungen von niedrigeren in höhere Topologien zu "transportieren". Dieses Konzept ermöglicht es die Existenz von Material- und lokalen Formableitungen in klassischen Funktionenräumen aber auch in Sobolev-Räumen höherer Ordnung zu zeigen. Außerdem wird es so möglich, die Existenz von Formableitungen für die vorgestellten Zuverlässigkeits-Funktionale zu beweisen.  Anschließend betrachten wir adjungierte Gleichungen, die zulassen Abstiegsrichtungen für iterative Minimierungsverfahren zu bestimmen. Allerdings können wir auch hier nicht gradlinig vorgehen, da bekannte Ansätze zu nicht lösbare Gleichungen führen. 
\\
\indent Zentral ist die Klassifizierung der klassischen Hadamard $ L^2$-Abstiegsrichtungen im Hinblick auf ihre Regularität und somit im Hinblick auf ihr Potential die Regularität der Startform entlang eines Flusses in Abstiegsrichtung zu erhalten. Wir zeigen im Rahmen des vorgestellten Konzepts, was schon lange in der Simulation beobachtet wird. Nämlich, dass die Degenerierung der Form vorprogrammiert ist, wenn die $ L^2 $-Abstiegsrichtung nicht regularisiert wird.
Wir geben einen Ausblick auf die existierenden Glättungsmethoden, illustrieren deren Glättungsgrad und schlagen einen möglichen Ansatz vor, der hoffentlich den Weg in Richtung der Existenz von Abstiegsflüssen ebnet. 
\newpage
\thispagestyle{plain}

$  $
\newpage

\section*{{\huge Danksagung}}
\addcontentsline{toc}{section}{Danksagung}
\thispagestyle{plain}
\newcommand*{\changefont}[3]{%
	\fontfamily{#1}\fontseries{#2}\fontshape{#3}\selectfont}

Das soll es jetzt also gewesen sein. Die letzte Formel ist geschrieben, die Arbeit formatiert und bereit zum Druck. Das Ende einer langen Zeit an der Bergischen Universität Wuppertal steht kurz bevor. Vor allem die Zeit des Promotionsstudiums war intensiv, lehrreich, manchmal schwieirig, spannend aber auch schön. Und so möchte ich mich an dieser Stelle bei allen bedanken, die währenddessen an meiner Seite waren und mich - in welcher Art und Weise auch immer - unterstützt haben.

\noindent Zuallererst möchte ich mich meinem Doktorvater Prof. Dr. Hanno Gottschalk danken, der mich ermutigte nach der Masterarbeit an der Universität zu bleiben. 
{\changefont{ppl}{m}{it}Dank dir, lieber Hanno, bin ich schließlich hier angekommen. Dein unerschütterlicher Optimismus hat mich oft zum schmunzeln gebracht und auch deine ebenso hilfsbereite, neugierige, aber auch kritische Art haben meine Zeit als Doktorandin bei dir sehr bereichert.
	Ohne deine Ermunterungen an meine eigenen Grenzen zu gehen, deine immerwährende Diskussionsbe\-reitschaft und deine Unterstützung wäre diese Thesis in dieser Form nicht möglich gewesen.} 

\noindent Mein Dank geht auch an Prof. Dr. Balint Farkas, der ebenfalls immer für fachliche Diskussionen offen war und der gerne die Zweitbetreuung meiner Dissertation übernahm. 
{\changefont{ppl}{m}{it} Danke dafür, lieber Balint, und auch für deine Bereitschaft meine Arbeit zu begutachten.}

\noindent Gleiches gilt auch für Herrn Prof. Dr. Volker Schulz, der ebenso bereit war ein Gutachten zu erstellen und bei dem ich mich auch für seine Einladung zu einem sehr gewinnbringenden fachlichen Austausch mit Dr. Kathrin Welker bedanken möchte. {\changefont{ppl}{m}{it} Vielen Dank dafür!}

\noindent Danke an Dr. Hannah Rittich, Dr. Stephan Schmidt, ohne die die Implementierung des LCF Funktionals in FEniCS wohl nicht zustande gekommen wäre, und auch an Dr. Kathrin Welker. {\changefont{ppl}{m}{it} Durch die Diskussion mit euch hat sich mir manches erst erschlossen. Dafür bin ich euch sehr dankbar und in deinem Fall, liebe Hannah, freue ich mich Dich nicht nur als fähige Mitmathematikerin, sondern vor allem als echte Freundin an meiner Seite zu wissen.} 

\noindent Gewinnbringend war es auch, mein Büro während des letzten Promotionsjahres mit Dr. Daniel Siemssen teilen zu dürfen. {\changefont{ppl}{m}{it} Deine Denkimpulse, deine LaTeX-Tipps und die Auflockerung durch deinen trockenen Humor werden mir fehlen.}

\noindent Danke vor allem auch an meine Eltern und Schwiegereltern, die mir das Studium erst ermöglicht und mich immer unterstützt haben - finanziell, aber auch mit Rat und Tat - auch wenn mein Schwiegervater diesen Moment leider nicht mehr miterleben durfte. Das Gleiche gilt für meine Schwester Nora, die immer ein offenes Ohr hat und zuallererst für meinen Mann Sebastian. {\changefont{ppl}{m}{it} Danke, dass Du in jeder, aber auch wirklich jeder Lebenslage so unerschütterlich an meiner Seite bist.} 
\enlargethispage{\baselineskip}
\begin{flushright}
	{\changefont{ppl}{m}{it} Rien ne se perd, rien ne se crée, tout se transforme.}\\
	\footnotesize{Antoine de Lavoisier}
\end{flushright}


\mainmatter
\tableofcontents
\thispagestyle{plain}



\setlength{\nomlabelwidth}{.27\hsize}
\setlength{\nomitemsep}{-0.1\parsep}
\renewcommand{\nomrefpage}[1]{Seite \textsl{#1}}
\renewcommand{\nomgroup}[1]
{\ifthenelse{\equal{#1}{N}}{\vspace{0.5cm}\item[\textbf{Norms, semi-norms, scalar products and distances}]}
	{\ifthenelse{\equal{#1}{F}}{\vspace{0.5cm} \item[\textbf{Function spaces}]}
		{\ifthenelse{\equal{#1}{A}}{\vspace{0.5cm}\item[\textbf{Sets and spaces}]}
			{\ifthenelse{\equal{#1}{D}}{\vspace{0.5cm}\item[\textbf{Differential operators}]}
				{\ifthenelse{\equal{#1}{V}}{\vspace{0.5cm}\item[\textbf{Linear elasticity}]}
					{\ifthenelse{\equal{#1}{C}}{\vspace{0.5cm}\item[\textbf{Linear algebra}]}
						{\ifthenelse{\equal{#1}{K}}{\vspace{0.5cm}\item[\textbf{Abbreviations}]}
							{\ifthenelse{\equal{#1}{B}}{\vspace{0.5cm}\item[\textbf{Measures and characteristic functions}]}
								{}
							} 
						} 
					} 
				}
			}
		}
	}
}

\vspace{-1cm}
\makenomenclature 
\renewcommand{\nomname}{Notation}
\markboth{\nomname}{\nomname}
\printnomenclature 


\nomenclature[A 0]{$ \N{} $, $ \N{}_{0} $}{natural numbers; $ \N{}_{0} = \N{} \cup \{0\}$}
\nomenclature[A 02]{$n,\, k$}{ elements of $ \N{}_0 $ ;}
\nomenclature[A 10]{$\R{n}$}{Euclidean space of dimension $ n \in \N{} $ ;}
\nomenclature[A 11]{$\mathbb{P}(M)$}{power set of a set $ M $;}
\nomenclature[A 111]{$ \mathbb{S}^{n-1}$}{$ (n-1) $-sphere in $ \R{n} $; $ \mathbb{S}^{n-1} =\{x \in \R{n} \vert \Norm{x}{} = 1\}$;}
\nomenclature[A 12 ]{$ \Umg{r}{z}$}{open ball with radius $ r $ with and center $ z $}
\nomenclature[A 13 ]
{$ \Sigma_{r} $}
{hemipsphere in $ \R{n} $, $ \Sigma_{r} =\lk x \in \R{n} \vert\,  \Norm{x}{}   \leq r ,\, x_n \geq 0 \rk $;}

\nomenclature[A 14]
{$ F_{r} $}
{ground of the hemisphere $ \Sigma_{r}, ~F_{r}=\lbrace x \in \R{n}_{+} \vert \, \Norm{x}{}\leq r,\,x_{n}=0 \rbrace$;}

\nomenclature[A 4]{$ \Omega $, $ \overline{\Omega} $, $ \overset{\circ}{\Omega}$, $ \Gamma =\partial \Omega$ }{domain in $ \R{n} $; closure, interior and boundary of $ \Omega $;}



\nomenclature[A 62]
{$\supp(f)$}
{support of the function $ f $}

\nomenclature[A 63]{$ \Omega' \Subset  \Omega $}{$ \Omega' $ is compactly contained in $ \Omega$ $ \Leftrightarrow \overline{\Omega'} \subset \Omega $ and $ \overline{\Omega'} $ is compact;}

\nomenclature[B 1]{$ \delta_{\omega} $}{Dirac Measure, $ \delta_{\omega}(\Omega)=1 $, if $ \omega \in \Omega $, otherwise $ \delta_{\omega}(\Omega)=0 $ }

\nomenclature[B 2]{$\delta_{ij}$}{Kronecker Delta: $ \delta_{ij}=1 $ if $ i=j $ and $ 0 $, if $ i \neq j $, $ i,j=1,\ldots ,n $}

\nomenclature[B 3]{$ \mathds{1}_{\Omega} $}{characteristic function on the set $ \Omega $ }

\nomenclature[B 4]{$ dx $}{Lebesgue Measure on $ \R{n} $}

\nomenclature[B 5]{$ dS $}{differential surface element}







\nomenclature[C]{$ x^{\top},~A^{\top} $}{transpose of $ x \in \R{n}$ and $ A \in \R{n \times n}$;}

\nomenclature[C]{$ A^{-1} $}{inverse of $ A \in \R{n \times n}$;}

\nomenclature[C]{$ \tr(A)$}{trace of $ A \in  \R{n \times n} $, $ \tr(A)=\sum_{i=1}^{n}a_{ii} $;}

\nomenclature[C]{$ \text{I} $}{identity matrix in $ \R{n} $ ;}

\nomenclature[C]{$A:B $}{$ A:B = \tr(A^{\top}B) $; Frobenius skalar product;}

\nomenclature[C]{$a \otimes b = ab^{\top} $}{$\vec{a} \vec{b}^{\top}:= (a_{i}b_{j})_{i,j=1,\ldots,m};$ tensor product of two vectors $ a,\, b \in \R{m}$;}


%

\nomenclature[D 3]
{$ \frac{\partial^{\vert \beta \vert}f}{\partial^{\beta}x}$}
{partial derivatives of $ f $ of degree $ \vert \beta \vert $, $ \beta \in \N{n}$ multi index, $\vert \beta \vert=\sum_{j=1}^{n}\beta_{i}$, $ \partial x^{\beta}=\partial x_{1}^{\beta_{1}}\dots x_{n}^{\beta_{n}};$}

\nomenclature[D 4 ]
{$ \frac{\partial f}{\partial \vec{n}} = Df\vec{n} $}
{derivative of $ f:\R{n} \to \R{m} $ in outward normal direction $ \vec{n}$;}

\nomenclature[D 40]
{$ \nabla f $}
{gradient $ f:\R{n}\to \R{} $;}

\nomenclature[D 41]
{$ Df $}
{Jacobian of $ f:\R{n} \to \R{m} $, $ Df=\nabla f^{\top} $ if $ f $ is scalar;}

\nomenclature[D 42]
{$\Delta =\frac{\partial^{2}}{\partial x_{1}^{2}}+ \dots \frac{\partial^{2}}{\partial x_{n}^{2}}$:}
{Laplace operator;}

\nomenclature[D 421]
{$ \Div(f)$}
{ divergence of $f:\R{n} \to \R{} $, $ \Div(f)=\tr(Df)=\sum_{j=1}^{n}\frac{\partial f}{\partial x_{j}}$;}

\nomenclature[D 422]{$ \Div(A)$}{column-wise divergence of $ A:\R{m} \to \R{n \times m} $, $  \Div(A)_{j}=\sum_{i=1}^{m} \hspace*{-1mm}\frac{\partial A_{ij}}{\partial x_{i}} $; }

\nomenclature[D 43]
{$ \nabla_{\Gamma} f =\nabla f - \langle \nabla f,\vec{n} \rangle\vec{n}  $}
{tangential gradient $ f:\Gamma\to \R{} $;}

\nomenclature[D 44]
{$ D_{\Gamma}f = Df - Df \vec{n}\vec{n}^{\top}
	$}
{tangential Jacobian on the boundary of $ f:\Gamma \to \R{m} $;}

\nomenclature[D 45]
{$ \Div_{\Gamma}f = \tr(D_{\Gamma}f)
	$}
{tangential divergence of $ f:\Gamma \to \R{m} $;}


\nomenclature[F 01]{$\mathcal{R}(X)$}{space of radon measures on $ (X,\mathscr{T}) $;}

\nomenclature[F 02]{$\mathcal{R}_c(X)$}{space of radon counting measures on $ (X,\mathscr{T}) $; }

\nomenclature[F 03]{$\mathcal{B}(X)$}{Borel $ \sigma $-Algebra on the topological space $ (X,\mathscr{T}) $;}

\nomenclature[F 04]
{$\mathcal{L}(X,Y)$}
{space of linear and continuous mappings from the Banach space $ X $ to the Banach space $ Y $;}

\nomenclature[F 05]
{$X' = \mathcal{L}(X,\R{})$}
{topological dual space of the Banach space $ X $;}

\nomenclature[F 1]
{$\Cm{}{\Omega}$}
{$ \Cm{0}{\Omega} $, space of continuous functions;}

\nomenclature[F 11]
{$\Cm{0,\phi}{\Omega}$}
{ H\"older-continuous functions;  $ f \in  \Cm{0}{\Omega} $ with $ [f]_{C^{0,\phi}(\Omega)}<\infty $,  $ 0<\phi \leq 1 $;}

\nomenclature[F 2]
{$C^{k}(\Omega)$}
{set of $ k $-times continuously differentiable functions $ f:\Omega\to \R{} $, $ k  \in \lk 0 \rk \cup \N{} \cup \lk \infty \rk $;}

\nomenclature[F 3]
{$\Cm{k,\phi}{\Omega}$}
{ functions in $ \Cm{k}{\Omega} $, with H\"older continuous $ k $-th derivative with exponent $ 0<\phi \leq 1 $, $ k\in \N{}_{0} $ (see \cite{GilbTrud});}

\nomenclature[F 31]
{$\Cm{k,0}{\Omega}$}
{$ \Cm{k}{\Omega} $;}

\nomenclature[F 32]
{$Diff^{k}(\Omega,\Omega')$}
{vector fields $ f\in C^{k}(\Omega,\Omega') $, $ f$ is bijective and $ f^{-1} \in C^{k}(\Omega',\Omega) $;}

\nomenclature[F 33]
{$C^{k}_{0}(\Omega)$}
{$ f \in C^{k}(\Omega) $ with $ \supp(f)\Subset \Omega $;}

\nomenclature[F 4]
{ $ L^{p}(\Omega) $}
{$ p $-Lebesgue integrable functions on $ \Omega $;  $ p\in [1,\infty)$, $ L^{p}(\Omega)=\lbrace f \mathrm{~measurable~}:~ \int_{\Omega} \vert f \vert^p < \infty \rbrace$}

\nomenclature[F 5]
{$ L^{\infty}(\Omega) $}
{essentially bounded functions on $ \Omega $;}

\nomenclature[F 6]
{$ W^{k,p}(\Omega) $}
{Sobolev space; $ L^{p} $-integrable functions, with weak derivatives up to order $ k $ in $ L^{p} $ (see \cite{AdamsFournier});}

\nomenclature[F 7]
{$ H^{k}(\Omega) $}
{$ W^{k,2}(\Omega) $;}

\nomenclature[F 8]
{$ W^{k-\nicefrac{1}{p},p}(\Omega) $}
{$= \mathbf{T_{\Gamma}}(W^{k,p}(\Omega))$; trace space of $ W^{k,p}(\Omega) $ functions (see \cite{Cia1988,AdamsFournier});}

\vspace*{2em}	

\noindent The vector valued versions of the spaces  $C(\Omega) $, $ C^{k}(\Omega) $, $ W^{k,p}(\Omega) $ ... are denoted by $ C(\Omega,\R{m}) $, $ C^{k}(\Omega,\R{m}) $, $ W^{k,p}(\Omega,\R{m}) $... The norms of these spaces result of the chosen norm on $ \R{m} $ (we choose the Euclidean $ 2 $-norm here).

\nomenclature[N 00]
{$ \Norm{.}{X}$}
{norm of the space $ X $;}  

\nomenclature[N 01]
{$ \Norm{.}{X'}$}
{operator norm, $ \Norm{F}{X^{\prime}} = \sup_{\Norm{x}{X} \leq 1} \vert F(x)\vert,\, F \in X'  $;}  	

\nomenclature[N 011]
{$ x_n \rightarrow x,\, n \to \infty $}
{(strong) convergence, $ x_n \underset{n \to \infty}{\rightarrow} x:= \lim_{n\to \infty} \Norm{x_n - x}{X} =0$;}

\nomenclature[N 012]
{$ x_n \rightharpoonup x,\, n \to \infty $}
{weak convergence, $ x_n \underset{n \to \infty}{\rightharpoonup} x:= \lim_{n\to \infty} l(x_n) = l(x)\, \forall l \in X' $;}

\nomenclature[N 02]
{$ \Norm{.}{} = \sqrt{\langle .,. \rangle} = \Norm{.}{\R{n}}$}
{norm in $ \R{n} $;} 

\nomenclature[N 03]
{$ \dist(x, \Omega)$}
{distance from $ x \in \R{n} $ to $ \Omega,\,\dist(x, \Omega)=\inf\lk \vert x-y \vert~ \vert ~y \in \Omega \rk$;}

\nomenclature[N 04]
{$\diam(\Omega)$}
{diameter of $\Omega $, $\diam(\Omega)=\sup \lk \vert x-y \vert~ \vert~ x,y \in \Omega \rk $ ;}

\nomenclature[N 03]
{$\Norm{f}{\infty,\Omega} = \Norm{f}{C(\Omega)}$}
{norm on $C(\Omega) $; $ \Norm{f}{C(\Omega)}=\sup_{x \in \Omega} \vert f(x )\vert $; abbreviaton  $ \Norm{f}{\infty} $; }

\nomenclature[N 04]{$\Norm{f}{C^k(\Omega)} $}
{norm on $ C^k(\Omega)$, $\Norm{f}{C^k(\Omega)}= \left(\sum_{\vert \alpha \vert \leq k}\Norm{\frac{\partial^{\vert \alpha \vert}f}{\partial x^{\alpha}}}{\infty}^p\right)^{1/p},\, p \in \N{}$;\\ usually $ p=1 $, here $ p=2 $;}

\nomenclature[N 05]{$[f]_{C^{0,\phi}(\Omega)} $}
{semi norm on $ C^{0,\phi}(\Omega) $, $[f]_{C^{0,\phi}(\Omega)} =   \sup_{x,y \in \Omega \atop  x \neq y} \frac{\vert  f(x) - f(y)  \vert }{\vert x-y\vert^\alpha } $;}

\nomenclature[N 06]{$\Norm{f}{C^{k,\phi}(\Omega)} $}
{norm on $ C^{k,\phi}(\Omega) $,\\ $ \displaystyle \Norm{f}{C^{k,\phi}(\Omega)}= \Big(\sum_{\vert \alpha \vert \leq k}\Norm{\tfrac{\partial^{\vert \alpha \vert}f}{\partial x^{\alpha}}}{\infty}^p + \sum_{\vert \alpha \vert =k } \left[\tfrac{\partial^{\vert \alpha \vert f}}{\partial x^{\alpha}}\right]_{0,\phi}^p \Big)^{1/p}  $,  $ p \in  \N{} $;\\ usually $ p=1 $, here $ p=2 $;}

\nomenclature[N 07]{$\Norm{f}{W^{k,p}(\Omega)} $}
{norm on $ W^{k,p}(\Omega) $, $\Norm{f}{W^{k,p}(\Omega)}= \left( \int_{\Omega}  \sum_{\vert \alpha \vert  \leq k} \vert  f(x) \vert^{p} \, dx \right)^{1/p}$ ;}


\chapter*{Introduction}
\thispagestyle{plain}
\addcontentsline{toc}{chapter}{Introduction}

17th April, 2018. The following message fills the news: "Southwest Airlines engine explodes in flight" \cite{NYTBoing2018}, "Material fatigue causes accident of a Boeing.", "According to initial reports of the National Transportation Safety Board (NTSB) blade number 13 out of 24 severed." \cite{NTVBoing2018}. No good news at all, but a good motivation for shape optimization and reliability optimization, in particular.  \\
\indent The goal of shape optimization is to obtain lower failure rates, less
material, more stability, or higher efficiency - all in all: more functionality. 
Shape optimization has many applications as there are for example airplaine wing desings with better airo-dynamics \cite{Schmidt2011airfoil}, lower failure rates for gas turbines \citep{GottschSchmitz}, bridges with more stability \cite{Bendsoe2003sigmund} or better image reconstruction like it appears e.g. in electrical impedance tomography \cite{Dambrine2008second}. The cost or objective functional under consideration depends on a shape and often also on the solution of a partial differential equation (PDE) which reflects the physical impacts and is called state equation. This solution itself is also shape dependent as the problem of the reentrant corner illustrates so impressively. Thus the solution $ u(\Omega) $ is coupled to the shape $ \Omega $ and therefore PDE constraint shape optimization problems can also be seen as a special class of optimal control problems \cite{Troltzsch2005optimale,Ulbrich2012constrained}.\\

\noindent \textbf{\textsf{Historical background and recent development}} \\[0.5\baselineskip]
\noindent In finite dimensional analysis, the necessary condition for a differentiable function $ f:\R{n} \to \R{} $ having a local minimum or maximum at a point $ x $ is $ \nabla f(x) =0$. Unfortunately, there is no straight forward way to define derivatives and gradients for functionals $ J $ that map a shape $ \Omega $, i.e. a subset of $ \R{2} $ or $ \R{3} $, to real values. 
\\
\indent In the beginning of shape optimization, more precisely in 1907 \cite{Hadamard1908memoire}, J. Hadamard presented an approach on how to obtain such a derivative of a shape functional $ J(\Omega) $. Therein, he considered normal perturbations of the boundary $ \Gamma = \partial\Omega $ of a smooth and bounded $ \Omega $.
It was only in 1975 when D. Chains published her famous paper on the existence of optimal shapes \cite{Chen75}\nocite{ExOptShape2003}. Four years later the approach of Hadamard was elaborated by Zolésio \cite{Zolesio1979identification} in the so-called “Hadamard structure theorem” which became central in shape calculus. This theorem first allowed to define descent directions, above all the $ L^2 $-shape gradient, for iterative optimization schemes and necessary optimality conditions. \\
\indent Nowadays, there are multiple approaches to define shape and topological derivatives: The speed or velocity method \cite{SokZol92}, the level set method \cite{Allaire2002level}, the homogenization method \cite{Allaire2012shape}, the perturbation of identity method \cite{DelfZol11} and many more. \\
It is even more complicated to reasonably define second order derivatives, then those of first order.  Since there is no intrinsic definition of distances on the power set of $ \R{n} $, even simple examples can be found \cite{Delfour1991anatomy} such that the second order shape derivative is non-symmetric. Understanding the set of shapes as a manifold overcame this problem, consider for example \cite{Schulz2014riemannian,Michor2003ShapeManifold}. Also other metrics than the $ L^2 $-metric are considered \cite{Schulz2016Steklov,Schulz2016metricsComparison,Schulz2015PdeShapeManifolds} and second order schemes like Newton, and Newton-like methods became relevant \cite{Schulz2015towardsLN,Delfour1991Velocity,Schmidt2018weak} in the numerical implementation. This accelerated the computational progress in shape optimization significantly and led further to the consideration of Lipschitz shapes \cite{Sturm2015shape,Welker2017optimization} and other shape spaces. Also, the Lagrangian method has a large impact on PDE constraint shape optimization. It recently allowed to automate the computation of shape derivatives in the finite element software \textsf{FEniCS} and the unified form Language (\textsf{UFL}) \cite{Wechsung2018automated,Schmidt2018weak}. It was first proposed by Céa \cite{Cea1986Lagrange} and later corrected and developed further  \cite{Ito2008variational,Sturm2013lagrange,Sturm2015shape,SturmLaurain2016distributed}. 
However, mesh degeneration remains a big problem in numerical optimization schemes and there are many approaches which aim to prevent this behavior \cite{Iglesias2017shape,Paganini2018higher,Schulz2016metricsComparison,Dokken2018shape}. Meanwhile, isogeometric analysis has found its way into shape optimization \cite{Fusseder2015isogeometric,Wall2008isogeometric} and allows to parameterize shapes with a high accuracy. Somewhat contrarily, shape optimization under uncertainties gains more and more popularity which is due to the aim of more realistic and robust models \cite{Conti2009shape,Harbrecht2015shape,GottschSchmitz,GBS_Ceramic2014} . This is, where we arrive at the motivation for this thesis.\\

\thispagestyle{plain}
\noindent \textbf{\textsf{Motivation}} \\[0.5\baselineskip]
\noindent To prevent fatigue fracture we need functionals which make predictions on the durability of the mechanic device which is represented by the shape $ \Omega \subset \R{3}$. 
The majority of shape optimization problems concerns functionals of the energy or tracking type. But these functionals are not a good choice in the context of reliability optimization for loaded systems since they do not reflect the material theoretic nature of fatigue \cite{ProbLCF2013}.
A question that arises immediately is "How can reliability be measured in a meaningful way?". A promising access to this problem is to combine a deterministic ansatz with a stochastic approach \cite{GottschSchmitz,GBS_Ceramic2014,ProbLCF2013}. 
\\
\indent Since cracks originate where the stress is accumulated, the deterministic capacity of a component, e.g. the number of load cycles to crack or the ultimate amount of tensile loading, is calculated based on the stress states acting on the material \cite{CMB,RO,Neuber1961}. These stress states $ \sigma(u) $ can be calculated by solving an elasticity equation on $ \Omega $. \\ 
The damage mechanism of low cycle fatigue (LCF) is best understood for poly-crystalline metal \cite{Werkstoffe,LCFalu2004}: Shear stresses are acting on the atomic layers of the material and initiates the transport of lattice defects to the surface. The resulting reentrant corners lead to local stress concentration and cracks originate at these corner tips.\\
In case of brittle media cracks start at the porosities which can be seen as initial flaws \cite{Ceramics2012}. However, crack initiation is also subjected to empirical scattering \cite{CerWeiScale} which has to be taken into account. Thus, the deterministic capacity is integrated into a probabilistic attempt based on Poisson Point Processes \cite{Wahrsch}. This leads to shape functionals which determine the risk of failure with respect to the device shape, the forces acting on it, i.e. the resulting stress states, the material and the physics of crack formation.\\ 
\thispagestyle{plain}
\indent Due to this complex and close-to-reality construction we have to accept that these functionals depend highly nonlinear and non-quadratic on the stress states, or in other words these \textit{reliability functionals} turn out to have challenging properties. In particular they are $ H^1 $-ill defined or strictly speaking: They are only defined for functions in Sobolev spaces $ W^{k,p}(\Omega,\R{3}) $ of higher order and with high values for $ p $ or for functions in spaces $ C^{l}(\overline{\Omega},\R{3}) $ or $ C^{l,\phi}(\overline{\Omega},\R{3})  $ of differentiable functions. Under suitable assumptions, it has already been shown that there exist optimal shapes \cite{GottschSchmitz,BittGottsch,GBS_Ceramic2014} for these problems, but providing the necessary shape calculus remained an open task and became subject of this thesis. \\

\thispagestyle{plain}
\noindent \textbf{\textsf{Outline and Contributions}}\\[0.5\baselineskip]
\noindent\textsf{Chapter 1:} In this chapter the stochastic foundations of reliability theory and point processes are introduced briefly. Moreover, two reliability functionals \cite{GottschSchmitz,GBS_Ceramic2014,BittGottsch} - called  LCF-reliability functional $ J^{\mathrm{lcf}} $ and ceramic-reliability functional $ J^{\mathrm{cer}} $ - and the associated shape reliability optimization problems under linear elasticity constraints are presented. It is illustrated, that these functionals are $ H^1 $-ill defined to motivate the approach of shape calculus in classical function spaces.
\\[0.5\baselineskip]
\noindent\textsf{Chapter 2:} This chapter provides the notation and the concepts that are mandatory to treat linear elliptic systems of partial differential equations. Since the classical solution theory of  linear elasticity has an important impact on the study of its shape derivatives in Hölder spaces, a compilation of the results on existence \cite{Cia1988}, regularity theory \cite{Geymonat} and Schauder estimates \cite{Agm59,Agm64}, is given here. 
Most of the provided results are well known, but scattered across the literature and thus hard to find. Therefore, they are complemented, where no suitable sources could be found. 
\\[0.5\baselineskip]
\noindent\textsf{Chapter 3: } The foundations of calculus in Banach spaces \cite{Cheney,Werner_Funkana}, that are needed to treat material derivatives with respect to Banach space topologies are summarized. In this work, the latter ones appear in particular in the shape of Hölder topologies \cite{GilbTrud}. 
\\[0.5\baselineskip]
\noindent\textsf{Chapter 4:}  This chapter gives an introduction to shape optimization and recalls
some basic material from shape calculus. Further, it provides the main results and arithmetic rules regarding (local) shape and material derivatives \cite{SokZol92,DelfZol11,ShapeOpt} in spaces of differentiable functions. Moreover, the existence of shape derivatives for general local objective functionals of the volume $ J_{vol}(\Omega) =\int_{\Omega}\mathcal{F}(x,u,\sigma(u))\,dx $ and surface type $ J_{sur}(\Omega) =\int_{\Gamma}\mathcal{F}(x,u,\sigma(u))\, dS $ are shown under differentiability assumptions that are 
suitable in the scope of this thesis. 
\\[0.5\baselineskip]
\noindent\textsf{Chapter 5:} A novel framework for sensitivity analysis of solutions of linear variational equalities with parameter dependence on Hilbert spaces $ b^{t}(u^{t},v)=l^{t}(v)~\forall v \in H $ is developed. Such equations appear e.g. when linear PDE on parameter dependent domains are considered. Two main results are shown here: 
\begin{itemize}
	\item  Under suitable assumptions on coercivity, continuity and differentiability of the linear and bilinear forms $ l^{t} $ and $ b^{t} $, the solutions $ u^{t} $ are continuously differentiable with respect to the parameter $ t $ in the strong Hilbert space topology. 
	\item  The second result shows how differentiability can be transferred to higher topologies in Banach spaces. 
\end{itemize}
\thispagestyle{plain}
\noindent \textsf{Chapter 6:} The outcomes of Chapter 5 prepare the ground on our mathematical way towards material derivatives w.r.t. classical function space topologies. In order to apply the regularity theory for PDE, introduced in \textsf{Chapter 2}, shapes are considered to be $ C^{k} $-domains. It is shown that the results of the previous sections can be applied to the PDE of linearized elasticity when the domain is perturbed according to the velocity method \cite{SokZol92}.
\begin{itemize}
	\item The existence of material derivatives for linear elasticity in Hölder space topologies is proved.
	\item Further it is shown, that the general local cost functionals $ J_{vol}(\Omega)  $ and $ J_{sur}(\Omega) $, introduced in Chapter 4, are shape differentiable under elasticity constraints. In particular, these results apply for the reliability functionals $ J^{\mathrm{lcf}} $ and $ J^{\mathrm{cer}} $ which were introduced in Chapter 1.
\end{itemize}

\noindent \textsf{Chapter 7:} Adjoint equations to $ J_{vol}(\Omega)  $ and $ J_{sur}(\Omega) $ w.r.t. linear elasticity are considered in order to derive Hadamard $ L^2 $-shape gradients for the mentioned reliability functionals. The adjoint states and the shape gradients are analyzed in view of their regularity and their potential to maintain the smoothness along a gradient flow.
\begin{itemize}
	\item A regularity theory for adjoint equations and $ L^2 $-shape gradients in Hölder spaces is presented. 
	\item It is shown that the Hadamard gradient is insufficient to pertain the shape regularity in the framework of $ C^{k} $-shapes. 
	\item Further, a brief investigation on how the regularity assumptions can be reduced without loosing the property of shape differentiability is given.
	With a glance on other descent directions \cite{Sturm2015shape,Sturm2013lagrange,Schulz2016metricsComparison} and a proposal on a further one, this thesis is concluded.
\end{itemize}


\chapter[Probabilistic Failure Models]{Probabilistic Failure Models for Devices under Load} \label{Reliability}

In the case of metal devices, repeated mechanical (and thermal) loading leads to a slow deterioration of the material - also known as fatigue. 
Also one time ultimate loading can lead to fracture of components made of brittle material, like ceramic.
In both cases, it is impossible to determine the mechanical capacity of the component exactly. Hence, it is more promising to set up probabilistic models for crack formation that involve material behavior and estimate failure probability.  The stochastic nature of crack formation has been widely studied in the materials science literature, see e.g. \cite{Werkstoffe}. For information on fatigue, fracture mechanics and their mathematical investigation we refer to Section \ref{subsec:LCF} below.
 
Here we briefly resume the basic definitions needed in the following sections.  For a more detailed introduction to stochastics we recommend for example the book \cite{Wahrsch}. 

\section{Hazard functions and point processes}
\begin{defn}[Density, Distribution,- Survival-, and Hazard Function]
	Let $ (\varSigma ,{\cal A},P) $ be a probability space and $ \mathcal{Z} :(\varSigma ,{\cal A},P)\to (\R{},\mathcal{B}(\R{}))$ be a random variable. 
\begin{itemize}
	\item[i)] The associated \textit{distribution function} is defined by 
	\begin{equation}
	F_{\mathcal{Z}}:\R{} \to [0,1],\,~~~~~ F_{\mathcal{Z}}(s):=P(\mathcal{Z} \leq s)
	\end{equation}
	and the survival function by $ S_{\mathcal{Z}}(s):=P(\mathcal{Z} > s) = 1- P(\mathcal{Z} \leq s) $. 
	\item[ii)] Suppose that $ F_{\mathcal{Z}} $ is differentiable. Then the \textit{probability density} associated with $ \mathcal{Z} $ is given by 
	\begin{equation}
	f_{\mathcal{Z}}:\R{} \to \R{+}_{0},\,~~~~~ f_{\mathcal{Z}}(s)  = \frac{d}{ds} F_{\mathcal{Z}}(s)\, .
	\end{equation}
	\item[iii)] The related \textit{hazard function} is defined by 
	\begin{align}
	h_{\mathcal{Z}}(s):=
	\begin{cases}
		\lim_{h \to 0} \frac{ P(\mathcal{Z} \in (s,s+h] \, | \, \mathcal{Z} >s)}{h}  & ,\, \text{if } S_{\mathcal{Z}}(s) >0 \\
		\infty &,\, \text{otherwise. } 
	\end{cases}
	\end{align}
	Here $ P(A \,|\, B):=\frac{ P(A \cap B)}{P(B)} $ for $ A,\, B \in \mathcal{A} $ denotes the conditional probability.
	\item[iv)] The \textit{cumlultive hazard function} is defined by
	\begin{align}
	H_{\mathcal{Z}}(s) := \int_{-\infty}^{s} h_{\mathcal{Z}}(\tau) \, d\tau.
	\end{align}
	\item[v)]  If $ f_{\mathcal{Z}} $ is continuous, then $$ h_{Z}(s)= \frac{f_{\mathcal{Z}}(s)}{1 - F_{\mathcal{Z}}(s)} $$ and $$ F_{\mathcal{Z}}(s)= 1 - e^{-H_{\mathcal{Z}}(s)}.$$
\end{itemize}
\end{defn}
Note that if $ F_{\mathcal{Z}} $ is continuous then $ P(\mathcal{Z} = s)=0 $ for any $ s \in \R{} $. In this situation e.g. $ P(\mathcal{Z} \leq s) = P(\mathcal{Z} < s) $.
Especially recall the following:
\begin{defn} 
	A random variable $\mathcal{Z}: (\varSigma,\mathcal{A},P) \to ([0,\infty],{\cal B}([0,\infty])$ is called \textit{Weibull distributed} with scale parameter $\eta>0$ and shape parameter $ m>0 $, written $ \mathcal{Z} \sim \mathrm{Wei}(\eta,m)$, if and only if the distribution function satisfies 
	\begin{align*}
	F_\mathcal{Z}(s)=
	\begin{cases}
	1-e^ {-\left(\frac{s}{\eta}\right)^ m} & \text{ if } s\geq 0\\
	0  & \text{ if } s<0.
	\end{cases}
	\end{align*} The associated hazard function is given by 
	\begin{align*}
	h_{\mathcal{Z}}(s) =
	\begin{cases}
	\frac{m}{\eta}\left(\frac{s}{\eta}\right)^{m-1} & \text{ if } s\geq 0\\
	0  & \text{ if } s<0.
	\end{cases}
	\end{align*}
\end{defn}

Assume that $ \mathcal{Z} $ is the random failure time of some device. The survival function determines the probability that the device survives beyond time $ s $ with probability $ S_{\mathcal{Z}}(s) $, whereas $ F_{\mathcal{Z}}(s) $ determines its failure probability until time $ s $. 

The hazard function is nether a density nor a probability function. Nevertheless, given
that the subject has survived until time $ s $, we can think of it as the
probability of failure in "the next moment" or in an infinitesimally short time period $ (s,s+h]$ . Thus, the hazard rate is a measure of risk:
the higher the values of the hazard function $( s_1, s_2 ]$, the higher the risk of failure in this interval.

\begin{defn}
	Let $ (X,\mathscr{T}) $ be a topological space and let $ \mathcal{B}(\mathscr{T}) $ denote the Borel $ \sigma $-algebra on $ (X,\mathscr{T}) $.
	\begin{itemize}
		\item[i)~] A measure $ \nu $ on $ (X,\mathcal{B}(\mathscr{T})) $ is called \textit{Radon measure}, if it is inner regular and locally finite\footnote{See also Appendix \ref{App: Sec:TopoMeasures} for the topological and measure theoretic foundations.}. By $ \mathcal{R}(X)  $ we denote the set of all Radon measures on $ X $.
		\item[ii)] Let $ \nu  $ be a Radon measure on $ X $. If $ \nu: \mathcal{B}(\mathscr{T}) \to \N{}_{0} \cup \{\infty\} $ then $ \nu $ is called \textit{Radon counting measure}. The space of Radon counting measures is denoted by $ \mathcal{R}_{c}(X) $.
	\end{itemize}
\end{defn}

For any continuous function $ h $ on $X$ with compact support i.e. $h\in C_0(X)$, and $ \nu \in \mathcal{R}(X) $ the integral $ \int_Xhd\nu $ is well defined and the mappings $\mathcal{R}(X) \, (\mathcal{R}_c(X))\ni \nu\to \int_Xh\,d\nu$, $h\in C_0(X)$  induce the weak-*-topology on the space of the Radon (counting) measures. 
The standard $ \sigma $-algebra on $ \mathcal{R}_{c}(X) $ is  generated by these mappings and denoted by $ \mathcal{N}(\mathcal{R}_{c}(X)) $. 
The associated Borel $\sigma$-algebra is denoted by $\mathcal{B}(\mathcal{R})$ ($\mathcal{B}(\mathcal{R}_c)$). 

\begin{defn} Let $(X,\mathscr{T})$ be a locally compact, second countable Hausdorff space equipped with its Borel $ \sigma $-algebra $ \mathcal{B}(\mathscr{T}) $ and let $(\varSigma,{\cal A},P)$ be a probability space. If $\gamma:(\varSigma ,{\cal A},P)\to (\mathcal{R}_{c}(X),\mathscr{N}(\mathcal{R}_{c}(X)),\, \varsigma \mapsto \gamma_{\varsigma}$ is a measurable mapping, then $\gamma$ is called a \textit{point process} (PP).
\end{defn}

\begin{defn}\label{Def: PP}	A point prosess $ \gamma $ on $(\varSigma,{\cal A},P)$
	\begin{itemize}
		\item[i)~~] is called \textit{simple}, if $ \gamma_{\varsigma} $ is simple for $ P $-almost all $ \varsigma \in \varSigma $, i.e. for $ P $-almost all $ \varsigma \in \varSigma $ and $\forall \, Y \in \mathcal{B}(\mathscr{T}) $ with $ \gamma_{\varsigma}(Y)< \infty  $, there is $ j \in \N{}  $ and $ y_{1}, \ldots, y_{j} \in Y ,\, y_i \neq y_j $ for $ i\neq j $ such that $ \gamma_{\varsigma}\vert_{Y} = \sum_{i=1}^{j}\delta_{y_{i}} $.
		\item[ii)~] is called \textit{non atomic}, if $ P(\gamma_{\varsigma}(\{x\})>0)=0~~ \forall x \in X ,\,\varsigma \in \Sigma$. 
 		\item[iii)]  has \textit{independent increments}, if the random variables $ \gamma_{(.)}(Y_{i}): (\varSigma,{\cal A},P) \to (\N{}_{0} \cup\{\infty\}, \mathbb{P}(\N{}_{0} \cup\{\infty\})) ,\, i \in\{1,\ldots,k\}  $ are stochastically independent for all disjoint sets $ Y_{1},...,Y_{k} \in \mathcal{B}(\mathscr{T}),\, \varsigma \in \Sigma $.
	\end{itemize}
\end{defn}

\begin{defn}\label{Def: PPP} A point process $ \gamma:(\varSigma ,{\cal A},P)\to (\mathcal{R}_{c}(X),\mathscr{N}(\mathcal{R}_{c}(X)) $  is a \textit{Poisson point process} (PPP), if for any $ \varsigma \in \varSigma $ there is a Radon measure $ \rho_{\varsigma}  \in  {\cal R}(X)$ such that $\gamma_{\varsigma}(Y)$ is Poisson distributed $\forall Y \in{\cal B}(\mathscr{T})$ with intensity $\rho_{\varsigma}(Y)$, i.e.
\begin{align}\label{Eq:PPP}
P(\gamma_{\varsigma}(Y)=n)=e^{-\rho_{\varsigma}(Y)}\,\frac{\rho_{\varsigma}(Y)^n}{n!}.
\end{align}
\end{defn}

As shown in  \cite{PPPWatanabe1964, Wahrsch} the point process $  \gamma $ is a Poisson point process if and only if it possesses the properties  i) -iii) of Definition \ref{Def: PP}.

\begin{rem}
	Instead of $ \gamma_{\varsigma} $ we usually write $ \gamma $ and suppose implicitly that $ \gamma = \gamma_{\varsigma}$ is a randomly generated counting measure $ \gamma: (X,\mathcal{B}(\mathscr{T})) \to (\N{}_{0}\cup\{\infty\}, \mathbb{P}(\N{}_{0})\cup\{\infty\}) $.
\end{rem}

\newpage
\section{Modeling mechanic loading} \label{Sec:MechLoad}
The PDE of linearized elasticity models the displacement that an elastic material undergoes under load. This system of partial differential equations will be introduced briefly in this section and discussed in detail in Chapter \ref{PDE_Systems}. For literature see e.g. \cite{Cia1988,Ciarlet1997_Band2,Ciarlet2000_Band3,HetEsl09,ErnGuerm04} or \cite{KnopsPayne1971}.

Let $\Omega\subseteq \R{3}$ be a bounded domain with Lipschitz boundary. It models a mechanic device like a blade of a turbine. Its  boundary is denoted by $\Gamma=\partial\Omega$ and its closure by $\overline{\Omega}$. We assume that the device consists of an isotropic material e.g. a forged or cast metal. 
Throughout this work, we assume that the boundary of $ \Omega $ is divided into two parts: An interior part $ \Gamma_{D} $, where the device is clamped, and an exterior boundary part $ \Gamma_N $ such that $ \Gamma_{D} \dot{ \cup} \Gamma_{N}=\Gamma $.

Under operation, the device $\Omega$ is loaded by a volume force $ f=f(\Omega):\Omega \to \R{3} $ like gravity or centrifugal loads. The vector field 
$ g_{N}=g(\Gamma_N):\Gamma_N \to \R{3} $ is a surface load e.g. caused by static gas pressure $ P_{N}=P(\Gamma_N):\Gamma_N \to \R{} $. Then $ g_{N}(x)=-P_{N}(x)\vec{n}(x),\, x \in \Gamma_N $ where $ \vec{n} $ denotes the outward normal vector field on $ \Gamma $.   
The related displacement is defined by the vector field $u=u(\Omega):\overline{ \Omega}\to \R{3}$ which can be derived as a solution of a linear elasticity problem which is given by a system of linear elliptic PDE of second order.       
According to \citep{ErnGuerm04,Cia1988} the \textit{disjoint displacement-traction problem of linear isotropic elasticity} is defined by 
\begin{align}\label{Reliability:Eq:LinEl} \tag{P1}
\left. 
\begin{array}{rcll}
-\Div( \se(u)) &=&f  &\text{ in } \Omega \\
u &=& 0  &\text{ on } \Gamma_{D}  \\
\se(u) \vec{n}&=&g_N &\text{ on }\Gamma_{N}
\end{array} 
\right.
\end{align}
on $ \Omega \subset \R{3} $ where
\begin{equation}\label{Reliability:Eq:Stress_Tensor} 
\se(u):=\lambda\Div(u)\mathrm{I}+\mu(Du+Du^{\top}) \text{ in } \Omega
\end{equation} 
is the stress tensor. By $ \mathrm{I} = \delta_{ij},\, i,j=1,2,3 $ we denote the identity matrix on $ \R{3} $.
It is also common to express the stress $ \sigma(u) $ via the strain-tensor $ \varepsilon(u):=\frac{1}{2}(Du+Du^{\top}) $. Then \eqref{Reliability:Eq:Stress_Tensor} reads $$ \se(u)=\lambda\,\tr(\varepsilon(u))\mathrm{I}+2\mu\,\varepsilon(u) \text{ in } \Omega.$$

In course of a load cycle, some heating and cooling might take place, such that $u_{\Omega}$ also depends on a temperature distribution $T(\Omega): \overline{ \Omega}\to \R{}$. Then $ u = u_{T}(\Omega) $ can be derived as a solution of a thermal elasticity problem. But since this is more or less a special case of \eqref{Reliability:Eq:LinEl}, we refer to \citep{BittGottsch} and \citep{HetEsl09} for further investigations. 

\section{Probabilistic models for mechanic failure mechanisms}
 
Because of technical or financial reasons it is not possible to inspect machines or buildings like engines, turbines and or bridges in an rhythm of several days, weeks or month in detail. The more important is a good prognosis regarding the lifetime and service interval scheduling \cite{BGMGSS,groger2016models}. 

A classical approach to estimate the lifetime of a component exposed to fatigue caused by slowly repeated cyclic loading (LCF) is the computation of the so called deterministic lifetime to crack initiation. But this concept alone is not sufficient since the component might also fail with a certain probability before or after that number of load cycles. Thus, it seems more favorable to combine the classical deterministic approach with a stochastic one. This approach was proposed in \cite{ProbLCF2013, DissSchmitz2014} and later examined in view of gas turbine design \cite{BittGottsch,ThermoLCF2013,LCFSchmid2014,GottschSchmitz}  and shape optimization \cite{MultiCrit_LCFAdj2018,GottschSaadi2018}. A survey can be found in \cite{BGMGSS}. We refer also to \cite{SPPCrack2018,ProbTurbMatDes1997} or \cite{EPNotch1985}. The probabilistic model is based however on the deterministic ansatz.

\subsection{Metal components under cyclic loading and LCF}\label{subsec:LCF}


It is well known \cite{Werkstoffe,PhysMatScieGottstein2004} that repeated loading of a mechanical component ultimately leads to failure, even if the maximal tensile strength of the material is much higher than the single loads. The resulting deterioration in the material is known as fatigue. LCF is a damage mechanism which is stress and surface driven and is best understood for poly-crystalline metal: Shear stress are acting on the atomic layers 
of the material and leads to the transport of lattice defects to the surface. After several load cycles these defects reach the surface of the component and form intrusions and extrusions, see Figure \ref{fig:force} (a). This leads to stress concentration at the tip of the intrusion and cracks originate there \cite{Werkstoffe}, see Figure \ref{fig:damage} (b).

\begin{multicols}{2}
	\begin{center}
		\includegraphics[scale=.155]{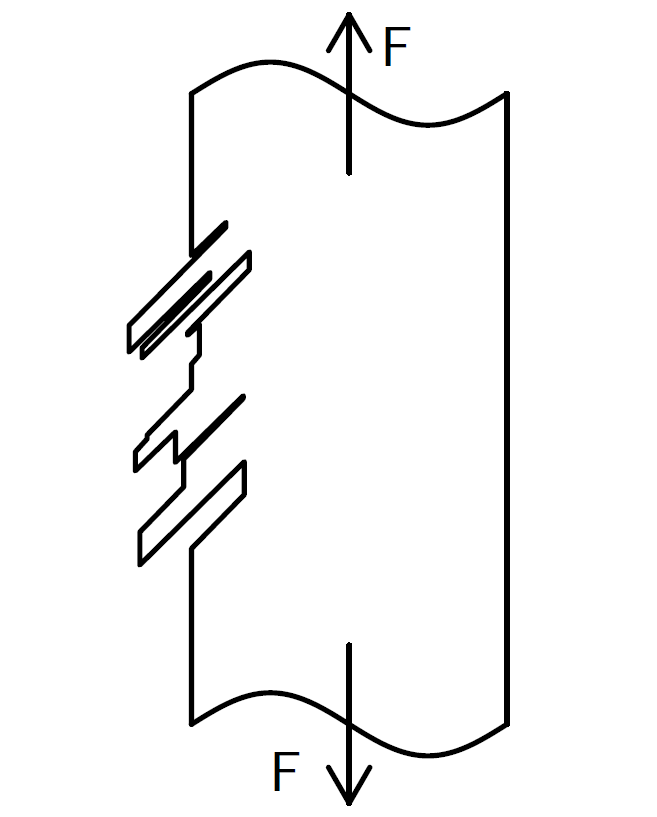}\\
		\captionof*{figure}{\footnotesize (a) Intrusions and extrusions at the \\ surface forming under cyclic appli- \\cation of the force $F$.}\label{fig:force}
	\end{center}
	\columnbreak 
	{ \centering
		\includegraphics[scale=1.17]{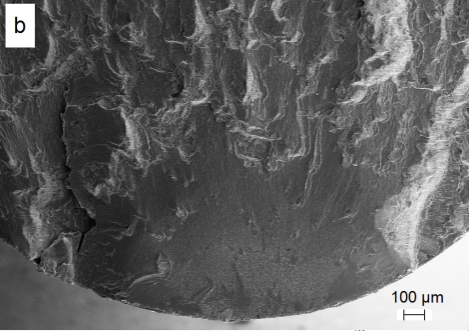}\\
		\captionof*{figure}{\footnotesize (b) Crack initation at the lower boundary of a specimen cracked during a cyclic life test for the Ni-based superalloy RENE80}\label{fig:damage}
	}
\end{multicols}

We derive the deterministic number of load-cycles to crack initiation $ N_{det} $ in the case of cyclic, purely mechanical loading as follows: 
Let  $\sigma=\sigma(u):\Omega\to \R{3\times 3}$ be the stress field associated with the displacement field $u=u(\Omega)$, i.e. $ \sigma(u) = \lambda \Div(u)\mathrm{I} + \mu(Du + Du^{\top}) $, $ \lambda,\, \mu >0 $. Here we suppress the  $\Omega$ dependence for notational simplicity.  

\begin{enumerate}
\item First calculate the trace free part $ \sigma'= TF(\sigma)=\sigma-\frac{1}{3}\tr(\sigma)\mathrm{I}$ of $ \sigma $. 
\item Then define the amplitude comparison stress as the von Mises stress associated with $ \sigma$, i.e. $ \sigma_v= VM(\sigma')=\sqrt{\frac{3}{2} \sigma': \sigma'}$ and define the amplitude stress $\sigma_a:=\nicefrac{\sigma_v}{2}$.
\item If $ \sigma$ is obtained from a linear elasticity equation \eqref{Reliability:Eq:LinEl}, convert the van Mises stress $ \sigma_v \in \R{+}_{0}$ to elastic-plastic amplitude stress $ \sigma^{el-pl}\in \R{+}_{0} $, e.g. by solving the Neuber equation \cite{Neuber1961,GlinkaNeuberMeth2000}
\begin{align}
	\label{Eq:SD}
	\sigma_v=SD(\sigma^{el-pl})=\sqrt{( \sigma^{el-pl})^2+ E \sigma^{el-pl}\left(\frac{\sigma^{el-pl}}{K}\right)^{1/\hat{n}}}.
\end{align}
Otherwise, i.e. if $\sigma$ is obtained from an elastoplastic problem, set $ \sigma^{el-pl}=\sigma_a$. In equation \eqref{Eq:SD} $E$ denotes the Young's modulus, $K$ the hardening coefficient and $\hat{n}$ the hardening exponent.	
	\item Afterwards, convert the elastic-plastic comparison stress amplitude to the elastic-plastic strain amplitude $\varepsilon^{el-pl} \in \R{+}_{0}$ via the Ramberg-Osgood relation \cite{RO}:
	\begin{equation}
	\label{Eq:RO}
	\varepsilon^{el-pl}=RO(\sigma^{el-pl})=\frac{ \sigma^{el-pl}}{E}+\left(\frac{\sigma^{el-pl}}{K}\right)^{1/\hat{n}}\, .
	\end{equation}
	\item Finally, solve the Coffin-Manson-Basquin equation \cite{CMB} for $N_{det} \in \R{+}_{0} \cup \{\infty\}$,
	\begin{equation}
	\label{Eq:CMB}
	\varepsilon^{el-pl}=CMB(N_{det})=\frac{\sigma_f'}{E}(2N_{det})^b+\varepsilon_f'(2N_{det})^c.
	\end{equation}
	Here $\sigma_f',\varepsilon_f'>0$ and $b,c<0$ are material constants.
\end{enumerate}
\vspace*{-1ex}
Note that models that include notch support factors \cite{Werkstoffe} also require derivatives of second order on $ u $ that enter into the definition of $N_{det}$. Examples for such models have been investigated e.g. in \cite{EPNotch1985,HertelVormwNotch2012} or \cite{NotchSizeLCF2018}. In \cite{LCFSchmid2014} also Schmidt factors are considered. Moreover, the van Mises stress $ \sigma_{v} $ can be replaced by the stress amplitude $ \sigma_a $, but then the constants have to be adapted. 

Based on that, we can define the mappings 
\begin{eqnarray*}
TF:~~ &~~~~\R{3 \times 3} \to \R{3 \times 3}, & M \mapsto M - \frac{1}{3}\tr(M)\mathrm{I}\, ,\\
VM:~~ &\R{3 \times 3} \to \R{},&M  \mapsto \sqrt{\frac{3}{2}M:M}\, , \\
SD:~~ &\R{+}_{0} \to \R{+}_{0},&x \mapsto \sqrt{x^2 + \frac{E}{K^{\nicefrac{1}{\hat{n}}}}\,x^{1+\nicefrac{1}{\hat{n}}}} \, ,\\
RO:~~ &\R{+}_{0} \to \R{+}_{0},&x \mapsto \frac{x}{E} + \left(\frac{x}{K}\right)^{\nicefrac{1}{\hat{n}}} \, , \\
CMB:~ &~~~~~~~~~~~\R{+}_{0} \to \R{+} \cup \{\infty\},& x \mapsto \frac{\sigma_{f}'}{E}(2x)^{b} + \epsilon_{f}'(2x)^{c}\, .
\end{eqnarray*}
The matrices without trace ($ M \in \ker(TF) $) are mapped to infinite life and thus we set
\begin{align}
N_{det}:\R{3 \times 3} \to \R{+} \cup \{\infty\},\, M \mapsto CMB^{-1}\circ RO \circ SD^{-1} \circ VM \circ TF(M)
\end{align}
and conclude with
\begin{align}
N_{det}(\sigma(u(x))):=CMB^{-1}\circ RO \circ SD^{-1} \circ VM \circ TF(\sigma(u(x))), ~~~ x \in \Gamma.
\end{align}

\subsubsection{Probabilistic models for fatigue under cyclic loading}

The LCF failure mechanism is purely surface and strain driven. To make the model accessible for more general failure mechanisms, we take all cracks into account that we can find after some number of load cycles $ s \in \N{} $ in the interior ($ x \in \overset{\circ}{\Omega} $) or the surface ($ x \in \Gamma $) of the component. Though, for technical reasons we assume that $ s $ is a positive real number. Thus we choose $\mathcal{C}=\mathbb{R}_+\times \overline{\Omega}$ to be the configuration space for crack initiation. 

Radon counting measures have the property to map measurable sets to natural numbers $\mathbb{N}_0\cup\{\infty\}$.
Thus, it is obvious to take such measures to "count" the number of cracks in the time-space set $ [0,s]\times \overline{\Omega} $.


\begin{defn}[Crack Initiation Process] \cite[Def. 2.2]{GottschSchmitz}
	Let $(\varSigma,{\cal A},P)$ be a probability space. Any point process $ \gamma(\Omega) :(\varSigma,{\cal A},P)\to (\mathcal{R}(\mathcal{C}),\mathcal{B}(\mathcal{R}_c))$   
	that satisfies Definition \ref{Def: PPP} i) and ii) is called a \textit{crack initiation process}.
\end{defn}

\begin{rem}
Actually, $ \gamma = \gamma_{(.)}(\Omega,.):(\varSigma,{\cal A},P)\to (\mathcal{R}(\mathcal{C}),\mathcal{B}(\mathcal{R}_c)),\, \varsigma \mapsto  \gamma_{\varsigma}(\Omega,.) $ generates a crack initiation process $ \gamma_{\varsigma}(\Omega,.)  $ which then counts the existing cracks in $ C_s = [0,s] \times \overline{ \Omega} $. Thus, strictly speaking,  $ \gamma(C_{s}) =  \gamma_{\varsigma}(\Omega,C_s) \in \N{}_{0} $. 
For notational simplicity we suppress the $ \Omega $  and $ \varsigma $ dependence if possible. 
\end{rem}

Assume that the component $ \overline{\Omega} $ is crack free at the beginning $ s=0 $.
For a crack initiation process $\gamma = \gamma(\Omega)$ we define the time of first failure associated with $\Omega$ by  $$\mathcal{T}(\Omega)=\mathcal{T}(\gamma(\Omega)) :=\inf\{s> 0:\gamma([s,\infty)\times \overline {\Omega})>0\} = \sup\{s  > 0 :\gamma([0,s]\times \overline {\Omega})=0 \}.$$
Since it is possible that the component is never destroyed, we interpret the failure time $ \mathcal{T} $ as random variable $\mathcal{T}:(\varSigma,{\cal A},P)$ $\to ([0,\infty],{\cal B}^{0,\infty})$. 

Simplicity and nonatomicness can be interpreted in the following way. 
First of all two cracks can not initiate at the same location and the same time (in that case they would be counted as one crack). Non atomicness is motivated by the fact that there should be no point where the probability that a crack originates exactly there is larger than zero. 

Moreover, we can assume that for small times $ s>0 $  cracks have not yet grown to a size where they influence the macroscopic stress field \cite{GottschSchmitz}. Also, due to the fact that we are interested in first failure times, we can assume that the various crack initiations are independent of each other. This means, that the crack initation process becomes a PPP see \cite{Wahrsch} and the section above. Consequently, we only have to model the intensity measure $\rho$ as a function of the stress on $\overline{\Omega}$.

With $\Omega\subseteq \R{3}$ we associate the displacement field $u=u(\Omega)$, which is obtained as a solution of $ \eqref{Reliability:Eq:LinEl} $. Here we assume that the solution is smooth enough that the integrals in the following section are well defined. We will discuss in Section \ref{Reliability:Sec:Reg_Req} which regularity is needed in the context of LCF or ceramic failure.

The expected number of cracks that are located in $ A \subset \overline{\Omega} $ up to time $ s $, i.e. $ \rho([0,s] \times A) = \mathbb{E}[\gamma([0,s] \times A)] $ \cite{GottschSaadi2018}, should depend on the local stress state $ \sigma(u) $ (\footnote{Derivatives of $ \sigma(u) $ are neglected, although \cite{NotchSizeLCF2018,NotchFracMech2016} show that at least second order derivatives of $ u $ are of interest in this context.}) on $ A $.
 Furthermore $ \rho([0,s] \times A) $ has to be monotonically increasing in time as cracks do not vanish once they originated. This leads to the flowing structure 
\begin{align*}
\rho(\Omega, [0,s] \times A)&=\int_{A \cap \overset{\circ}{\Omega} } \int_{0}^{s} \varrho_{vol}(\tau,x,u(x),\sigma(u(x)))\,dx \,d\tau\\
&+\int_{A \cap \Gamma}\int_{0}^{s}\varrho_{sur}(\tau,x,u(x),\sigma(u(x)))\,d\tau\,dS
\end{align*} 
or, to be more general,
\begin{equation}\label{Eq:locFailure}
\begin{split}
\rho(\Omega, C)=&\int_{C\cap (\R{+}\times \Omega)}\varrho_{vol}(\tau,x,u(x),\sigma(u(x)))\,d\tau\,dx\\
+ &\int_{C\cap (\R{+}\times \Gamma)}\varrho_{sur}(\tau,x,u(x),\sigma(u(x)))\,d\tau\,dS, ~~~~~~~~~  C \in \mathcal{B}(\mathcal C).
\end{split}
\end{equation}
Here $dS$ denotes the surface measure on $\Gamma$ and $\varrho_{vol/sur}$ are some nonnegative, integrable functions that reflect the physical behavior of crack formation. We will discuss this topic in detail in Section \ref{Subsec:Loc Weibull Modell} below. The function $\varrho_{vol}$ represents volume driven failure mechanisms. For example e-g-creep or ceramic failure shall be mentioned here. Contrarily, $\varrho_{sur}$ models surface driven crack formation processes like LCF for metal devices. 
Strictly speaking, $ \rho $ depends only on the stress $ \sigma(u) $. To keep the model as flexible as possible we also introduce the displacement $ u $ as a possible control variable.

\begin{lem}\cite{GottschSchmitz}
	\label{lem:Prob} 
	Let $\gamma=\gamma(\Omega,.)$ be the PPP associated with (\ref{Eq:locFailure}) and $\mathcal{T}=\mathcal{T}(\gamma)$ the associated first failure time. Let $C_s=[0,s]\times \overline{\Omega}$. Then the distribution function $F_\mathcal{T}$ satisfies
	\begin{equation}
	\label{Eq:CDFFFT}
	F_\mathcal{T}(s)=P(\mathcal{T}\leq s)=1-e^ {-\rho(\Omega,C_s)},~~s\in\R{}. 
	\end{equation}
\end{lem}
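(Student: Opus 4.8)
\emph{Proof plan.} The plan is to reduce the claim to the void probability of the Poisson point process $\gamma$ on the time--space cylinder $C_s$: up to a $P$-null event, $\{\mathcal{T}>s\}$ is the event $\{\gamma(C_s)=0\}$ that no crack has initiated on $\overline{\Omega}$ before time $s$, and the probability of the latter is read off from \eqref{Eq:PPP}.

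First I would translate the definition of the first failure time into event inclusions. The map $t\mapsto\gamma(C_t)=\gamma([0,t]\times\overline{\Omega})$ is nondecreasing with values in $\N{}_{0}\cup\{\infty\}$, and $\mathcal{T}=\sup\{t>0:\gamma(C_t)=0\}$ is, as the text treats it, a random variable $\varSigma\to[0,\infty]$. From the monotonicity one reads off, for all $s'>s>0$,
\begin{equation*}
\{\gamma(C_{s'})=0\}\ \subseteq\ \{\mathcal{T}\ge s'\}\ \subseteq\ \{\mathcal{T}>s\}\ \subseteq\ \{\gamma(C_s)=0\}\ \subseteq\ \{\mathcal{T}\ge s\},
\end{equation*}
where the first inclusion holds because $\gamma(C_{s'})=0$ puts $s'$ into the set defining $\mathcal{T}$, and the second-to-last because $\mathcal{T}>s$ forces some $t>s$ with $\gamma(C_t)=0$, whence $\gamma(C_s)\le\gamma(C_t)=0$. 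Since by hypothesis $\gamma=\gamma(\Omega,\cdot)$ is the Poisson point process with the intensity measure $\rho(\Omega,\cdot)$ of \eqref{Eq:locFailure} — which is $\varsigma$-independent because $u=u(\Omega)$ is deterministic — the case $n=0$, $Y=C_s$ of \eqref{Eq:PPP} gives $P(\gamma(C_s)=0)=e^{-\rho(\Omega,C_s)}$. Inserting this into the inclusions yields the two-sided bound
\begin{equation*}
\sup_{s'>s}e^{-\rho(\Omega,C_{s'})}\ \le\ P(\mathcal{T}>s)\ \le\ e^{-\rho(\Omega,C_s)}\ \le\ P(\mathcal{T}\ge s).
\end{equation*}

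It then remains to close the gap between the outer terms, i.e. to show that $s'\mapsto\rho(\Omega,C_{s'})$ is continuous; this is the only real (and mild) point of the argument, and the step I would flag as the obstacle. By \eqref{Eq:locFailure}, $\rho(\Omega,C_{s'})$ is the primitive in its upper limit $s'$ of a nonnegative function of $\tau$ that is locally integrable under the standing regularity assumptions on $u$ which make the integrals in \eqref{Eq:locFailure} finite, hence an absolutely continuous function of $s'$; therefore $\sup_{s'>s}e^{-\rho(\Omega,C_{s'})}=e^{-\rho(\Omega,C_s)}$. The bound then collapses to $P(\mathcal{T}>s)=e^{-\rho(\Omega,C_s)}$, and consequently $F_\mathcal{T}(s)=1-P(\mathcal{T}>s)=1-e^{-\rho(\Omega,C_s)}$ for every $s\in\R{}$ (for $s\le 0$ one has $\rho(\Omega,C_s)=0$ and both sides vanish). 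As a by-product this shows $F_\mathcal{T}$ is continuous, i.e. $P(\mathcal{T}=s)=0$, which fits the non-atomicity built into the crack initiation process; beyond this continuity bookkeeping no difficulty is expected, the essential input being the Poisson void probability \eqref{Eq:PPP}.
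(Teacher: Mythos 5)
Your proposal is correct and follows essentially the same route as the paper: identify survival beyond $s$ with the void event $\{\gamma(C_s)=0\}$ and read off its probability from the Poisson formula \eqref{Eq:PPP}. The only difference is that you additionally close the gap between $\{\mathcal{T}>s\}$ and $\{\gamma(C_s)=0\}$ via the sandwich argument and the continuity of $s\mapsto\rho(\Omega,C_s)$, a boundary detail the paper's proof passes over silently.
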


\begin{proof} Survival beyond time $ s>0 $ means that there was no crack until time $ s $ and thus $ \gamma(C_s)=0 $.
According to equation \eqref{Eq:PPP} the probability to have $ n \in \N{}_{0} $ cracks in $ C_{s} $ is given by
\begin{align}\label{Eq:PPPprob_n_cracks}
P(\gamma(C_s)=n)=e^ {-\rho(\Omega,C_s)} \frac{\rho(C_{s})^n}{n!}.
\end{align} 
Thus, the survival probability beyond $ s $ is given by $P(\mathcal{T}> s)=P(\gamma(C_s)=0)=e^ {-\rho(\Omega,C_s)}. $ This directly implies
\[ F_{\mathcal{T}}(s) = P(\mathcal{T}\leq s) =1 -P(\mathcal{T}> s)=1-e^ {-\rho(\Omega,C_s)}. \]
\end{proof}
Moreover $  F_{\mathcal{T}}(s) = 1 - e^{-H_{\mathcal{T}}(\Omega,s)} $ with $ H_{\mathcal{T}}(\Omega,s) =\int_{0}^{s} h_{\mathcal{T}}(\tau) d\tau $, consider for example  \cite{EscobarMeeker,Wahrsch}, since $ h_{\mathcal{T}}(s) = 0 $ for any $ s<0 $.
This means that $H_{\mathcal{T}}(\Omega,s) = \rho(\Omega,C_{s})$ is the cumulative hazard rate of the random variable $ \mathcal{T} $ and that any component $ \Omega $ induces its own probability distribution and its own failure time $ \mathcal{T}(\Omega) $. Thus, the question is how to compare these distributions.

\subsubsection{Optimal reliability under cyclic loading as a shape optimization problem}

Let $\Omega^{\ast},\,\Omega\subseteq \R{3}$ be two different designs and
$\mathcal{T} = \mathcal{T}(\Omega)$ and $\mathcal{T}^{\ast}=\mathcal{T}(\Omega^{\ast})$ their first failure times associated to some $\varrho_{vol/sur}$. Then we can define the following: 

\begin{defn}[Notions of Reliability] \cite{BittGottsch}
	\label{def:Reliability} 
	\begin{itemize}
		\item[i)] \textit{Reliability at fixed time:} Let $ s\in\R{}_+ $ be fixed. The design $\Omega^{\ast}$ is said to be more or equally reliable then $ \Omega $ at time $s$, if $F_{\mathcal{T}^{\ast}}(s)\leq F_{\mathcal{T}}(s)$.
		\item[ii)] \textit{Reliability in first order stochastic dominance:} The design $\Omega^{\ast}$ is more or equally reliable than $\Omega$ in first stochastic order, if i) holds at any time $s\in \R{}_+$.
		\item[iii)] \textit{Reliability in terms of instantaneous hazard:} Suppose that $\mathcal{T}^{\ast}$ and $\mathcal{T}$ are continuous random variables. Then $\Omega^{\ast}$ is more reliable than $\Omega$ in terms of instantaneous hazard, if $h_{\mathcal{T}^{\ast}}(s)\leq h_{\mathcal{T}u}(s)$ holds for any $ s\geq 0$. 
	\end{itemize}  
\end{defn} 

The three notions can be interpreted in an ascending order: Reliability at a fixed time means, that a design $ \Omega^{\ast} $ is only better than $ \Omega $ at one time spot, whereas reliability in first order stochastic dominance means that $ \Omega^{\ast} $ is always most reliable. 
The concept of reliability in terms of instantaneous hazard even strengthens this notion. Since $F_\mathcal{T}(s)=1-e^{-\int_0^s h_\mathcal{T}(\tau) d\tau}$  the estimate $h_\mathcal{T}(s)\leq h_{\mathcal{T}^{\ast}}(s)~\forall s\in\R{}_+$ implies $F_\mathcal{T}(s)\leq F_{\mathcal{T}^{\ast}}(s)$ $\forall s\in\R{}_+$. 
Therefore, being most reliable at each instant in time implies being most reliable at any time.

Further explanations and other perspectives on reliability optimization problems are given for example in \cite{BGMGSS,EscobarMeeker,GBS_Ceramic2014,Stat}. 
In this work, enhancing the reliability of a device means optimizing the set $ \Omega $ w.r.t. one of the notions introduced in Definition \ref{def:Reliability}.

Now we are ready to define a reliability optimization problem according to each of the notions of reliability.

\begin{defn}[Optimal Reliability Problem]
	\label{def:OptReliability}
	Let ${\cal O}\subset \mathbb{P}(\R{3})$ be some set of \textit{admissible domains} (shapes) $\Omega $. 
	Then, $\Omega^*\in{\cal O}$ solves the problem of optimal reliability on $ \mathcal{O} $ according to  Definition \ref{def:Reliability} (i), (ii) or (iii) if it is more or equally reliable than any other design $\Omega\in{\cal O}$ regarding (i), (ii) or (iii), respectively.
\end{defn}

\noindent Let $ \varrho $ be some non negative, integrable function, see \eqref{Eq:locFailure}, and define 
${\cal F}_{vol/sur}(s,\cdot):=\int_0^s \varrho_{vol/sur}(\tau,\cdot)\, d\tau$ and
\begin{align}
{\cal J}_{vol}(s,\Omega,u,\sigma)&:=\int_{\Omega}{\cal F}_{vol}(s,x,u,\sigma(u))\,dx\, , \\
{\cal J}_{sur}(s,\Omega,u,\sigma)&:=\int_{\Gamma}{\cal F}_{sur}(s,x,u,\sigma(u))\,dS\, .
\end{align}  
For $ C=C_s $ we immediately obtain 
\begin{equation}\label{Eq:ObjFunct}
\begin{split}
\rho(\Omega, C_s) 
&= \rho(\Omega,[0,s] \times \overline{\Omega})  \\
&=\int_{\Omega} \int_{0}^{s} \varrho_{vol}(\tau,x,u,\sigma(u))\,dx \,d\tau +\int_{\Gamma}\int_{0}^{s}\varrho_{sur}(\tau,x,u,\sigma(u))\,d\tau\,dS \\
&= \int_{\Omega} \mathcal{F}_{vol}(s,x,u,\sigma(u)) \, dx +  \int_{\Gamma} \mathcal{F}_{sur}(s,x,u,\sigma(u)) \, dS  \\[1ex]
&= {\cal J}_{vol}(s,\Omega,u,\sigma)+{\cal J}_{sur}(s,\Omega,u,\sigma)\\[1ex]
&:= {\cal J}(s,\Omega,u,\sigma).
\end{split}
\end{equation}

This indicates the following:

\begin{lem} \cite{BittGottsch}
	\label{lem:OpRelSO}
	Let the crack initiation process $\gamma=\gamma(\Omega)$ for some $\Omega\in{\cal O}$ be a PPP with intensity measure (\ref{Eq:locFailure}). Let $ u=u(\Omega) $, $ \sigma=\sigma(u) $ be the displacement and the stress field associated with $ \Omega $. A shape $\Omega^*\in{\cal O}$ together with $ u^{\ast}=u(\Omega^{\ast}) $ and $ \sigma^{\ast} =\sigma(u^{\ast})$ solves the optimal reliability problem
	\begin{itemize}
		\item[i)]   at fixed time $s\in \R{}_+$ if and only if 
		\begin{equation}
		\label{Eq:SO}
		{\cal J}(s,\Omega^{\ast},u^{\ast},\sigma^{\ast})\leq {\cal J}(s,\Omega,u,\sigma)~~\forall\, \Omega\in{\cal O}.
		\end{equation}
		\item[ii)] in first order stochastic dominance  if and only if $\,\Omega^*$ solves \eqref{Eq:SO} for all $s\in\R{}_+$.   
		\item[iii)] in terms of instantaneous hazard, if an only if
		\begin{equation}
		\label{Eq:OptHaz}
		\frac{d}{ds}{\cal J}(s,\Omega^{\ast},u^{\ast},\sigma^{\ast})\leq \frac{d}{ds}{\cal J}(s,\Omega,u,\sigma)~~\forall s\in\R{}_+,~\Omega\in{\cal O}.
		\end{equation} 
	\end{itemize}
\end{lem}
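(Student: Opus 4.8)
The plan is to reduce all three equivalences to two facts already in hand: the explicit form $F_{\mathcal{T}(\Omega)}(s)=1-e^{-\rho(\Omega,C_s)}$ of Lemma~\ref{lem:Prob}, and the identity $\rho(\Omega,C_s)=\mathcal{J}(s,\Omega,u,\sigma)$ of \eqref{Eq:ObjFunct}. Together these give, for every admissible $\Omega$ with associated $u=u(\Omega)$, $\sigma=\sigma(u)$, the representation $F_{\mathcal{T}(\Omega)}(s)=1-e^{-\mathcal{J}(s,\Omega,u,\sigma)}$; since $u$ and $\sigma$ are determined by $\Omega$ through \eqref{Reliability:Eq:LinEl}, the right-hand side depends only on $s$ and $\Omega$. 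Because $t\mapsto 1-e^{-t}$ is strictly increasing on $\R{}$, any comparison of distribution functions is then equivalent to the corresponding comparison of the functional $\mathcal{J}$.

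For (i) I would fix $s\in\R{}_+$ and read off from this representation that the inequality $F_{\mathcal{T}^{\ast}}(s)\le F_{\mathcal{T}}(s)$ of Definition~\ref{def:Reliability}(i) holds if and only if $\mathcal{J}(s,\Omega^{\ast},u^{\ast},\sigma^{\ast})\le\mathcal{J}(s,\Omega,u,\sigma)$; feeding this into the definition of a solution (Definition~\ref{def:OptReliability}, i.e.\ that $\Omega^{\ast}$ be more or equally reliable than \emph{every} $\Omega\in\mathcal{O}$) yields \eqref{Eq:SO} verbatim. Part (ii) is then immediate, since by Definition~\ref{def:Reliability}(ii) reliability in first order stochastic dominance \emph{is} reliability at fixed time for all $s$, hence it is equivalent to \eqref{Eq:SO} holding for every $s\in\R{}_+$ by (i).

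For (iii), assuming $\mathcal{T}^{\ast},\mathcal{T}$ continuous, I would use the observation recorded after Lemma~\ref{lem:Prob} that $H_{\mathcal{T}}(\Omega,s)=\rho(\Omega,C_s)=\mathcal{J}(s,\Omega,u,\sigma)$ is the cumulative hazard function of $\mathcal{T}$, so $h_{\mathcal{T}}(s)=\tfrac{d}{ds}H_{\mathcal{T}}(\Omega,s)$. Since $\mathcal{F}_{vol/sur}(s,\cdot)=\int_0^s\varrho_{vol/sur}(\tau,\cdot)\,d\tau$, I would differentiate under the volume and surface integrals to obtain
\[
\frac{d}{ds}\mathcal{J}(s,\Omega,u,\sigma)=\int_{\Omega}\varrho_{vol}(s,x,u,\sigma(u))\,dx+\int_{\Gamma}\varrho_{sur}(s,x,u,\sigma(u))\,dS=h_{\mathcal{T}}(s)
\]
for almost every $s$, so that $h_{\mathcal{T}^{\ast}}(s)\le h_{\mathcal{T}}(s)$ for all $s\ge 0$ is equivalent to \eqref{Eq:OptHaz}; combining with Definition~\ref{def:Reliability}(iii) and Definition~\ref{def:OptReliability} closes the argument.

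The only step that is more than bookkeeping is the interchange of $\tfrac{d}{ds}$ with the spatial integrals in (iii) and the identification of the derivative with the hazard rate: this relies on the standing integrability assumptions on $\varrho_{vol/sur}$ and on enough regularity of $u$ for the compositions $\varrho_{vol/sur}(\cdot,\cdot,u,\sigma(u))$ to be integrable --- exactly what was already assumed to make \eqref{Eq:locFailure} well defined --- and on continuity of $\mathcal{T}$ so that the relation $h_{\mathcal{T}}=f_{\mathcal{T}}/(1-F_{\mathcal{T}})=\tfrac{d}{ds}H_{\mathcal{T}}$ may be invoked. I expect this to be the main (and essentially the only) obstacle; parts (i) and (ii) require nothing beyond the strict monotonicity of $t\mapsto 1-e^{-t}$.
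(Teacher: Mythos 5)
Your argument is correct and follows essentially the same route as the paper: parts (i)--(ii) combine Lemma~\ref{lem:Prob} with \eqref{Eq:ObjFunct} and the strict monotonicity of $t\mapsto 1-e^{-t}$, and part (iii) identifies $\mathcal{J}(s,\Omega,u,\sigma)$ with the cumulative hazard $\int_0^s h_{\mathcal{T}}(\tau)\,d\tau$ and differentiates in $s$. The only difference is cosmetic: the paper differentiates the one-dimensional cumulative-hazard identity directly, whereas you additionally verify the same formula by interchanging $\tfrac{d}{ds}$ with the spatial integrals, which is a harmless elaboration under the standing integrability assumptions.
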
  
\vspace*{-1em}
\begin{proof}
i) \& ii) According to Lemma \ref{lem:Prob} and equation \eqref{Eq:ObjFunct} we immediately receive 
\begin{align*}
{\cal J}(s,\Omega^{\ast},u^{\ast},\sigma^{\ast})\leq {\cal J}(s,\Omega,u,\sigma)
& \Leftrightarrow 1-e^{-\mathcal{J}(s,\Omega^*,u^{\ast},\sigma^{\ast})} \leq 1-e^{-{\cal J}(s,\Omega,u,\sigma)} \\
&\Leftrightarrow F_{\mathcal{T}^{\ast}}(s) \leq F_{\mathcal{T}}(s). 
\end{align*}
Moreover $F_\mathcal{T}(s)=1-e^{-\int_0^s h_\mathcal{T}(\tau) d\tau}=1-e^{-{\cal J}(s,\Omega,u,\sigma)}$ or equivalently $ {\cal J}(s,\Omega,u,\sigma) $ $= \int_0^s h_\mathcal{T}(\tau) d\tau $. This implies  $h_\mathcal{T}(s)= \frac{d}{ds} {\cal J}(s,\Omega,u,\sigma)$.
\end{proof}

Therefore, $ {\cal J}(s,\Omega,u,\sigma) $ can be interpreted from different perspectives: on one hand it is a cumulative hazard function since $ {\cal J}(s,\Omega,u,\sigma) = \int_0^s h_\mathcal{T}(\tau) d\tau = H(\Omega,s) $. From another other point of view, solving the optimal reliability problem means finding an optimal set $ \Omega $ such that the expected number of cracks that are located in $ \overline{\Omega} $ at time $ s $, i.e. $\mathbb{E}[\gamma(C_s)]   = \rho(\Omega,C_s) ={\cal J}(s,\Omega,u,\sigma)$, is minimized.

\subsubsection*{The local Weibull model for LCF} \label{Subsec:Loc Weibull Modell}

Clearly, the choice of $ \varrho_{vol/sur} $ has a huge impact on the notion of optimal reliability that is of interest. In reliability statistics the Weibull distribution turned out to be suitable for many applications \cite{Weib1939,EscobarMeeker} and is often used in engineering practice. 

\begin{defn}[Local Weibull Model] \label{ReliabilityFunctionals:Def:Loc_Weibull}\cite{GottschSchmitz}
	Let $m>0$ be a Weibull shape parameter and 
	\begin{equation}
	\label{Eq:locWei}
	\varrho_{vol/sur}(s,\cdot)=\frac{m}{N_{vol/sur}(\cdot)}\left(\frac{s}{N_{vol/sur}(\cdot)}\right)^{m-1}
	\end{equation}
	for functions $N_{vol/sur}(\cdot)=N_{vol/sur}(x,u,\sigma(u))$ with values in $[0,\infty]$. The associated crack initiation processes are called \textit{local Weibull models}. We use the convention $\frac{1}{\infty}=0$.
\end{defn} 

In the context of optimal reliability for mechanic components under cyclic loading the number $ N_{vol/sur} $ can be interpreted as the number of load cycles passed until crack formation. In this sense $ N_{vol}= \infty$ means that the failure mechanism is surface driven or vice versa. An example for the systematic derivation of such a functional $ N_{sur} $ will be presented in the next section.

\begin{lem}
	\label{lem:Wei}
	Let $\gamma^{w}=\gamma^{w}(\Omega)$ be the PPP from a local Weibull model. Then the first failure time  $\mathcal{T}^w=\mathcal{T}(\gamma^w)$ is Weibull distributed with parameters $\eta=\eta(\Omega)>0$ and $ m>1 $ given by
	\begin{align}
	\label{Eq:scaleWei}
	\eta&=\left[\int_\Omega \left(\frac{1}{N_{vol}(x,u,\sigma(u))}\right)^ m\,dx +\int_{\Gamma}\left(\frac{1}{N_{sur}(x,u,\sigma(u))}\right)^m \, dS\right]^{-\frac{1}{m}}.
	\end{align}
\end{lem}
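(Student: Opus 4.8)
The plan is to derive the claim directly from Lemma~\ref{lem:Prob} together with a single elementary integration. By Lemma~\ref{lem:Prob} the distribution function of $\mathcal{T}^w$ is $F_{\mathcal{T}^w}(s)=1-e^{-\rho(\Omega,C_s)}$ with $C_s=[0,s]\times\overline{\Omega}$, so it suffices to show that $\rho(\Omega,C_s)=(s/\eta)^m$ for $s\geq 0$ and $\rho(\Omega,C_s)=0$ for $s<0$, where $\eta$ is the quantity in \eqref{Eq:scaleWei}. Comparing the resulting $F_{\mathcal{T}^w}$ with the definition of the Weibull distribution then yields $\mathcal{T}^w\sim\mathrm{Wei}(\eta,m)$, and any statement about the associated hazard function follows immediately from that same definition.

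First I would specialize the intensity measure \eqref{Eq:locFailure} to $C=C_s$, obtaining
\[
\rho(\Omega,C_s)=\int_{\Omega}\int_0^s \varrho_{vol}(\tau,x,u,\sigma(u))\,d\tau\,dx+\int_{\Gamma}\int_0^s \varrho_{sur}(\tau,x,u,\sigma(u))\,d\tau\,dS,
\]
the interchange of temporal and spatial integration being justified by Tonelli's theorem since the integrands are nonnegative and measurable. Next I would insert the local Weibull densities from \eqref{Eq:locWei} and carry out the inner integral: for a fixed value $N=N_{vol/sur}(x,u,\sigma(u))\in(0,\infty]$,
\[
\int_0^s \frac{m}{N}\Bigl(\frac{\tau}{N}\Bigr)^{m-1}\,d\tau=\frac{m}{N^m}\int_0^s \tau^{m-1}\,d\tau=\Bigl(\frac{s}{N}\Bigr)^m,
\]
the identity persisting for $N=\infty$ thanks to the convention $1/\infty=0$. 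Substituting this back and pulling the time-independent factor $s^m$ out of the two spatial integrals gives
\[
\rho(\Omega,C_s)=s^m\left[\int_{\Omega}\Bigl(\frac{1}{N_{vol}}\Bigr)^m dx+\int_{\Gamma}\Bigl(\frac{1}{N_{sur}}\Bigr)^m dS\right]=s^m\,\eta^{-m}=\Bigl(\frac{s}{\eta}\Bigr)^m,
\]
with $\eta$ exactly as in \eqref{Eq:scaleWei}. For $s<0$ the interval $[0,s]$ is empty, so $\rho(\Omega,C_s)=0$ and $F_{\mathcal{T}^w}(s)=0$. Hence $F_{\mathcal{T}^w}(s)=1-e^{-(s/\eta)^m}$ for $s\geq 0$ and $F_{\mathcal{T}^w}(s)=0$ otherwise, which is precisely the distribution function of $\mathrm{Wei}(\eta,m)$.

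I do not expect a genuine obstacle here: apart from the bookkeeping this is just Tonelli's theorem plus a one-line antiderivative. The only points that deserve a word of care are the treatment of the value $N=\infty$ via the stated convention, the nonnegativity and measurability needed to exchange the order of integration, and the standing well-posedness hypothesis that $u=u(\Omega)$ is regular enough and $N_{vol/sur}$ measurable so that the spatial integral in \eqref{Eq:scaleWei} is finite and strictly positive --- which is exactly what makes $\eta$ a well-defined number in $(0,\infty)$ as asserted.
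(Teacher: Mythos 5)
Your proof is correct and follows essentially the same route as the paper: insert the local Weibull densities \eqref{Eq:locWei} into the intensity from Lemma \ref{lem:Prob}/\eqref{Eq:ObjFunct}, separate the time integral $\int_0^s m\tau^{m-1}\,d\tau=s^m$ from the spatial factor, and identify that factor with $\eta^{-m}$. The only differences are cosmetic — you make the Tonelli interchange, the $1/\infty=0$ convention and the case $s<0$ explicit, which the paper leaves implicit.
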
 
\begin{proof}
	We insert (\ref{Eq:locWei}) into \eqref{Eq:ObjFunct}. Then 
	\begin{align*}
	F_{\mathcal{T}}(s) &= 1-\exp\left(\int_{\Omega} \int_{0}^{s} \frac{m}{N_{vol}} \left(\frac{\tau}{N_{vol}}\right)^{m-1}\,d\tau\,dx +\int_{\Gamma}\int_{0}^{s}\frac{m}{N_{sur}} \left(\frac{\tau}{N_{sur}}\right)^{m-1}\,d\tau\,dS\right)\\
	&= 1-\exp\left(\int_{0}^{s} m\tau^{m-1} \, d\tau\, \left[ \int_{\Omega}  \left(\frac{1}{N_{vol}}\right)^{m}\,dx +\int_{\Gamma} \left(\frac{1}{N_{sur}}\right)^{m}\,dS \right] \right)\, .
	\end{align*} 
	 With $\eta$ given by (\ref{Eq:scaleWei}), we thus obtain $F_\mathcal{T}(s)=1-e^{-s^m\eta^{-m}} = 1-e^{-\left(\frac{s}{\eta}\right)^{m}}.$ 
\end{proof}

In the context of a local Weibull model the three different notions of the optimal reliability problem turn out to be equivalent:

\begin{prop} \cite{BittGottsch}
	\label{prop:OpRelFOSD}
	Let ${\cal O}$ be a set of admissible shapes $\gamma=\gamma(\Omega)$ be the crack initiation process associated with a local Weibull model and $m> 0$. Then, 
	\begin{itemize}
		\item[(i)] $\Omega^*\in{\cal O}$ is a solution to the optimal reliability problem \ref{def:OptReliability} (iii) if and only if it solves the optimal reliability problem \ref{def:OptReliability} (i) at a given time $t$.
		\item[(ii)]    $\Omega^*\in{\cal O}$ is a solution to the optimal reliability problem \ref{def:OptReliability} (i) if and only if 
		\begin{equation}
		\label{Eq:SOWei}
		J(\Omega^*,u^{\ast},\sigma^{\ast})\leq J(\Omega,u,\sigma)~~\forall \Omega\in{\cal O}
		\end{equation}
		where
		\begin{align*}
		\label{Eq:scaleWei2}
		 J(\Omega,u,\sigma)&=\int_\Omega \left(\frac{1}{N_{vol}(x,u,\sigma(u))}\right)^ m\,dx +\int_{\Gamma}\left(\frac{1}{N_{sur}(x,u,\sigma(u))}\right)^m \, dS = \eta(\Omega)^{-m}.
		\end{align*}
	\end{itemize} 
\end{prop}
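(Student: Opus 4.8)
The plan is to reduce every notion of optimal reliability occurring in Definition~\ref{def:OptReliability} to the single scalar inequality \eqref{Eq:SOWei}, i.e.\ $J(\Omega^*,u^*,\sigma^*)\le J(\Omega,u,\sigma)$ for all $\Omega\in\mathcal{O}$. The structural observation that makes this work is that, for a local Weibull model, the cumulative hazard factorizes as a pure power of the time variable times a shape-dependent constant. Carrying out the inner $\tau$-integration exactly as in the proof of Lemma~\ref{lem:Wei} — insert \eqref{Eq:locWei} into \eqref{Eq:ObjFunct} and use $\int_0^s m\tau^{m-1}\,d\tau=s^m$ — yields
\begin{equation}
\mathcal{J}(s,\Omega,u,\sigma)=\rho(\Omega,C_s)=s^m\,J(\Omega,u,\sigma)=s^m\,\eta(\Omega)^{-m},
\end{equation}
with $\eta(\Omega)>0$ by Lemma~\ref{lem:Wei}, so that the time dependence and the shape dependence separate multiplicatively.

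For part~(ii) I would invoke Lemma~\ref{lem:OpRelSO}(i): $\Omega^*$ solves the optimal reliability problem at a fixed time $t$ if and only if $\mathcal{J}(t,\Omega^*,u^*,\sigma^*)\le\mathcal{J}(t,\Omega,u,\sigma)$ for all $\Omega\in\mathcal{O}$, i.e.\ \eqref{Eq:SO}. Inserting the factorization and cancelling the strictly positive factor $t^m$ (for $t>0$) turns \eqref{Eq:SO} into \eqref{Eq:SOWei}; in particular the resulting condition does not depend on $t$. This proves (ii).

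For part~(i) I would use Lemma~\ref{lem:OpRelSO}(iii): $\Omega^*$ is optimal in terms of instantaneous hazard if and only if \eqref{Eq:OptHaz} holds, i.e.\ $\frac{d}{ds}\mathcal{J}(s,\Omega^*,u^*,\sigma^*)\le\frac{d}{ds}\mathcal{J}(s,\Omega,u,\sigma)$ for all $s\in\R{}_+$. From the factorization $\frac{d}{ds}\mathcal{J}(s,\Omega,u,\sigma)=m\,s^{m-1}\,J(\Omega,u,\sigma)$, and since $m\,s^{m-1}>0$ for every $s>0$, condition \eqref{Eq:OptHaz} holds for all $s$ if and only if $J(\Omega^*,u^*,\sigma^*)\le J(\Omega,u,\sigma)$ for all $\Omega$ — which, by part~(ii), is exactly the condition characterizing the fixed-time problem. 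Hence optimality in terms of instantaneous hazard is equivalent to optimality at any given time $t$, which is (i). Combining (i) and (ii) one moreover gets equivalence with reliability in first-order stochastic dominance, since the latter amounts to \eqref{Eq:SO} holding simultaneously for all $s\in\R{}_+$, and the common condition \eqref{Eq:SOWei} is time-independent.

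I do not expect a real obstacle: the essence is that the Weibull \emph{shape} parameter $m$ is common to all admissible domains, so only the \emph{scale} parameter $\eta(\Omega)$ — equivalently $J(\Omega,u,\sigma)=\eta(\Omega)^{-m}$ — enters the comparison. The only points needing a word of care are the degenerate cases: at $t=0$ every admissible shape is trivially optimal, so (i) is understood for $t>0$; when $m\in(0,1)$ the factor $s^{m-1}$ blows up as $s\to 0^+$ but stays strictly positive on $(0,\infty)$, so the cancellation remains valid; and the never-failing case $J(\Omega)=0$ (i.e.\ $\eta(\Omega)=\infty$) is consistent, since then $0=J(\Omega)\le J(\Omega')$ for every admissible $\Omega'$. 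Each of these is a one-line remark.
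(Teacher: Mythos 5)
Your proof is correct, and it rests on the same structural fact as the paper's — that in the local Weibull model the time and shape dependence separate, so everything reduces to comparing $J(\Omega,u,\sigma)=\eta(\Omega)^{-m}$ — but the route is organized differently. The paper proves (i) first, working with the explicit Weibull distribution and hazard functions supplied by Lemma \ref{lem:Wei}: fixed-time optimality is translated into $\eta(\Omega^*)\geq\eta(\Omega)$ via $F_{\mathcal T}(t)=1-e^{-(t/\eta)^m}$, and then the hazard rates $h_{\mathcal T}(t)=\tfrac{m}{\eta}(t/\eta)^{m-1}$ are compared directly (with a superfluous caveat ``for $m\geq 1$'' in that computation, since writing the hazard as $mt^{m-1}\eta^{-m}$ makes the ordering valid for all $m>0$, as your argument shows); part (ii) is then obtained by combining (i) at $t=1$ with Lemma \ref{lem:Wei}, \eqref{Eq:locWei} and Lemma \ref{lem:OpRelSO}(i). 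You instead work with the cumulative hazard factorization $\mathcal J(s,\Omega,u,\sigma)=s^m J(\Omega,u,\sigma)$ and the reformulations of Lemma \ref{lem:OpRelSO}(i) and (iii), proving (ii) first by cancelling $t^m>0$ in \eqref{Eq:SO} and then (i) by cancelling $ms^{m-1}>0$ in \eqref{Eq:OptHaz}; this avoids the explicit hazard-rate comparison, treats all $m>0$ uniformly, and makes the time-independence of the resulting condition transparent. There is no circularity in using (ii) inside (i), since you establish (ii) independently beforehand, and your remarks on the degenerate cases ($t=0$, $m\in(0,1)$, $\eta=\infty$) cover the only points where care is needed.
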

\begin{proof}
	(i) Let $t\in\R{}_+$ be fixed. If $\Omega^ *$ solves the optimal reliability problem \ref{def:OptReliability} (iii) with respect to that time, we have $F_{\mathcal{T}(\Omega^*)}(t)\leq F_{\mathcal{T}(\Omega)}(t)$ $\forall\, \Omega\in{\cal O}$ and thus 
	$$
	1-e^{-t^m \eta(\Omega^*)^{-m}}\leq 1-e^{-t^m \eta(\Omega)^{-m}} ~\Leftrightarrow~ \eta(\Omega^*)\geq \eta(\Omega)~~\forall \Omega\in{\cal O}.$$
	But then the hazard rates fulfill for $m\geq 1$
	\begin{equation}
	h_{\mathcal{T}(\Omega^*)}(t)=\frac{m}{\eta(\Omega^*)}\left(\frac{t}{\eta(\Omega^*)}\right)^{m-1}\leq \frac{m}{\eta(\Omega)}\left(\frac{t}{\eta(\Omega)}\right)^{m-1}=h_{\mathcal{T}(\Omega)}(t)~,~~\forall t\in\R{}_+.
	\end{equation}
	(ii) Combine (i) for $t=1$, Lemma \ref{lem:Wei}, equation \eqref{Eq:locWei} and  Lemma \ref{lem:OpRelSO} (i).
\end{proof}

\subsubsection{Optimal reliability problem for metal components under cyclic loading}
Now we combine all the previous modeling steps to define the  \textit{optimal reliability problem for LCF}. We have already chosen $ \varrho_{vol/sur}  $  to be a local Weibull model, according to Definition \ref{ReliabilityFunctionals:Def:Loc_Weibull}. But we still have to define $ m $ and $ N_{vol/sur} $ such that they reflect the physical behavior of crack formation.

We set $ N_{vol} = \infty $ since LCF is a surface driven failure mechanism and define $ N_{sur}(x,u,\sigma):=N_{det}(\sigma(u(x))),\, x \in \Gamma $. Moreover $ 1.5 \leq m \leq 4 $, \cite{GottschSaadi2018}, are typical values.
\begin{defn}
The \textit{optimal reliability problem for LCF} is defined by the problem
\begin{align}
&\mathrm{ Solve } && \min_{\Omega \in \mathcal{O}} ~~~ J^{\mathrm{lcf}}(\Omega,u,\sigma(u))=\int_\Gamma \left(\frac{1}{N_{det}(\sigma(u(x)))}\right)^ m\,dS\\
& && s.t. ~~~~~ u \text{ solves } \eqref{Reliability:Eq:LinEl}\, .
\end{align}	
\end{defn} 

This problem is an example for a so called shape optimization problem, defined in Section \ref{Transf_shape_opt:Sec: Admissible Shapes} Definition \ref{Transf_shape_opt:Defn:shape_opt_probl}.
It already has been discussed in view of existence of optimal shapes in \cite{GottschSchmitz,BittGottsch}. For computational investigations we refer to \cite{DissSchmitz2014,GottschSaadi2018}

\subsubsection{A bend rod under simulated cyclic loading}\label{Reliability:Sec:Simulation}

For a numerical visualization of the behavior of this functional, we consider an example where one has an intuitive idea where the component should break. To receive comparable values to those calculated in \cite{GottschSaadi2018}, we chose a bend rod which is also $ 6 $ mm long, bend up to $ 3 $ mm and has a diameter of $ 1 $ mm. However we used a slightly different geometry (geometry $ \Omega_1 $), another type of mesh and Lagrange elements of second order (\textsf{CG} 2). The mesh consists of $ 6418 $ vertices and $ 25\,438 $ cells. 

\begin{multicols}{3}
	\begin{center}
		\includegraphics[scale=.084]{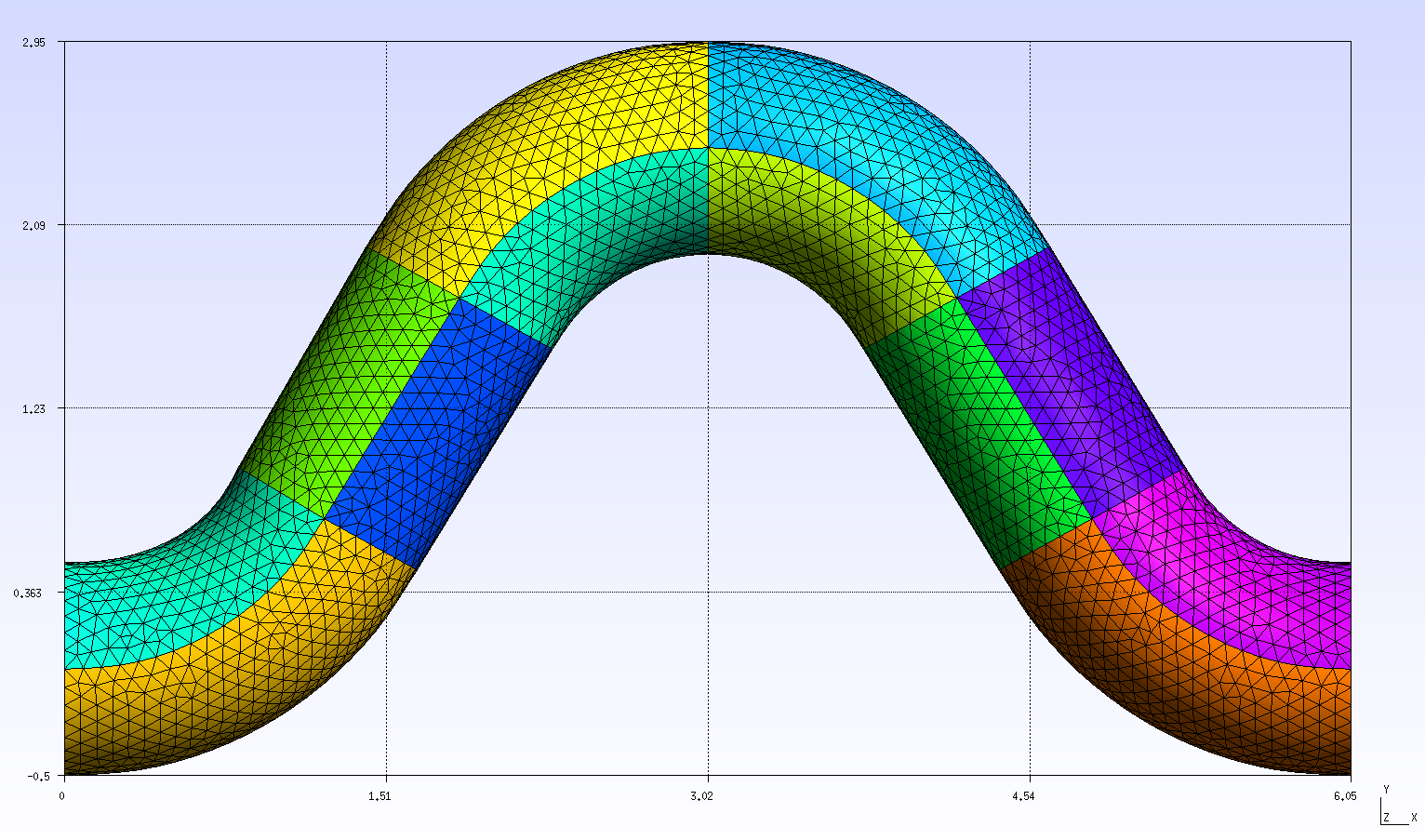}\\
		\captionof*{figure}{\footnotesize Geometry $ \Omega_0 $}\label{fig:mesh0}
	\end{center}
	\begin{center}
	\includegraphics[scale=.084]{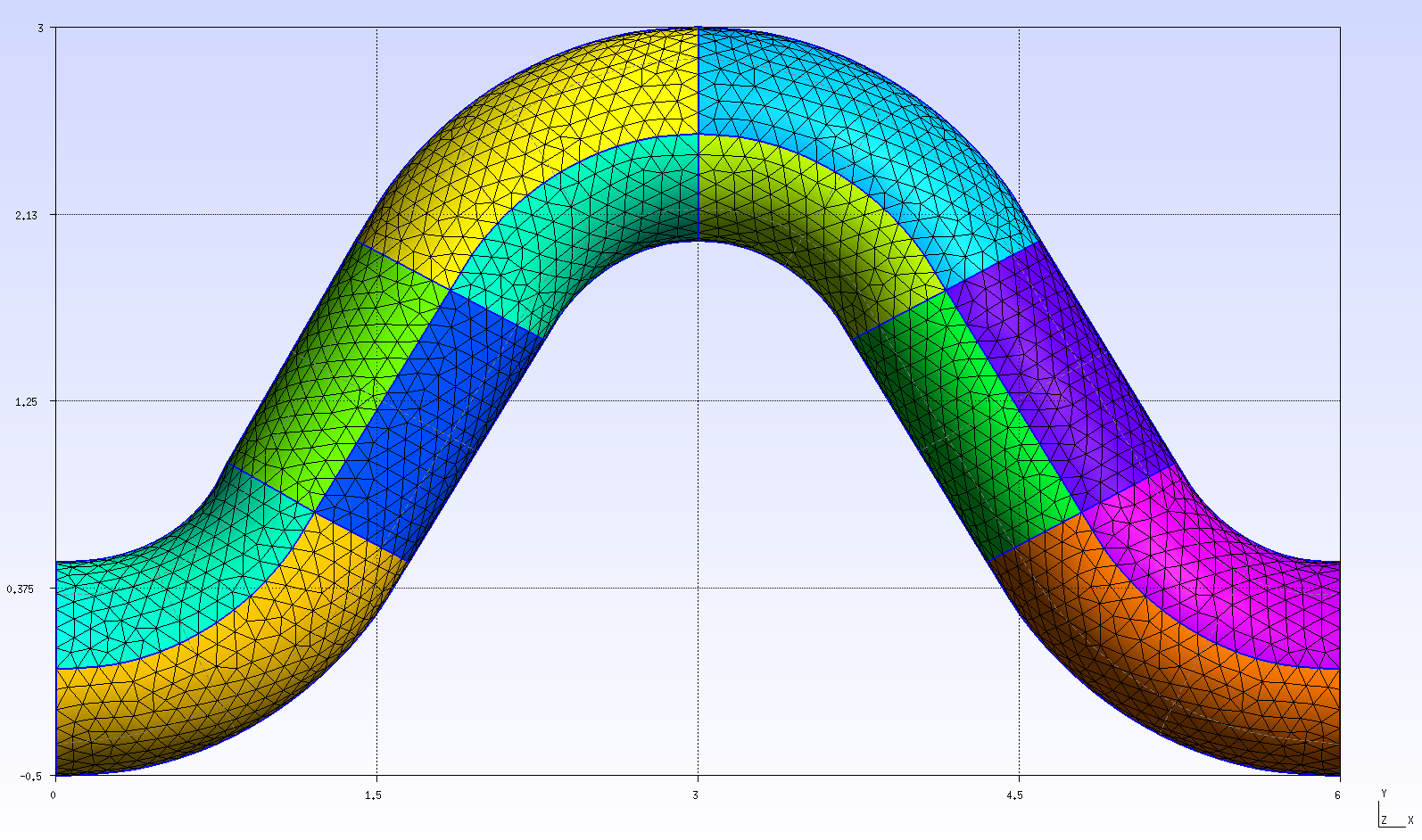}\\
	\captionof*{figure}{\footnotesize Geormetry $ \Omega_1 $}\label{fig:mesh1}
\end{center}
\begin{center}
	\includegraphics[scale=.084]{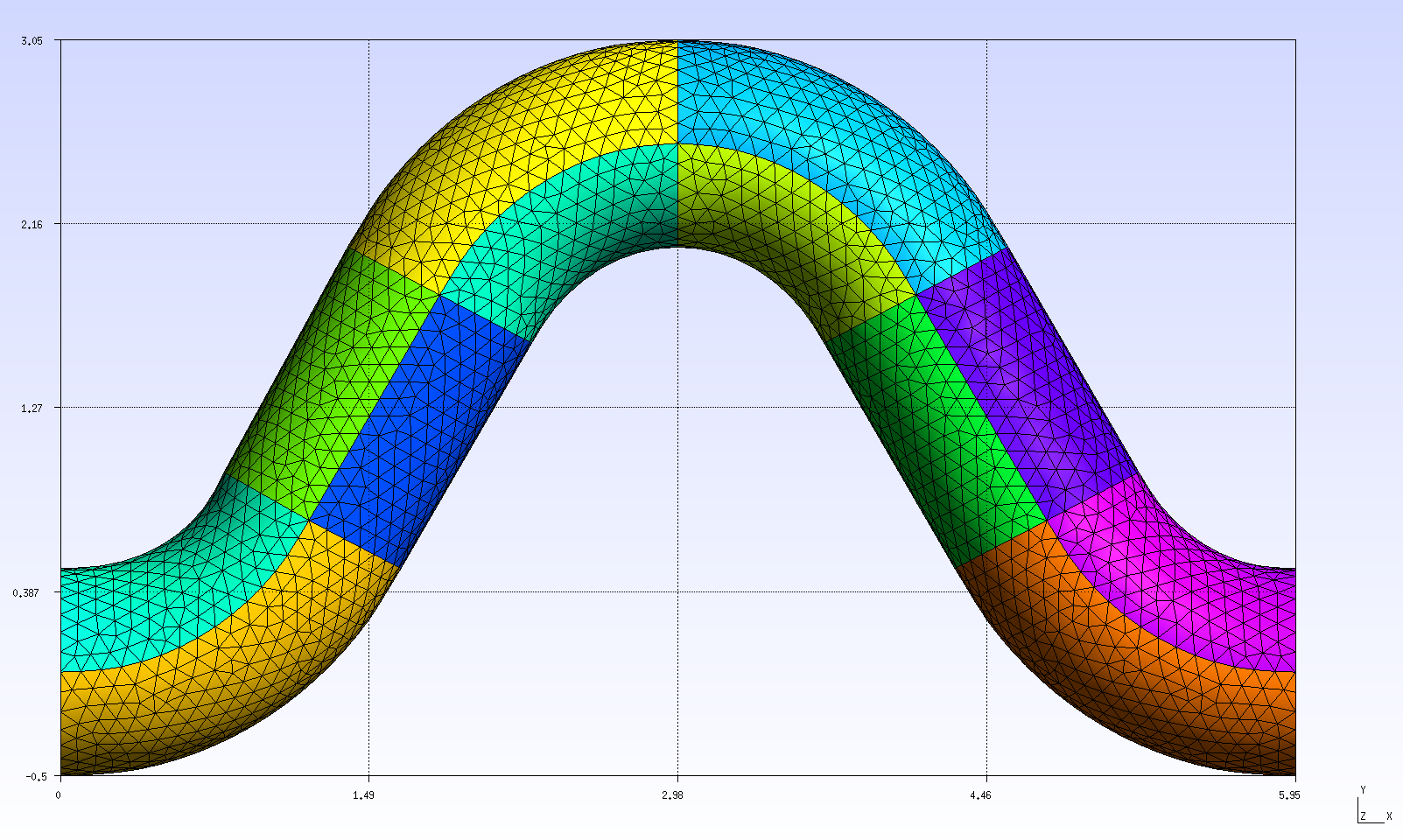}\\
	\captionof*{figure}{\footnotesize Geometry $ \Omega_2 $}\label{fig:mesh2}
\end{center}
\end{multicols}
\vspace*{-1.5em}
The model and the grid were created with the open source mesh generation software \textsf{Gmsh} and the computations were executed with \textsf{Python} and the \textsf{FEniCS} package\footnote{Software and documentation can be found on \url{https://fenicsproject.org/}, \url{http://gmsh.info/}}. The second geometry $ \Omega_2 $ is only slightly different: it is $ 5.95 $ mm long, bend up to $ 3.05 $ mm and has the same diameter and $ \Omega_0 $ is $ 2.95 $ mm high and $ 6.05 $ mm long.
We assume that the rod is fixed at the right end. This corresponds to a Dirichlet $ u=0 $ condition. On the left end we apply a tensile load of $ 19.6 \approx 1.0695 \times 18.4 ~[\mathrm{N}/\frac{\pi}{4}\mathrm{mm}^2] $ horizontal direction. We assume that the arc is made of an aluminium alloy with a specific density $ 2650 ~ [\text{kg}/\text{m}^3] $ and neglect air pressure and gravity. The Lamé coefficients for the material $  \lambda = \frac{E}{(1+\nu)(1-2\nu)} = 40.385 ~[\text{MPa}] $ and $ \mu = \frac{E}{2(1+\nu)} = 26.923~ [\text{MPa}]$ are calculated from the Young's modulus ($E = 70 000 ~[\text{MPa}]$) and
Poisson’s ratio $ \nu = 0.3 $. The Ramberg-Osgood parameters $ K = 443.9  ~[\text{MPa}] $ and $ \hat{n} = 0.064 $ are reported in \cite{LCFalu2004}.
The Coffin-Manson-Basquin constants are set to $ \sigma_f' = 2536 ~[\text{MPa} \times \text{mm}^{-\frac{2b}{m}}]$, $ \varepsilon_f'=0.254 ~[\text{mm}^{-\frac{2c}{m}}] $, $ b=-0.07 $, $ c=-0.593 $ and $ m=2 $. 
Note, that this are not the classical CMB-paramaters. The classical ones have to be adapted to the probabilistic LCF model as reported e.g. in \cite{DissSchmitz2014,ProbLCF2013,GottschSchmitz} or \cite{GottschSaadi2018}.

\begin{multicols}{2}
	\begin{center}
	\includegraphics[scale=0.25]{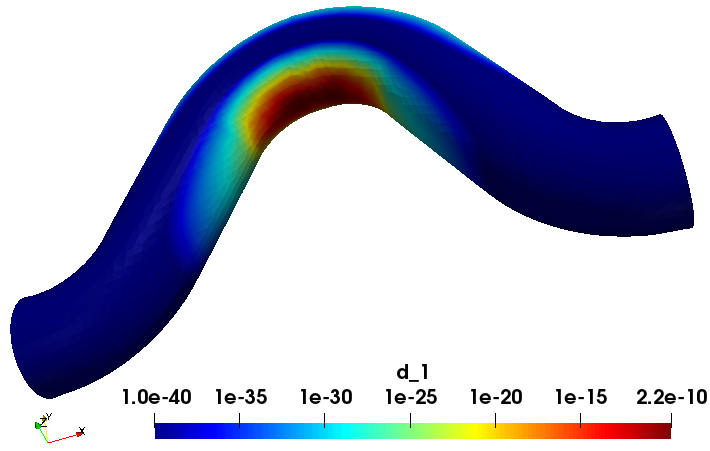}\\
	\captionof*{figure}{\footnotesize \textbf{Figure:} Plot of the crack initiation density $ d_1 $ in logarithmic scale on the surface of $ \Omega_1 $.}\label{fig:densityGeo1}
\end{center}
\begin{center}
	\includegraphics[scale=0.44]{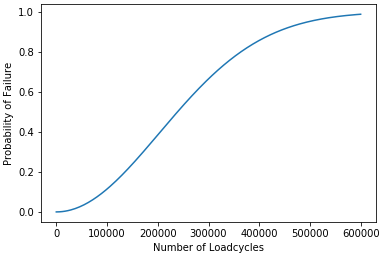} \\
	\captionof*{figure}{\footnotesize \textbf{Figure:} Probability distribution of $ \Omega_1 $.}\label{fig:probGeo1}
\end{center}
\end{multicols}

In the numerical experiments the rod behaves in the expected manner. The crack initiation density $d_1(x) =\frac{1}{N_{det}(\sigma(u_{\Omega_1}(x)))^m} ,\, x \in \partial\Omega_1 $ reaches its maximum at locally at the bottom side of the middle bending and is almost zero at most of the surface points. Moreover, the numerical simulation indicates that the LCF functional is very sensitive w.r.t. changes in the geometry. This can be observed on the size of $ \eta(\Omega) $:
The value $ \eta(\Omega) = \mathcal{J}(\Omega,u,\sigma(u))$ is the $ 0.63 $-quantile $ q_{0.63} $ of the Weibull distribution $ \mathrm{Wei}(\eta,m) $, i.e. $q_{p} := \inf\{s \in \R{+} \,\vert \, P(\mathcal{T}\leq s) \geq p\} $
the first time such that the probability of failure  is larger or equal than $ p \in[0,1] $. 
The figure on the right hand side shows that this probability is reached at about $ 300\,000 $ load cycles. Consider also the table below.
\begin{table}[h]
\begin{center}
	\begin{tabular}{|c |  l| l| l| l|} \hline
	& $ \max_{x \in \Omega} \Norm{u(x)}{} $& $ J(\Omega,u,\sigma) $ [cycles]$ ^{-m} $ & $ \eta(\Omega) $ [cycles] & $ q_{0.05}  $ [cycles] \\ \hline 
	$ \Omega_0 $ & $ \approx 0.1805 $ mm	& $ \approx 1.446456  \times 10^{-12} $ &$  \approx 780\,608 $& $ \approx 188\,311 $  \\\hline 
	$ \Omega_1 $ & $ \approx 0.1847 $ mm	& $ \approx 1.2138  \times 10^{-11} $ &$  \approx 287\,024 $& $ \approx 65\,005 $  \\\hline 
	$ \Omega_2 $	& $ \approx 0.1889 $ mm & $ \approx 4.35948 \times 10^{-11} $ & $ \approx 151\,454  $ & $ \approx 34\,140 $\\ \hline 
\end{tabular}
\caption{\footnotesize
		Comparison of the three geometries based on the value of the objective $ J $, the $ 0.63 $-quantile $ \eta(\Omega) $ and the $ 0.05 $-quantile $ q_{0.05} $.}
\end{center}
\end{table}

\subsection{Ceramic components under tensile loading}

We now investigate a mathematical model for sudden failure of components that are made of a brittle material like ceramic and follow \citep{GBS_Ceramic2014} closely. Therein, an objective functional is derived which measures the reliability of ceramic components under tensile load and examined in view of existence of optimal shapes under a uniform cone constraint \ref{PDE_Systems:Def:Cone}. Moreover, the article \citep{NumShapeCer2017} presents first numerical results on shape optimization to decrease the failure probability of ceramic components. 

The construction of the functional is again based on solutions to linear elasticity equation \eqref{Reliability:Eq:LinEl}, linear fracture mechanics and Weibull’s analysis of the stochastic nature of the ultimate strength of brittle material \citep{Weib1939}. 
General information regarding the material behavior and the properties of ceramics can be found for example in \citep{StatCer1974} or  \cite{Ceramics2012}.  
Note that in this application the probability of failure follows a Weibull distribution over the strength of external loads respectively the tension $ \sigma $, and not over the number of load cycles like it it the case in the LCF failure-probability model.  

A material like ceramic, contains porosities that are modeled in shape of a penny lying in a two dimensional plane with normal direction $ \mathfrak{n}  \in \mathbb{S}^{2}$. At these porosities cracks initiate. 
In linear fracture mechanics the three dimensional stress field $ \sigma $ measured at a point $ x $ close to the tip of the crack is modeled in the form
\[ \sigma(x) = \frac{1}{\sqrt{2 \pi r}} \left(K_{I}\tilde{\sigma}^{I} + K_{II}\tilde{\sigma}^{II} +K_{III}\tilde{\sigma}^{III}\right) + \text{regular terms}, \]
see e.g. \cite[Ch. 4]{Ceramics2012} where a detailed derivation of $ K_{I},\, K_{II},\, K_{III} $ is given.

\begin{multicols}{2}	
\begin{center}
	\includegraphics[scale=.29]{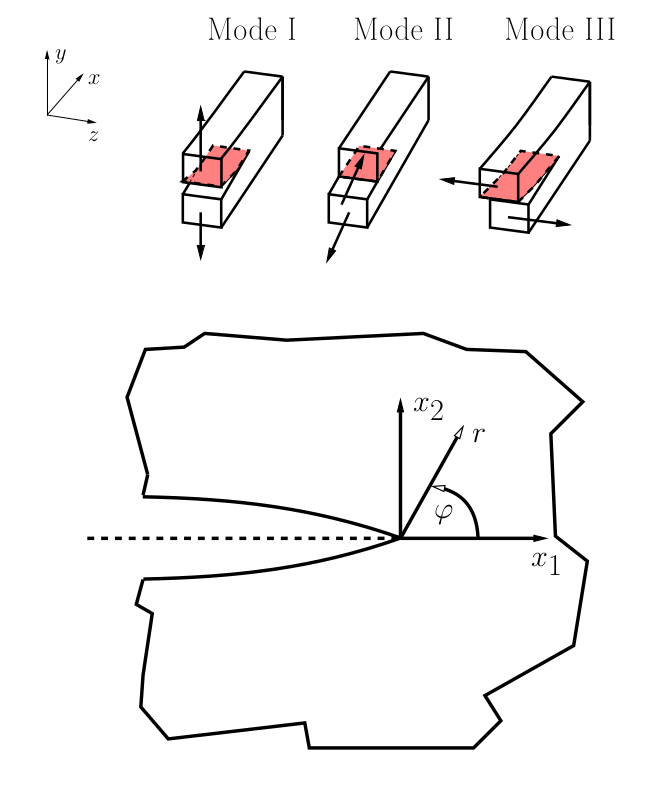}\\
	\captionof*{figure}{\footnotesize  Scatch of the opening modi (above) and the beginining crack with opening angle $ \varphi $ \\
	and cracklength $ r $ (below). Picture taken from \cite{GBS_Ceramic2014}. }\label{fig:ceramics}
\end{center}

The number $ r $ is the distance between the tip of the crack and $ \varphi $ is the angle between the crack plane and the point $ x $. The stress intensity factors $ K_{I},\, K_{II} $ and $ K_{III} $ depend on the mode and on the amount of loading.

We consider only loads in normal direction $ \mathfrak{n} $ of the crack plane since \cite{HegerZuvCer1993} indicates that shear stresses are negligible in this context.
\end{multicols}
These loads correspond to the first mode and thus $ K_{II},\, K_{III}  $ and also the "regular terms" become $ 0 $. The first stress intensity factor is obtained as 
 \[ K_{I}:=\frac{2}{\pi} \sigma_{n} \sqrt{\pi r} \]
 where $ r>0 $ is the radius of the penny shaped crack.

In the case of compressive loads, i.e. $ \sigma_{n}:= \mathfrak{n}^{\top} \sigma \mathfrak{n}\leq 0 $, no failure will occur even if $ r $ is large. Thus we only have to consider the case when $ K_{I} $ becomes larger than a critical value $ K_{I_c}>0 $ (between $ 3 $ and $ 16 $ [MPa $ \sqrt{\text{m}} $], see \cite{GBS_Ceramic2014}) and  $ \sigma_n>0 $. We therefore consider $ \sigma_n^{+}:=\max\{0,\sigma_n\} $ instead. 

In the case of brittle material we take any point in the device $ \Omega \subset \R{3} $ as a possible place for crack initiation into account. This crack is assumed to lie in two dimensional plane which is characterized by the normal $ \mathfrak{n} \in \mathbb{S}^2 $. Further the initial (penny shaped) flaw which has a diameter $ r \in \R{+}=(0,\infty) $. We thus define $$ \mathcal{M}:= \Omega \times \mathbb{S}^{2} \times \R{+} $$ to be the crack configuration space and endow it with the sigma algebra $ \mathcal{B}(\mathcal{M}) $. 

Since we can neither determine where the initial flaw lies in $ \Omega $, nor in which direction the normal $ \mathfrak{n} $ is directed, nor which radius the flaw has, we assume that these quantities are randomly distributed.
In this context, it is natural to assume crack initiation points are uniformly distributed in the volume since the material has the same structure everywhere. Thus we choose the Lebesgue measure  $  \mathds{1}_{\Omega} dx $  on $ \R{3} $ where $ \mathds{1}_{\Omega} $ is the indicator function of $ \Omega $. Also, it is obvious to assume that the directions of the crack plains are distributed according to the scaled surface measure $ \frac{1}{4\pi} d A_{\mathbb{S}^2} $ on $ \mathbb{S}^2 $. The only choice to be made concerns the measure  $ \nu = \nu(\Omega) $, that counts the random number of cracks in a given volume $ \Omega $. Therefore, we define
$$ \rho(\Omega) =\rho(\Omega):= \mathds{1}_{\Omega}\, dx \otimes \frac{1}{4\pi} d A_{\mathbb{S}^2} \otimes \nu(\Omega),$$
see Definition 3.1. in \cite{GBS_Ceramic2014}.

In the following, we construct the measure $ \nu = \nu(\Omega) $  on $ (\R{+},\mathcal{B}(\R{+})) $ such that it counts the number of cracks with radius $ (0,\infty) $ in the volume $ \Omega $ and such that it reflects the physical behavior of brittle material. We assume that $ \nu([c,\infty)) < \infty\, \forall c >0 $ to assure that only finitely many cracks can be contained in $ \Omega $. 


Let $ u =u(\Omega) \in H^1(\Omega,\R{3}) $ be the displacement field calculated as a solution of \eqref{Reliability:Eq:LinEl} and $ \sigma=\sigma(u) $ the associated stress field. As Lemma 3.1. in \cite{GBS_Ceramic2014} shows $$ A_{c}= A_{c}(\Omega,\sigma(u)):=\{(x,\mathfrak{n},r) \in \mathcal{M} : K_{I}(r,\sigma_n^{+}) = \frac{2}{\pi} \sigma_{n}^{+} \sqrt{\pi r} >K_{I_c} \}$$
is measurable and thus we can associate a Poisson point process $ \gamma=\gamma(A_c) $ with $ \rho $ on $ \mathcal{M} $, using the relation \ref{Eq:PPP}. Hence, the survival probability is given by
\[ P(\gamma(A_c) = 0) = e^{-\rho(A_c)}.\]
In this special situation, we can derive an explicit representation of $ \rho(A_c) $ as follows: 
\begin{align*}
 \rho(A_c) 
& = \frac{1}{4 \pi}\int_{\Omega}\int_{\mathbb{S}^2} \int_{0}^{\infty} \mathds{1}_{\left\lbrace \frac{2}{\pi} \sigma_{n}^{+}\sqrt{\pi r}>K_{I_c}\right\rbrace} \, d\nu(r)\, dS_{\mathbb{S}^{2}} \, dx \\
&= \frac{1}{4 \pi}\int_{\Omega}\int_{\mathbb{S}^2} \int_{0}^{\infty} \mathds{1}_{\left\lbrace r > \frac{\pi}{4} \left(\frac{K_{I_c}}{\sigma_{n}^{+}}\right)^2 \right\rbrace} \, d\nu(r)\, dS_{\mathbb{S}^{2}} \, dx\\
&= \frac{1}{4 \pi}\int_{\Omega}\int_{\mathbb{S}^2} \nu\left( r > \frac{\pi}{4} \left(\frac{K_{I_c}}{\sigma_{n}^{+}}\right)^2 \right) dS_{\mathbb{S}^{2}} \, dx.
\end{align*}
If we assume that $ \nu(r) = c  r^{-\beta} $ for some $ \beta > 1 $ we obtain 
\begin{align*}
\nu\left( r > \frac{\pi}{4} \left(\frac{K_{I_c}}{\sigma_{n}^{+}}\right)^2 \right) 
& = \int_{\frac{\pi}{4} \left(\frac{K_{I_c}}{\sigma_{n}^{+}}\right)^2 } ~~~ca^{-\beta} \, da 
= (\beta-1)\left(\frac{\pi}{4} K_{I_c}\right)^{2(1-\beta)} \left(\sigma_{n}^{+}\right)^{2(\beta-1)}.
\end{align*}
Finally, defining $ m:=2(\beta-1)>0 $ and  $ \sigma_{0}:=  \frac{\pi \, K_{I_c}}{4 (\beta-1)^{1/m}}  >0 $ leads to an expected number of cracks in the volume $ \Omega $ of
\begin{align*}
\mathbb{E}[\gamma(A_c)]  =\rho(A_c)  = \frac{1}{4 \pi} \int_{\Omega}\int_{S^2} \left(\frac{\sigma_n^{+}}{\sigma_0}\right)^{m} dS_{\mathbb{S}^{2}} \, dx.
\end{align*}

Again we can define a shape optimization problem, but this time for ceramic components under tensile load \cite{GBS_Ceramic2014}.

\begin{defn}
	The \textit{optimal reliability problem for ceramic components under tensile loading} is defined by 
	\begin{align}
	& \mathrm{ Solve } &&\min_{\Omega \in \mathcal{O}}~~~ J^{\mathrm{cer}}(\Omega,u,\sigma(u)):= \frac{1}{4 \pi} \int_{\Omega}\int_{\mathbb{S}^2} \left(\frac{\sigma(u(x))_n^{+}}{\sigma_0}\right)^{m} dS_{\mathbb{S}^{2}} \, dx\\
	& && s.t. ~~~~~ u \text{ solves } \eqref{Reliability:Eq:LinEl}\, .
	\end{align}	
\end{defn} 

\noindent For a computational investigation of this shape optimization problem we refer to \cite{NumShapeCer2017}. 

\subsection{Reliability functionals and regularity of PDE solutions} \label{Reliability:Sec:Reg_Req}

Now we investigate briefly how regular the solution $ u $ has to be such that the functionals are defined: \pagebreak

In case of the ceramic functional it is obvious that the solution $ u $ of \eqref{Reliability:Eq:LinEl} must at least be an element of $ W^{1,m}(\Omega,\R{3}) $ such that a derivative can be taken. A state-of-the-art ceramic has a Weibull module of $ m \geq 10 $ \cite{CerWeiScale} but typically $ m$ satisfies $  5 \leq m \leq 20 $, see \cite{Weib1939}. This means that the solution has to be an element of $ C^{0,\phi}(\Omega,\R{3}) $ for  $1\geq \phi \geq 1-\frac{3}{m} $ or higher. In the worst case of $ m = 20 $ the solution $ u $ must be at least an element of $ C^{0,17/20}(\Omega,\R{3}) $  

In case of the LCF reliability functional the situation is less clear. Since $ SD $ and $ CMB $ can not be inverted manually, we concentrate on the leading terms:\\
First of all. the Ramberg-Osgood equation \eqref{Eq:RO} and the CMB relation are satisfied and thus (if we neglect the linear term and the high cycle fatigue term $ \epsilon_{f}(2N_{det})^{c} $) we obtain $ \epsilon^{el-pl} \simeq (\sigma^{el-pl})^{1/\hat{n}} $ and also $ \epsilon^{el-pl} \simeq N_{det}^{b} $. Furthermore, the Neuber relation implies \[ (\sigma_{v})^{\frac{2}{1+\nicefrac{1}{\hat{n}}}} \simeq \sigma^{el-pl},  \]
if we only consider the leading term $ \sigma^{el-pl}(\frac{\sigma^{el-pl}}{K})^{1/\hat{n}} $ and ignore the quadratic term. This implies \[ N_{det} \simeq (\sigma^{el-pl})^{\frac{1}{b\hat{n}}} \Leftrightarrow \left(\frac{1}{N_{det}}\right)^{m} \simeq (\sigma^{el-pl})^{-\frac{m}{b\hat{n}}} =\sigma_v^{-\frac{2m}{b\hat{n}(1+1/\hat{n})}} =\sigma_v^{-\frac{2m}{b(\hat{n}+1)}} . \]
Moreover, $ \sigma_v \leq C = \sqrt{\sigma(u):\sigma(u)}$. Thus one obtains
$ \left(\frac{1}{N_{det}}\right)^{m} \leq C\, \tr(\sigma(u)^2)^{-\frac{m}{b(1+\hat{n})}} $
on the boundary $ \Gamma $ of $ \Omega $. We now insert the values for $ b \approx -0.07 $ and $ \hat{n} = 0.064 $ and end up with the exponent $ -\frac{m}{b(1+\hat{n})} = \frac{m}{0.07\cdot(1+0.064)} \approx m \cdot 13.43 $. This means that at least $ u \in W^{1,m\cdot14}(\Gamma,\R{3}) $ has to be claimed. Due to the trace we end up with $ u \in W^{2,m\cdot14}(\Omega,\R{3}) \hookrightarrow C^{1,\phi}(\overline{\Omega},\R{3}) $ for $ 0 \leq \phi\leq 1-\frac{3}{14m} \approx 1- \frac{1}{4.7 m}  $. Depending on the material, the module $ m $ lies between $ 1.5 $ and $ 4 $, what implies $ \frac{1}{7.05} \geq \frac{1}{4.7m} \geq \frac{1}{18.8}$. Thus, in the "worst" case $ \phi \geq 0.946 $ will be sufficient, what means that $ u $ has to be "nearly" two times continuously differentiable.

In both cases it is unavoidable to apply strong solution theory, at least in Sobolev spaces. Generally, even a regularity of $ W^{1,m},\, m>6 $ can not be reached with weak solution theory. Certainly a regularity of $ H^2 $ (see  \cite{Necas1967} and also Chapter \ref{PDE_Systems} and Section \ref{Sec:Linear_Elasticity:Sec:Regularity_Weak_Solutions}) can be reached with weak PDE theory, but since  $ H^{2}(\Omega) = W^{2,2}(\Omega)  \hookrightarrow W^{1,6}(\Omega) \nsupseteq W^{1,m}(\Omega),\, m>6$, solutions in $ H^2(\Omega) $ are principally not enough. 
If we consider the LCF functional the situation becomes even worse and $ W^{2,p} $ solutions with very high values for $ p $ or solutions in $ C^{1,\phi} $ with $ \phi $ close to $ 1 $ become mandatory. \\
New approaches actually consider failure models that involve notch support factors and thus second order derivatives, see \cite{NotchFracMech2016,HertelVormwNotch2012,NotchSizeLCF2018}.
Then solutions in $ W^{3,p}(\Omega,\R{3}) \hookrightarrow C^{2,\phi}(\Omega,\R{3})$ with $ p \gg 3 $ are needed. Thus it is obligatory to pass over to strong PDE solution theory and classical function spaces. This theory is provided in the next chapter.


\chapter[Systems of Elliptic Partial Differential Equations]{Systems of Elliptic Partial Differential Equations and Linearized Elasticity}
\label{PDE_Systems}

In the last paragraph of Chapter \ref{Reliability} we illustrated that, depending on the functional under consideration, solutions to the linear elasticity equation in classical function spaces, or at least in higher order Sobolev spaces, are obligatory. Thus we provide the necessary theory here.

In the first section of this chapter we introduce the notation and the concepts that are necessary to treat linear elliptic systems of partial differential equations. These concepts were mainly developed by S. Agmon, A. Douglis, L. Nirenberg \cite{Agm64},\cite{Necas1967} and also G. Geymonat \cite{Geymonat}. While \cite{Agm64} presents only pure regularity results, \cite{Geymonat} provides index theorems for the underlying differential operator and thus the foundation of regularity theory for elliptic PDEs.

Detailed information regarding the theory of scalar partial differential equations can also be found in \citep{Evans,GilbTrud,WLGleichung}, and \cite{McLean2000}  discusses also PDE systems. The paper \citep{Agm59} is especially concerned with Schauder estimates for scalar linear elliptic PDE of second order.  Theory of Sobolev spaces needed for the mathematical analysis of partial differential equations is provided in \cite{AdamsFournier}.

The following Sections \ref{Sec:Linear_Elasticity:Sec:Modeling Elastic Deformations} - \ref{Sec:Linear_Elasticity:Sec:Classical_solutions_and_Schauder} treat the existence and regularity of weak and strong solutions to the so called \textit{disjoint displacement traction problem of linear elasticity}. The books \citep{Cia1988,Ciarlet1997_Band2,Ciarlet2000_Band3,McLean2000} and \cite{KnopsPayne1971} provide a detailed discussion of the mathematical theory of three dimensional elasticity in $ H^1 $. The book \cite{Cia1988} considers also other Sobolev spaces - at least in the case of the pure traction problem (i.e. pure Dirichlet data).

Unfortunately, the results on strong solution theory are scattered across the literature and consequently they are very difficult to find. But, since classical solutions play a crucial role in the study of shape derivatives in Hölder spaces, we give a compilation of these results in Section \ref{Sec:Linear_Elasticity:Sec:Modeling Elastic Deformations}.

\section[Properties of domains]{Preliminaries: Properties of domains}

We start summarizing some definitions on properties of domains which are crucial for the regularity of PDE solutions.

\begin{defn}\label{PDE_Systems:Def:Cone} \cite{Fujii,Chen75}
	Let $ \Omega \subset \R{n} $ be a domain. $ \Omega $ satisfies a \textit{cone property} with angle $ \vartheta \in (0,\frac{\pi}{2}) $,  $ d>0 $ and hight $ h \in (0,\frac{d}{2}) $ if there exists a direction $ y_x  $ with $ \Norm{y_x}{} =1 $ for any $ x \in \partial\Omega $  such that the cone 
	$$ C_{x}=C_{x}(y_x,\vartheta,d):=\{x \in \R{n} \, | \, \Norm{x}{}<d,\, \langle x,y\rangle > \Norm{x}{} \cos(\vartheta)\} $$
	satisfies \[ p + C_{x} \subset \Omega~~~~~~\forall p \in U(x,h) \cap \Omega. \]
	The family of bounded and open subsets of $ \R{n}   $ that satisfy the cone property with $ \vartheta \in (0,\frac{\pi}{2}) $,  $ d>0 $ and $ h \in (0,\frac{d}{2}) $ is denoted by $ \mathcal{M}(\vartheta,d,h) $.
\end{defn}

\begin{defn}\cite{Fujii,Chen75}
	Let $ \mathcal{M} $ be a set of domains in $ \R{n} $. Then $ \mathcal{M}  $ is said to satisfy a \textit{uniform cone property} if there is $ \vartheta \in (0,\frac{\pi}{2}) $, $ d>0 $ and $ h \in (0,\frac{d}{2}) $ such that any $ \Omega \in \mathcal{M} $ is an element of $ \mathcal{M}(\vartheta,d,h) $.
\end{defn}

\begin{lem}\label{Ex_Shape_Deriv_LinEl:Lem: uniform cone lemma} \cite[Lemma 5.5]{GottschSchmitz}
	Let $ \mathcal{M} $ be a set of domains in $ \R{n} $ with a uniform cone property and let $ \Omega \in \mathcal{M} $. Then, for every $ \delta>0 $ there exists a constant $ C_{\delta}>0 $ such that for any $ v \in C^{1}(\Omega) $
	\begin{align*}
	\Norm{v}{C^{0}(\Omega)} \leq \delta\Norm{v}{C^{1}(\Omega)} + C_{\delta} \Norm{v}{L^1(\Omega)}.
	\end{align*}
	Particularly, the constant $ C_{\delta} $ can be chosen uniformly w.r.t. $ \mathcal{M} $.
\end{lem}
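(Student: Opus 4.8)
The strategy is the classical Ehrling-type argument, carried out so that every constant is visibly a function of the cone data $(\vartheta,d,h)$ and the dimension $n$ alone, hence uniform over $\mathcal{M}$. Concretely, I would establish the pointwise estimate
\[
  |v(x_0)| \;\le\; r\,\Norm{v}{C^1(\Omega)} \;+\; \omega_n(\vartheta)^{-1}\,r^{-n}\,\Norm{v}{L^1(\Omega)}
\]
for every $x_0\in\Omega$ and every $r\in(0,h]$, where $\omega_n(\vartheta)>0$ depends only on $n$ and $\vartheta$; taking the supremum over $x_0$ and then choosing $r:=\min\{\delta,h\}$ yields the claim with $C_\delta:=\omega_n(\vartheta)^{-1}\min\{\delta,h\}^{-n}$, which does not depend on the particular $\Omega\in\mathcal{M}$.

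First I would isolate the only geometric fact used. Since $\mathcal{M}$ has a uniform cone property, there are $\vartheta\in(0,\tfrac{\pi}{2})$, $d>0$ and $h\in(0,\tfrac{d}{2})$ such that for \emph{every} $\Omega\in\mathcal{M}$ and every $x_0\in\Omega$ there is a unit vector $e=e(x_0)$ for which the spherical sector $S_r:=\{z\in\R{n}:\Norm{z}{}<r,\ \langle z,e\rangle>\Norm{z}{}\cos\vartheta\}$ satisfies $x_0+S_r\subset\Omega$ for all $r\le h$. This splits into two cases: if $\dist(x_0,\partial\Omega)\ge h$, then $U(x_0,h)\subset\Omega$ already contains $x_0+S_r$ for an arbitrary direction $e$ and any $r\le h$; if $\dist(x_0,\partial\Omega)<h$, choose $x\in\partial\Omega$ with $\Norm{x_0-x}{}<h$, so $x_0\in U(x,h)\cap\Omega$ and Definition~\ref{PDE_Systems:Def:Cone} gives $x_0+C_x\subset\Omega$, and since $h<d$ we have $S_r\subset C_x$ for $r\le h$ with $e=y_x$. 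Note that $S_r$ is convex and star-shaped about $0$, and $|S_r|=\omega_n(\vartheta)\,r^n$ with $\omega_n(\vartheta)>0$ depending only on $n$ and $\vartheta$.

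Then I would run the integration step. Fix $x_0\in\Omega$ and $r\le h$, and put $T:=x_0+S_r\subset\Omega$. For $y\in T$ the segment $[x_0,y]$ lies in $T\subset\Omega$ by convexity, so the mean value theorem gives $|v(x_0)-v(y)|\le\Norm{x_0-y}{}\,\Norm{Dv}{C^0(\Omega)}\le r\,\Norm{v}{C^1(\Omega)}$, where I use $\Norm{Dv}{C^0(\Omega)}\le\Norm{v}{C^1(\Omega)}$ for the norm fixed in the notation section. Integrating over $T$,
\[
  |S_r|\,|v(x_0)| \;\le\; \int_T|v(y)|\,dy + \int_T|v(x_0)-v(y)|\,dy \;\le\; \Norm{v}{L^1(\Omega)} + r\,|S_r|\,\Norm{v}{C^1(\Omega)},
\]
and dividing by $|S_r|=\omega_n(\vartheta)\,r^n$ yields the pointwise estimate. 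If $\Norm{v}{C^1(\Omega)}=\infty$ the inequality is trivial; otherwise $\Norm{v}{C^0(\Omega)}\le\Norm{v}{C^1(\Omega)}<\infty$, so all terms are finite and the supremum over $x_0\in\Omega$ is legitimate.

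The only genuinely delicate point is the geometric step: one must check that the cone property supplies an interior sector of size bounded below \emph{at every point of $\Omega$}, uniformly over $\mathcal{M}$, including points at arbitrarily small distance from $\partial\Omega$. The case distinction on $\dist(x_0,\partial\Omega)$ is what makes this work — the deep-interior points are the trivial case, and the boundary-near points are handled directly by Definition~\ref{PDE_Systems:Def:Cone}, which is precisely why the height $h$, and not $d$, governs the usable sector radius. Everything after that is routine.
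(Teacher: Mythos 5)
Your proof is correct: the case split on $\dist(x_0,\partial\Omega)$ correctly extracts a sector $x_0+S_r\subset\Omega$ of uniform size from the uniform cone data, and the averaging step with $r=\min\{\delta,h\}$ gives the estimate with $C_\delta$ depending only on $n,\vartheta,h,\delta$, hence uniformly over $\mathcal{M}$ (the only cosmetic slip is that $x_0\notin T$, so the segment $[x_0,y]$ lies in $T\cup\{x_0\}\subset\Omega$ rather than in $T$ itself). The paper does not prove this lemma but only cites it as Lemma 5.5 of the reference \cite{GottschSchmitz}; your argument is the standard cone-averaging (Ehrling-type) proof that underlies that cited result, so there is nothing in the paper to diverge from.
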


\begin{defn}\label{PDE_Systems:Def:Ck_phi_Boundary}\cite[Sec. 6.2]{GilbTrud} 
	Let $ \Omega \subset \R{n} $ be a domain with $ \Gamma \neq \emptyset $, $ k \in \N{}_0 $, $ 0\leq \phi \leq 1 $.  \begin{itemize}
		\item[a)] $\Omega $ is said to have a \textit{$ C^{k,\phi} $-boundary} or to be \textit{of class $ C^{k,\phi} $} if for any point $ x\in \Gamma=\partial\Omega $ there is a open ball $ U=B_{r}(x)\subset \R{n}$ $ $ with center $ x $ and radius $ r $ and a bijective mapping $ \Psi_{x}: U \to \tilde{U} \subset \R{n}$, such that 
		\begin{itemize}
			\item[i)] $ \Psi_{x}(U \cap \Omega) \subset \R{n}_{+} $, 
			\item[ii)] $ \Psi_{x}(U \cap \Gamma)  \subset \R{n-1} \times\{0\}$, 
			\item[iii)] $ \Psi_{x} \in C^{k,\phi}(U,\tilde{U})$, $  \Psi^{-1}_{x} \in C^{k,\phi}(\tilde{U},U) $.
		\end{itemize}
		\item[b)] A boundary part $ \Gamma' \subset \Gamma $ is called $ C^{k,\phi} $-boundary part if for any $ x \in \Gamma' $ there exists a ball $ U=U(x,r) $ with radius $ r>0 $ such that $ U\cap \Gamma \subset \Gamma' $  and a mapping $ \Psi_{x}: U \to \tilde{U} \subset \R{n}$   that satisfies i) - iii).
		\item[c)] If $ \Omega $ is a $ C^{k,\phi} $-domain and $ f:\Gamma' \to \R{m}$ is defined on a boundary part $ \Gamma' $ of $ \Gamma $, then $ f $ is 
		called a $ C^{k,\phi}(\Gamma,\R{m}) $-function (vector field) if 
		$ f \circ \Psi^{-1}_{x}\in C^{k,\phi}(\tilde{U} \cap \partial\R{n}_{+},\R{m}) $
		for any $ x \in \Gamma' $.
	\end{itemize}
\end{defn}

The cone condition (Definition \ref{PDE_Systems:Def:Cone}) is a rather weak condition since already any Domain with $ C^{0,1} $-boundary - also called Lipschitz boundary - possesses a cone property, see \cite{Fujii,AdamsFournier,Chen75} and vice versa. 

\begin{lem}\cite[Sec. 6.2, P. 89]{GilbTrud}
	A bounded 
	domain $ \Omega \subset \R{n} $ possesses a $C^{k,\phi}$-boundary if for every $ x \in \Gamma $ there is a neighborhood $ U_x\subset \Gamma $ in which $ \Gamma $ is the graph of a $ C^{k,\phi} $-transformation $ \Psi $ in $ (n-1) $ coordinates. The converse is true if $ k \geq 1 $.
\end{lem}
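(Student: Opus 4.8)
The statement is an equivalence between two ways of encoding boundary regularity: Definition~\ref{PDE_Systems:Def:Ck_phi_Boundary} asks for an abstract local flattening chart $\Psi_x$, whereas the lemma asks for a concrete local graph (whose defining function I will call $\gamma$, to avoid clashing with the chart letter $\Psi$). The plan is to prove the two implications separately. The translation ``graph $\Rightarrow$ chart'' is an explicit shear construction that consumes no derivatives, hence works for every $k\ge 0$. The translation ``chart $\Rightarrow$ graph'' must extract a graph from an abstract diffeomorphism, which requires the inverse function theorem and therefore $k\ge 1$.

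\textbf{Graph $\Rightarrow$ chart.} Let $x\in\Gamma$ and suppose $\Gamma$ coincides, in a neighbourhood $U_x$ of $x$, with a $C^{k,\phi}$ graph. After an affine change of coordinates we may assume $x=0$, that there are an open $V\subset\R{n-1}$ and $\gamma\in C^{k,\phi}(V)$ with $\gamma(0)=0$ such that $\Gamma\cap U_x=\{(y',\gamma(y')):y'\in V\}$, and --- using that $\Omega$ lies locally on one side of $\Gamma$ --- that $\Omega\cap U_x=\{(y',y_n)\in U_x : y_n>\gamma(y')\}$. Define $\Psi(y',y_n):=(y',y_n-\gamma(y'))$ on a ball $U=B_r(0)\subset U_x$ and put $\tilde U:=\Psi(U)$. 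Then $\Psi$ is a bijection with inverse $(z',z_n)\mapsto(z',z_n+\gamma(z'))$; both $\Psi$ and $\Psi^{-1}$ inherit the regularity of $\gamma$, so they lie in $C^{k,\phi}$ and (iii) of Definition~\ref{PDE_Systems:Def:Ck_phi_Boundary} holds, while (i) and (ii) are immediate from the construction, i.e. $\Psi(U\cap\Omega)\subset\R{n}_{+}$ and $\Psi(U\cap\Gamma)\subset\R{n-1}\times\{0\}$. Shrinking $r$ makes $U$ a genuine ball around $x$, and no smoothness of $\gamma$ beyond $C^{k,\phi}$ is used.

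\textbf{Chart $\Rightarrow$ graph (for $k\ge 1$).} Conversely, let $\Psi_x:U\to\tilde U$ be a flattening chart as in Definition~\ref{PDE_Systems:Def:Ck_phi_Boundary}, write $\psi:=\Psi_x^{-1}\in C^{k,\phi}(\tilde U,U)$ and $x_0:=\Psi_x(x)$, so that $z'\mapsto\psi(z',0)$ parametrizes $\Gamma$ near $x$. Since $k\ge 1$, $\psi$ is $C^1$ and $D\psi(x_0)$ is invertible, so $T:=D\psi(x_0)(\R{n-1}\times\{0\})$ is an $(n-1)$-dimensional subspace of $\R{n}$; choose a rotation $R$ with $R(T)=\R{n-1}\times\{0\}$ and set $\Phi(z'):=\pi'\bigl(R\,\psi(z',0)\bigr)$, where $\pi':\R{n}\to\R{n-1}$ is the projection onto the first $n-1$ coordinates. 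Then $D\Phi(x_0')$ is invertible, so by the inverse function theorem in the Hölder category $\Phi$ is, near $x_0'$, a $C^{k,\phi}$ diffeomorphism; letting $h$ be its local inverse, the set $R(\Gamma)$ is, near $Rx$, the graph of
\[
y'\ \longmapsto\ \bigl(R\,\psi(h(y'),0)\bigr)_n ,
\]
which is $C^{k,\phi}$ as a composition of $C^{k,\phi}$ maps; undoing $R$ exhibits $\Gamma$ as a local $C^{k,\phi}$ graph, and $\Psi_x(U\cap\Omega)\subset\R{n}_{+}$ forces $\Omega$ onto one side of it.

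\textbf{Expected obstacle.} The delicate point is the converse: one must justify the inverse function theorem \emph{within} the class $C^{k,\phi}$, i.e. that the local inverse of a $C^{k,\phi}$ map with non-degenerate derivative is again $C^{k,\phi}$ (true for $k\ge 1$: differentiate $\Phi\circ h=\mathrm{id}$ and propagate the Hölder estimate through the resulting higher-order chain-rule expansion), and that $C^{k,\phi}$ is closed under composition. These are precisely the facts that collapse at $k=0$, which is why the converse genuinely requires $k\ge 1$; a merely continuous (or even bi-Lipschitz) flattening need not yield a graph. The remaining work is bookkeeping: successively shrinking neighbourhoods, passing between balls and general open sets, and tracking the one-sidedness of $\Omega$ through both constructions.
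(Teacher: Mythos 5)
Your proposal is correct and follows the standard argument; note that the paper itself gives no proof of this lemma at all — it is quoted verbatim from Gilbarg--Trudinger (Sec.\ 6.2), so there is nothing internal to compare against. Your two directions (the shear $\Psi(y',y_n)=(y',y_n-\gamma(y'))$ for ``graph $\Rightarrow$ chart'', which costs no derivatives, and the inverse function theorem in the $C^{k,\phi}$ category for ``chart $\Rightarrow$ graph'', which is exactly where $k\ge 1$ enters) are precisely how the cited result is proved in the literature. Two small points you correctly flag as bookkeeping but should be aware are genuinely part of the hypotheses: the graph condition must be read as including that $\Omega$ lies locally on one side of the graph (otherwise a slit-type domain defeats condition (i) of Definition~\ref{PDE_Systems:Def:Ck_phi_Boundary}), and in the converse one needs a short topological argument (connectedness/invariance of domain after shrinking $U$) to see that $\psi$ carries a full relative neighbourhood of $\Psi_x(x)$ in the hyperplane onto $\Gamma$ near $x$, not merely that $\Psi_x(U\cap\Gamma)$ sits inside the hyperplane. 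With those caveats made explicit, the argument is complete.
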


\begin{defn}[Hemisphere Transformations, Hemisphere Property]\label{PDE_Systems:Def:Hemisphere_Trafo}\cite{Agm64}
	Let $ \Omega \subset \R{n} $ be a domain with $ \Gamma \neq \emptyset $, $ k \in \N{}_0 $, $ 0\leq \phi \leq 1 $. Let $ \Omega' \subset \Omega $ be a subdomain of $ \Omega $ and $ \Gamma_{r} $ a regular (\footnote{That means there is a countinuous transformation $ \Phi:\Gamma_{r} \to \R{n-1} $ that maps $ \Gamma_{r} \subset \Gamma $ to the $ (n-1) $ dimensional hyper plane in $ \R{n} $.}) boundary portion of $ \Gamma $ such that $ \Gamma \cap \partial \Omega' \subset \overset{\circ}{\Gamma_r} $ in the $ (n-1) $-dimensional sense. 
	\begin{itemize}
		\item[a)] The set $ \Omega' $ is said to satisfy a \textit{hemisphere condition} if there exists a distance $ d>0 $ such that every $ x \in \Omega' $  with $ \dist(x,\Gamma)\leq d $ has a neighborhood $ U_x $ such that
		\begin{itemize}
			\item[i)] $ \overline{U}_{x} \cap \Gamma \subset \Gamma_r$,
			\item[ii)] $ B_{\nicefrac{d}{2}}(x) \subset U_x$,
			\item[iii)] $ \overline{U_x}\cap \overline{\Omega}=\mathbb{T}_{x}(\Sigma_{R(x)})$, ~~~$ \overline{U_x} \cap \Gamma =\mathbb{T}_{x}(F_{R(x)})$, ~~~ $ 0 < R(x)<1  $
		\end{itemize} 
		for some hemisphere $ \Sigma_{R(x)} = \{z \in \R{n} \, | \, \Norm{z}{} \leq R(x),\, z_{n} \geq 0 \} $ and some disk $ F_{R(x)} = \{z \in \R{n} \, | \, \Norm{z}{} \leq R(x),\, z_n =0 \}$ and transformations $ \mathbb{T}_{x},\, \mathbb{T}_{x}^{-1} $.
		\item[b)] If $ \Omega $ is of class $ C^{k,\phi} $, $ k \geq 0 $, $ \phi\in [0,1] $  and the transformations $  \mathbb{T}_{x}  $ are $ C^{k,\phi} $- transformations are then $ \Omega' $ is said to posses a $ C^{k,\phi} $-hemisphere property.
		\item[c)] The transformations $ \mathbb{T}_{x} $ are called \textit{hemisphere transformations}.
	\end{itemize}
\end{defn}

\begin{defn}[Uniform Hemisphere Property]\cite{Agm64,Agm59}
	Let $ \mathcal{O} $ be a family of domains $ \Omega \subset \R{n} $ such that any $ \Omega \in \mathcal{O} $ possesses a $ C^{k,\phi} $-hemisphere property. The hemisphere property is called \textit{uniform} if the transformations satisfy $ \Norm{\mathbb{T}_{x}}{C^{k,\phi}(\Omega,\R{n})} \leq  C_{\mathbb{T},\mathcal{O}} ~ \forall \Omega\in \mathcal{O}, x \in \Gamma $ for only one constant $ C_{\mathbb{T},\mathcal{O}} $ and the distance $ d $ can be chosen uniformly with respect to $ \mathcal{O} $.
\end{defn}

A hemisphere property is a special $ C^{k,\phi} $-boundary condition. 
It claims that the transformations satisfy $ \Psi: U_x \cap \Omega \to \Sigma_{R}(x) \subset \tilde{U}$ which is a special assumption on $ \tilde{U} $.

\noindent It moreover requests not only existence of suitable neighborhoods for the boundary points, but also for the points in $ \Omega $ close to the boundary. These neighborhoods have a minimum diameter of $ d/2 $ and cover $ \Omega^{d}:=\{x \in \Omega \, | \, \dist(x,\Gamma)<d\} $.

In case $ \Omega $ is a bounded domain with $ C^{k,\phi} $ boundary it is nevertheless possible to construct hemisphere transformations that satisfy the assumptions of Definition \ref{PDE_Systems:Def:Hemisphere_Trafo}. In this case both definitions can be transferred one in each other. This is due to the compactness of the boundary, a geometric construction of the neighborhoods $ U_x $ and the possibility to extend the inverse transformations $ \Psi_{x}^{-1}$  onto hemispheres. We also refer to Lemma 6 in \cite{BittGottsch} and Lemma 5.4. in \cite{GottschSchmitz}.

\begin{lem}\label{Ch:Ex_Shape_Deriv_LinEl:Lem: uniform hemisphere property}
	Any bounded domain $ \Omega $ of class $ C^{k,\phi} $ has a $ C^{k,\phi} $-hemisphere property. The choice of the diameter $ d $ depends on the curvature of the boundary of $ \Omega $. 
\end{lem}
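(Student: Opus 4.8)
The plan is a quantitative ``straightening of the boundary'', made uniform over $\Gamma$ by compactness, followed by an upgrade of the boundary charts to hemisphere transformations that in addition contain a full ball around every interior point lying in a thin collar of $\Gamma$; this is essentially the construction of \cite[Lemma~6]{BittGottsch} and \cite[Lemma~5.4]{GottschSchmitz}. First I would invoke the graph characterisation of $C^{k,\phi}$-domains recalled above (cf.\ \cite[Sec.~6.2]{GilbTrud}; for $k=0$ one argues directly with the charts of Definition~\ref{PDE_Systems:Def:Ck_phi_Boundary} and their moduli of continuity): for each $p\in\Gamma$ there are a rigid motion $A_p$ and a radius $r_p>0$ so that, in coordinates $\tilde x=A_p(x-p)$, the set $\Gamma\cap\Umg{r_p}{p}$ is the graph $\tilde x_n=\gamma_p(\tilde x')$ of a $C^{k,\phi}$-function with $\gamma_p(0)=0$, $\nabla\gamma_p(0)=0$, and $\Omega\cap\Umg{r_p}{p}=\{\tilde x_n>\gamma_p(\tilde x')\}$. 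Composing with the flattening diffeomorphism $\Phi_p(\tilde x',\tilde x_n)=(\tilde x',\tilde x_n-\gamma_p(\tilde x'))$ — whose inverse $(\tilde y',\tilde y_n)\mapsto(\tilde y',\tilde y_n+\gamma_p(\tilde y'))$ is again $C^{k,\phi}$ — gives a chart $\Psi_p:=\Phi_p\circ A_p\circ(\cdot-p)$ mapping a neighbourhood of $p$ onto a neighbourhood of $0$, carrying $\overline\Omega$ to the closed half-space $\{y_n\ge 0\}$ and $\Gamma$ to $\{y_n=0\}$, with $C^{k,\phi}$- and bi-Lipschitz constants controlled by the $C^{k,\phi}$-norm of $\gamma_p$. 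By compactness of $\Gamma$ I would keep finitely many such charts $\Psi_{p_1},\dots,\Psi_{p_N}$ whose half-size domains already cover $\Gamma$, obtaining a common lower bound $r_0>0$ for their sizes and a common bound $L$ for the relevant constants. This is exactly where the curvature of $\Gamma$ enters: the radius $r_0$ over which the graphs $\gamma_{p_i}$ (which have vanishing gradient at their base points) stay of controlled size is governed by the regularity of $\Gamma$, i.e.\ by the oscillation of $\nabla\gamma_{p_i}$ — the second fundamental form when $k\ge 2$.

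\textbf{Choice of the collar width and the hemisphere maps.}
Next I would fix a model radius $R_0\in(0,1)$ and a collar width $d>0$, both small relative to $r_0/L$, chosen so that $d(1+L)<R_0$ and so that the map $\mathbb T_x$ below is well defined on $\overline{\Umg{R_0}{0}}$ for every admissible $x$. Given $x\in\Omega$ with $\dist(x,\Gamma)\le d$, I would pick a nearest point $p\in\Gamma$ and an index $i$ with $p$ in the half-size domain of $\Psi_{p_i}$; then $d<r_0/4$ forces $x$, and even $\Umg{d/2}{x}$, into the domain of $\Psi_{p_i}$ by the triangle inequality. Writing $z=\Psi_{p_i}(x)$ (with last coordinate $0<z_n\le Ld$) and $t(y)=y-(z',0)$, I would define $\mathbb T_x^{-1}:=t\circ\Psi_{p_i}$, a $C^{k,\phi}$-diffeomorphism with $C^{k,\phi}$-inverse $\mathbb T_x$ that sends $x$ to $(0,\dots,0,z_n)$ and still carries $\overline\Omega$ to $\{y_n\ge0\}$ and $\Gamma$ to $\{y_n=0\}$, and finally put $U_x:=\mathbb T_x(\Umg{R_0}{0})$.

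\textbf{Verification.}
It then remains to check the three requirements a)i)--iii) of Definition~\ref{PDE_Systems:Def:Hemisphere_Trafo} with $\Omega'=\Omega$. Since $\mathbb T_x$ is a homeomorphism onto its image and $\mathbb T_x^{-1}$ locally identifies $\overline\Omega$ with $\{y_n\ge0\}$ and $\Gamma$ with $\{y_n=0\}$ (here the graph charts are used through the equalities $\Omega\cap\Umg{r_p}{p}=\{\tilde x_n>\gamma_p\}$), one gets $\overline{U_x}=\mathbb T_x(\overline{\Umg{R_0}{0}})$, hence $\overline{U_x}\cap\overline\Omega=\mathbb T_x(\Sigma_{R_0})$ and $\overline{U_x}\cap\Gamma=\mathbb T_x(F_{R_0})$ with $0<R(x)=R_0<1$, which is iii). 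Requirement i), $\overline{U_x}\cap\Gamma\subset\Gamma_r$, holds by taking $\Gamma_r:=\Psi_{p_i}^{-1}(\{y_n=0\})$, the part of $\Gamma$ inside the $i$-th chart, which is a $C^{k,\phi}$ graph and hence regular in the sense of the footnote. For ii): $\mathbb T_x^{-1}$ maps $\Umg{d/2}{x}$ into the ball of radius $Ld/2$ about $(0,\dots,0,z_n)$, each point of which has norm at most $z_n+Ld/2\le Ld+Ld/2<R_0$, so $\Umg{d/2}{x}\subset U_x$. Assembling i)--iii) gives the $C^{k,\phi}$-hemisphere property of $\Omega$ for this $d$, uniformly in $x$ because every constant came from the finite atlas.

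\textbf{Main obstacle.}
The flattening and the compactness are routine; the one delicate point is the simultaneous balancing of scales in the last step. The hemisphere neighbourhoods must have diameter at least $d/2$ and together cover the whole collar $\{\dist(\cdot,\Gamma)<d\}$, while each chart supplies only room of order $r_0$, part of which is consumed by recentring at the nearest boundary point. Making the inequalities among $d$, $R_0$, $r_0$ and $L$ close at once — and uniformly in $x$, which is why the finite atlas with a single constant $L$ is indispensable — is the crux, and it is precisely this balancing that ties the admissible collar width $d$ to the boundary curvature.
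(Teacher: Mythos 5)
Your construction is correct and follows essentially the same route the paper itself indicates: the paper gives no detailed proof of this lemma but appeals to exactly the ingredients you spell out — the $C^{k,\phi}$ graph/flattening charts of Definition \ref{PDE_Systems:Def:Ck_phi_Boundary}, compactness of $\Gamma$ to pass to finitely many charts with uniform constants, and recentred chart inverses extended onto hemispheres as the transformations $\mathbb{T}_x$, with the collar width $d$ tied to the boundary regularity (cf.\ the references to Lemma 6 in \cite{BittGottsch} and Lemma 5.4 in \cite{GottschSchmitz}). Only tighten the constant bookkeeping in your step ii): since $z_n\le Ld$ you need $\tfrac{3}{2}Ld<R_0$, which your stated condition $d(1+L)<R_0$ guarantees only for $L\le 2$, so impose the slightly stronger smallness requirement on $d$ directly.
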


\section{Linear elliptic systems of partial differential equations}
\label{PDE_Systems:Sec:Linear Elliptic Systems}

\begin{notation}[PDE Systems] Let $ \Omega \subset \R{n} $, $ n\geq 2 $ be a domain with coordinates $ x=(x_1,x_2,\ldots,x_n) $. Let $ \Xi \in \R{n} $  and then 
	$$ \Xi^{\alpha}:=\Xi_1^{\alpha_1}\Xi_2^{\alpha_2}\cdots\Xi_n^{\alpha_n}$$
	for any multiindex $ \alpha = (\alpha_1,...,\alpha_n) \in \N{n}_{0}  $. Let 
	\begin{align}\label{PDE_Systems:Eq:Polynom}
	p(x,\Xi)&=\sum_{|\alpha|=0}^{d}p^{(\alpha)}(x)\Xi^{\alpha} = \sum_{|\alpha|=0}^{d}p^{(\alpha_1,...,\alpha_n)}(x)\Xi_{1}^{\alpha_1}\cdots \Xi^{\alpha_n}
	\end{align}
	be a polynomial in $ \Xi $ with coefficient functions $ p^{(\alpha)}, 0 \leq |\alpha| =\sum_{i=1}u^{nu} \leq m $ in $ x \in \Omega$ and degree $ d $. Replacing $ \xi_i $ by $ \frac{\partial}{\partial x_i} $ we can associate a partial differential operator of order $ d $ with the polynomial $ p $.
	\begin{align}
	p(x,D)u(x):&= \sum_{|\alpha|=0}^{d}p^{(\alpha_1,...,\alpha_n)}(x)\Big(\frac{\partial}{\partial x_1}\Big)^{\alpha_1}\Big(\frac{\partial}{\partial x_2}\Big)^{\alpha_2}\cdots \Big(\frac{\partial}{\partial x_n}\Big)^{\alpha_n}u(x)\\
	&=\left(\sum_{|\alpha|=0}^{d}p^{(\alpha_1,...,\alpha_n)}\frac{\partial^{|\alpha|}u}{\partial x_1^{\alpha_1} \partial x_2^{\alpha_2} \cdots \partial x_n^{\alpha_n}}\right)(x)\, .
	\end{align}
	
	We now take a matrix of such polynomials $ \mathbf{a}(x,\Xi)=\left(\mathbf{a}_{ij}(x,\Xi)\right)_{i,j=1,\ldots,N}$ where $ \mathbf{a}_{ij}(x,\Xi) = \sum_{|\alpha|=0}^{d_{ij}} \mathbf{a}_{ij}^{(\alpha_1,...,\alpha_n)}(x)\Xi_{1}^{\alpha_1}\cdots \Xi_{n}^{\alpha_n} $ each  of degree $ d_{ij} \geq 0 $ in $ \Xi $
	and define a system of partial differential equations setting
	\begin{align}\label{PDE_Systems:Eq:PDE_System_Volume}
	(\mathbf{a}(x,D)u(x))_{i}&=\sum_{j=1}^{N}\mathbf{a}_{ij}(x,D)u_j(x)=f_i(x),~~~ i=1,\ldots,N.
	\end{align}
	
\end{notation}
\noindent The matrix $ \mathbf{a}(x,\Xi) $ is called the \textit{symbol of the differential operator} $ \mathbf{a}(x,D) $.

With any system of PDEs we associate two systems of weights of integers: $ s_i \leq 0, i=1,2,\ldots,N$ such that $ s_i $ corresponds to the $ i $-th equation and $ t' \geq t_j\geq 0,\, j=1,2,\ldots,N$, $ t'=\max\{t_1,\ldots,t_N\} $ that are related to the unknowns $ u_j $ (\footnote{This can always bee achieved by adding a constant to one system of weights and subtracting it from the other, confer \citep{Agm64}}) by the relation
\begin{align}\label{PDE_Systems:Eq:PDE_System_Weights_Volume}
d_{ij}=\deg(\mathbf{a}_{ij}(x,\Xi)) \leq s_i+t_j ~~~\forall i,j=1,\ldots,N\, .
\end{align}
Note that the systems $ (s_{i})_{i},\, (t_{j})_{j} $ are not uniquely defined and that $ \mathbf{a}_{ij}\equiv 0 $ if $ s_i+t_j<0 $, consider \citep[(1.2)-(1.4)]{Agm64}. Then \[  \mathbf{a}_{ij}(x,\Xi)=\sum_{|\alpha|=0}^{s_i+t_j}\mathbf{a}_{ij}^{(\alpha)}\Xi^{\alpha}.\]  

\begin{defn}[Ellipticity]\citep{Agm64}
	A system of partial differential equations \eqref{PDE_Systems:Eq:PDE_System_Volume} is called 
	\textit{elliptic} if the \textit{characteristic polynomial}
	\begin{equation}\label{PDE_Systems:Eq:Def_elliptic}
	\mathcal{A}(x,\Xi):=\det(\mathbf{a}'_{ij}(x,\Xi)) \neq 0 ~~~ \text{ for all } \Xi \in \R{n}\setminus\{0\}. 
	\end{equation}
	The matrix $ (\mathbf{a}'_{ij}(x,\Xi))_{ij} $ contains only the terms in $ \mathbf{a}_{ij}(x,\Xi) $ that are of order $ s_i+t_j $ and is called \textit{principal symbol of } $ \mathbf{a}(x,D) $.
\end{defn}

\begin{defn}
	An elliptic system of PDEs \ref{PDE_Systems:Eq:PDE_System_Volume} satisfies the \textit{supplementary condition} if the following two conditions are satisfied:
	\begin{itemize}
		\item[i)] The characteristic polynomial $ \mathcal{A}(x,\Xi) $ is of even degree $ 2M,\, M \in \N{} $ in $ \Xi $.
		\item[ii)] The polynomial $ \mathcal{A}(x,\Xi + \tau \Xi') $ in $ \tau \in \mathbb{C},\, x \in \overline{\Omega} $ has exactly $ M $ roots with positive imaginary part for any pair of linearly independent vectors $ \Xi,\, \Xi' \in \R{n} $.
	\end{itemize} 
\end{defn}

The supplementary condition is satisfied whenever the PDE system \eqref{PDE_Systems:Eq:PDE_System_Volume} is a system in three or more independent variables $ u_j,\, j=1,2,3,\ldots,N $. The proof of this statement can be found in \citep[P. 631-632]{Agm59}. Thus the supplementary condition has to be assumed for systems in two variables, only. 
Moreover, it is actually only employed at the boundary $ \Gamma $ of $ \Omega $ with $ \Xi $ tangent and $ \Xi' $ normal to $ \Gamma $ at the point $ x \in \Gamma $, see \cite{Agm64}. 

\begin{defn}[Uniform Ellipticity]\citep{Agm64}
	A system of partial differential equations \eqref{PDE_Systems:Eq:PDE_System_Volume} is called
	\textit{uniformly elliptic} if the characteristic polynomial  $ \mathcal{A}(x,\Xi) $ is of even degree $ \deg(\mathcal{A}(x,\Xi)) =2M $ in $ \Xi $ and there exists a constant $ \varLambda>0 $ sucht that 
	\begin{equation}\label{PDE_Systems:Eq:Def_uniformly_elliptic}
	\varLambda^{-1}\Norm{\Xi}{}^{2M} \leq |\mathcal{A}(x,\Xi)| \leq \varLambda \Norm{\Xi}{}^{2M} ~~~ \forall \,\Xi \in \R{n} \text{ and } x \in \overline{\Omega}.
	\end{equation}
\end{defn}

\noindent Throughout this chapter we assume the folling:

\noindent \textbf{Assumptions:} Let $ \mathbf{a}(x,D) $ be defined according to \eqref{PDE_Systems:Eq:PDE_System_Volume}.
\begin{itemize}
	\item[\textbf{(A1)}] The system weights $ s_1,\ldots,s_N \leq 0$ and $ t'=\max\{t_1,\ldots,t_N\} \geq t_1,\ldots,t_N \geq 0 $  satisfy \eqref{PDE_Systems:Eq:PDE_System_Weights_Volume}.
	\item[\textbf{(A2)}] The system of PDE is elliptic.
	\item[\textbf{(A3)}] The supplementary condition holds, and the number $ M=\frac{1}{2}\deg\mathcal{A}(x,\Xi) $ is positive.
\end{itemize}

\begin{notation}[Boundary Conditions]
	Let $ \Gamma_{r}$ be a regular portion of $ \Gamma=\partial\Omega $. A system of boundary conditions is given by
	\begin{align}\label{PDE_Systems:Eq:PDE_System_Boundray}
	(\mathbf{b}(x,D)u(x))_{h}=\sum_{j=1}^{N}\mathbf{b}_{hj}(x,D)u_i(x)=g_h(x),~~~ h=1,\ldots,M ,\, x \in \Gamma_r,
	\end{align}
	where the $ \mathbf{b}_{hj}(x,\Xi) $ are polynomials in $ \Xi $ with coefficients in $ x $. 
	Thus the differential operator $ \mathbf{b}(x,D) $ depends on the matrix $ \mathbf{b}(x,\Xi) = (\mathbf{b}_{hj}(x,\Xi))_{hj},\, h=1,\ldots,M,\, i=1,\ldots,N  $ with polynomial entries.
	The degrees of the polynomials $ \mathbf{b}_{hj} $ in $ \Xi $   depend on integer weights $ r_1,\ldots,r_M $ 
	and the system $ t_1,\ldots, t_N $ associated to \eqref{PDE_Systems:Eq:PDE_System_Volume} by \eqref{PDE_Systems:Eq:PDE_System_Weights_Volume}
	\begin{align}
	\deg(\mathbf{b}_{hj}(x,\Xi))\leq r_h + t_j ,\, 1 \leq h \leq M,\, 1 \leq j \leq N,
	\end{align}
	the degree of $ \mathbf{b}_{hj}(x,\Xi) $ with respect to $ \Xi \in \R{n} $ and
	$ \mathbf{b}_{h,j}(x,\Xi) $ can be rewritten as
	\[  \mathbf{b}_{hj}(x,\Xi)=\sum_{|\alpha|=0}^{r_h+t_j}\mathbf{b}_{hj}^{(\alpha)}\Xi^{\alpha}.\] If $ r_h+t_j<0 $ then  $ \mathbf{b}_{hj}=0 $. By $ \mathbf{b}_{hj}'(x,\Xi) $ we denote the terms in $ \mathbf{b}_{hj}(x,\Xi) $ which are of the order $ r_h+t_{j} $. Moreover, we set $ r':=\max\{0, r_1+1,\ldots, r_M+1 \}$ and $ r'':=\max\{0, r_1,\, \ldots,r_{M}\} $. 
\end{notation}

Now we investigate so called \textit{boundary value problems} (BVPs)
\begin{equation}\label{PDE_Systems:Eq:PDE_System}
\begin{split}
\mathbf{a}(\cdot,D)u&=f \text{ on } \Omega \\
\mathbf{b}(\cdot,D)u&=g \text{ on }  \Gamma.
\end{split}
\end{equation}
Therefore, we have to assume further continuity conditions. The so called \textit{complementing boundary condition} assures that this system is well-posed.

\begin{notation} 
	Let $ \vec{n}(x) $ denote the outward normal, $ \mathcal{T}_{x} $ the tangential space at $ x $ associated to the manifold $ \Gamma$ and $ \Xi(x) \neq 0,\, \Xi(x) \in \mathcal{T}_{x} $ a tangent to $ x \in \Gamma_{r} $. Then, 
	$ \tau^{+}_{k}(x,\Xi),\, 1 \leq k \leq M $ denote the $ M $ solutions  with positive imaginary part\footnote{The existence of these roots is assured by the supplementary condition.} of the characteristic equation 
	\[ \mathcal{A}(x,\Xi(x) + \tau \vec{n}(x))=\det(\mathbf{a}_{ij}'(\Xi(x) + \tau \vec{n}(x)))=0.\]
	Additionally, define the polynomial 
	$$ \mathbf{M}^{+}(x,\Xi,\tau)=\prod_{k=1}^{M}(\tau - \tau^{+}(x,\Xi)) $$
	in the variable $ \tau $ and let $ (\mathbf{a}_{ij}'^{\ast}(x,\Xi + \tau \vec{n}))_{ij},\, 1 \leq i,j \leq N $ denote the adjoint matrix of $ (\mathbf{a}_{ij}'(x,\Xi + \tau \vec{n}))_{ij},\, 1 \leq i,j \leq N  $.
\end{notation}

\begin{defn}(Complementing Boundary Condition) The system \eqref{PDE_Systems:Eq:PDE_System} is said to satisfy a \textit{complementing} (or complementary) \textit{boundary condition} if the matrix (with polynomial entries in the indeterminate $ \tau $ )
	\[ \mathbf{c}(x,\Xi + \tau \vec{n})_{hk}=\sum_{j=1}^{N}\mathbf{b}_{hj}(x,\Xi + \tau \vec{n})\mathbf{a}_{jk}'^{\ast}(x,\Xi + \tau \vec{n}),\, h=1,\ldots,M,\, k=1,\ldots,N \]
	has linear independent rows modulo $ \mathbf{M}^{+}(x,\Xi,\tau) $, $  \Xi=\Xi(x) $ tangent to $ \Gamma_r $ at  $ x \in \Gamma_{r} $.
\end{defn}

\noindent \textbf{Assumptions:} Let $\mathbf{a} $ and $ \mathbf{b} $ be defined according to \eqref{PDE_Systems:Eq:PDE_System_Volume} and  \eqref{PDE_Systems:Eq:PDE_System_Boundray}, respectively. Then, additionally to \textbf{(A1)} - \textbf{(A3)}, we assume that
\begin{itemize}
	\item[\textbf{(B1)}] the complementing boundary condition holds for the system \eqref{PDE_Systems:Eq:PDE_System}.
\end{itemize}

\vspace{\parindent}
Since $ \mathbf{c}(x,\Xi + \tau \vec{n})$ is a polynomial in each entry we can find coefficients $ \mathbf{c}_{hk}^{(\beta)}(x,\Xi) \in \R{}$ such that 
\[  \mathbf{c}(x,\Xi + \tau \vec{n})_{hk} =\sum_{\beta=0}^{M-1}\mathbf{c}_{hk}^{(\beta)}(x,\Xi) \tau^{\beta} \mod \mathbf{M}^{+}(x,\Xi,\tau) \, .\]

\begin{notation} \cite{Agm64}
	Let $ x \in \Gamma_r \subset \Gamma=\partial \Omega $ and let $ \Xi=\Xi(x) $ be a tangent vector to $ \Gamma_{r} $ at $ x \in \Gamma_{r} $. Set $\mathbf{C}(x,\Xi)_{h,(\beta,k)}:=\mathbf{c}_{hk}^{(\beta)}(x,\Xi) $ (\footnote{Since the absolute value of the minors of this matrix are calculated and the determinant is alternating in the columns (rows) the column sorting is not important here.}) as a matrix with $ M $ rows $ h=1,\ldots, M $ and $ MN $ columns $ \beta=0,\ldots,M-1,\, k=1,\ldots,N $, i.e. 
	\[C= \begin{pmatrix}
	\mathbf{c}_{1,1}^{(0)} & \ldots & \mathbf{c}_{1,N}^{(0)} & \mathbf{c}_{1,1}^{(1)} & \ldots & \mathbf{c}_{1,N}^{(1)} & 	\mathbf{c}_{1,1}^{(M-1)} & \ldots & \mathbf{c}_{1,N}^{(M-1)} \\
	\mathbf{c}_{2,1}^{(0)} & \ldots & \mathbf{c}_{2,N}^{(0)} & \mathbf{c}_{2,1}^{(1)} & \ldots & \mathbf{c}_{2,N}^{(1)} & 
	\mathbf{c}_{2,1}^{(M-1)} & \ldots & \mathbf{c}_{2,N}^{(M-1)} \\
	\vdots & & \vdots & \vdots & &\vdots  & \vdots & &\vdots \\
	\mathbf{c}_{M,1}^{(0)} & \ldots & \mathbf{c}_{M,N}^{(0)} & \mathbf{c}_{M,1}^{(1)} & \ldots & \mathbf{c}_{M,N}^{(1)} & 
	\mathbf{c}_{M,1}^{(M-1)} & \ldots & \mathbf{c}_{M,N}^{(M-1)}
	\end{pmatrix} \]
	By $ \mathcal{M}^1(x,\Xi),\ldots, \mathcal{M}^{\binom{MN}{M}}(x,\Xi) $ we denote $ M $-rowed minors of $ \mathbf{C}(x,\Xi) $ achieved by deleting $ NM-M $ columns at a time and calculating the determinant.
\end{notation}

\begin{lem}[\cite{Agm64}]
	The complementary boundary condition \textbf{(B1)} implies that $ \mathbf{C}(x,\Xi) $, for fixed $ x \in \Gamma_{r},\, \Xi=\Xi(x) \in \mathcal{T}_{x}  $ is a rank-$ M $-matrix and that not all $ M $-rowed minors $ \mathcal{M}^{i}(x,\Xi),\, i=1,\ldots, \binom{NM}{M}, $ are zero. In particular \[ \mathcal{M}^{max}(x,\Xi)=\max_{i=1,\ldots,\binom{NM}{M}}|\mathcal{M}^{i}(x,\Xi)| >0 \]
	and if $ \Gamma_{r} \subset \Gamma$ is compact, then also $ \displaystyle \triangle_{\Gamma_r}':=\inf_{x \in \Gamma_r, \Xi \in \mathcal{T}_{x}} \mathcal{M}^{max}(x,\Xi) >0. $
\end{lem}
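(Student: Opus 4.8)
The plan is to reinterpret the analytic content of the complementing boundary condition \textbf{(B1)} — ``the rows of $\mathbf{c}(x,\Xi+\tau\vec n)$ are linearly independent modulo $\mathbf{M}^{+}(x,\Xi,\tau)$'' — as an ordinary rank condition on the real coefficient matrix $\mathbf{C}(x,\Xi)$, and then to upgrade the pointwise positivity of $\mathcal{M}^{max}$ to a uniform lower bound by a compactness argument over the (compact) unit tangent bundle of $\Gamma_r$.

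First I would fix $x\in\Gamma_r$ and a tangent vector $\Xi=\Xi(x)\in\mathcal{T}_x$ and suppose $\lambda=(\lambda_1,\dots,\lambda_M)$ annihilates the rows of $\mathbf{C}(x,\Xi)$, i.e.\ $\sum_{h=1}^{M}\lambda_h\,\mathbf{c}_{hk}^{(\beta)}(x,\Xi)=0$ for all $k=1,\dots,N$ and all $\beta=0,\dots,M-1$. By the defining relation $\mathbf{c}(x,\Xi+\tau\vec n)_{hk}\equiv\sum_{\beta=0}^{M-1}\mathbf{c}_{hk}^{(\beta)}(x,\Xi)\,\tau^{\beta}\pmod{\mathbf{M}^{+}(x,\Xi,\tau)}$ this yields
\[
\sum_{h=1}^{M}\lambda_h\,\mathbf{c}(x,\Xi+\tau\vec n)_{hk}\;\equiv\;\sum_{\beta=0}^{M-1}\Big(\sum_{h=1}^{M}\lambda_h\,\mathbf{c}_{hk}^{(\beta)}(x,\Xi)\Big)\tau^{\beta}\;=\;0\pmod{\mathbf{M}^{+}(x,\Xi,\tau)}
\]
for every $k$, so the corresponding linear combination of the rows of $\mathbf{c}(x,\Xi+\tau\vec n)$ vanishes modulo $\mathbf{M}^{+}$; then \textbf{(B1)} forces $\lambda=0$. (Conversely, any such combination has $\tau$-degree $\le M-1<M=\deg_\tau\mathbf{M}^{+}$ and $\mathbf{M}^{+}$ is monic, so vanishing modulo $\mathbf{M}^{+}$ already means it is the zero polynomial; hence the two conditions are equivalent, but only this implication is needed.) Thus the $M$ rows of $\mathbf{C}(x,\Xi)$ are linearly independent, and since $\mathbf{C}(x,\Xi)$ has exactly $M$ rows and $MN\ge M$ columns it has rank $M$; consequently it contains an invertible $M\times M$ submatrix whose determinant is $\pm\mathcal{M}^{i}(x,\Xi)$ for some $i$, so $\mathcal{M}^{max}(x,\Xi)>0$ at every such $(x,\Xi)$.

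For the uniform bound I would first note that each minor $\mathcal{M}^{i}(\cdot,\cdot)$ is homogeneous in $\Xi$: the characteristic polynomial $\mathcal{A}(x,\cdot)$ is homogeneous of degree $2M$, so its roots satisfy $\tau^{+}_k(x,\lambda\Xi)=\lambda\,\tau^{+}_k(x,\Xi)$ for $\lambda>0$, whence $\mathbf{M}^{+}$ scales accordingly, the remainders $\mathbf{c}_{hk}^{(\beta)}(x,\cdot)$ of the Euclidean division are $\Xi$-homogeneous, and so are the $\mathcal{M}^{i}$. It therefore suffices to bound $\mathcal{M}^{max}$ from below on $K:=\{(x,\Xi):x\in\Gamma_r,\ \Xi\in\mathcal{T}_x,\ \Norm{\Xi}{}=1\}$, which is compact precisely because $\Gamma_r\subset\Gamma$ is compact. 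On $K$ the map $(x,\Xi)\mapsto\mathcal{M}^{max}(x,\Xi)$ is continuous: the coefficients of $\mathbf{a}$ and $\mathbf{b}$ are continuous, the supplementary condition \textbf{(A3)} keeps exactly $M$ roots $\tau^{+}_k$ of $\mathcal{A}(x,\Xi+\tau\vec n)$ in the open upper half-plane with none on $\R{}$, so by continuous dependence of polynomial roots on their coefficients the monic factor $\mathbf{M}^{+}(x,\Xi,\tau)$ varies continuously, hence so do the division remainders $\mathbf{c}_{hk}^{(\beta)}$, and forming determinants and $\max|\cdot|$ preserves continuity. A positive continuous function on a nonempty compact set attains a positive minimum, which together with the homogeneity gives $\triangle_{\Gamma_r}'=\inf_{x\in\Gamma_r,\ \Xi\in\mathcal{T}_x}\mathcal{M}^{max}(x,\Xi)>0$.

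The hard part will be the continuity claim in the last step — concretely, that the splitting of the roots of $\mathcal{A}(x,\Xi+\tau\vec n)$ into the $M$ ``upper'' ones defining $\mathbf{M}^{+}$ and the remaining ``lower'' ones is continuous in $(x,\Xi)$. This is exactly where \textbf{(A3)} is indispensable: it forbids real roots and fixes the count of upper roots at $M$, so the two groups can never merge or cross and a standard continuity-of-roots (or contour-integral) argument applies; after that, continuity of the minors and the compactness conclusion are routine, as is the degree bookkeeping behind the homogeneity statement.
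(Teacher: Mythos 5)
Your proposal is correct, and in fact the paper itself gives no proof of this lemma at all: it is stated with a bare citation to Agmon--Douglis--Nirenberg \cite{Agm64}, so the only comparison available is with the classical argument there, which is essentially what you reproduce. Your first step (a vector $\lambda$ annihilating the rows of $\mathbf{C}(x,\Xi)$ produces a linear combination of the rows of $\mathbf{c}(x,\Xi+\tau\vec n)$ that vanishes modulo $\mathbf{M}^{+}$, so \textbf{(B1)} forces $\lambda=0$, hence $\operatorname{rank}\mathbf{C}=M$ and some $M$-rowed minor is nonzero) is exactly the right translation of the ``linear independence mod $\mathbf{M}^{+}$'' condition into a rank statement, and your converse remark about degrees $\le M-1$ versus the monic degree-$M$ modulus is accurate. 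Your compactness step is also the standard one: ellipticity keeps the leading $\tau$-coefficient $\mathcal{A}(x,\vec n)$ away from zero, the supplementary condition \textbf{(A3)} pins the number of upper-half-plane roots at $M$ with none on the real axis, so the factor $\mathbf{M}^{+}$, the division remainders $\mathbf{c}_{hk}^{(\beta)}$, and the minors depend continuously on $(x,\Xi)$, and a positive continuous function on a compact set has a positive minimum. One small point worth making explicit: for the infimum to be positive at all, $\Xi$ must be restricted to \emph{unit} tangent vectors (equivalently the minors must be normalized), which is the ADN convention your homogeneity remark implicitly adopts; taken literally over all $\Xi\in\mathcal{T}_x$, positive homogeneity would drive the infimum to zero, so your reading is the only one under which the statement (and the paper's later use of $\triangle_{\Gamma_r}$) makes sense. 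Note also that the $\mathbf{c}_{hk}^{(\beta)}$ are in general complex, so the rank argument should be read over $\mathbb{C}$; this changes nothing in your proof.
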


\begin{defn}[Minor Constant] \cite{Agm64} $  $
	\begin{itemize}
		\item[i)] 	When $ \Gamma_{r} $ is plane, then the \textit{minor constant} $ \triangle_{\Gamma_r}=\triangle_{\Gamma_r}' $.
		\item[ii)] If $ \Gamma_r $ is not plane and if there exists a change of coordinates $ \Phi: \Gamma_r \to \R{n-1} $ that 'makes' $ \Gamma_r $ plane, then the \textit{minor constant} is defined by $\triangle_{\Gamma_r}=\triangle_{\Phi(\Gamma_r)}' $. 
		\item[iii)] If $ \Gamma $ is not plane and compact and if there exists a family of regular boundary portions $ \Gamma_i, i=1,...,l $ such that $ \Gamma $ is covered by the family $ (\Gamma_i)_{i} $ and transformations $ \Phi_i: \Gamma_i \to \R{n-1} $ that make $ \Gamma_i $ plane, then each portion $ \Gamma_{i} $ has a minor constant as defined in $  ii) $ and the corresponding \textit{minor constant} for the whole boundary $ \Gamma $ is defined by $$\triangle \equiv \triangle_{\Gamma} =\inf_{i=1,\ldots,l}\triangle_{\Gamma_i}.$$
	\end{itemize}
\end{defn}

\noindent \textbf{Notation:} Let $ \triangle_x $  denote the minor constant for $ \overline{U}_x \cap \partial \Omega $ which pertains to the hemisphere transformation $ \mathbb{T}_x $. Then $ \triangle = \inf_{\{x: ~\dist(x,\Gamma)\leq d\}} \triangle_x $.

\begin{exmp}[Pure Traction Problem of Linear Elasticity]\label{PDE_Systems:Exmp:Lin_El}
	Let $ \Omega \subset \R{3} $, $ f:\Omega  \to \R{3} $ and $ u \in C^{2}(\Omega,\R{3}) $. The PDE system of linear elasticity with Dirichlet BC is given by  \begin{equation}\label{PDE_Systems:Eq:Lin_El_Volume}
	\begin{split}
	\Div(\sigma(u))&= f \text{ on } \Omega \\
	u&=0 \text{ on } \Gamma 
	\end{split}
	\end{equation}
	where $ \sigma(u)=\lambda\Div(u)\mathrm{I} + \mu(Du + Du^{\top}) $ for Lamé-Coefficients $ \lambda,\mu>0 $.  Component-wise the system reads
	\begin{equation*}
	\begin{split}
	\left[(\lambda+2\mu)\frac{\partial^2}{\partial x_1^2}+ \mu\left(\frac{\partial^2}{\partial x_2^2}+\frac{\partial^2}{\partial x_3^2}\right)\right]u_1 + (\lambda+\mu)\frac{\partial^2}{\partial x_1\partial x_2}u_2 +   (\lambda+\mu)\frac{\partial^2}{\partial x_1\partial x_3}u_3 &=f_1
	\\[1ex]
	(\lambda+\mu)\frac{\partial^2}{\partial x_1\partial x_2}u_1 +  \left[(\lambda+2\mu)\frac{\partial^{2}}{\partial x_2^{2}}+\mu\left(\frac{\partial^{2}}{\partial x_1^{2}}+\frac{\partial^{2}}{\partial x_3^{2}}\right)\right]u_2 +   (\lambda+\mu)\frac{\partial^{2}}{\partial x_2 \partial x_3}u_3 &= f_2
	\\[1ex]
	(\lambda+\mu) \frac{\partial^{2}}{\partial x_1\partial x_3}u_1  +   (\lambda+\mu)\frac{\partial^{2}}{\partial x_2 \partial x_3}u_2 + \left[(\lambda+2\mu)\frac{\partial^{2}}{\partial x_{3}^2}+\mu\left(\frac{\partial^2}{\partial x_1^{2}}+ \frac{\partial^{2}}{\partial x_2^{2}}\right)\right]u_3&= f_3 
	\end{split}
	\end{equation*}
	and
	\begin{equation*}
	\begin{split}
	u_1 + 0 u_2 + 0u_3 &=0 \\
	0u_1 + u_2 + 0u_3 &=0 \\
	0u_1 + 0u_2 + u_4 &=0
	\end{split}
	\end{equation*}
	on $ \Gamma$. Since the Lamé-Coefficients are constant in $ x $ also
	\begin{align*}
	\mathbf{a}_{i,j}(\Xi)&=\mathbf{a}_{i,j}(x,\Xi)=
	\begin{pmatrix}
	(\lambda+\mu)\xi_1^2 + \mu\Norm{\xi}{}^2 & (\lambda +\mu)\xi_1\xi_2 & (\lambda +\mu)\xi_1\xi_3 \\[1ex]
	(\lambda +\mu)\xi_1\xi_2 & (\lambda+\mu)\xi_2^2 + \mu\Norm{\xi}{}^2   &  (\lambda +\mu)\xi_2\xi_3 \\[1ex]
	(\lambda +\mu)\xi_1\xi_3 & (\lambda +\mu)\xi_2\xi_3 & (\lambda+\mu)\xi_3^2 + \mu\Norm{\xi}{}^2 
	\end{pmatrix}
	\end{align*}
	and 
	\begin{align*}
	\mathbf{b}_{i,j}(\Xi)&=\mathbf{b}_{i,j}(x,\Xi)=
	\begin{pmatrix} 
	1 &0 &0 \\
	0 & 1 & 0 \\
	0 & 0 & 1
	\end{pmatrix}
	\end{align*}
	are constant in $ x $.
	
	Here, we obtain $ s_1=s_2=s_3=0 $ and $ t'=t_1=t_2=t_3=2 $ for the weights (but  $ s_1=s_2=s_3=-1 $ and $ t_1=t_2=t_3=3 $ would also be a reasonable choice). Then $ r_1=r_2=r_3=-2 $, $ r'=\max\{0, r_1+1, r_2+1,r_3+1\}=0 $. 
	The supplementary condition is fulfilled anyway since the PDE system of linearized elasticity is a system in three independent variables $ u_1,\, u_2,\, u_3 $ and the determinant of degree $ \deg(\mathcal{A}(x,\Xi))=2M,\, M=3 $ satisfies
	\begin{align*}
	\mathcal{A}(x,\Xi)=\mu(\lambda+\mu)^2\xi_2^2\xi_3^2\Norm{\Xi}{}^{2} + \mu^2((\lambda+\mu) + \mu^4)\Norm{\Xi}{}^{6}.
	\end{align*}
	
	\noindent Thus, $ \mu^2((\lambda+\mu) + \mu^4) \Norm{\Xi}{}^6 \leq |\mathcal{A}(x,\Xi)| \leq (\mu(\lambda+\mu)^2 + \mu^2(\lambda+\mu) + \mu^6)\Norm{\Xi}{}^{6} $	and $ \varLambda>0 $ can be chosen (depending on $ \lambda $ and $ \mu $) such that $ \varLambda^{-1} \leq \mu^2((\lambda+\mu) + \mu^4) \leq  \mu(\lambda+\mu)^2 + \mu^2(\lambda+\mu) + \mu^6 \leq \varLambda $. Thereof we conclude that the linear elasticity equation \ref{PDE_Systems:Eq:Lin_El_Volume} is a uniformly elliptic system of partial differential equations of second order. The complementary condition is also satisfied, confer Section 6.3. in \citep{Cia1988}.
\end{exmp}

Combining  Observation 3.2  and Theorem 3.5.  in \citep{Geymonat} and the comments thereafter, we can state the following theorem that plays a crucial role in the existence of solutions to \eqref{PDE_Systems:Eq:PDE_System} w.r.t. to higher order Sobolev and classical function spaces:

\begin{thm}[Index Theorem] \label{PDE_Systems:Thm:Index_Theorem}
	Let a system 
	\begin{equation}\label{PDE_Systems:Eq:PDE_System_II}
	\begin{split}
	\mathbf{a}(.,D)u&=f \text{ on } \Omega \\
	\mathbf{b}(.,D)u&=g \text{ on }  \Gamma.
	\end{split}
	\end{equation}
	of partial differential equations be given and suppose that $ \Omega $ is a bounded domain in $\R{n},\, n \geq 2 $ with boundary of class $ C^{r'+t'+k} $, $k \in \N{}_{0}\cup \{\infty\}$. Moreover, assume that the coefficients of $ \mathbf{a} $ and $ \mathbf{b} $, respectively, satisfy
	\begin{align*}
	\mathbf{a}_{i,j}^{(\rho)}  \in 
	\begin{cases}
	C^{r'- s_i+k}(\overline{\Omega}) & \text{ if } |\rho| = s_i + t_j \\
	W^{r'-s_i+k,\infty}(\Omega) & \text{ if } |\rho| < s_i + t_j
	\end{cases}
	\text{ and } ~	
	\mathbf{b}_{i,j}^{(\rho)}  \in 
	\begin{cases}
	C^{r'- r_h+k} & \text{ if } |\varrho| = r_h + t_j \\
	W^{r'-r_h+k,\infty} & \text{ if } |\varrho| < r_h + t_j
	\end{cases}
	\end{align*}
	for $ i,\,j=1,\ldots,N $, $ h =1,\ldots,M $. Then the following two assertions are equivalent:
	\begin{itemize}
		\item[i)] The system \eqref{PDE_Systems:Eq:PDE_System_II} is elliptic  \textbf{(A2)}  and satisfies the supplementary and complementary conditions  \textbf{(A3)}  and \textbf{(B1)};
		\item[ii)] If $1 < p<\infty  $  and $ 0 \leq l \leq k$, the operator 
		\begin{align*}
		A_{l,p}: \prod_{j=1}^{N} W^{t'+t_j+l,p}(\Omega) &\to \prod_{i=1}^{N} W^{t'-s_i+l,p}(\Omega) \times \prod_{h=1}^{M} W^{t'-r_h+l-\nicefrac{1}{p},p}(\Gamma
		) \\
		u & \mapsto  (\mathbf{a}u,\mathbf{b}u) 
		\end{align*}
		is linear and continuous and has a finite index $$ \ind{A_{l,p}}:= \dim(\ker(A_{l,p})) - \dim(\coker{A_{l,p}}) $$
		that depends neither on $ k $ nor on $ p $.
	\end{itemize}
\end{thm}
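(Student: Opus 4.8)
The plan is to read both implications off \cite[Observation 3.2 and Theorem 3.5]{Geymonat} after reconciling the weight conventions used here with those in that reference; I only sketch the structure and the single genuinely delicate point. Write $X_{l,p}:=\prod_{j=1}^{N}W^{t'+t_j+l,p}(\Omega)$ and $Y_{l,p}:=\prod_{i=1}^{N}W^{t'-s_i+l,p}(\Omega)\times\prod_{h=1}^{M}W^{t'-r_h+l-\nicefrac{1}{p},p}(\Gamma)$ for the domain and target of $A_{l,p}$. That $A_{l,p}\in\mathcal{L}(X_{l,p},Y_{l,p})$ does not use the structural hypotheses: by the degree bounds $\deg\mathbf{a}_{ij}\le s_i+t_j$ and $\deg\mathbf{b}_{hj}\le r_h+t_j$ and the assumed regularity of the coefficients $\mathbf{a}_{ij}^{(\rho)}$, $\mathbf{b}_{hj}^{(\rho)}$, the map $u\mapsto\mathbf{a}(\cdot,D)u$ sends $X_{l,p}$ boundedly into $\prod_{i=1}^{N}W^{t'-s_i+l,p}(\Omega)$, while $u\mapsto\mathbf{b}(\cdot,D)u$ lands in $\prod_{h=1}^{M}W^{t'-r_h+l,p}(\Omega)$ and is composed with the boundary trace; that this trace is bounded into $\prod_{h=1}^{M}W^{t'-r_h+l-\nicefrac{1}{p},p}(\Gamma)$, and that the boundary a priori estimates used below are available, is exactly where the hypothesis $\Gamma\in C^{r'+t'+k}$ together with $0\le l\le k$ enters.

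For (i)$\,\Rightarrow\,$(ii) I would invoke the Agmon--Douglis--Nirenberg theory: ellipticity \textbf{(A2)} together with the supplementary condition \textbf{(A3)} and the complementing condition \textbf{(B1)} yields the a priori estimate
\[
\sum_{j=1}^{N}\Norm{u_j}{W^{t'+t_j+l,p}(\Omega)}\;\le\;C\Big(\Norm{A_{l,p}u}{Y_{l,p}}+\sum_{j=1}^{N}\Norm{u_j}{L^{p}(\Omega)}\Big)\qquad\text{for all }u\in X_{l,p},
\]
and the formally adjoint boundary value problem is again elliptic and complementing for the adjoint system of weights, so an analogous estimate holds for it. Since $\Omega$ is bounded with boundary of class $C^{r'+t'+k}$, each $W^{t'+t_j+l,p}(\Omega)$ is compactly embedded in $L^{p}(\Omega)$, and the estimate forces $\ker A_{l,p}$ to be finite-dimensional and the range of $A_{l,p}$ to be closed; the closed-range theorem together with the adjoint estimate identifies $\coker{A_{l,p}}$ with the finite-dimensional kernel of the adjoint operator, so $A_{l,p}$ is Fredholm. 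That $\ind{A_{l,p}}$ depends on neither $l$ nor $k$ follows from elliptic regularity: bootstrapping the estimate shows $\ker A_{l,p}\subset\bigcap_{l'}X_{l',p}$, so the kernel is independent of $l$, and dually for the cokernel via the adjoint problem; independence of $p$, which is the substantive addition over the classical theory, is exactly \cite[Theorem 3.5]{Geymonat} and is obtained by the same bootstrap together with the Sobolev embeddings available on a bounded domain.

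The converse (ii)$\,\Rightarrow\,$(i) is the part I expect to be the main obstacle, and here I would lean on \cite[Observation 3.2]{Geymonat} rather than reprove it. Its argument is contrapositive and local: if ellipticity fails, so that $\mathcal{A}(x_0,\Xi_0)=0$ for some $x_0\in\overline{\Omega}$ and $\Xi_0\in\R{n}\setminus\{0\}$, one freezes the coefficients at $x_0$ and inserts highly oscillatory test fields --- a fixed cut-off times $e^{\mathrm{i}k\langle x-x_0,\Xi_0\rangle}$, rescaled according to the weights --- obtaining a sequence bounded below in the $X_{l,p}$-norm whose image under $A_{l,p}$ tends to $0$ modulo lower-order terms; this contradicts the estimate that any semi-Fredholm operator satisfies on a complement of its kernel. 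If instead \textbf{(A3)} or \textbf{(B1)} fails at a boundary point, the same idea is carried out in a half-space model with boundary-layer quasimodes built from the degenerate Lopatinskii--Shapiro determinant. What remains is then pure bookkeeping, and this is really the only nontrivial thing to verify: that the admissible shift of weights recorded near \eqref{PDE_Systems:Eq:PDE_System_Weights_Volume}, the normalizations $t'=\max_jt_j$ and $r'=\max\{0,r_1+1,\dots,r_M+1\}$, the coefficient spaces in the hypotheses, and the boundary class $C^{r'+t'+k}$ all match the conventions under which \cite[Obs. 3.2, Thm. 3.5]{Geymonat} is stated, so that their results transfer verbatim and yield the claimed equivalence.
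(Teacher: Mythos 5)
Your proposal is consistent with the paper's treatment: the paper gives no proof of this theorem at all, stating it as a compilation of Observation 3.2 and Theorem 3.5 in \cite{Geymonat} (with the convention-matching remarks noted afterwards), and your argument likewise reduces both implications to exactly those results, merely sketching the Agmon--Douglis--Nirenberg and quasimode machinery behind them. So you take essentially the same route as the paper, just with more of the underlying mechanism spelled out.
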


\noindent Here $ W^{l-1/p,p}(\Gamma)=\mathbf{T_{\Gamma}}(W^{l,p}(\Omega)) =\{\mathbf{T_{\Gamma}}(u) \vert u \in W^{l,p}(\Omega) \} $ with $ \mathbf{T_{\Gamma}} $ the trace operator on $ W^{l,p}(\Omega) $, see also \ref{App: Trace Space}.

A linear, continuous operator $ A: X \to Y $ from one Banach space $ X $ into another one $ Y $ is called \textit{Fredholm operator} if  $ \dim(\ker(A))  $ is finite, $ \im{A} $ is closed 
and the \textit{codimenson} of $ \im{A} $ in $ Y $, i.e. $  \dim(\coker{A}) = \dim( Y/\im{A}) $, is also finite. 

From this point of view, the family of operators $ (A_{l,p})_{l,p} $ is a set of  Fredholm operators with constant index if the assumptions and condition i) of Theorem  \ref{PDE_Systems:Thm:Index_Theorem} are satisfied.

\begin{thm}[Schauder Estimates in Sobolev Spaces]\label{PDE_Systems:Thm:SchauderEstSobolev} \citep[Theorem 10.5]{Agm64} Let $ \Omega \subset \R{n} $ be a bounded domain.
	Suppose that a PDE system \eqref{PDE_Systems:Eq:PDE_System_II} is given on  $ \Omega$ and $ \Gamma $ such that the assumptions $ (\mathbf{A1}) - (\mathbf{A3}) $ and $ (\mathbf{B1}) $ are satisfied. Let $ k \geq r'=\max\{0,r_1+1,\ldots,r_M+1\} $ be a fixed integer and $ 1<p <\infty $.
	
	\noindent Moreover, we make the following assumptions regarding smooth- and boundedness:
	\begin{itemize}
		\item[(S1)] The coefficients $ \mathbf{a}_{i,j}^{(\rho)} $ belong to $ C^{k-s_i}(\overline{\Omega}) $ and the functions $ f_i $ are elements of $ W^{k-s_i,p}(\Omega) $.
		\item[(S2)] The coefficients $ \mathbf{b}_{h,j}^{(\varrho)} $ are elements of $ C^{k-r_h}(\Gamma) $ and $ g_h \in W^{k-r_h-\nicefrac{1}{p},p}(\Gamma) $.
		\item[(S3)] The right hand sides $ f_i $ and $ g_h $ are bounded from above by some constant $ c_{f,g}>0 $ and the coefficients $ \mathbf{a}_{i,j}^{(\rho)} $ and $ \mathbf{b}_{h,j}^{(\varrho)} $ are bounded from above by $ c_{a,b} $ in their respective norms.
		\item[(S4)]Moreover, suppose that $ \Omega $ possesses a $ C^{k+t'} $- hemisphere property with $ t' = \max\{t_j\} $ such that $ \triangle_{\Gamma}>0 $  and such that the hemisphere transformations $ \mathbb{T}_{x} $ have finite $ C^{k + t',\phi} $-norms bounded by some constant $ C_{\mathbb{T}} $ independent of $ x $.
	\end{itemize}
	Then any solution $ u \in W^{r' + t_j}(\Omega) $ of \eqref{PDE_Systems:Eq:PDE_System_II}
	belongs to $ W^{k+t_j}(\overline{\Omega}) $, $ j=1,\ldots, N $ and satisfies
	\begin{align*}
	\Norm{u_{j}}{W^{k+t_j,p}(\Omega)} \leq C \left(\sum_{i=1}^{N} \Norm{f_i}{W^{k-s_i,p}}(\Omega) + \sum_{h=1}^{M}\Norm{g_h}{W^{k-r_h-\nicefrac{1}{p},p}(\Gamma)} + \sum_{k=1}^{N}\Norm{u_{k}}{C^{0}(\Omega)}\right)
	\end{align*}
	for some constant $ C\geq 0 $ that depends on $ c_{a,b},\, \varLambda,\, \triangle_{\Gamma},\, C_{\mathbb{T}},\, d,\, n,\, N,\, M\, \sum| r_h|,\,p,\, k $.
	
	\noindent The term $ \Norm{u_{j}}{C^{0}(\Omega)} $  can be replaced by $ \int_{\Omega} |u_j|\, dx $, $ j=1,\ldots, N $.
\end{thm}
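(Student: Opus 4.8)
The plan is to run the classical Agmon--Douglis--Nirenberg argument: localize, freeze the leading coefficients, reduce to two constant-coefficient model problems --- a whole-space problem governing the interior estimate and a half-space problem (carrying the boundary operator) governing the boundary estimate --- prove sharp $L^p$ a priori estimates for these models by Fourier / Calder\'on--Zygmund methods, and then glue everything with a partition of unity subordinate to a hemisphere covering, absorbing all lower-order contributions by interpolation. The assertion that $u$ genuinely lies in $W^{k+t_j,p}(\Omega)$ (and not merely satisfies the inequality formally) is obtained by running Nirenberg's difference-quotient method alongside the a priori estimate, and the passage from the base case $k=r'$ to general $k$ is an induction in which one differentiates the system \eqref{PDE_Systems:Eq:PDE_System_II} once more at each step, invoking (S1)--(S4) at the corresponding order.

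\textbf{Interior model problem.} Away from $\Gamma$, cover $\Omega$ by balls $B$; on a double ball freeze the top coefficients $\mathbf{a}_{ij}^{(\rho)}$ at the center, producing a constant-coefficient operator $\mathbf{a}_0(D)$. Uniform ellipticity \eqref{PDE_Systems:Eq:Def_uniformly_elliptic} forces $\det\mathbf{a}_0'(\Xi)\neq 0$ on $\R{n}\setminus\{0\}$, so the solution operator $\mathbf{a}_0(\Xi)^{-1}$ is, entrywise, an adjugate-over-determinant rational function homogeneous of degree $-(s_i+t_j)$; the Mikhlin--H\"ormander multiplier theorem then gives $\sum_j \Norm{D^{k+t_j}v_j}{L^p(\R{n})}\le C\sum_i\Norm{D^{k-s_i}f_i}{L^p(\R{n})}$ with $C=C(\varLambda,n,N,p,k)$. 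The freezing error $(\mathbf{a}-\mathbf{a}_0)(\cdot,D)u$ has a top-order part that is small for small ball radius (uniform continuity of the leading coefficients, by (S1)) and is absorbed on the left, while its genuinely lower-order part is postponed to the interpolation step.

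\textbf{Boundary model problem --- the crux.} Near $\Gamma$ use the hemisphere transformations $\mathbb{T}_x$ from (S4) to flatten $\overline{U_x}\cap\overline{\Omega}$ onto a half-ball and freeze all coefficients at the boundary point, obtaining a constant-coefficient system on $\R{n}_{+}$ together with the frozen boundary operator $\mathbf{b}_0(D)$. A partial Fourier transform in the tangential variables turns this into an ODE system in the normal variable $x_n$ whose characteristic roots are exactly the $\tau_k^{\pm}$ of the supplementary condition, with precisely $M$ of them in the open upper half-plane; the solutions bounded as $x_n\to\infty$ form an $M$-dimensional space, and the complementing condition (B1), made quantitative through the minor constant $\triangle_{\Gamma}>0$, says precisely that $\mathbf{b}_0$ restricted to that space is boundedly invertible, uniformly over the unit tangential frequencies. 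Inverting the tangential transform yields Poisson-type operators whose kernels one estimates to obtain
\[
\sum_j \Norm{D^{k+t_j}v_j}{L^p(\R{n}_{+})} \le C\Big(\sum_i \Norm{D^{k-s_i}f_i}{L^p(\R{n}_{+})} + \sum_h \Norm{g_h}{W^{k-r_h-1/p,p}(\partial\R{n}_{+})}\Big),
\]
with $C=C(\varLambda,\triangle_{\Gamma},n,N,M,\sum|r_h|,p,k)$; pulling back through $\mathbb{T}_x$ costs only the $C^{k+t',\phi}$-norm of the transformation, which is $\le C_{\mathbb{T}}$ uniformly by (S4).

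\textbf{Patching, lower-order terms, uniformity, and regularity.} Take a finite partition of unity $\{\zeta_m\}$ subordinate to a cover of $\overline{\Omega}$ by interior balls and hemisphere neighbourhoods and apply the interior or boundary model estimate to each $\zeta_m u$. The commutators $[\mathbf{a}(\cdot,D),\zeta_m]u$, $[\mathbf{b}(\cdot,D),\zeta_m]u$, the frozen-coefficient remainders, and the genuinely lower-order coefficients (those with $|\rho|<s_i+t_j$, lying in $W^{\cdot,\infty}$ by (S1)--(S2)) are all of order strictly less than the top; by Gagliardo--Nirenberg interpolation in $W^{\cdot,p}(\Omega)$ --- whose constant is uniform over the admissible family thanks to the uniform cone property and Lemma~\ref{Ex_Shape_Deriv_LinEl:Lem: uniform cone lemma} --- each is bounded by $\delta\Norm{u}{W^{k+t',p}(\Omega)}+C_\delta\Norm{u}{L^p(\Omega)}$ (equivalently $C_\delta\int_\Omega|u|\,dx$). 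Summing over $m$ and choosing $\delta$ small absorbs the top-order pieces into the left-hand side, leaving the stated estimate with $\Norm{u}{C^0(\Omega)}$ (or $\int_\Omega|u_j|\,dx$) on the right and with $C$ depending only on $c_{a,b},\varLambda,\triangle_{\Gamma},C_{\mathbb{T}},d,n,N,M,\sum|r_h|,p,k$. To turn this a priori estimate into a genuine regularity statement and to climb from the base case $k=r'$ to arbitrary $k\ge r'$, one runs the same estimate on tangential difference quotients $\Delta_h^{e_\ell}(\zeta_m u)$ --- the right-hand side staying bounded in $h$ because the data and coefficients carry the required regularity --- lets $h\to 0$ to gain one tangential derivative, and recovers the missing normal derivatives by solving \eqref{PDE_Systems:Eq:PDE_System_II} for the top purely normal derivatives (the coefficient of $\partial_{x_n}^{t_j}$ is invertible by ellipticity); iterating raises the regularity one order at a time. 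The main obstacle is the boundary model problem: extracting from the purely algebraic complementing condition a \emph{quantitative}, frequency-uniform bound on the inverse of the boundary operator on the space of stable solutions, and organizing the resulting Poisson kernels into $L^p$-bounded singular integral operators --- this is exactly where $\triangle_{\Gamma}$ and the uniform hemisphere property are indispensable for keeping $C$ uniform over the family of domains.
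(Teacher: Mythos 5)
The paper itself gives no proof of Theorem~\ref{PDE_Systems:Thm:SchauderEstSobolev}: it is quoted verbatim as a known result from \cite{Agm64} (Theorem 10.5 there), so there is no in-paper argument to compare against. Your outline is exactly the Agmon--Douglis--Nirenberg strategy of that cited source --- localization and coefficient freezing, the half-space model problem with the complementing condition made quantitative through the minor constant $\triangle_{\Gamma}$, Calder\'on--Zygmund/Poisson-kernel $L^p$ bounds, partition-of-unity patching with absorption of lower-order terms, and difference quotients plus induction in $k$ --- and it correctly identifies the role of the uniform hemisphere property and $C_{\mathbb{T}}$ in keeping the constant uniform, which is precisely why the thesis records the theorem in this form.
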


\begin{thm}[Schauder Estimates for Classical PDE solutions] \label{PDE_Systems:Thm:SchauderEstHölder} \citep{Agm64} Let $ \Omega \subset \R{n} $ be a general, nonempty (possibly infinite) domain and $ \Omega' \subset \Omega $ a subdomain abutting a boundary portion $ \Gamma' $ of $ \Omega $ such that $ \partial \Omega' \cap \Gamma \subset \Gamma'  $ in the $ (n-1) $-dimensional sense.
	
	\noindent Suppose that a PDE system \eqref{PDE_Systems:Eq:PDE_System_II} is given on  $ \Omega$ and $ \Gamma' $ such that the assumptions $ (\mathbf{A1}) - (\mathbf{A3}) $ and $ (\mathbf{B1}) $ are satisfied. Let $ k \geq r'' = \max\{0,r_1,\ldots,r_{M}\} $ be a fixed integer and $ \phi \in (0,1) $ a Hölder index.
	Additionally, suppose that $ \Omega' \subset \Omega $ satisfies a hemisphere property such that $ \triangle_{\Gamma'}=\inf_{\{x: ~\dist(x,\Gamma')\leq d\}} \triangle_x  >0 $. Moreover, we make the following assumptions regarding smoothness and boundedness:
	\begin{itemize}
		\item[(S1)] The coefficients $ \mathbf{a}_{i,j}^{(\rho)} $ and the functions $ f_i $ belong to $ C^{k-s_i,\phi}(\overline{\Omega}) $.
		\item[(S2)] The coefficients $ \mathbf{b}_{h,j}^{(\varrho)} $ and $ g_h $ are elements of $ C^{k-r_h,\phi}(\Gamma') $.
		\item[(S3)] The right hand sides $ f_i $ and $ g_h $ are bounded from above by some constant $ c_{f,g}>0 $ and the coefficients $ \mathbf{a}_{i,j}^{(\rho)} $ and $ \mathbf{b}_{h,j}^{(\varrho)} $ are bounded from above by $ c_{a,b} $ in their respective Hölder norms.
		\item[(S4)] The hemisphere transformations $ \mathbb{T}_{x} $ and there inverses are of class $ C^{k+ t'',\phi} $ with $ t'' = \max\{-s_i,\, -r_h,\, t_j\} $ and have finite $ C^{k + t'',\phi} $-norms bounded by some constant $ C_{\mathbb{T}} $ independent of $ x $.
	\end{itemize}
	Then any solution $ u \in C^{r'' + t_j,\phi}(\Omega\cup \Gamma') $ of \eqref{PDE_Systems:Eq:PDE_System_II}
	belongs to $ C^{k+t_j,\phi}(\overline{\Omega'}) $, $ j=1,\ldots, N $ and satisfies
	\begin{align*}
	\Norm{u_{j}}{C^{k+t_j,\phi}(\Omega')} \leq C \left(\sum_{1}^{N} \Norm{f_i}{C^{k-s_i,\phi}}(\Omega) + \sum_{h=1}^{M}\Norm{g_h}{C^{k-r_h,\phi}(\Gamma')} + \sum_{k=1}^{N}\Norm{u_{k}}{C^{0}(\Omega)}\right)
	\end{align*}
	for some constant $ C\geq 0 $ that depends on $ c_{a,b},\, \varLambda,\, \triangle_{\Gamma'},\, C_{\mathbb{T}},\, d,\, n,\, N,\,\phi,\, k $.
	
	\noindent If $ \Omega $ is bounded, $ \Omega'=\Omega $, $ \Gamma'=\Gamma $, then any solution $ u \in C^{r'' + t_j,\phi}(\overline{\Omega}) $ of \eqref{PDE_Systems:Eq:PDE_System_II}
	already belongs to $ C^{k+t_j,\phi}(\overline{\Omega}) $, $ j=1,\ldots, N $ and $ \Norm{u_{j}}{C^{0}(\Omega)} $  can be replaced by $ \int_{\Omega} |u_j|\, dx $, $ j=1,\ldots, N $.
\end{thm}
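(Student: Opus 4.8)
The statement is essentially Theorem~10.5 of \citep{Agm64}, so the plan is to recall the three building blocks of the Agmon--Douglis--Nirenberg Schauder theory --- an interior estimate, a flattened-boundary estimate, and a perturbation-plus-covering argument --- and to verify that, under the smoothness and boundedness hypotheses (S1)--(S4), all constants can be taken to depend only on the listed quantities $c_{a,b},\varLambda,\triangle_{\Gamma'},C_{\mathbb{T}},d,n,N,\phi,k$. Throughout one works with the normalized weights $s_i\le 0$, $0\le t_j\le t''$, so that the orders of the operators match the Hölder exponents appearing on each side.

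\textbf{Local estimates.} For an interior point $x_0$ with $\dist(x_0,\Gamma')$ bounded below, freeze the principal part of $\mathbf{a}$ at $x_0$ and split $\mathbf{a}(\cdot,D)=\mathbf{a}'(x_0,D)+\big(\mathbf{a}(\cdot,D)-\mathbf{a}'(x_0,D)\big)$. For the homogeneous constant-coefficient elliptic operator $\mathbf{a}'(x_0,D)$ one has the classical Calder\'on--Zygmund estimate in Hölder norms, with a constant depending only on $\varLambda,n,N,k,\phi$, obtained from the explicit Newtonian-type kernel of the fundamental matrix; the remainder has Hölder-small coefficients on a ball of radius $r_0=r_0(\varLambda,c_{a,b},n,k,\phi)$ and is absorbed on the left, at the price of a term $C\sum_k\Norm{u_k}{C^0}$ on the right. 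For a point $x_0$ with $\dist(x_0,\Gamma')\le d$, one pulls the problem back by the hemisphere transformation $\mathbb{T}_{x_0}^{-1}$ to a half-ball $\Sigma_R\subset\R{n}_+$ with flat face $F_R$; ellipticity, the supplementary condition and the complementing condition are preserved under the $C^{k+t'',\phi}$-diffeomorphism $\mathbb{T}_{x_0}$, and the ellipticity constant, coefficient norms and --- decisively --- the minor constant of the transformed system stay controlled by $\varLambda,c_{a,b},C_{\mathbb{T}}$ and $\triangle_{\Gamma'}$. Freezing the transformed coefficients at the image of $x_0$ and using the Poisson kernel associated with the complementing boundary condition (for the boundary data) together with the half-space Green's matrix (for the interior data), one gets the flat-boundary Schauder estimate; perturbing off the frozen operator exactly as in the interior case and transforming back gives the local boundary estimate, with the transformation contributing a factor in $C_{\mathbb{T}}$.

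\textbf{Covering and the low-order term.} Since $\Omega'$ abuts $\Gamma'$ with $\partial\Omega'\cap\Gamma\subset\Gamma'$ in the $(n-1)$-dimensional sense, cover $\overline{\Omega'}$ by finitely many interior balls of radius $r_0$ together with boundary neighbourhoods $U_x$ (each containing a ball of radius $d/2$), so that every boundary point met lies in a genuine boundary chart; the number of patches and the overlap multiplicity are controlled by $d$ and $n$. Multiply the local estimates by a subordinate partition of unity, sum, and absorb the lower-order contributions into $C\sum_k\Norm{u_k}{C^0(\Omega)}$, which yields the stated inequality on $\Omega'$; when $\Omega$ is unbounded the same localization works with constants uniform in the location of the chart by (S4). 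Finally, when $\Omega$ is bounded with $\Omega'=\Omega$, $\Gamma'=\Gamma$, one replaces $\Norm{u_k}{C^0(\Omega)}$ by $\int_\Omega|u_k|\,dx$ via the interpolation inequality $\Norm{v}{C^0(\Omega)}\le\varepsilon\Norm{v}{C^{k+t_j,\phi}(\Omega)}+C_\varepsilon\Norm{v}{L^1(\Omega)}$ (a form of Ehrling's lemma, cf.\ Lemma~\ref{Ex_Shape_Deriv_LinEl:Lem: uniform cone lemma}), choosing $\varepsilon$ small and absorbing the top-order term on the left.

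\textbf{Main obstacle.} The genuinely technical point is the flattened-boundary estimate: proving the constant-coefficient half-space Schauder inequality directly from the complementing boundary condition (constructing the relevant Poisson kernel and controlling its singularity by the minor constant $\triangle$), and then verifying that this quantitative strength survives, with only controlled loss, both the freezing of coefficients and the hemisphere change of variables. Keeping every constant uniform across all boundary charts --- which is exactly what (S4) and the uniform hemisphere property are designed to guarantee --- is the bookkeeping-intensive remainder.
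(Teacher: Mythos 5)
The paper does not prove this theorem at all: it is quoted directly from Agmon--Douglis--Nirenberg \citep{Agm64} (the H\"older-space version, Theorem 9.3 there; Theorem 10.5, which you name, is the $L^p$ counterpart used for Theorem \ref{PDE_Systems:Thm:SchauderEstSobolev}), so there is no in-text proof to compare against. Your outline is a faithful reconstruction of exactly that classical ADN argument --- interior and flattened-boundary estimates by freezing coefficients, the complementing-condition Poisson kernel controlled by the minor constant, the hemisphere change of variables with constants governed by $C_{\mathbb{T}}$ and $\triangle_{\Gamma'}$, a covering/partition-of-unity step, and the Ehrling-type interpolation (cf.\ Lemma \ref{Ex_Shape_Deriv_LinEl:Lem: uniform cone lemma}, which the paper itself invokes for the same purpose in the proof of Theorem \ref{Linear_Elasticity:Thm:LinEl_Classical_Sol}) to replace $\Norm{u_k}{C^{0}(\Omega)}$ by the $L^1$-norm in the bounded case. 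The only caveat is that the genuinely technical core --- the constant-coefficient half-space Schauder estimate derived from the complementing condition --- is named but not carried out, which is precisely the content deferred to \citep{Agm64}.
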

\pagebreak
\begin{rem}
	To be more accurate, Theorem 7.3. in \cite{Agm59} shows that the constant $ C $ depends on $ \triangle_{\Gamma'}^{-1} $ and that it's dependence on $ \triangle_{\Gamma'} $ is anti proportional: if $ \triangle_{\Gamma'} $ decreases, then $ C $ increses and vice versa.
	
	\noindent The constant $ C $ is obviously independent of the choice of $ \Omega' $ but not of $ \Gamma' $.
\end{rem}

\section{Regularity theory in linear elasticity}
\label{Sec:Linear_Elasticity:Sec:Modeling Elastic Deformations}

Let us now resume the linear elasticity equation which was introduced in Section \ref{Reliability}. 

The set $\Omega\subseteq \R{3}$ is assumed to be a bounded domain with divided boundary $\Gamma=\partial\Omega$: A Dirichlet boundary part denoted by $ \Gamma_{D} $ and a Neumann boundary $ \Gamma_N $ with $ \Gamma_{D} \dot{ \cup} \Gamma_{N}=\Gamma $. 

The displacement field $u=u(\Omega):\overline{ \Omega}\to \R{3}$ depends on the domain and can be derived as a solution of a linear elasticity problem which is given by a system of linear elliptic PDE of second order. The we assume that volume loads $f=f(\Omega):\Omega\to\R{3}$ and surface loads $g_{N}=g(\Gamma_{N}):\Gamma_{N} \to\R{3}$ are acting on $ \Omega $. The lamé coefficients $ \lambda,\, \mu $ shall are assumed to be positive real numbers. 
According to \citep{ErnGuerm04,Cia1988} the \textit{disjoint displacement-traction problem} is given by equation \eqref{Reliability:Eq:LinEl}, i.e.
\begin{align}
\left. 
\begin{array}{rcll}
-\Div( \se(u)) &=&f  &\text{ in } \Omega \\
\se(u) &= &\lambda\Div(u)\mathrm{I}+\mu(Du+Du^{\top}) &\text{ on } \Omega \\
u &=& 0  &\text{ on } \Gamma_{D}  \\
\se(u) \vec{n}&=&g_N &\text{ on }\Gamma_{N}.
\end{array} 
\right.
\end{align}

As already indicated, it is easy to find results on the existence of $ H^1 $-solutions in the literature, see e.g. \citep{Cia1988,ErnGuerm04} or \cite{KnopsPayne1971}. The book of Ciarlet \cite{Cia1988} also provides existence theory for solutions the pure traction problem in higher order Sobolev spaces. 

The situation is somewhat different concerning strong solutions for elliptic PDE systems.
Although the results on the existence of strong solutions are widely known, they are hardly detectable in the literature. 
Nevertheless, it is the aim of this section to provide these statements on regularity results on linear elasticity since they play a crucial role in the study of shape derivatives for linear elasticity in Hölder Spaces. Thus we supplement the results, where we could not find them in the literature.

\subsection{Sobolev-solutions and Schauder estimates}
\label{Sec:Linear_Elasticity:Sec:Regularity_Weak_Solutions}

\begin{thm}[Korn's second inequality]\label{Linear_Elasticity:Thm:Korns_second_ineq}\citep[Theorem 6.3-3 and 6.3-4]{Cia1988} \cite{Nitsche1981}
	Suppose that $ \Omega \subset \R{3}$ is a Lipschitz domain.
	\begin{itemize}
		\item[i)]  For each $ v \in H^{1}(\Omega,\R{3}) $ the strain tensor $ \varepsilon(v)_{ij} \in L^2(\Omega) $ and there is a constant $ c>0 $ such that 
		\begin{equation}\label{Linear_Elasticity:Eq:Korns_second_ineq}
		\Norm{v}{H^1(\Omega,\R{3})}^2 \leq C\left(\Norm{v}{L^2(\Omega,\R{3})}^2 + \Norm{\varepsilon(v)}{L^2(\Omega,\R{3 \times 3})}^2\right)
		\end{equation}
		with  
		\begin{equation}
		\Norm{\varepsilon(v)}{L^2(\Omega,\R{3 \times 3})}=\left(\int_{\Omega} \varepsilon(u):\varepsilon(u) \, dx\right)^{\nicefrac{1}{2}}= \left(\int_{\Omega} \tr(\varepsilon(u)^2) \, dx\right)^{\nicefrac{1}{2}}\, .
		\end{equation}
		\item[ii)] Let $ \Gamma_{D}$ be a measurable subset of $ \Gamma =\partial\Omega $. Then the space
		\[ H^1_{D}(\Omega,\R{3}):=\{ v \in H^1(\Omega,\R{3}) \vert \mathbf{T_{\Gamma}}(v)=0 \,  \text{ on } \Gamma_{D}\} \]
		is a closed subspace of $  H^1(\Omega,\R{3}) $ and therefore a Hilbert space. If the surface measure of $ \Gamma_{0} $ is positive, i.e. $ |\Gamma_{0}|>0 $, there exists a constant $ C_{K}>0 $ such that  
		\begin{equation}\label{Linear_Elasticity:Eq:Folgerung_Korns_second_ineq}
		\Norm{v}{H^1(\Omega,\R{3})} \leq C_{K} \Norm{\varepsilon(v)}{L^2(\Omega,\R{3})} ~~~ \forall v \in H^1_{\Gamma_{0}}(\Omega,\R{3}).
		\end{equation}
	\end{itemize}
\end{thm}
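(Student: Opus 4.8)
Plan. I would establish (i) first and then deduce (ii) from it via a Rellich compactness argument.

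For part (i), the inclusion $\varepsilon(v)_{ij}=\tfrac12(\partial_i v_j+\partial_j v_i)\in L^2(\Omega)$ is immediate, since the weak first derivatives of an $H^1$-function lie in $L^2$. The substance is the estimate \eqref{Linear_Elasticity:Eq:Korns_second_ineq}, and the starting point is the distributional identity
\[
\partial_k\partial_l v_j \;=\; \partial_l\,\varepsilon_{jk}(v)+\partial_k\,\varepsilon_{jl}(v)-\partial_j\,\varepsilon_{kl}(v),\qquad j,k,l\in\{1,2,3\},
\]
verified by expanding the right-hand side and cancelling the commuting mixed derivatives; it holds in $\mathcal{D}'(\Omega)$ for every $v\in L^2(\Omega,\R{3})$. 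Setting $w:=\partial_k v_j$ we have $w\in H^{-1}(\Omega)$ (because $v_j\in L^2(\Omega)$) and, by the identity, $\partial_l w\in H^{-1}(\Omega)$ for each $l$, with the relevant $H^{-1}$-norms controlled by $\Norm{v}{L^2(\Omega,\R{3})}+\Norm{\varepsilon(v)}{L^2(\Omega,\R{3\times3})}$. The analytic core is then the lemma of J.-L.\ Lions (a consequence of Ne\v{c}as' inequality, see \cite{Necas1967}): on a bounded Lipschitz domain, any $w\in\mathcal{D}'(\Omega)$ with $w$ and all its first-order derivatives in $H^{-1}(\Omega)$ in fact belongs to $L^2(\Omega)$, with $\Norm{w}{L^2(\Omega)}\le C_\Omega\big(\Norm{w}{H^{-1}(\Omega)}+\sum_l\Norm{\partial_l w}{H^{-1}(\Omega)}\big)$. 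Applying this to $w=\partial_k v_j$ for all $j,k$ yields $v\in H^1(\Omega,\R{3})$ together with \eqref{Linear_Elasticity:Eq:Korns_second_ineq}, with a constant depending only on $\Omega$. I expect this Lions lemma to be the main obstacle: it is proved by Fourier transform on $\R{n}$, where it is transparent from the Bessel-potential description of $H^s$, followed by localization with a partition of unity subordinate to a finite cover of $\overline{\Omega}$ by boundary charts and flattening — and it is precisely at the flattening step that the Lipschitz regularity of $\partial\Omega$ is used.

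For part (ii), the space $H^1_D(\Omega,\R{3})$ is the kernel of the linear map $v\mapsto(\mathbf{T_{\Gamma}}v)\big|_{\Gamma_D}$ from $H^1(\Omega,\R{3})$ into $L^2(\Gamma_D,\R{3})$, which is continuous since the trace operator $\mathbf{T_{\Gamma}}$ is; hence $H^1_D$ is closed, and as a closed subspace of the Hilbert space $H^1(\Omega,\R{3})$ it is itself a Hilbert space. For the inequality \eqref{Linear_Elasticity:Eq:Folgerung_Korns_second_ineq} I would argue by contradiction: if it fails, there is a sequence $(v_n)\subset H^1_{\Gamma_0}(\Omega,\R{3})$ with $\Norm{v_n}{H^1(\Omega,\R{3})}=1$ and $\Norm{\varepsilon(v_n)}{L^2(\Omega,\R{3\times3})}\to 0$. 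Since $\Omega$ is bounded and Lipschitz, hence has a cone property, the embedding $H^1(\Omega,\R{3})\hookrightarrow L^2(\Omega,\R{3})$ is compact (Rellich--Kondrachov), so a subsequence converges in $L^2$; applying part (i) to the differences $v_n-v_m$ gives $\Norm{v_n-v_m}{H^1(\Omega,\R{3})}^2\le C\big(\Norm{v_n-v_m}{L^2}^2+\Norm{\varepsilon(v_n)-\varepsilon(v_m)}{L^2}^2\big)\to 0$, so $(v_n)$ is Cauchy in $H^1(\Omega,\R{3})$ and $v_n\to v$ in $H^1$ with $\Norm{v}{H^1(\Omega,\R{3})}=1$ and $\varepsilon(v)=0$. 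By the identity of part (i), $\varepsilon(v)=0$ forces all second derivatives of $v$ to vanish, so on the connected domain $\Omega$ one has $v(x)=a+Bx$ with $a\in\R{3}$ and $B=-B^{\top}$. Closedness of $H^1_{\Gamma_0}$ gives $v\in H^1_{\Gamma_0}$, i.e.\ $\mathbf{T_{\Gamma}}v=0$ on $\Gamma_0$; since $|\Gamma_0|>0$, the set $\Gamma_0$ contains three non-collinear points, so $B$ annihilates a two-dimensional subspace, and a skew-symmetric matrix — being of even rank — must then be $0$, whence also $a=0$. Thus $v\equiv 0$, contradicting $\Norm{v}{H^1(\Omega,\R{3})}=1$, and \eqref{Linear_Elasticity:Eq:Folgerung_Korns_second_ineq} follows.
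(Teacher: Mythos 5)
Your argument is correct and is precisely the classical proof from the reference the paper cites (Ciarlet, Thms.\ 6.3-2 to 6.3-4): the identity $\partial_k\partial_l v_j=\partial_l\varepsilon_{jk}(v)+\partial_k\varepsilon_{jl}(v)-\partial_j\varepsilon_{kl}(v)$ combined with the Lions/Ne\v{c}as lemma on a Lipschitz domain for part (i), and Rellich compactness together with the fact that a rigid displacement $a+Bx$ (with $B$ skew) vanishing on a boundary set of positive surface measure must be zero for part (ii). The paper itself states this theorem without proof, citing Ciarlet and Nitsche, so your proposal supplies exactly the argument the paper relies on.
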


\noindent The original work was published in 1981 by Nitsche \cite{Nitsche1981}.
A good survey over some possibilities to proof Korn's second inequality is provided in  \citep{DiplomDerksen}.

\begin{defn}\label{Linear_Elasticity:Def:Weak_formulation_linear_elasticity}
	The weak formulation of equation \eqref{Reliability:Eq:LinEl} is given by 
	\begin{align}\label{Linear_Elasticity:Eq: B(u,v)=L(v)}
	B(u,v)=L(v) \, \forall v \in H^{1}_{D}(\Omega,\R{3})
	\end{align}
	where
	\begin{align}
	B(u,v):= \int_{\Omega} \lambda \tr(\varepsilon(u)) \tr(\varepsilon(v)) + 2\mu \tr(\varepsilon(u)\varepsilon(v)) \, dx = \int_{\Omega} \varepsilon(u):\sigma(v) \, dx 
	\end{align}
	and 
	\begin{align}
	L(v):=\int_{\Omega} \langle f(\Omega),v \rangle \, dx + \int_{\Gamma_{N}}\langle g(\Gamma),v \rangle \, dS.
	\end{align}
\end{defn}

\noindent As a result of Theorem \ref{Linear_Elasticity:Thm:Korns_second_ineq} the bilinear form $ B $ is strictly coercive. Moreover, $ B $ is continuous on $ H^{1}_{D} $.

\noindent In case $ \Omega $ possesses a Lipschitz boundary, the trace spaces
$ W^{m-1/p,p}(\Gamma)=\mathbf{T_{\Gamma}}(W^{m,p}(\Omega))$ $ =\{\mathbf{T_{\Gamma}}(u) \vert u \in W^{m,p}(\Omega) \} $ are well defined for any $ m \geq 1 $ - see also Definition \ref{App: Trace Space}.

\noindent If $ f \in L^{\nicefrac{6}{5}}(\Omega,\R{3}) $ and $ g \in L^{\nicefrac{4}{3}}(\Gamma_{N},\R{3}) $ then $ L $ is a continuous linear form. This is due to the Sobolev Embedding Theorem, the trace operator $ \mathbf{T_{\Gamma}} $ and H\"olders inequality because
$$ H^{1}(\Omega,\R{3}) = W^{1,2}(\Omega,\R{3})\hookrightarrow L^{p^{\ast}}(\Omega,\R{3}) \text{ if } p^{\ast}=6 $$ 
and
\[ \mathbf{T_{\Gamma}}:H^{1}(\Omega,\R{3})  = W^{1,2}(\Omega,\R{3}) \to W^{1-1/2,2}(\Gamma,\R{3}) \subsetneq L^{p^{\#}}(\Gamma,\R{3}) \text{ with } p^{\#}=4, \]
see the sobolev embedding Theorem \ref{App: Sobolev embedding}, imply
\begin{align}\label{Linear_Elasticity:Def:Con Linearform LinEl}
|L(v)| 
&\leq \Norm{\langle f(\Omega),v \rangle}{L^1(\Omega,\R{3})}+\Norm{\langle g(\Gamma),v \rangle}{L^1(\Gamma_N,\R{3})} \nonumber \\
&\leq  \Norm{f(\Omega)}{L^{\nicefrac{6}{5}}(\Omega,\R{3})} \Norm{v}{L^{6}(\Omega,\R{3})}+ \Norm{g(\Gamma)}{L^{\nicefrac{4}{3}}(\Gamma_N,\R{3})} \Norm{v}{L^{4}(\Omega,\R{3})}\\
&\leq \left(C_1\Norm{f(\Omega)}{L^{\nicefrac{6}{5}}(\Omega,\R{3})} + C_2\Norm{g(\Gamma)}{L^{\nicefrac{4}{3}}(\Gamma_N,\R{3})}\right)\Norm{v}{H^{1}(\Omega,\R{3})}. \nonumber
\end{align}

\noindent Thus, we can state the following:

\begin{thm} \label{Linear_Elasticity:Def:H1_solutions_lin_el} ($ H^1 $-solutions for the disjoint displacement traction \\ problem)\citep[Theorem 6.3-5 ]{Cia1988}
	Let $ \Omega \subset \R{3}$ be a nonempty Lipschitz domain and  $ \Gamma_{D}$ be a measurable subset of $ \Gamma =\partial\Omega $ with $ \int_{\Gamma_D}\,dS= |\Gamma|_{D}>0 $. Then there exists a unique weak solution $  u \in H^{1}_{D}(\Omega,\R{3})  $ such that 
	\begin{equation}
	B(u,v)=L(v)\, \forall v \in H^{1}_{D}(\Omega,\R{3}),
	\end{equation} 
	if $ f \in L^{\nicefrac{6}{5}}(\Omega,\R{3}) $ and $ g_{N} \in L^{\nicefrac{4}{3}}(\Gamma_N,\R{3}) $.
\end{thm}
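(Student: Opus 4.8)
The plan is to read the variational problem $B(u,v)=L(v)$ for all $v\in H^{1}_{D}(\Omega,\R{3})$ as an instance of the abstract Lax--Milgram situation; since $B$ is symmetric one may equivalently regard $B$ as an inner product on $H^{1}_{D}(\Omega,\R{3})$ and invoke the Riesz representation theorem. The work then splits into three routine verifications. First, $H^{1}_{D}(\Omega,\R{3})$ is a Hilbert space, which is exactly part ii) of Theorem \ref{Linear_Elasticity:Thm:Korns_second_ineq}: it is a closed subspace of $H^{1}(\Omega,\R{3})$. Second, $B$ is bounded: writing $B(u,v)=\int_{\Omega}\lambda\,\tr(\varepsilon(u))\tr(\varepsilon(v))+2\mu\,\tr(\varepsilon(u)\varepsilon(v))\,dx$ and using the Cauchy--Schwarz inequality in $L^{2}(\Omega,\R{3\times 3})$ together with the trivial bound $\Norm{\varepsilon(w)}{L^{2}(\Omega,\R{3\times 3})}\le\Norm{w}{H^{1}(\Omega,\R{3})}$ gives $|B(u,v)|\le C(\lambda,\mu)\,\Norm{u}{H^{1}(\Omega,\R{3})}\Norm{v}{H^{1}(\Omega,\R{3})}$.

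The third and substantive point is the coercivity of $B$ on $H^{1}_{D}(\Omega,\R{3})$, and this is the place where the hypothesis $|\Gamma_{D}|>0$ and Korn's second inequality are indispensable. Because $\lambda,\mu>0$, the pointwise identity $\sigma(v):\varepsilon(v)=\lambda(\tr\varepsilon(v))^{2}+2\mu\,\varepsilon(v):\varepsilon(v)$ yields $B(v,v)\ge 2\mu\,\Norm{\varepsilon(v)}{L^{2}(\Omega,\R{3\times 3})}^{2}$; then inequality \eqref{Linear_Elasticity:Eq:Folgerung_Korns_second_ineq}, valid on $H^{1}_{D}(\Omega,\R{3})$ precisely because the surface measure of $\Gamma_{D}$ is positive, improves this to $B(v,v)\ge (2\mu/C_{K}^{2})\,\Norm{v}{H^{1}(\Omega,\R{3})}^{2}$, i.e. strict coercivity. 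I expect this to be the main obstacle, in the sense that it is the only step that relies on a non-elementary input; everything else is bookkeeping.

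Finally, $L$ is bounded on $H^{1}_{D}(\Omega,\R{3})$, and here I would reuse the estimate already displayed before the theorem: the Sobolev embedding $H^{1}(\Omega,\R{3})\hookrightarrow L^{6}(\Omega,\R{3})$, the trace map $H^{1}(\Omega,\R{3})\to W^{1/2,2}(\Gamma,\R{3})\hookrightarrow L^{4}(\Gamma_{N},\R{3})$, and Hölder's inequality combine to give $|L(v)|\le\bigl(C_{1}\Norm{f}{L^{6/5}(\Omega,\R{3})}+C_{2}\Norm{g_{N}}{L^{4/3}(\Gamma_{N},\R{3})}\bigr)\Norm{v}{H^{1}(\Omega,\R{3})}$, which is finite under the assumptions $f\in L^{6/5}(\Omega,\R{3})$, $g_{N}\in L^{4/3}(\Gamma_{N},\R{3})$, and restricting to $v\in H^{1}_{D}(\Omega,\R{3})$ keeps the bound. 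With $B$ bounded and coercive and $L$ bounded, the Lax--Milgram theorem produces a unique $u\in H^{1}_{D}(\Omega,\R{3})$ with $B(u,v)=L(v)$ for every $v\in H^{1}_{D}(\Omega,\R{3})$, which is the assertion.
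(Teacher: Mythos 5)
Your proof is correct and follows essentially the same route as the paper, which assembles exactly these ingredients immediately before the theorem statement: strict coercivity of $B$ on $H^{1}_{D}(\Omega,\R{3})$ via Korn's second inequality (using $|\Gamma_{D}|>0$), continuity of $B$, and continuity of $L$ via the Sobolev embedding, the trace operator and H\"older's inequality, after which the result is obtained by Lax--Milgram (the paper simply cites Ciarlet, Theorem 6.3-5, rather than writing the argument out). Your additional remark that symmetry of $B$ allows Riesz representation in place of Lax--Milgram is a harmless variant, not a different method.
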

Thus especially $ f:=f\vert_{\Omega} $ and $ g:=g\vert_{\Gamma_N} $ with $ f,\,g \in C(\overline{ \Oext},\R{3}) $ will be sufficient to obtain a unique $ H^1 $-solution $ u $ to equation \eqref{Linear_Elasticity:Eq: B(u,v)=L(v)}.

\begin{thm}\label{Linear_Elasticity:Thm:Weak_Reg_LinEl}(Regularity weak solutions to the linearized disjoint displacement traction problem)
	Let $ \Omega $ be a $ C^{2} $-domain in $ \R{3} $ and $ \Gamma_{D} \subset \Gamma $ a proportion of the boundary with positive surface measure. Assume that $ \Gamma_N= \Gamma\setminus\Gamma_{D} $ and $ \Gamma_{D} $ have a positive distance $\dist(\Gamma_{D},\Gamma_{N})>0$ (\footnote{Otherwise there may be no such regular solution to \eqref{Linear_Elasticity:Eq: B(u,v)=L(v)}}).
	\begin{itemize}
		\item[i)] Suppose that $ f \in L^{p}(\Omega,\R{3}) $ and $ g_N \in W^{1-\nicefrac{1}{p},p}(\Gamma,\R{3}),\, p\geq \frac{4}{3} $. Then the weak solution $ u \in H^{1}_{D}(\Omega,\R{3}) $ of \eqref{Reliability:Eq:LinEl} is an element of $ W^{2,p}(\Omega,\R{3}) $.
		\item[ii)] Suppose that $ k \geq 1 $ is an integer and $ \Gamma $ is of class $ C^{2+k} $. If $ f \in W^{k,p}(\Omega,\R{3}) $ and $ g_N\in W^{k+1-\nicefrac{1}{p},p}(\Gamma_N,\R{3}),\, p\geq \frac{4}{3} $. Then the weak solution $ u \in H^{1}_{D}(\Omega,\R{3}) $ of \eqref{Reliability:Eq:LinEl} is an element of $ W^{k+2,p}(\Omega,\R{3}) $.
	\end{itemize}
	For any $ k \geq 0 $ the solution $ u \in W^{2+k,p}(\Omega,\R{3}) $ satifies 
	\begin{equation}\label{Linear_Elasticity:Eq:Schauder_Estimate_u_Sobolev}
	\Norm{u}{W^{k+2,p}(\Omega,\R{3})} \leq C \left(\Norm{f}{W^{k,p}(\Omega,\R{3})} + \Norm{g}{W^{k+1-\nicefrac{1}{p},p}(\Gamma,\R{3})} +\Norm{u}{C^0(\Omega,\R{3})}\right).
	\end{equation}
	for some constant $ C>0 $ depending on  $ \lambda,\, \mu,\, n=3,\, N=M=3,\,p\geq \frac{4}{3} ,\, \sum |r_h| = 6 $ the choice of $ k \in \N{} $ and the domain $ \Omega $ or more precisely on $ \triangle_{\Gamma},\, C_{\mathbb{T}}$ and the distance $ d $.
\end{thm}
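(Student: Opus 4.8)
The plan is to reduce both assertions and the estimate \eqref{Linear_Elasticity:Eq:Schauder_Estimate_u_Sobolev} to the abstract linear elliptic theory of Section \ref{PDE_Systems:Sec:Linear Elliptic Systems}. First I would check that the disjoint displacement--traction problem fits that framework. By Example \ref{PDE_Systems:Exmp:Lin_El} the operator $u\mapsto-\Div(\se(u))$ is a uniformly elliptic system of second order on $\R{3}$ with $n=N=M=3$, $s_i=0$, $t_j=2$ (hence $t'=2$), satisfying the supplementary condition \textbf{(A3)} automatically in three independent variables. The Dirichlet operator $u\mapsto u$ on $\Gamma_D$ has weights $r_h=-2$ and satisfies the complementing condition \textbf{(B1)} (Example \ref{PDE_Systems:Exmp:Lin_El}), while the traction operator $u\mapsto\se(u)\vec n$ on $\Gamma_N$ has weights $r_h=-1$ (so that $\deg\mathbf{b}_{hj}\le r_h+t_j=1$) and also satisfies it --- this is classical, see Section~6.3 of \cite{Cia1988}. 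The hypothesis $\dist(\Gamma_D,\Gamma_N)>0$ together with $\Gamma_D\cupdot\Gamma_N=\Gamma$ forces $\Gamma_D$ and $\Gamma_N$ to be open, hence clopen, in $\Gamma$, so the compact $C^{2+k}$-manifold $\Gamma$ splits into finitely many connected components, on each of which exactly one of the two operators acts; there is no Dirichlet--Neumann interface, so the composite boundary operator $\mathbf{b}$ is smooth on all of $\Gamma$ and \textbf{(B1)} holds globally. With $r''=\max\{0,r_h\}=0$ and $r'=\max\{0,r_h+1\}=0$ the boundary regularity demanded by Theorems \ref{PDE_Systems:Thm:Index_Theorem} and \ref{PDE_Systems:Thm:SchauderEstSobolev} is $C^{r'+t'+k}=C^{2+k}$, exactly what is assumed in (i) ($k=0$) and (ii) ($k\ge1$), and a bounded $C^{2+k}$-domain has a $C^{2+k}$-hemisphere property with $\triangle_\Gamma>0$ by Lemma \ref{Ch:Ex_Shape_Deriv_LinEl:Lem: uniform hemisphere property} and compactness of $\Gamma$.

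Next I would upgrade the regularity of the weak solution by the index theorem. Existence and uniqueness of $u\in H^1_D(\Omega,\R{3})$ is given by Theorem \ref{Linear_Elasticity:Def:H1_solutions_lin_el}, which applies since $p\ge 4/3$ forces $f\in L^{6/5}(\Omega,\R{3})$ and the trace data lies in $L^{4/3}(\Gamma_N,\R{3})$. By Theorem \ref{PDE_Systems:Thm:Index_Theorem} the operator of the mixed problem is Fredholm with index independent of the differentiability level and of the integrability exponent. For $p=2$ and at the lowest differentiability level the weak formulation, Korn's second inequality (Theorem \ref{Linear_Elasticity:Thm:Korns_second_ineq}) and $H^2$-regularity \cite{Necas1967} identify this operator as a bijection, so its index is $0$ and therefore vanishes for all admissible parameters. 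Its kernel is trivial: any $W^{k+2,p}$-solution of the homogeneous problem lies in $H^1_D(\Omega,\R{3})$ because $W^{2,4/3}(\Omega)\hookrightarrow W^{1,12/5}(\Omega)\hookrightarrow H^1(\Omega)$ (and larger $k$ or $p$ only improve this), hence vanishes by the uniqueness in Theorem \ref{Linear_Elasticity:Def:H1_solutions_lin_el}. Thus the operator is an isomorphism, so for the prescribed data there is a unique $\tilde u\in W^{k+2,p}(\Omega,\R{3})$ solving \eqref{Reliability:Eq:LinEl}; by the same embedding and $H^1$-uniqueness $\tilde u=u$, which establishes (i) and (ii).

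For the a priori bound I would then apply the Sobolev Schauder estimate, Theorem \ref{PDE_Systems:Thm:SchauderEstSobolev}, directly to $u\in W^{2,p}(\Omega,\R{3})$: with the coefficients of $\se$ constant and the right-hand sides $f\in W^{k,p}(\Omega,\R{3})$, $g_N\in W^{k+1-1/p,p}(\Gamma_N,\R{3})$, the theorem yields $u\in W^{k+2,p}(\overline\Omega,\R{3})$ together with \eqref{Linear_Elasticity:Eq:Schauder_Estimate_u_Sobolev}, the constant depending only on $\lambda,\mu$, on $n=N=M=3$, $p$, $k$, $\sum|r_h|=6$, and on $\triangle_\Gamma$, $C_{\mathbb{T}}$, $d$; the $\Norm{u}{C^0(\Omega)}$ term may be replaced by $\int_\Omega|u|\,dx$ exactly as stated there. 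The step I expect to be the main obstacle is establishing that the \emph{weak} solution actually belongs to the higher space $W^{k+2,p}$, i.e. combining the index theorem, the triviality of the kernel, and the Sobolev embeddings to identify $u$ with $\tilde u$ --- Theorems \ref{PDE_Systems:Thm:Index_Theorem} and \ref{PDE_Systems:Thm:SchauderEstSobolev} on their own only deliver a priori estimates for solutions already known to be sufficiently regular. A secondary difficulty is the verification of the complementing boundary condition for the traction operator and the careful bookkeeping of the system weights $s_i,t_j,r_h$, which, although standard, must be carried out precisely for the index and Schauder theorems to apply; the hypothesis $\dist(\Gamma_D,\Gamma_N)>0$ is what removes the only genuinely delicate point, namely possible corner singularities at a Dirichlet--Neumann interface.
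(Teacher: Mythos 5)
Your proposal follows essentially the same route as the paper's proof: verify that the mixed displacement--traction problem satisfies the ellipticity, supplementary and complementing conditions (with $s_i=0$, $t_j=2$, $r'=0$), use the $p=2$, $H^2$-bijectivity from \cite{Necas1967} together with the Index Theorem \ref{PDE_Systems:Thm:Index_Theorem} to get index zero, kill the kernel via the embedding $W^{2,p}(\Omega)\hookrightarrow H^{1}(\Omega)$ for $p\geq\tfrac{4}{3}$ and $H^1$-uniqueness, conclude surjectivity and hence $u\in W^{k+2,p}$, and finally invoke Theorem \ref{PDE_Systems:Thm:SchauderEstSobolev} for the estimate. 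The argument is correct and matches the paper's proof step for step, including the correct identification of the weak solution with the regular one and the observation that the positive distance between $\Gamma_D$ and $\Gamma_N$ is what excludes interface singularities.
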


\begin{proof}
	We transfer the proof for the pure Dirichlet case, see \citep{Cia1988}, to equation \eqref{Reliability:Eq:LinEl}:
	
	\noindent i) (Step 1) The differential operator $ \mathbf{a}(.,D)$ that belongs to the disjoint displacement traction problem \eqref{Reliability:Eq:LinEl},
	\begin{align*}
	\mathbf{a}_{i,j}(\Xi)&=\mathbf{a}_{i,j}(x,\Xi)=
	\begin{pmatrix}
	(\lambda+\mu)\xi_1^2 + \mu\Norm{\xi}{}^2 & (\lambda +\mu)\xi_1\xi_2 & (\lambda +\mu)\xi_1\xi_3 \\
	(\lambda +\mu)\xi_1\xi_2 & (\lambda+\mu)\xi_2^2 + \mu\Norm{\xi}{}^2   &  (\lambda +\mu)\xi_2\xi_3 \\
	(\lambda +\mu)\xi_1\xi_3 & (\lambda +\mu)\xi_2\xi_3 & (\lambda+\mu)\xi_3^2 + \mu\Norm{\xi}{}^2 
	\end{pmatrix},
	\end{align*} compare Example \ref{PDE_Systems:Exmp:Lin_El},
	satisfies 
	\[ \mathbf{a}(x,D)u(x)=-\Div(\sigma(u(x))) \text{ if } x \in \Omega, \]
	is uniformly elliptic and fulfills the supplementary condition (see \citep{Agm64}). The boundary differential operator 
	$ \mathbf{b}(x,D)$ given by
	\begin{align*}
	\mathbf{b}(x,D)u(x)=
	\begin{cases}
	u &\text{ if } x \in \Gamma_{D},\\
	\sigma(u)\vec{n} &\text{ if } x \in  \Gamma_{N}
	\end{cases}
	~~~=\mathds{1}_{\Gamma_D} u + \mathds{1}_{\Gamma_N} \sigma(u)\vec{n}
	\end{align*}
	is defined by the polynomials $\left(\mathbf{b}_{h,j}(x,\Xi)\right),\,h=1,2,3,\, j=1,2,3 $, 
	\begin{align}
	\mathbf{b}_{hj}(x,\Xi)= 
	\begin{cases}
	\mathds{1}_{\Gamma_N}(x)((\lambda + 2\mu)\vec{n}_h \xi_h + \sum_{k \neq h}\mu \vec{n}_k\xi_k)+  \mathds{1}_{\Gamma_D}(x)& \text{ if } h=j \\
	\mathds{1}_{\Gamma_N}(x)(\lambda \vec{n}_h\xi_j + \mu \vec{n}_j\xi_h) & \text{ if } h\neq j 
	\end{cases}
	\end{align}
	and
	satisfies the complementing boundary condition, see \cite{Cia1988}. Let $ f\in L^{2}(\Omega,\R{3}) $ and  $ g_{N} \in W^{1-1/2,2}(\Gamma,\R{3}) $. As Theorem \ref{Linear_Elasticity:Def:H1_solutions_lin_el} shows, the weak PDE formulation \[ B(u,v)=L(v) \, \forall v \in  H^{1}_{D}(\Omega,\R{3}) \]
	then has a unique $ H^{1}_{D} $ solution  $ u $ and \citep{Necas1967} implies, that $ u \in H^{2}(\Omega,\R{3}) $, already.
	
	\noindent (Step 2) Now let $  \nicefrac{4}{3}<p< \infty $ and $ k\geq0 $. The degrees $ s_{i},t_{j} \in \mathbb{Z}$ that are associated to $ \mathbf{a}_{i,j}(x,\Xi) $ 
	can be chosen as $ s_1=s_2=s_3=0 $ and $ t_1=t_2=t_3=2 $. The system $ r_1,r_2,r_3 $ is then
	determined as $ r_1=r_2=r_3=-1 $ and thus $ \deg(\mathbf{b}_{i,j}(x,\Xi)) =1=-1+2=r_h+t_j $ for all $ h,j=1,\,2,\,3. $ 
	Furthermore, $ r'=\max_{h=1,\, 2,\,3}\{0,r_{h}+1\}=0 $. 
	On $  W^{r'+t_j+m,p}_{D}(\Omega)  $ we can also examine the weak differential operator 
	\begin{align*}
	T_{m,p}^{D}: \prod_{j=1}^{3} W^{m+2,p}_{D}(\Omega) & \to \prod_{i=1}^{3} W^{m,p}(\Omega) \times \prod_{h=1}^{3} W^{m+1-\nicefrac{1}{p},p}(\Gamma_N) \\
	u &\mapsto (\mathbf{a}u,\mathbf{b}u) = (-\Div(\sigma(u)),\sigma(u)\vec{n})
	\end{align*}
	for $ 0\leq m \leq k $ instead of 
	\begin{align*}
	T_{m,p}: \prod_{j=1}^{3} W^{m+2,p}(\Omega) & \to \prod_{i=1}^{3} W^{m,p}(\Omega) \times \prod_{h=1}^{3} W^{m+1-\nicefrac{1}{p},p}(\Gamma) \\
	u &\mapsto (\mathbf{a}u,\mathbf{b}u) = \left(-\Div(\sigma(u)), \mathds{1}_{\Gamma_D} u + \mathds{1}_{\Gamma_N} \sigma(u)\vec{n}\right).
	\end{align*}
	In case of linear elasticity with mixed boundary condition and $ m=0 $ we obtain
	\begin{align*}
	T_{p}^{D}=T_{0,p}^{D}: W^{2,p}(\Omega,\R{3}) & \to L^{p}(\Omega,\R{3}) \times W^{1-\nicefrac{1}{p},p}(\Gamma_N,\R{3})\, .
	\end{align*}
	According to Theorem \ref{PDE_Systems:Thm:Index_Theorem} the index \[ \ind{T^{D}_{p}} = \dim (\ker(T^{D}_{p})) -\dim(\coker{T^{D}_{p}})\, , \]
	is independent of $ 1 < p < \infty $ where $$ \coker{T^{D}_{p}} = \left[L^{p}(\Omega,\R{3}) \times W^{1-\nicefrac{1}{p},p}(\Gamma_N,\R{3})\right]/\im{T^{D}_{p}} \, .$$ Thus we can return to the case $ p=2 $:
	Then we already know, that $ T_{p}^{D} $ is a bijection since there exists a unique solution to $ T_{p}^{D}(u)=(f,g_N) $ in  $ H^2 $ given  $ (f,g_N) \in L^{2}(\Omega,\R{3}) \times W^{1-\nicefrac{1}{2},p}(\Gamma_N,\R{3})$. Thus $ \coker{T^{D}_{2}} =\{0\}  $ and $ \ker{T^{D}_{2}}=\{0\} $ implies $ \ind{T^{D}_{2}}=0 $.
	
	\noindent Suppose that $ f=0 $ on $ \Omega $ and $ g_N=0 $ on $ \Gamma_N $. Then the unique solution $ u  \in H^{1}_{D}(\Omega,\R{3}) $ satisfies $ u=0 $. But 
	$ W^{2,p}_{D}(\Omega,\R{3})\hookrightarrow H^{1}(\Omega,\R{3}) $ for $ p\geq 4/3 >6/5 $ implies $ \ker{T^{D}_{p}}=\{0\} $ and hence $ T^{D}_{p} $ is injective. Therefore $$ -\dim(\coker{T^{D}_{p}}) = \dim (\ker(T^{D}_{p})) -\dim(\coker{T^{D}_{p}}) = \ind{T^{D}_{p}} $$ and since the index is independent of $ p $
	\[ -\dim(\coker{T^{D}_{p}}) = \ind{T^{D}_{p}} = \ind{T^{D}_{2}} =0\, .\]
	Thereof we conclude that $ \coker{T^{D}_{p}}=\{0\} $ i.e. $ T^{D}_{p} $ is also surjective. This proofs the assertion for $ l=0 $.
	
	\noindent ii) Now, we investigate the solution operator
	\begin{align*}
	T_{k,p}^{D}: W^{2+m,p}_{D}(\Omega,\R{3}) & \to W^{m,p}(\Omega,\R{3}) \times W^{m+1-\nicefrac{1}{p},p}(\Gamma_N,\R{3})
	\end{align*}
	for  $ m \in \{ 1,\ldots,k\} $ and $ \Omega $ of class $ C^{2+m} $, $ k> 0 $. Since
	\[ \{0\} \subset \ker(T_{k,p}^{D}) \subset \cdots \subset\ker(T_{m,p}^{D}) \subset \cdots \subset \ker(T_{0,p}^{D})=\ker(T_{p}^{D}) =\{0\} \]
	$ T_{m,p}^{D} $ is injective for any $ 1 \leq m \leq k $ and since the index  is independent of $ k $ and $ p $ we deduce $ \ind{T_{m,p}^{D}} = \ind{T_{0,p}^{D}}= \ind{T_{0,2}^{D}}=\{0\} $ and thus the operator $ T_{m,p}^{D} $ is also surjective. 
	
	\noindent The Schauder estimate can directly be derived from Theorem \ref{PDE_Systems:Thm:SchauderEstSobolev}. 
\end{proof}

\begin{rem}
	This proof can also be extended to the equation 
	\begin{align}
	\left. 
	\begin{array}{rcll}
	-\Div( \se(u)) &=&f  &\text{ in } \Omega \\
	u &=& g_D  &\text{ on } \Gamma_{D}  \\
	\se(u) \vec{n} &=&g_N &\text{ on }\Gamma_{N}
	\end{array} 
	\right.
	\end{align}
	with $ f \in W^{k,p}(\Omega,\R{3}),\, g_D \in W^{k+2-1/p,p}(\Gamma_{D},\R{3}) $ and $ g_{N} \in  W^{k+1-1/p,p}(\Gamma_{N},\R{3}) $. Then the operator 
	\begin{align*}
	T_{k,p}: \prod_{j=1}^{3} W^{2+k,p}(\Omega) & \to \prod_{i=1}^{3} W^{k,p}(\Omega) \times \prod_{h=1}^{3} W^{1+k-\nicefrac{1}{p},p}(\Gamma) \\
	u &\mapsto (\mathbf{a}u,\mathbf{b}u) = (-\Div(\sigma(u)), \mathds{1}_{\Gamma_D} u + \mathds{1}_{\Gamma_N} \sigma(u)\vec{n}),
	\end{align*}
	$ k \geq 0 $, has to be examined and application of the same arguments as before leads again to existence of unique solutions. 
\end{rem}

\subsection{Classical solutions and Schauder estimates}
\label{Sec:Linear_Elasticity:Sec:Classical_solutions_and_Schauder}

\begin{thm}\label{Linear_Elasticity:Thm:LinEl_Classical_Sol} Let $ \Omega \subset \R{3} $ be a domain of class $ C^{k+2,\phi} $ for some $ k\in \N{}_{0}$. Let $ \Gamma_{D} $ be a proportion of the boundary with positive surface measure.  Moreover, assume that $ \Gamma_N= \Gamma\setminus\Gamma_{D} $ and $ \Gamma_{D} $ have a positive distance $\dist(\Gamma_{D},\Gamma_{N})>0$.
	Suppose that
	$ f \in C^{k,\phi}(\overline{\Omega},\R{3}) $ and $ g \in C^{k+1,\phi}(\overline{\Gamma}_{N},\R{3})  $, $ \phi \in (0,1) $. Then there exists a unique solution $ u \in C^{k+2,\phi}(\overline{ \Omega},\R{3}) $ of equation \eqref{Reliability:Eq:LinEl}
	and
	\begin{equation}\label{Linear_Elasticity:Eq:Schauder_Estimate_u}
	\Norm{u}{C^{k+2,\phi}(\Omega,\R{3})} \leq C \left(\Norm{f}{C^{k,\phi}(\Omega,\R{3})} + \Norm{g}{C^{k+1,\phi}(\Gamma,\R{3})} +\Norm{u}{C^0(\Omega,\R{3})}\right)
	\end{equation}
	where $ C>0 $ is a constant depending on the constants $ \lambda,\, \mu,\, n=3,\, N=3,\,\phi\in (0,1) ,\,  k $ and the domain $ \Omega $ or more precisely on $ \triangle_{\Gamma},\, C_{\mathbb{T}}$ and the distance $ d $. 
\end{thm}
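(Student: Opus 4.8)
The plan is to mirror the proof of Theorem~\ref{Linear_Elasticity:Thm:Weak_Reg_LinEl}: first produce a weak solution, then lift its regularity step by step, now all the way into the Hölder scale, using the classical Schauder estimate of Theorem~\ref{PDE_Systems:Thm:SchauderEstHölder} in place of its Sobolev sibling. Since $\Omega$ is bounded, $f\in C^{k,\phi}(\overline\Omega,\R{3})\subset L^{\nicefrac{6}{5}}(\Omega,\R{3})$ and $g\in C^{k+1,\phi}(\overline{\Gamma}_N,\R{3})\subset L^{\nicefrac{4}{3}}(\Gamma_N,\R{3})$, so Theorem~\ref{Linear_Elasticity:Def:H1_solutions_lin_el} furnishes a unique $u\in H^1_D(\Omega,\R{3})$ solving \eqref{Linear_Elasticity:Eq: B(u,v)=L(v)}. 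Uniqueness of a $C^{k+2,\phi}$-solution then comes for free: any such solution restricts to $H^1_D$ and solves the weak problem, hence agrees with $u$. For the quantitative part, $\Omega$ being a bounded $C^{k+2,\phi}$-domain, Lemma~\ref{Ch:Ex_Shape_Deriv_LinEl:Lem: uniform hemisphere property} provides the $C^{k+2,\phi}$-hemisphere property required in (S4) of Theorem~\ref{PDE_Systems:Thm:SchauderEstHölder} (with $t''=\max\{-s_i,-r_h,t_j\}=2$, cf.\ Example~\ref{PDE_Systems:Exmp:Lin_El}), while ellipticity, the supplementary and the complementing conditions are verified in Example~\ref{PDE_Systems:Exmp:Lin_El}.

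Next I would run the bootstrap. On the bounded domain and boundary portion one has $C^{k,\phi}(\overline\Omega,\R{3})\hookrightarrow W^{k,p}(\Omega,\R{3})$ and $C^{k+1,\phi}(\overline{\Gamma}_N,\R{3})\hookrightarrow W^{k+1-\nicefrac{1}{p},p}(\Gamma_N,\R{3})$ for every $1<p<\infty$, and $\Omega$ is in particular of class $C^{2+k}$; hence Theorem~\ref{Linear_Elasticity:Thm:Weak_Reg_LinEl} gives $u\in W^{k+2,p}(\Omega,\R{3})$ for all $p<\infty$, and the Sobolev embedding theorem then yields $u\in C^{k+1,\alpha}(\overline\Omega,\R{3})$ for every $\alpha\in(0,1)$. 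If $k\ge 1$ this already provides the a-priori regularity $u\in C^{2,\phi}(\overline\Omega,\R{3})=C^{r''+t_j,\phi}(\overline\Omega,\R{3})$ (recall $r''=0$, $t_j=2$ for the disjoint displacement--traction operator), so Theorem~\ref{PDE_Systems:Thm:SchauderEstHölder} in its bounded-domain form---with $\Omega'=\Omega$, $\Gamma'=\Gamma$ and $\mathbf b$ split into its Dirichlet and Neumann parts exactly as in the proof of Theorem~\ref{Linear_Elasticity:Thm:Weak_Reg_LinEl}---upgrades $u$ to $C^{k+2,\phi}(\overline\Omega,\R{3})$ and delivers \eqref{Linear_Elasticity:Eq:Schauder_Estimate_u}.

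The step I expect to be the genuine obstacle is the base case $k=0$: there the bootstrap stalls at $u\in C^{1,\alpha}(\overline\Omega,\R{3})$, one derivative short of the $C^{2,\phi}$ needed to invoke Theorem~\ref{PDE_Systems:Thm:SchauderEstHölder}. I see two ways to close this gap. The first parallels the index argument of Theorem~\ref{Linear_Elasticity:Thm:Weak_Reg_LinEl}: by Theorem~\ref{PDE_Systems:Thm:SchauderEstHölder} the operator $A_\phi\colon C^{2,\phi}_{D}(\overline\Omega,\R{3})\to C^{0,\phi}(\overline\Omega,\R{3})\times C^{1,\phi}(\overline{\Gamma}_N,\R{3})$, $u\mapsto(-\Div(\se(u)),\se(u)\vec n|_{\Gamma_N})$ (here $C^{2,\phi}_{D}$ are the fields vanishing on $\Gamma_D$), satisfies $\Norm{u}{C^{2,\phi}(\Omega,\R{3})}\le C(\Norm{A_\phi u}{}+\Norm{u}{C^0(\Omega,\R{3})})$; together with the compact embedding $C^{2,\phi}(\overline\Omega)\hookrightarrow\hookrightarrow C^{0}(\overline\Omega)$ and the Hölder counterpart of the index theorem (contained in \cite{Agm64,Geymonat}, alongside Theorem~\ref{PDE_Systems:Thm:Index_Theorem}), $A_\phi$ is Fredholm with an index independent of $\phi$ that coincides with $\ind{T^{D}_{2}}=0$ from the proof of Theorem~\ref{Linear_Elasticity:Thm:Weak_Reg_LinEl}; and $\ker A_\phi=\{0\}$ because any element lies in $H^1_D$ and solves the homogeneous weak problem, so vanishes by Theorem~\ref{Linear_Elasticity:Def:H1_solutions_lin_el}. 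An operator of index $0$ with trivial kernel is bijective, so the $C^{2,\phi}$-solution exists. The second, more self-contained option is the method of continuity along $L_\theta=(1-\theta)(-\Delta)+\theta(-\Div(\se(\cdot)))$, $\theta\in[0,1]$, with conormal boundary operator $(1-\theta)\tfrac{\partial}{\partial\vec n}+\theta\,\se(\cdot)\vec n$ on $\Gamma_N$ and $u=0$ on $\Gamma_D$: every $L_\theta$ is uniformly elliptic and complementing, and---since both $\int_\Omega Du\!:\!Dv\,dx$ and $B(u,v)$ are $H^1_D$-coercive---injective on $H^1_D$, so the Schauder constant in \eqref{Linear_Elasticity:Eq:Schauder_Estimate_u} is uniform in $\theta$, a routine compactness argument absorbs the $\Norm{u}{C^0}$ term, and solvability at $\theta=0$ (scalar Schauder theory applied componentwise) propagates to $\theta=1$. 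In either case, once existence and uniqueness are secured for all $k\ge 0$, estimate \eqref{Linear_Elasticity:Eq:Schauder_Estimate_u}---with $C$ depending on $\lambda,\mu$, $n=3$, $N=3$, $\phi$, $k$ and, through $\triangle_\Gamma$, $C_{\mathbb{T}}$, $d$, on $\Omega$---is read off directly from Theorem~\ref{PDE_Systems:Thm:SchauderEstHölder}.
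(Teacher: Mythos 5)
For $k\ge 1$ your argument is exactly the paper's: weak solution from Theorem \ref{Linear_Elasticity:Def:H1_solutions_lin_el}, the Sobolev bootstrap of Theorem \ref{Linear_Elasticity:Thm:Weak_Reg_LinEl} with $p$ large to land in $C^{k+1,\phi}\subset C^{2,\phi}$, and then the ADN Hölder regularity/Schauder estimate to reach $C^{k+2,\phi}$ together with \eqref{Linear_Elasticity:Eq:Schauder_Estimate_u}; uniqueness via the weak formulation is also how the paper sees it. You have also correctly isolated the real difficulty, namely $k=0$, where the bootstrap stalls at $C^{1,\alpha}$.

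For $k=0$, however, you take a genuinely different route from the paper. The paper closes the gap by approximation: it extends $f,g$ and approximates them by $C^{1,\phi}$- resp.\ $C^{2,\phi}$-data $f_n,g_n$, so that the already established case $k\ge1$ yields solutions $u_n\in C^{3,\phi}$; it then combines the Schauder estimate with the uniform cone interpolation Lemma \ref{Ex_Shape_Deriv_LinEl:Lem: uniform cone lemma} and coercivity of $B$ (to bound $\Norm{u_n}{H^1}$ by the data) to get a $t$-independent bound on $\Norm{u_n}{C^{2,\phi}}$, extracts a subsequence converging in $C^{2,\varphi}$, $\varphi<\phi$, identifies the limit as a solution, and recovers the top Hölder seminorm by a pointwise limit argument on the difference quotients. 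This stays entirely inside the toolkit the paper has set up. Your two alternatives are plausible but each imports a result the paper does not provide. In the Fredholm route, the a priori estimate plus the compact embedding $C^{2,\phi}\hookrightarrow C^{0}$ only gives finite-dimensional kernel and closed range; finiteness of the cokernel and, crucially, the equality of the Hölder index with $\ind{T^{D}_{2}}=0$ is precisely the Hölder-scale counterpart of Theorem \ref{PDE_Systems:Thm:Index_Theorem}, which is not stated in the paper and would have to be quoted and verified for the mixed displacement--traction operator. In the continuity-method route, the claims that every $L_\theta$ with the interpolated conormal boundary operator satisfies the complementing condition (uniformly, so that the constant in the Schauder estimate and the minor constant $\triangle_{\Gamma}$ stay controlled along the homotopy) and that the mixed Dirichlet--Neumann problem for the Laplacian is solvable in $C^{2,\phi}$ at $\theta=0$ are nontrivial and are asserted rather than proved. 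So your outline can be completed, but as written the $k=0$ step rests on external facts, whereas the paper's approximation--compactness argument is self-contained; it would strengthen your proposal either to supply these verifications or to adopt the paper's approximation scheme for the base case.
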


\begin{proof}
	i) First, let $ k\geq 1 $. Then $ f \in C^{k,\phi}(\overline{\Omega},\R{3})$  $ \subset W^{k,p}(\Omega,\R{3})$ and $ g \in C^{k+1,\phi}(\overline{\Gamma_{N}},\R{3}) $ $\subset W^{k+1-\nicefrac{1}{p},p}(\Gamma_N,\R{3})  $ for any $ p \geq 1 $. Thus we can choose $ p^0>3$ large enough such that $ \phi\leq 1-\nicefrac{3}{p^0} $. By Theorem \ref{Linear_Elasticity:Thm:Weak_Reg_LinEl} there exists a unique weak solution $ u \in W^{k+2,p}(\Omega,\R{3})  $ that can be embedded according to the Sobolev Embedding Theorem into $ C^{k+1,\phi}(\overline{\Omega},\R{3}) $. In the case of linear elasticity Theorem 9.3. in \citep{Agm64} tells us, that if $ f \in C^{k,\phi},\, g\in C^{k+1,\phi} $ and $ u \in C^{2,\phi} $ is a solution, then $ u $ is already an element of $ C^{k+2,\phi} $ and satisfies the Schauder estimate \eqref{Linear_Elasticity:Eq:Schauder_Estimate_u}. Since $ k+1 \geq 2 $ the assertion therefore holds.

	\noindent Now let $ k=0 $, $ f \in C^{0,\phi}(\overline{\Omega},\R{3})   $ and $ g \in C^{1,\phi}(\overline{\Gamma_{N}},\R{3})  $ and $ \varphi<\phi $. Then $ f $ and $ g $ have $ C^{0,\phi} $ and $ C^{1,\phi} $ extensions $ \tilde{f},\, \tilde{g} $, respectively,  to some domain $  \tilde{\Omega}  \supset \supset \Omega$ of class $ C^{2,\phi} $, see Lemma \ref{App:Lem: Hölder Extension Lemma}.  Then we a sequence $ (f_n)_{n} \subset  C^{1,\phi}(\tilde{\Omega},\R{3})  $ and $ (g_{n})_{n} \in C^{2,\phi}(\tilde{\Omega},\R{3}) $ with $ f_{n} \to \tilde{f} \in C^{0,\phi},\, g_{n} \to \tilde{g} \in C^{1,\phi} $ and thus $ f_{n} \to f $ on $ \Omega $, $ g_{n} \to g $ on $ \Gamma_N $. 
	We denote the sequence of solutions associated to $ (f_n,0,g_n) $  by $ (u_n)_{n} \subset C^{3,\phi}(\overline{\Omega},\R{3})$. This sequence satisfies 
	\[ \Norm{u_n}{C^{2,\phi}} \leq C(\Norm{f_{n}}{C^{0,\phi}(\overline{\Omega},\R{3})} + \Norm{g_{n}}{C^{1,\phi}(\overline{\Omega},\R{3})} + \Norm{u_{n}}{C^{0}}) \]
	since especially $ f_n \in C^{0,\phi}(\overline{\Omega},\R{3}) $, $ g_{n}\in C^{1,\phi}(\overline{\Omega},\R{3}) $ and $ u_{n} \in C^{2,\phi}(\overline{\Omega},\R{3}) $ satisfies 
	\begin{align}
	\left. 
	\begin{array}{rcll}
	-\Div( \se(u_n)) &=&f_n  &\text{ in } \Omega \\
	u_n &=& 0  &\text{ on } \Gamma_{D}  \\
	\se(u_n) \vec{n}&=&g_n &\text{ on }\Gamma_{N}.
	\end{array} 
	\right.
	\end{align}
	Now let $0< \delta<\nicefrac{1}{C} $. Since $ \Omega $ satisfies a cone condition, we can deduce from Lemma 5.5  \citep{GottschSchmitz} that there exits a constant $ C(\delta) $ such that 
	$$\Norm{u}{C^{0}(\overline{\Omega},\R{3})} \leq \delta \Norm{u}{C^1(\overline{\Omega},\R{3})} +C(\delta)\Norm{u}{L^1((\Omega,\R{3}))} \, \forall u \in C^{1}(\Omega,\R{3}).$$ 
	Then Hölder's inequality and the definition of the $ C^{2,\phi} $-norm lead to 
	\[ \Norm{u_n}{C^{0}(\overline{\Omega},\R{3})} \leq \delta \Norm{u_n}{C^{2,\phi}(\overline{\Omega},\R{3})} +C(\delta)\Norm{u_n}{H^1(\Omega,\R{3})}\sqrt{|\Omega|} ~~~\forall n \in\N{}.\]
	From the uniform ellipticity of the bilinear form $ B $ we derive
	\begin{align*}
	& && \Norm{u_{n}}{H^1(\Omega,\R{3})}^2 
	&&\leq \Varlambda B(u_n,u_n)=\Varlambda\left(\int_{\Omega} \langle f_n,u_{n} \rangle \, dx  + \int_{\Gamma_{N}} \langle g_n,u_{n} \rangle \, dS\right) \\
	& && &&\leq c \left(\Norm{f_n}{C^{0}(\Omega,\R{3})}\Norm{u_{n}}{H^1(\Omega,\R{3})}+ \Norm{g_n}{C^{0}(\Gamma_{N},\R{3})}\Norm{u_{n}}{H^1(\Omega,\R{3})} \right) \\[1ex]
	& \Leftrightarrow && \Norm{u_{n}}{H^1(\Omega,\R{3})} 
	&&\leq   c \left(\Norm{f_n}{C^{0}(\Omega,\R{3})}+ \Norm{g_n}{C^{0}(\Gamma_{N},\R{3})}\right)  \\
	& && &&\leq c \left(\Norm{f_n}{C^{0,\phi}(\Omega,\R{3})}+ \Norm{g_n}{C^{1,\phi}(\Gamma_{N},\R{3})}\right)
	\intertext{
		Where $ c $ depends on $ \Omega $ and $ \Varlambda $. Therefore,
	}
	& &&\Norm{u_n}{C^{2,\phi}} &&\leq C(\Norm{f_{n}}{C^{0,\phi}} + \Norm{g_{n}}{C^{1,\phi}} + \Norm{u_{n}}{C^{0}}) \\[1ex]
	& && &&\leq \tilde{C}(\delta)(\Norm{f_{n}}{C^{0,\phi}} + \Norm{g_{n}}{C^{1,\phi}}) + C\delta\Norm{u_{n}}{C^{2,\phi}} \\[1ex]
	&\Leftrightarrow && \Norm{u_n}{C^{2,\phi}} &&\leq \frac{\tilde{C}(\delta)}{1-C\delta}\left(\Norm{f_{n}}{C^{0,\phi}} + \Norm{g_{n}}{C^{1,\phi}}\right).
	\end{align*}
	Since $ (f_{n})_{n \in  \N{}} $ and $ (g_n)_{n\in \N{}} $ are convergent sequences in the respective norms the sequences $ \Norm{f_{n} }{C^{0,\phi}(\overline{\Omega},\R{3})} $ and $ \Norm{g_{n}}{C^{1,\phi}(\overline{\Omega},\R{3})} $ are bounded and thus $\Norm{u_n}{C^{2,\phi}(\overline{\Omega},\R{3})}$ is also. Thus there is a constant $ C^{\ast} $ such that, 
	$$ (u_n)_{n \in \N{}} \subset S:=\{u \in C^{2,\phi}(\Omega,\R{3})\, \vert \Norm{u_n}{C^{2,\phi}(\Omega,\R{3})} \leq C^{\ast} \}.$$ As \citep[Lemma 6.36]{GilbTrud} shows, the set $ S $ is precompact in $ C^{2,\varphi}(\Omega,\R{3}) $ for any $  \varphi \in (0,\phi) $ and there is a subsequence $ u_{n_{k}} \to u $ in $ C^{2,\varphi} $. Since $ u_{n_k} $ converges in $ C^2 $  all partial derivatives of $ u_{n_k} $ converge in $ C^{0} $ and we can conclude that $ u $ satisfies \eqref{Reliability:Eq:LinEl}. 
	
	\noindent Now, we show that $ u $ is again an element of $ C^{2,\phi} $ even though it does not necessarily satisfy $ \Norm{u_{n_{k}}-u}{C^{k,\phi}} \to 0 $. Nevertheless, we can investigate the point wise convergence of this sequence and observe that $$ \lim_{k\to \infty}\frac{\partial^{\beta} u_{n_{k}}}{\partial x^{\beta}}(x) = \frac{\partial^{\beta} u}{\partial x^{\beta}}(x) ~~~\text{ where }~~~ \frac{\partial^{\beta}}{\partial x^{\beta}} = \frac{\partial^{|\beta|}}{\partial x^{\beta_1}\partial x^{\beta_2}\partial x^{\beta_3}} $$ for any $ x \in \overline{\Omega} $ and any multiindex $ \beta  \in \N{3}_{0} $ with $ \beta=2 $. Since $ \R{} \to \R{+}_{0},\, x \to |x| $ is continuous we thus obtain 
	\begin{align*}
	C^{\ast} \geq \lim_{k \to \infty}\frac{\Big|\frac{\partial^{\beta} u_{n_{k}}}{\partial x^{\beta}}(x) - \frac{\partial^{\beta} u_{n_{k}}}{\partial x^{\beta}}(x')\Big|}{|x-x'|^{\phi}}
	=\frac{\Big|\frac{\partial^{\beta} u}{\partial x^{\beta}}(x) - \frac{\partial^{\beta} u}{\partial x^{\beta}}(x')\Big|}{|x-x'|^{\phi}}
	\end{align*}
	for any paring $ \forall x \neq x' \in \Omega$. Thus, this inequality carries over to the supremum which exists on any subset of real numbers that is bounded from above. 
	
	\noindent The demanded Schauder estimate then follows directly from \citep{Agm64}.
\end{proof}


\chapter{Calculus in Banach Spaces} \label{Diff_Banach_Space}

In shape optimization many derivatives appear as derivatives of mappings from an open interval $  I$ to some Banach space $ Y $, where $ Y $  usually is a function space.

The differential calculus in $ \R{n} $ is well known. Analogously,  \textit{Gâteaux} and \textit{Fréchet derivatives} in infinite dimensions can be defined and we provide the results taken from \cite{Werner_Funkana} or \cite{Cheney} here.

\section{Gâteaux and Fréchet differentiability}

In this chapter let $ X $ and $ Y $ be Banach Spaces, $ U \subset X,\, U \neq \emptyset $ an open subset and $ F:U \subset X \to Y $ a functional. The normed vector space of linear operators from $ X $ to $ Y $ will be denoted by $ L(X,Y) $ and is equipped with the so called operator norm 
\[ \Norm{F}{L(X,Y)}=\sup_{\Norm{x}{X} \leq 1} \Norm{F(x)}{Y} = \sup_{\Norm{x}{X} = 1} \Norm{F(x)}{Y} =\sup_{x \in X \setminus \{0\}} \frac{\Norm{F(x)}{Y}}{\Norm{x}{X}}.  \]
The space of linear and continuous (i.e. bounded) operators is denoted by $ \mathcal{L}(X,Y) $.  

\begin{defn}[Gâteaux and Fréchet Differentiability]\citep[Def. III.5.1]{Werner_Funkana} \label{Diff_Banach_Space:Def:Gateaux/Frechet Differntial}$  $
	\begin{itemize}
		\item[a)] $ F $ is called \textit{Gâteaux differentiable (G-differentiable)}  at $ x_{0} \in U $ if there exists $ F'(x_{0}) \in \mathcal{L}(X,Y) $ such that
		\begin{equation}
		\lim_{h \to 0} \frac{F(x_{0} +hv)-F(x_{0})}{h} = F'(x_{0})[v] ~~~~ \forall v \in X.
		\end{equation}
		If $ F'(x_{0}) \in \mathcal{L}(X,Y)$ exists for every $x_{0} \in U$, then $ F $ is called \textit{Gâteaux differentiable on $ U $} and $ F':U \to \mathcal{L}(X,Y) $ is called the \textit{Gâteaux differential of $ F $}. Then we write $ D^{g}F $ instead of $ F' $. 
		\item[b)] $ F $ is called \textit{Fréchet differentiable (F-differentiable) at} $ x_{0} \in U $ if there exists  $ F'(x_{0}) \in \mathcal{L}(X,Y) $ such that
		\begin{equation}
		\lim_{h \to 0} \sup_{\Norm{v}{X} \leq 1}\Norm{\frac{F(x_{0} +hv)-F(x_{0})}{h} - F'(x_{0})[v]}{Y}=0 \, .
		\end{equation}
		If $ F'(x_{0}) \in \mathcal{L}(X,Y)$ exists for any $x_{0} \in U$, then $ F $ is called \textit{(Fréchet) differentiable on $ U $} and $ F':U \to \mathcal{L}(X,Y) $ is called \textit{(Fréchet) differential}. Then we write $ DF $ instead of $ F' $. 
	\end{itemize}
\end{defn}

\begin{lem}\label{Diff_Banach_Space:Lem:G_Diff_Linear_Map} \cite{Cheney}  Let  $ F \in \mathcal{L}(X,Y) $. Then $ F $ is Fréchet differentible at $ x_0 \in X $ with differential $  DF: X \to \mathcal{L}(X,Y),\, x_{0} \mapsto F  $. 
	
	Note that $ DF $ is constant and therefore continuous and that $ DF \neq F $! ($ DF:X \to \mathcal{L}(X,Y)  $ and $ F: X \to Y $!)
\end{lem}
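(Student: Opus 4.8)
The plan is to exploit the linearity of $F$ directly; no limiting argument in the usual sense is actually needed. First I would fix $x_0 \in X$ and compute the difference quotient appearing in Definition \ref{Diff_Banach_Space:Def:Gateaux/Frechet Differntial} b). For any scalar $h \neq 0$ and any $v \in X$, linearity gives $F(x_0 + hv) - F(x_0) = F(hv) = hF(v)$, hence
\[
\frac{F(x_0 + hv) - F(x_0)}{h} = F(v).
\]
Now take the candidate Fréchet derivative at $x_0$ to be $F'(x_0) := F$ itself; this is admissible precisely because $F \in \mathcal{L}(X,Y)$ by hypothesis. Then for every $v$ with $\Norm{v}{X} \leq 1$ and every $h \neq 0$ the quantity $\big\| \tfrac{F(x_0+hv)-F(x_0)}{h} - F'(x_0)[v]\big\|_Y$ is identically zero, so the supremum over the unit ball is zero for all $h$, and a fortiori the limit as $h \to 0$ vanishes. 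This establishes Fréchet differentiability of $F$ at $x_0$ with $F'(x_0) = F$.

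Since $x_0 \in X$ was arbitrary, $F$ is Fréchet differentiable on all of $X$, and the Fréchet differential is the map $DF: X \to \mathcal{L}(X,Y)$, $x_0 \mapsto F$. As this is a constant map into the Banach space $\mathcal{L}(X,Y)$, it is trivially continuous. I would then add the cautionary remark built into the statement: $F$ is a map $X \to Y$, whereas $DF$ is a map $X \to \mathcal{L}(X,Y)$; even though $DF$ takes the single value $F$, it is a distinct object, so $DF \neq F$.

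There is essentially no obstacle here. The only two points deserving a word are that the proposed derivative must be verified to lie in $\mathcal{L}(X,Y)$ — which is immediate from the assumption $F \in \mathcal{L}(X,Y)$ — and that one must not conflate the continuous linear map $F$ with its (constant-valued, $\mathcal{L}(X,Y)$-valued) differential $DF$.
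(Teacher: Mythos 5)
Your proof is correct and uses exactly the paper's argument: the paper's one-line proof likewise reduces everything to the identity $F(x_0+hv)=F(x_0)+hF(v)$, so the difference quotient equals $F(v)$ identically and the Fréchet condition holds trivially with $F'(x_0)=F$. You merely spell out the verification of Definition \ref{Diff_Banach_Space:Def:Gateaux/Frechet Differntial} b) and the distinction between $F$ and $DF$ in more detail, which is fine.
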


\begin{proof}
	The assertion follows from $ F(x_{0} +hv) = F(x_0) +hF(v) $ for all $ v \in X $.
\end{proof}

\begin{exmp}\label{Diff_Banach_Space:Exmp:Diff_Gradient}
	The gradient $ \nabla:C^1(\R{n}) \to C(\R{n},\R{n}) $ is a linear and continuous differential operator with 
	\[ \Norm{\nabla}{L(C(\R{n}), C(\R{n},\R{n}))}=\sup_{\Norm{f}{C^1(\R{n})}\leq 1}\Norm{\nabla f}{C(\R{n},\R{n})}\leq \sup_{\Norm{f}{C^1(\R{n})}\leq 1}\Norm{ f}{C^1(\R{n})}= 1 .\]	
	We can thus apply Lemma \ref{Diff_Banach_Space:Lem:G_Diff_Linear_Map} and obtain the Fréchet differential of $ \nabla 
	$ by $ D\nabla(f_0)[f]$ $=\nabla f \in C(\R{n},\R{n}). $
\end{exmp}

\begin{lem}[Taylor Expansion] \cite[Lemma III.5.2]{Werner_Funkana}:\label{Diff_Banach_Space:Lem:Linear_Approx}
	Let $ F : X \to Y$. Then $ F $ is F-differentiable at $ x_{0}\in X $ if any only if there exists a linear and continuous operator $ F'(x_0) \in \mathcal{L}(X,Y)$ such that
	\begin{equation}
	F(x_{0}+v)=F(x_{0}) + F'(x_{0})[v] + r_{x_{0}}(v) \text{ where  } \frac{r_{x_{0}}(v)}{\Norm{v}{X}} \to 0 \text{ as } \Norm{v}{X} \to 0.
	\end{equation}
	In this case, $ F'(x_0)=DF(x_0) $.
\end{lem}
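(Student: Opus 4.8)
The plan is to prove the two implications of the equivalence separately and then read off the identity $F'(x_0)=DF(x_0)$ from a short uniqueness argument. For the first implication, I would assume $F$ is Fréchet differentiable at $x_0$ with differential $F'(x_0)\in\mathcal{L}(X,Y)$ in the sense of the definition recalled above, and define the remainder by $r_{x_0}(v):=F(x_0+v)-F(x_0)-F'(x_0)[v]$. For $v\neq 0$ I set $h:=\Norm{v}{X}>0$ and $w:=v/h$, so that $\Norm{w}{X}=1$ and, by linearity of $F'(x_0)$,
\[ \frac{r_{x_0}(v)}{\Norm{v}{X}}=\frac{F(x_0+hw)-F(x_0)}{h}-F'(x_0)[w]. \]
The norm of the right-hand side is bounded above by $\sup_{\Norm{w}{X}\le 1}\Norm{\tfrac{F(x_0+hw)-F(x_0)}{h}-F'(x_0)[w]}{Y}$, which tends to $0$ as $h\to 0$ by the definition of Fréchet differentiability; since $\Norm{v}{X}\to 0$ is equivalent to $h\to 0$, this gives $r_{x_0}(v)/\Norm{v}{X}\to 0$, i.e. the Taylor expansion with this $F'(x_0)$.

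For the converse I would start from the Taylor expansion with some $F'(x_0)\in\mathcal{L}(X,Y)$. Using linearity of $F'(x_0)$, for $h\neq 0$ and $v\neq 0$ one has $\frac{F(x_0+hv)-F(x_0)}{h}-F'(x_0)[v]=\frac{r_{x_0}(hv)}{h}$, hence
\[ \Norm{\frac{F(x_0+hv)-F(x_0)}{h}-F'(x_0)[v]}{Y}=\frac{\Norm{r_{x_0}(hv)}{Y}}{\Norm{hv}{X}}\,\Norm{v}{X}\le\frac{\Norm{r_{x_0}(hv)}{Y}}{\Norm{hv}{X}}\quad\text{for }\Norm{v}{X}\le 1, \]
the case $v=0$ being trivial. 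Given $\varepsilon>0$, choose $\delta>0$ with $\Norm{r_{x_0}(w)}{Y}/\Norm{w}{X}<\varepsilon$ for all $0<\Norm{w}{X}<\delta$; then for $0<|h|<\delta$ and $\Norm{v}{X}\le 1$ one has $\Norm{hv}{X}=|h|\,\Norm{v}{X}<\delta$, so the supremum over $\Norm{v}{X}\le 1$ of the left-hand side is at most $\varepsilon$. This is precisely Fréchet differentiability of $F$ at $x_0$ with differential $F'(x_0)$.

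Finally, for the identification $F'(x_0)=DF(x_0)$: if $F'_1(x_0)$ and $F'_2(x_0)$ both satisfy the Taylor expansion (by the first implication $DF(x_0)$ is one such operator), then subtracting the two expansions evaluated at the increment $tv$, for a fixed $v\neq 0$ and $t>0$, gives $t\big(F'_1(x_0)[v]-F'_2(x_0)[v]\big)=r^{(2)}_{x_0}(tv)-r^{(1)}_{x_0}(tv)$, whence
\[ \Norm{F'_1(x_0)[v]-F'_2(x_0)[v]}{Y}\le\Norm{v}{X}\left(\frac{\Norm{r^{(1)}_{x_0}(tv)}{Y}}{\Norm{tv}{X}}+\frac{\Norm{r^{(2)}_{x_0}(tv)}{Y}}{\Norm{tv}{X}}\right), \]
and the right-hand side tends to $0$ as $t\to 0$. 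Hence $F'_1(x_0)[v]=F'_2(x_0)[v]$ for every $v$, so $F'_1(x_0)=F'_2(x_0)$, and in particular the operator occurring in the expansion coincides with the Fréchet differential $DF(x_0)$. The whole argument is elementary; the only bookkeeping requiring a little care is the relation $\Norm{hv}{X}=|h|\,\Norm{v}{X}$ (so that one estimates with the Fréchet quotient rather than a naive division by $h$) and isolating the trivial case $v=0$, so I do not expect a genuine obstacle.
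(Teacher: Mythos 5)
Your proof is correct: both implications are established by the standard rescaling of the increment (writing $v=hw$ with $\Norm{w}{X}=1$, respectively bounding $\Norm{r_{x_0}(hv)}{Y}/|h|$ by the remainder quotient using $\Norm{hv}{X}=|h|\,\Norm{v}{X}$), and the uniqueness argument correctly identifies $F'(x_0)$ with $DF(x_0)$. The paper itself only cites this lemma from Werner without proof, and your argument is exactly the standard one intended there (it is, in effect, the equivalence with the quotient formulation noted in the remark following the lemma), so nothing further is needed.
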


\begin{rem}\label{Diff_Banach_Space:Rem:Fréchet Diff.=>Contiuous}
	The last Lemma shows that Definition \ref{Diff_Banach_Space:Def:Gateaux/Frechet Differntial} b) is equivalent to the definition of the Fréchet differential in \citep{Cheney}:
	Therein $ F $ is called \textit{Fréchet differentiable at} $ x_{0} \in U $ if there exists a continuous map $ F'(x_{0}) \in \mathcal{L}(X,Y) $ such that
	\begin{equation}
	\lim_{v \to 0} \frac{\Norm{F(x_{0} +v)-F(x_{0})- F'(x_{0})[v]}{Y}}{\Norm{v}{X}}=0. 
	\end{equation}
\end{rem}

A consequence from the preceding Lemma is, that any Fréchet differentiable map is also continuous since
\[ 
\lim
_{x\to x_{0}}F(x) = F(x_0) + \lim_{x \to x_0 } DF(x_{0})[x-x_0] + \lim_{x \to x_0 } r_{x_{0}}(x-x_0) =F(x_0).  \]
Moreover, it is clear that any Fréchet differentiable map is Gâteaux differentiable. 

\begin{rem}
	The notions of G- and F-differentiability coincide with the definitions of differentiability of functions in the euclidean space - Gâteaux derivatives correspond to directional derivatives and Fréchet differentiability to total differentiability.
\end{rem}

Now we return to the general case of arbitrary Banach spaces.
The Gâteaux and the Fréchet differential are linear operators or more precisely:

\begin{lem}\citep[Thm. III.5.4 (a)]{Werner_Funkana}
	Let $ F,\, G:X \to Y $ G- (F-) differentiable. Then also $ F+G:X \to Y $ and $ \lambda F $, $ \lambda \in \R{} $ are G-(F-)differentiable with $ D^g(F+G)=D^gF+ D^gG $ and $ D^g\lambda F=\lambda D^gF $ or $ D(F+G)=DF+ DG $ and $ D\lambda F=\lambda DF $, respectively.
\end{lem}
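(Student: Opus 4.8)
The plan is to reduce both claims to the characterizations of G- and F-differentiability already established above, together with the elementary fact that $\mathcal{L}(X,Y)$ is itself a vector space, so that pointwise sums and scalar multiples of the candidate differentials are again linear and continuous.

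First I would treat the Gâteaux case. Fix $x_0\in X$ and $v\in X$. For the difference quotient of $F+G$ one writes
\[
\frac{(F+G)(x_0+hv)-(F+G)(x_0)}{h}=\frac{F(x_0+hv)-F(x_0)}{h}+\frac{G(x_0+hv)-G(x_0)}{h},
\]
and since both summands converge as $h\to 0$ (to $D^gF(x_0)[v]$ and $D^gG(x_0)[v]$ respectively), the limit of the sum exists and equals $\big(D^gF(x_0)+D^gG(x_0)\big)[v]$. The operator $D^gF(x_0)+D^gG(x_0)$ belongs to $\mathcal{L}(X,Y)$ because $\mathcal{L}(X,Y)$ is closed under addition; this verifies the definition and yields $D^g(F+G)=D^gF+D^gG$ on $U$. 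The scalar case is identical: $\tfrac{1}{h}\big((\lambda F)(x_0+hv)-(\lambda F)(x_0)\big)=\lambda\cdot\tfrac{1}{h}\big(F(x_0+hv)-F(x_0)\big)\to\lambda D^gF(x_0)[v]$, and $\lambda D^gF(x_0)\in\mathcal{L}(X,Y)$.

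Next I would handle the Fréchet case via the Taylor expansion criterion in Lemma \ref{Diff_Banach_Space:Lem:Linear_Approx}. Writing $F(x_0+v)=F(x_0)+DF(x_0)[v]+r^F_{x_0}(v)$ and $G(x_0+v)=G(x_0)+DG(x_0)[v]+r^G_{x_0}(v)$ with $\Norm{r^F_{x_0}(v)}{Y}/\Norm{v}{X}\to 0$ and $\Norm{r^G_{x_0}(v)}{Y}/\Norm{v}{X}\to 0$ as $\Norm{v}{X}\to 0$, addition gives
\[
(F+G)(x_0+v)=(F+G)(x_0)+\big(DF(x_0)+DG(x_0)\big)[v]+\big(r^F_{x_0}(v)+r^G_{x_0}(v)\big),
\]
and the triangle inequality shows the combined remainder is again $o(\Norm{v}{X})$. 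Since $DF(x_0)+DG(x_0)\in\mathcal{L}(X,Y)$, the criterion applies and gives Fréchet differentiability of $F+G$ with $D(F+G)=DF+DG$; scaling the expansion of $F$ by $\lambda$ gives $D(\lambda F)=\lambda DF$ in the same way. There is essentially no hard step here — the only point requiring a word of care is checking that the candidate operators $D^gF(x_0)+D^gG(x_0)$ and $\lambda D^gF(x_0)$ (and their Fréchet analogues) genuinely lie in $\mathcal{L}(X,Y)$, which is immediate from the vector space structure of $\mathcal{L}(X,Y)$, together with the elementary stability of limits (resp. of the $o(\Norm{v}{X})$ property) under finite sums and scalar multiples.
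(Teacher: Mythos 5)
Your proof is correct, and in fact the paper gives no proof of this lemma at all — it is simply cited to Werner (Thm.~III.5.4(a)). Your argument (splitting the difference quotient and using linearity of limits for the Gâteaux case, and the remainder characterization of Lemma \ref{Diff_Banach_Space:Lem:Linear_Approx} together with the vector space structure of $\mathcal{L}(X,Y)$ for the Fréchet case) is exactly the standard proof one would supply here.
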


\begin{lem}\label{Diff_Banach_Space:Lem:Product_rule_Gateaux}
	Let $ W,X,Y,Z $ be Banach spaces and $ F:X \to Y $, $ G:X \to Z $ G-differentiable and continuous at $ x_0\in X $. Suppose that there exists a product $$ \cdot:  Y \times Z \to W,\, (y, z) \mapsto yz  $$ such that $ \lim_{n \to \infty} y_nz_n = (\lim_{n \to \infty} y_n)( \lim_{n \to \infty } z_n)$ for sequences $ (y_n)_{n} \subset Y,\, (z_n)_{n} \subset Z $ if 
	all limits exist in $ W $.
	
	\noindent Then also $ F\cdot G:X \to W $ is G-differentiable and continuous at $ x_0 $ with $ D^g(F\cdot G)(x_0)[v]=F(x_0) \cdot D^gG(x_0)[v]+  D^gG(x_0)[v]\cdot G(x_0),\, v \in X  $.
\end{lem}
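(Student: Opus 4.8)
The plan is to mimic the classical product-rule argument: split the difference quotient by adding and subtracting a mixed term, then pass to the limit using the assumed sequential continuity of the product together with the continuity and G-differentiability of $F$ and $G$ at $x_0$.

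First I would fix $v \in X$ and an arbitrary null sequence $(h_n)_n \subset \R{}\setminus\{0\}$ with $h_n \to 0$, and set $x_n := x_0 + h_n v$, so $x_n \to x_0$ in $X$. Adding and subtracting $F(x_n)\cdot G(x_0)$ gives the decomposition
\[
\frac{(F\cdot G)(x_n) - (F\cdot G)(x_0)}{h_n}
= F(x_n)\cdot \frac{G(x_n) - G(x_0)}{h_n} + \frac{F(x_n) - F(x_0)}{h_n}\cdot G(x_0).
\]
This step uses that the product $\cdot:Y\times Z\to W$ is bilinear, so that it distributes over differences and commutes with the scalar factor $h_n^{-1}$; this bilinearity is implicit in the word ``product'' and holds in all the concrete situations in which the lemma is applied (matrix multiplication, the Frobenius product, multiplication by a scalar, the tensor product, etc.).

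Next I would identify the limits of the factors on the right. Since $x_n\to x_0$ and $F$ is continuous at $x_0$, we have $F(x_n)\to F(x_0)$ in $Y$; by G-differentiability of $G$ and of $F$ at $x_0$, $\frac{G(x_n)-G(x_0)}{h_n}\to D^gG(x_0)[v]$ in $Z$ and $\frac{F(x_n)-F(x_0)}{h_n}\to D^gF(x_0)[v]$ in $Y$; and $G(x_0)$ is a constant sequence. Applying the assumed sequential continuity of the product to these factorwise convergent sequences, the right-hand side converges to $F(x_0)\cdot D^gG(x_0)[v] + D^gF(x_0)[v]\cdot G(x_0)$. Since $(h_n)_n$ was arbitrary, this yields
\[
\lim_{h\to 0}\frac{(F\cdot G)(x_0+hv)-(F\cdot G)(x_0)}{h} = F(x_0)\cdot D^gG(x_0)[v] + D^gF(x_0)[v]\cdot G(x_0)
\]
for every $v\in X$, which is the claimed product rule.

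Finally I would check that the map $v \mapsto F(x_0)\cdot D^gG(x_0)[v] + D^gF(x_0)[v]\cdot G(x_0)$ lies in $\mathcal{L}(X,W)$, so that it qualifies as a Gâteaux differential in the sense of Definition \ref{Diff_Banach_Space:Def:Gateaux/Frechet Differntial}. Linearity is immediate from the linearity of $D^gF(x_0)\in\mathcal{L}(X,Y)$ and $D^gG(x_0)\in\mathcal{L}(X,Z)$ together with the bilinearity of the product. For boundedness one invokes that a jointly sequentially continuous bilinear map between Banach spaces is automatically bounded, so the partial maps $z\mapsto F(x_0)\cdot z$ and $y\mapsto y\cdot G(x_0)$ are bounded; composing them with the bounded operators $D^gG(x_0)$ and $D^gF(x_0)$ produces a bounded operator $X\to W$. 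Continuity of $F\cdot G$ at $x_0$ follows in the same way: if $x\to x_0$ then $F(x)\to F(x_0)$ and $G(x)\to G(x_0)$, and sequential continuity of the product gives $(F\cdot G)(x)\to (F\cdot G)(x_0)$. The only point requiring genuine care is the bilinearity needed for the telescoping step and the accompanying verification that the limiting operator is bounded (via boundedness of jointly continuous bilinear maps); the rest is a routine transcription of the finite-dimensional product rule.
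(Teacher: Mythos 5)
Your proof follows essentially the same route as the paper: the paper's argument is precisely the telescoping decomposition $\frac{(FG)(x_0+hv)-(FG)(x_0)}{h}=F(x_0+hv)\frac{G(x_0+hv)-G(x_0)}{h}+\frac{F(x_0+hv)-F(x_0)}{h}G(x_0)$, followed by passing to the limit using continuity of $F$ at $x_0$ and the assumed compatibility of the product with limits. Your additional checks (bilinearity needed for the splitting, linearity and boundedness of the resulting operator, continuity of $F\cdot G$) are correct and merely make explicit what the paper leaves implicit; note also that you arrive at the correct formula $F(x_0)\cdot D^gG(x_0)[v]+D^gF(x_0)[v]\cdot G(x_0)$, whereas the lemma's displayed formula contains a typo in its second term.
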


\begin{proof} The assertion follows from $$ \frac{(F G)(x_0+hv)-(FG)(x_0)}{h} =  F(x_0+hv)\frac{G(x_0+hv)-G(x_0)}{h}+  \frac{F(x_0+hv)-F(x_0)}{h}G(x_0).$$ 
\end{proof}

We already know  that any Fréchet differentiable map is Gâteaux differentiable.
The converse obviously is not true, but in analogy to the finite dimensional case the following holds: 

\begin{lem}\citep[Thm. III.5.4 (c)]{Werner_Funkana}\label{Diff_Banach_Space:Lem:Gateaux=>Frechet}
	Let $ F:X \to Y $ be G-differentiable on $ U \subset X $ and $ D^{g}F: U \to \mathcal{L}(X,Y) $ continuous. Then $ F $ is F-differentiable on $ U $ with $ D^gF=DF$.
	
	In this case we say that $ F:U \subset X: \to Y $ is continuously F-differentiable and write $ F \in C^1(U,Y) $ in analogy to continuous differentiability on $ \R{n} $. If $ F $ is only continuous we denote this by $ F \in C(U,Y) $.
\end{lem}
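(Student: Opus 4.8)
The plan is to fix an arbitrary point $x_0 \in U$ and prove that $F$ is Fréchet differentiable at $x_0$ with $DF(x_0) = D^gF(x_0)$; since $x_0$ is arbitrary and $D^gF$ is assumed continuous, this immediately gives $F \in C^1(U,Y)$ together with the claimed identity $D^gF = DF$. As $U$ is open, I first pick $\varepsilon>0$ with $\Umg{\varepsilon}{x_0} \subset U$, so that for every $v \in X$ with $\Norm{v}{X} < \varepsilon$ the whole segment $\{x_0 + tv : t \in [0,1]\}$ lies in $U$. For such $v$ I introduce the auxiliary curve $\varphi:[0,1]\to Y$, $\varphi(t):=F(x_0+tv) - t\,D^gF(x_0)[v]$. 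Gâteaux differentiability of $F$ in direction $v$ at each point $x_0+tv$ shows that $t\mapsto F(x_0+tv)$ is differentiable on $[0,1]$ (hence continuous), so $\varphi$ is continuous on $[0,1]$ and differentiable on $(0,1)$ with $\varphi'(t) = \big(D^gF(x_0+tv) - D^gF(x_0)\big)[v]$.

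The main analytic tool is the mean value inequality for Banach-space-valued functions of a real variable: if $\psi:[0,1]\to Y$ is continuous and differentiable on $(0,1)$, then $\Norm{\psi(1)-\psi(0)}{Y} \le \sup_{t\in(0,1)} \Norm{\psi'(t)}{Y}$. I would prove this by a Hahn–Banach argument — choose $\ell \in Y'$ with $\Norm{\ell}{Y'}=1$ and $\ell(\psi(1)-\psi(0)) = \Norm{\psi(1)-\psi(0)}{Y}$, note that $g:=\ell\circ\psi$ is real-valued, continuous on $[0,1]$ and differentiable on $(0,1)$ with $g'(t)=\ell(\psi'(t))$, and apply the ordinary one-dimensional mean value theorem to $g$. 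Applying this with $\psi=\varphi$ yields
\begin{align*}
\Norm{F(x_0+v) - F(x_0) - D^gF(x_0)[v]}{Y}
&= \Norm{\varphi(1)-\varphi(0)}{Y} \\
&\le \sup_{t\in[0,1]} \Norm{\big(D^gF(x_0+tv)-D^gF(x_0)\big)[v]}{Y} \\
&\le \Norm{v}{X}\,\sup_{t\in[0,1]} \Norm{D^gF(x_0+tv) - D^gF(x_0)}{\mathcal{L}(X,Y)}.
\end{align*}

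Dividing by $\Norm{v}{X}$, the Fréchet difference quotient at $x_0$ is bounded above by $\sup_{t\in[0,1]}\Norm{D^gF(x_0+tv)-D^gF(x_0)}{\mathcal{L}(X,Y)}$. Since $\sup_{t\in[0,1]}\Norm{(x_0+tv)-x_0}{X}=\Norm{v}{X}$, norm-continuity of $D^gF$ at $x_0$ forces this supremum to tend to $0$ as $\Norm{v}{X}\to 0$. Thus the limit in Definition \ref{Diff_Banach_Space:Def:Gateaux/Frechet Differntial} b) (equivalently, the Taylor expansion of Lemma \ref{Diff_Banach_Space:Lem:Linear_Approx}) holds, so $F$ is Fréchet differentiable at $x_0$ with $DF(x_0)=D^gF(x_0)$, and the proof is complete. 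I expect the only genuinely delicate points to be the vector-valued mean value inequality — where one must pass through Hahn–Banach since no equality is available — and the careful bookkeeping that $\varphi$ is continuous up to the endpoints while differentiable on the open interval; the remaining steps are routine estimates.
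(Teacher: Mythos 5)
Your proof is correct, and it is the standard argument (the one in Werner, which the paper only cites rather than reproduces): reduce to the scalar case via Hahn--Banach, apply the one-dimensional mean value theorem to $t\mapsto \ell\bigl(F(x_0+tv)-t\,D^gF(x_0)[v]\bigr)$, and let norm-continuity of $D^gF$ at $x_0$ kill the resulting supremum. Note that you are in effect re-proving the paper's Mean Value Theorem II (Theorem \ref{Diff_Banach_Space:Thm:MWS_G_II}) inline; you could shorten the write-up by invoking that theorem directly with $L=D^gF(x_0)$, observing that continuity of $F$ on the segment follows from the differentiability of $t\mapsto F(x_0+tv)$, and then concluding exactly as you do from the estimate
\begin{equation}
\Norm{F(x_0+v)-F(x_0)-D^gF(x_0)[v]}{Y}\leq \sup_{t\in[0,1]}\Norm{D^gF(x_0+tv)-D^gF(x_0)}{\mathcal{L}(X,Y)}\,\Norm{v}{X}.
\end{equation}
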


%

This connection between the Gâteaux and the Fréchet differential is helpful to illustrate the link to another notion of differentiability that will be needed in the later sections: Differentiability w.r.t. the strong (norm) topology on the Banach space $ X $:\\
\indent Let $ f:\R{} \to X, t \mapsto f(t) $ such that $ X $ is a Banach (or Hilbert) Space and $ I \subset \R{} $ and open interval.
The mapping $f $ is called differentiable w.r.t. the strong (norm) topology on $ X $ at $ t \in I $ if there exists $\left.\frac{d}{dt}f(t)\right\vert_{t=t_0} =\dot{f}(t_0)$ such that 
\[ \lim_{h \to 0}\Norm{\frac{f(t_0+h)-f(t_0)}{h} -\dot{f}(t_0)}{X}=0\, , \]
compare Definition \ref{Parameter_Dep_PDE:Defn: Differentiation on H}. This notion is equivalent to Gâteaux-differentiability: 	
\begin{itemize}
	\item[i)]  Let $f $ be differentiable w.r.t. the strong norm topology on $ X $ then $ f $ is  G-differentiable on $ I$ with $D^{g}u(t)[\alpha]=\alpha \dot{f}(t)$, $ \alpha \in \R{} $: For $ \alpha \neq 0 $
	\begin{equation*}
	\begin{split}
	\lim_{h \to 0}   \frac{\alpha (f(t_0+h\alpha)-f(t_0))}{\alpha h}= \lim_{\tilde{h} \to 0}  \alpha \frac{f(t_0+\tilde{h})-f(t_0)}{\tilde{h}} = \alpha\dot{f}(t_0)
	\end{split}
	\end{equation*}
	and for $ \alpha=0 $ we observe
	$ \lim_{h \to 0} \frac{f(t_0+h\alpha)-f(t_0)}{h} 
	= 0 =0\cdot\dot{f}(t_0)=0.$
	This suggests that $D^{g}u(t_0)\in L(\R{}, X)  $ is the multiplication operator $$M_{\dot{f}(t_0)}: \R{} \to X, \, \alpha \mapsto \alpha \dot{f}(t_0).$$
	This operator is an element of $ L(\R{},X) $ and continuous with 
	$\Vert M_{\dot{f}(t_0)} \Vert_{L(\R{},X)}= \Vert \dot{f}(t_0)\Vert_X.$ 
	Thus, the Gâteaux-differential is given by 
	$$ D^{g}f(t_0)[\alpha]=M_{\dot{f}(t_0)}(\alpha)=\alpha \dot{f}(t_0) . $$
	If $ f $ is conversely Gâteaux-differentiable, then  
	$D^{g}f(t_0)[1]= \lim_{h \to 0} \tfrac{f(t_0+h)-f(t_0)}{h}$ $=\dot{f}(t_0)$
	and
	$ D^{g}f(t_0)[\alpha]=\alpha D^{g}f(t_0)[1] = \alpha \dot{f}(t_0).$
	\item[ii)] If it is additionally supposed that the mapping $ I \to X, t \to \dot{f}(t) $ is strongly continuous on $ X $, then $ f $ is even Fréchet differentiable, because  $ t \mapsto  M_{\dot{f}(t)}$ is continuous then:
	\begin{align*}
	\Norm{D^gf(t)-D^gf(s)}{L(\R{},X)}
	&=  \Norm{M_{\dot{f}(t)}- M_{\dot{f}(s)}}{L(\R{},X)} 
	=  \sup_{|\alpha| \leq 1} |\alpha| \Norm{\dot{f}(t)-\dot{f}(s)}{X}\underset{s \to t}{\to} 0.
	\end{align*}
\end{itemize}

\begin{lem} \label{Diff_Banach_Space:Lem:Gateaux/Frechet_Diff_u^t}
	Let $ f:I \subset \R{}  \to  X$, $ I  $ an open interval and $ X $ a Banach space. 
	\begin{itemize}
		\item[i)] $ f $ is differentiable w.r.t. the strong topology on $ X $ if and only if f is G-differentiable. Then 
		$D^{g}f(t_0)[\alpha]=M_{\dot{f}(t_0)}(\alpha)=\alpha \dot{f}(t_0),\, \alpha \in \R{}.$
		\item[ii)] Suppose that i) is valid and assume that the mapping $ t \in I \mapsto \dot{f}(t)=D^{g}f(t_0)[1] $ $\in X $ is strongly continuous, then $ f $ is F- differentiable. 
	\end{itemize}
\end{lem}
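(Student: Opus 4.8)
The plan is to verify both assertions by reducing everything to the one–dimensional difference quotient, exactly along the lines of the discussion preceding the lemma, and then to deduce (ii) from Lemma \ref{Diff_Banach_Space:Lem:Gateaux=>Frechet}.

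First I would treat (i). For the ``only if'' direction, assume $f$ is differentiable with respect to the strong topology on $X$ at some $t_0 \in I$, with strong derivative $\dot f(t_0) \in X$. I would propose as Gâteaux differential the multiplication operator $M_{\dot f(t_0)}\colon \R{} \to X$, $\alpha \mapsto \alpha\,\dot f(t_0)$, which is linear, lies in $\mathcal{L}(\R{},X)$, and satisfies $\Norm{M_{\dot f(t_0)}}{\mathcal{L}(\R{},X)} = \sup_{|\alpha|\le 1}|\alpha|\,\Norm{\dot f(t_0)}{X} = \Norm{\dot f(t_0)}{X}$. To check the defining limit, for a fixed direction $\alpha \neq 0$ I would substitute $\tilde h := h\alpha$ and obtain
\[
\lim_{h\to 0}\frac{f(t_0+h\alpha)-f(t_0)}{h} = \lim_{\tilde h\to 0}\alpha\,\frac{f(t_0+\tilde h)-f(t_0)}{\tilde h} = \alpha\,\dot f(t_0),
\]
the convergence being in $\Norm{\cdot}{X}$; the case $\alpha = 0$ is immediate since the quotient is identically $0$. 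Hence $f$ is G-differentiable at $t_0$ with $D^g f(t_0)[\alpha] = \alpha\,\dot f(t_0) = M_{\dot f(t_0)}(\alpha)$. For the ``if'' direction, if $f$ is G-differentiable at $t_0$, then evaluating at $\alpha = 1$ shows $\lim_{h\to 0}\frac{f(t_0+h)-f(t_0)}{h} = D^g f(t_0)[1]$ exists in the strong topology, so $f$ is strongly differentiable with $\dot f(t_0) = D^g f(t_0)[1]$, and linearity of $D^g f(t_0)$ recovers the stated formula.

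For (ii), I would invoke Lemma \ref{Diff_Banach_Space:Lem:Gateaux=>Frechet}: by (i), $f$ is G-differentiable on all of $I$, so it suffices to show $D^g f\colon I \to \mathcal{L}(\R{},X)$ is continuous. Since $D^g f(t) = M_{\dot f(t)}$, for $s,t \in I$ one has
\[
\Norm{D^g f(t) - D^g f(s)}{\mathcal{L}(\R{},X)} = \Norm{M_{\dot f(t)} - M_{\dot f(s)}}{\mathcal{L}(\R{},X)} = \sup_{|\alpha|\le 1}|\alpha|\,\Norm{\dot f(t)-\dot f(s)}{X} = \Norm{\dot f(t)-\dot f(s)}{X},
\]
which tends to $0$ as $s \to t$ precisely by the assumed strong continuity of $t\mapsto \dot f(t)$. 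Thus $D^g f$ is continuous, and Lemma \ref{Diff_Banach_Space:Lem:Gateaux=>Frechet} yields $f \in C^1(I,X)$; in particular $f$ is F-differentiable with $DF = D^g f$.

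There is no genuine obstacle here — the argument is routine. The only points needing a little care are the change of variables in the difference quotient (valid only for $\alpha \neq 0$, forcing the separate trivial treatment of $\alpha = 0$) and the identification $\Norm{M_y}{\mathcal{L}(\R{},X)} = \Norm{y}{X}$; the continuity claim in (ii) is then just the statement that $y \mapsto M_y$ is an isometry of $X$ onto a subspace of $\mathcal{L}(\R{},X)$, transporting strong continuity of $t\mapsto\dot f(t)$ to continuity of $t\mapsto D^g f(t)$.
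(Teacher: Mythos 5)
Your proposal is correct and follows essentially the same route as the paper: the paper's own argument (given in the discussion immediately preceding the lemma) likewise uses the change of variables $\tilde h = h\alpha$ for $\alpha\neq 0$, identifies the Gâteaux differential with the multiplication operator $M_{\dot f(t_0)}$ with $\Norm{M_{\dot f(t_0)}}{\mathcal{L}(\R{},X)}=\Norm{\dot f(t_0)}{X}$, recovers the converse by evaluating at $\alpha=1$, and deduces (ii) from the continuity of $t\mapsto M_{\dot f(t)}$ together with Lemma \ref{Diff_Banach_Space:Lem:Gateaux=>Frechet}. No gaps; nothing to add.
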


\section{Chain rule and mean value theorems}

%
%

\begin{lem}[Chain Rule]\citep{Cheney}\label{Diff_Banach_Space:Lem:CR}
	Let $ X,Y,Z $ be Banach spaces and $ F:X \to Y $, $ G:Y \to Z $. If $ F $ is F-differentiable on $ U \subset X $ and $ G $ is F-differentiable on $ F(U) $, then $ G \circ F:X \to Z $ is F-differentiable on $ U $ with differential $ D(G \circ F )(x_0) =  DG\left( F(x_0)\right) \circ (DF(x_0)),\, x_0 \in U  $.
\end{lem}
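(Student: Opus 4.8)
The plan is to reduce everything to the Taylor-expansion characterization of Fréchet differentiability provided by Lemma \ref{Diff_Banach_Space:Lem:Linear_Approx}. Fix $x_0 \in U$ and write $y_0 := F(x_0) \in F(U)$. By hypothesis $F$ is F-differentiable at $x_0$, so there is a remainder $r_{x_0}$ with
\begin{equation}
F(x_0 + v) = F(x_0) + DF(x_0)[v] + r_{x_0}(v), \qquad \frac{\Norm{r_{x_0}(v)}{Y}}{\Norm{v}{X}} \to 0 \text{ as } \Norm{v}{X} \to 0.
\end{equation}
Similarly $G$ is F-differentiable at $y_0$, so
\begin{equation}
G(y_0 + w) = G(y_0) + DG(y_0)[w] + s_{y_0}(w), \qquad \frac{\Norm{s_{y_0}(w)}{Z}}{\Norm{w}{Y}} \to 0 \text{ as } \Norm{w}{Y} \to 0.
\end{equation}
The candidate derivative is the composition $L := DG(F(x_0)) \circ DF(x_0)$, which lies in $\mathcal{L}(X,Z)$ since it is a composition of bounded linear operators; its operator norm is bounded by $\Norm{DG(y_0)}{\mathcal{L}(Y,Z)} \Norm{DF(x_0)}{\mathcal{L}(X,Y)}$.

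First I would substitute $w := F(x_0+v) - F(x_0) = DF(x_0)[v] + r_{x_0}(v)$ into the expansion of $G$ around $y_0$. This gives
\begin{equation}
(G \circ F)(x_0+v) = G(y_0) + DG(y_0)\big[DF(x_0)[v] + r_{x_0}(v)\big] + s_{y_0}(w).
\end{equation}
Using linearity of $DG(y_0)$, this becomes $(G\circ F)(x_0) + L[v]$ plus the remainder term
\begin{equation}
R(v) := DG(y_0)[r_{x_0}(v)] + s_{y_0}\big(DF(x_0)[v] + r_{x_0}(v)\big).
\end{equation}
By Lemma \ref{Diff_Banach_Space:Lem:Linear_Approx} it then suffices to show $\Norm{R(v)}{Z}/\Norm{v}{X} \to 0$ as $\Norm{v}{X}\to 0$. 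For the first summand, boundedness of $DG(y_0)$ gives $\Norm{DG(y_0)[r_{x_0}(v)]}{Z} \le \Norm{DG(y_0)}{\mathcal{L}(Y,Z)} \Norm{r_{x_0}(v)}{Y}$, and dividing by $\Norm{v}{X}$ this tends to $0$ by F-differentiability of $F$.

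The main obstacle — really the only point requiring care — is the second summand $s_{y_0}(w)$, because the rate estimate for $s_{y_0}$ is phrased in terms of $\Norm{w}{Y}$, not $\Norm{v}{X}$. I would handle this by a standard two-case argument. Note first that $\Norm{w}{Y} \le \big(\Norm{DF(x_0)}{\mathcal{L}(X,Y)} + \varepsilon_0\big)\Norm{v}{X}$ for $\Norm{v}{X}$ small enough, where $\varepsilon_0$ comes from $\Norm{r_{x_0}(v)}{Y} \le \varepsilon_0 \Norm{v}{X}$; in particular $w \to 0$ in $Y$ as $v \to 0$ in $X$. If $w = 0$ then $s_{y_0}(w)=0$ and there is nothing to estimate; if $w \neq 0$, write
\begin{equation}
\frac{\Norm{s_{y_0}(w)}{Z}}{\Norm{v}{X}} = \frac{\Norm{s_{y_0}(w)}{Z}}{\Norm{w}{Y}} \cdot \frac{\Norm{w}{Y}}{\Norm{v}{X}}.
\end{equation}
The second factor stays bounded by $\Norm{DF(x_0)}{\mathcal{L}(X,Y)} + \varepsilon_0$ for small $v$, while the first factor tends to $0$ since $\Norm{w}{Y} \to 0$. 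Hence $\Norm{R(v)}{Z}/\Norm{v}{X} \to 0$, so by Lemma \ref{Diff_Banach_Space:Lem:Linear_Approx} the map $G \circ F$ is F-differentiable at $x_0$ with $D(G\circ F)(x_0) = L = DG(F(x_0)) \circ DF(x_0)$. Since $x_0 \in U$ was arbitrary, this holds on all of $U$, which completes the proof.
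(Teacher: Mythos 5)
Your proof is correct and is exactly the standard remainder-estimate argument (via Lemma \ref{Diff_Banach_Space:Lem:Linear_Approx}) that the paper itself defers to, since the paper only remarks that the proof is analogous to the finite-dimensional case in Cheney. The only delicate point, converting the rate estimate for $s_{y_0}$ from $\Norm{w}{Y}$ to $\Norm{v}{X}$, is handled properly by your two-case argument with the bounded ratio $\Norm{w}{Y}/\Norm{v}{X}$.
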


\noindent The proof is analogous to the finite dimensional case, see \cite[Section 3.2. Thm.1]{Cheney}.

\pagebreak
\begin{rem}\label{Diff_Banach_Space:Rem:CR_I} The special case, when 
	$ G\in \mathcal{L}(Y,Z) $ is linear and continuous, naturally is included: Thus, if $ X,Y,Z $ are Banach spaces and $ F:X \to Y $, $ G:Y \to Z $ such that $ F $ is Fréchet differentiable on $ U \subset X $ and $ G $ is linear on $ F(U) $, then $ G \circ F:X \to Z $ is Gâteaux (Fréchet) differentiable on $ U $ with differential $ D^g(G \circ F )(x_0) = G\circ D^gF(x_0),\, x_0 \in U  $.
\end{rem}

\begin{exmp}\label{Diff_Banach_Space:Exmp:G_Diff_Integral}
	Let $ \Omega \subset \R{n} $, $ n \in \N{} $ be a bounded domain.
	The space $ (C(\overline{\Omega}), \Norm{.}{\infty}) $ of continuous functions on $ \overline{\Omega} $ is a Banach space. The mapping 
	\begin{align}
	I_{\Omega}: C(\overline{\Omega}) \to \R{},\,
	f \mapsto \int_{\Omega} f(x) \, dx
	\end{align}
	is a linear and continuous functional $ I_{\Omega} \in C(\overline{\Omega})' $ with operator norm  $ \Norm{I_{\Omega}}{C(\overline{\Omega})'}=|\Omega| $.
	Let $ X$ be a Banach space and $ F:X \to C(\overline{\Omega}) $ Gâteaux differentiable on $ U \subset X $. Then 
	\begin{align}
	I_{\Omega} \circ F: U \to  \R{},\,  
	f \mapsto  \int_{\Omega} F(f)(x)\, dx 
	\end{align} is Gâteaux differentiable on $ U $ according to Lemma \ref{Diff_Banach_Space:Lem:CR} and the remark above with 
	\[ D^g(I_{\Omega} \circ F)(f_0)[f] = \int_{\Omega} (D^gF(f_0)[f])(x) \, dx,\]
	for any $ f_0 \in U $, $ f \in X $.
	If $ F $ is Fréchet differentiable on $ U $ then $ I_\Omega \circ F $ also is and $ D^gF $ can be replaced by $ DF $.  
	Moreover, this example extends to surface integrals and continuous functions. 
	Under these conditions 
	\begin{align}
	I_{\Gamma}: C(\Gamma) \to \R{},\,
	f \mapsto \int_{\Gamma} f(x) \, dS
	\end{align}
	turns also out to be Gâteaux/Fréchet differentiable by analogous arguments. 
\end{exmp}


\begin{thm}[Mean Value Theorem I] \citep[Sec. 3.2, Theorem 2]{Cheney}\label{Diff_Banach_Space:Thm:MWS_G_I}
	Let $ X $ be a Banach space, $ U \subset X $ an open subset and $ F:U \subset X \to \R{} $ a real valued function.  
	Suppose that for the elements $ x_1,x_2 \in I $ also the line segment $ S=[x_1,x_2]:=\{x_1 +\lambda (x_2-x_1) \, | \, \lambda \in [0,1]\}  $ is contained in $ U $. 
	
	If $ F $ is continuous on $ S $ and F-differentiable on the open line segment $(x_1,x_2):=\{x_1 +\lambda (x_2-x_1) \, | \, \lambda \in (0,1)\} $, then for some $ \xi \in (x_1,x_2) $  
	\[ F(x_1)-F(x_2)=DF(\xi)[x_1-x_2]. \]
\end{thm}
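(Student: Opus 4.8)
The plan is to reduce the statement to the classical one-variable mean value theorem by restricting $F$ to the segment $S$. First I would introduce the affine path $g:\R{}\to X$, $g(\lambda):=x_2+\lambda(x_1-x_2)$, and the scalar function $\varphi:[0,1]\to\R{}$ defined by $\varphi(\lambda):=F(g(\lambda))=F\big(x_2+\lambda(x_1-x_2)\big)$. Because $g$ is continuous with $g([0,1])=S\subset U$ and $F$ is continuous on $S$ by hypothesis, $\varphi$ is continuous on the closed interval $[0,1]$; in particular $\varphi(0)=F(x_2)$ and $\varphi(1)=F(x_1)$.

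Next I would establish differentiability of $\varphi$ on the open interval $(0,1)$ together with a formula for $\varphi'$. The map $g$ has linear part $h\mapsto h(x_1-x_2)$, which lies in $\mathcal{L}(\R{},X)$, so by Lemma \ref{Diff_Banach_Space:Lem:G_Diff_Linear_Map} (and the fact that adding a fixed vector does not change a Fréchet derivative) $g$ is Fréchet differentiable at every $\lambda$ with $Dg(\lambda)[h]=h(x_1-x_2)$. For $\lambda\in(0,1)$ the point $g(\lambda)$ lies on the open segment $(x_1,x_2)$, where $F$ is Fréchet differentiable; hence the chain rule (Lemma \ref{Diff_Banach_Space:Lem:CR}, applied locally on a small neighbourhood of each interior $\lambda$ — differentiability being a local property — or equivalently Remark \ref{Diff_Banach_Space:Rem:CR_I}) yields that $\varphi$ is differentiable on $(0,1)$ with
\[ \varphi'(\lambda)=D\varphi(\lambda)[1]=DF\big(g(\lambda)\big)\big[Dg(\lambda)[1]\big]=DF\big(g(\lambda)\big)[x_1-x_2]. \]

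Then I would invoke the ordinary mean value theorem for the real-valued function $\varphi$, which is continuous on $[0,1]$ and differentiable on $(0,1)$: there is some $\lambda_0\in(0,1)$ with $\varphi(1)-\varphi(0)=\varphi'(\lambda_0)$. Setting $\xi:=g(\lambda_0)=x_2+\lambda_0(x_1-x_2)=x_1+(1-\lambda_0)(x_2-x_1)$, which lies on the open segment $(x_1,x_2)$ since $1-\lambda_0\in(0,1)$, and unravelling the definitions gives $F(x_1)-F(x_2)=DF(\xi)[x_1-x_2]$, as claimed.

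The only point that needs genuine care is the chain rule step: the hypothesis provides Fréchet differentiability of $F$ only on the open segment, not on all of $U$, so Lemma \ref{Diff_Banach_Space:Lem:CR} cannot be applied with $U$ as stated but must be used locally around each $\lambda\in(0,1)$, exploiting that $g$ maps a sufficiently small neighbourhood of such a $\lambda$ into the open segment. One must also resist the temptation to ask for differentiability of $\varphi$ at the endpoints $0$ and $1$: only continuity there is available and, fortunately, only continuity there is needed, exactly as in the scalar mean value theorem. Everything else is routine bookkeeping.
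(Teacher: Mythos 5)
Your proof is correct and is essentially the argument the paper relies on by citing Cheney: parametrize the segment by an affine path, note the composite $\varphi(\lambda)=F\big(x_2+\lambda(x_1-x_2)\big)$ is continuous on $[0,1]$ and, by the chain rule applied on $(0,1)$ (where $g$ maps into the open segment, so Lemma \ref{Diff_Banach_Space:Lem:CR} applies directly), differentiable there, and then invoke the classical scalar mean value theorem — the same device the paper uses in its proof of Theorem \ref{Diff_Banach_Space:Thm:MWS_G_II}, only without the auxiliary dual functional since here $F$ is real-valued. Your handling of the endpoint regularity and of $\xi$ lying on the open segment is exactly right, so there is nothing to add.
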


\begin{thm}(Mean Value Theorem II)\label{Diff_Banach_Space:Thm:MWS_G_II}\\
	Let $ F:X \to Y $ be G-(F-) differentiable on $ U \subset X $. Let $ x_0 \in U $, $ v \in X $ be fixed such that $ S:=(x_0,x_0+v)\subset U $. If $ F $ is continuous on $ S $ and $ L \in \mathcal{L}(X,Y) $, then 
	$$ \displaystyle \Norm{F(x_0+v) - F(x_0)-L(v)}{Y} \leq \sup_{s \in S } \Norm{D^gF(s)-L}{L(X,Y)} \Norm{v}{X}. $$
	
\end{thm}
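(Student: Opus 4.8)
The plan is to reduce the vector-valued Mean Value Theorem II to the scalar Mean Value Theorem I (Theorem \ref{Diff_Banach_Space:Thm:MWS_G_I}) by a standard Hahn--Banach duality argument. Consider the auxiliary map $G: U \to Y$ defined by $G(x) := F(x) - L(x)$, where we view $L$ as an affine perturbation (recall $L \in \mathcal{L}(X,Y)$ is linear and continuous, hence Fréchet differentiable with $DL(x) = L$ by Lemma \ref{Diff_Banach_Space:Lem:G_Diff_Linear_Map}). Then $G$ is G-(F-)differentiable on $U$ with $D^g G(s) = D^g F(s) - L$, and the claim becomes $\Norm{G(x_0+v) - G(x_0)}{Y} \leq \sup_{s \in S} \Norm{D^g G(s)}{L(X,Y)} \Norm{v}{X}$. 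So it suffices to prove the ``homogeneous'' version with $L$ absorbed into $F$.

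To estimate $\Norm{G(x_0+v)-G(x_0)}{Y}$, I would pick, by a corollary of Hahn--Banach, a functional $y' \in Y'$ with $\Norm{y'}{Y'} = 1$ such that $y'\big(G(x_0+v)-G(x_0)\big) = \Norm{G(x_0+v)-G(x_0)}{Y}$. Now define the scalar function $\varphi : [0,1] \to \R{}$ by $\varphi(\lambda) := y'\big(G(x_0 + \lambda v)\big)$. Since $y' \in \mathcal{L}(Y,\R{})$ is linear and continuous, the chain rule in the form of Remark \ref{Diff_Banach_Space:Rem:CR_I} (composition of a Fréchet-differentiable map with a continuous linear map) gives that $\varphi$ is differentiable on $(0,1)$ with $\varphi'(\lambda) = y'\big(D^g G(x_0+\lambda v)[v]\big)$; continuity of $\varphi$ on $[0,1]$ follows from continuity of $G$ on $S$. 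Applying the scalar Mean Value Theorem \ref{Diff_Banach_Space:Thm:MWS_G_I} (or just the classical one-dimensional MVT) to $\varphi$ on $[0,1]$ yields a $\lambda^\ast \in (0,1)$ with $\varphi(1) - \varphi(0) = \varphi'(\lambda^\ast)$, i.e. with $s^\ast := x_0 + \lambda^\ast v \in S$,
\[
\Norm{G(x_0+v)-G(x_0)}{Y} = y'\big(D^g G(s^\ast)[v]\big) \leq \Norm{y'}{Y'}\,\Norm{D^g G(s^\ast)[v]}{Y} \leq \Norm{D^g G(s^\ast)}{L(X,Y)}\,\Norm{v}{X},
\]
where the last inequality is just the definition of the operator norm. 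Bounding the right-hand side by the supremum over $s \in S$ and re-expanding $D^g G = D^g F - L$ gives the assertion.

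The only genuinely delicate point is making sure the differentiability hypotheses transfer correctly along the composition $\lambda \mapsto x_0 + \lambda v \mapsto G(x_0+\lambda v) \mapsto y'(G(x_0+\lambda v))$: the inner affine map $\lambda \mapsto x_0 + \lambda v$ is smooth $\R{} \to X$, the outer map $y'$ is linear continuous, and $G$ is only assumed G-differentiable, but G-differentiability is exactly what is needed to differentiate a composition with a one-dimensional parameter (this is the content of the equivalence in Lemma \ref{Diff_Banach_Space:Lem:Gateaux/Frechet_Diff_u^t}, applied to $f(\lambda) := G(x_0+\lambda v)$, whose strong derivative at $\lambda$ is $D^g G(x_0+\lambda v)[v]$). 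One should also double-check the degenerate case $v = 0$, where the statement is trivially $0 \leq 0$, and note that the hypothesis $S = (x_0, x_0+v) \subset U$ together with continuity of $F$ on $S$ is enough; we do not need the closed segment in $U$ because the scalar MVT is applied on the open interval with one-sided limits supplied by continuity on $S$. If one prefers to avoid even the mild subtlety of differentiating at the endpoints, the cleanest route is to apply Theorem \ref{Diff_Banach_Space:Thm:MWS_G_I} to $\varphi$ exactly as stated there, since it already only demands F-differentiability on the open segment and continuity on the closed one.
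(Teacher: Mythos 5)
Your proof is correct and follows essentially the same route as the paper: pick a norming functional $y^\ast \in Y'$ via Hahn--Banach for the element $F(x_0+v)-F(x_0)-L(v)$, reduce to the scalar function $\varphi(\lambda)=y^\ast\big(F(x_0+\lambda v)-F(x_0)-\lambda L(v)\big)$ on $[0,1]$, apply the classical mean value theorem, and bound by the operator norm. Your preliminary substitution $G=F-L$ is only a cosmetic repackaging of the same argument, so there is nothing substantively different to compare.
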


\begin{proof} 
	%
	For any element $ y $ of a normed space $ Y $  there exists a functional $ y^{\ast}  $ in the dual space $ Y' $ with $ \Norm{y^{\ast}}{Y'}=1 $ and $ y^{ \ast}(y)=\Norm{y}{Y} $ (\citep[Corollary III.1.6.]{Werner_Funkana}). We apply this to $ y= F(x_0+v)-F(x_0)- L(v) $ and investigate the mapping
	\begin{align*}
	\varphi: [0,1] \to \R{},\, 
	s &\mapsto y^{\ast}(F(x_0+s v)-F(x_0)-s L(v)).
	\end{align*} 
	Especially, $ \varphi(1)= y^{\ast}(F(x_0+ v)-F(x_0)- L(v))=\Norm{F(x_0+ v)-F(x_0)- L(v)}{Y} $.
	Since $ y^{\ast} $ is linear and continuous
	\begin{align*}
	\,\lim_{h \to 0} \frac{\varphi(s + h)-\varphi(s)}{h} 
	=&\,\lim_{h \to 0} y^{\ast}\left(\frac{F(x_0+s v+hv) -F(x_0+s v) }{h} - L(v)\right) \\
	=&\, y^{\ast}\left( \lim_{h \to 0 } \frac{F(x_0+s v+hv) -F(x_0+s v) }{h} - L(v)\right)\\
	=& \, y^{\ast}\left(D^gF(x_0+s v)[v] - L(v)\right)
	\end{align*}
	and hence $ \varphi'(s)= y^{\ast}\left(D^gF(x_0+s v)[v] - L(v)\right)$ is the derivative of $ \varphi $ at $ s \in (0,1) $. Moreover, $\varphi $ is continuous on [0,1] because $ y^{\ast} $ is continuous and $ F $ is continuous on $ S $.
	This gives us the possibility to apply the classical mean value theorem to $ \varphi $: $ \exists \mathcal{ \xi} \in (0,1) $ such that
	$|\varphi(1)-\varphi(0)| =|\varphi'(\mathcal{ \xi})||1-0|  $
	which is equivalent to  
	\[ \Norm{F(x_0 + v)-F(x_0)-L(v)}{Y}=|y^{\ast}\left(D^gf(x_0+\xi v)[v] -L(v)\right)|. \]
	Accordingly, for any $ \xi \in [0,1] $,
	\begin{align*}
	& \Norm{F(x_0 + v)-F(x_0)-L(v)}{Y}
	=|y^{\ast}\left(D^gF(x_0+\xi v)[v]-L(v)\right)|  \\
	& \leq \Norm{y^{\ast}}{Y'}\Norm{D^gF(x_0+\xi v)[v]-L(v)}{Y} 
	\leq \Norm{D^gF(x_0+\xi v)-L}{\mathcal{L}(X,Y)} \Norm{v}{X}.
	\end{align*}
\end{proof}
\vspace{\parindent}

\section{Partial derivatives}

If $ X_1,...,X_n $ are Banach spaces, then also the Cartesian product $ X:=\prod_{i=1}^{n} X_i $ with Norm $ \Norm{(x_1,\ldots,x_n)}{X} =\sum_{i=1,...,n} \Norm{x_i}{X_i} $ is a Banach space.

In case $ U\subset\prod_{i=1}^{n}  X_i  $ is open, $ p\in U $ and $ F:U \to Y $ is a functional with values in some Banach space $ Y $ then we can find an environment $ U_{p_i} \subset X_i $ of $ p_i $ such that the point $  (p_1,...,p_{i-1},x,p_{i+1},...,p_n) \in U   $ for any $ x \in U_{p_i}$. Afterwards we can define the mappings $$ F_{p,i}:U_{p_i} \subset X_i \to Y,\, x \mapsto F(p_1,...,p_{i-1},x,p_{i+1},...,p_n) $$
which satisfy $ F_{p,i}(p_i) =F(p) $.
This justifies the following definition: 

\begin{defn}[Partial derivatives]\citep[p. 339 f]{Heuser_Analysis2}
	Let $ X_1,...,X_n,\,Y $ be Banach spaces, $ U\subset X:=\prod_{i=1}^{n} X_i $ open and $ F:U \to Y $. 
	\begin{itemize}
		\item[a)] The functional $ F $ is called partially
		Gâteaux (Fréchet) differentiable at $ p \in U $ regarding the $ i $-th component if $$ F_{p,i}: x \in U_{p_i} \mapsto F(p_1,...,p_{i-1},x,p_{i+1},...,p_n)  $$
		\begin{itemize}
			\item[i)]  is  Gâteaux differentiable at $ p_i$,
			i.e. the limit value
			\[ \lim_{h \to 0} \frac{F_{p,i}(p_i+hv_i) - F_{p,i}(p_i)}{h} =D^gF_{p,i}(p_i)[v_i] ,\, v_i\in X_i \]
			exists such that $ D^gF_{p,i}(p_i) \in \mathcal{ L}(X_i,Y)$. 
			\item[ii)] is  Fréchet differentiable at $ p_i $, i.e. there exists $ DF_{p,i}(p_i) \in $ $ \mathcal{ L}(X_i,Y)$ such that 
			\[ \lim_{\Norm{v_i}{X_i}\to 0} \frac{\Norm{F_{p,i}(p_i+v_i) - F_{p,i}(p_i) -DF_{p,i}(p_i)[v_i]}{Y} }{\Norm{v_i}{X_i}} =0.  \]
		\end{itemize}
		Then the $ i $-th partial Gâteaux (Fréchet) derivative is given by the mapping $$ 
		\frac{\partial F}{\partial x_i}(p_1,...,p_n):=D^gF_{p,i}(p_i) \in \mathcal{L}(X_i,Y)
		$$ 
		respectively,  
		$$  \frac{\partial F}{\partial x_i}(p_1,...,p_n):=DF_{p,i}(p_i)\in \mathcal{L}(X_i,Y).$$
		\item[b)] $ F $ is called partially Gâteaux (Fréchet) differentiable if $ F $ is partially Gâteaux (Fréchet) differentiable regarding any component $ i = 1,...,n $.
	\end{itemize}
\end{defn}

We now establish the link between the partial derivatives of $ F:X \to Y $, $X= \prod_{i=1}^{n} X_i$ for Banach spaces $ X_i,\, i=1,\ldots,n $ and $ Y $ and the Gâteaux (Fréchet) derivative of $ F $:

\begin{lem}\label{Diff_Banach_Space:Lem:Part_Deriv_Frechet}
	Let $ F: U\subset X \to Y $ be G-(F-) differentiable at $ p=(p_1,...,p_n) \in U $. Then $ F $ is partially G-(F-) differentiable at $ p  $ with 
	\begin{align*}
	DF(p)[v]&=\sum_{i=1}^{n}\frac{\partial F}{\partial x_i}(p)[v_i],\, v=(v_1,...,v_n) \in X.
	\end{align*}
\end{lem}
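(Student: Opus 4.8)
The plan is to exhibit each ``slice'' $F_{p,i}$ as the composition of $F$ with an affine embedding of $X_i$ into $X$, and then to read off the asserted formula from the linearity of the differential of $F$ at $p$.

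First I would fix, for each $i\in\{1,\dots,n\}$, the canonical inclusion
\[
\iota_i\colon X_i\to X,\qquad \iota_i(x)=(0,\dots,0,x,0,\dots,0),
\]
with $x$ sitting in the $i$-th slot. Since $\Norm{\iota_i(x)}{X}=\Norm{x}{X_i}$, the map $\iota_i$ is linear and continuous, $\iota_i\in\mathcal L(X_i,X)$. Setting $\hat p_i:=(p_1,\dots,p_{i-1},0,p_{i+1},\dots,p_n)$, the affine map $A_i(x):=\hat p_i+\iota_i(x)$ satisfies $A_i(p_i)=p$, maps the neighbourhood $U_{p_i}$ into $U$ by the very choice of $U_{p_i}$ made in the definition of $F_{p,i}$, and obeys $F_{p,i}=F\circ A_i$ on $U_{p_i}$, together with the exact identity $A_i(p_i+v_i)=p+\iota_i(v_i)$ for $v_i\in X_i$.

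Next I would treat the two regularity notions separately. In the Gâteaux case the exact identity above gives, for every $v_i\in X_i$,
\[
\lim_{h\to0}\frac{F_{p,i}(p_i+hv_i)-F_{p,i}(p_i)}{h}
=\lim_{h\to0}\frac{F(p+h\,\iota_i(v_i))-F(p)}{h}
=D^{g}F(p)[\iota_i(v_i)],
\]
so $F_{p,i}$ is Gâteaux differentiable at $p_i$ with $\tfrac{\partial F}{\partial x_i}(p)=D^{g}F(p)\circ\iota_i$, which lies in $\mathcal L(X_i,Y)$ as a composition of bounded linear maps. In the Fréchet case I would use the Taylor characterisation of Lemma~\ref{Diff_Banach_Space:Lem:Linear_Approx}: writing $F(p+v)=F(p)+DF(p)[v]+r_p(v)$ with $r_p(v)/\Norm{v}{X}\to0$ as $\Norm{v}{X}\to0$, substitution of $v=\iota_i(v_i)$ yields
\[
F_{p,i}(p_i+v_i)-F_{p,i}(p_i)-\bigl(DF(p)\circ\iota_i\bigr)[v_i]=r_p(\iota_i(v_i)),
\]
and since $\Norm{\iota_i(v_i)}{X}=\Norm{v_i}{X_i}$ the right-hand side divided by $\Norm{v_i}{X_i}$ tends to $0$ as $\Norm{v_i}{X_i}\to0$; hence, again by Lemma~\ref{Diff_Banach_Space:Lem:Linear_Approx}, $F_{p,i}$ is Fréchet differentiable at $p_i$ with $\tfrac{\partial F}{\partial x_i}(p)=DF(p)\circ\iota_i$.

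Finally I would assemble the formula: any $v=(v_1,\dots,v_n)\in X$ satisfies $v=\sum_{i=1}^{n}\iota_i(v_i)$, so by linearity of $DF(p)$ (respectively $D^{g}F(p)$),
\[
DF(p)[v]=DF(p)\Bigl[\sum_{i=1}^{n}\iota_i(v_i)\Bigr]=\sum_{i=1}^{n}DF(p)[\iota_i(v_i)]=\sum_{i=1}^{n}\frac{\partial F}{\partial x_i}(p)[v_i],
\]
which is the claim. There is no genuinely hard step here; the only points that need a moment's care are the bookkeeping that $A_i$ maps $U_{p_i}$ into $U$ (so that $F\circ A_i$ is well defined on $U_{p_i}$) and the fact that the Gâteaux and Fréchet statements must be obtained by the matching tool — a direct difference quotient in the first case, the linear-approximation Lemma~\ref{Diff_Banach_Space:Lem:Linear_Approx} in the second — rather than by invoking the chain rule of Lemma~\ref{Diff_Banach_Space:Lem:CR}, which presupposes differentiability of $F$ on an open set, whereas here $F$ is only assumed differentiable at the single point $p$.
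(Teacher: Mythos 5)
Your proof is correct and follows essentially the same route as the paper's one-line argument, which rests on the identity $F(p+h(0,\ldots,v_i,\ldots,0))=F_{p,i}(p_i+hv_i)$ together with the linearity of $DF(p)$; you have merely spelled out the details, including the Fréchet case via Lemma~\ref{Diff_Banach_Space:Lem:Linear_Approx} and the norm identity $\Norm{\iota_i(v_i)}{X}=\Norm{v_i}{X_i}$. Nothing further is needed.
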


\begin{proof}
	The assertion follows from $ F(p + h(0,\ldots,v_i,\ldots,0) ) = F(p_1,\ldots,p_i+hv_i,\ldots,p_n) =  F_{p,i}(p_i+hv_i) $ and $ F(p) = F_{p,i}(p_i) $ and the linearity of $  DF(p)[v] $.
\end{proof}

\begin{lem}\label{Diff_Banach_Space:Lem: Cont_Part_Deriv_Gateaux}
	Let $ X_1,..,X_n,\, Y $ be Banach spaces and $ F:U \subset X=\prod_{i=1}^{n}X_i \to \R{} $, $ p \in U $.
	Suppose that the partial G-derivatives $ \frac{\partial F}{\partial x_i} $ exist in an environment $ \mathcal{U}_{p} $ of $p$ such that they are continuous on $ U_{p} $ . Then the F-differential of $ F $ exists at $ p $ and can be calculated by
	\[ DF(p)[v]=\sum_{i=1}^{n}\frac{\partial F}{\partial x_i}(p)[v_i],\, v=(v_1,...,v_n) \in X.\]
\end{lem}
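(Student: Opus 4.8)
This is the Banach space version of the classical theorem that continuity of the partial derivatives implies total (Fréchet) differentiability, and the plan is to carry over the classical argument: decompose the increment of $F$ into a chain of increments each of which varies only one coordinate, estimate every link by the mean value theorem, and make the error terms vanish by invoking the \emph{continuity} of the partials. First I would single out the candidate for the differential: writing $\pi_i : X \to X_i$ for the $i$-th coordinate projection, set
\[
L := \sum_{i=1}^{n} \frac{\partial F}{\partial x_i}(p)\circ \pi_i .
\]
Since each $\frac{\partial F}{\partial x_i}(p)\in\mathcal{L}(X_i,Y)$ and $\Norm{v_i}{X_i}\le\Norm{v}{X}$ for $v=(v_1,\dots,v_n)$, the map $L$ is linear and bounded, i.e.\ $L\in\mathcal{L}(X,Y)$, and $L[v]=\sum_{i=1}^{n}\frac{\partial F}{\partial x_i}(p)[v_i]$. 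Because $\mathcal{U}_p$ is a neighbourhood of $p$, fix $r>0$ with $B_r(p)\subset\mathcal{U}_p$ and from now on take $v\in X$ with $\Norm{v}{X}<r$. Put $p^{(0)}:=p$ and $p^{(k)}:=p+(v_1,\dots,v_k,0,\dots,0)$ for $k=1,\dots,n$, so that $p^{(n)}=p+v$; every $p^{(k)}$ and every closed segment $[p^{(k-1)},p^{(k)}]$ stays inside $B_r(p)\subset\mathcal{U}_p$, since all of its points lie within distance $\Norm{v}{X}$ of $p$.

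Next I would telescope,
\[
F(p+v)-F(p)-L[v]=\sum_{k=1}^{n}\Bigl(F(p^{(k)})-F(p^{(k-1)})-\tfrac{\partial F}{\partial x_k}(p)[v_k]\Bigr),
\]
and observe that $F(p^{(k)})-F(p^{(k-1)})$ is the increment of the partial function $F_{p^{(k-1)},k}$ as its argument moves along the segment from $p_k$ to $p_k+v_k$ in $X_k$ (note that $p^{(k-1)}$ has $k$-th coordinate $p_k$). On this segment $F_{p^{(k-1)},k}$ is G-differentiable, with G-differential at the point of $k$-th coordinate $s$ equal to $\frac{\partial F}{\partial x_k}$ evaluated at the corresponding point of $\mathcal{U}_p$, hence continuous as a function of the segment parameter, so Mean Value Theorem~II (Theorem~\ref{Diff_Banach_Space:Thm:MWS_G_II}), applied to $F_{p^{(k-1)},k}$ with the role of $L$ played by $\frac{\partial F}{\partial x_k}(p)$, gives
\[
\Bigl\Vert F(p^{(k)})-F(p^{(k-1)})-\tfrac{\partial F}{\partial x_k}(p)[v_k]\Bigr\Vert_{Y}\le \omega_k(v)\,\Norm{v_k}{X_k},
\]
where $\omega_k(v)$ is the supremum of $\bigl\Vert \frac{\partial F}{\partial x_k}(\,\cdot\,)-\frac{\partial F}{\partial x_k}(p)\bigr\Vert_{\mathcal{L}(X_k,Y)}$ taken over the points whose $k$-th coordinate lies on $(p_k,p_k+v_k)$ and whose remaining coordinates agree with $p^{(k-1)}$. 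Summing and using $\Norm{v_k}{X_k}\le\Norm{v}{X}$ yields $\Norm{F(p+v)-F(p)-L[v]}{Y}\le\bigl(\sum_{k=1}^{n}\omega_k(v)\bigr)\Norm{v}{X}$.

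To finish, every argument occurring in $\omega_k(v)$ is within distance $\Norm{v}{X}$ of $p$, so it converges to $p$ as $\Norm{v}{X}\to 0$; since each $\frac{\partial F}{\partial x_k}$ is continuous on $\mathcal{U}_p$, we get $\omega_k(v)\to 0$, hence $\Norm{F(p+v)-F(p)-L[v]}{Y}/\Norm{v}{X}\to 0$ as $\Norm{v}{X}\to 0$. By the Taylor characterization of the Fréchet differential (Lemma~\ref{Diff_Banach_Space:Lem:Linear_Approx}) this means $F$ is Fréchet differentiable at $p$ with $DF(p)=L$, i.e.\ $DF(p)[v]=\sum_{i=1}^{n}\frac{\partial F}{\partial x_i}(p)[v_i]$. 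The only genuinely delicate point — and thus the main obstacle — is verifying that the error terms $\omega_k(v)$ really tend to $0$: this is precisely where the hypothesis of \emph{continuity} of the partial derivatives (rather than mere existence) is essential, and it also forces the careful choice of the radius $r$ so that all intermediate points and segments remain in $\mathcal{U}_p$ where the partials are available and jointly controllable.
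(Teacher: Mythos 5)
Your proof is correct, and it is exactly the argument the paper has in mind: the paper only remarks that ``the proof is absolutely analogous to the finite dimensional case,'' and your telescoping decomposition, the application of Mean Value Theorem II (Theorem \ref{Diff_Banach_Space:Thm:MWS_G_II}) with $L=\frac{\partial F}{\partial x_k}(p)$ on each one-coordinate segment, and the use of continuity of the partials together with Lemma \ref{Diff_Banach_Space:Lem:Linear_Approx} is precisely that classical argument transplanted to the Banach space setting. In effect you have supplied the details the paper chose to omit, so there is nothing to correct.
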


The proof is absolutely analogous to the finite dimensional case.


\section{Properties of parameter integrals} \label{Diff_Banach_Space:Sec:Parameter_Integrals}

To calculate so called \textit{shape derivatives} (see, Chapter \ref{Shape_Grad_LinEl}), we have to analyze under which conditions mappings
\begin{align*}
\R{} \to \R{}, t \mapsto \int_{\Omega} (\mathcal{ F}\circ u(t))(x) \, dx \text{ or } \R{} \to \R{}, t \mapsto  \int_{\Gamma} (\mathcal{ F}\circ u(t))(x) \, dS
\end{align*}
are differentiable w.r.t. the parameter $ t \in I \subset \R{} $ where $ I $ is some open interval. The functions $ u(t),\, t \in I $ are assumed to be mappings $ u(t):\Omega \to \R{n} $ from a bounded domain $ \Omega $ to $ \R{n} $. We suppose that $ \Omega  $ is a $ C^1 $ domain if we consider surface integrals and that $ \mathcal{F} $ is sufficiently smooth. 

There are at least two approaches to this problem: The 'classical' rules for parameter integrals based on the theorem of Lebesgue and another one via Gâteaux or Fréchet differentials, confer Example \ref{Diff_Banach_Space:Exmp:G_Diff_Integral}. In the letter sense
\[ 
\int_{\Omega} (\mathcal{ F}\circ u(t))(x) \, dx =( I_{\Omega} \circ F_{\mathcal{ F}} \circ u)(t)  
\] if we define $  F_{\mathcal{ F}}: C(\overline{ \Omega},\R{n}) \to C(\overline{ \Omega}),\, f \mapsto \mathcal{ F} \circ f .$

Since this work treats Fréchet or Gâteaux derivatives of mappings
$ I \to C^{k,\phi}(\Omega),\, t \mapsto u(t)$
from an Interval into some Hölder-space in Section \ref{Ex_Shape_Deriv_LinEl:Sec: hölder material derivatives} the approach using Fréchet or Gâteaux derivatives is natural in some way. 

Nevertheless, we also establish the "classical" rules for parameter integrals since they will require weaker hypothesis and will be needed in Section \ref{Ex_Shape_Deriv_LinEl:Sec:H1 material derivatives}.
\begin{prop}[Continuity of Parameter Integrals] \label{Diff_Banach_Space:Cor:Continuity of parameter Integrals} $  $\\
	Suppose that $ X $ is a Banach space, $ U \subset X $ is an open subset and $ (Z,\mathcal{Z},\mu) $ a measurable space. Suppose that $ f:X \times Z \to \R{}$. 
	\begin{enumerate}
		\item $ f(x, \cdot)\in L^{1}(Z,\mathcal{Z},\mu)$ for any $ x \in U $, 
		\item $ f(\cdot,z):Y \to \R{},\, x \mapsto f(x,z) $ is continuous on $ U $ for almost every $ z \in Z $
		\item there is an integrable function $ \mathfrak{m}\in L^{1}(Z,\mathcal{Z}, \mu) $ such that for any $ x\in U$ and almost every $ z \in Z $: $|f(x,z)| \leq \mathfrak{m}(z). $
	\end{enumerate}
	Then \[ U \to \R{},\, x \mapsto \int_{Z}f(x,z)d\mu(z) \] is continuous.
\end{prop}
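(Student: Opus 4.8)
The plan is to reduce the statement to the classical dominated convergence theorem by working with sequences. Since $X$ is a metric space (being a Banach space), continuity of the map $x \mapsto \int_Z f(x,z)\,d\mu(z)$ on $U$ is equivalent to sequential continuity: it suffices to show that for every $x_0 \in U$ and every sequence $(x_n)_{n\in\N{}} \subset U$ with $x_n \to x_0$ in $X$, one has $\int_Z f(x_n,z)\,d\mu(z) \to \int_Z f(x_0,z)\,d\mu(z)$.

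First I would fix such an $x_0$ and such a sequence $(x_n)_n$. By hypothesis~1, each function $z \mapsto f(x_n,z)$ and $z \mapsto f(x_0,z)$ lies in $L^1(Z,\mathcal{Z},\mu)$, so all the integrals under consideration are well-defined and finite. Next, by hypothesis~2, there is a $\mu$-null set $\mathcal{N}_0 \in \mathcal{Z}$ such that for every $z \in Z \setminus \mathcal{N}_0$ the map $x \mapsto f(x,z)$ is continuous on $U$; in particular, for such $z$, $f(x_n,z) \to f(x_0,z)$ as $n \to \infty$. By hypothesis~3, there is $\mathfrak{m} \in L^1(Z,\mathcal{Z},\mu)$ and, for each $n$, a $\mu$-null set $\mathcal{N}_n$ with $|f(x_n,z)| \leq \mathfrak{m}(z)$ for all $z \in Z \setminus \mathcal{N}_n$. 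Setting $\mathcal{N} = \bigcup_{n \geq 0} \mathcal{N}_n$, which is again $\mu$-null as a countable union of null sets, we obtain: for all $z \in Z \setminus \mathcal{N}$, $f(x_n,z) \to f(x_0,z)$ and $|f(x_n,z)| \leq \mathfrak{m}(z)$ for every $n$.

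Then I would invoke the dominated convergence theorem of Lebesgue applied to the sequence of functions $z \mapsto f(x_n,z)$, with dominating function $\mathfrak{m}$ and pointwise ($\mu$-a.e.) limit $z \mapsto f(x_0,z)$. This yields
\[
\lim_{n\to\infty} \int_Z f(x_n,z)\,d\mu(z) = \int_Z f(x_0,z)\,d\mu(z),
\]
which is exactly the sequential continuity we needed. Since $x_0$ and the sequence were arbitrary, the map $x \mapsto \int_Z f(x,z)\,d\mu(z)$ is continuous on $U$.

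There is essentially no serious obstacle here: the statement is a routine parameter-integral continuity result and the only point requiring a moment of care is the bookkeeping of the exceptional null sets — the dominating bound and the continuity in hypotheses~2 and~3 hold only $\mu$-almost everywhere, with the exceptional set a priori depending on $x$ (resp. on $n$), so one must collect countably many null sets into a single null set before applying dominated convergence along the fixed sequence. The passage from sequential continuity back to topological continuity uses metrizability of $X$, which holds since $X$ is normed.
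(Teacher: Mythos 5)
Your proof is correct and follows essentially the same route as the paper: fix a convergent sequence in $U$ and apply Lebesgue's dominated convergence theorem to $f_j := f(x_j,\cdot)$ with majorant $\mathfrak{m}$, using the a.e.\ continuity of $f(\cdot,z)$. The only additions are the explicit collection of the exceptional null sets and the remark that sequential continuity suffices by metrizability, which the paper leaves implicit.
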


\begin{proof}
	Let $ (x_j)_{j \in \N{}} \subset U  $ be a sequence with $ x_j \to x_0,\,  j \to \infty$. Then apply Lebesgue's Theorem of Dominated Convergence (\cite[Sec. 8.6. Thm. 2 ]{Cheney}) to the sequence $ f_{j}:=f(x_j,\cdot) \in L^1(Z,\mathcal{Z},\mu)  $ and use the continuity of $ f(\cdot,z):Y \to \R{},\, x \mapsto f(x,z) $.
\end{proof}

The following Proposition allows to interchange the order of integrals and partial Fréchet derivatives under weak conditions, as they will be needed in Chapter \ref{Ex_Shape_Deriv_LinEl}.

\begin{prop}[Differentiability of Parameter Integrals]\label{Diff_Banach_Space:Prop:Diff_Parameter_Integrals}
	Assume that $ X $ is a Banach space, $x_0 \in  U \subset X $ is an open subset and $ (Z,\mathcal{Z},\mu) $ a measurable space. Let $ f:X \times Z \to \R{}$. If
	\begin{enumerate}
		\item $ f(x, \cdot) $ is $ d\mu $- integrable on $ Z $ for any $ x \in U$.
		\item the partial F-derivative $ \frac{\partial f}{\partial x}(x_0,z)[v] $ at $ x_0 $ in direction of $ v \in X $ exists for a.e. $ z \in Z $ such that $ \frac{\partial f}{\partial x}(x_0,\cdot)[v]\in L^{1}(Z,\mathcal{Z},\mu) $, 
		\item  there is $  \mathfrak{m}\in L^{1}(Z,\mathcal{Z}, \mu) $ s.t. for any $ x\in U $ and $ \mu $-a.e. $ z \in Z $: $ \Norm{\frac{\partial f}{\partial x}(x,z) }{X'}\leq \mathfrak{m}(z). $
	\end{enumerate}
	Then $ U \to \R{},\, x \mapsto \int_{Z} f(x,z)\, d\mu(z)  $ is F-differentiable with  \[ D\left(\int_{Z}f(\cdot,z)d \mu(z) \right)(x_0)[v] = \int_{Z} \frac{\partial f}{\partial x}(x_0,z)[v] \, d\mu(z).\]
\end{prop}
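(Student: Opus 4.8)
The plan is to verify the Taylor-expansion criterion for Fréchet differentiability (Lemma~\ref{Diff_Banach_Space:Lem:Linear_Approx}) for the map $\Phi(x):=\int_Z f(x,z)\,d\mu(z)$, which is well defined on $U$ by hypothesis~1. The obvious candidate for $D\Phi(x_0)$ is
\[
 L(v):=\int_Z \frac{\partial f}{\partial x}(x_0,z)[v]\,d\mu(z),\qquad v\in X,
\]
which is well defined by hypothesis~2. First I would check $L\in\mathcal{L}(X,\R{})$: linearity in $v$ follows from the linearity of $v\mapsto \frac{\partial f}{\partial x}(x_0,z)[v]$ for $\mu$-a.e.\ $z$ together with linearity of the integral, and continuity from hypothesis~3 (applied at $x=x_0$), since
\[
 |L(v)|\le \int_Z\Big|\frac{\partial f}{\partial x}(x_0,z)[v]\Big|\,d\mu(z)\le \int_Z \mathfrak{m}(z)\,d\mu(z)\,\Norm{v}{X}=\Norm{\mathfrak{m}}{L^1}\,\Norm{v}{X}.
\]

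Next, fix $v\in X$ small enough that the segment $[x_0,x_0+v]$ is contained in the open set $U$, set
\[
 g_v(z):=f(x_0+v,z)-f(x_0,z)-\frac{\partial f}{\partial x}(x_0,z)[v],
\]
and observe that $\Phi(x_0+v)-\Phi(x_0)-L(v)=\int_Z g_v(z)\,d\mu(z)$ by linearity of the integral. I would then establish two facts about $g_v$. \emph{Pointwise decay:} for $\mu$-a.e.\ $z$ the map $x\mapsto f(x,z)$ is Fréchet differentiable at $x_0$ with derivative $\frac{\partial f}{\partial x}(x_0,z)$, so Lemma~\ref{Diff_Banach_Space:Lem:Linear_Approx} gives $g_v(z)/\Norm{v}{X}\to 0$ as $\Norm{v}{X}\to 0$. \emph{Uniform domination:} applying Mean Value Theorem~II (Theorem~\ref{Diff_Banach_Space:Thm:MWS_G_II}) to $F=f(\cdot,z)$ and the operator $\frac{\partial f}{\partial x}(x_0,z)$ on the segment, and then invoking hypothesis~3 twice,
\[
 |g_v(z)|\le \sup_{s\in(x_0,x_0+v)}\Big\|\frac{\partial f}{\partial x}(s,z)-\frac{\partial f}{\partial x}(x_0,z)\Big\|_{X'}\,\Norm{v}{X}\le 2\,\mathfrak{m}(z)\,\Norm{v}{X},
\]
so that $|g_v(z)|/\Norm{v}{X}\le 2\mathfrak{m}(z)\in L^1(Z,\mathcal{Z},\mu)$ uniformly over all such small $v$.

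I would finish by dominated convergence in sequential form: for every sequence $v_n\to 0$ in $X$ with $v_n\ne 0$, the functions $h_n:=g_{v_n}/\Norm{v_n}{X}$ converge to $0$ $\mu$-a.e.\ and satisfy $|h_n|\le 2\mathfrak{m}$, hence $\int_Z h_n\,d\mu\to 0$ by Lebesgue's theorem of dominated convergence \cite[Sec.~8.6, Thm.~2]{Cheney}. Since this holds along every such sequence,
\[
 \frac{\Phi(x_0+v)-\Phi(x_0)-L(v)}{\Norm{v}{X}}=\int_Z\frac{g_v(z)}{\Norm{v}{X}}\,d\mu(z)\;\longrightarrow\;0\qquad\text{as }\Norm{v}{X}\to 0,
\]
and Lemma~\ref{Diff_Banach_Space:Lem:Linear_Approx} yields that $\Phi$ is Fréchet differentiable at $x_0$ with $D\Phi(x_0)=L$, which is precisely the asserted formula.

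The only genuinely delicate point is measure-theoretic bookkeeping: hypotheses~2 and~3 are phrased as ``for every $x$, for $\mu$-a.e.\ $z$'', whereas the mean value step needs ``for $\mu$-a.e.\ $z$, the partial derivative exists and is bounded by $\mathfrak{m}(z)$ at every point of the segment''. I would resolve this by reading hypothesis~3 in the customary, slightly stronger sense (one fixed $\mu$-null set outside of which $\frac{\partial f}{\partial x}(\cdot,z)$ exists and is dominated on all of $U$); under that reading all exceptional $z$-sets used above lie in a single null set, the functions $z\mapsto g_v(z)$ and $z\mapsto\frac{\partial f}{\partial x}(x_0,z)[v]$ are measurable by hypotheses~1 and~2, and the argument goes through verbatim. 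Everything else is routine.
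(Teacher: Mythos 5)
Your proposal is correct and follows essentially the same route as the paper's proof: the same candidate operator, domination of the remainder via Mean Value Theorem~\ref{Diff_Banach_Space:Thm:MWS_G_II} by $2\mathfrak{m}(z)\Norm{v}{X}$, Lebesgue dominated convergence along sequences $v_n\to 0$, and the same linearity/continuity estimate $|L(v)|\le\Norm{\mathfrak{m}}{L^1}\Norm{v}{X}$. Your explicit remark on reading hypothesis~3 with a single exceptional null set only makes precise what the paper's argument uses implicitly.
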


\begin{proof}
	We have to show that the F-differential of the operator $F:X \to \R{},\, x \mapsto \int_{Z} f(x,z)\, d \mu(z) $ is given by \[ F'(x_0): X \to \R{},\, v \mapsto \int_{Z} \frac{\partial f }{\partial x} (x_0,z)[v] \, d\mu(z).\]
	Therefor, let $ (v_{j})_{j \in N} \subset X$ be a sequence such that $ v_{j} \to 0,\, j \to \infty $ in $ X $ and apply Lebesgue's Theorem to $ f_{j}:=  \frac{f(x_0 +v_j, \cdot) -  f(x_0,\cdot) - \frac{\partial f}{\partial x} (x_0,\cdot)[v_j]}{\Norm{v_{j}}{X}}  \in L^{1}(Z,\mathcal{Z},\mu)$.
	Now choose $ j $ large enough such that $[x_0,x_0+v_j] \subset U  $, then we receive by application of \ref{Diff_Banach_Space:Thm:MWS_G_II}
	\begin{align*}
	\left|f(x_0 + v_j,z) - f(x_0,z) - \frac{\partial f}{\partial x} (x_0,z)[v_j]\right| 
	&\leq \sup_{x \in [x_0,x_0+v_j]} \Norm{ \frac{\partial f}{\partial x} (x,z) - \frac{\partial f}{\partial x} (x_0,z)}{X'} \Norm{v_{j}}{X} \\
	& \leq 2 \mathfrak{m}(z) \Norm{v_{j}}{X} \text{ a.e. on } Z.
	\end{align*}
	Thus $ |f_{j}(z)|\leq \mathfrak{m}(z) $ for $ \mu $-a.e. $ z \in Z $. Since $ \frac{\partial f}{\partial x}(x_0,z)[v_j] $ is the partial F-derivative of $ f(\cdot, z) $ in direction of $ v_{j} \in X $ we obtain $ f_{j}(z) 
	\to f^{*}(z)=0,\, j \to \infty \text{ pointwise a.e. on } Z  $
	and thus 
	\begin{align*}
	&\lim_{j \to \infty}  \frac{\left|\int_{Z} f(x_0 +v_{j},z) -  f(x_0,z) - \frac{\partial f }{\partial x} (x_0,z)[v_{j}] \, d\mu(z) \right|}{\Norm{v_j}{X}}\\
	\leq & \lim_{j \to \infty} \int_{Z}\frac{ \left|f(x_0 +v_j,z) -  f(x_0,z) - \frac{\partial f }{\partial x} (x_0,z)[v_j] \right| }{\Norm{v_j}{X}} \, d\mu(z) =0.
	\end{align*}
	Finally, one has to show that $ F'(x_0) $ is linear and continuous but at least linearity is clear.
	Moreover the continuity of $ F'(x_0) $ is implied by
	\begin{align*}
	|F'(x_0)[v]| 
	& \leq \int_{Z} \left|\frac{\partial f }{\partial x} (x_0,z)[v] \right|\, d\mu(z)   
	\leq  \int_{Z} \Norm{\frac{\partial f }{\partial x} (x_0,z)}{X'} \hspace*{-1mm}\Norm{v}{X} d\mu(z)  \leq \Norm{v}{X} \int_{Z} \hspace*{-1mm} \mathfrak{m}(z)\, d\mu(z). 
	\end{align*}
\end{proof}

\noindent The following Lemma summarizes our conclusions derived from Lebesgue's Theorem:

\begin{lem}(Rules for Parameter Integrals)\label{Diff_Banach_Space:Lem:Cont_Diff_Parameter_Integrals} 
	Suppose that $ t_0 \in \R{} $, $ \Omega \subset \R{n} $ is a Lebesgue measurable set and $ f:\R{} \times \R{n} \to \R{}$. Assumed there is an open interval $ t_0 \in I \subset \R{} $ such that
	\begin{itemize}
		\item[a)] 
		$ f(t,\cdot) \in L^{1}(\Omega)$ for any $ t \in I $, there exists a function $ f^{\ast} $ such that $ \lim_{t \to t_0}f(t,x)$ $ = f^{\ast}(x)$ a.e.\ in $ \Omega $, and there is $ \mathfrak{m}\in L^{1}(\Omega) $ such that for any $ t\in I $: $ \left|f(t,x)\right| \leq \mathfrak{m}(x) \text{ a.e. }$
		then $$ \lim_{t \to t_0}\int_{\Omega}f(t,x)\,dx = \int_{\Omega} f^{\ast}(x) \,dx  .$$
		\item[] If additionally $ t \mapsto f(t,\cdot) $ is continuous at $ t_0 $ then, also $I \to \R{},\, t \mapsto \int_{\Omega} f(t,x) \, dx  $
		is continuous at $  t_0 $.
		\item[b)] Suppose that,  $ f(t,\cdot) \in L^{1}(\Omega)$ for any $ t \in I $, $ f(\cdot,x):\R{} \to \R{}  ,\, t \mapsto f(t,x)$ is differentiable in $ t_0 $ for almost any $ x \in \Omega $ such that $ \frac{d f }{d t}(t,.) \in L^1(\Omega)$,
		and there is a function $ \mathfrak{m}\in L^{1}(\Omega) $ s.t. for any $ t \in I $: $  \left|\frac{d f}{d t}(t,x)\right| \leq \mathfrak{m}(x) \text{ a.e. on } \Omega $.
		Then \[ \left.\frac{d}{d t}\right\vert_{t =t_0}\int_{\Omega}f(t,x)\,dx  = \int_{\Omega} \left.\frac{d f}{d t}(t,x)\right\vert_{t=t_0}\, dx.\]
		\item[] If additionally $ I \to \R{},\, t \mapsto \frac{d f}{d t}(t,x)$  is continuous at $ t_0 $, then also $ t \mapsto \int_{\Omega}f(t,x)\,dx  $ is continuously differentiable at $ t_0 $.
	\end{itemize}
\end{lem}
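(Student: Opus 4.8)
The statement is a routine application of Lebesgue's dominated convergence theorem (and its standard corollary for differentiation under the integral sign), so the proof will simply invoke that theorem twice — once for continuity, once for differentiability — and cite the abstract results already proved in this section, namely Proposition \ref{Diff_Banach_Space:Cor:Continuity of parameter Integrals} and Proposition \ref{Diff_Banach_Space:Prop:Diff_Parameter_Integrals}, which are themselves consequences of Lebesgue's theorem. The only real ``work'' is to observe that the one-dimensional parameter space $I\subset\R{}$ is a special case of the Banach-space setting $X=\R{}$ used in those propositions, and to translate the hypotheses accordingly.

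For part a), the plan is: let $(t_j)_{j\in\N{}}\subset I$ be an arbitrary sequence with $t_j\to t_0$. Set $f_j:=f(t_j,\cdot)\in L^1(\Omega)$. By the pointwise-convergence hypothesis $f_j\to f^{\ast}$ a.e.\ on $\Omega$, and by the domination hypothesis $|f_j|\le\mathfrak{m}$ a.e.\ with $\mathfrak{m}\in L^1(\Omega)$; hence Lebesgue's theorem of dominated convergence (e.g.\ \cite[Sec.~8.6, Thm.~2]{Cheney}) gives $\int_\Omega f_j\,dx\to\int_\Omega f^{\ast}\,dx$. Since the sequence was arbitrary, $\lim_{t\to t_0}\int_\Omega f(t,x)\,dx=\int_\Omega f^{\ast}(x)\,dx$. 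If moreover $t\mapsto f(t,\cdot)$ is continuous at $t_0$, then $f^{\ast}(x)=f(t_0,x)$ a.e., so the limit equals $\int_\Omega f(t_0,x)\,dx$, which is exactly continuity of $t\mapsto\int_\Omega f(t,x)\,dx$ at $t_0$ (this is also Proposition \ref{Diff_Banach_Space:Cor:Continuity of parameter Integrals} with $X=\R{}$, $Z=\Omega$, $\mu$ the Lebesgue measure, and $U$ a neighbourhood of $t_0$ in $I$).

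For part b), the plan is to apply Proposition \ref{Diff_Banach_Space:Prop:Diff_Parameter_Integrals} with $X=\R{}$, $x_0=t_0$, $U=I$, $(Z,\mathcal{Z},\mu)=(\Omega,\mathcal{L}(\Omega),dx)$. One checks its three hypotheses: hypothesis (1) is the assumption $f(t,\cdot)\in L^1(\Omega)$ for $t\in I$; hypothesis (2) holds because for $X=\R{}$ the partial Fréchet derivative $\frac{\partial f}{\partial x}(t_0,z)[v]$ coincides with $v\cdot\frac{df}{dt}(t_0,z)$, which exists a.e.\ by assumption and lies in $L^1(\Omega)$ as a function of $z$; hypothesis (3) holds because $\Norm{\frac{\partial f}{\partial x}(t,z)}{X'}=|\frac{df}{dt}(t,z)|\le\mathfrak{m}(z)$ a.e.\ for all $t\in I$. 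Proposition \ref{Diff_Banach_Space:Prop:Diff_Parameter_Integrals} then yields Fréchet differentiability of $t\mapsto\int_\Omega f(t,x)\,dx$ at $t_0$ with $D\bigl(\int_\Omega f(\cdot,x)\,dx\bigr)(t_0)[v]=\int_\Omega v\cdot\frac{df}{dt}(t_0,x)\,dx$; evaluating at $v=1$ and using Lemma \ref{Diff_Banach_Space:Lem:Gateaux/Frechet_Diff_u^t} i) (which identifies the one-dimensional Fréchet derivative with the strong derivative) gives $\frac{d}{dt}\big|_{t=t_0}\int_\Omega f(t,x)\,dx=\int_\Omega\frac{df}{dt}(t,x)\big|_{t=t_0}\,dx$. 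For the final ``continuously differentiable'' addendum, apply part a) to the family $g(t,x):=\frac{df}{dt}(t,x)$, which by hypothesis satisfies $g(t,\cdot)\in L^1(\Omega)$, is dominated by $\mathfrak{m}$, and is continuous in $t$ at $t_0$; hence $t\mapsto\int_\Omega\frac{df}{dt}(t,x)\,dx$ is continuous at $t_0$, i.e.\ the derivative depends continuously on $t$.

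**Main obstacle.** There is no real obstacle here — the statement is essentially a dictionary entry translating the abstract Banach-space parameter-integral results into the classical one-parameter language needed later in Chapter \ref{Ex_Shape_Deriv_LinEl}. The only point requiring a line of care is the identification, in the case $X=\R{}$, of the partial Fréchet derivative $\frac{\partial f}{\partial x}(t_0,z)$ (an element of $\mathcal{L}(\R{},\R{})\cong\R{}$) with the ordinary derivative $\frac{df}{dt}(t_0,z)$, and correspondingly the identity $\Norm{\frac{\partial f}{\partial x}(t,z)}{X'}=\bigl|\frac{df}{dt}(t,z)\bigr|$, which makes hypothesis (3) of Proposition \ref{Diff_Banach_Space:Prop:Diff_Parameter_Integrals} match the hypothesis $\bigl|\frac{df}{dt}(t,x)\bigr|\le\mathfrak{m}(x)$ stated here. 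Once that identification is made, everything follows by quoting the earlier propositions.
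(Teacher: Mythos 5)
Your proof is correct and follows exactly the route the paper intends: the lemma is stated there as a summary of the dominated-convergence arguments, and your verification via Lebesgue's theorem for part a) and via Proposition \ref{Diff_Banach_Space:Prop:Diff_Parameter_Integrals} (with $X=\R{}$ and the identification of the one-dimensional Fréchet derivative with the ordinary derivative) for part b) is precisely that reasoning made explicit. No gaps.
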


\begin{rem}
	Of course this statement also holds on $ \partial \Omega $ if $ \Omega \subset \R{n} $ is a bounded $ C^1 $-domain with boundary $ \partial\Omega $. Then $ L^1(\Omega) $ has to be replaced by $ L^{1}(\partial\Omega) $ and the domain integral by a boundary integral.
\end{rem} 

\pagebreak
Even though the assumptions are stronger than those requested
in Proposition \ref{Diff_Banach_Space:Prop:Diff_Parameter_Integrals} and \ref{Diff_Banach_Space:Lem:Cont_Diff_Parameter_Integrals}, the following lemma will be very helpful in the progress of this thesis since the assumptions are easy to check: 

\begin{lem}\label{Diff_Banach_Space:Lem:Gâteaux_diff_Integral}
	\begin{itemize}
		\item[i)] Suppose that $ I \to C(\overline{\Omega}),\,t \mapsto u(t) $  is G-differentiable. Then $ I\to \R{},\, t \mapsto \int_{\Omega} u(t)(x) \, dx $ is G-differentiable with \[ \frac{d}{dt}\int_{\Omega} u(t)\, dx =\int_{\Omega} \frac{d}{dt}u(t) \, dx .\] 
		\item[ii)] Suppose that $ I \to C(\Gamma),\,t \mapsto v(t) $ is G- differentiable. Then $I \to\R{},\, t \mapsto \int_{\Gamma} v(t) \, dA $ is G-differentiable  with \[ \frac{d}{dt}\int_{\Gamma} v(t) \, dx =\int_{\Gamma} \frac{d}{dt}v(t) \, dx .\] 
	\end{itemize}
\end{lem}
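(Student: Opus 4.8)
The plan is to recognize both claims as a direct application of the chain rule in Banach spaces (Lemma~\ref{Diff_Banach_Space:Lem:CR}), exactly as sketched in Example~\ref{Diff_Banach_Space:Exmp:G_Diff_Integral}. For part (i), I would write the map $t \mapsto \int_\Omega u(t)(x)\,dx$ as the composition $I_\Omega \circ u$, where $u : I \to C(\overline{\Omega})$ is the given G-differentiable map and $I_\Omega : C(\overline{\Omega}) \to \R{}$, $f \mapsto \int_\Omega f(x)\,dx$, is the linear continuous functional from Example~\ref{Diff_Banach_Space:Exmp:G_Diff_Integral} with operator norm $|\Omega|$. Since $I_\Omega$ is linear and continuous, Lemma~\ref{Diff_Banach_Space:Lem:G_Diff_Linear_Map} gives that it is Fréchet (hence Gâteaux) differentiable with $D I_\Omega(f_0) = I_\Omega$ for every $f_0$, and Remark~\ref{Diff_Banach_Space:Rem:CR_I} then yields that $I_\Omega \circ u$ is G-differentiable on $I$ with
\[
D^g(I_\Omega \circ u)(t_0)[\alpha] = \big(I_\Omega \circ D^g u(t_0)\big)[\alpha] = \int_\Omega \big(D^g u(t_0)[\alpha]\big)(x)\,dx .
\]

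To translate this into the $\tfrac{d}{dt}$-notation of the statement, I would invoke Lemma~\ref{Diff_Banach_Space:Lem:Gateaux/Frechet_Diff_u^t}~i): for a map from an interval into a Banach space, G-differentiability is equivalent to strong differentiability, and $D^g u(t_0)[1] = \dot u(t_0) = \tfrac{d}{dt}u(t)\big|_{t=t_0}$. Likewise $D^g(I_\Omega \circ u)(t_0)[1] = \tfrac{d}{dt}\int_\Omega u(t)(x)\,dx\big|_{t=t_0}$. Evaluating the displayed identity at $\alpha = 1$ therefore gives precisely
\[
\frac{d}{dt}\int_\Omega u(t)\,dx = \int_\Omega \frac{d}{dt}u(t)\,dx .
\]
For part (ii) the argument is verbatim the same, replacing $\Omega$ by $\Gamma$, $C(\overline{\Omega})$ by $C(\Gamma)$, $dx$ by $dS$, and $I_\Omega$ by the linear continuous functional $I_\Gamma : C(\Gamma) \to \R{}$, $f \mapsto \int_\Gamma f(x)\,dS$; here one uses that $\Omega$ is a $C^1$-domain so that $\Gamma = \partial\Omega$ carries a well-defined surface measure and $I_\Gamma$ is continuous (again Example~\ref{Diff_Banach_Space:Exmp:G_Diff_Integral}).

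There is essentially no obstacle here: the only things to be careful about are bookkeeping issues — confirming that $I_\Omega$ (resp.\ $I_\Gamma$) really is bounded, so that Lemma~\ref{Diff_Banach_Space:Lem:G_Diff_Linear_Map} and Remark~\ref{Diff_Banach_Space:Rem:CR_I} apply, and correctly matching the abstract Gâteaux differential $D^g(\cdot)[1]$ with the classical time-derivative notation via Lemma~\ref{Diff_Banach_Space:Lem:Gateaux/Frechet_Diff_u^t}. The boundedness of $I_\Omega$ follows from $|I_\Omega(f)| \le |\Omega|\,\Norm{f}{\infty}$ and that of $I_\Gamma$ from $|I_\Gamma(f)| \le |\Gamma|\,\Norm{f}{\infty}$ with $|\Gamma| = \int_\Gamma dS < \infty$ since $\Gamma$ is a compact $C^1$-hypersurface. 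Thus the proof is a two-line chain-rule argument once the right functionals are identified; no estimates beyond those already recorded in Chapter~\ref{Diff_Banach_Space} are needed.
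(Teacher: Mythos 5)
Your proposal is correct and follows essentially the same route as the paper: the paper also rewrites the map as $(I_{\Omega}\circ u)(t)$ (resp. $(I_{\Gamma}\circ v)(t)$) and appeals to Example \ref{Diff_Banach_Space:Exmp:G_Diff_Integral} together with the chain rule \ref{Diff_Banach_Space:Lem:CR}. Your additional bookkeeping — boundedness of $I_{\Omega}$, $I_{\Gamma}$ and the identification of $D^g(\cdot)[1]$ with $\tfrac{d}{dt}$ via Lemma \ref{Diff_Banach_Space:Lem:Gateaux/Frechet_Diff_u^t} — is exactly what the paper leaves implicit, so nothing is missing.
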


\begin{proof}
	We rewrite $ \int_{\Omega} u(t)(x) \, dx = (I_{\Omega} \circ u)(t) $  and  $ \int_{\Gamma} v(t)(x) \, dS = (I_{\Gamma} \circ u)(t) $. Then the assertion follows from Example \ref{Diff_Banach_Space:Exmp:G_Diff_Integral} and chain rule \ref{Diff_Banach_Space:Lem:CR}.
\end{proof}
\vspace{\parindent}

\section{Derivatives of composed domain and boundary functionals}

Analogously to Example 4 in Section 3.1. of \cite{Cheney} we can show the following more general result:

\begin{lem}\label{Diff_Banach_Space:Lem:Fréchet Differential F_F}
	Let $  \mathcal{F} \in C^1(\R{n}) $, $ F_{\mathcal{ F}}: C(M,\R{n}) \to C(M),\, f \mapsto \mathcal{ F} \circ f  $ be defined as above and $M =\overline{\Omega}  $ or $ M=\Gamma $. Then $ F_{\mathcal{ F}}$ is F-differentiable at $ f_0 \in C(M,\R{n}) $ with \[ DF_{\mathcal{F}}(f_0)[f]=\langle (\nabla \mathcal{F} \circ f_0), f \rangle \in C(M) . \]  
\end{lem}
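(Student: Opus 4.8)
The plan is to verify Fréchet differentiability directly from the Taylor-expansion characterization in Lemma \ref{Diff_Banach_Space:Lem:Linear_Approx}. Fix $ f_0 \in C(M,\R{n}) $ and define the candidate differential $ L: C(M,\R{n}) \to C(M) $ by $ L[f] := \langle \nabla \mathcal{F} \circ f_0, f \rangle $. First I would check that $ L $ is well-defined and lies in $ \mathcal{L}(C(M,\R{n}),C(M)) $: since $ \mathcal{F} \in C^1(\R{n}) $ the map $ \nabla\mathcal{F}\circ f_0 $ is continuous on the compact set $ M $, hence bounded, so $ x \mapsto \langle (\nabla\mathcal{F}\circ f_0)(x), f(x)\rangle $ is continuous and $ \Norm{L[f]}{C(M)} \leq \Norm{\nabla\mathcal{F}\circ f_0}{C(M,\R{n})}\Norm{f}{C(M,\R{n})} $, giving linearity and continuity of $ L $.

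Next I would estimate the remainder $ r(f) := F_{\mathcal{F}}(f_0+f) - F_{\mathcal{F}}(f_0) - L[f] $, which at a point $ x\in M $ reads $ r(f)(x) = \mathcal{F}(f_0(x)+f(x)) - \mathcal{F}(f_0(x)) - \langle \nabla\mathcal{F}(f_0(x)), f(x)\rangle $. For each fixed $ x $ this is the first-order Taylor remainder of the $ C^1 $ function $ \mathcal{F} $ at the point $ f_0(x) $, so by the mean value theorem there is $ \theta_x\in(0,1) $ with $ r(f)(x) = \langle \nabla\mathcal{F}(f_0(x)+\theta_x f(x)) - \nabla\mathcal{F}(f_0(x)), f(x)\rangle $, whence
\begin{align*}
\Norm{r(f)}{C(M)} \leq \Norm{f}{C(M,\R{n})} \sup_{x\in M}\, \big\Vert \nabla\mathcal{F}(f_0(x)+\theta_x f(x)) - \nabla\mathcal{F}(f_0(x))\big\Vert.
\end{align*}
It then remains to show the supremum tends to $ 0 $ as $ \Norm{f}{C(M,\R{n})}\to 0 $; combined with the factor $ \Norm{f}{C(M,\R{n})} $ this yields $ r(f)/\Norm{f}{C(M,\R{n})}\to 0 $, which by Lemma \ref{Diff_Banach_Space:Lem:Linear_Approx} gives $ F_{\mathcal{F}}\in C^1 $ with $ DF_{\mathcal{F}}(f_0) = L $.

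The main obstacle is exactly this last uniform estimate: pointwise continuity of $ \nabla\mathcal{F} $ is not enough on its own, and one needs \emph{uniform} continuity. To obtain it, note that as $ \Norm{f}{C(M,\R{n})}\to 0 $ the points $ f_0(x)+\theta_x f(x) $ and $ f_0(x) $ all stay in the compact set $ K := \overline{\{\, y\in\R{n} : \dist(y, f_0(M)) \leq 1\,\}} $ (using $ f_0(M) $ compact), and $ \nabla\mathcal{F} $ restricted to $ K $ is uniformly continuous; since $ \Norm{(f_0(x)+\theta_x f(x)) - f_0(x)}{} \leq \Norm{f}{C(M,\R{n})} $ uniformly in $ x $, the modulus of continuity of $ \nabla\mathcal{F}|_K $ bounds the supremum above and drives it to $ 0 $. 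This compactness-plus-uniform-continuity argument is the crux; the rest is routine. (A remark that the same proof covers $ \mathcal{F}:\R{n}\to\R{m} $ vector-valued, with $ \nabla\mathcal{F} $ replaced by the Jacobian, could be appended.)
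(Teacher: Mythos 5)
Your proposal is correct and follows essentially the same route as the paper's proof: a pointwise application of the classical mean value theorem on $\R{n}$, an estimate of the remainder by $\Norm{\nabla\mathcal{F}\circ(f_0+\theta f)-\nabla\mathcal{F}\circ f_0}{\infty}$, and then linearity and boundedness of the candidate differential. The only difference is that you spell out the compactness-plus-uniform-continuity step (uniform continuity of $\nabla\mathcal{F}$ on a compact neighborhood of $f_0(M)$), which the paper's proof invokes more tersely via uniform convergence $f_0+\lambda f\to f_0$ and continuity of $\nabla\mathcal{F}$.
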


\begin{proof}
	Since $ \mathcal{F} \in C^1(\R{n}) $, it is F-differentiable with 
	$ D\mathcal{F}(x_0)[v]=\langle \nabla \mathcal{F}(x_0),v\rangle$ for any $ x_0 \in \R{n},\,v \in \R{n} $ and $ \R{n} \to \R{n},\, x \mapsto \nabla \mathcal{F}(x) $ is continuous.
	Now, let $ f_0,\, f \in C(M,\R{n}) $ and $ x \in M $ be arbitrary. Then $ F_\mathcal{ F}(f_0 + f),\, F_\mathcal{ F}(f_0) \in C(M) $ are scalar functions and  
	\begin{align*}
	\left( F_{\mathcal{F}}(f_0+f)- F_{\mathcal{F}}(f_0)\right)(x)&= \mathcal{F}(f_0(x)+f(x)) - \mathcal{F}(f_0(x)) \\
	&= \langle \nabla \mathcal{F}(f_0(x)+\lambda(x)f(x)), f(x) \rangle 
	\end{align*}
	for some $ \lambda(x) \in (0,1)  $ by the classical mean value theorem on $ \R{n} $ applied to $ \mathcal{ F} $ and the
	\pagebreak
	
	\noindent points $ f_0(x) \in \R{n} $ and $ f(x) \in \R{n} $. Accordingly,
	\begin{align*}
	&\frac{\Norm{F_{\mathcal{F}}(f_0+f)- F_{\mathcal{F}}(f_0)- \langle\nabla \mathcal{F} \circ f_0,f \rangle}{\infty} }{\Norm{f}{\infty}}
	\\
	\leq & \sup_{x \in M} \frac{\Norm { \nabla \mathcal{F}(f_0(x)+\lambda(x)f(x)) - \nabla \mathcal{F}(f_0(x)) }{\R{n}} \Norm{f(x)}{\R{n}}}{\Norm{f}{\infty}}
	\\
	\leq & \Norm { \nabla \mathcal{F}\circ (f_0+\lambda(\cdot) f) - \nabla \mathcal{F}\circ f_0) }{\infty}.
	\end{align*}
	
	\noindent Combining $ f_0 + \lambda f \overset{unif.}{\to}f_0 $ when $ \Norm{f}{\infty} \to 0 $ and the continuity of $ \nabla \mathcal{F} $ we obtain $\Norm{\nabla \mathcal{F}\circ (f_0+\lambda f) - \nabla \mathcal{F} \circ f_0}{\infty} \to 0 \text{ if }  \Norm{f}{\infty} \to 0.$
	It is clear that $ f \mapsto \langle \nabla\mathcal{ F} \circ f_0,f \rangle $ is linear from $ C(M,\R{n}) \to C(M) $ and the continuity follows from 
	$$  \Norm{\langle \nabla\mathcal{ F} \circ f_0, \cdot \rangle}{\mathcal{L}(C(M,\R{n}),C(M))}\leq \sup_{\Norm{f}{\infty} \leq 1} \Norm{\nabla \mathcal{ F} \circ f_0}{\infty}\Norm{f}{\infty} = \Norm{\nabla \mathcal{ F} \circ f_0}{\infty} < \infty.$$ 
\end{proof}

\begin{lem}\label{Diff_Banach_Space:Lem: Fréchet differnetial F_nach_f mehrdim } 
	Let $ n_1, \ldots,n_k\in\N{} $, $ \mathcal{F} \in C^1(\prod_{i=1}^{k}\R{n_i}) $ a continuously differentiable scalar function. 
	Then
	$F_{\mathcal{ F}}: \prod_{i=1}^{k}C(M,\R{n_i}) \to C(M),
	f=(f_{1},\ldots,f_{k}) \mapsto  \mathcal{F}\circ f$
	is F-differentiable at $ g^{(0)}=(g^{(0)}_1,\dots,g^{(0)}_k) $ with 
	\begin{align*}
	DF_{\mathcal{ F}}(g^{(0)})[g] = \sum_{i=1}^{k} \left\langle  \frac{\partial \mathcal{F}}{\partial z_i} \circ g^{(0)} , g_i\right\rangle_{\R{n_i}}
	\end{align*} 	
	in direction of $ g=(g_{1},\ldots,g_{k}) 
	\in \prod_{i=1}^{k}C(M,\R{n_i}) $.
\end{lem}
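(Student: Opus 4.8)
The plan is to reduce the multivariable statement, Lemma~\ref{Diff_Banach_Space:Lem: Fréchet differnetial F_nach_f mehrdim }, to the single-variable case already established in Lemma~\ref{Diff_Banach_Space:Lem:Fréchet Differential F_F}. First I would identify $\prod_{i=1}^{k}C(M,\R{n_i})$ with $C(M,\R{N})$ where $N=\sum_{i=1}^{k}n_i$, via the obvious isometric isomorphism (up to equivalence of norms, which does not affect Fréchet differentiability) that concatenates the components of a tuple $f=(f_1,\dots,f_k)$ into a single vector-valued function with values in $\R{N}=\prod_i\R{n_i}$. Under this identification the map $F_{\mathcal F}$ from the statement is literally the map $F_{\mathcal F}\colon C(M,\R{N})\to C(M)$, $h\mapsto \mathcal F\circ h$, of Lemma~\ref{Diff_Banach_Space:Lem:Fréchet Differential F_F}, since $\mathcal F\in C^1(\prod_{i=1}^k\R{n_i})=C^1(\R{N})$.

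Next I would invoke Lemma~\ref{Diff_Banach_Space:Lem:Fréchet Differential F_F} directly: $F_{\mathcal F}$ is F-differentiable at any $g^{(0)}\in C(M,\R{N})$ with
\[ DF_{\mathcal F}(g^{(0)})[g]=\langle(\nabla\mathcal F\circ g^{(0)}),g\rangle\in C(M), \]
where $\nabla\mathcal F$ is the full gradient on $\R{N}$ and $\langle\cdot,\cdot\rangle$ the Euclidean inner product on $\R{N}$. The remaining task is purely bookkeeping: decompose this inner product according to the block structure $\R{N}=\prod_{i=1}^k\R{n_i}$. Writing $g=(g_1,\dots,g_k)$ and noting that $\nabla\mathcal F$ in these coordinates is the concatenation of the partial gradients $\frac{\partial\mathcal F}{\partial z_i}\in\R{n_i}$, the Euclidean inner product on $\R{N}$ splits as
\[ \langle\nabla\mathcal F(x),g\rangle_{\R{N}}=\sum_{i=1}^{k}\left\langle\frac{\partial\mathcal F}{\partial z_i}(x),g_i\right\rangle_{\R{n_i}}. \]
Substituting $x=g^{(0)}(x)$ pointwise and reading the identity in $C(M)$ gives exactly the claimed formula. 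Continuity of each summand $g_i\mapsto\langle\frac{\partial\mathcal F}{\partial z_i}\circ g^{(0)},g_i\rangle$ as a map $C(M,\R{n_i})\to C(M)$ follows as in the single-variable proof (bound by $\Norm{\frac{\partial\mathcal F}{\partial z_i}\circ g^{(0)}}{\infty}$, finite since $g^{(0)}(M)$ is compact and $\frac{\partial\mathcal F}{\partial z_i}$ continuous), and linearity is clear, so the sum lies in $\mathcal L(\prod_i C(M,\R{n_i}),C(M))$.

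There is no real obstacle here; the only point requiring a word of care is the compatibility of norms under the identification $\prod_i C(M,\R{n_i})\cong C(M,\R{N})$ — the product norm $\sum_i\Norm{f_i}{\infty}$ and the sup-norm of the concatenated $\R{N}$-valued function are equivalent but not equal — and the fact that the chosen norm on $\R{N}$ (Euclidean) is the one making the inner-product formula literally correct; since Fréchet differentiability and the identity of the differential are invariant under passing to equivalent norms, this causes no difficulty. Alternatively, one could avoid the identification entirely and mimic the proof of Lemma~\ref{Diff_Banach_Space:Lem:Fréchet Differential F_F} verbatim, applying the classical mean value theorem on $\prod_i\R{n_i}$ to $\mathcal F$ between the points $g^{(0)}(x)$ and $g^{(0)}(x)+g(x)$ and using uniform convergence $g^{(0)}+\lambda g\to g^{(0)}$ together with continuity of each $\frac{\partial\mathcal F}{\partial z_i}$; I would present the short identification argument as the main line and perhaps remark that the direct argument is identical to the one-dimensional case.
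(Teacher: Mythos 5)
Your proposal is correct and follows essentially the same route as the paper: identify $\prod_{i=1}^{k}C(M,\R{n_i})$ with $C(M,\R{N})$, $N=\sum_i n_i$, and apply Lemma \ref{Diff_Banach_Space:Lem:Fréchet Differential F_F}. The only cosmetic difference is that you carry out the block decomposition of the Euclidean inner product by hand, whereas the paper delegates this bookkeeping to Lemma \ref{Diff_Banach_Space:Lem:Part_Deriv_Frechet}; your remark on the equivalence of the product norm and the concatenated sup-norm is a sensible extra precision.
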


\begin{proof} Set $ N:=\sum_{i=1}^{k}n_i $. Then
	$\prod_{i=1}^{k}C(M,\R{n_i})\equiv C(M,\R{N} ) $ and therefore this case is implied by  Lemma \ref{Diff_Banach_Space:Lem:Part_Deriv_Frechet} and  Lemma \ref{Diff_Banach_Space:Lem:Fréchet Differential F_F}.
\end{proof}

\begin{lem}\label{Diff_Banach_Space:Lem:d/dt F nach u(t)}
	Let $ I \subset \R{} $ be an open interval $ u_i: I \to C(M,\R{n_i})$, $ n_i \in \N{},\, i=1,...,k $ F-differentiable. Set $ u=(u_1,\ldots,u_k) $ and let $ \mathcal{ F}:\, \prod_{i=1}^{k}\R{n_i} \to \R{},\, z=(z_1,\ldots,z_k) \mapsto \mathcal{ F}(z_1,\ldots,z_k)  $ be an element of  $ C^1(\prod_{i=1}^{k}\R{n_i}) $. Then 
	\begin{align}
	I \to C(M),\, t &\mapsto \mathcal{ F} \circ u(t) =\mathcal{ F}(u(t)(.))
	\end{align} 
	is differentiable on $ I $ with  differential
	\begin{align}
	\frac{d}{dt} \mathcal{ F}\circ u(t)=\sum_{i=1}^{k}\left\langle \frac{ \partial \mathcal{ F}}{\partial z_i} \circ u(t),\frac{d}{dt}u_i(t) \right\rangle\,.
	\end{align}	
\end{lem}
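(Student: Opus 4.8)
The plan is to write $ t\mapsto \mathcal F\circ u(t) $ as the composition $ F_{\mathcal F}\circ u $ of the vector valued curve $ u=(u_1,\ldots,u_k) $ with the Nemytskii type operator $ F_{\mathcal F} $ treated in Lemma \ref{Diff_Banach_Space:Lem: Fréchet differnetial F_nach_f mehrdim }, and then to invoke the chain rule for Fréchet derivatives, Lemma \ref{Diff_Banach_Space:Lem:CR}. Since $ I\subset\R{} $, differentiability of a $ C(M) $-valued curve with respect to the strong topology coincides with Fréchet differentiability by Lemma \ref{Diff_Banach_Space:Lem:Gateaux/Frechet_Diff_u^t}, so it is enough to establish Fréchet differentiability of $ F_{\mathcal F}\circ u $ and to read off its differential at the direction $ 1\in\R{} $.

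First I would check that the curve $ u=(u_1,\ldots,u_k)\colon I\to \prod_{i=1}^{k}C(M,\R{n_i}) $ is Fréchet differentiable at every $ t_0\in I $. By hypothesis each $ u_i $ is, so by the Taylor characterization of Lemma \ref{Diff_Banach_Space:Lem:Linear_Approx} we may write $ u_i(t_0+h)=u_i(t_0)+h\,\dot u_i(t_0)+r_i(h) $ with $ \Norm{r_i(h)}{\infty}/|h|\to 0 $ as $ h\to 0 $. Collecting these componentwise and using that the norm on the finite product is $ \Norm{(v_1,\ldots,v_k)}{}=\sum_{i=1}^{k}\Norm{v_i}{\infty} $, one obtains $ u(t_0+h)=u(t_0)+h\,(\dot u_1(t_0),\ldots,\dot u_k(t_0))+(r_1(h),\ldots,r_k(h)) $ with a remainder of order $ o(|h|) $; hence $ u $ is Fréchet differentiable at $ t_0 $ with $ Du(t_0)[1]=\frac{d}{dt}u(t)\big|_{t=t_0}=(\dot u_1(t_0),\ldots,\dot u_k(t_0)) $.

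Next, Lemma \ref{Diff_Banach_Space:Lem: Fréchet differnetial F_nach_f mehrdim } gives that $ F_{\mathcal F}\colon\prod_{i=1}^{k}C(M,\R{n_i})\to C(M) $, $ f\mapsto\mathcal F\circ f $, is Fréchet differentiable on its whole domain, with
\[ DF_{\mathcal F}(g)[h]=\sum_{i=1}^{k}\Big\langle \tfrac{\partial\mathcal F}{\partial z_i}\circ g,\,h_i\Big\rangle_{\R{n_i}},\qquad g=(g_1,\ldots,g_k),\ h=(h_1,\ldots,h_k). \]
Applying the chain rule Lemma \ref{Diff_Banach_Space:Lem:CR} to $ F_{\mathcal F}\circ u $ shows that $ t\mapsto\mathcal F\circ u(t) $ is Fréchet differentiable on $ I $ with $ D(F_{\mathcal F}\circ u)(t_0)=DF_{\mathcal F}\big(u(t_0)\big)\circ Du(t_0) $. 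Evaluating at $ 1\in\R{} $, inserting $ Du(t_0)[1]=(\dot u_1(t_0),\ldots,\dot u_k(t_0)) $ and the formula for $ DF_{\mathcal F} $, and using Lemma \ref{Diff_Banach_Space:Lem:Gateaux/Frechet_Diff_u^t} to identify $ \frac{d}{dt}(\mathcal F\circ u)(t_0) $ with $ D(F_{\mathcal F}\circ u)(t_0)[1] $, we arrive at
\[ \frac{d}{dt}\,\mathcal F\circ u(t)\Big|_{t=t_0}=\sum_{i=1}^{k}\Big\langle \tfrac{\partial\mathcal F}{\partial z_i}\circ u(t_0),\,\dot u_i(t_0)\Big\rangle_{\R{n_i}}, \]
which is the asserted identity, $ t_0\in I $ being arbitrary.

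I do not expect a genuine obstacle here: the argument is pure bookkeeping with the chain rule. The only two points deserving a line of care are (a) the identification of strong-topology differentiability of a $ C(M) $-valued curve with Fréchet differentiability of the $ \R{} $-argument map $ F_{\mathcal F}\circ u $, which is precisely Lemma \ref{Diff_Banach_Space:Lem:Gateaux/Frechet_Diff_u^t}, and (b) the componentwise description of the differential of the curve $ u $ into the finite product $ \prod_{i=1}^{k}C(M,\R{n_i}) $, which follows directly from the definition as sketched above; everything else is furnished by Lemmas \ref{Diff_Banach_Space:Lem: Fréchet differnetial F_nach_f mehrdim } and \ref{Diff_Banach_Space:Lem:CR}.
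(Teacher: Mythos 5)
Your proposal is correct and follows essentially the same route as the paper: writing $\mathcal F\circ u(t)=F_{\mathcal F}(u(t))$, invoking Lemma \ref{Diff_Banach_Space:Lem: Fréchet differnetial F_nach_f mehrdim } for $DF_{\mathcal F}$ and the chain rule Lemma \ref{Diff_Banach_Space:Lem:CR}. Your only additions — the explicit componentwise check that the curve $u$ into the product space is F-differentiable and the appeal to Lemma \ref{Diff_Banach_Space:Lem:Gateaux/Frechet_Diff_u^t} — merely spell out steps the paper's shorter proof leaves implicit (the paper instead cites Lemma \ref{Diff_Banach_Space:Lem:Part_Deriv_Frechet} for the same bookkeeping).
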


\begin{proof}
	For any $ t \in I $: $ \mathcal{ F} \circ u(t)=F_{\mathcal{ F}}(u(t)) $. 
	Then chain rule \ref{Diff_Banach_Space:Lem:CR}, Lemma \ref{Diff_Banach_Space:Lem:Part_Deriv_Frechet} and the previous lemma imply 
	\begin{align*}
	\frac{d}{dt} \mathcal{ F}\circ u(t)= \frac{d }{d t}F_{\mathcal{ F}}(u(t))= &~\sum_{i=1}^{k}\dfrac{\partial F_{\mathcal{ F}}}{\partial f_i}(u(t))\left[\frac{d}{dt}u_i(t)\right]   = \left\langle\frac{\partial \mathcal{F}}{\partial z_i}\circ u(t) ,\frac{d}{dt}u_i(t)\right\rangle.
	\end{align*}
\end{proof}
\vspace*{-1ex}
A special case of this situation appears when $ \mathcal{F} $ depends explicitly on the prameter $ t $, i.e. $ \mathcal{F}: I \times\prod_{i=1}^{k}\R{n_i} \to \R{} $ and $ u $ is given as above. Then we can consider the mapping
\begin{align}
I \to C(M),\, t \mapsto \mathcal{F}(t,u(t)(.) )
\end{align}
and its derivative by $ t $. In this view we drive the following general statement:
\begin{lem}\label{Diff_Banach_Space:Lem: d/dt F (t,u(t))}
	Suppose that $ F: \R{} \times X \to \R{},\, (t,x) \mapsto F(t,x) $ and $ u:\R{} \to X, t \mapsto u(t) $ are F-differentiable on an open interval $ I $. Then the map $ \R{}\to \R{}, t \mapsto F(t,u(t)) $ is differentiable on $ I $ and the differential is given by
	\begin{equation}
	\frac{d }{d t}F(t,u(t)) = \dfrac{\partial F}{\partial t}(t,u(t))[1]+\dfrac{\partial F}{\partial x}(t,u(t))\left[\frac{d u}{d t}(t)\right]  \, \forall t \in I.
	\end{equation}
	
\end{lem}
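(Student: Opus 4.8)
The plan is to reduce this statement to the chain rule (Lemma \ref{Diff_Banach_Space:Lem:CR}) together with the formula for partial derivatives (Lemma \ref{Diff_Banach_Space:Lem:Part_Deriv_Frechet}), exactly as in the proof of Lemma \ref{Diff_Banach_Space:Lem:d/dt F nach u(t)}. First I would introduce the auxiliary map $\Phi : \R{} \to \R{} \times X$ defined by $\Phi(t) := (t, u(t))$. The first component $t \mapsto t$ is linear and continuous from $\R{}$ to $\R{}$, hence F-differentiable with derivative the identity (Lemma \ref{Diff_Banach_Space:Lem:G_Diff_Linear_Map}), and the second component $t \mapsto u(t)$ is F-differentiable on $I$ by hypothesis. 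Therefore $\Phi$ is F-differentiable on $I$ with $D\Phi(t)[\alpha] = (\alpha, \alpha \tfrac{du}{dt}(t))$ for $\alpha \in \R{}$; here one uses that F-differentiability of a map into a product space is componentwise, which follows from the product norm on $\R{} \times X$ and Lemma \ref{Diff_Banach_Space:Lem:Part_Deriv_Frechet} read in reverse.

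Next, since $F : \R{} \times X \to \R{}$ is F-differentiable on the open interval $I$ (more precisely on an open neighborhood of the graph of $u$ over $I$), the chain rule, Lemma \ref{Diff_Banach_Space:Lem:CR}, applies to the composition $F \circ \Phi : \R{} \to \R{}$, which is precisely $t \mapsto F(t, u(t))$. This yields F-differentiability on $I$ and
\begin{equation*}
\frac{d}{dt} F(t, u(t)) = \big(DF(t, u(t))\big)\big[D\Phi(t)[1]\big] = \big(DF(t,u(t))\big)\Big[\big(1, \tfrac{du}{dt}(t)\big)\Big].
\end{equation*}

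Finally I would unfold the right-hand side using Lemma \ref{Diff_Banach_Space:Lem:Part_Deriv_Frechet}, which expresses the total F-differential of a map on a product $\R{} \times X$ as the sum of its two partial F-differentials: $DF(t,x)[(\alpha, v)] = \tfrac{\partial F}{\partial t}(t,x)[\alpha] + \tfrac{\partial F}{\partial x}(t,x)[v]$. Substituting $\alpha = 1$ and $v = \tfrac{du}{dt}(t)$ gives
\begin{equation*}
\frac{d}{dt} F(t, u(t)) = \frac{\partial F}{\partial t}(t, u(t))[1] + \frac{\partial F}{\partial x}(t, u(t))\Big[\frac{du}{dt}(t)\Big] \quad \forall t \in I,
\end{equation*}
as claimed. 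The only point requiring a little care — and the mild obstacle in an otherwise routine argument — is the observation that differentiability into the product space $\R{} \times X$ is equivalent to componentwise differentiability, so that $\Phi$ genuinely inherits F-differentiability from its components; this is where Lemma \ref{Diff_Banach_Space:Lem:Part_Deriv_Frechet} (in the converse direction, using continuity of the component differentials, cf. Lemma \ref{Diff_Banach_Space:Lem: Cont_Part_Deriv_Gateaux}) does the work, and it is already implicitly used in the proof of Lemma \ref{Diff_Banach_Space:Lem:d/dt F nach u(t)}.
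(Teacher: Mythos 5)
Your proposal is correct and follows essentially the same route as the paper: define the auxiliary map $t \mapsto (t,u(t))$, apply the chain rule (Lemma \ref{Diff_Banach_Space:Lem:CR}), and decompose the total differential via Lemma \ref{Diff_Banach_Space:Lem:Part_Deriv_Frechet}. Your extra remark on componentwise differentiability into the product space merely makes explicit a step the paper leaves implicit.
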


\begin{proof}
	We set $ U: \R{} \to \R{} \times Z \, t \mapsto (t,u^t)  $. The F-differential of $ U $ at $ \alpha = 1  $ is given by 
	\[DU(t)[1]= \frac{d}{dt}U(t)= \left(1,\frac{du}{dt}(t)\right).\] Then the differential can be calculated by chain rule Lemma \ref{Diff_Banach_Space:Lem:CR} and Lemma \ref{Diff_Banach_Space:Lem:Part_Deriv_Frechet}:
	\begin{equation}
	\begin{split}
	D(F  \circ U)(t_0)[1]&=
	DF(U(t))\left[ DU(t)[1]\right]  = DF(U(t))\left[\left(1 , \frac{du}{dt}(t)\right)\right]\\
	&=\dfrac{\partial  F}{\partial t}(t,u(t))[1] + \frac{\partial F}{\partial x}(t,u(t))\left[ \frac{du}{dt}(t)\right].
	\end{split}
	\end{equation}
\end{proof} 
\vspace*{-1ex}
This implies the following:

\begin{lem}\label{Diff_Banach_Space:Lem: d/dt F (t,u(t)) mehrdim}
	Let $ I \subset \R{} $ be an open interval, $ u_i: I \to C(M,\R{n_i})$, $ n_i \in \N{},\, i=1,...,k $ F-differentiable and set $ u= (u_1,\ldots,u_k) $. Moreover, suppose that $ \mathcal{F}: I \times\prod_{i=1}^{k}\R{n_i} \to \R{} $ is differentiable. Then the mapping 
	\begin{align}
	I \to C(M),\, t \mapsto \mathcal{F}(t,u(t)(.) )
	\end{align}
	is differentiable with differential 
	\begin{align}
	\frac{d}{dt}\mathcal{ F} (t,u(t)(.)) 
	= \frac{\partial \mathcal{ F}}{\partial t}(t, u(t)(.)) + \sum_{i=1}^{k}\left\langle \frac{ \partial \mathcal{ F}}{\partial z_i}(t,u(t)(.)),\frac{d u_i}{dt}(t) \right\rangle. 
	\end{align}
\end{lem}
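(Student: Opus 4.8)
The plan is to assemble the statement from two results already at hand: the Fréchet differentiability of the superposition operator $F_{\mathcal F}$ from Lemma~\ref{Diff_Banach_Space:Lem: Fréchet differnetial F_nach_f mehrdim }, and the chain rule in Banach spaces, Lemma~\ref{Diff_Banach_Space:Lem:CR}, in exactly the spirit of the proof of Lemma~\ref{Diff_Banach_Space:Lem: d/dt F (t,u(t))}. The only genuinely new bookkeeping is that the curve to be inserted now carries the $k$ vector-valued components $u_1,\dots,u_k$, and that the scalar parameter $t$ enters $\mathcal F$ \emph{both} explicitly and through $u(t)$; it therefore has to be dragged along as an extra, constant-in-$x$ component, so that $\mathcal F$ becomes an ordinary superposition nonlinearity on a product of $k+1$ spaces $C(M,\R{n_i})$ with $n_0:=1$.

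First I would form the parameter curve
\[
U:I\to\R{}\times\prod_{i=1}^{k}C(M,\R{n_i}),\qquad U(t)=\bigl(t,\,u_1(t),\dots,u_k(t)\bigr).
\]
Its first coordinate $t\mapsto t$ is linear, hence F-differentiable with constant derivative $1$ by Lemma~\ref{Diff_Banach_Space:Lem:G_Diff_Linear_Map}, and the remaining coordinates $u_1,\dots,u_k$ are F-differentiable by hypothesis; by Lemma~\ref{Diff_Banach_Space:Lem:Part_Deriv_Frechet} the curve $U$ is therefore F-differentiable on $I$ with $\tfrac{d}{dt}U(t)=\bigl(1,\tfrac{d u_1}{dt}(t),\dots,\tfrac{d u_k}{dt}(t)\bigr)$. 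Since the embedding $\R{}\to C(M)$, $s\mapsto(x\mapsto s)$, is linear and continuous, post-composing the first factor with it changes neither F-differentiability nor the value of the derivative, so we may regard $U$ as an F-differentiable curve into $\prod_{i=0}^{k}C(M,\R{n_i})\equiv C(M,\R{1+N})$ with $N=\sum_{i=1}^{k}n_i$. Viewing $\mathcal F$ as a differentiable function on $I\times\prod_{i=1}^{k}\R{n_i}\subset\R{1+N}$, we then have $\mathcal F(t,u(t)(\cdot))=F_{\mathcal F}(U(t))$ with $F_{\mathcal F}:C(M,\R{1+N})\to C(M)$ the associated superposition operator.

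Now Lemma~\ref{Diff_Banach_Space:Lem: Fréchet differnetial F_nach_f mehrdim }, applied to the $(k+1)$-fold product with weights $(n_0,n_1,\dots,n_k)=(1,n_1,\dots,n_k)$, yields the F-differentiability of $F_{\mathcal F}$ together with the explicit form of its differential, and the chain rule Lemma~\ref{Diff_Banach_Space:Lem:CR} applied to $F_{\mathcal F}\circ U$ at $t$, with base point $U(t)$ and increment $\tfrac{d}{dt}U(t)$, gives
\[
\frac{d}{dt}\,\mathcal F(t,u(t)(\cdot))=\frac{\partial\mathcal F}{\partial t}(t,u(t)(\cdot))+\sum_{i=1}^{k}\left\langle\frac{\partial\mathcal F}{\partial z_i}(t,u(t)(\cdot)),\ \frac{d u_i}{dt}(t)\right\rangle,
\]
which is the asserted formula. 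I do not anticipate a real obstacle: the argument is a routine concatenation of earlier lemmas. The one place deserving a little attention is precisely the handling of the parameter $t$ — it must be introduced as an independent slot rather than conflated with one of the $\R{n_i}$-components, which is exactly what produces the extra $\tfrac{\partial\mathcal F}{\partial t}$-term; alternatively, one may bundle $u=(u_1,\dots,u_k)$ into a single $C(M,\R{N})$-valued curve and repeat the proof of Lemma~\ref{Diff_Banach_Space:Lem: d/dt F (t,u(t))} verbatim, with its $\R{}$-valued $F$ replaced by the $C(M)$-valued superposition map $(s,f)\mapsto\mathcal F(s,f(\cdot))$.
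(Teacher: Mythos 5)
Your proposal is correct and follows essentially the same route as the paper: the paper's proof simply sets $X=\prod_{i=1}^{k}C(M,\R{n_i})$ and combines Lemma \ref{Diff_Banach_Space:Lem: Fréchet differnetial F_nach_f mehrdim } with Lemma \ref{Diff_Banach_Space:Lem: d/dt F (t,u(t))}, i.e.\ the chain rule applied to $U(t)=(t,u(t))$, which is exactly the skeleton of your argument (your second variant is literally the paper's proof). Your device of embedding $t$ as a constant function so that $\mathcal F$ becomes a $(k+1)$-fold superposition operator is only a cosmetic repackaging of the same computation and yields the identical formula.
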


\begin{proof}
	Set $ X:=\prod_{i=1} C(M,\R{n_i})  $ and combine \ref{Diff_Banach_Space:Lem: Fréchet differnetial F_nach_f mehrdim } and  \ref{Diff_Banach_Space:Lem: d/dt F (t,u(t))}.
\end{proof}

\begin{lem}\label{Diff_Banach_Space:Lem:d/dt int f(t)F(t,u(t))}
	Let all hypotheses of Lemma \ref{Diff_Banach_Space:Lem: d/dt F (t,u(t)) mehrdim} be satisfied and suppose in addition that $ f_{v}:\R{} \to C(\overline{\Omega}),\, f_{s}:\R{} \to C(\Gamma) $
	are F-differentiable at $ t$ such that their derivatives are bounded in a neighborhood of $ t $. Then the differentials of
	\begin{align*}
	J_v:\R{} &\to \R{},~
	t\mapsto \int_{\Omega}f_{v}(t) \mathcal{F}(t,u(t)(x))\, dx 
	\intertext{ and }
	J_s:\R{} &\to \R{},~
	t\mapsto \int_{\Gamma} f_{s}(t) \mathcal{ F}(t,v(t)(x))\, dS
	\end{align*}
	at $t\in I $ are given by
	\begin{align*}
	\frac{d}{dt} J_v(t)=& \int_{\Omega} f_{v}(t) \left[\frac{\partial \mathcal{ F}}{\partial t}(t, u(t)(.)) + \sum_{i=1}^{k}\left\langle \frac{ \partial \mathcal{ F}}{\partial z_i}(t,u(t)(.)),\frac{d u_i}{dt}(t) \right\rangle\right] +\frac{df_{v}}{dt}(t)   \mathcal{ F}(t,u(t)(.)) 
	\, dx\, , \\[1em]
	\frac{d}{dt} J_s(t) =& \int_{\Gamma}  f_{s}(t) \left[\frac{\partial \mathcal{ F}}{\partial t}(t, v(t)(.)) + \sum_{i=1}^{k}\left\langle \frac{ \partial \mathcal{ F}}{\partial z_i}(t,v(t)(.)),\frac{d v_i}{dt}(t) \right\rangle\right]+\frac{df_{s}}{dt}(t) \mathcal{ F} (t,v(t)(.)) 
	\, dS\, .
	\end{align*}
\end{lem}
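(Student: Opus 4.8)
The plan is to reduce everything to the chain rule and the product rule for Fréchet derivatives in the Banach algebra $C(M)$, $M=\overline{\Omega}$ or $M=\Gamma$, combined with Example~\ref{Diff_Banach_Space:Exmp:G_Diff_Integral} on the integral operator. First I would set $G_v(t):=\mathcal{F}(t,u(t)(\cdot))\in C(\overline{\Omega})$ and $G_s(t):=\mathcal{F}(t,v(t)(\cdot))\in C(\Gamma)$. By Lemma~\ref{Diff_Banach_Space:Lem: d/dt F (t,u(t)) mehrdim} the maps $t\mapsto G_v(t)$ and $t\mapsto G_s(t)$ are differentiable on $I$ with respect to the strong topologies of $C(\overline{\Omega})$ and $C(\Gamma)$, with derivatives
\[ \tfrac{d}{dt}G_v(t)=\tfrac{\partial \mathcal{F}}{\partial t}(t,u(t)(\cdot))+\sum_{i=1}^{k}\big\langle \tfrac{\partial \mathcal{F}}{\partial z_i}(t,u(t)(\cdot)),\tfrac{du_i}{dt}(t)\big\rangle \]
and the analogous expression with $v$ in place of $u$. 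One then rewrites $J_v(t)=I_{\Omega}\big(f_v(t)\,G_v(t)\big)$ and $J_s(t)=I_{\Gamma}\big(f_s(t)\,G_s(t)\big)$, where the products are taken pointwise in $C(M)$.

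Second I would observe that pointwise multiplication $C(M)\times C(M)\to C(M)$, $(p,q)\mapsto pq$, meets the hypothesis of the product rule Lemma~\ref{Diff_Banach_Space:Lem:Product_rule_Gateaux}: it is continuous, since $\|pq-p_0q_0\|_{\infty}\le\|p\|_{\infty}\|q-q_0\|_{\infty}+\|p-p_0\|_{\infty}\|q_0\|_{\infty}$, hence respects limits of sequences, and it is bilinear and bounded. Since $f_v$ (resp.\ $f_s$) is Fréchet differentiable near $t$ by hypothesis and $G_v$ (resp.\ $G_s$) is differentiable, hence Gâteaux differentiable and continuous, by the step above, Lemma~\ref{Diff_Banach_Space:Lem:Product_rule_Gateaux} gives that $t\mapsto f_v(t)G_v(t)$ is Gâteaux differentiable with
\[ \tfrac{d}{dt}\big(f_v(t)G_v(t)\big)=f_v(t)\,\tfrac{d}{dt}G_v(t)+\tfrac{df_v}{dt}(t)\,G_v(t), \]
and likewise for the surface version. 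That this is in fact a strong (Fréchet) derivative at $t$ follows at once from the Taylor expansions of $f_v$ and $G_v$ at $t$ (Lemma~\ref{Diff_Banach_Space:Lem:Linear_Approx}) together with boundedness of the bilinear multiplication map, which forces all cross and remainder terms to be $o(h)$; the assumed local boundedness of $\tfrac{df_v}{dt}$, $\tfrac{df_s}{dt}$ is what keeps these remainders controlled uniformly near $t$.

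Third, since $I_{\Omega}\in C(\overline{\Omega})'$ and $I_{\Gamma}\in C(\Gamma)'$ are linear and continuous, Remark~\ref{Diff_Banach_Space:Rem:CR_I} applied to $J_v=I_{\Omega}\circ(f_v\cdot G_v)$ and $J_s=I_{\Gamma}\circ(f_s\cdot G_s)$ yields
\[ \tfrac{d}{dt}J_v(t)=I_{\Omega}\Big(f_v(t)\tfrac{d}{dt}G_v(t)+\tfrac{df_v}{dt}(t)G_v(t)\Big),\qquad \tfrac{d}{dt}J_s(t)=I_{\Gamma}\Big(f_s(t)\tfrac{d}{dt}G_s(t)+\tfrac{df_s}{dt}(t)G_s(t)\Big). \]
Writing $I_{\Omega}$, $I_{\Gamma}$ as integration over $\Omega$, $\Gamma$ and inserting the expressions for $\tfrac{d}{dt}G_v$, $\tfrac{d}{dt}G_s$ from Lemma~\ref{Diff_Banach_Space:Lem: d/dt F (t,u(t)) mehrdim} gives exactly the two claimed formulas.

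The argument is essentially bookkeeping; the only point that needs genuine care is the product structure, i.e.\ running the product rule inside the algebra $C(M)$ rather than differentiating naively under the integral, and verifying that pointwise multiplication satisfies the continuity requirement of Lemma~\ref{Diff_Banach_Space:Lem:Product_rule_Gateaux} so that the chain of earlier lemmas can be concatenated. I do not expect a real obstacle beyond matching hypotheses; in particular the $C^1$-regularity of $\Omega$ enters only to make $I_{\Gamma}$ and the surface integral well defined.
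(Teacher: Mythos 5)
Your argument is correct and follows essentially the same route as the paper: the paper's proof likewise combines the product rule (Lemma \ref{Diff_Banach_Space:Lem:Product_rule_Gateaux}) for pointwise multiplication in $C(M)$, the formula from Lemma \ref{Diff_Banach_Space:Lem: d/dt F (t,u(t)) mehrdim}, and the linearity and continuity of $I_{\Omega}$, $I_{\Gamma}$ (Lemma \ref{Diff_Banach_Space:Lem:Gâteaux_diff_Integral}). Your explicit verification that pointwise multiplication satisfies the continuity hypothesis of the product rule, and your remarks on the role of the local boundedness of $\tfrac{df_v}{dt}$, $\tfrac{df_s}{dt}$, simply spell out details the paper leaves implicit.
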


\begin{proof} 
	This follows from Lemma \ref{Diff_Banach_Space:Lem:Gâteaux_diff_Integral}, product rule \ref{Diff_Banach_Space:Lem:Product_rule_Gateaux} and Lemma \ref{Diff_Banach_Space:Lem:d/dt F nach u(t)}.
\end{proof}


\chapter{Theoretical Foundations of Shape Optimization} \label{Transf_shape_opt}

In this chapter we give a basic introduction to shape optimization. We recapitulate the well known theorems from the literature \cite{SokZol92,DelfZol11,ShapeOpt} and make all necessary computational rules in the spaces considered in this work available. Even though most of these results are known, we were not always able to find rigorous proofs in the literature. For the sake of completeness we provide these proofs here. 

\section[Velocity method]{Velocity method - transformation along vector fields}\label{Transf_shape_opt:Sec: Transformation along Vector fields}

In the following, let $ k \in \N{} $, $ k\geq 1 $, 
and $ \Omega^{ext} \subset \R{n} $ a bounded domain with boundary of class $ C^{k} $.

\begin{defn}[Admissible Vector Fields]
	According to Theorem 2.16 in \cite{SokZol92} we define the set
	\begin{equation}\label{Transf_shape_opt:Def: Admissible vectorfields}
	\Vad{k}{\Omega^{ext}}:=\Menge{V\in C^k(\overline{\Oext},\R{n})}{ \langle V,\vec{n}_{ext} \rangle=0 \text{ on } \partial \Omega^{ext}} 
	\end{equation} 
	of admissible $ C^{k} $-vector fields on $ \R{n}$, where $ \vec{n}_{ext} $ denotes the outward unity normal vector field of $ \Omega^{ext} $.\footnote{For an extension to unbounded domains $ \Oext $ see also \cite{ShapeOpt}}
\end{defn}
For such a $ V \in \Vad{k}{\Omega^{ext}} $ the system of ordinary differential equations 
\begin{equation}\label{Transf_shape_opt:Eq: ODE}
\left.\begin{array}{ll}
\dfrac{d}{dt} y(t,x)&=V(y(t,x))\\
y(0,x) &=x 
\end{array} \right. ~~~ \forall  x \in \overline{ \Oext}
\end{equation} 
has a unique solution $ y:I_{V} \times \R{n} $ on the maximal existence interval $ I_{V} $ that depends on the chosen vector field $ V $. For any $ t \in I_{V} \ni \{0{\tiny }\} $, the mapping $ y_{t}:=y(t,\cdot) $ maps $ \Oext$ to $  \Oext $ and the condition $  \langle V,\vec{n}_{ext} \rangle=0 \text{ on } \partial \Omega^{ext} $ ensures that $ y_{t} $ maps $ \overline{ \Oext}  $ to $ \overline{ \Oext} $, see also (2.76) - (2.79) in \cite{SokZol92}. Moreover, there exists a $ \epsilon>0  $ such that $ [0,\delta_{+} ) \subset I_{V}$.

The following formulas are stated in the mentioned book for vector fields $ V \in C^{k}_{0}(\Omega^{ext},\R{n})$, the set of all vector fields with compact support $$\supp(V)=\overline{\{x \in \Omega^{ext} \vert V(x) \neq 0\}} \subset \overline{\Omega^{ext}}. $$ 
Especially, they can be applied to $ V \in \Vad{k}{\Oext} $, since then $ V \in C^k(\Omega^{ext},\R{n}) $ and $ \supp(V) $ is compact as it is a closed and bounded subset of $ \R{n} $.

\begin{lem}[\cite{SokZol92}, Lemma 2.42]\label{Transf_shape_opt:Lem: Flow_Properties_Tt}$  $\\
	Choose $ V \in \Vad{k}{\Omega^{ext}} $, let $ T_{t}[V]:=y_{t},\, t \in I_{V} $ be the mapping induced by \eqref{Transf_shape_opt:Eq: ODE}. Then the following holds:
	\begin{itemize}
		\item[i)] $ T_{t+s}[V]=T_{s}[V] \circ T_{t}[V]=T_{t}[V] \circ T_{s}[V] $ for $ t,s  \in I_{V}$ with $ t+s \in I_{V} $. 
		\item[ii)]  For any $ t \in I_V $ the mapping $ T_{t}[V]: \overline{ \Oext} \to \overline{ \Oext}$ is a one-to-one transformation and the inverse is given by  $T_{t}[V]^{-1}= T_{t}[-V]$.
	\end{itemize} 
\end{lem}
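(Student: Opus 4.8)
The plan is to obtain both assertions directly from the uniqueness of solutions to the ODE system \eqref{Transf_shape_opt:Eq: ODE}, i.e.\ from the standard theory of flows of vector fields (Picard--Lindel\"of plus the semigroup/group property of autonomous flows). The key point is that $V$ is autonomous, so the flow $y(\cdot,\cdot)$ enjoys the usual one-parameter group structure wherever it is defined.

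For part i), I would fix $x \in \overline{\Oext}$ and $t,s \in I_V$ with $t+s \in I_V$, and compare two curves in the variable $\tau$. On the one hand, $\tau \mapsto y(t+\tau, x)$ solves $\frac{d}{d\tau}z(\tau) = V(z(\tau))$ with initial value $z(0) = y(t,x)$, by the chain rule and \eqref{Transf_shape_opt:Eq: ODE}. On the other hand, $\tau \mapsto y(\tau, y(t,x))$ solves the same ODE with the same initial value $y(t,x)$, by definition of $y$. Since $V \in C^k$ with $k \ge 1$ is in particular locally Lipschitz on $\overline{\Oext}$, the Picard--Lindel\"of uniqueness theorem gives $y(t+\tau, x) = y(\tau, y(t,x))$ for all admissible $\tau$; evaluating at $\tau = s$ yields $T_{t+s}[V](x) = T_s[V](T_t[V](x))$, i.e.\ $T_{t+s}[V] = T_s[V]\circ T_t[V]$. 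The identity $T_{t+s}[V] = T_t[V]\circ T_s[V]$ follows by symmetry (interchanging the roles of $t$ and $s$, which is legitimate since $t+s=s+t$). I would also note that $\langle V, \vec n_{ext}\rangle = 0$ on $\partial\Oext$ guarantees $T_t[V]$ maps $\overline{\Oext}$ into itself, so all compositions make sense on the whole closure, as recalled just before the lemma.

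For part ii), I would first apply i) with $s = -t$ (assuming $-t \in I_V$; the maximal existence interval of an autonomous flow is symmetric, and this point is part of the cited setup) to get $T_t[V]\circ T_{-t}[V] = T_0[V] = \mathrm{id}_{\overline{\Oext}}$ and likewise $T_{-t}[V]\circ T_t[V] = \mathrm{id}$, using $y(0,x) = x$. Hence $T_t[V]$ is a bijection of $\overline{\Oext}$ with inverse $T_{-t}[V]$. Finally, to identify $T_{-t}[V]$ with $T_t[-V]$, I would observe that $\tau \mapsto y(-\tau, x)$ solves $\frac{d}{d\tau}w(\tau) = -V(w(\tau))$ with $w(0) = x$, which is exactly the defining ODE for the flow of $-V$; by uniqueness, $y(-\tau,x)$ equals the flow of $-V$ evaluated at $(\tau,x)$, i.e.\ $T_{-t}[V] = T_t[-V]$ upon setting $\tau = t$.

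I do not expect a serious obstacle here: the only mildly delicate issue is bookkeeping with the maximal existence intervals (ensuring $t$, $s$, $t+s$, $-t$ all lie in the relevant interval and that everything stays in $\overline{\Oext}$ where $V$ is defined and Lipschitz), but this is handled by the invariance of $\overline{\Oext}$ under the flow and the symmetry of $I_V$, both of which are part of the framework set up in \cite{SokZol92} and quoted in the paragraph preceding the lemma. The mathematical content is entirely the uniqueness theorem for ODEs applied three times.
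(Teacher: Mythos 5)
Your proof is correct and takes essentially the same route as the paper: part i) is exactly the uniqueness argument comparing $s\mapsto y_{s+t}(x)$ with $s\mapsto y_s(y_t(x))$, and part ii) uses the group law at $s=-t$ together with the time-reversal identification $T_{-t}[V]=T_t[-V]$, which the paper itself only sketches (it cites \cite{SokZol92} for bijectivity and hedges with ``if $I_V$ is symmetric''). The one caveat is your assertion that the maximal interval is symmetric ``as part of the setup'': the paper explicitly allows $I_V$ to be one-sided and later defines $T_t[V]:=T_{-t}[-V]$ for negative $t$, but this is immaterial here since the time-reversal argument you already give (that $\tau\mapsto y(-\tau,x)$ is the flow of $-V$) supplies the inverse without assuming symmetry.
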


\begin{proof}
	$ i) $ Let $ y_{s}(Y) $ be the unique solution of \eqref{Transf_shape_opt:Eq: ODE} with $ y_{0}(Y)=Y $, $ Y=y_{t}(x) $. Hence, the mapping $ s \mapsto y_{s}(y_{t}(x)) $ solves \eqref{Transf_shape_opt:Eq: ODE} with initial value $ Y= y_{t}(x) $. By differentiation it is clear that the mapping $ s \mapsto y_{s+t}(x) $ also is a solution to the same problem since
	\[ \dfrac{d}{ds}y_{s+t}(x)=\frac{d}{ds}(s+t) \left.\frac{d}{dr} y_{r}(x)\right|_{r=s+t}=V(y_{s+t}(x)) \]
	and at $ s =0 $ it holds that $ y_{s+t}(x)\vert_{s=0}=y_{t}(x)=Y $. Then the assertion follows from the uniqueness of the solution.
	
	\noindent $ ii) $ The fist statement can be found in \cite[P. 51]{SokZol92}. 
	If $ I_V  $ ist symmetric, then for any $ t \in I_V $ also $ -t \in I_{V} $ and
	\[ id=T_{0}[V]=T_{t-t}[V]=T_{t}[V]\circ T_{-t}[V]= T_{-t}[V]\circ T_{t}[V] .\] 
\end{proof}

The following scheme illustrates what was stated in the previous lemma. Here, $ \Omega_0=\Omega $ is some subset of $ \Omega^{ext} $ and $ \Omega_{t}:=T_{t}[V](\Omega) $ for any $ t \in I_V $. 
\begin{center}
	\begin{tikzpicture}
	\node(O_s) at (0,0) {$ \Omega_{t} $};
	\node(O_t) at (4,0) {$ \Omega_{s} $};
	\node(O_0) at (-4,0){$ \Omega_{0} $};
	\path (O_t) edge [bend left,->] node[below] {$T_{s-t}[-V]$}  (O_s);
	\path (O_s) edge [bend left,->] node[above] {$T_{t}[V]$}  (O_t)
	edge [bend left,->] node[below] {$T_{t}[-V]$}  (O_0);
	\path (O_0) edge [bend left,->] node[above] {$T_{s-t}[V]$}  (O_s);
	\end{tikzpicture}
\end{center}

\noindent  Since we fix an arbitrary $ V \in \Vad{k}{\Omega^{ext}} $ and regard the induced transformation mapping $ t \mapsto T_{t}[V] $, we suppress the $ V $-dependence of $ T_{t}[V] $ and $ I_V $  for $ t \in I $  and write $ T_{t} $ and $ I $ instead to abbreviate the notation if possible.

\subsection{Properties of the transformations}

The following properties of the transformations and associated quantities are well known in the case $ t=0 $ but
that many of them will be needed (\footnote{See also 
	the introductory example in Chapter \ref{Parameter_Dep_PDE} and the Theorems \ref{Parameter_Dep_PDE:Thm: Existence Deriv. strong Topology} and \ref{Parameter_Dep_PDE:Thm: Existence of Strong Derivatives u^t }}) also for $ t \neq 0 $ and thus we provide them here.
The proofs can be found partially in \cite{SokZol92, DelfZol11,ShapeOpt}.

\noindent \textbf{Notation:} Whenever $ A \in \R{n \times n} $ is an invertible matrix$ (A^{-1})^{\top} = (A^{\top})^{-1} $ holds. Hence the abbreviation $ (A^{-1})^{\top} = (A^{\top})^{-1} =: A^{-\top} $ is justified.

\begin{lem}\label{Transf_shape_opt:Lem: Properties D_Tt}$  $ \vspace*{-1ex}
	\begin{itemize}
		\item[i)] The mapping $t \mapsto T_{t}$ is an element of $ C^1(I,C^{k}(\overline{\Omega^{ext}},\R{n})) $ with \[ \frac{d}{dt}T_t=V \circ T_t \in C^k(\overline{\Omega^{ext}},\R{n}). \]
		\item[ii)] The mappings $t \mapsto DT_{t}$ and $ t \mapsto (DT_{t})^{-1} $ are in $C^1(I, C^{k-1}(\overline{\Omega^{ext}},\R{n \times n})) $ with 
		\begin{enumerate}
			\item  $ DT_{0}= \mathrm{I} $ where $ \mathrm{I} $ is the unit matrix in $ \R{n\times n} $.
			\item  $\dfrac{d}{dt}DT_{t} =\left(DV \circ T_{t}\right) DT_t \text{ and } \left.\dfrac{d}{dt}DT_{t}\right|_{t=0} =DV.$
			\item  $\dfrac{d}{dt} (DT_{t})^{-1}=-(DT_t)^{-1}(DV \circ T_t) \text{ and }\left.\dfrac{d}{dt} (DT_{t})^{-1}\right|_{t=0} = -DV.$
		\end{enumerate}
	\end{itemize}
\end{lem}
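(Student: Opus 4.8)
The statement collects standard facts about the flow $T_t = T_t[V]$ generated by the ODE \eqref{Transf_shape_opt:Eq: ODE} for a fixed $V \in \Vad{k}{\Omega^{ext}}$. The overall strategy is to exploit classical results on smooth dependence of ODE solutions on the initial data together with differentiation of the variational equation. First I would recall from Lemma \ref{Transf_shape_opt:Lem: Flow_Properties_Tt} and standard ODE theory (Picard--Lindelöf with parameters, e.g. as in \cite{SokZol92}) that since $V \in C^k(\overline{\Oext},\R{n})$, the flow map $(t,x) \mapsto y(t,x) = T_t(x)$ is $C^k$ in $x$ and $C^1$ in $t$; more precisely $t \mapsto T_t$ is a $C^1$ curve in the Banach space $C^k(\overline{\Oext},\R{n})$. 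This gives part i): differentiating the integral form $T_t(x) = x + \int_0^t V(T_s(x))\,ds$ in $t$ yields $\frac{d}{dt}T_t = V \circ T_t$, and the right-hand side lies in $C^k(\overline{\Oext},\R{n})$ because $V \in C^k$ and $T_t$ is a $C^k$-diffeomorphism of $\overline{\Oext}$; continuity of $t \mapsto V\circ T_t$ in the $C^k$-norm follows from continuity of $t\mapsto T_t$ in $C^k$ and uniform continuity of $V$ and its derivatives on the compact set $\overline{\Oext}$.

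For part ii), I would differentiate the relation $\frac{d}{dt}T_t = V\circ T_t$ with respect to the space variable $x$ (justified since the mixed partials exist and are continuous, again by smooth dependence on initial data). This produces the linear matrix ODE
\begin{equation*}
\frac{d}{dt}DT_t = (DV\circ T_t)\,DT_t,\qquad DT_0 = \mathrm{I},
\end{equation*}
which is exactly item ii).2, and evaluating at $t=0$ and using $T_0 = \mathrm{id}$ gives $\left.\frac{d}{dt}DT_t\right|_{t=0} = DV$ and item ii).1. The regularity claim $t \mapsto DT_t \in C^1(I, C^{k-1}(\overline{\Oext},\R{n\times n}))$ follows because $DV\circ T_t$ is $C^{k-1}$ in $x$ (one derivative is consumed) and depends continuously on $t$ in that norm, so the linear ODE above has a $C^1$ solution curve in $C^{k-1}(\overline{\Oext},\R{n\times n})$.

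Finally, for the inverse $(DT_t)^{-1}$: since $DT_t$ is invertible for every $t\in I$ (its determinant is nowhere zero, being the Wronskian of the linear system and equal to $\exp\int_0^t \tr(DV\circ T_s)\,ds > 0$), the map $A \mapsto A^{-1}$ is smooth on $GL(n)$, and composition with the $C^1$ curve $t\mapsto DT_t$ keeps us in $C^1(I, C^{k-1})$; moreover $DT_t^{-1} = D(T_t^{-1}) = D(T_t[-V])$ is also $C^{k-1}$ in $x$ by Lemma \ref{Transf_shape_opt:Lem: Flow_Properties_Tt} ii). Differentiating the identity $(DT_t)^{-1}DT_t = \mathrm{I}$ in $t$ and using item ii).2 gives
\begin{equation*}
\frac{d}{dt}(DT_t)^{-1} = -(DT_t)^{-1}\Big(\frac{d}{dt}DT_t\Big)(DT_t)^{-1} = -(DT_t)^{-1}(DV\circ T_t),
\end{equation*}
which is item ii).3, and evaluating at $t=0$ yields $\left.\frac{d}{dt}(DT_t)^{-1}\right|_{t=0} = -DV$. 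The only genuine subtlety — the ``main obstacle'' — is bookkeeping the loss of one order of differentiability: one must check carefully that differentiating in $x$ really sends a $C^1$ curve in $C^k$ to a $C^1$ curve in $C^{k-1}$, i.e. that $x$-differentiation is a bounded linear operator $C^k \to C^{k-1}$ (hence commutes with the $t$-derivative by Remark \ref{Diff_Banach_Space:Rem:CR_I}), and that the right-hand sides of the matrix ODEs are continuous in $t$ with respect to the $C^{k-1}$-norm. Everything else is a routine application of the chain rule (Lemma \ref{Diff_Banach_Space:Lem:CR}), the product rule (Lemma \ref{Diff_Banach_Space:Lem:Product_rule_Gateaux}), and uniqueness for linear ODEs in Banach spaces.
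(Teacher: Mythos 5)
Your proposal is correct and follows essentially the same route as the paper: part i) from the ODE/flow construction, part ii) by commuting the spatial derivative with $\frac{d}{dt}$ to obtain $\frac{d}{dt}DT_t=(DV\circ T_t)DT_t$, invertibility via $(DT_t)^{-1}=DT_t[-V]\circ T_t[V]$, and item ii).3 by differentiating $(DT_t)^{-1}DT_t=\mathrm{I}$. The extra observations you add (the Wronskian formula for $\det DT_t$ and smoothness of matrix inversion) are consistent with, but not needed beyond, the paper's argument.
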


\begin{proof}
	i) Follows from the construction of $ T_t $.\\[1ex]
	ii) 1. follows from $ T_0=id $. \\
	2. It is clear that  $t \mapsto DT_{t}$ and $ t \mapsto (DT_{t})^{-1} $ are in $C^1(I, C^{k-1}(\overline{\Omega^{ext}},\R{n \times n})) $ since $ t \mapsto T_{t} $ is continuous with values in 
	$ C^{k}(\overline{\Omega^{ext}},\R{n}) $ and 
	$$ \frac{d}{dt}DT_t = D\frac{d}{dt}T_t =D (V \circ T_t)=\left(DV\circ T_{t} \right) DT_t  \in C(I, C^{k-1}(\overline{\Omega^{ext}},\R{n \times n})) .$$ 
	3. By chain rule we obatin
	\begin{align}\label{Transf_shape_opt:Eq: Jacobi_Ttinv}
	(DT_{t}[V])^{-1} = D(T_{t}[V]^{-1}) \circ T_t[V] = DT_t[-V] \circ T_t[V],
	\end{align} thus $  DT_t =DT_{t}[V] $ is invertible. Since $ (DT_t)^{-1} DT_t=\mathrm{I} $,
	$$
	0=\dfrac{d}{dt}[(DT_t)^{-1} DT_t]= \left[\dfrac{d}{dt} (DT_t)^{-1} \right] DT_t + (DT_t)^{-1}\, (DV \circ T_t)\, DT_t $$
	implies the assertion.
\end{proof}

Thus we resume, that $ T_t $ is an element of $ Diff^k(\overline{\Omega^{ext}},\overline{\Omega^{ext}}) $, the set of $ k $-diffeo\-mor\-phisms from $ \overline{\Omega^{ext}} $ to $ \overline{\Omega^{ext}} $.


\begin{lem}\label{Transf_shape_opt:Lem: Properties gammat}
	The scalar field $ \gamma_{t}:=\det(DT_{t}): \overline{\Omega^{ext}} \to \R{} $, $ t \in I $ satisfies the following properties:
	\begin{itemize}
		\item[i)] The mapping $t \mapsto \gamma_{t} $ is in $ C(I,C^{k-1}(\overline{\Omega^{ext}})) $ and $ \displaystyle \min_{t \in I} \gamma_{t}>0  \text{ on }\overline{\Omega^{ext}}.$
		In particular $ \Norm{\gamma_t -1}{\infty,\Omega} \to 0 $ as $ t \to 0 $.
		\item[ii)] It holds that $ \gamma_{s+t}=\left(\gamma_{s} \circ T_{t}\right) \gamma_{t} $.
		\item[iii)] The map $t \mapsto \gamma_{t}$ is an element of $ C^1(I,C^{k-1}(\overline{\Omega^{ext}})) $ with derivative
		\begin{align*} 
		\dot{\gamma}_{t}=\left.\left(\dfrac{d}{ds}\gamma_{s+t}\right)\right|_{t=0} = \gamma_{t} \Div(V )\circ T_{t},\, t \in I \text{ and }  \dot{\gamma}_{0} = \Div(V) .
		\end{align*}
	\end{itemize}
\end{lem}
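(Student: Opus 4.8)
The plan is to establish the three properties of $\gamma_t = \det(DT_t)$ in order, relying on Lemma \ref{Transf_shape_opt:Lem: Properties D_Tt} and the smoothness of the determinant map.

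\textbf{Part i).} First I would note that $t \mapsto DT_t$ is continuous from $I$ into $C^{k-1}(\overline{\Omega^{ext}},\R{n\times n})$ by Lemma \ref{Transf_shape_opt:Lem: Properties D_Tt} ii). The determinant $\det:\R{n\times n}\to\R{}$ is a polynomial in the matrix entries, hence smooth, and composition with a $C^{k-1}$ matrix field gives a $C^{k-1}$ scalar field; moreover $A \mapsto \det(A)$ maps continuously $C^{k-1}(\overline{\Omega^{ext}},\R{n\times n}) \to C^{k-1}(\overline{\Omega^{ext}})$ (this is a routine multilinearity/product-rule estimate on Hölder-type norms). Therefore $t\mapsto\gamma_t \in C(I,C^{k-1}(\overline{\Omega^{ext}}))$. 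Since $DT_0 = \mathrm{I}$ we have $\gamma_0 \equiv 1$; because $DT_t$ is invertible for each $t\in I$ (Lemma \ref{Transf_shape_opt:Lem: Properties D_Tt} ii)3., using $(DT_t)^{-1} = DT_t[-V]\circ T_t[V]$), $\gamma_t$ never vanishes on the connected set $\overline{\Omega^{ext}}$, so by continuity and $\gamma_0>0$ it stays strictly positive. For a compact subinterval the infimum over $t$ and $x$ is attained and positive; I would phrase the statement for $I$ replaced by a relatively compact neighbourhood of $0$ (or argue locally, as is implicit throughout this section). The convergence $\Norm{\gamma_t-1}{\infty,\Omega}\to 0$ as $t\to 0$ follows from continuity of $t\mapsto\gamma_t$ into $C^{k-1}\hookrightarrow C^0$ at $t=0$ with $\gamma_0=1$.

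\textbf{Part ii).} Here I would use the chain rule together with the semigroup property from Lemma \ref{Transf_shape_opt:Lem: Flow_Properties_Tt} i), namely $T_{s+t} = T_s\circ T_t$. Differentiating, $DT_{s+t} = (DT_s\circ T_t)\, DT_t$, and taking determinants with the multiplicativity $\det(AB)=\det(A)\det(B)$ yields $\gamma_{s+t} = \det(DT_s\circ T_t)\,\det(DT_t) = (\gamma_s\circ T_t)\,\gamma_t$, since $\det(DT_s\circ T_t)(x) = \det((DT_s)(T_t(x))) = (\gamma_s\circ T_t)(x)$.

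\textbf{Part iii).} To compute $\dot\gamma_t$ I would use Jacobi's formula for the derivative of the determinant: for a differentiable matrix-valued curve $A(t)$ with $A(t)$ invertible, $\frac{d}{dt}\det A(t) = \det A(t)\,\tr\!\big(A(t)^{-1}\dot A(t)\big)$. Taking $A(t)=DT_t$ and inserting $\dot A(t) = (DV\circ T_t)\,DT_t$ from Lemma \ref{Transf_shape_opt:Lem: Properties D_Tt} ii)2., we get
\begin{align*}
\dot\gamma_t = \gamma_t\,\tr\!\big((DT_t)^{-1}(DV\circ T_t)\,DT_t\big) = \gamma_t\,\tr\big(DV\circ T_t\big) = \gamma_t\,\big(\Div(V)\circ T_t\big),
\end{align*}
using the cyclic invariance of the trace and $\tr(DV) = \Div(V)$. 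That $t\mapsto\gamma_t$ is actually $C^1$ into $C^{k-1}(\overline{\Omega^{ext}})$ follows because the right-hand side $\gamma_t\,(\Div V\circ T_t)$ is continuous in $t$ with values in $C^{k-1}$ (product of a $C(I,C^{k-1})$ term and the continuous curve $t\mapsto \Div V \circ T_t$, noting $\Div V\in C^{k-1}$ and $T_t$ depends $C^1$-ly on $t$ into $C^k$). Setting $t=0$ and using $\gamma_0=1$, $T_0=\mathrm{id}$ gives $\dot\gamma_0 = \Div(V)$. Alternatively one can avoid quoting Jacobi's formula by differentiating $\gamma_{s+t} = (\gamma_s\circ T_t)\gamma_t$ from part ii) with respect to $s$ at $s=0$, which directly produces the stated "increment" form $\dot\gamma_t = \frac{d}{ds}\gamma_{s+t}\big|_{s=0}$ and, via $\frac{d}{ds}\gamma_s|_{s=0} = \Div V$ (the $t=0$ case), again yields $\dot\gamma_t = \gamma_t(\Div V\circ T_t)$.

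The only mild obstacle is the bookkeeping of Hölder/$C^{k-1}$ norm estimates showing that $\det$ and composition act continuously between the relevant function spaces, and handling the "$\min_{t\in I}$" on a genuinely open (possibly one-sided) interval — which I would resolve by restricting attention to a relatively compact subinterval around $0$, consistent with the rest of the chapter. The differentiation steps themselves are immediate consequences of Jacobi's formula (or of part ii)) combined with Lemma \ref{Transf_shape_opt:Lem: Properties D_Tt}.
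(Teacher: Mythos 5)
Your proposal is correct and follows essentially the same route as the paper: part i) via continuity of $t\mapsto DT_t$, $\gamma_0=1$, invertibility of $DT_t$ and a connectedness/intermediate-value argument, and part ii) via $T_{s+t}=T_s\circ T_t$, the chain rule and multiplicativity of the determinant. For part iii) the paper simply cites Lemma 2.31 and Proposition 2.44 of Sokolowski--Zol\'esio, and your Jacobi-formula computation (equivalently, differentiating the identity of ii) at $s=0$) is precisely the standard argument behind that reference, so no gap remains.
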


\begin{proof}
	i) $ \gamma_{0}=\det\left(DT_{0}\right)=\det(\mathrm{I})=1$ on $ \overline{\Omega^{ext}} $. Since $ DT_t(x) $ is invertible for any $ t \in I,\, x \in \overline{\Omega^{ext}} $ the determinant $ \det(DT_t)(x) $ is nowhere equal to zero and $ t \mapsto \det(DT_t), \, I \to C^{k-1}(\overline{\Omega^{ext}},\R{}) $ is continuous. Thus the assertion holds by the intermediate value theorem.
	\\[1ex]
	ii) $ \gamma_{t+s}=\det\left(DT_{t+s}\right)=\det\left(DT_{s} \circ T_{t} \right) \det\left(DT_t \right)= \left(\gamma_{s} \circ T_{t}\right) \gamma_{t}$.\\[1ex]
	iii) Can be found in \cite{SokZol92} Lemma 2.31 and Proposition 2.44. 
\end{proof}

Usually the existence interval $ I_V $ of the Transformations is not symmetric and so we have to complete the approach for negative values of $ t $. Therefor we regard the existence interval $ I_{-V} \supset (-\delta_{-},0] $  of $ T_t[-V] $, $ \delta_{-} >0 $. The associated flow runs into the opposite direction of $ V $ and thus it is intuitive to set $ T_t[V]:= T_{-t}[-V] $ for $ t \in I_{-V} $. Now the transformations $ T_t $ are defined for values of $ t $ in some interval $ (-\epsilon,\epsilon) $ with $ 0 < \epsilon \leq \min\{\delta_{-},\delta_{+}\} $ and inherit the proven properties.

Before we investigate the behavior of Gram determinants and normal vector fields under the application of the transformations $ T_t=T_t[V] $
we briefly introduce the tangential differential operators which are needed in this context. 
They are defined as differential operators on the $ n-1 $-dimensional submanifold $ \Gamma $ and can be found in most books on differential geometry \cite{lee2013smooth,Kosinski1993differential,Kuhnel2015differential} and also for example in \cite{SokZol92,DelfZol11}.

\subsubsection{Tangential derivatives at the boundary}

\begin{defn}[Tangential Derivatives] 
	Let $ \Omega \subset \R{n} $ be a domain of class $ C^k,\, k\geq 1 $, $ \Gamma =\partial \Omega $ and let $ \vec{n} $ denote the outward normal vector field $ \vec{n} \in C^{k-1}(\Gamma,\R{n}) $. 
	\begin{itemize}
		\item[i)] For any scalar field $ f\in C^{1}(\Gamma,\R{}) $ the \textit{tangential gradient} $ \nabla_{\Gamma} :  C^{1}(\Gamma,\R{}) \to C(\Gamma,\R{n}) $ is defined by 
		\[ \nabla_{\Gamma}f:= \nabla f - \langle\nabla f ,\vec{n}\rangle \vec{n} \text{ on } \Gamma.\]
		\item[ii)] Let $ v\in C^{1}(\Gamma,\R{m}) $ be a vector field. Then the \textit{tangential Jacobian} is given by the mapping $ D_{\Gamma} :  C^{1}(\Gamma,\R{m}) \to C(\Gamma,\R{m \times n}) $
		with
		\[ D_{\Gamma}v:= D v - D v \,\vec{n}\vec{n}^{\top}\text{ on } \Gamma\]
		where $ \vec{a} \vec{b}^{\top}:= (a_{i}b_{j})_{i,j=1,\ldots,m} $ is the tensor product of two vectors $ \vec{a},\, \vec{b} \in \R{n} $. The \textit{tangential divergence} is given by 
		\[ \Div_{\Gamma}(v)=\tr(D_{\Gamma}v) = \Div(v) - \langle Dv\vec{n},\vec{n}\rangle \text{ on } \Gamma.\]
		\item[iv)] If $ f\in C^{2}(\Gamma,\R{}) $, then the \textit{Laplace-Beltrami operator} $ \Delta_{\Gamma}:  C^{2}(\Gamma,\R{}) \to C(\Gamma,\R{}) $
		\begin{align*}
		\Delta_{\Gamma}(f)=\Div_{\Gamma}(\nabla_{\Gamma}(f)) \text{ on } \Gamma.
		\end{align*} 
	\end{itemize}
\end{defn} 

\begin{rem}
	These definitions can be extended to Sobolev spaces $ H^{s+\frac{1}{2}}(U,\R{m}) $ using extension of $ v $ (or $ V $) from $ \Gamma $ to an neighborhood $ U $ of $ \Gamma $ and the trace operator  $$  \varGamma:  H^{s+\frac{1}{2}}(\Omega,\R{m}) \to H^{s}(\Gamma,\R{m}),\, u \mapsto \varGamma(u) $$
	where $ \varGamma(u)=u\vert_{\Gamma}$ if $u\in H^{s+\frac{1}{2}}(\Omega,\R{n})\cap C(\overline{\Omega},\R{m})$, compare \cite[Prop. 2.55]{SokZol92} and the remark above.  Nevertheless, it is common to identify $ u\vert_{\Gamma} $ with $ \varGamma(u) $ even if $ u \notin C(\overline{\Omega},\R{m}) $.
	
	\noindent The space $ H^1(\Gamma) $ can be defined in the following way, see \cite{SokZol92}: The scalar product $$ \langle u,v \rangle_{H^1(\Gamma)}  =\int_{\Gamma} \langle \nabla_{\Gamma} u, \nabla_{\Gamma}v \rangle + uv \, dS $$
	is well defined for $ u,v \in C^{1}(\Gamma) $. Thus we can define the space $ H^1(\Gamma)$ as the completion of $C^{1}(\Gamma) $ with respect to the Sobolev norm $ \Norm{u}{H^1(\Gamma)}=\sqrt{\langle u,u \rangle_{H^1(\Gamma)}}$.
\end{rem}

\subsubsection{Outward normal vector fields and Gram determinants}
\begin{lem}\label{Transf_shape_opt:Lem: Properties omegat}
	$ \Omega \subset \Omega^{ext} $ be a domain of class $ C^k $ and let $ \mathcal N \in  C^{k-1} (\overline{ \Oext},\R{n}) $ be a unitary extension of the unit outward normal vector field $ \vec{n} \in C^{k-1}(\Gamma,\R{n}) $ to $ \overline{ \Oext}$. Then the outward normal vector field $ \vec{n}_t $ on $ \Gamma_t = \{T_t(x) \vert x \in \Gamma \} =\partial \Omega_t $ is given by
	\[ 
	\vec{n}_{t} \circ T_t = \frac{1}{\Norm{(DT_{t})^{-\top}\vec{n}}{}}(DT_{t})^{-\top}\vec{n}  \text{ on } \Gamma\,.
	\] 
	Let $ \mathcal{M}(T_{t})(x):=\gamma_{t}(x)(DT_t(x))^{-\top}$, $ x \in \overline{\Omega^{ext}} $ be the adjunct matrix of $ DT_t(x) $.
	
	\begin{itemize}
		\item[i)]The mapping $ t \mapsto \omega_{t}:=\Norm{\mathcal{\mathcal{M}}(T_{t}) \mathcal{N}}{} $ is an element of  $C(I,C^{k-1}(\overline{\Omega^{ext}},\R{})) $ and satisfies
		$  \Norm{\omega_t -1}{\infty,\Omega^{ext}} \underset{t \to 0}{\to} 0$ and there exists an environment $ 0 \in U $ such that $ \min_{t \in U} \omega_{t}>0  \text{ on }\overline{\Omega^{ext}}$.
		\item[ii)]   $ \omega_{s+t}=\left(\Norm{\gamma_{s} (DT_{s})^{-\top} \vec{n}_{t} }{} \circ T_{t}\right) \Norm{\mathcal{M}(T_{t}) \vec{n}}{}=(\omega_{s} \circ T_{t})  \omega_{t}$ on $ \Gamma $ 
		.
		\item[iii)]The mapping $ t \mapsto \omega_{t} $ even is in $ C^1(I,C^{k-1}(\overline{\Omega^{ext}},\R{})) $ with
		\begin{align} 
		\dot{\omega}_{t}&=\left.\left(\frac{d}{ds}\omega_{s}\right)\right|_{s=t}= \omega_{t}\Div_{\Gamma_t}(V)\circ T_{t},\, \forall t \in I \text{ and } \dot{\omega}_{0}=\Div_{\Gamma}(V) \text{ on }  \Gamma. 
		\end{align}
	\end{itemize}
\end{lem}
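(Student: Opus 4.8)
The plan is to reduce the whole statement to the group (cocycle) structure of the flow $T_t$ (Lemma~\ref{Transf_shape_opt:Lem: Flow_Properties_Tt}) together with the regularity of $t\mapsto DT_t$, $t\mapsto (DT_t)^{-1}$ from Lemma~\ref{Transf_shape_opt:Lem: Properties D_Tt} and of $t\mapsto\gamma_t$ from Lemma~\ref{Transf_shape_opt:Lem: Properties gammat}. First I would derive the formula for $\vec n_t$: since $\Gamma_t=T_t(\Gamma)$, the tangent space at $T_t(x)$ is the image under $DT_t(x)$ of the tangent space at $x$, so a vector $w$ is normal to $\Gamma_t$ at $T_t(x)$ iff $\langle DT_t(x)^{\top}w,\tau\rangle=0$ for every tangent $\tau$ at $x$, i.e.\ iff $w$ is a multiple of $(DT_t(x))^{-\top}\vec n(x)$; normalising and noting that the outward orientation is preserved (true for small $t$ since $DT_0=\mathrm{I}$) gives $\vec n_t\circ T_t=\Norm{(DT_t)^{-\top}\vec n}{}^{-1}(DT_t)^{-\top}\vec n$ on $\Gamma$, as in \cite{SokZol92}. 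With the cofactor matrix $\mathcal{M}(T_t)=\gamma_t(DT_t)^{-\top}$ one has $\omega_t=\Norm{\mathcal{M}(T_t)\mathcal{N}}{}=\gamma_t\Norm{(DT_t)^{-\top}\mathcal{N}}{}$ on $\overline{\Oext}$, and on $\Gamma$ this is precisely the Jacobian in the surface change of variables $\int_{\Gamma_t}\psi\,dS_t=\int_\Gamma(\psi\circ T_t)\,\omega_t\,dS$, which is the property it is really meant to encode.

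For (i) I would argue that, since $\mathcal{N}\in C^{k-1}(\overline{\Oext},\R{n})$ and by Lemmas~\ref{Transf_shape_opt:Lem: Properties D_Tt} and \ref{Transf_shape_opt:Lem: Properties gammat} the map $t\mapsto\gamma_t(DT_t)^{-\top}$ is $C^1$ with values in $C^{k-1}(\overline{\Oext},\R{n\times n})$, the map $t\mapsto\mathcal{M}(T_t)\mathcal{N}$ is $C^1$ with values in $C^{k-1}(\overline{\Oext},\R{n})$; it never vanishes because $DT_t(x)$ is invertible, $\gamma_t>0$ (Lemma~\ref{Transf_shape_opt:Lem: Properties gammat}) and $\mathcal{N}$ is unitary. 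Post-composing with the Euclidean norm, which is $C^\infty$ off the origin, shows $t\mapsto\omega_t$ is $C^1$ into $C^{k-1}(\overline{\Oext},\R{})$ (this $C^1$ regularity is also what (iii) needs). Since $DT_0=\mathrm{I}$, $\gamma_0=1$ and $\Norm{\mathcal{N}}{}=1$ give $\omega_0\equiv 1$, continuity at $0$ yields $\Norm{\omega_t-1}{\infty,\Oext}\to0$, and continuity plus compactness on a closed subinterval around $0$ yields a neighbourhood $U$ of $0$ with $\min_{t\in U}\omega_t>0$ on $\overline{\Oext}$.

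For (ii) I would combine $T_{s+t}=T_s\circ T_t$ with the chain rule, $DT_{s+t}=(DT_s\circ T_t)\,DT_t$, and with $\gamma_{s+t}=(\gamma_s\circ T_t)\gamma_t$ (Lemma~\ref{Transf_shape_opt:Lem: Properties gammat}), to obtain the cocycle identity $\mathcal{M}(T_{s+t})=(\mathcal{M}(T_s)\circ T_t)\,\mathcal{M}(T_t)$ for cofactor matrices. Applying this to $\vec n$ on $\Gamma$ and using the normal formula in the form $\mathcal{M}(T_t)\vec n=\gamma_t\Norm{(DT_t)^{-\top}\vec n}{}\,(\vec n_t\circ T_t)=\omega_t\,(\vec n_t\circ T_t)$ on $\Gamma$, one gets $\mathcal{M}(T_{s+t})\vec n=\omega_t\,\big[(\gamma_s(DT_s)^{-\top}\vec n_t)\circ T_t\big]$, so taking norms gives the first displayed equality $\omega_{s+t}=(\Norm{\gamma_s(DT_s)^{-\top}\vec n_t}{}\circ T_t)\,\omega_t$ on $\Gamma$. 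Identifying $\Norm{\gamma_s(DT_s)^{-\top}\vec n_t}{}$ with $\omega_s$ (so that the factor becomes $\omega_s\circ T_t$) just expresses that $\gamma_s\Norm{(DT_s)^{-\top}\vec n_t}{}$ is the surface Jacobian of the diffeomorphism $T_s\colon\Gamma_t\to\Gamma_{s+t}$, i.e.\ the same quantity as $\omega_s$ but computed with $\Gamma_t$ as reference surface; equivalently it follows from applying the surface change of variables twice, through $\Gamma_t$, cf.\ \cite{SokZol92}.

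Finally, for (iii) the quickest route is to differentiate the cocycle identity (ii) in $s$ at $s=0$: since $\omega_t$ does not depend on $s$, $\dot\omega_t=\big(\tfrac{d}{ds}\big|_{s=0}(\omega_s\circ T_t)\big)\,\omega_t$, and by the base ($t=0$) case of the lemma applied with the $C^k$-surface $\Gamma_t$ in place of $\Gamma$ the inner derivative is $\Div_{\Gamma_t}(V)$, whence $\dot\omega_t=\omega_t\,\Div_{\Gamma_t}(V)\circ T_t$; at $t=0$ (where $\omega_0=1$, $T_0=\mathrm{id}$, $\Gamma_0=\Gamma$) this becomes $\dot\omega_0=\Div_\Gamma(V)$. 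This is the surface analogue of Lemma~\ref{Transf_shape_opt:Lem: Properties gammat}\,(iii) and is contained in \cite{SokZol92}. Alternatively one may differentiate $\omega_t=\gamma_t\langle(DT_t)^{-\top}\vec n,(DT_t)^{-\top}\vec n\rangle^{1/2}$ on $\Gamma$ directly, using $\dot\gamma_t=\gamma_t\,\Div(V)\circ T_t$ and $\tfrac{d}{dt}(DT_t)^{-\top}=-(DV\circ T_t)^{\top}(DT_t)^{-\top}$ from Lemmas~\ref{Transf_shape_opt:Lem: Properties gammat} and \ref{Transf_shape_opt:Lem: Properties D_Tt}, and then rewriting the outcome with the normal formula and the definition of the tangential Jacobian so as to recognise $\Div_{\Gamma_t}$. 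The main obstacle is exactly this last point in (iii): correctly bookkeeping the difference between $\Div_{\Gamma_t}$ and $\Div_\Gamma$ and justifying the $s$-differentiation of the cocycle (or, in the direct computation, the algebraic reduction to tangential-divergence form); everything else is routine use of the chain rule and of the regularity statements already proved.
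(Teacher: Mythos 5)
Your proof is correct and takes essentially the same route as the paper: the normal formula and $\dot\omega_0=\Div_{\Gamma}(V)$ are taken from \cite{SokZol92}, part (ii) rests on the cofactor cocycle $\mathcal{M}(T_{s+t})=(\mathcal{M}(T_s)\circ T_t)\,\mathcal{M}(T_t)$ combined with the normal formula, and (iii) follows by differentiating this composition (chain rule), exactly as in the paper. The only difference is one of detail: where the paper simply cites \cite{SokZol92} and "chain rule", you spell out the identification of $\Norm{\gamma_s (DT_s)^{-\top}\vec{n}_t}{}$ with $\omega_s$ on $\Gamma_t$ and the appearance of $\Div_{\Gamma_t}(V)$ as the inner derivative, which is consistent with the paper's argument.
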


\begin{proof}
	The statement $\vec{n}^{t}:=\vec{n}_{t} \circ T_t =\Norm{(DT_{t})^{-\top}\vec{n}}{}^{-1}(DT_{t})^{-\top}\vec{n} $ can be found in \cite{SokZol92} Proposition 2.48. 
	\\[1ex]
	i) This is due to $ \omega_0 =1 $ and the continuity of the mapping $ t \mapsto \omega_t $.
	\begin{align*}
	\Norm{\omega_t - \omega_0}{\infty,\Omega^{ext}}=\Norm{\Norm{\mathcal{M}(T_{t})\mathcal{N} }{} - \Norm{\mathcal{N}}{}}{\infty,\Omega^{ext}} \leq \Norm{\mathcal{M}(T_{t})\mathcal{N} -\mathcal{N}}{\infty,\Omega^{ext}} \underset{t \to 0}{\longrightarrow}0
	\end{align*}
	ii) On $ \Gamma = \Gamma_{0} $ we have
	\begin{align*}
	\omega_{s+t}
	&= \Norm{\mathcal{M}(T_{s} \circ T_{t})\vec{n}}{} \\
	&= \Norm{\left(\gamma_{s}\circ\T{t}\right) \left(DT_{s}^{-1} \circ T_{t}\right)^{\top} \mathcal{M}(T_{t}) \vec{n}}{}\\
	&= \Norm{\left(\gamma_{s}\circ\T{t}\right) \left(DT_{s}^{-1} \circ T_{t}\right)^{\top} \Norm{\mathcal{M}(T_{t}) \vec{n}}{} \frac{\mathcal{M}(T_{t}) \vec{n}}{\Norm{\mathcal{M}(T_{t}) \vec{n}}{}} }{}\\
	&= \Norm{\left(\gamma_{s}\circ\T{t}\right) \left((DT_{s})^{-1} \circ T_{t}\right)^{\top} \vec{n}_{t} \circ T_{t} }{} \Norm{\mathcal{M}(T_{t}) \vec{n}}{}  \\
	&= \left(\Norm{\gamma_{s} (DT_{s})^{-\top} \vec{n}_{t} }{} \circ T_{t}\right) \Norm{\mathcal{M}(T_{t})\vec{n}}{} \\
	&= \left(\omega_{s}\circ T_{t}\right) \omega_{t}\,.
	\end{align*}
	iii)  $ \dot{\omega}_{0}=\Div(V)-\skp{DV\vec{n}}{\vec{n}} =\Div_{\Gamma}(V)$ 
	is stated in \cite{SokZol92} Lemma 2.49, while $\dot{\omega}_{t}= \omega_{t}\Div_{\Gamma_t}(V)\circ T_{t}$ follows by application of the chain rule.
\end{proof}


\section{Shape derivatives and the Hadamard structure theorem} \label{Transf_shape_opt:Sec: Admissible Shapes}

In the following we consider sets that can be created by application of the transformations $ T_{t}[V] $, $ V \in \Vad{k}{\Omega^{ext}}$ to a set $ \Omega \subset \Omega^{ext}$ of class $ C^k,\, k \in \N{} $. The set $ \mathcal{O} $ is the set of \textit{admissible shapes} that has to be chosen properly with respect to the present problem. Generally, the set $ \mathcal{O} $ and the natural number $ k $ have to be chosen such that $ T_{t}[V](\Omega) \in \mathcal{O} $ for any $ V\in \Vad{k}{\Omega^{ext}} $ and $ t \in I_V $.

Furthermore, we will establish the main notions of shape calculus in spaces consisting of continuous or even differentiable functions. Many of the concepts can be derived under weaker conditions, for example in Sobolev spaces. For further information we refer to the books \cite{SokZol92}, \cite{DelfZol11} or \cite{ShapeOpt}. Nevertheless, we will give some comments regarding these spaces.

\begin{defn}[Shape Functional] \label{Transf_shape_opt:Defn:shape_functional}
	Let $ \mathcal{O} \subset \mathbb{P}(\Omega^{ext}) $. A shape functional is a mapping \[ J:\mathcal{O} \to \R{},\, \Omega \mapsto J(\Omega) ,\] that is well defined for every $ \Omega \in \mathcal{O}$.
\end{defn}

\begin{defn}[Shape Optimization Problem] \label{Transf_shape_opt:Defn:shape_opt_probl}
	Let $ \mathcal{O} \subset \mathbb{P}(\Omega^{ext}) $ be a set of measurable subsets of $ \Omega^{ext} $ and $ J:\mathcal{O}\to \R{}$ be a shape functional. 
	\begin{itemize}
		\item[i)] A \textit{shape optimization problem} is given by the minimization problem $$ \text{ Find }  \Omega^{\ast} \in \mathcal{O} \text{  s.t. }  J(\Omega^{\ast}) \leq J(\Omega) ~~~ \forall \Omega \in \mathcal{O} $$
		respectively,
		\[ \min_{\Omega \in \mathcal{O}} J(\Omega). \]
		\item[ii)] Let a PDE $ (P(\Omega)) $ be given such that there exists a unique solution $ u(\Omega) $ for any $ \Omega \in \mathcal{O} $. Then the problem    
		\begin{eqnarray*}
			&\min & J(\Omega,u(\Omega)) \\
			&\text{ s.t. } & u(\Omega) \text{ solves } P(\Omega),\, \Omega \in \mathcal{O}
		\end{eqnarray*}
		is called \textit{PDE-constraint shape optimization problem}. $ J $ may also depend on first or higher order (weak) derivatives of $ u(\Omega) $.
	\end{itemize}
\end{defn}

\begin{defn}[Shape Derivative]{\cite[Def. 2.19/2.20]{SokZol92}} \label{Transf_shape_opt:Defn:Euler_Deriv}
	Let $ J:\mathcal{O} \to \R{} $ be functional that is well defined for any $ \Omega $ of class $ C^{k} $. 
	\begin{itemize}
		\item[i)] The \textit{shape derivative} (\textit{Euler derivative}) of $ J $ at $ \Omega$ in direction of $ V \in \Vad{k}{\Omega^{ext}} $ is defined as 
		\begin{align}
		dJ(\Omega)[V]:= \left.\frac{d}{dt}J(\Omega_t) \right\vert_{t=0} = \lim_{t \to 0} \frac{J(\Omega_t) -J(\Omega)}{t},\,~~~~~~~  \Omega_t =T_t[V](\Omega),
		\end{align}
		if this limit value exists.
		\item[ii)] The functional $ J $ is called \textit{shape differentiable} if \\[1ex]
		1) $ dJ(\Omega)[V]  $ exists for any $ V \in  C^{k}_{0}(\Oext,\R{n}) \subset C^{k}(\overline{\Oext},\R{n})$ and \\[1ex]
		2) the map $C^{k}_{0}(\Oext,\R{n}) \to \R{},\, V \mapsto dJ(\Omega)[V] $ is linear and continuous.
	\end{itemize}
\end{defn}

The notion of shape differentiability always  has to be adopted to the present optimization problem and the shape functional under consideration. 
In many cases it is enough to claim that $ \Omega $ is a Lipschitz domain and $ V \in C^{0,1}_{0}(\Oext,\R{n})$ \cite{SturmLaurain2016distributed,Zolesio1979identification} as for example in the case of an energy type functional, e.g. $$ J(Du(\Omega)) = \int_{\Omega} Du(\Omega) : Du(\Omega) \, dx.$$

\begin{defn}\label{Transf_shape_opt:Defn:transformed_shapes} $ $\\
	i) Let $k\in \N{}$. The set $ \mathcal{O}_{k}:=\{\Omega \subset \Omega^{ext}\,| \, \Omega \text{ is a } C^k\text{- domain} \} $ contains all\textit{ $k$-admissible shapes}.\\[1ex]	
	ii) The set of all \textit{admissible transformations} that are generated by equation \eqref{Transf_shape_opt:Eq: ODE} is given by
	\begin{align}\label{Transf_shape_opt:Eq: transformation variables}
	\mathcal{T}:=\Menge{T_{t}[W]}{W \in \Vad{k}{\Oext}, t \in I_{W}}
	\end{align}
	where $ I_{W} $ is the maximal existence interval of $ \T{t}[W] \in C^k(\overline{\Omega^{ext}},\R{n}) $.\\[1ex]
	iii) Let $ \Omega \in \mathcal{O}_{k} $ and $ V \in \Vad{k}{\Omega^{ext}} $. Then $ (\Omega_{t}[V])_{t \in I_{V}} $ is the family of all \textit{perturbed domains along $ V $} where \[ \Omega_{t}[V]:=T_{t}[V](\Omega)=\{T_{t}[V](x)\, |\, x \in \Omega\} \text{ with } \Omega_{0}=\Omega.\]
	The family 
	$ (\Gamma_{t}[V])_{t \in I_{V}} $ is the family of all \textit{perturbed boundaries along $ V $} where \[ \Gamma_{t}[V]:=T_{t}[V](\Gamma)=\{T_{t}[V](x)\, |\, x \in \Gamma\} \text{ with } \Gamma_{0}=\Gamma.\]
\end{defn}

\begin{rem}$ $
	i) The set $ \mathcal{O}_{k} $ is closed w.r.t. transformation by $ T_{t}[V] $, $ V \in \Vad{k}{\Omega^{ext}} $.\\[1ex]
	ii) $ \Omega_{0}[V]=\Omega $ for any $ V \in \Vad{k}{\Omega^{ext}} $ and $ \Omega \in \mathcal{O}_{k}$.\\[1ex]
	iii) If $ V  \in \Vad{k}{\Oext}$ and $ \Omega \in \mathcal{O}_{k}$ are fixed we can also investigate the sets
	\begin{align}
	\mathcal{T}[V]:=& \Menge{T_{t}[V]}{ t \in I_{V}} \subset \mathcal{T}, \\
	\mathcal{O}(\Omega;V):=& \Menge{\Omega_t[V]}{t \in I_{V}} \subset \mathcal{O}_{k}.
	\end{align}
	Further, for any closed subset $ \tilde{I}\subset I_{V} $ there exists a constant $ C_{V, \tilde{I}} \geq 0 $  such that 
	\begin{align}\label{Transf_shape_opt:Eq: Bound TtV}
	\sup_{t \in \tilde{I}} \Norm{T_{t}[V]}{\Cm{k}{\overline{\Omega^{ext}},\R{n}} }\leq C_{V,\tilde{I}}.
	\end{align}
\end{rem}

\subsection{The Hadamard formula}

For more transparency we summarize the results in \cite[Section 2.11]{SokZol92} before we turn to the Hadamard Structure Theorem:

Supposed that $ J $ is a shape functional that is defined on the family of measurable subsets of $ \Omega^{ext} $ and is shape differentiable, then $ dJ(\Omega)=\mathscr{G}(\Omega)$ is an element of the topological dual space $ C^{k}_{0}(\Omega^{ext},\R{n})' $ or equivalently 
\begin{equation}\label{Transf_shape_opt:Eq:Existence_G_Omega}
dJ(\Omega)[V] =\mathscr{G}(\Omega)(V)
\, \forall V \in C^{k}_{0}(\Omega^{ext},\R{n}). 
\end{equation}
This representation is also called \textit{domain representation} \cite{DissKW}, \textit{weak shape derivative} \cite{Schmidt2011Dissertation} or \textit{ distributed shape derivative} \cite{SturmLaurain2016distributed}. It turns out, that the distribution $ G(\Omega) $ has only support in the interior of $ \Omega $ and actually only on  $ \Gamma $ if $ J $ is defined and shape differentiable at any $ \Omega $ of class $ C^k $:

Moreover, any vector field $ V $ with $V_{\vec{n}} = \langle V,\vec{n}\rangle=0  $ on $ \Gamma $  is identified as an element of $ \ker(dJ(\Omega)) $ since
$\langle V,\vec{n} \rangle=0  $ implies that $ T_t[V]:\overline{\Omega} \to \overline{\Omega} $ for any $ t \in I_V $ and therefore $ dJ(\Omega)[V]=0 $.  Now we consider the closed subspace \[ F_{\vec{n}}(\Omega) :=\{V \in C^{k}_{0}(\Oext,\R{n})\, \vert \, V_{\vec{n}} = \langle V\vert_{\Gamma},\vec{n} \rangle=0 \text{ on } \Gamma\} \subset C^{k}_{0}(\Oext,\R{n}) \]
and the canonical projection $ \pi:C^{k}_{0}(\Oext,\R{n}) \to C^{k}_{0}(\Oext,\R{n})/F_{\vec{n}}(\Omega), V \mapsto [V]_{\sim}.$ The mapping $ P:C^{k}_{0}(\Oext,\R{n}) \to C^{k}(\Gamma),\, V \mapsto V_{\vec{n}} = \langle V\vert_{\Gamma},\vec{n} \rangle $ is linear and continuous. 
Moreover, $ F_{\vec{n}}(\Omega) = \ker(P) $ and thus there is a unique mapping $$ \hat{P}:C^{k}_{0}(\Oext,\R{n})/F_{\vec{n}}(\Omega) \to  C^{k}(\Gamma),\, [V]_{\sim} \mapsto \langle V,\vec{n} \rangle $$ that is linear and continuous such that the following diagram commutes:
\begin{center}
	\begin{tikzpicture}
	\node(n1) at (0,0){$C^{k}_{0}(\Oext,\R{n})$}; 
	\node(n2) at (0,-1.5){$C^{k}_{0}(\Oext,\R{n})/F_{\vec{n}}(\Omega)$};
	\node(n3) at (3,0){$C^{k}(\Gamma)$}; 
	\draw[->] (n1) -- (n2) node[midway,left]{$ \pi $};
	\draw[->] (1,-1.2) -- (n3) node[midway,right]{$~~ \hat{P} $};
	\draw[->] (n1) -- (n3) node[midway,above]{$ P $};
	\end{tikzpicture}
\end{center} 
Furthermore, we can investigate the following diagram concerning $ dJ(\Omega) $:
\begin{center}
	\begin{tikzpicture}
	\node(n1) at (0,0){$C^{k}_{0}(\Oext,\R{n})$}; 
	\node(n2) at (0,-1.5){$C^{k}_{0}(\Oext,\R{n})/F_{\vec{n}}(\Omega) \cong $};
	\node(n3) at (2.5 ,-1.5){$ C^{k}(\Gamma) $};
	\node(n4) at (4.4,0){$ \R{} $}; 
	\draw[->] (n1) -- (n2) node[midway,left]{$ \pi $};
	\draw[white] (n2) -- (n3) node[midway,above]{$ $};
	\draw[->] (n3) -- (n4) node[near start,right]{$ ~dJ(\Gamma) $};
	\draw[->] (n1) -- (n4) node[midway,above]{$ dJ(\Omega) $};
	\end{tikzpicture}
\end{center} 
The mapping $ dJ(\Omega): C^{k}_{0}(\Oext,\R{n}) \to \R{} $ is linear and continuous and $ F_{\vec{n}}(\Omega) \subset \ker(dJ(\Omega)) $ is closed. Again, by the Fundamental Theorem on Homomorphisms, there is a unique mapping 
$$ dJ(\Gamma):  C^{k}(\Gamma,\R{n}) \cong C^{k}_{0}(\Oext,\R{n})/F_{\vec{n}}(\Omega) \to \R{}  $$ with $dJ(\Omega)= dJ(\Gamma) \circ \hat P \circ \pi $, i.e. 
\begin{equation}\label{Transf_shape_opt:Eq:Existence_G_Gamma}
dJ(\Omega)[V] = dJ(\Gamma)[V_{\vec{n}}]~~~ \forall V \in C^{k}_{0}(\Oext,\R{n}). 
\end{equation}

\begin{thm}[Hadamard Structure Theorem]{\cite[Theorem 2.27]{SokZol92}} \label{Transf_shape_opt:Thm:Hadamard_Thm}\\
	Let $ J $ be a shape functional, that is shape differentiable for any subset of $\Omega^{ext}  $ with boundary of class $ C^k $. Furthermore, suppose that $  \Omega \subset \Omega^{ext}$ is a domain of class $ C^{k+1} $.
	There exists a unique scalar distribution $ \mathcal{G}(\Gamma) \in C^{k}(\Gamma)' $ such that $ dJ(\Omega) \in C^{k}_{0}(\Oext,\R{n})' $ satisfies 
	\begin{align}\label{}
	dJ(\Omega)
	= \mathbf{T}_{\Gamma}'(\mathcal{G}(\Gamma)\vec{n})
	\end{align} 
	where $\mathbf{T}_{\Gamma}:   C^{k}_{0}(\Oext,\R{n}) \to  C^{k}(\Gamma,\R{n}),\,
	v  \mapsto v\vert_{\Gamma}$ 
	is the trace operator, $ \mathbf{T}_{\Gamma}': C^{k}(\Gamma,\R{n})' \to  C^{k}_{0}(\Oext,\R{n})',\, A \mapsto A \circ \mathbf{T}_{\Gamma} $ is the dual oparator of $ \mathbf{T}_{\Gamma}.$ 
\end{thm}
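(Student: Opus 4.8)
The plan is to prove the Hadamard Structure Theorem in two stages, following the algebraic scaffolding already laid out in the excerpt just before the statement: first establish that $dJ(\Omega)$ factors through the normal trace $V \mapsto V_{\vec n}$, and then package the resulting functional on $C^{k}(\Gamma)$ as ``multiplication by a scalar distribution $\mathcal{G}(\Gamma)$ followed by the normal vector field $\vec n$.''

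First I would recall the decomposition of $C^{k}_{0}(\Oext,\R{n})$ that was constructed above. By shape differentiability, $dJ(\Omega) \in C^{k}_{0}(\Oext,\R{n})'$ is linear and continuous. The key observation is that $F_{\vec n}(\Omega) = \{V : \langle V|_{\Gamma},\vec n\rangle = 0 \text{ on } \Gamma\} \subset \ker(dJ(\Omega))$: indeed, if $\langle V,\vec n\rangle = 0$ on $\Gamma$, then by the flow properties of Lemma~\ref{Transf_shape_opt:Lem: Flow_Properties_Tt} (which apply because such $V$ restricted to a neighbourhood of $\Gamma$ is tangent there) the transformations $T_t[V]$ map $\overline{\Omega}$ onto $\overline{\Omega}$ for all $t \in I_V$, so $\Omega_t[V] = \Omega$ and $dJ(\Omega)[V] = \frac{d}{dt}J(\Omega)|_{t=0} = 0$. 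Here the hypothesis that $\Omega$ is of class $C^{k+1}$ (one degree more than the vector fields) is what makes the extension of the normal field and the tangency argument go through cleanly. Combining this with the map $P: V \mapsto V_{\vec n}$, whose kernel is exactly $F_{\vec n}(\Omega)$ and whose induced map $\hat P$ onto $C^{k}(\Gamma)$ is a continuous isomorphism modulo $F_{\vec n}(\Omega)$, the Fundamental Theorem on Homomorphisms yields a unique continuous linear $dJ(\Gamma): C^{k}(\Gamma) \to \R{}$ with $dJ(\Omega) = dJ(\Gamma)\circ\hat P\circ\pi$, i.e. $dJ(\Omega)[V] = dJ(\Gamma)[V_{\vec n}]$ for all $V$. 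This is precisely equation~\eqref{Transf_shape_opt:Eq:Existence_G_Gamma}.

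Next I would convert the scalar functional $dJ(\Gamma) \in C^{k}(\Gamma)'$ into the asserted form. Define $\mathcal{G}(\Gamma) := dJ(\Gamma) \in C^{k}(\Gamma)'$ — a scalar distribution on $\Gamma$ — and then check that the right-hand side $\mathbf{T}_{\Gamma}'(\mathcal{G}(\Gamma)\vec n)$ acts on $V \in C^{k}_{0}(\Oext,\R{n})$ by $V \mapsto \langle \mathcal{G}(\Gamma)\vec n, \mathbf{T}_{\Gamma}(V)\rangle = \mathcal{G}(\Gamma)(\langle V|_{\Gamma}, \vec n\rangle) = dJ(\Gamma)[V_{\vec n}] = dJ(\Omega)[V]$, using that pairing a vector-valued distribution $\mathcal{G}(\Gamma)\vec n$ (scalar distribution times a fixed $C^{k-1}$ vector field) against a vector test field amounts to pairing $\mathcal{G}(\Gamma)$ against the scalar $\langle V|_{\Gamma},\vec n\rangle$. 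Continuity of $\mathbf{T}_\Gamma$ on the relevant spaces (the trace being continuous from $C^{k}_{0}(\Oext,\R{n})$ to $C^{k}(\Gamma,\R{n})$) ensures $\mathbf{T}_\Gamma'$ is well-defined, and $\vec n \in C^{k-1}(\Gamma,\R{n})$ pointwise multiplication is continuous on $C^{k}(\Gamma)$ only if $k-1 \geq k$... so in fact one should note that the pairing $\mathcal{G}(\Gamma)(\langle V|_\Gamma, \vec n\rangle)$ is the cleaner object and is well-defined because $\langle V|_\Gamma,\vec n\rangle \in C^{k}(\Gamma)$ whenever $V \in C^{k}_0$ and $\Gamma$ is $C^{k+1}$ (so $\vec n \in C^{k}(\Gamma,\R{n})$). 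This is exactly where the extra smoothness of $\Omega$ is used a second time.

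For uniqueness, suppose $\mathcal{G}_1, \mathcal{G}_2 \in C^{k}(\Gamma)'$ both represent $dJ(\Omega)$. Then for every $V \in C^{k}_{0}(\Oext,\R{n})$ we get $(\mathcal{G}_1 - \mathcal{G}_2)(V_{\vec n}) = 0$; since the normal trace map $V \mapsto V_{\vec n}$ is surjective onto $C^{k}(\Gamma)$ (given any $\psi \in C^{k}(\Gamma)$, take $V = \tilde\psi\,\mathcal{N}$ for a $C^k$ extension $\tilde\psi$ of $\psi$ and a unitary $C^k$ extension $\mathcal{N}$ of $\vec n$, cut off near $\partial\Oext$), it follows that $\mathcal{G}_1 = \mathcal{G}_2$ on all of $C^{k}(\Gamma)$. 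The main obstacle I anticipate is purely bookkeeping about smoothness indices — making sure $\vec n$ is $C^k$ (not merely $C^{k-1}$) on $\Gamma$ so that $\langle V|_\Gamma,\vec n\rangle$ genuinely lies in $C^{k}(\Gamma)$ and the dual pairing with $\mathcal{G}(\Gamma) \in C^{k}(\Gamma)'$ is legitimate; this is resolved by the hypothesis $\Omega \in C^{k+1}$, which gives $\vec n \in C^{k}(\Gamma,\R{n})$. Everything else is a direct consequence of the homomorphism theorem and the support-localization already discussed before the statement.
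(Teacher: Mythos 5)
Your proof is correct and follows essentially the same route as the paper's: it reuses the factorization $dJ(\Omega)=dJ(\Gamma)\circ\hat P\circ\pi$ built before the statement, sets $\mathcal{G}(\Gamma):=dJ(\Gamma)$, and identifies $\mathcal{G}(\Gamma)(\langle \mathbf{T}_{\Gamma}(V),\vec n\rangle)$ with $\mathbf{T}_{\Gamma}'(\mathcal{G}(\Gamma)\vec n)(V)$ via the product of a scalar distribution with a vector field, exactly as in the paper. Your added uniqueness argument via surjectivity of $V\mapsto V_{\vec n}$ onto $C^{k}(\Gamma)$ and your remark that $\Omega\in C^{k+1}$ is what gives $\vec n\in C^{k}(\Gamma,\R{n})$ are correct and make explicit points the paper leaves implicit.
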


\begin{proof}
	We apply \eqref{Transf_shape_opt:Eq:Existence_G_Omega} and \eqref{Transf_shape_opt:Eq:Existence_G_Gamma}. The mapping $dJ(\Gamma)= dJ(\Omega) \circ \pi^{-1} \circ \hat{P}^{-1}$ is linear and continuous and thus 
	\begin{align*}
	dJ(\Omega)[V]=dJ(\Gamma)[\langle V,\vec{n} \rangle] = dJ(\Gamma)[V_{\vec{n}}] = \mathcal{G}(\Gamma)( \langle \mathbf{T}_{\Gamma}(V),\vec{n} \rangle )
	\end{align*}
	For $ R \in C^{k}(\Gamma)'  $ and $ v \in C^{k}(\Gamma,\R{n})  $ the product $ Rv \in C^{k}(\Gamma,\R{n})' $ is given by
	$ Rv(W)=R(\langle W, v\rangle) $ and thus 
	\begin{align*}
	\mathcal{G}(\Gamma) (\langle \mathbf{T}_{\Gamma}(V),\vec{n} \rangle ) = \left(\mathcal{G}(\Gamma)\vec{n}\right)(\mathbf{T}_{\Gamma}(V) ) = \mathbf{T}_{\Gamma}'\left(\mathcal{G}(\Gamma)\vec{n}\right)(V) .
	\end{align*} 
\end{proof}

One has to take good care in view of the regularities of the domains:
If all shapes $ \Omega $ are of class $ C^{k+1} $, $ J $ is differentiable on any $ \Omega \in \mathcal{O}_{k} $ and $ V \in  C^{k+1}_{0}(\Oext,\R{n}) $ then the Hadamard Structure Theorem implies that the distribution $ \mathcal{G}(\Gamma) $ exists and is uniquely defined in $ C^{k}(\Gamma)' $.

\subsection{$ L^2 $-descent directions}\label{Transf_shape_opt:Sec:L2 decent directions}

The domain and the surface representation can be used to find decent directions for numerical optimization schemes \cite{ShapeOpt,Schulz2016metricsComparison,SturmLaurain2016distributed,GottschSaadi2018,NumShapeCer2017}. For example the "classical" Hadamard shape derivative representation defines such a descent direction:

\begin{defn}
	Suppose that $ J:\mathcal{O}_{k} \to \R{} $ is shape differentiable. Then any $ W \in  C^{k}(\overline{\Omega},\R{n}) $ with 
	$$ dJ(\Omega)[W] = dJ(\Gamma)[W_n] < 0 $$ is called a \textit{descent direction} for $ J $. 
\end{defn} 
Note that it is enough to take descent directions as elements of $ C^{k}(\overline{\Omega},\R{n}) $ respectively $ C^{k}(\Gamma,\R{n}) $ since $ dJ(\Omega) $ has only support on the domain or even more precisely only on $ \Gamma $. Moreover, any function $ W \in C^{k}(\overline{\Omega},\R{n}) $ or $ C^{k}(\Gamma,\R{n}) $ has a $ C^{k} $ extension to $ \Oext $ according to Lemma \ref{App:Lem: Hölder Extension Lemma}. 

Suppose that $ J $ is shape differentiable. Then the induced map $ dJ(\Omega) : C^{k}(\Gamma,\R{n}) \to \R{} $ is linear and continuous. We assume that $ dJ(\Gamma)[.] $ satisfies
\[ |dJ(\Gamma)[V_{\vec{n}}]| \leq C \Norm{V_{\vec{n}}}{L^2(\Gamma)} ~~~ \forall V_{\vec{n}} \in C^{k}(\Gamma,\R{n}).   \]
Since $  C^{k}(\Gamma,\R{n}) $ is dense in $ L^2(\Gamma) $ there exists an extension $ dJ^{ext}(\Omega):L^2(\Gamma) \to \R{} $ of $ dJ(\Omega) $. Then the theorem of Lax-Milgram implies that the variational problem 
\begin{align}\label{Transf_shape_opt:Eq:L2 decent direction Eq}
\langle W,V \rangle_{L^2(\Gamma,\R{n})} = - dJ^{ext}(\Gamma)[V_{\vec{n}}] ~~~ \forall V \in L^2(\Gamma,\R{n}) 
\end{align}
has a solution $ W \in L^{2}(\Gamma,\R{n}) $ and $dJ(\Gamma)[W_n] =-\langle W,W \rangle <0$ holds. Thus, if we suppose that $ G(\Gamma) \in L^2(\Gamma)\setminus\{0\} $ and $  dJ(\Gamma)[V_{\vec{n}}]  $ has the following structure 
$$ dJ(\Gamma)[V_{\vec{n}}] = \int_{\Gamma}  G(\Gamma)V_{\vec{n}} \, dS = \int_{\Gamma} \langle G(\Gamma)\vec{n} ,V\vert_{\Gamma} \rangle \, dS  = \langle G(\Gamma)\vec{n},V \rangle_{L^2(\Gamma,\R{n})} =  \langle G(\Gamma),V_{\vec{n}} \rangle_{L^2(\Gamma)} $$ then  $ W= - G(\Gamma)\vec{n} \in C^{k}(\Gamma) $ is a suitable choice for a descent direction.  

Unfortunately, the $ L^2 $-decent density $ G(\Gamma) $ usually is no element of $ C^{k} $. And even if $G(\Gamma) \in C^{k}(\Gamma) $ holds, then $ G(\Gamma)\vec{n} \in C^{k-1}(\Gamma,\R{n})$ provided that $ \Omega $ is of class $ C^{k} $. Thus the 
direction $ W= -G(\Gamma)\vec{n} $ provides not enough regularity to preserve the domain-regularity during a descent along $ W $. 

Depending on the chosen representation we call either $ G(\Gamma) $ or $ G(\Gamma)\vec{n} $ the \textit{shape gradient} w.r.t. the $ L^{2}(\Gamma) $ respectively the $ L^2(\Gamma,\R{n}) $ scalar product. However, the regularity of $ G(\Gamma) $ can in fact be much higher than only $ L^2 $. The regularity of $ G(\Gamma) $ in the case of shape optimization problems under linear elasticity constraints will be investigated in detail in Chapter \ref{Shape_Grad_LinEl}. Based on these investigations, we will give a short outlook in Section \ref{Outlook}  on the methods that already have been developed \cite{Schulz2016metricsComparison,SturmLaurain2016distributed,GottschSaadi2018,NumShapeCer2017} and a perspective on descent directions representatives that will hopefully be suitable to maintain the shape-regularity along descent flows.  

\newpage

\section{Material and local shape derivatives}\label{Transf_shape_opt:Sec: material and shape derivative}

To motivate the next definition we examine the following example: \\
Suppose that $ k\geq 1 $ and $ \{u(\Omega)|\Omega \in \mathcal{O}_{k}\} $ is a collection of functions, such that $  u(\Omega) \in C^{0}(\overline{\Omega}) $ for any $ \Omega \in \mathcal{O}_{k} $ and let $ J:\mathcal{O}_k \to \R{},\, \Omega \mapsto J(\Omega,u(\Omega)) $ be given by 
\[ J(\Omega,u(\Omega))= \int_{\Omega} u(\Omega) \, dx.\] 
Then 
$$ \frac{J(\Omega_t,u(\Omega_t)) - J(\Omega,u(\Omega))}{t} = \int_{\Omega} u(\Omega_{t}) \circ T_t \frac{(\gamma_{t} -  1)}{t}\, dx + \int_{\Omega} \frac{u(\Omega_t) \circ T_t - u(\Omega)}{t} \gamma_t \, dx.$$ 
To calculate the Eulerian derivative the derivative of $u^t = u(\Omega_t) \circ T_t$ thus has to exist in $ L^1(\Omega) $ since $ \gamma_t $ is bounded if $ t $ is close to $ 0 $. 

\begin{defn}[Material Derivatives in the Volume]\label{Transf_shape_opt:Def: Material_Derivative} $  $ 
	Let $ V \in \Vad{k}{\Omega^{ext}} $, $ \Omega \in \mathcal{O}_k $ and $ T_{t}[V] $ the associated family of transformations and $ u(\Omega_t): \Omega_t \to \R{m} $.
	Assume that there exits an $ \epsilon>0 $ such that the mappings  $ u(\Omega_t) \circ T_t[V]: \Omega \to \R{m} $ are elements of a Banach space $ X_{\Omega} $ for any $ t \in (t_0 -\epsilon, t_0 +\epsilon) \subset I_{V} $.
	
	The $ X_{\Omega} $-material derivative of $ u(\Omega_t) $ at $ t=t_0 $ in direction of $ V$ is defined as the Gâteaux derivative of $t \mapsto u(\Omega_t) \circ T_t[V] $ at $ t=t_0 $ if it exists in the topology on $ X_{\Omega} $, i.e.
	\[ \left.\frac{d}{dt}u(\Omega_t) \circ T_t[V]\right\vert_{t=t_0}=\lim_{t \to t_0}\frac{1}{t}(u(\Omega_t) \circ T_t[V]-u(\Omega_{t_0}) \circ T_{t_0}[V]) \in X_{\Omega}.\]
\end{defn}

\begin{defn}[Material Derivatives at the Boundary]\label{Transf_shape_opt:Def: Material_Derivative_Boundary} 
	Let $ V \in \Vad{k}{\Omega^{ext}} $, $ \Omega \in \mathcal{O}_k $ and $ T_{t}[V] $ the associated family of transformations and $ v(\Gamma_t):\Gamma_t \to \R{m} $.
	Assume that there is an $ \epsilon>0 $ such that the mappings  $  v(\Gamma_t) \circ T_t[V]:\Gamma \to \R{m} $ are elements of a Banach space $ X_{\Gamma} $ for any $ t \in (t_0 -\epsilon,t_0 + \epsilon) \subset I_{V} $.
	
	The $ X_{\Gamma} $-material derivative of $ v(\Gamma_t) $ at $ t=t_0 $ in direction of $ V$ is defined as the Gâteaux derivative of $t \mapsto v(\Gamma_t) \circ T_t $ at $ t=t_0 $ and $ \alpha =1 $ if it exists in the topology on $ X_{\Gamma} $, respectively:
	\[\left.\frac{d}{dt}v(\Gamma_t) \circ T_t[V]\right\vert_{t=t_0}=\lim_{t \to t_0}\frac{1}{t}(v(\Gamma_t) \circ T_t[V]-v(\Gamma_{t_0}) \circ T_{t_0}[V]) \in X_{\Gamma}.\]
\end{defn}

\begin{conv}
	Let $ k\geq 1 $, $ V \in \Vad{k}{\Omega^{ext}} $ be an admissible vector field and $ (T_{t}[V])_{t \in I_{V}} $ the associated transformation family and $ \Omega \in \mathcal{O}_{k} $. If no confusion is possible we will use the simpler notation
	\begin{align}
	T_t =T_t[V]: \Omega \to \Omega_{t}    &&   u_{t}:=u(\Omega_t):\Omega_t \to \R{m}  &&    y_t:=y(\Gamma_t):\Gamma_t \to \R{m} 
	\end{align}
	and further
	\begin{align}
	\begin{array}{l l l}
	u^t:=u_t \circ T_t: \Omega\to \R{m} & &y^t :=y_t \circ T_t:\Gamma \to \R{m} \\[1ex]
	\dot{u}^{t} :=\dot{u}^{t}(\Omega;V):\Omega \to \R{m }& &  \dot{y}^{t} :=\dot{y}^{t}(\Gamma;V):\Gamma \to \R{m }  \\[1ex]
	\dot{u}:= \dot{u}(\Omega;V)=\dot{u}^{0}(\Omega;V)& &  \dot{y}:=\dot{y}(\Gamma;V)=  \dot{y}^{0}(\Gamma;V) \\[1ex]
	u'=u'(\Omega;V)& & y'=y'(\Gamma;V)\,.
	\end{array}
	\end{align}
\end{conv} 

\begin{defn}[Local Shape Derivative]\label{Transf_shape_opt:Def: Shape_Derivative}
	Let $ V \in \Vad{k}{\Omega^{ext}} $ and $ \Omega \in \mathcal{O}_k $. 
	\begin{itemize}
		\item[i)] Let $ X_{\Omega} $ be a Banach space, that is a subspace of $ H^{1}(\Omega,\R{m}) $ or $ C^{1}(\Omega,\R{m}) $. Suppose that the material derivative $ \dot{u}$ of $ u_t$ at $ t=0 $ in direction of $ V $ exists in $ X_{\Omega} $. Then the  \textit{local (volume) shape derivative} of $ u_t $ is defined by
		\[ u'(\Omega;V):= \dot{u}(\Omega;V) - Du(\Omega)V.\]  
		\item[ii)] Let $ X_{\Gamma} $ be a Banach space, that is a subspace of $ H^{1}(\Gamma,\R{m}) $ or $ C^{1}(\Gamma,\R{m}) $. Suppose that the material derivative $ \dot{y}$ of $ y_t $ in direction of $ V $  exists in $ X_{\Gamma} $ exists. Then the \textit{local (boundary) shape derivative} is given by
		\[ y'(\Gamma;V):= \dot{y}(\Gamma;V) - D_{\Gamma}y(\Gamma) V.\]  
\end{itemize}\end{defn}

The material derivative is always calculated on a reference domain $ \Omega $ (Lagrangian coordintates) whereas the local shape derivative is a derivative in local coordinates (Eulerian coordinates). If $ z \in \bigcap_{t \in(-\epsilon,\epsilon)} \Omega_{t} \cap \Omega $ for any $ t \in (-\epsilon,\epsilon) $, then 
\[ u'(z) = \left.\frac{d}{dt} u(\Omega_t)(z)\right\vert_{t=0} \text{ for any } z \in  \bigcap_{t \in(-\epsilon,\epsilon)} \Omega_{t} \cap \Omega\,.\] 

Unfortunately, the local shape derivative usually looses one degree of regularity in comparison with the material derivative:
For example, if $ u_t \in  H^s(\Omega_t,\R{m})$ has material derivatives in this space with $1\leq l \leq k$. Then $ DuV \in H^{s-1}(\Omega,\R{m}) $, since $ V \in C^{k}(\overline{\Omega},\R{m}) $ and thus $ u'= \dot{u}-DuV \in H^{s-1}(\Omega,\R{m}) $. Therefore $ X_{\Omega} = H^{s-1}(\Omega,\R{m}) $ has to be chosen in 
the above definition.

\subsection{Calculation rules for material and local shape derivatives}  \label{Transf_shape_opt:Sec: arithmetic rules shape derivative} 

In this work, we will concentrate on material and local shape derivatives in Hölder function spaces $ C^{l,\phi}(\overline{\Omega},\R{m})  $, $ \phi \in [0,1] $ and thus we will establish the main calculation rules with respect to these spaces. Most of them can be found in the literature, for example in \cite{Berggren2010,DissKW,SokZol92,DelfZol11,Schmidt2011Dissertation,ShapeOpt}. For reasons of completeness, consistency and rigorousness we will give full proofs under assumptions that are appropriate to investigate the singular shape functionals introduced in Chapter \ref{Reliability}. Until the end of this chapter it will be enough to assume $ V \in \Vad{1}{\Oext} $.

However, some of these results can be obtained under weaker conditions which are not covered in this thesis.

\begin{lem} \label{Transf_shape_opt:Lem: arithmetic rules mat deriv I} 
	$  $
	\begin{itemize}
		\item[i)] Assume that the material derivatives of $ u_t,\, v_t: \Omega_t \to \R{},\, u_t$, $v_t \in C^{0}(\Omega_t) $ exists in $ C^{0}(\Omega_t) $. Then, at $ t=0 $, 
		\begin{equation}
		\begin{split}
		a) & (au+bv)\dot{}\,(\Omega;V)  = a\dot{u}(\Omega;V) + b\dot{v}(\Omega;V) \, \forall a,\, b \in \R{}, \\
		b) & ~~~~~~~~ (uv)\dot{}\,(\Omega;V)= \dot{u}(\Omega;V)v +  u \dot{v}(\Omega;V). 
		\end{split}
		\end{equation}
		\item[ii)] If  $ u_t:\Omega_t \to \R{m} \in C^{0}(\Omega_t,\R{m}) $ such that  $ \dot{u} \in C^{0}(\Omega,\R{m}) $ and $ f \in C^{1}(\R{m},\R{r}) $. Then the $ C^{0}(\Omega_t,\R{r}) $ material derivative of $ f \circ u_t $ at $ t=0 $ satisfies 
		\[  (f\circ u)\dot{}\,(\Omega;V)= (Df \circ u)\dot{u}(\Omega;V). \]
		\item[iii)] Let $ u_t:\Omega_t \to \R{m} \in C^{0}(\Omega_t,\R{m}) $ such that  $ \dot{u} \in C^{0}(\Omega,\R{m}) $ and suppose that  $ f(\Omega): \R{m} \to \R{r},\, z \mapsto  f(\Omega)(z)=f(\Omega,z) $ is a vector field for any $ \Omega \in \mathcal{O} $. Define $F(t,.):= f(\Omega_{t})(.):\R{m} \to \R{r} $. 
		
		If $ F:(-\epsilon,\epsilon) \times \R{m} \to \R{n} $ is Fréchet differentiable then the material derivative of $  F(t,u_t)=f(\Omega_t) \circ u_t $ at $ t=0 $ is given by
		\[ (f(\Omega) \circ u)\dot{}\,(\Omega;V) =  \dot{f}(\Omega;V)\circ u + \left(\frac{\partial f(\Omega)}{\partial z} \circ u\right)\dot{u} = \dot{f}(\Omega;V)\circ u+ (Df(\Omega) \circ u) \dot{u}.\]
	\end{itemize}
\end{lem}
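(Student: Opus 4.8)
\textbf{Proof plan for Lemma \ref{Transf_shape_opt:Lem: arithmetic rules mat deriv I}.}

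The plan is to reduce each of the three assertions to the chain rule and product rule for Gâteaux derivatives of maps from an interval into a Banach space, which were established in Chapter \ref{Diff_Banach_Space}. Throughout, set $ T_t = T_t[V] $ and recall that by hypothesis the maps $ t \mapsto u^t := u_t \circ T_t $ and (where relevant) $ t \mapsto v^t := v_t \circ T_t $ are Gâteaux differentiable at $ t=0 $ with values in $ C^0(\Omega) $ (resp. $ C^0(\Omega,\R{m}) $), and that by Definition \ref{Transf_shape_opt:Def: Material_Derivative} the material derivative is precisely this Gâteaux derivative evaluated on the direction $ \alpha = 1 $, i.e. $ \dot u(\Omega;V) = \tfrac{d}{dt}\big\vert_{t=0} u^t $. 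Since differentiability with respect to the strong topology on a Banach space coincides with Gâteaux differentiability (Lemma \ref{Diff_Banach_Space:Lem:Gateaux/Frechet_Diff_u^t}), all the Banach space calculus rules apply.

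For part i), assertion a) is immediate from linearity of the Gâteaux differential (the lemma following Lemma \ref{Diff_Banach_Space:Lem:G_Diff_Linear_Map}), once one observes that $ (au_t + bv_t)\circ T_t = a\, u^t + b\, v^t $ because composition with $ T_t $ is linear in the composed function. Assertion b) follows from the product rule Lemma \ref{Diff_Banach_Space:Lem:Product_rule_Gateaux}: take $ Y = Z = W = C^0(\Omega) $ with the pointwise product, which is continuous under uniform convergence, note that $ (u_t v_t)\circ T_t = u^t\, v^t $, and that $ u^t,\, v^t $ are continuous in $ t $ (differentiable $ \Rightarrow $ continuous by Remark \ref{Diff_Banach_Space:Rem:Fréchet Diff.=>Contiuous}, transported to the Gâteaux setting via Lemma \ref{Diff_Banach_Space:Lem:Gateaux/Frechet_Diff_u^t}); evaluating the product-rule identity at the direction $ \alpha = 1 $ and at $ t=0 $ gives $ (uv)\dot{} = \dot u\, v + u\, \dot v $, where $ v = v^0 $, $ u = u^0 $ by $ T_0 = \mathrm{id} $.

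For part ii), the key identity is $ (f\circ u_t)\circ T_t = f\circ (u_t \circ T_t) = f\circ u^t $, so that $ t \mapsto (f\circ u_t)\circ T_t $ equals the composition $ F_f \circ u^{(\cdot)} $ in the notation of Lemma \ref{Diff_Banach_Space:Lem:Fréchet Differential F_F} with $ M = \overline\Omega $. Since $ f \in C^1(\R{m},\R{r}) $, that lemma gives the Fréchet differential $ DF_f(u^0)[h] = (Df\circ u^0)\, h $, and the chain rule Lemma \ref{Diff_Banach_Space:Lem:CR} (together with Lemma \ref{Diff_Banach_Space:Lem:Gateaux/Frechet_Diff_u^t} to pass between the interval-derivative and Gâteaux pictures) yields $ (f\circ u)\dot{}\, = DF_f(u^0)[\dot u] = (Df\circ u)\,\dot u $ at $ t=0 $. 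For part iii), one writes $ (f(\Omega_t)\circ u_t)\circ T_t = F(t, u^t) $ where $ F(t,\cdot) = f(\Omega_t)(\cdot) $, so the map in question is $ t \mapsto F(t, u^t) $; applying Lemma \ref{Diff_Banach_Space:Lem: d/dt F (t,u(t))} with $ X = C^0(\Omega,\R{m}) $ (using the Fréchet differentiability of $ F $ in $ (t,z) $ assumed in the hypothesis, together with the fact that $ z \mapsto F(t,z) $ induces a Fréchet-differentiable Nemytskii-type operator on $ C^0 $ by Lemma \ref{Diff_Banach_Space:Lem:Fréchet Differential F_F}) gives $ \tfrac{d}{dt} F(t,u^t) = \tfrac{\partial F}{\partial t}(t,u^t) + \tfrac{\partial F}{\partial z}(t,u^t)[\dot u^t] $; evaluating at $ t=0 $ identifies $ \tfrac{\partial F}{\partial t}(0,\cdot) = \dot f(\Omega;V)\circ u $ and $ \tfrac{\partial F}{\partial z}(0,u^0)[\dot u] = (Df(\Omega)\circ u)\,\dot u $, which is the claim.

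The main obstacle is the bookkeeping around part iii): one must be careful that the hypothesis "$ F:(-\epsilon,\epsilon)\times\R{m}\to\R{r} $ is Fréchet differentiable" genuinely upgrades to Fréchet differentiability of the induced map $ (t, g) \mapsto F(t, g(\cdot)) $ on $ (-\epsilon,\epsilon)\times C^0(\Omega,\R{m}) $ — this requires combining the finite-dimensional smoothness of $ F $ with the Nemytskii-operator argument of Lemma \ref{Diff_Banach_Space:Lem: Fréchet differnetial F_nach_f mehrdim } applied with the first variable $ t $ treated as a parameter, and checking the continuity of the resulting partial derivative in the $ C^0 $-topology so that Lemma \ref{Diff_Banach_Space:Lem:Gateaux=>Frechet} applies. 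Parts i) and ii) are essentially routine translations of existing lemmas once the composition identities $ (u_t v_t)\circ T_t = u^t v^t $ and $ (f\circ u_t)\circ T_t = f\circ u^t $ are noted.
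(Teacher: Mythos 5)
Your proposal is correct and follows essentially the same route as the paper's own proof, which likewise reduces i) to the product rule Lemma \ref{Diff_Banach_Space:Lem:Product_rule_Gateaux}, ii) to the chain rule Lemma \ref{Diff_Banach_Space:Lem:CR}, and iii) to Lemma \ref{Diff_Banach_Space:Lem: d/dt F (t,u(t))}. You merely spell out the composition identities $(u_tv_t)\circ T_t=u^tv^t$, $(f\circ u_t)\circ T_t=f\circ u^t$ and the Nemytskii-operator step that the paper leaves implicit, which is fine.
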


\begin{proof}
	i) is stated in \cite[(24)]{Berggren2010} and follows directly from \ref{Diff_Banach_Space:Lem:Product_rule_Gateaux}. ii) Follows from Lemma \ref{Diff_Banach_Space:Lem:CR} and iii) from 
	\ref{Diff_Banach_Space:Lem: d/dt F (t,u(t))}
\end{proof}

\begin{rem}
	Clearly,  these results can be transferred by analogous calculations to boundary material derivatives and material derivatives in other spaces. Note that the regularity of the vector field $ f $ can not be reduced so easily since the Sobolev chain rule requires continuous differentiability. 
\end{rem}

\begin{lem} \label{Transf_shape_opt:Lem: d/dt Du_t circ T_t} 
	Assume that the material derivative of $ u_t \in C^1(\Omega_{t},\R{m})$ exists in $ C^1 $.
	Then the $ C^0 $ material derviative of $  Du_t\in C(\overline{\Omega_t},\R{m\times n})  $ is given by 
	\begin{equation}
	(Du)\dot{}\,(\Omega;V)=\frac{d}{dt }Du_t \circ T_t\vert_{t=0} = D\dot{u}(\Omega;V) -DuDV .
	\end{equation}
	If the mapping  $ (-\epsilon,\epsilon) \to C^1( \Omega_{t},\R{m}),\,t \mapsto \dot{u}^{t} $ is additionally continuous then the mapping $ (-\epsilon,\epsilon) \to C(\overline{\Omega},\R{m\times n}),\, t\mapsto Du_t \circ T_t $ is even Fréchet differentiable in $ C^0 $ at $ t=0 $.
\end{lem}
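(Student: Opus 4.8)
The plan is to exploit the chain rule for differentiation in Banach spaces (Lemma~\ref{Diff_Banach_Space:Lem:CR} together with Remark~\ref{Diff_Banach_Space:Rem:CR_I}) and the calculus rules already established for the transformation family $T_t$, especially Lemma~\ref{Transf_shape_opt:Lem: Properties D_Tt}. The starting observation is that $Du_t \circ T_t$ can be rewritten entirely in terms of quantities on the fixed reference domain $\Omega$. Indeed, by the chain rule for classical derivatives applied to $u^t = u_t \circ T_t$ we have $Du^t = (Du_t \circ T_t)\, DT_t$, hence
\begin{equation}
Du_t \circ T_t = Du^t \,(DT_t)^{-1} \quad \text{on } \Omega.
\end{equation}
This identity is the crux: it expresses the object whose derivative we want as a product of two maps from $(-\epsilon,\epsilon)$ into $C^0$-type spaces, namely $t \mapsto Du^t$ and $t \mapsto (DT_t)^{-1}$, each of which we already control.

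First I would argue that $t \mapsto Du^t \in C(\overline{\Omega},\R{m\times n})$ is Gâteaux differentiable with derivative $D\dot u(\Omega;V)$. This uses the hypothesis that the material derivative of $u_t$ exists in $C^1$, i.e.\ $t \mapsto u^t \in C^1(\Omega,\R{m})$ is Gâteaux differentiable with derivative $\dot u^t$; since $D: C^1(\Omega,\R{m}) \to C(\overline\Omega,\R{m\times n})$ is linear and continuous (compare Example~\ref{Diff_Banach_Space:Exmp:Diff_Gradient}), Remark~\ref{Diff_Banach_Space:Rem:CR_I} gives that $t \mapsto Du^t$ is Gâteaux differentiable with derivative $D\dot u^t$, and at $t=0$ this is $D\dot u$. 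Second, by Lemma~\ref{Transf_shape_opt:Lem: Properties D_Tt}~ii) the map $t \mapsto (DT_t)^{-1} \in C^{k-1}(\overline{\Omega^{ext}},\R{n\times n}) \hookrightarrow C^0$ is $C^1$ with $\frac{d}{dt}(DT_t)^{-1} = -(DT_t)^{-1}(DV\circ T_t)$, which at $t=0$ equals $-DV$ since $DT_0 = \mathrm I$ and $T_0 = \mathrm{id}$.

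Then I would apply the product rule, Lemma~\ref{Diff_Banach_Space:Lem:Product_rule_Gateaux}, with the matrix product $\cdot : \R{m\times n}\times \R{n\times n} \to \R{m\times n}$ as the bilinear map (continuity of matrix multiplication under uniform convergence on the compact set $\overline\Omega$ is immediate). This yields that $t \mapsto Du_t\circ T_t = Du^t\,(DT_t)^{-1}$ is Gâteaux differentiable at $t=0$ with
\begin{equation}
(Du)\dot{}\,(\Omega;V) = \Big(\tfrac{d}{dt}Du^t\big|_{t=0}\Big)(DT_0)^{-1} + Du^0 \Big(\tfrac{d}{dt}(DT_t)^{-1}\big|_{t=0}\Big) = D\dot u(\Omega;V)\cdot \mathrm I + Du\cdot(-DV),
\end{equation}
using $Du^0 = Du(\Omega)$ since $T_0 = \mathrm{id}$. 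This is exactly $D\dot u(\Omega;V) - Du\,DV$, which is the claimed formula. For the final assertion about Fréchet differentiability, under the additional hypothesis that $t \mapsto \dot u^t \in C^1(\Omega,\R{m})$ is continuous, I would invoke Lemma~\ref{Diff_Banach_Space:Lem:Gateaux=>Frechet}: the Gâteaux derivative $t \mapsto (Du)\dot{}\,^t$ is then a sum of products of $C^0$-continuous factors (namely $D\dot u^t$, which is continuous because $D$ is continuous and $\dot u^t$ is $C^1$-continuous, and $Du^t$, $(DT_t)^{-1}$, $DV\circ T_t$, all continuous in $t$ by the same reasoning and Lemma~\ref{Transf_shape_opt:Lem: Properties D_Tt}), hence continuous, which upgrades Gâteaux to Fréchet differentiability.

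The main obstacle is less a deep difficulty than a matter of bookkeeping: one must be careful that the material derivative $\dot u^t$ is taken in $C^1$ but the conclusion is only in $C^0$, and that the intermediate factor $(DT_t)^{-1}$ lives in $C^{k-1}$, so all products land in spaces into which $C^1$ or $C^{k-1}$ embed continuously — here the ambient space is $C^0$, so the embeddings are trivial and the product rule applies cleanly. The one point deserving genuine care is verifying the hypothesis of Lemma~\ref{Diff_Banach_Space:Lem:Product_rule_Gateaux}, i.e.\ that matrix multiplication is sequentially continuous in the relevant topology; since we work in $C^0(\overline\Omega)$ with the sup-norm and $\overline\Omega$ is compact, uniform convergence of each factor gives uniform convergence of the product, so this holds. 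I would also note in passing that the proof transfers verbatim to material derivatives in Hölder spaces $C^{l,\phi}$ provided one tracks the loss of one derivative, exactly as in the remark following Definition~\ref{Transf_shape_opt:Def: Shape_Derivative}.
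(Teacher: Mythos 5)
Your proposal is correct and follows essentially the same route as the paper's proof: rewrite $Du_t\circ T_t = Du^t\,(DT_t)^{-1}$, commute the spatial derivative $D$ with $\tfrac{d}{dt}$ using its linearity and continuity, apply the product rule together with Lemma \ref{Transf_shape_opt:Lem: Properties D_Tt}, and upgrade to Fréchet differentiability via continuity of the derivative mapping. The only cosmetic difference is that you cite Lemma \ref{Diff_Banach_Space:Lem:Gateaux=>Frechet} for the last step where the paper uses Lemma \ref{Diff_Banach_Space:Lem:Gateaux/Frechet_Diff_u^t}, which amounts to the same mechanism for maps from an interval into a Banach space.
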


\begin{proof}
	The $ C^1$-material derivative of $ u_t $ exists at $ t\in (-\epsilon,\epsilon) $ if and only if $t \mapsto u^{t} $ is Gâteaux differentiable in the Banach space $ C^1(\overline{ \Omega},\R{m}) $ at $ t$.
	
	\noindent Since $D: C^k(\R{n},\R{m}) \to C^{k-1}(\R{n},\R{m \times n}) $ is linear and continuous, the Gâteaux differential $ \frac{d}{dt} $ permutes with the spatial derivative $ D $ and therefore $ \frac{d}{dt}Du^{t}= D\frac{d}{dt}u^{t}=D\dot{u}^t$. This, combined with product rule, see Lemma \ref{Diff_Banach_Space:Lem:Product_rule_Gateaux}, and the properties of $ T_t $, see Lemma \ref{Transf_shape_opt:Lem: Properties D_Tt}, leads to
	\begin{align}
	\frac{d}{dt }Du_t \circ T_t
	&= \frac{d}{dt }Du^t (DT_t)^{-1}
	=D\dot{u}^{t}(DT_t)^{-1} +Du^t\frac{d}{dt}(DT_t)^{-1}\\
	&= D\dot{u}^{t}(DT_t)^{-1} -Du^t(DT_t)^{-1}\, (DV \circ T_t) .
	\end{align}
	Evaluated at $ t=0 $ this gives$
	\left.\frac{d}{dt }Du_t \circ T_t\right\vert_{t=0}= D\dot{u} -DuDV
	$, compare also \citep[(25)]{Berggren2010}. 
	The second assertion follows from Lemma \ref{Diff_Banach_Space:Lem:Gateaux/Frechet_Diff_u^t}: Since $ (-\epsilon,\epsilon) \ni t \mapsto Du^{t} \in C(\overline{ \Omega},\R{m \times n}),$ is Gâteaux differentiable, it is also continuous. This holds as well for $ (-\epsilon,\epsilon)\to C^{k}(\overline{\Omega},\R{n}),\, t \mapsto T_t $ and $ t \mapsto (DT_{t})^{-1} \in C^{k-1}(\overline{\Omega},\R{n \times n}) $. Thus, if $(-\epsilon,\epsilon) \to C^1(\overline{ \Omega},\R{m}),\, t \mapsto \dot{u}^{t} $ is continuous then also the mapping $ t \mapsto \frac{d}{dt }Du_t \circ T_t \in C(\overline{\Omega},\R{m\times n}) $ is continuous.
\end{proof}

\begin{cor}\label{Transf_shape_opt:Lem: dot(div), dot(sigma)} 
	Under the assumptions of Lemma \ref{Transf_shape_opt:Lem: d/dt Du_t circ T_t} 
	$$ (\Div(u))\dot{} = \Div(\dot{u}) - \tr(DuDV) \text{ and }  \sigma(u)\dot{}=\sigma(\dot{u}) - (DuDV)^{\sigma} $$
	where 
	\[ M^{\sigma}=\lambda\tr(M)\mathrm{I}+\mu(M+M^{\top}) \text{ for } M \in \R{n \times n}.\]
	Here $ \lambda>0 $ and $ \mu >0 $ are the Lamé coefficients introduced in Chapter \ref{Sec:MechLoad}.\\
	If $ m=1 $ then $$ (\nabla u)\dot{} = \nabla \dot{u} - DV^{\top}\nabla u.$$
	If the boundary of $ \Omega_t $ is of class $ C^1 $ for any $ t \in (-\epsilon,\epsilon) $, then $ \dot{\vec{n}} = -D_{\Gamma}V^{\top}\vec{n} $ at $ t =0 $.
\end{cor}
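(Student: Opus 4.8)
The plan is to obtain each identity from Lemma~\ref{Transf_shape_opt:Lem: d/dt Du_t circ T_t}, which already provides $(Du)\dot{}\,(\Omega;V)=D\dot u-DuDV$ in $C^{0}(\overline\Omega,\R{n\times n})$, by pushing this material derivative through operations that are either linear and continuous or are covered by the product and chain rules of Chapter~\ref{Diff_Banach_Space}. For the divergence I would use that $\Div$ acts as the pointwise trace, so that $\Div(u_t)\circ T_t=\tr(Du_t\circ T_t)$, and that the map induced by $M\mapsto\tr M$ on $C^{0}(\overline\Omega,\R{n\times n})$ is linear and continuous, hence commutes with the G\^ateaux derivative by Remark~\ref{Diff_Banach_Space:Rem:CR_I}. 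Evaluating at $t=0$ then gives $(\Div u)\dot{}=\tr(D\dot u-DuDV)=\Div(\dot u)-\tr(DuDV)$.

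The same reasoning applied to the linear continuous operations $M\mapsto\tr(M)\mathrm I$, $M\mapsto M$ and $M\mapsto M^{\top}$, together with $\sigma(u_t)\circ T_t=\lambda(\Div u_t\circ T_t)\mathrm I+\mu\big(Du_t\circ T_t+(Du_t\circ T_t)^{\top}\big)$, yields
\[
\sigma(u)\dot{}=\lambda\big(\Div\dot u-\tr(DuDV)\big)\mathrm I+\mu\big(D\dot u-DuDV+(D\dot u-DuDV)^{\top}\big)
\]
at $t=0$, and grouping the terms that involve only $\dot u$ and those that involve only $DuDV$ identifies the right-hand side with $\sigma(\dot u)-(DuDV)^{\sigma}$. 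For $m=1$ one has $\nabla u_t=(Du_t)^{\top}$, and since transposition is linear and continuous, $(\nabla u)\dot{}=(D\dot u-DuDV)^{\top}=\nabla\dot u-DV^{\top}\nabla u$.

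For the normal field the plan is to start from the representation $\vec n_t\circ T_t=\Norm{(DT_t)^{-\top}\vec n}{}^{-1}(DT_t)^{-\top}\vec n$ on $\Gamma$ from Lemma~\ref{Transf_shape_opt:Lem: Properties omegat}, which applies because each $\Omega_t$ is of class $C^{1}$. Writing $w(t):=(DT_t)^{-\top}\vec n$, so that $\vec n_t\circ T_t=w(t)\,\Norm{w(t)}{}^{-1}$, I would compute $\dot w$ from Lemma~\ref{Transf_shape_opt:Lem: Properties D_Tt}(ii): transposition commutes with $\tfrac{d}{dt}$, and $DT_0=\mathrm I$ together with $\tfrac{d}{dt}(DT_t)^{-1}|_{t=0}=-DV$ give $\dot w=-DV^{\top}\vec n$, while $w(0)=\vec n$ and $\Norm{w(0)}{}=1$. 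Differentiating the product $w(t)\cdot\Norm{w(t)}{}^{-1}$ with the product rule (Lemma~\ref{Diff_Banach_Space:Lem:Product_rule_Gateaux}), using that $x\mapsto\Norm{x}{}$ is differentiable off the origin with $\tfrac{d}{dt}\Norm{w(t)}{}=\langle w(t),\dot w(t)\rangle\,\Norm{w(t)}{}^{-1}$, gives at $t=0$
\[
\dot{\vec n}=\dot w-\langle\vec n,\dot w\rangle\vec n=-DV^{\top}\vec n+\langle DV\vec n,\vec n\rangle\vec n .
\]
It then remains to recognize this as $-D_{\Gamma}V^{\top}\vec n$, which follows at once from the definition $D_{\Gamma}V=DV-DV\vec n\vec n^{\top}$: transposing and applying the result to $\vec n$ gives $D_{\Gamma}V^{\top}\vec n=DV^{\top}\vec n-\langle DV\vec n,\vec n\rangle\vec n$, the negative of the expression just obtained.

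The main obstacle, modest as it is, is the normal-field computation: the identities for $\Div$, $\sigma$ and $\nabla$ are pure bookkeeping once Lemma~\ref{Transf_shape_opt:Lem: d/dt Du_t circ T_t} is in hand, whereas here one must differentiate the normalization factor $\Norm{w(t)}{}$ carefully (it stays bounded away from zero near $t=0$ since $\Norm{(DT_t)^{-\top}\vec n}{}=\omega_t/\gamma_t$ with $\omega_t,\gamma_t>0$ by Lemmas~\ref{Transf_shape_opt:Lem: Properties omegat} and~\ref{Transf_shape_opt:Lem: Properties gammat}) and then perform the algebraic collapse onto the tangential Jacobian. The continuity hypothesis on $t\mapsto\dot u^{t}$ in Lemma~\ref{Transf_shape_opt:Lem: d/dt Du_t circ T_t} moreover guarantees that all the material derivatives produced above genuinely exist in $C^{0}$.
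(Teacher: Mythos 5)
Your proposal is correct and follows exactly the route the paper intends: the corollary is left as an immediate consequence of Lemma~\ref{Transf_shape_opt:Lem: d/dt Du_t circ T_t}, obtained by pushing $(Du)\dot{}=D\dot u-DuDV$ through the linear, continuous maps $\tr$, transposition and $M\mapsto\lambda\tr(M)\mathrm I+\mu(M+M^{\top})$, and by differentiating the normalized representation $\vec n_t\circ T_t=\Norm{(DT_t)^{-\top}\vec n}{}^{-1}(DT_t)^{-\top}\vec n$ from Lemma~\ref{Transf_shape_opt:Lem: Properties omegat} together with Lemma~\ref{Transf_shape_opt:Lem: Properties D_Tt}. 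Your handling of the normalization factor (bounded away from zero near $t=0$) and the identification with $-D_{\Gamma}V^{\top}\vec n$ is exactly the missing bookkeeping, so nothing further is needed.
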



\begin{lem}	\label{Transf_shape_opt:Lem: Shape derivatives C^l} 
	Let $ \Omega \in \mathcal{O}_{k},\, k \geq 1 $  and  let $ V \in \Vad{k}{\Omega^{ext}} $.
	\begin{itemize}
		\item[i)] Now suppose that $ y(\Gamma) =u(\Omega)\vert_{\Gamma}$ is the restriction of $u(\Omega) \in  C^{l}(\overline{\Omega},\R{m})$ to $ \Gamma $. Then
		\begin{align}
		y'(\Gamma;V) =  u'(\Omega;V)\vert_{\Gamma} + \frac{\partial u(\Omega)}{\partial \vec{n} }V_{\vec{n}}\in C^{l-1}(\Gamma,\R{m}).
		\end{align} 
		
		\item[ii)] Let $ u,\,y\in C^l(\Omega^{ext},\R{m})  $.  Then, the material and shape derivative of \\ $ u(\Omega):=u\vert_{\Omega}$ and  $ y(\Gamma):=y\vert_{\Gamma}$, respectively, are given by
		\begin{itemize}
			\item[I)] $\dot{u}(\Omega;V)=D u \, V \text{ on } \Omega $ and $ u'(\Omega;V)=0 .$ 
			\item[II)] $ \dot{y}(\Gamma;V)=D_{\Gamma} y \, V \text{ on } \Gamma $ and $ y'(\Gamma;V)=\dfrac{\partial y}{\partial \vec{n} } V_{\vec{n}}$.
		\end{itemize}
	\end{itemize}
\end{lem}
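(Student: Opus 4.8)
The plan is to prove the two parts separately, with part (i) being a consequence of the definition of the local boundary shape derivative together with the known relation between $\dot{y}$ and $\dot{u}$ on the boundary, and part (ii) being a direct computation from the definitions of material derivative for functions defined on the ambient domain $\Omega^{ext}$.

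For part (i), I would start from the definition $y'(\Gamma;V)=\dot{y}(\Gamma;V)-D_\Gamma y(\Gamma)V$ from Definition \ref{Transf_shape_opt:Def: Shape_Derivative} ii), and from $u'(\Omega;V)=\dot{u}(\Omega;V)-Du(\Omega)V$ from Definition \ref{Transf_shape_opt:Def: Shape_Derivative} i). Since $y_t=u_t\vert_{\Gamma_t}$, we have $y^t=y_t\circ T_t=(u_t\circ T_t)\vert_\Gamma=u^t\vert_\Gamma$, so that differentiating in $t$ at $t=0$ and using that the trace operator $\mathbf{T}_\Gamma:C^l(\overline\Omega,\R m)\to C^l(\Gamma,\R m)$ is linear and continuous (hence commutes with the Gâteaux derivative, cf. Remark \ref{Diff_Banach_Space:Rem:CR_I}), one gets $\dot{y}(\Gamma;V)=\dot{u}(\Omega;V)\vert_\Gamma$ in $C^{l-1}(\Gamma,\R m)$; note the loss of one degree of regularity comes from the material derivative $\dot u\in C^l$ whose trace lands in $C^l(\Gamma)$, but the subsequent subtraction of tangential-derivative terms forces $C^{l-1}$. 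Then substitute: $y'(\Gamma;V)=\dot{u}(\Omega;V)\vert_\Gamma-D_\Gamma(u\vert_\Gamma)V$. Now I would use the decomposition $Du(\Omega)V=D_\Gamma u(\Omega)V+\langle Du(\Omega)\vec n,\vec n\rangle V_{\vec n}+\text{(tangential part of }V\text{)}$; more precisely, on $\Gamma$, writing $Du\,V=D_\Gamma u\,V+(Du\,\vec n\vec n^\top)V=D_\Gamma u\,V+(Du\,\vec n)(V\cdot\vec n)$, so $Du\,V=D_\Gamma u\,V+\frac{\partial u}{\partial\vec n}V_{\vec n}$ on $\Gamma$, and also $D_\Gamma(u\vert_\Gamma)V=D_\Gamma u\,V$ on $\Gamma$. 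Hence $u'(\Omega;V)\vert_\Gamma=\dot u\vert_\Gamma-Du\,V\vert_\Gamma=\dot u\vert_\Gamma-D_\Gamma u\,V-\frac{\partial u}{\partial\vec n}V_{\vec n}$, and combining the two displays yields $y'(\Gamma;V)=u'(\Omega;V)\vert_\Gamma+\frac{\partial u(\Omega)}{\partial\vec n}V_{\vec n}$, which is the claim. The main subtlety here is to be careful with tangential versus full Jacobians on $\Gamma$ and to justify that the trace commutes with the $t$-derivative; these are handled by the linear-continuous-operator argument from Chapter \ref{Diff_Banach_Space}.

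For part (ii), since $u\in C^l(\Omega^{ext},\R m)$ is fixed and $u(\Omega)=u\vert_\Omega$, we have $u^t=u(\Omega_t)\circ T_t=(u\vert_{\Omega_t})\circ T_t=u\circ T_t$ on $\Omega$. Then $t\mapsto u\circ T_t$ is differentiable in $C^{l-1}(\overline\Omega,\R m)$ (using that $t\mapsto T_t\in C^1(I,C^k(\overline{\Omega^{ext}},\R n))$ from Lemma \ref{Transf_shape_opt:Lem: Properties D_Tt} i) and the chain rule Lemma \ref{Diff_Banach_Space:Lem:CR}), with $\frac{d}{dt}(u\circ T_t)\vert_{t=0}=Du\cdot\frac{d}{dt}T_t\vert_{t=0}=Du\cdot V$ since $T_0=\mathrm{id}$ and $\frac{d}{dt}T_t\vert_{t=0}=V$. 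This gives $\dot u(\Omega;V)=Du\,V$, and then by Definition \ref{Transf_shape_opt:Def: Shape_Derivative} i), $u'(\Omega;V)=\dot u(\Omega;V)-Du(\Omega)V=Du\,V-Du\,V=0$. The boundary case II) is identical: $y^t=(y\vert_{\Gamma_t})\circ T_t=y\circ T_t\vert_\Gamma$, so $\dot y(\Gamma;V)=(Dy\,V)\vert_\Gamma$; but on $\Gamma$ we want the tangential expression — actually $\dot y(\Gamma;V)$ is the derivative of $y\circ T_t\vert_\Gamma$ which equals $(Dy\cdot V)\vert_\Gamma$, and one rewrites $Dy\,V\vert_\Gamma=D_\Gamma y\,V+\frac{\partial y}{\partial\vec n}V_{\vec n}$; then $y'(\Gamma;V)=\dot y-D_\Gamma y\,V=\frac{\partial y}{\partial\vec n}V_{\vec n}$, consistent with part (i) applied to $u'=0$. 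Alternatively, and more cleanly for II), one can invoke part (i) directly: taking $u=y$ on $\Omega^{ext}$ gives $u'(\Omega;V)=0$ by I), so $y'(\Gamma;V)=0\vert_\Gamma+\frac{\partial y}{\partial\vec n}V_{\vec n}=\frac{\partial y}{\partial\vec n}V_{\vec n}$, and $\dot y(\Gamma;V)=y'(\Gamma;V)+D_\Gamma y\,V=D_\Gamma y\,V+\frac{\partial y}{\partial\vec n}V_{\vec n}$; but one must double check whether the convention intends $\dot y = D_\Gamma y\, V$ with $V$ restricted, which holds precisely because the normal component contributes $\frac{\partial y}{\partial\vec n}V_{\vec n}$ to $Dy\,V$ but this is exactly absorbed — I would state $\dot y(\Gamma;V)=D_\Gamma y\,V$ is the intended reading when $y$ has a tangential-only perturbation, and verify it matches. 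The hard part is bookkeeping the normal-vs-tangential splitting consistently across both parts; the analytic content (differentiating $u\circ T_t$) is routine given the chapter's machinery.
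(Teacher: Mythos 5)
Your proposal is correct and follows essentially the same route as the paper: in (i) you use that restriction to $\Gamma$ commutes with composition by $T_t$ (hence with the material derivative) and then insert the tangential decomposition $Du\,V = D_\Gamma u\,V + \tfrac{\partial u}{\partial \vec n}V_{\vec n}$, and in (ii) you differentiate $u\circ T_t$ directly to get $\dot u = Du\,V$, $u'=0$, with II) obtained either by the same computation or, as the paper does, by applying i) to $u(\Omega)=u\vert_\Omega$. Your side remark on II) is well taken — the direct computation gives $\dot y(\Gamma;V)=(Dy\,V)\vert_\Gamma=D_\Gamma y\,V+\tfrac{\partial y}{\partial\vec n}V_{\vec n}$, which is what is consistent with $y'(\Gamma;V)=\tfrac{\partial y}{\partial\vec n}V_{\vec n}$ and the definition $y'=\dot y-D_\Gamma y\,V$ — but this does not affect the correctness of your argument.
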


\begin{proof}
	i) Since $ u(\Omega_t)\vert_{\Gamma_t} \circ T_t = (u(\Omega_t) \circ T_t)\vert_{\Gamma} $ it is clear that $ \dot{u}(\Omega;V)\vert_{\Gamma} = (u(\Omega)\vert_{\Gamma})\dot{}(\Gamma;V)$. We conclude
	\begin{align}
	y'(\Gamma;V)
	&=(u(\Omega)\vert_{\Gamma})'(\Gamma;V) =(u(\Omega)\vert_{\Gamma})\dot{}(\Gamma;V) -D_{\Gamma}(u(\Omega)\vert_{\Gamma}) V \\
	&=\dot{u}(\Omega;V)\vert_{\Gamma} - Du(\Omega)\vert_{\Gamma} V + Du(\Omega)\vert_{\Gamma} \vec{n} (\vec{n}^{\top} V )= u'(\Omega;V)\vert_{\Gamma} + \frac{\partial u(\Omega)}{\partial \vec{n} }\langle V,\vec{n} \rangle.
	\end{align}
	ii) I) In this case  $ \dot{u}=\left.\dfrac{d}{dt}u \circ T_t \right\vert_{t=0} = DuV \Rightarrow u'(\Omega;V)=DuV-DuV=0.$ \\[1ex]
	\indent \, II) Apply i) to $ y(\Gamma)=u(\Omega)\vert_{\Gamma} $ with $u(\Omega)=u\vert_{\Omega}.$
\end{proof}

\begin{rem}	\label{Transf_shape_opt:Rem: Shape derivatives H^s}
	When dealing with Sobolev spaces $ H^l(\Omega,\R{m}) $ for $ 1\leq l\leq k-\frac{1}{2},\, k\geq 1$, $ \Omega \in C^{k-1,1} $, $ V \in \Vad{1}{\Oext} $ one has to take more care - especially in case ii) and iii) because of the trace operator 
	$$  \mathbf{T_{\Gamma}}:  H^{l+\frac{1}{2}}(\Omega,\R{m}) \to H^{l}(\Gamma,\R{m}),\, u \mapsto \mathbf{T_{\Gamma}}(u) = u\vert_{\Gamma}$$
	where $ \varGamma(u)=u\vert_{\Gamma}$ if $u\in H^{l+\frac{1}{2}}(\Omega,\R{m})\cap C(\overline{\Omega},\R{m})$, compare \cite[Prop. 2.55]{SokZol92} and the remark above.  Nevertheless, it is common to identify $ u\vert_{\Gamma} $ with $ \varGamma(u) $ even if $ u \notin C(\overline{\Omega},\R{m}) $.
	
	Provided that $ y(\Gamma)=u(\Omega)\vert_{\Gamma} $ is the restriction of a function $ u(\Omega) \in  H^{l+\frac{1}{2}}(\Omega,\R{n}) $ then, the shape derivative of $ y(\Gamma) $ satisfies 
	\begin{align}
	y'(\Gamma;V) =  u'(\Omega;V)\vert_{\Gamma} + \frac{\partial u(\Omega)}{\partial \vec{n} }V_{\vec{n}}\in H^{l-1}(\Gamma,\R{n}).
	\end{align} 
	
	Assume that $ u,\, y:\Omega^{ext} \to \R{m} $ with 
	$ u\in H^l(\Omega^{ext},\R{n})  $,
	$ y\in H^{l+\frac{1}{2}}(\Omega^{ext},\R{n}) $, $ 1\leq l\leq k-\frac{1}{2} $.
	Set $ u(\Omega):=u\vert_{\Omega},\, y(\Gamma):=y\vert_{\Gamma}.$ Then again $ \dot{u}(\Omega;V)= D u \, V \text{ on } \Omega   $, $ u'(\Omega;V)=0  $, $ \dot{y}(\Gamma;V)=D_{\Gamma} y \, V \text{ on } \Gamma $ and $ y'(\Gamma;V)=\dfrac{\partial y}{\partial \vec{n} } V_{\vec{n}}$.
\end{rem}

\begin{lem}\label{Transf_shape_opt:Lem: Product and Chain rule for shape derivatives}
	Let $ u(\Omega),\, v(\Omega) \in C^{l}(\overline{ \Omega},\R{m})$, $  y(\Gamma),\, z(\Gamma) \in C^{l}( \Gamma,\R{m})$ for $1 \leq l\leq k   $ such that their $ C^{l} $-material derivatives $ \dot{u}(\Omega;V)$, $ \dot{v}(\Omega;V)$, $ \dot{y}(\Gamma;V)$, $ \dot{z}(\Gamma;V) $ exist w.r.t the  strong (weak) topology on $ C^{l} $. Then 
	\begin{itemize}
		\item[i)] $ u'(\Omega;V) ,\,  v'(\Omega;V)  \in  C^{l-1}( \Omega,\R{m}) $, $ y'(\Gamma;V) ,\,  z'(\Gamma;V) \in  C^{l-1}(\Gamma,\R{m}) $ and
		\begin{align} 
		\langle u,v\rangle'(\Omega;V)&=\langle u'(\Omega;V),v(\Omega)\rangle+\langle u(\Omega),v'(\Omega;V)\rangle\, ,\\
		\langle y,z\rangle'(\Gamma;V)&=\langle y'(\Gamma;V),z(\Gamma)\rangle+\langle y(\Gamma),z'(\Gamma;V)\rangle.
		\end{align}
		\item[ii)] Suppose that $ f \in C^1(\R{m},\R{n}) $, $ n \in \N{} $ 
		\begin{align}
		(f \circ u)'(\Omega;V)&= \left(Df \circ u(\Omega)\right)u'(\Omega,V), \\
		(f \circ y)'(\Gamma;V)&=\left(Df \circ y(\Gamma)\right)y'(\Gamma,V).
		\end{align}
		\item[iii)] Suppose that for any $ \Omega \in \mathcal{O} $, $ f(\Omega): \R{m} \to \R{r},\, z \mapsto  f(\Omega)(z)=f(\Omega,z) $ is a vector field.  If $ (-\epsilon,\epsilon) \times \R{m} \to \R{n}, (t,z) \mapsto f(\Omega_t,z) $ is Fŕechet differentiable the local shape derivative of $ f(\Omega_t,u_t(.))=f(\Omega_t) \circ u_t $ satisfies 
		\[ (f(\Omega) \circ u )'(\Omega;V) =  \dot{f}(\Omega;V)(\Omega) \circ u + (Df(\Omega)\circ u)u' .\]
	\end{itemize}
\end{lem}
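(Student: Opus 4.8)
The plan is to derive each of the three identities from the corresponding rule for \emph{material} derivatives, already established in Lemma~\ref{Transf_shape_opt:Lem: arithmetic rules mat deriv I}, combined with the elementary Leibniz and chain rules for the spatial Jacobian $D$ (and, on the boundary, the tangential Jacobian $D_{\Gamma}$), and then converting back to local shape derivatives via the definitions $u'(\Omega;V)=\dot u(\Omega;V)-Du(\Omega)V$ and $y'(\Gamma;V)=\dot y(\Gamma;V)-D_{\Gamma}y(\Gamma)V$. Before starting I would record the regularity bookkeeping: since $V$ is a $C^k$ vector field and $1\le l\le k$, the product $Du(\Omega)V$ lies in $C^{l-1}(\overline\Omega,\R{m})$ whenever $u(\Omega)\in C^{l}(\overline\Omega,\R{m})$, and likewise on $\Gamma$ with $D_{\Gamma}$; hence every local shape derivative produced below belongs to $C^{l-1}$, which already settles the regularity claim in part~i).

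For part~i) I would first apply Lemma~\ref{Transf_shape_opt:Lem: arithmetic rules mat deriv I}~i), componentwise, to get $\langle u,v\rangle\dot{}\,(\Omega;V)=\langle\dot u(\Omega;V),v(\Omega)\rangle+\langle u(\Omega),\dot v(\Omega;V)\rangle$. Next I would use the ordinary product rule for the gradient of a scalar field to write $D\langle u(\Omega),v(\Omega)\rangle\,V=\langle Du(\Omega)V,v(\Omega)\rangle+\langle u(\Omega),Dv(\Omega)V\rangle$. Subtracting the second relation from the first and grouping the terms $\dot u-DuV$ and $\dot v-DvV$ yields the claimed formula. The boundary identity is the same computation with $D$ replaced by $D_{\Gamma}$; the only extra point is to note that the tangential Leibniz rule holds, which follows either by applying the definition $D_{\Gamma}=D-D\vec{n}\vec{n}^{\top}$ to $C^1$-extensions of $y,z$ to a neighbourhood of $\Gamma$, or directly from Leibniz' rule on the manifold $\Gamma$.

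For parts~ii) and~iii) the mechanism is identical. For ii) I would take the material chain rule from Lemma~\ref{Transf_shape_opt:Lem: arithmetic rules mat deriv I}~ii), namely $(f\circ u)\dot{}\,(\Omega;V)=(Df\circ u(\Omega))\,\dot u(\Omega;V)$, combine it with the classical chain rule $D(f\circ u(\Omega))V=(Df\circ u(\Omega))\,Du(\Omega)V$, subtract, and factor out $Df\circ u(\Omega)$ to obtain $(f\circ u)'(\Omega;V)=(Df\circ u(\Omega))\,u'(\Omega;V)$; the boundary case only replaces $D$ by $D_{\Gamma}$. For iii), where $f$ also carries a domain dependence, I would start from Lemma~\ref{Transf_shape_opt:Lem: arithmetic rules mat deriv I}~iii), $(f(\Omega)\circ u)\dot{}\,(\Omega;V)=\dot f(\Omega;V)\circ u+(Df(\Omega)\circ u)\,\dot u(\Omega;V)$, observe that in forming the \emph{spatial} derivative the domain is frozen so that $D\bigl(f(\Omega)\circ u(\Omega)\bigr)V=(Df(\Omega)\circ u(\Omega))\,Du(\Omega)V$, subtract, and again factor out $Df(\Omega)\circ u$, giving $(f(\Omega)\circ u)'(\Omega;V)=\dot f(\Omega;V)\circ u+(Df(\Omega)\circ u)\,u'(\Omega;V)$.

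I do not expect a genuine obstacle here: the content is essentially bookkeeping, and the substantive analytic facts — existence of the material derivatives in the stated spaces, and the commutation of the Gâteaux derivative with the continuous linear operator $D$ — are exactly what Lemma~\ref{Transf_shape_opt:Lem: arithmetic rules mat deriv I} and Lemma~\ref{Transf_shape_opt:Lem: d/dt Du_t circ T_t} already provide. The points that need a little care are: making the regularity claim in i) precise (each application of $D$ costs one derivative, so the answer is $C^{l-1}$, not $C^{l}$); justifying the tangential product and chain rules for $D_{\Gamma}$; and using consistently the trace-theoretic identification of $\dot y$, $y'$ with restrictions of $\dot u$, $u'$ as in Lemma~\ref{Transf_shape_opt:Lem: Shape derivatives C^l}. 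Finally, the weak-topology variants follow by the same computation, since the Leibniz and chain rules for $D$ are insensitive to the topology in which the material derivatives are taken.
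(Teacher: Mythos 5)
Your proof is correct and follows essentially the same route as the paper: write each local shape derivative as the material derivative minus $DuV$ (resp.\ $D_{\Gamma}yV$), invoke the material-derivative product/chain rules of Lemma \ref{Transf_shape_opt:Lem: arithmetic rules mat deriv I}, apply the classical Leibniz/chain rule to the spatial (tangential) term, and regroup. Your added remarks on the $C^{l-1}$ bookkeeping and the tangential Leibniz rule only make explicit what the paper leaves implicit with ``the same argumentation''.
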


\begin{proof}
	i) Let $ u=u(\Omega),\, v=v(\Omega) $. Then,
	\begin{align}
	\langle u,v\rangle'
	&=\langle u,v\rangle\dot{} - \left\langle \nabla\langle u,v\rangle ,  V \right\rangle =\langle \dot{u},v\rangle+ \langle u,\dot{v}\rangle-\langle Du^{\top}v+Dv^{\top}u,V\rangle\\
	&=\langle \dot{u},v\rangle -\langle DuV,v\rangle+ \langle u,\dot{v}\rangle-\langle u, Dv V\rangle =\langle \dot{u}-DuV,v\rangle + \langle u,\dot{v}-Dv V\rangle\, .
	\end{align}
	\noindent ii) Let $ u=u(\Omega) $. Then $(f\circ u)' = (f \circ u)\dot{}-D(f \circ u)V =(Df \circ u)\dot{u} - (Df\circ u)DuV= (Df \circ u)u'$
	where we applied Lemma \ref{Transf_shape_opt:Lem: arithmetic rules mat deriv I}.\\[1ex]
	iii) Under these conditions
	\begin{align}
	(f(\Omega) \circ u)' &= (f(\Omega) \circ u)\dot{} - \frac{\partial }{\partial x}f(\Omega,u(.))V\\
	& = \dot{f}(\Omega) \circ u +(Df(\Omega)\circ u)\dot{u} - (Df(\Omega)\circ u)DuV \\
	&= \dot{f}(\Omega)\circ u + (Df(\Omega)\circ u)u'.
	\end{align}
	
	The results for $ y=y(\Gamma),\, z=z(\Gamma) $ can be derived by the same argumentation. 
\end{proof}

While the material derivative $ \dot{u}(\Omega;V) $ does not commute with spacial derivatives the shape derivative $ u'(\Omega;V) $ does:

\begin{lem}\label{Transf_shape_opt:Lem: Properties u'} 
	Let $ V \in \Vad{k}{\Omega^{ext}} $, $ u(\Omega)=(u_1(\Omega),u_2(\Omega),\ldots, u_m(\Omega))^{\top} \in C^{l}( \Omega,\R{m})$, $ 1 \leq l\leq k  $ such that their strong (weak) $ C^{l} $-material derivative $ \dot{u}(\Omega;V)
	$ exist. Then the following holds: 
	\begin{itemize}
		\item[i)] $ u'(\Omega,V)=(u_{i}'(\Omega,V))_{i=1,\ldots,m} $. Set $ (au +bv)(\Omega):= au(\Omega) + bv(\Omega),\, \forall a,b \in \R{} $. Then   $$ (au +bv)'(\Omega;V):= au'(\Omega;V) + bv'(\Omega;V),\, \forall a,b \in \R{} \, .$$
		\item[ii)]  If $ u(\Omega) \in C^{2}(\Omega,\R{m}) $, then  $ (Du)'(\Omega;V)$ $=D(u'(\Omega;V)) $. 
	\end{itemize}
\end{lem}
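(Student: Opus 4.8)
The plan is to prove the two assertions of Lemma~\ref{Transf_shape_opt:Lem: Properties u'} directly from the definition of the local shape derivative and the calculation rules already established. For part~i), linearity is essentially a formal computation. First I would observe that the material derivative is a G\^ateaux derivative (Definition~\ref{Transf_shape_opt:Def: Material_Derivative}), hence linear in the function argument by Lemma~\ref{Transf_shape_opt:Lem: arithmetic rules mat deriv I}~i)~a): $(au+bv)\dot{}\,(\Omega;V) = a\dot{u}(\Omega;V) + b\dot{v}(\Omega;V)$. Since the spatial Jacobian $D$ is linear, $D(au+bv)(\Omega)\,V = aDu(\Omega)\,V + bDv(\Omega)\,V$. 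Subtracting and using the definition $u'(\Omega;V) = \dot{u}(\Omega;V) - Du(\Omega)V$ gives the claim. The componentwise statement $u'(\Omega;V) = (u_i'(\Omega;V))_{i=1,\ldots,m}$ follows because both the material derivative (being a limit in the product Banach space $X_\Omega \subset C^1(\Omega,\R{m})$, whose norm controls each component) and the operation $u \mapsto Du\,V$ act componentwise: $(Du\,V)_i = Du_i\,V = \nabla u_i^\top V$, and the limit defining $\dot{u}$ converges componentwise since convergence in $C^l(\Omega,\R{m})$ implies convergence of each coordinate in $C^l(\Omega,\R{})$.

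For part~ii), the key identity is that the shape derivative commutes with $D$, in contrast to the material derivative which satisfies the correction formula $(Du)\dot{}\,(\Omega;V) = D\dot{u}(\Omega;V) - Du\,DV$ from Lemma~\ref{Transf_shape_opt:Lem: d/dt Du_t circ T_t}. I would argue as follows. Assume $u(\Omega) \in C^2(\Omega,\R{m})$ with $C^l$-material derivative, so that $Du(\Omega) \in C^1(\Omega,\R{m\times n})$ and, by Lemma~\ref{Transf_shape_opt:Lem: d/dt Du_t circ T_t}, the $C^0$-material derivative $(Du)\dot{}\,(\Omega;V)$ exists and equals $D\dot{u}(\Omega;V) - Du(\Omega)\,DV$. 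Now apply the definition of the local shape derivative to the quantity $Du_t$, which lives on $\Omega_t$ with values in $\R{m\times n}$: by Definition~\ref{Transf_shape_opt:Def: Shape_Derivative},
\begin{equation*}
(Du)'(\Omega;V) = (Du)\dot{}\,(\Omega;V) - D(Du(\Omega))\,V = D\dot{u}(\Omega;V) - Du(\Omega)\,DV - D^2u(\Omega)\,V.
\end{equation*}
On the other hand, $D(u'(\Omega;V)) = D\big(\dot{u}(\Omega;V) - Du(\Omega)\,V\big) = D\dot{u}(\Omega;V) - D(Du(\Omega)\,V)$. Expanding the last term by the product rule for the spatial derivative, $D(Du(\Omega)\,V) = D^2u(\Omega)\,V + Du(\Omega)\,DV$ (this is just the chain/product rule on $\R{n}$ applied to the matrix field $Du$ composed with the vector field $V$, using that $D(Au) = (DA)u + A\,Du$ for a matrix field $A$ and vector field $u$). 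Substituting, $D(u'(\Omega;V)) = D\dot{u}(\Omega;V) - D^2u(\Omega)\,V - Du(\Omega)\,DV$, which coincides with the expression above for $(Du)'(\Omega;V)$.

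One point requiring care is the exact meaning of the term $D(Du(\Omega))\,V$ in the definition of $(Du)'$, i.e.\ the spatial derivative of the matrix-valued field $Du(\Omega)$ contracted with $V$; I would make this precise index-wise, writing $(D^2u\,V)_{ij} = \sum_k \partial_k(\partial_j u_i)\,V_k$, so that the product rule bookkeeping is transparent and the two correction terms $Du\,DV$ and $D^2u\,V$ are kept distinct. I also need $u(\Omega) \in C^2$ precisely so that $D^2u$ is continuous and all the derivatives above are classical; the hypothesis is exactly tailored to this. The main (minor) obstacle is bookkeeping the matrix-valued product rule correctly and making sure the material-derivative correction term from Lemma~\ref{Transf_shape_opt:Lem: d/dt Du_t circ T_t} cancels against the extra term produced by differentiating $Du(\Omega)\,V$ spatially; no analytic subtlety beyond what is already in the cited lemmas is involved.
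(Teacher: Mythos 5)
Your proposal is correct and follows essentially the same route as the paper: part i) by linearity of the material derivative and of $D$, and part ii) by combining Lemma \ref{Transf_shape_opt:Lem: d/dt Du_t circ T_t} with the definition of the local shape derivative so that the correction terms $Du\,DV$ and $D^2u\,V$ cancel. The only step you leave implicit is the identification $\sum_k \partial_j\partial_k u_i V_k=\sum_k \partial_k\partial_j u_i V_k$ when matching the product-rule term with the term $D(Du(\Omega))V$ from the definition, which is exactly the appeal to Schwarz's theorem that the paper makes explicit and which is justified by the hypothesis $u(\Omega)\in C^{2}(\Omega,\R{m})$.
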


\begin{proof}
	$i)$ $ (u_i)'=(u_i)\dot{} -  (\nabla u_i)^{\top}V = (\dot{u} -Du V)_{i} =(u')_{i}.$ Moreover,
	The linearity of the differential operator $ (.)\dot{} $, $ \nabla $ and the bilinearity of $ \langle .,. \rangle  $ imply $ (u+v)'=u'+v' $. Since $ a'=0 $, Lemma \ref{Transf_shape_opt:Lem: Product and Chain rule for shape derivatives} yields $ (au')=((au_i)')^{\top}_{i=1,\ldots,m}$ $= (a'u_i+a u_i')^{\top}_{i=1,\ldots,m} =(a u_i')^{\top}_{i=1,\ldots,m}= au'$.
	\\[1em]
	$ ii) $ Let $ m\geq 1 $. Then, according to Lemma \ref{Transf_shape_opt:Lem: d/dt Du_t circ T_t}
	\begin{equation}
	\left(\frac{\partial u_i}{\partial x_j}\right)^{\hspace*{-2mm}\sbt}=\frac{\partial \dot{u_i}}{\partial x_j} -\sum_{k=1}^{n} \frac{\partial u_i}{\partial x_k} \frac{\partial V_k}{\partial x_j},\,~~~ i=1,\ldots,m,\, j=1,\ldots, n.
	\end{equation}
	This implies
	\begin{align}
	\left( \frac{\partial u_i}{\partial x_j} \right)' \hspace*{-1mm}
	&= \frac{\partial \dot{u_i}}{\partial x_j} -\left[\sum_{k=1}^{n} \frac{\partial u_i}{\partial x_k} \frac{\partial V_k}{\partial x_j}\right]  - \left(\nabla \frac{\partial u_i}{\partial x_j}\right)^{\top} V 
	= \frac{\partial \dot{u_i}}{\partial x_j} -\left[\sum_{k=1}^{n} \frac{\partial u_i}{\partial x_k} \frac{\partial V_k}{\partial x_j}  +  \frac{\partial^2 u_i}{\partial x_k \partial x_j} V_k\right]
	\\
	&= \frac{\partial }{\partial x_j}(\dot{u_i} - \nabla u_i^{\top}V)= \frac{\partial u_i'}{\partial x_j},~~~i=1,\ldots,m,\, j=1,\ldots,n.
	\end{align}
	since the second order partial derivatives are symmetric by Schwartz's Theorem:
	\begin{align}
	\frac{\partial}{\partial x_j}(\nabla u_i^{\top }V) &= \sum_{k=1}^{n} \frac{\partial^2 u_i  }{\partial x_j \partial x_k} V_k + \frac{\partial u_i}{\partial x_k} \frac{\partial V_k}{\partial x_j} \\
	&= \sum_{k=1}^{n}\frac{\partial^2 u_i  }{\partial x_k \partial x_j} V_k+\frac{\partial u_i}{\partial x_k} \frac{\partial V_k}{\partial x_j}.
	\end{align}
\end{proof}
The combination of i) and ii) implies that the shape derivative commutes with any linear differential operator if the regularity of $ u(\Omega) $ is high enough and the material derivative exists in $ C^{l} $ for $1\leq l \leq k$ large enough.


\section{Shape derivatives of local cost functionals} \label{Transf_shape_opt:Sec: Eulerian Derivatives} 

The following formula can be found in \cite{SokZol92,Schmidt2011Dissertation,DelfZol11}.

\begin{thm}\label{Transf_shape_opt:Thm: Tangential Stokes Vector}(Integration by Parts on the Boundary/Tangential Stokes Formula)
	Let $ \Omega \subset \R{n}  $  be a domain of class $ C^2 $ and $ f\in C^{1}(\Gamma) $, $ v \in C^{1}(\Gamma,\R{n}) $. Then 
	\[\int_{\Gamma} \Div_{\Gamma}(fv)\, dS =  \int_{\Gamma} \langle \nabla_{\Gamma} f,v \rangle  + f \Div_{\Gamma}v \, dS = \int_{\Gamma} \kappa  \langle f v,\vec{n}\rangle \, dS\]
	where the \textit{mean curvature} of $ \Gamma $ is given by $ \kappa:=\Div_{\Gamma}\vec{n} $. 
\end{thm}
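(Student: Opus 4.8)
The plan is to reduce the statement to the single geometric identity $\int_{\Gamma}\Div_{\Gamma}w\,dS=\int_{\Gamma}\kappa\langle w,\vec{n}\rangle\,dS$ valid for an arbitrary field $w\in C^{1}(\Gamma,\R{n})$ on the closed $C^{2}$ surface $\Gamma$, and then to read off the two asserted equalities from it. The product rule $\Div_{\Gamma}(fv)=\langle\nabla_{\Gamma}f,v\rangle+f\Div_{\Gamma}v$ is the easy first step: choosing $C^{1}$ extensions of $f$ and $v$ to a neighbourhood of $\Gamma$ (the tangential operators $\nabla_{\Gamma}$, $D_{\Gamma}$, $\Div_{\Gamma}$ being independent of the extension), one inserts the ordinary identities $\Div(fv)=\langle\nabla f,v\rangle+f\Div v$ and $D(fv)=v\,\nabla f^{\top}+f\,Dv$ into the definitions $\Div_{\Gamma}v=\Div v-\langle Dv\,\vec{n},\vec{n}\rangle$ and $\nabla_{\Gamma}f=\nabla f-\langle\nabla f,\vec{n}\rangle\vec{n}$ and checks that the terms involving the normal cancel as required. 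Integrated over $\Gamma$, this gives the first equality of the theorem; the second is precisely the geometric identity applied to $w:=fv$.

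To prove that identity I would split $w$ into its normal and tangential parts on $\Gamma$, writing $w=w_{\Gamma}+w_{n}\vec{n}$ with $w_{n}:=\langle w,\vec{n}\rangle\in C^{1}(\Gamma)$ and $w_{\Gamma}:=w-w_{n}\vec{n}\in C^{1}(\Gamma,\R{n})$ tangential along $\Gamma$ (this uses $\vec{n}\in C^{1}(\Gamma,\R{n})$, which holds since $\Gamma$ is $C^{2}$). Applying the product rule of the first step with $(f,v)$ replaced by $(w_{n},\vec{n})$, and using that $\nabla_{\Gamma}w_{n}$ is tangential so that $\langle\nabla_{\Gamma}w_{n},\vec{n}\rangle=0$, yields $\Div_{\Gamma}(w_{n}\vec{n})=w_{n}\Div_{\Gamma}\vec{n}=\kappa\,w_{n}$. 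Hence $\int_{\Gamma}\Div_{\Gamma}w\,dS=\int_{\Gamma}\Div_{\Gamma}w_{\Gamma}\,dS+\int_{\Gamma}\kappa\langle w,\vec{n}\rangle\,dS$, and everything comes down to showing $\int_{\Gamma}\Div_{\Gamma}w_{\Gamma}\,dS=0$ for the tangential field $w_{\Gamma}$.

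For this I would exploit the first variation of the surface area. Fix an auxiliary bounded $C^{2}$ domain $\Oext$ with $\overline{\Omega}\subset\Oext$ (e.g. a large ball), extend $w_{\Gamma}$ to a $C^{1}$ field on $\overline{\Oext}$ by Lemma~\ref{App:Lem: Hölder Extension Lemma} and multiply it by a cut-off equal to $1$ near $\overline{\Omega}$ and vanishing near $\partial\Oext$; this produces $V\in\Vad{1}{\Oext}$ with $V|_{\Gamma}=w_{\Gamma}$. Since $V$ is tangent to $\Gamma$ at every point of $\Gamma$, integral curves of $\dot y=V(y)$ starting on $\Gamma$ stay on $\Gamma$, so the flow satisfies $\Gamma_{t}=T_{t}[V](\Gamma)=\Gamma$ for all $t$ in the existence interval and $\int_{\Gamma_{t}}1\,dS_{t}$ is constant. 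On the other hand the transformation rule for surface integrals gives $\int_{\Gamma_{t}}1\,dS_{t}=\int_{\Gamma}\omega_{t}\,dS$, and since $t\mapsto\omega_{t}$ is $C^{1}$ with values in $C^{0}(\overline{\Oext})$ (Lemma~\ref{Transf_shape_opt:Lem: Properties omegat}) one may differentiate under the integral sign by Lemma~\ref{Diff_Banach_Space:Lem:Gâteaux_diff_Integral}; using $\dot{\omega}_{0}=\Div_{\Gamma}(V)$ from the same Lemma~\ref{Transf_shape_opt:Lem: Properties omegat} we obtain $0=\frac{d}{dt}\big|_{t=0}\int_{\Gamma}\omega_{t}\,dS=\int_{\Gamma}\Div_{\Gamma}(V)\,dS=\int_{\Gamma}\Div_{\Gamma}w_{\Gamma}\,dS$, the last step again by extension independence of $\Div_{\Gamma}$. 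Combining the three steps gives $\int_{\Gamma}\Div_{\Gamma}(fv)\,dS=\int_{\Gamma}\kappa\langle fv,\vec{n}\rangle\,dS$, which together with the product rule completes the proof.

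The step I expect to be the genuine obstacle is the invariance of $\Gamma$ under the flow of a tangential vector field (equivalently, the vanishing of the first variation of area along a tangential deformation): it requires arguing, from uniqueness for the ODE $\dot y=V(y)$ together with the fact that $V|_{\Gamma}$ is a complete $C^{1}$ vector field on the compact manifold $\Gamma$, that solutions issued from $\Gamma$ never leave $\Gamma$. If one prefers to sidestep the flow altogether, the same identity $\int_{\Gamma}\Div_{\Gamma}w_{\Gamma}\,dS=0$ follows from a partition of unity subordinate to $C^{2}$ boundary charts, in which $\Div_{\Gamma}$ becomes the Riemannian divergence on $\Gamma$ and the classical divergence theorem on a closed manifold applies; this route is more computational but needs nothing beyond local coordinates.
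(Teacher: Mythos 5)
Your proof is correct, but it is worth noting that the paper itself does not prove this theorem at all: it is quoted with a reference to the literature (Sokolowski--Zolésio, Schmidt, Delfour--Zolésio), and only the matrix-valued Corollary~\ref{Transf_shape_opt:Cor: Tangential Stokes Matrix} is deduced from it. What you supply is therefore a self-contained argument, and it meshes nicely with the machinery the paper builds anyway: the pointwise product rule reduces everything to $\int_{\Gamma}\Div_{\Gamma}w\,dS=\int_{\Gamma}\kappa\,\langle w,\vec{n}\rangle\,dS$, the splitting $w=w_{\Gamma}+w_{n}\vec{n}$ (legitimate since $\vec{n}\in C^{1}$ for a $C^{2}$ boundary) isolates the curvature term via $\Div_{\Gamma}(w_{n}\vec{n})=\kappa w_{n}$, and the vanishing of $\int_{\Gamma}\Div_{\Gamma}w_{\Gamma}\,dS$ follows from the first variation of area along a tangential field, i.e. exactly from Lemma~\ref{Transf_shape_opt:Lem: Properties omegat} and Lemma~\ref{Diff_Banach_Space:Lem:Gâteaux_diff_Integral} together with the flow-invariance of $\Gamma$, which your ODE-uniqueness argument settles; the cited references instead typically argue through local charts, which is your alternative route. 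Two small points to tighten: Lemma~\ref{App:Lem: Hölder Extension Lemma} extends functions defined on $\overline{\Omega}$, not on $\Gamma$, so the extension of $w_{\Gamma}$ to a $C^{1}$ field on $\overline{\Oext}$ should be justified via the boundary charts of Definition~\ref{PDE_Systems:Def:Ck_phi_Boundary} (or a tubular-neighbourhood/nearest-point projection, available for $C^{2}$ boundaries) before cutting off; and one should state explicitly that $\Div_{\Gamma}$ applied to the ambient extension depends only on the trace $V\vert_{\Gamma}=w_{\Gamma}$, so that the derivative $\dot{\omega}_{0}=\Div_{\Gamma}(V)$ really equals $\Div_{\Gamma}w_{\Gamma}$ on $\Gamma$. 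With these standard details filled in, the argument is complete and closes a gap the paper leaves to the literature.
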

Thereof, we can derive the matrix-vector valued version of this Theorem:
\begin{cor}\label{Transf_shape_opt:Cor: Tangential Stokes Matrix}
	Let $ \Omega \subset \R{n}  $  be a domain of class $ C^2 $,  $ v \in C^{1}(\Gamma,\R{n}) $ a vector field and $ M \in C^1(\Gamma,\R{n \times n}) $ a matrix field. Then
	\begin{align}\label{Transf_shape_opt:Eq: div_thm_matrix_ vector_fields_boundary}
	\int_{\Gamma} \Div_{\Gamma}(Mv)\, dS =
	\int_{\Gamma} \tr(M D_{\Gamma}v) + \langle \Div_{\Gamma}(M),v \rangle \, dS  =   \int_{\Gamma} \kappa \langle M^{\top}\vec{n},v \rangle \, dS
	\end{align}
	where $ \Div_{\Gamma}(M)= \begin{pmatrix} \Div_{\Gamma}(M_{.,1}), & \cdots, & \Div_{\Gamma}(M_{.,n}) \end{pmatrix}^{\top} $ and $ M_{.,k} $ the $ k $-th column of $ M $.
\end{cor}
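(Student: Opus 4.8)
The plan is to derive the matrix-vector identity \eqref{Transf_shape_opt:Eq: div_thm_matrix_ vector_fields_boundary} from the scalar tangential Stokes formula in Theorem \ref{Transf_shape_opt:Thm: Tangential Stokes Vector} by applying the latter component-wise. First I would fix notation: write $Mv = \sum_{k=1}^{n} M_{\cdot,k}\, v_k$ where $M_{\cdot,k} \in C^1(\Gamma,\R{n})$ denotes the $k$-th column of $M$ and $v_k \in C^1(\Gamma)$ the $k$-th component of $v$. Since $M \in C^1(\Gamma,\R{n\times n})$ and $v \in C^1(\Gamma,\R{n})$, each product $M_{\cdot,k} v_k$ is an element of $C^1(\Gamma,\R{n})$, so the scalar tangential divergence theorem is applicable to it.

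The key computational step is to expand $\Div_{\Gamma}(Mv)$ using the product rule for the tangential divergence. By linearity, $\Div_{\Gamma}(Mv) = \sum_{k=1}^n \Div_{\Gamma}(v_k M_{\cdot,k})$, and for each $k$ the scalar-times-vector product rule (which follows from $\Div_{\Gamma}(f w) = \langle \nabla_\Gamma f, w\rangle + f\,\Div_\Gamma w$ applied with $f = v_k$, $w = M_{\cdot,k}$) gives
\begin{align}
\Div_{\Gamma}(Mv) = \sum_{k=1}^{n} \left( \langle \nabla_{\Gamma} v_k, M_{\cdot,k} \rangle + v_k \, \Div_{\Gamma}(M_{\cdot,k}) \right).
\end{align}
I would then identify the two sums: $\sum_k v_k \Div_\Gamma(M_{\cdot,k}) = \langle \Div_\Gamma(M), v\rangle$ by the very definition of $\Div_\Gamma(M)$ as the vector of column-wise tangential divergences, while $\sum_k \langle \nabla_\Gamma v_k, M_{\cdot,k}\rangle = \sum_{k,i} (M_{\cdot,k})_i (\nabla_\Gamma v_k)_i = \sum_{i,k} M_{ik} (D_\Gamma v)_{ki} = \tr(M D_\Gamma v)$, using that the $i$-th component of $\nabla_\Gamma v_k$ is exactly the $(k,i)$-entry of the tangential Jacobian $D_\Gamma v$. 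Integrating this identity over $\Gamma$ yields the first equality in \eqref{Transf_shape_opt:Eq: div_thm_matrix_ vector_fields_boundary}.

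For the second equality I would again apply Theorem \ref{Transf_shape_opt:Thm: Tangential Stokes Vector} to each summand $v_k M_{\cdot,k}$: it gives $\int_\Gamma \Div_\Gamma(v_k M_{\cdot,k})\, dS = \int_\Gamma \kappa \langle v_k M_{\cdot,k}, \vec{n}\rangle\, dS = \int_\Gamma \kappa\, v_k \langle M_{\cdot,k},\vec{n}\rangle\, dS$. Summing over $k$ and observing that $\sum_k v_k \langle M_{\cdot,k},\vec{n}\rangle = \sum_{k,i} v_k M_{ik} \vec{n}_i = \sum_{i} \vec{n}_i (Mv)_i = \langle Mv, \vec n\rangle$; alternatively, regrouping as $\sum_{k,i} \vec n_i M_{ik} v_k = \sum_k (M^\top \vec n)_k v_k = \langle M^\top \vec n, v\rangle$, we obtain $\int_\Gamma \Div_\Gamma(Mv)\,dS = \int_\Gamma \kappa \langle M^\top \vec n, v\rangle\, dS$, which is the claimed right-hand side. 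I do not anticipate a genuine obstacle here; the only point requiring a little care is the bookkeeping of indices — making sure the convention $D_\Gamma v = Dv - Dv\,\vec n \vec n^\top$ yields $(D_\Gamma v)_{ki} = \partial_{\Gamma,i} v_k$ so that the trace $\tr(M D_\Gamma v)$ comes out with the correct pairing, and checking that $C^2$-regularity of $\Omega$ (needed so that $\kappa = \Div_\Gamma \vec n$ and $\vec n \in C^1$ make sense) is exactly the hypothesis inherited from Theorem \ref{Transf_shape_opt:Thm: Tangential Stokes Vector}.
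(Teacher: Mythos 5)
Your proof is correct and follows exactly the route the paper intends (and sketches in the appendix): decompose $Mv=\sum_k v_k M_{\cdot,k}$, apply the scalar tangential Stokes formula of Theorem \ref{Transf_shape_opt:Thm: Tangential Stokes Vector} column-wise, and identify the resulting sums with $\tr(M D_{\Gamma}v)$, $\langle \Div_{\Gamma}(M),v\rangle$ and $\kappa\langle M^{\top}\vec{n},v\rangle$. The index bookkeeping $(D_{\Gamma}v)_{ki}=(\nabla_{\Gamma}v_k)_i$ and $\sum_k v_k\langle M_{\cdot,k},\vec{n}\rangle=\langle M^{\top}\vec{n},v\rangle$ is handled correctly, so nothing is missing.
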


\subsection{Reynolds transport theorem in shape calculus}\label{Transf_shape_opt:Sec: Transport Theorem}  

\begin{lem}
	\label{Transf_shape_opt:Lem:Reynolds Theorem Volume} \cite[Section 2.31]{SokZol92}
	Assume that the $ C^0 $-material derivative of $ u(\Omega_t) \in C(\overline{ \Omega_{t}},\R{}) $ in direction of $ V  $ exists at $ t=0 $. Let 
	$$ J(\Omega):=\int_{\Omega} u(\Omega) \, dx.$$
	\begin{itemize}
		\item[i)] The the Eulerian derivative of $ J:\mathcal{O}_{k} \to \R{} $ is given by
		\begin{align}
		dJ(\Omega)[V]=\int_{\Omega} \dot{u}(\Omega;V) + u(\Omega)\Div(V) \, dx.
		\end{align}
		\item[ii)] If the $ C^1 $-material derivative of $ u(\Omega_t)  \in C^1(\overline{ \Omega_{t}},\R{}) $ exists at $ t=0 $ then 
		\begin{align}
		dJ(\Omega)[V] =\int_{\Omega} \Div(u(\Omega)V) + u'(\Omega;V)  \, dx= \int_{\Omega} u'(\Omega;V)\, dx + \int_{\Gamma} u(\Omega) V_{\vec{n}} \, dS.
		\end{align}
	\end{itemize} 
\end{lem}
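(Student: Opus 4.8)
The plan is to pull the integral back to the fixed reference domain $\Omega$ via the flow $T_t=T_t[V]$, differentiate under the integral sign using the Banach-space calculus of Chapter \ref{Diff_Banach_Space}, and then re-express the result via the definition of the local shape derivative together with the classical divergence theorem. First I would apply the change of variables $x=T_t(y)$ in $J(\Omega_t)=\int_{\Omega_t}u(\Omega_t)\,dx$. Since $T_t$ is a $C^k$-diffeomorphism of $\overline{\Omega^{ext}}$ mapping $\Omega$ onto $\Omega_t$ (Lemma \ref{Transf_shape_opt:Lem: Properties D_Tt}) and $\gamma_t=\det(DT_t)>0$ on $\overline{\Omega^{ext}}$ (Lemma \ref{Transf_shape_opt:Lem: Properties gammat} i)), this gives
\[
J(\Omega_t)=\int_\Omega \bigl(u(\Omega_t)\circ T_t\bigr)\,\gamma_t\,dx=\int_\Omega u^t\,\gamma_t\,dx ,
\]
with $u^t:=u(\Omega_t)\circ T_t$, so that $dJ(\Omega)[V]$ is the derivative at $t=0$ of the parameter integral $t\mapsto\int_\Omega u^t\gamma_t\,dx$.

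Next I would differentiate this parameter integral. The hypothesis that the $C^0$-material derivative of $u(\Omega_t)$ exists at $t=0$ means precisely that $t\mapsto u^t\in C^0(\overline\Omega)$ is differentiable w.r.t.\ the strong topology at $t=0$ with derivative $\dot u(\Omega;V)$, and in particular $u^t\to u(\Omega)$ in $C^0(\overline\Omega)$. By Lemma \ref{Transf_shape_opt:Lem: Properties gammat} iii) the map $t\mapsto\gamma_t$ lies in $C^1(I,C^{k-1}(\overline{\Omega^{ext}}))$, hence in $C^1(I,C^0(\overline\Omega))$, with $\gamma_0=1$ and $\dot\gamma_0=\Div(V)$. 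Applying the product rule Lemma \ref{Diff_Banach_Space:Lem:Product_rule_Gateaux} with the continuous pointwise multiplication $C^0(\overline\Omega)\times C^0(\overline\Omega)\to C^0(\overline\Omega)$, the map $t\mapsto u^t\gamma_t\in C^0(\overline\Omega)$ is differentiable at $t=0$ with derivative $\dot u(\Omega;V)\,\gamma_0+u(\Omega)\,\dot\gamma_0=\dot u(\Omega;V)+u(\Omega)\Div(V)$. Lemma \ref{Diff_Banach_Space:Lem:Gâteaux_diff_Integral} i) then permits the interchange of $\tfrac{d}{dt}$ and $\int_\Omega$, which yields assertion i):
\[
dJ(\Omega)[V]=\int_\Omega \dot u(\Omega;V)+u(\Omega)\Div(V)\,dx .
\]

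For ii), under the stronger assumption that the $C^1$-material derivative exists, the local shape derivative $u'(\Omega;V)=\dot u(\Omega;V)-\langle\nabla u(\Omega),V\rangle$ is well defined (Definition \ref{Transf_shape_opt:Def: Shape_Derivative}), and the scalar product rule for the divergence gives $\Div(u(\Omega)V)=\langle\nabla u(\Omega),V\rangle+u(\Omega)\Div(V)$. Substituting into i) produces $dJ(\Omega)[V]=\int_\Omega u'(\Omega;V)+\Div(u(\Omega)V)\,dx$. Since $u(\Omega)\in C^1(\overline\Omega)$, $V\in C^k(\overline{\Omega^{ext}},\mathbb{R}^n)$ with $k\geq 1$, and $\Omega$ is a $C^1$-domain, the field $u(\Omega)V$ belongs to $C^1(\overline\Omega,\mathbb{R}^n)$ and the classical Gauss divergence theorem applies, giving $\int_\Omega\Div(u(\Omega)V)\,dx=\int_\Gamma u(\Omega)\langle V,\vec n\rangle\,dS=\int_\Gamma u(\Omega)V_{\vec n}\,dS$, which is the second form in ii).

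The main obstacle is the differentiation under the integral sign in the middle step: one must check that the precise hypotheses — existence of the material derivative only at $t=0$, together with the $C^{k-1}$-regularity of $t\mapsto\gamma_t$ — feed correctly into the product rule and into Lemma \ref{Diff_Banach_Space:Lem:Gâteaux_diff_Integral}, and in particular that the relevant notion is differentiability in the strong $C^0$-topology, so that the continuity of $t\mapsto u^t$ at $t=0$ needed for the product rule is genuinely available. Everything else — the change of variables, the algebraic identity turning $\dot u+u\Div(V)$ into $u'+\Div(uV)$, and the divergence theorem — is routine once this smoothness bookkeeping is settled.
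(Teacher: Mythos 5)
Your proposal is correct and follows essentially the same route as the paper: pull back to $\Omega$ so that $J(\Omega_t)=\int_\Omega u^t\gamma_t\,dx$, apply the product rule Lemma \ref{Diff_Banach_Space:Lem:Product_rule_Gateaux} together with Lemma \ref{Diff_Banach_Space:Lem:Gâteaux_diff_Integral} (resp.\ Example \ref{Diff_Banach_Space:Exmp:G_Diff_Integral}) to differentiate under the integral, evaluate at $t=0$ using $\gamma_0=1$, $\dot\gamma_0=\Div(V)$, and then obtain ii) from $u'=\dot u-\langle\nabla u,V\rangle$ and the divergence theorem. Your explicit verification of the continuity of $t\mapsto u^t$ at $t=0$ (which follows from differentiability there) is exactly the bookkeeping the paper leaves implicit.
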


\begin{proof}
	We apply Lemma \ref{Diff_Banach_Space:Exmp:G_Diff_Integral} and Lemma \ref{Diff_Banach_Space:Lem:Product_rule_Gateaux} to the Gâteaux differentiable maps
	$(-\epsilon,\epsilon) \to C(\overline{\Omega}),\, t \mapsto u(\Omega_t) \circ T_t \gamma_{t}$. Therefore,
	\begin{align*}
	\frac{d}{dt}J(t)= \int_{\Omega} \left(\frac{d}{dt}\gamma_{t}\right)u(\Omega_t)  \circ T_t + \gamma_{t}\frac{d}{dt}(u(\Omega_t)  \circ T_t) \, dx.
	\end{align*}
	At $ t=0 $ this leads to
	\begin{align}
	dJ(\Omega)[V] 
	= & \,  \int_{\Omega} \hspace*{-1mm} \Div(V) u(\Omega) + \dot{u}(\Omega;V)  \, dx 
	= \,  \int_{\Omega} \hspace*{-1mm} \Div(V) u(\Omega;V) + u'(\Omega;V) + Du(\Omega)V  \, dx\\
	=& \,  \int_{\Omega} \Div(u(\Omega)V) + u'(\Omega;V)  \, dx = \,  \int_{\Omega} u'(\Omega;V) \, dx + \int_{\Gamma} u(\Omega)  V_{\vec{n}} \, dS.
	\end{align}
\end{proof}

\begin{lem}
	\label{Transf_shape_opt:Lem:Reynolds Theorem Surface} 
	\cite[Section 2.33]{SokZol92}
	Suppose that the $ C^0 $-material derivative of $ y(\Gamma_t)\in C(\Gamma_t,\R{})  $ in direction of $ V $ exists at $ t=0 $. Let 
	$$ J(\Omega):=\int_{\Gamma} y(\Gamma)\, dS  .$$
	\begin{itemize}
		\item[i)] Then the Eulerian derivative of $ J:\mathcal{O}_{k} \to \R{} $ is given by
		\begin{align}
		dJ(\Omega)[V]=\int_{\Gamma} \dot{y}(\Gamma;V)+ y(\Gamma)\Div_{\Gamma}(V) \, dS.
		\end{align}
		\item[ii)] If the $ C^1 $-material derivative of $ y_t \in C^1(\Gamma_t,\R{})  $ exists, then 
		\begin{align}
		dJ(\Omega)[V] = \int_{\Gamma} y'(\Gamma;V) + \kappa y(\Gamma) V_{\vec{n}} \, dS.
		\end{align} 
	\end{itemize} 
	In the special case of $ y(\Gamma)=v(\Omega)\vert_{\Gamma} $, 
	\begin{align}
	dJ(\Omega)[V]
	=\int_{\Gamma} y'(\Gamma;V)+\Div_{\Gamma}(y(\Gamma)V) \, dS
	= \int_{\Gamma} \hspace*{-1mm} v'(\Omega;V) + \left(\frac{ \partial v(\Omega)}{\partial \vec{n}}  + \kappa v(\Omega)\right)\hspace*{-1mm} V_{\vec{n}}\, dS.
	\end{align}
\end{lem}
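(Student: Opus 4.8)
The plan is to mirror the argument used for the volume transport theorem (Lemma~\ref{Transf_shape_opt:Lem:Reynolds Theorem Volume}): pull the integral back to the fixed reference boundary $\Gamma$, differentiate under the integral by the product rule, and then push the result forward again. First I would use the change-of-variables formula for surface integrals under the $C^k$-diffeomorphism $T_t=T_t[V]$ (see \cite[Section~2.33]{SokZol92}), which gives
\[
J(\Omega_t)=\int_{\Gamma_t} y(\Gamma_t)\,dS \;=\; \int_{\Gamma} \bigl(y(\Gamma_t)\circ T_t\bigr)\,\omega_t\,dS \;=\; \int_\Gamma y^t\,\omega_t\,dS,
\]
where $\omega_t=\Norm{\mathcal M(T_t)\mathcal N}{}$ is the surface Jacobian, which on $\Gamma$ equals $\Norm{\mathcal M(T_t)\vec n}{}$. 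By Lemma~\ref{Transf_shape_opt:Lem: Properties omegat} the map $t\mapsto\omega_t$ lies in $C^1(I,C^{k-1}(\overline{\Omega^{ext}}))$ with $\omega_0=1$ and $\dot\omega_0=\Div_\Gamma(V)$ on $\Gamma$.

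Next I would differentiate $t\mapsto\int_\Gamma y^t\omega_t\,dS$ at $t=0$. By hypothesis the material derivative $\dot y(\Gamma;V)$ exists in $C(\Gamma)$, i.e.\ $t\mapsto y^t$ is G\^ateaux differentiable (hence continuous) at $0$; since $t\mapsto\omega_t$ is $C^1$ and pointwise multiplication $C(\Gamma)\times C(\Gamma)\to C(\Gamma)$ is continuous, the product rule (Lemma~\ref{Diff_Banach_Space:Lem:Product_rule_Gateaux}) shows $t\mapsto y^t\omega_t$ is G\^ateaux differentiable in $C(\Gamma)$ with derivative $\dot y^t\omega_t+y^t\dot\omega_t$. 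Applying Lemma~\ref{Diff_Banach_Space:Lem:Gâteaux_diff_Integral}~ii) to interchange differentiation and the boundary integral, and evaluating at $t=0$ with $\omega_0=1$, $\dot\omega_0=\Div_\Gamma(V)$, yields
\[
dJ(\Omega)[V]=\int_\Gamma \dot y(\Gamma;V)+y(\Gamma)\Div_\Gamma(V)\,dS,
\]
which is assertion~i).

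For assertion~ii), assuming now that the $C^1$-material derivative of $y_t$ exists (so $\Omega$ is of class $C^2$) I would substitute the definition of the local boundary shape derivative, $\dot y(\Gamma;V)=y'(\Gamma;V)+D_\Gamma y(\Gamma)V$, into i). Since $y$ is scalar, $D_\Gamma y(\Gamma)V=\skp{\nabla_\Gamma y(\Gamma)}{V}$, and $\skp{\nabla_\Gamma y(\Gamma)}{V}+y(\Gamma)\Div_\Gamma(V)=\Div_\Gamma\bigl(y(\Gamma)V\bigr)$; the tangential Stokes formula (Theorem~\ref{Transf_shape_opt:Thm: Tangential Stokes Vector}) then gives $\int_\Gamma\Div_\Gamma(y(\Gamma)V)\,dS=\int_\Gamma\kappa\,y(\Gamma)V_{\vec n}\,dS$, so that $dJ(\Omega)[V]=\int_\Gamma y'(\Gamma;V)+\kappa\,y(\Gamma)V_{\vec n}\,dS$. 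Finally, in the special case $y(\Gamma)=v(\Omega)\vert_\Gamma$, I would insert the restriction rule $y'(\Gamma;V)=v'(\Omega;V)\vert_\Gamma+\frac{\partial v(\Omega)}{\partial\vec n}V_{\vec n}$ from Lemma~\ref{Transf_shape_opt:Lem: Shape derivatives C^l}~i) and collect the $V_{\vec n}$-terms, recovering both displayed forms (the intermediate one by running the Stokes step backwards). The only genuinely delicate point is bookkeeping the regularity hypotheses so that the surface change-of-variables and the G\^ateaux product rule apply with values in $C(\Gamma)$; once Lemma~\ref{Transf_shape_opt:Lem: Properties omegat} is invoked this is routine, and no new estimates are needed.
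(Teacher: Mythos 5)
Your proposal is correct and follows essentially the same route as the paper's proof: pull-back via $\omega_t$, the Gâteaux product rule together with Lemma \ref{Diff_Banach_Space:Lem:Gâteaux_diff_Integral} (resp.\ Example \ref{Diff_Banach_Space:Exmp:G_Diff_Integral}) to differentiate under the boundary integral, evaluation at $t=0$ using Lemma \ref{Transf_shape_opt:Lem: Properties omegat}, then the substitution $\dot y = y' + D_\Gamma y\,V$ with tangential Stokes for ii), and Lemma \ref{Transf_shape_opt:Lem: Shape derivatives C^l} for the special case. No gaps to report.
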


\begin{proof}
	We apply Lemma \ref{Diff_Banach_Space:Exmp:G_Diff_Integral} and Lemma \ref{Diff_Banach_Space:Lem:Product_rule_Gateaux} to the Gâteaux differentiable maps $(-\epsilon,\epsilon) \to C(\Gamma),\, t \mapsto y(\Gamma_t) \circ T_t \omega_{t}. $ Therefore,
	\begin{align}
	\frac{d}{dt}J(t)&= \int_{\Omega} \left(\frac{d}{dt}\omega_{t} \right)y(\Gamma_t) \circ T_t + \omega_{t} \frac{d}{dt}(y(\Gamma_t) \circ T_t) \, dS .
	\end{align}
	At $ t=0 $ this leads to
	\begin{align}
	dJ(\Omega)[V]
	= & \,  \int_{\Omega} \Div_{\Gamma}(V)y(\Gamma) + \dot{y}(\Gamma;V)  \, dS
	= \, \int_{\Gamma} y'(\Gamma;V) + \Div_{\Gamma}(V)y(\Gamma) + D_{\Gamma}y(\Gamma)V  \, dS \\
	= & \int_{\Gamma} y'(\Gamma;V)+\Div_{\Gamma}(y(\Gamma)V) \, dS 
	=  \int_{\Gamma} y'(\Gamma;V) + \kappa y(\Gamma) V_{\vec{n}} \, dS 
	\end{align}
	where partial integration on the boundary was used in the last step. 
	
	\noindent The last statement follows directly from Lemma \ref{Transf_shape_opt:Lem: Shape derivatives C^l}.
\end{proof}

In the special case that  $ f,\, g \in C^{1}(\R{n}) $ are independent of the shape, then the shape derivatives of 
\begin{align}
J_{vol}(\Omega)=\int_{\Omega} f(x) \, dx && \text{ and } && J_{sur}(\Omega)=\int_{\Gamma} g(x) \, dx
\end{align}
are given by 
\begin{align}
dJ_{vol}(\Omega)[V]  = \int_{\Omega} \langle \nabla f,V\rangle+f\Div(V), dx= \int_{\Omega} \Div(fV)\, dx = \int_{\Gamma} f V_{\vec{n}} \, dS
\end{align}
and 
\begin{align}
dJ_{sur}(\Omega)[V]  &= \int_{\Gamma} \langle\nabla g,V\rangle+g\Div(V)\, dx = \int_{\Gamma} \Div_{\Gamma}(gV) +\frac{ \partial g}{\partial \vec{n}}   V_{\vec{n}} \, dx  \\
&= \int_{\Gamma} \left[\frac{ \partial g}{\partial \vec{n}}   + \kappa g\right]V_{\vec{n}} \, dS.
\end{align}

\begin{rem}
	\begin{itemize}
		\item[i)] Lemma \ref{Transf_shape_opt:Lem: Product and Chain rule for shape derivatives} i) and Lemma \ref{Transf_shape_opt:Lem: Properties u'} also apply for shape derivatives in appropriate Sobolev spaces $ W^{k,p}(\Omega,\R{m}) $ and $ W^{k,p}(\Gamma,\R{m}) $ if $ l \geq 1 $, see also \cite{SokZol92,DelfZol11}.
		\ref{Transf_shape_opt:Lem: Product and Chain rule for shape derivatives} ii) can not be proved that easily e.g. in $ W^{1,1}(\Omega,\R{m}) $ because there is no general chain rule on Sobolev spaces see, for example, \cite{LeoniMorini}.
		\item[ii)] Of cause material and shape derivatives can be defined according to other notions of differentiability. For example the material derivative can be defined point-wise, analogously to \cite{DissKW}: \\[1ex]
		Let $ u(\Omega_t): \Omega_t \to \R{m} $ be defined for $ t \in (-\epsilon,\epsilon) $ and some $ \epsilon>0 $.
		The (weak) material derivative of $ u(\Omega_t) $ at $ t=t_0 $ is defined as the vector field $ \dot{u}(t_0)(\Omega;V)=\dot{u}^{t_0} $ which is the point-wise derivative of $ u(\Omega_t) \circ T_t $ at $ t=t_0 $:
		\[ 
		\dot{u}^{t_0}(x):=\left.\frac{d}{dt}\left(u(\Omega_t) \circ T_t \right)(x)\right\vert_{t=t_0} \hspace*{-3mm}=\lim_{t \to t_0}\frac{1}{t}([u(\Omega_t) \circ T_t](x)-[u(\Omega_{t_0})\circ T_{t_0}](x)).
		\]
		In this case many of the calculation rules can also be established if $ u=u(\Omega)=u(\Omega_0) $ is differentiable or at least the weak derivative exists. Moreover, Reynolds transport theorem can be derived in the classical way using the differentiation rules for parameter integrals , see Section \ref{Diff_Banach_Space:Sec:Parameter_Integrals}. 
	\end{itemize}
\end{rem}

\subsection{General local cost functionals}
\begin{defn}[Local Cost Functionals]\label{Transf_shape_opt:Defn: Local Cost Functionals}
	Suppose that $ u(\Omega) \in C^l(\overline{\Omega},\R{m})$, $ 0 \leq l\leq k $  and let $ \mathcal{F}_{vol},\, \mathcal{F}_{sur} \in C^1(\R{d}) $ with $ d = n+ m\frac{n^{l+1}-1}{n-1} $ if $ \Omega \subset \R{n}, n \geq 2 $ and $d=1+m(l+1) $ if $n=1$. A \textit{local cost functional of $ l $-th order} is a mapping 
	\begin{align}\label{Transf_shape_opt:Eq: Local Integral Cost Functional}
	J: \mathcal{O}_k \to \R{} ,\,\Omega \mapsto J_{vol}(\Omega) + J_{sur}(\Omega)
	\end{align}
	where
	\begin{align}
	J_{vol}(\Omega) &:= \int_{\Omega} \mathcal{ F}_{vol}(x,u(\Omega)(x),\ldots, D^lu(\Omega)(x)) \, dx < \infty\, \forall \Omega \in \mathcal{O}_k \label{Transf_shape_opt:Defn: Volume_Cost_Functional}\\
	J_{sur}(\Omega) &:= \int_{\partial \Omega} \hspace*{-2mm} \mathcal{ F}_{vol}(x,u(\Omega)(x),\ldots, D^lu(\Omega)(x)) \, dS < \infty \,\forall \Omega \in \mathcal{O}_k . \label{Transf_shape_opt:Defn: Surface_Cost_Functional}
	\end{align}
\end{defn}
Thus we obtain mappings
\begin{align}
t \in (-\epsilon,\epsilon) \to J_{vol}(\Omega_t) &:= \int_{\Omega_{t}} \mathcal{ F}_{vol}(x,u_t(x), Du_t(x),\ldots, D^lu_t(x)) \, dx \, \in \R{} \\[1em]
t \in (-\epsilon,\epsilon) \to J_{sur}(\Omega_t) &:= \int_{\partial \Omega_{t}} \mathcal{ F}_{vol}(x,u_t(x), Du_t(x),\ldots, D^lu_t(x)) \, dS\, \in \R{} \\
\end{align}

\subsubsection{Shape derivatives of first order local cost functionals}

In the following we regard local cost functionals of first order, i.e. 
\begin{equation}\label{Transf_shape_opt:Eq: Loc_Cost_Functionals_first_order}
\begin{split}
J_{vol}(\Omega_t)&=\int_{\Omega_t}\mathcal{ F}_{vol}(x,u_t(x), Du_t(x))\, dx \\	
J_{sur}(\Omega_t)&=\int_{\Gamma_t}\mathcal{ F}_{sur}(x,u_t(x), Du_t(x))\, dS
\end{split}
\end{equation}
or linear combinations thereof where $ \mathcal{F}_{vol},\, \mathcal{F}_{sur} \in C^1(\R{d}) $
\begin{align*}
\mathcal{F}_{vol/sur}:\R{d}\cong\R{n}\times \R{m} \times \R{n \times m}
,\, (z_1,z_2,z_3) \to 	\mathcal{F}_{vol/sur}(z_1,z_2,z_3).
\end{align*}

\begin{lem}[Shape Derivative in Material Derivative Form]\label{Transf_shape_opt:Lem: d/dt int(T_t,u_t,Du_t) in material derivative form} Suppose that $ \Omega \in \mathcal{O}_{1} $, $ u_t \in C^1(\overline{ \Omega_{t}},\R{m}) $, $ t \in (-\epsilon,\epsilon) $ and let $ J:=J_{vol}+J_{sur} $ be defined as above.
	Assumed that the $ C^1 $-material derivative $ \dot{u}^t $ of $ u_t $ in direction of $ V \in \Vad{1}{\Oext} $ exists at any $ t \in (-\epsilon,\epsilon) $ such that $ t \in I \mapsto \dot{u}^{t}\in C^1(\overline{ \Omega},\R{m})  $ is continuous. Then the mapping
	\begin{equation}
	\begin{split}
	\mathcal{J}: I &\to \R{},\,t \mapsto J(\Omega_t)
	\end{split}
	\end{equation}			
	is Fréchet differentiable. At $ t=0 $ the derivative is given by
	\begin{equation} \label{Transf_shape_opt:Eq: d/dt int(T_t,u_t,Du_t) in material derivative form}
	\begin{split}
	dJ(\Omega)[V]
	=&\, \int_{\Omega}
	\Div(V)(x) \mathcal{F}_{vol}(x,u(x),Du(x)) \, dx   \\
	&+ \int_{\Omega} \left\langle \frac{\partial \mathcal{F}_{vol}}{\partial z_1} (x,u(x),Du(x)
	) ,V(x) \right\rangle \, dx   \\
	&+  \int_{\Omega} \left\langle\frac{\partial \mathcal{F}_{vol}}{\partial z_2} (x,u(x),Du(x)
	) ,\dot{u}(x) \right\rangle \, dx \\
	&+  \int_{\Omega} \frac{\partial \mathcal{F}_{vol}}{\partial z_3} (x,u(x),Du(x)
	) : (D\dot{u}(x) - Du(x)DV(x)) \, dx \\
	&+ \int_{\Gamma}
	\Div_{\Gamma}(V)(x) \mathcal{F}_{sur}(x,u(x),Du(x)
	) \, dS  \\
	&+ \int_{\Gamma} \left\langle \frac{\partial \mathcal{F}_{sur}}{\partial z_1} (x,u(x),Du(x)
	) ,V(x) \right\rangle \, dS  \\
	&+  \int_{\Gamma} \left\langle\frac{\partial \mathcal{F}_{sur}}{\partial z_2} (x,u(x),Du(x)
	) ,\dot{u}(x) \right\rangle \, dS \\
	&+  \int_{\Gamma} \frac{\partial \mathcal{F}_{sur}}{\partial z_3} (x,u(x),Du(x)) : (D\dot{u}(x) - Du(x)DV(x)) \, dS.  
	\end{split}
	\end{equation} 
\end{lem}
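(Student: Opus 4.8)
The plan is to transport the two moving integrals to the fixed reference configuration $\Omega$ (resp.\ $\Gamma$) and then differentiate under the integral sign with the Banach space calculus of Chapter~\ref{Diff_Banach_Space}. Using the change of variables $x = T_t(\xi)$, with $\gamma_t = \det(DT_t)$ for the volume element (Lemma~\ref{Transf_shape_opt:Lem: Properties gammat}) and $\omega_t = \Norm{\mathcal{M}(T_t)\mathcal{N}}{}$ for the surface element (Lemma~\ref{Transf_shape_opt:Lem: Properties omegat}), together with $u_t\circ T_t = u^t$ and the chain rule identity $(Du_t)\circ T_t = Du^t\,(DT_t)^{-1}$, one rewrites
\[ J_{vol}(\Omega_t) = \int_\Omega \mathcal{F}_{vol}\!\big(T_t,\,u^t,\,(Du_t)\circ T_t\big)\,\gamma_t\,d\xi, \qquad J_{sur}(\Omega_t) = \int_\Gamma \mathcal{F}_{sur}\!\big(T_t,\,u^t,\,(Du_t)\circ T_t\big)\,\omega_t\,dS . \]
Each summand now has the form $t\mapsto\int_M g_t\,d\mu$ with $M=\overline\Omega$ or $\Gamma$ and $g_t\in C^0(M)$ a product of $\gamma_t$ (resp.\ $\omega_t$) with a superposition map applied to the triple $(T_t,u^t,(Du_t)\circ T_t)$, all assembled from F-differentiable maps $I\to C^0(M,\R{n})$, $I\to C^0(M,\R{m})$, $I\to C^0(M,\R{m\times n})$.

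First I would record the differentiability of the building blocks and the spaces in which they live. From Lemma~\ref{Transf_shape_opt:Lem: Properties D_Tt}, $t\mapsto T_t$ and $t\mapsto (DT_t)^{-1}$ are continuously F-differentiable into $C^{k}(\overline{\Oext},\R{n})$ and $C^{k-1}(\overline{\Oext},\R{n\times n})$ with $\tfrac{d}{dt}T_t = V\circ T_t$; from Lemmas~\ref{Transf_shape_opt:Lem: Properties gammat} and~\ref{Transf_shape_opt:Lem: Properties omegat}, $t\mapsto\gamma_t$ and $t\mapsto\omega_t$ are continuously F-differentiable into $C^{k-1}$ with $\dot\gamma_0 = \Div(V)$ and $\dot\omega_0 = \Div_\Gamma(V)$. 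The hypothesis that the $C^1$-material derivative $\dot u^t$ exists for all $t$ with $t\mapsto\dot u^t$ continuous into $C^1(\overline\Omega,\R{m})$ means, by Lemma~\ref{Diff_Banach_Space:Lem:Gateaux/Frechet_Diff_u^t}, that $t\mapsto u^t$ is continuously F-differentiable into $C^1(\overline\Omega,\R{m})$; then Lemma~\ref{Transf_shape_opt:Lem: d/dt Du_t circ T_t} gives that $t\mapsto (Du_t)\circ T_t$ is continuously F-differentiable \emph{into} $C^0(\overline\Omega,\R{m\times n})$, with derivative at $t=0$ equal to $D\dot u - Du\,DV$.

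With these in hand the computation is routine. Since $\mathcal{F}_{vol},\mathcal{F}_{sur}\in C^1(\R{d})$, the chain rule (Lemma~\ref{Diff_Banach_Space:Lem:CR}) and Lemma~\ref{Diff_Banach_Space:Lem:d/dt F nach u(t)}, applied to the triple $(T_t,u^t,(Du_t)\circ T_t)$, differentiate the superposition map into $C^0(M)$; the product rule (Lemma~\ref{Diff_Banach_Space:Lem:Product_rule_Gateaux}) then handles the extra factor $\gamma_t$ (resp.\ $\omega_t$), contributing the term $\dot\gamma_t\,\mathcal{F}_{vol}(\cdots)$ (resp.\ $\dot\omega_t\,\mathcal{F}_{sur}(\cdots)$); and Lemma~\ref{Diff_Banach_Space:Lem:Gâteaux_diff_Integral} permits interchanging $\tfrac{d}{dt}$ with $\int_\Omega$ and $\int_\Gamma$ --- exactly as in Lemma~\ref{Diff_Banach_Space:Lem:d/dt int f(t)F(t,u(t))}. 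Evaluating at $t=0$ with $T_0 = \mathrm{id}$, $\dot T_0 = V$, $\gamma_0 = 1$, $\dot\gamma_0 = \Div(V)$, $\omega_0 = 1$, $\dot\omega_0 = \Div_\Gamma(V)$, $u^0 = u$, $\dot u^0 = \dot u$, $(Du_0)\circ T_0 = Du$ and $\tfrac{d}{dt}[(Du_t)\circ T_t]|_{t=0} = D\dot u - Du\,DV$ reproduces term by term the eight lines of the asserted formula (the pairing in the $z_3$-slot being the Frobenius product). Finally, since each integrand and its $t$-derivative is continuous in $t$ with values in $C^0(M)$, $\tfrac{d}{dt}J(\Omega_t)$ depends continuously on $t$, so $\mathcal{J}$ is not merely G- but F-differentiable by Lemma~\ref{Diff_Banach_Space:Lem:Gateaux/Frechet_Diff_u^t} applied to maps $I\to\R{}$.

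The main obstacle is the book-keeping of function-space regularities rather than any deep idea: one must work in $C^0$ for the $Du$-slot, because $t\mapsto (Du_t)\circ T_t$ is only known to be F-differentiable there, and then check that every product occurring --- the $C^0\times C^0\to C^0$ multiplication, the Frobenius pairing $A:B$, and the pairings $\langle\cdot,\cdot\rangle$ against the partials $\tfrac{\partial\mathcal{F}}{\partial z_i}$ --- satisfies the joint-continuity hypothesis needed to invoke the product and chain rule lemmas. The surface part additionally uses that $\Omega$ is of class $C^1$ (guaranteed by $\Omega\in\mathcal{O}_1$), so that $\omega_t$, the pullback of $dS$, and $\Div_\Gamma$ are available, and that the restrictions to $\Gamma$ of $u^t$ and $(Du_t)\circ T_t$ inherit the differentiability established on $\overline\Omega$. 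Everything beyond this is the substitution $t=0$.
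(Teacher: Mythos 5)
Your argument is correct and is essentially the paper's own proof: the paper applies its Reynolds transport lemmas (Lemmas \ref{Transf_shape_opt:Lem:Reynolds Theorem Volume} i) and \ref{Transf_shape_opt:Lem:Reynolds Theorem Surface} i)) to $\mathcal{F}_{vol/sur}\circ(id,u_t,Du_t)$ together with Lemmas \ref{Transf_shape_opt:Lem: arithmetic rules mat deriv I} and \ref{Transf_shape_opt:Lem: d/dt Du_t circ T_t}, and explicitly notes the alternative of applying Lemma \ref{Diff_Banach_Space:Lem:d/dt int f(t)F(t,u(t))} with $f_v(t)=\gamma_t$, $f_s(t)=\omega_t$ and the pullbacks $T_t$, $u^t$, $Du_t\circ T_t$ --- which is exactly the route you take. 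Since the Reynolds lemmas are themselves proved by this same pullback-and-product-rule computation, your proof and the paper's coincide in substance, including the concluding continuity argument that upgrades Gâteaux to Fréchet differentiability of $\mathcal{J}$.
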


\begin{proof}
	We apply  Lemma \ref{Transf_shape_opt:Lem:Reynolds Theorem Volume} i) and Lemma \ref{Transf_shape_opt:Lem:Reynolds Theorem Surface} i) to $ \mathcal{ F}_{vol/sur} \circ w(\Omega) $
	where $ w(\Omega):=(w_1(\Omega),w_2(\Omega),w_3(\Omega))
	$ with $ 
	w_1(\Omega_t):= id \vert_{\overline{\Omega}_{t}} \,\in C^\infty(\overline{\Omega}_t,\R{n})$ , $w_2(\Omega_t):= u_t \, \in C^1(\overline{\Omega}_t,\R{n}) $, $
	w_3(\Omega_t):= Du_t\,\in  C(\overline{\Omega}_t,\R{m\times n})$
	and $ \tilde{w}(\Gamma)=w(\Omega)\vert_{\Gamma} $. Then, Lemma \ref{Transf_shape_opt:Lem: d/dt Du_t circ T_t} implies
	\begin{align*}
	\dot{w}(\Omega;V)
	=&(V,\dot{u}(\Omega;V),(Du)\dot{}(\Omega;V)) 
	=(0,\dot{u}(\Omega;V), D\dot{u}(\Omega;V) -DuDV) 
	\end{align*} 
	and Lemma \ref{Transf_shape_opt:Lem: arithmetic rules mat deriv I} leads to 
	\begin{align*}
	&(\mathcal{ F}_{vol/sur} \circ w)\dot{}
	= \sum_{i=1}^{3}\left\langle \frac{\partial \mathcal{F}_{vol/sur}}{\partial z_i} \circ w(\Omega) , (w_i)\dot{} \right\rangle \\
	&=\left\langle \frac{\partial \mathcal{F}_{vol/sur}}{\partial z_1} \circ w(\Omega), V  \right\rangle +\left\langle \frac{\partial \mathcal{F}_{sur}}{\partial z_2} \circ w(\Omega) , \dot{u}\right\rangle  + \left(\frac{\partial \mathcal{F}_{vol/sur}}{\partial z_3} \circ w(\Omega)\right): ( D\dot{u} - DuDV) .
	\end{align*}
	
	Alternatively, apply Lemma \ref{Diff_Banach_Space:Lem:d/dt int f(t)F(t,u(t))} with $ f_{v}(t)=\gamma_{t} $ and $ f_{s}(t)=\omega_t $ and 
	\begin{align*}
	u_1:I &\to C^1(\overline{\Omega},\R{n}),\, t \mapsto T_t,\\
	u_2:I &\to C^1(\overline{\Omega},\R{m}),\, t \mapsto u_t \circ T_t=u^t ,\\
	u_3:I &\to C^{0}(\overline{\Omega},\R{m\times n}),\, t \mapsto Du_t \circ T_t.
	\end{align*}
	
\end{proof}

The following formula can also be found in \cite{SokZol92}. Here we supply some further details of the proof:

\begin{lem}\label{Transf_shape_opt:Lem: d/dt int J(T_t,u_t,Du_t) in shape derivative form}
	Suppose that $ \Omega \in \mathcal{O}_{2} $, $ u_t\in C^2(\overline{ \Omega_{t}},\R{m}),\, t \in (-\epsilon,\epsilon)  $ and let $ J:=J_{vol}+J_{sur} $ be a local cost functional of first order, see \eqref{Transf_shape_opt:Eq: Loc_Cost_Functionals_first_order}.
	Presumed that the $ C^1 $-material deriavtive $ \dot{u}^{t_0} $ of $ u_t $ in direction of $ V \in \Vad{1}{\Oext} $ exists at any $ t_0 \in (-\epsilon,\epsilon) $  such that $ t \in I \mapsto \dot{u}^{t}\in C^1(\overline{ \Omega},\R{m})  $ is continuous. Then the mapping
	\begin{equation}
	\begin{split}
	\mathcal{J}: I \to \R{},\,
	t \mapsto J(\Omega_t)
	\end{split}
	\end{equation}			
	is Fréchet differentiable. At $ t=0 $ the Fréchet derivative is given by 
	\begin{equation}
	\begin{split}\label{Transf_shape_opt:Eq: d/dt int J(T_t,u_t,Du_t) in shape derivative form}
	dJ(\Omega)[V]  =&\,  \int_{\Omega} \left\langle\frac{\partial \mathcal{F}_{vol}}{\partial z_2} (x,u(x),Du(x)
	) ,u'(x) \right\rangle + \frac{\partial \mathcal{F}_{vol}}{\partial z_3} (x,u(x),Du(x)
	) :Du'(x) \, dx \\
	&+  \int_{\Gamma} \mathcal{F}_{vol} (x,u(x),Du(x)) V_{\vec{n}} +  \left\langle\frac{\partial \mathcal{F}_{sur}}{\partial z_1} (x,u(x),Du(x)
	), \vec{n}  \right\rangle  V_{\vec{n}}  \, dS   \\ 
	&+  \int_{\Gamma} \left\langle\frac{\partial \mathcal{F}_{sur}}{\partial z_2} (x,u(x),Du(x)
	), u'(x) +  V_{\vec{n}} \frac{ \partial u}{\partial \vec{n}}       \right\rangle \, dS \\
	&+  \int_{\Gamma} \frac{\partial \mathcal{F}_{sur}}{\partial z_3} (x,u(x),Du(x)
	) :\{Du'(x)+ D(Du)[\vec{n}](x)V_{\vec{n}}\}   \, dS \\
	&+ \int_{\Gamma} \kappa \mathcal{F}_{sur}(x,u(x),Du(x))  V_{\vec{n}} \, dS.
	\end{split}
	\end{equation} 
	Here the following notation was used: \[ (D(A)[\vec{v}])_{ij}=(\langle \nabla a_{i,j}, \vec{v} \rangle)_{ij},\, A \in C^1(\Omega,\R{n \times m}),\,  \vec{v} \in \R{n},\, i=1,...,m,\, j=1,...,n.\]
\end{lem}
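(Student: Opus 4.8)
The plan is to start from the material-derivative form of the shape derivative, which is precisely Lemma \ref{Transf_shape_opt:Lem: d/dt int(T_t,u_t,Du_t) in material derivative form}, and convert every occurrence of the material derivative $\dot u$ into the local shape derivative $u' = \dot u - Du\,V$ together with the geometric boundary terms that this substitution generates. The two ingredients needed for this conversion are, first, the chain-rule/product-rule identities for shape derivatives from Section \ref{Transf_shape_opt:Sec: arithmetic rules shape derivative} (in particular Lemma \ref{Transf_shape_opt:Lem: arithmetic rules mat deriv I}, Lemma \ref{Transf_shape_opt:Lem: d/dt Du_t circ T_t}, Corollary \ref{Transf_shape_opt:Lem: dot(div), dot(sigma)} and Lemma \ref{Transf_shape_opt:Lem: Shape derivatives C^l}), and second, the two versions of Reynolds' transport theorem, Lemma \ref{Transf_shape_opt:Lem:Reynolds Theorem Volume} and Lemma \ref{Transf_shape_opt:Lem:Reynolds Theorem Surface}, applied in their \textbf{$C^1$-material-derivative} form ii), which already contain the normal-velocity and mean-curvature contributions $\int_\Gamma u'\,dx + \int_\Gamma u V_{\vec n}\,dS$ and $\int_\Gamma y' + \kappa y V_{\vec n}\,dS$. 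The hypotheses $\Omega\in\mathcal O_2$ and $u_t\in C^2$ are exactly what is needed so that the composite $\mathcal F_{sur}\circ w$ has a $C^1$-material derivative (one derivative is lost passing from $u_t$ to $Du_t$ and another in restricting to $\Gamma$), so that Lemma \ref{Transf_shape_opt:Lem:Reynolds Theorem Surface} ii) is applicable; Fréchet differentiability of $\mathcal J$ then follows from the continuity assumption on $t\mapsto\dot u^t$ exactly as in the proof of Lemma \ref{Transf_shape_opt:Lem: d/dt int(T_t,u_t,Du_t) in material derivative form}.

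Concretely, I would proceed as follows. Set $w(\Omega)=(w_1,w_2,w_3)=(\mathrm{id},u(\Omega),Du(\Omega))$ and $\tilde w(\Gamma)=w(\Omega)|_\Gamma$ as in the previous proof, so that $J_{vol}(\Omega)=\int_\Omega \mathcal F_{vol}\circ w(\Omega)\,dx$ and $J_{sur}(\Omega)=\int_\Gamma \mathcal F_{sur}\circ\tilde w(\Gamma)\,dS$. For the volume part I apply Reynolds ii): $dJ_{vol}(\Omega)[V]=\int_\Omega (\mathcal F_{vol}\circ w)'(\Omega;V)\,dx+\int_\Gamma (\mathcal F_{vol}\circ w(\Omega))V_{\vec n}\,dS$, and then use Lemma \ref{Transf_shape_opt:Lem: Product and Chain rule for shape derivatives} iii) together with Lemma \ref{Transf_shape_opt:Lem: Properties u'} ii) (the shape derivative commutes with $D$) to get $(\mathcal F_{vol}\circ w)' = \langle \tfrac{\partial \mathcal F_{vol}}{\partial z_1}\circ w, w_1'\rangle + \langle \tfrac{\partial \mathcal F_{vol}}{\partial z_2}\circ w, u'\rangle + \tfrac{\partial \mathcal F_{vol}}{\partial z_3}\circ w : Du'$; since $w_1=\mathrm{id}$ is a fixed (shape-independent) field, Lemma \ref{Transf_shape_opt:Lem: Shape derivatives C^l} ii) I) gives $w_1'=0$, which is exactly why no $\partial\mathcal F_{vol}/\partial z_1$ term survives in the first two lines of the asserted formula. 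For the surface part I apply Reynolds ii) for surfaces: $dJ_{sur}(\Omega)[V]=\int_\Gamma (\mathcal F_{sur}\circ\tilde w)'(\Gamma;V)\,dS + \int_\Gamma \kappa\,(\mathcal F_{sur}\circ\tilde w(\Gamma))\,V_{\vec n}\,dS$, and then expand $(\mathcal F_{sur}\circ\tilde w)'$ by the same chain rule, now being careful that the boundary shape derivative of each component carries an extra normal term: by Lemma \ref{Transf_shape_opt:Lem: Shape derivatives C^l} i), $(u|_\Gamma)'(\Gamma;V)=u'(\Omega;V)|_\Gamma + \tfrac{\partial u}{\partial\vec n}V_{\vec n}$, and $(\mathrm{id}|_\Gamma)'=\langle\mathrm{id}|_\Gamma\rangle' $ contributes $\vec n\,V_{\vec n}$ after pairing against $\partial\mathcal F_{sur}/\partial z_1$ (more precisely, applying Lemma \ref{Transf_shape_opt:Lem: Shape derivatives C^l} ii) II) to $y=\mathrm{id}$ gives $y'(\Gamma;V)=\vec n\,V_{\vec n}$), and $(Du|_\Gamma)'(\Gamma;V)=Du'(\Omega;V)|_\Gamma + D(Du)[\vec n]\,V_{\vec n}$, where the notation $D(A)[\vec v]$ is the one introduced in the statement. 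Collecting the four surface integrands times $\partial\mathcal F_{sur}/\partial z_i$, plus the $\kappa$-term from Reynolds, plus the volume-to-boundary term $\int_\Gamma \mathcal F_{vol}\,V_{\vec n}\,dS$, yields exactly \eqref{Transf_shape_opt:Eq: d/dt int J(T_t,u_t,Du_t) in shape derivative form}.

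A cleaner alternative, which I would mention, is simply to take the already-proved material-derivative formula \eqref{Transf_shape_opt:Eq: d/dt int(T_t,u_t,Du_t) in material derivative form} and rewrite it: substitute $\dot u = u' + DuV$ and $D\dot u - Du\,DV = Du' + D(Du)[V]$ (this last identity being Lemma \ref{Transf_shape_opt:Lem: Properties u'} ii) combined with the product/Schwarz computation in its proof), and then absorb all the resulting $V$-terms into divergence form via $\Div(V)\,\mathcal F_{vol} + \langle\nabla_x\mathcal F_{vol},V\rangle + \cdots = \Div(\mathcal F_{vol}\,V)$ on the volume and $\Div_\Gamma(V)\,\mathcal F_{sur}+\cdots$ on the boundary, finally converting $\int_\Omega\Div(\mathcal F_{vol}V)\,dx = \int_\Gamma \mathcal F_{vol}V_{\vec n}\,dS$ by the divergence theorem and $\int_\Gamma\Div_\Gamma(\mathcal F_{sur}V)\,dS = \int_\Gamma \kappa\langle \mathcal F_{sur}V,\vec n\rangle\,dS = \int_\Gamma \kappa\,\mathcal F_{sur}V_{\vec n}\,dS$ by the tangential Stokes formula, Theorem \ref{Transf_shape_opt:Thm: Tangential Stokes Vector}. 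The main obstacle — and the step that requires genuine care rather than bookkeeping — is the boundary bookkeeping: correctly tracking the extra $\tfrac{\partial u}{\partial\vec n}V_{\vec n}$ and $D(Du)[\vec n]V_{\vec n}$ terms that appear because on $\Gamma$ the shape derivative of a restriction is \emph{not} the restriction of the shape derivative (Lemma \ref{Transf_shape_opt:Lem: Shape derivatives C^l} i)), and making sure that exactly one mean-curvature term survives. I would also need to double-check at the outset that the chain of differentiability hypotheses — $\Omega\in\mathcal O_2$, $u_t\in C^2$, $\dot u^t\in C^1$ continuous in $t$ — genuinely supplies a $C^1$-material derivative for $Du_t|_{\Gamma_t}$, since Lemma \ref{Transf_shape_opt:Lem:Reynolds Theorem Surface} ii) is only invoked at the level of $C^1$-material derivatives of scalar boundary functions; this is the place where the regularity would fail if one assumed less.
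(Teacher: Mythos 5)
Your main argument---taking $w(\Omega)=(\mathrm{id},u(\Omega),Du(\Omega))$ and $\tilde w(\Gamma)=w(\Omega)\vert_\Gamma$, computing $w'(\Omega;V)=(0,u',Du')$ via Lemma \ref{Transf_shape_opt:Lem: Properties u'} and $\tilde w'(\Gamma;V)$ with the extra normal terms via Lemma \ref{Transf_shape_opt:Lem: Shape derivatives C^l}, expanding $(\mathcal{F}_{vol/sur}\circ w)'$ with the shape-derivative chain rule (Lemma \ref{Transf_shape_opt:Lem: Product and Chain rule for shape derivatives}) and then applying Lemma \ref{Transf_shape_opt:Lem:Reynolds Theorem Volume} ii) and Lemma \ref{Transf_shape_opt:Lem:Reynolds Theorem Surface} ii)---is exactly the paper's proof of this lemma. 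The alternative substitution route and the regularity caveat about $Du_t\vert_{\Gamma_t}$ are sensible side remarks, but the core proposal coincides with the published argument.
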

\begin{proof}
	
	The proof is analogous to the proof of the previous Theorem. This time, we apply  Lemma \ref{Transf_shape_opt:Lem:Reynolds Theorem Volume} ii) and Lemma \ref{Transf_shape_opt:Lem:Reynolds Theorem Surface} ii). Again we consider $ w(\Omega):=(w_1(\Omega),w_2(\Omega),w_3(\Omega))$ 
	and $ \tilde{w}(\Gamma)=w(\Omega)\vert_{\Gamma} $. Then, Lemma \ref{Transf_shape_opt:Lem: Properties u'} ii) implies $
	w'(\Omega;V)
	=(0,u'(\Omega;V), Du'(\Omega;V)) $ holds and
	\begin{align*}
	\tilde{w}'(\Gamma;V)
	= &\left(0 + \frac{\partial id}{\partial \vec{n}}V_{\vec{n}} ,u'(\Omega;V) + \frac{\partial u(\Omega)}{\partial \vec{n}}V_{\vec{n}},Du'(\Omega;V) + D(Du)[\vec{n}] V_{\vec{n}} \right),
	\end{align*} can be derived from Lemma \ref{Transf_shape_opt:Lem: Shape derivatives C^l}.  Lemma \ref{Transf_shape_opt:Lem: Product and Chain rule for shape derivatives} implies
	\begin{align*}
	(\mathcal{ F}_{vol} \circ w)'(\Gamma;V) 
	& = \sum_{i=1}^{3}\left\langle \frac{\partial \mathcal{F}_{sur}}{\partial z_i} \circ w(\Omega) , w_i'(\Omega;V) \right\rangle \\
	&=\left\langle \frac{\partial \mathcal{F}_{sur}}{\partial z_1} \circ w(\Omega) ,  \vec{n}\langle V,\vec{n} \rangle \right\rangle +\left\langle \frac{\partial \mathcal{F}_{sur}}{\partial z_2} \circ w(\Omega) , u'(\Omega;V)\right\rangle  \\
	&~+ \left(\frac{\partial \mathcal{F}_{sur}}{\partial z_3} \circ w(\Omega)\right): Du'(\Omega;V) . 
	\end{align*}
	and
	\begin{align*}
	(\mathcal{ F}_{sur} \circ \tilde{w})'(\Gamma;V) 
	=& \sum_{i=1}^{3}\left\langle \frac{\partial \mathcal{F}_{sur}}{\partial z_i} \circ w(\Omega) , \tilde{w}_i'(\Gamma;V) \right\rangle \\
	=&\left\langle \frac{\partial \mathcal{F}_{sur}}{\partial z_1} \circ w(\Omega) ,  V_{\vec{n}} \vec{n} \right\rangle +\left\langle \frac{\partial \mathcal{F}_{sur}}{\partial z_2} \circ w(\Omega) , u'(\Gamma;V) + V_{\vec{n}} \frac{\partial u}{\partial \vec{n}} \right\rangle  \\
	&+ \left( \frac{\partial \mathcal{F}_{sur}}{\partial z_3} \circ w(\Omega)\right) : \left(Du'(\Gamma;V) + D(Du)[\vec{n}] V_{\vec{n}} \right). 
	\end{align*}
	where $ (D(Du)[\vec{n}])_{ij} =(Hu_i\vec{n})_{j} $. 
	Finally we apply Lemma  \ref{Transf_shape_opt:Lem:Reynolds Theorem Volume} to $  \mathcal{ F}_{vol} \circ w(\Omega) $ and   \ref{Transf_shape_opt:Lem:Reynolds Theorem Surface} to $  \mathcal{ F}_{vol} \circ \tilde{w}(\Gamma) $.
\end{proof}

\begin{rem}
	In case that $ \mathcal{F}_{vol} $ or $ \mathcal{F}_{sur} $ explicitly depend on $ \Omega $ or $ \Gamma $, then Lemma \ref{Transf_shape_opt:Lem: Product and Chain rule for shape derivatives} implies
	\begin{align}
	dJ(\Omega)[V]
	=&\, \int_{\Omega}
	\Div(V) \mathcal{F}_{vol}(\Omega,.,u,Du) + \dot{\mathcal{F}}_{vol}(\Omega,.,u,Du) \, dx   \\
	&+ \int_{\Omega} \left\langle \frac{\partial \mathcal{F}_{vol}}{\partial z_1} (\Omega,.,u,Du) ,V \right\rangle + \left\langle\frac{\partial \mathcal{F}_{vol}}{\partial z_2} (\Omega,.,u,Du) ,\dot{u} \right\rangle  dx \\
	&+  \int_{\Omega} \frac{\partial \mathcal{F}_{vol}}{\partial z_3} (\Omega,.,u,Du) : (D\dot{u} - DuDV) \, dx \\
	&+ \int_{\Gamma}
	\Div_{\Gamma}(V) \mathcal{F}_{sur}(\Omega,.,u,Du)+\dot{\mathcal{F}}_{sur}(\Omega,.,u,Du) \, dS \\
	&+ \int_{\Gamma} \left\langle \frac{\partial \mathcal{F}_{sur}}{\partial z_1} + \dot{\mathcal{F}}_{vol}(\Omega,.,u,Du) ,V \right\rangle + \left\langle\frac{\partial \mathcal{F}_{sur}}{\partial z_2} + \dot{\mathcal{F}}_{vol}(\Omega,.,u,Du) ,\dot{u} \right\rangle dS  \\
	&+  \int_{\Gamma} \frac{\partial \mathcal{F}_{sur}}{\partial z_3} + \dot{\mathcal{F}}_{vol}(\Omega,.,u,Du) : (D\dot{u} - DuDV) \, dS.   
	\\
	=&\,  \int_{\Omega} \hspace*{-1mm} \dot{\mathcal{F}}_{vol}(\Omega,.,u,Du) + \left\langle\frac{\partial \mathcal{F}_{vol}}{\partial z_2}(\Omega,.,u,Du) ,u' \right\rangle + \frac{\partial \mathcal{F}_{vol}}{\partial z_3} (\Omega,.,u,Du):Du'\, dx \\
	&+  \int_{\Gamma} \hspace*{-1mm} \dot{\mathcal{F}}_{sur}(\Omega,.,u,Du) + \mathcal{F}_{vol} (\Omega,.,u,Du) V_{\vec{n}} + \left\langle\frac{\partial \mathcal{F}_{sur}}{\partial z_1} (\Omega,.,u,Du), \vec{n}  \right\rangle \hspace*{-1mm}  V_{\vec{n}}  \, dS \\ 
	&+  \int_{\Gamma} \left\langle\frac{\partial \mathcal{F}_{sur}}{\partial z_2} (\Omega,.,u,Du), u' +  \frac{\partial u }{\partial \vec{n}}  V_{\vec{n}}   \right\rangle \, dS  \\
	&+  \int_{\Gamma} \frac{\partial \mathcal{F}_{sur}}{\partial z_3} (\Omega,.,u,Du) :\{Du'+ D(Du)[\vec{n}]V_{\vec{n}}\} + \kappa \mathcal{F}_{sur}(\Omega,.,u,Du)  V_{\vec{n}} \, dS.
	\end{align}
\end{rem}


\chapter[Sensitivity Analysis for Parameter Dependent Linear Variational Equations]{Sensitivity Analysis for Parameter Dependent Linear Variational Equations on Hilbert Spaces}
\label{Parameter_Dep_PDE}

Recall the results of the discussion at the end of Chapter \ref{Reliability}: We figured out that solutions in $ W^{2,p} $ with very high values for $ p $ are necessary to assure for example that the LCF-functional is defined. In case that the failure model contains second order derivatives, we illustrated that even $ u \in W^{3,p} $ and thus a strong solution is needed.  

Consider again equation \eqref{Transf_shape_opt:Eq: d/dt int(T_t,u_t,Du_t) in material derivative form}. Once we found a solution $ u \in C^{1,\phi}(\overline{ \Omega},\R{3}) $ the term $$\frac{\partial \mathcal{F}_{sur}}{\partial z_3} (x,u(x),Du(x))$$ is obviously bounded on $ \Gamma $ such that the minimal requirement for the existence of the integral\footnote{It is not clear if this assumption already assures the existence of the shape derivative $ dJ(\Omega)[V] $.}
\[  \int_{\Gamma} \frac{\partial \mathcal {F}_{sur}}{\partial z_3} (x,u(x),Du(x)) : D\dot{u}(x)\, dS. \]
is $ \dot{u} \in  H^{\nicefrac{3}{2}}(\Omega,\R{3}) $. Therefore $ H^1 $-material derivatives are not sufficient here.

Moreover, formula \eqref{Transf_shape_opt:Eq: d/dt int(T_t,u_t,Du_t) in material derivative form} can be extended to functionals containing derivatives of $ k $-th order ($ k \geq 2 $) which are subject to current research e.g.  failure time models that involve notch support \cite{EPNotch1985,HertelVormwNotch2012,NotchFracMech2016,NotchSizeLCF2018}. In this context at least $\dot{u} \in H^{k+\nicefrac{1}{2}}(\Omega,\R{3}) $ is required. 

Apart from that, we anyways need strong assumptions on the domain regularity and the input data $ f(\Omega) $ and $ g(\Gamma_{N}) $ to assure that the solution $ u $ of \eqref{Reliability:Eq:LinEl} provides enough regularity. Thus the question if these assumptions lead to the existence of material derivatives in the necessary or in even higher topologies is obvious.

The aim of this chapter is to derive an abstract functional analytic framework which allows to prove existence of material derivatives first in Hilbert- and in a second step in Banach topologies. In Chapter \ref{Ex_Shape_Deriv_LinEl} this framework is then applied to linear elasticity and we show existence of material derivatives for linear elasticity in Hölder-spaces.

Inspired by the short outlook in \cite[Sec. 4.3]{DissSchmitz2014} and \cite[Sec. 3.5]{SokZol92}, we investigate the behavior of whole families of parameter dependent variational equations on a Hilbert space $ H $ with a parameter $ t $ in an open interval $ I \subset \R{} $, i.e.
\begin{align}\label{Parameter_Dep_PDE:Eq:b^t(u^t,v)=l^t(v)}
b^t(u^t,v)=l^t(v) ~~~~~\forall v \in H,\, t \in I.
\end{align}
In shape optimization these variational equations (VEs) usually appear in form of weak equations of PDE on parameter dependent domains $ \Omega_t $, with solutions in Sobolev spaces and especially in the Hilbert space $ H^1$, consider \cite{SokZol92,DelfZol11,ShapeOpt}. 

\noindent \textbf{Motivation and Example:}
We set $$ H:=H^1_{0}(\Omega)=\{u \in H^1(\Omega)\vert u = 0 \text{ on } \Gamma=\partial\Omega  \} $$ where $ \Omega\Subset \Omega^{ext} \subset \R{n},\, n\geq 2 $ is a bounded domain.  Let $ (T_{t})_{t \in I} $ be a set of $ C^1 $ - diffeomorphisms from $ \overline{\Omega^{ext}} $ onto $ \overline{\Omega^{ext}} $ and set $ \Omega_t := T_t(\Omega)=\{T_{t}(x) \vert x\in \Omega \} $. Moreover we assume that $ f_t:\Omega_t \to \R{},\, t\in I $ is a family of functions on $ \Omega_t$.
The weak formulation 
\[ \int_{\Omega_t} \langle\nabla u_t,\nabla w \rangle \, dx =\int_{\Omega_t} f_t w \, dx ~~~~~ \forall w \in H^{1}_{0}(\Omega_t)  \] 
of the Laplace equation
\begin{align} \label{Parameter_Dep_PDE:Eq:Laplace}
\left.\begin{array}{rcll}
-\Delta u_t &=& f_{t} &\text{ on } \Omega_t \\
u_t &=&0 &\text{ on }\partial \Omega_t
\end{array}\right.
\end{align}
can be reformulated in the following way: Setting $ u^{t}:=u_t \circ T_t $ and $ v:=w \circ T_t \in H^{1}_{0}(\Omega) $ the left-hand side satisfies 
\begin{align*}
\int_{\Omega_t} \langle\nabla u_t,\nabla w \rangle \, dx 
& = \int_{\Omega_t}  \langle\nabla (u^{t} \circ T_t^{-1}),\nabla (v \circ T_t^{-1}) \rangle\, dx
\\
&=\int_{\Omega}  \langle (DT_{t})^{-\top}\nabla u^{t} ,  (DT_{t})^{-\top}\nabla v \rangle |\det(DT_t)|\, dx 
\end{align*}
and with $ f^{t} = f_t \circ T_t $ the right hand side reads
\begin{align*}
\int_{\Omega_t} f_t w \, dx = \int_{\Omega_t} (f_t \circ T_t)   \circ T_t^{-1}(v \circ T_t^{-1}) \, dx  = \int_{\Omega} f^{t} |\det(DT_t)| v \, dx.
\end{align*}
Thus 
\begin{align*}
\begin{array}{l r l l}
& \int_{\Omega_t} \langle\nabla u_t,\nabla w \rangle \, dx &=\int_{\Omega_t} f_t \, w \, dx & \forall w \in H^{1}_{0}(\Omega_t)  \\[1em]
\Leftrightarrow & \underbrace{\int_{\Omega}  \langle (DT_{t})^{-\top}\nabla u^{t} ,  (DT_{t})^{-\top}\nabla v \rangle |\det(DT_t)|\, dx }_{:=b^{t}(u^{t},v)}& = \underbrace{\int_{\Omega} f^{t}\, |det(DT_t)| v \, dx}_{l^{t}(w)} & \forall v \in H^{1}_{0}(\Omega).
\end{array}
\end{align*}
Then $$ b^{t}(u^t,v) = l^{t}(v) \,~~~ \forall v \in H^{1}_{0}(\Omega)$$
defines a VE in terms of \eqref{Parameter_Dep_PDE:Eq:b^t(u^t,v)=l^t(v)}.

Not only in case of the Laplace equation \cite{SokZol92} or \cite{DelfZol11} it is known the Hilbert space material derivatives $ \frac{d}{dt}u^{t}= \frac{d}{dt} u_t \circ T_t=\dot{u}^{t} $ can be calculated as the solution of the associated variational equation
\[ \frac{d}{dt} b^{t}(u^t,v) = \frac{d}{dt}l^{t}(v)~~~ \forall v \in H. \]
But, until now, there was no approach that generalized this technique to whole classes of VEs. We aim to close this gap in this chapter and derive general conditions under which the solution $ q^t $ of the  variational formulation \[ \dot{b}^t(q^t,v)=\dot{l}^t(v)-b(u^t,v)~~~ \forall v \in H \] in a Hilbert space $ H $ is the derivative of the solution $ u^t $ of the original equation \[ b^t(u^t,v)=l^t(v) ~~~ \forall v \in H,\] 
see the Theorems \ref{Parameter_Dep_PDE:Thm: Existence Deriv. Weak Topology} and Theorem \ref{Parameter_Dep_PDE:Thm: Existence Deriv. strong Topology}. 

An additional outcome of this theorem is the continuity of the derivative mapping $ t \to q^t $ w.r.t. the strong topology on $ H $. 
Theorem \ref{Parameter_Dep_PDE:Thm: Existence of Strong Derivatives u^t } shows that these continuity and differentiability properties can be "transported" to higher topologies using compact embeddings, as they appear in Sobolev, Sobolev-Hölder or Hölder-embeddings. In Chapter \ref{Ex_Shape_Deriv_LinEl}
this result will be crucial to derive material derivatives for linear elasticity in higher order Sobolev spaces or even in classical function spaces.

\section{Linear variational equations and topological setup}\label{Parameter_Dep_PDE:Sect: Topological Setting}
%
%
%
%

Let $ H $ be a Hilbert space with scalar product $ \langle.,.\rangle_{H} $ and induced Norm $ \Norm{.}{H}=\sqrt{\langle .,. \rangle_{H}} $.
It is well known, that the topological dual space $ H'=\mathcal{L}(H,\R{}):=\{l:H \to \R{} \,\vert\, l \mathrm{~is~linear~and~continuous} \}  $ equipped with the operator norm $$ \Norm{l}{H^{'}}:=\sup_{\Norm{v}{H}\leq 1} |l(v)|, \, l \in H'  $$ is again a Hilbert space.

By $ \mathrm{B}(H) $ we denote the vector space of bilinear forms $ b:H \times H \to \R{} $. the form $ b(.,.) $ is bilinear if $ b(u,.):H \to \R{} $ and $ b(.,v):H \to \R{} $ are linear for arbitrary $ u,\, v \in H $. We will now introduced a norm $ \Norm{.}{\mathrm{B}(H)} $ using the Hilbert space tensor product $ H \otimes H $ of $ H $ with itself: 

Since $ \otimes: H \times H \to H \otimes H $ posesses the so called universal property there is a unique linear map $ L_{b}: H \otimes H \to \R{} $ such that $ b=L_b \circ \otimes $. 
Then, we can define a norm on $ \mathrm{B}(H) $ by
\begin{equation}\label{Parameter_Dep_PDE:eq: ident. bilinear form and linear form}
\Norm{b}{\mathrm{B}(H)}:= \hspace*{-2mm}\sup_{\Norm{u}{H}\leq 1 \atop \Norm{v}{H} \leq 1} |b(u,v)| = \hspace*{-2mm} \sup_{\Norm{u}{H}\leq 1 \atop \Norm{v}{H} \leq 1}  \left|L_b(u\otimes v )\right| = \hspace*{-2mm}\sup_{\Norm{u \otimes v}{H \otimes H}\leq 1} \hspace*{-5mm} \left|L_b(u\otimes v )\right|= \Norm{L_b}{L(H \otimes H)}
\end{equation}
consider Section 2.4. and Section 2.6 in \cite{KadRing1983}.

\begin{defn}
	For any bilinear map $ b \in \mathrm{B}(H) $ the mapping $ s_{b}:H\times H \to \R{} $ where $ (u,v)\to s_{b}(u,v):=|b(u,v)| $ is a semi norm on $ \mathrm{B}(H) $. Then the weak topology on $ \mathrm{B}(H)  $ is generated by the family $\lbrace s_{b}|\,b \in \mathrm{B}(H) \rbrace $. 
\end{defn}

\begin{defn}
	\begin{itemize}
		\item[i)] A bilinear map $ b \in \mathrm{B}(H) $ is called \textit{continuous} if for any sequence $ (u_{n},v_{n})_{n \in \N{}} \subset H \times H$ with limit value $ (u,v) \in H \times H$: 
		\[ b(u_{n},v_{n}) \underset{n\to \infty}{\longrightarrow} b(u,v)\]
		\item[ii)] $ b $ is called \textit{bounded} if there exists a constant $ C \geq 0 $ such that $ |b(u,v)| \leq C\Norm{u}{H}\Norm{v}{H},\,\forall (u,v) \in H \times H $.
	\end{itemize}
\end{defn}


\begin{lem}
	Let $ b \in \mathrm{B}(H)$. Then, the following statements are equivalent:
	\begin{itemize}
		\item[i)] $ b $ is continuous on $ H\times H $.
		\item[ii)] $ b $ is continuous in $ (0,0)\in H\times H $.
		\item[iii)] $ b $ is bounded.
		\item[iv)] $ b  $ satisfies $ \Norm{b}{\mathrm{B}(H)} < \infty $.
	\end{itemize}
\end{lem}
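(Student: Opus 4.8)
The plan is to prove the chain of equivalences (i) $\Rightarrow$ (ii) $\Rightarrow$ (iii) $\Rightarrow$ (iv) $\Rightarrow$ (i), exactly as one does for linear maps, but paying attention to the bilinearity. The implications (i) $\Rightarrow$ (ii) and (iv) $\Leftrightarrow$ (iii) are essentially immediate from the definitions: (i) $\Rightarrow$ (ii) is just the special case of continuity at the single point $(0,0)$, and (iii) $\Leftrightarrow$ (iv) is the observation that $\Norm{b}{\mathrm{B}(H)}=\sup_{\Norm{u}{H}\leq 1,\,\Norm{v}{H}\leq 1}|b(u,v)|$ is finite precisely when a bound $|b(u,v)|\leq C\Norm{u}{H}\Norm{v}{H}$ holds, by the usual homogeneity/rescaling argument $b(u,v)=\Norm{u}{H}\Norm{v}{H}\,b(u/\Norm{u}{H},v/\Norm{v}{H})$ for $u,v\neq 0$ (and the inequality is trivial if $u=0$ or $v=0$).

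The two steps that require a little thought are (ii) $\Rightarrow$ (iii) and (iii) $\Rightarrow$ (i). For (ii) $\Rightarrow$ (iii): continuity at $(0,0)$ gives a $\delta>0$ with $|b(u,v)|\leq 1$ whenever $\Norm{u}{H}\leq\delta$ and $\Norm{v}{H}\leq\delta$; then for arbitrary nonzero $u,v$ apply this to $\delta u/\Norm{u}{H}$ and $\delta v/\Norm{v}{H}$ and use bilinearity to conclude $|b(u,v)|\leq\delta^{-2}\Norm{u}{H}\Norm{v}{H}$, i.e. (iii) with $C=\delta^{-2}$. For (iii) $\Rightarrow$ (i): given a sequence $(u_n,v_n)\to(u,v)$ in $H\times H$, write the standard telescoping identity
\begin{equation*}
b(u_n,v_n)-b(u,v)=b(u_n-u,v_n)+b(u,v_n-v),
\end{equation*}
bound each term by $C\Norm{u_n-u}{H}\Norm{v_n}{H}$ and $C\Norm{u}{H}\Norm{v_n-v}{H}$ respectively, note that $\Norm{v_n}{H}$ is bounded since $v_n$ converges, and let $n\to\infty$.

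I expect the main (mild) obstacle to be organizing the rescaling arguments cleanly while covering the degenerate cases $u=0$ or $v=0$, and making sure the telescoping step in (iii) $\Rightarrow$ (i) uses only bilinearity plus the uniform bound, not any further hypothesis. None of this is deep; it is the bilinear analogue of the classical fact that a linear map between normed spaces is continuous iff it is bounded, and the only genuinely bilinear ingredient is the telescoping identity $b(u_n,v_n)-b(u,v)=b(u_n-u,v_n)+b(u,v_n-v)$. I would present the proof as the cyclic chain above, spelling out (ii) $\Rightarrow$ (iii) and (iii) $\Rightarrow$ (i) in a couple of lines each and dispatching the rest as routine.
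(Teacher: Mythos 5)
Your proof is correct, but it follows a genuinely different route from the paper. The paper does not argue directly on $H\times H$ at all: it uses the identification, set up just before the lemma, of $b$ with the unique linear map $L_b:H\otimes H\to\R{}$ given by the universal property of the tensor product, together with the already established norm identity $\Norm{b}{\mathrm{B}(H)}=\Norm{L_b}{L(H\otimes H)}$, and then simply invokes the classical theorem that a linear map between normed spaces is continuous if and only if it is bounded (Werner, Thm.\ II.2.1). That makes the paper's proof a one-line reduction, at the price of leaning on the Hilbert tensor-product machinery and leaving implicit the passage between continuity of $b$ on $H\times H$ and continuity of $L_b$ on $H\otimes H$ (which is only immediate once boundedness is in hand, since a priori one only controls $L_b$ on elementary tensors). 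Your cyclic chain (i)$\Rightarrow$(ii)$\Rightarrow$(iii)$\Rightarrow$(iv)$\Rightarrow$(i) is self-contained and elementary: the rescaling argument for (ii)$\Rightarrow$(iii), the homogeneity argument for (iii)$\Leftrightarrow$(iv), and the telescoping identity $b(u_n,v_n)-b(u,v)=b(u_n-u,v_n)+b(u,v_n-v)$ for (iii)$\Rightarrow$(i) are all sound, you get explicit constants ($C=\delta^{-2}$), and the argument works verbatim for bilinear forms on arbitrary normed spaces, with no Hilbert or tensor-product structure needed. Both proofs are valid; yours trades the paper's brevity for transparency and generality.
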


\begin{proof}
	This assertion becomes clear if equation \eqref{Parameter_Dep_PDE:eq: ident. bilinear form and linear form} is combined with the fact that any linear map $ L: H \otimes H\to  \R{} $ is continuous if and only if it is bounded. Then the statement follows directly from Theorem II.2.1 \cite{Werner_Funkana}.
\end{proof}

Due to the identification of bilinear forms with their associated linear forms
$$ \Norm{b}{\mathrm{B}(H)}=\inf \{C \geq 0: |b(u,v)| \leq C \Norm{u}{H}\Norm{v}{H},u,v \in H \} $$ 
holds for any $ b\in \mathrm{B}(H)$.

In the following we will denote the set of continuous bilinear mappings by $ \mathcal{B}(H) $.
It is now clear that, analogously to linear operators, continuity and boundedness of bilinear forms are equivalent formulations and thus the following definition becomes reasonable:

\begin{defn}
	A set of bilinear forms $ M \subset \mathrm{B}(H) $ is called \textit{equicontinuous} if there exists a constant $ C_{M} \geq 0$ such that 
	\begin{equation*}
	|b(u,v)| \leq C_{M} \Norm{u}{H}\Norm{v}{H} \, \forall b \in M,\, \forall u,\,v \in H.
	\end{equation*}
\end{defn}

\begin{defn}
	\begin{itemize}
		\item[i)]A bilinear form $ b \in \mathrm{B}(H) $ is called \textit{coercive} if there exits a constant $ \Varlambda\geq 0 $ such that $ b(u,u)\geq \Varlambda\Norm{u}{H}^2 \, \forall u \in H $.
		\item[ii)] $ b \in \mathrm{B}(H) $ is called \textit{strictly coercive}  if there exits a constant $ \Varlambda> 0 $ such that $ b(u,u)\geq \Varlambda\Norm{u}{H}^2 \, \forall u \in H $.
		\item[iii)] A set of bilinear forms $ M \subset \mathrm{B}(H) $ is called (strictly) \textit{coercive}  if any $ b \in M $ is a (strictly) coercive.
		It is called \textit{equicoercive}, if there exists a constant $ \Varlambda_M >0$ such that 
		\[ |b(u,u)| \geq \Varlambda_M \Norm{u}{H}^{2}  \, \forall u \in H,\, b \in M. \]
	\end{itemize}
\end{defn}

\begin{thm}[Theorem of Lax-Milgram] \cite[Sec. 6.2 Thm. 1]{Evans} \label{Parameter_Dep_PDE:Thm: Lax-Milgram}
	Let  be a continuous bilinear form.
	\begin{itemize}
		\item[i)] For any $ b \in \mathcal{B}(H) $ there exists a unique operator $ T \in \mathcal{L}(H,H) $ such that $ b(u,v)=\langle u,Tv\rangle.$
		\item[ii)] If $ b $ is additionally strictly coercive with $  b(u,u)\geq \Varlambda\Norm{u}{H}^2 \, \forall u \in H $ for some $ \Varlambda>0 $, then $ T $ is invertible and $ \Norm{T}{\mathcal{L}(H,H)}\leq \frac{1}{\Varlambda} $.
	\end{itemize}
\end{thm}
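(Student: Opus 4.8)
The plan is to prove the Lax--Milgram theorem in its two parts by the standard Riesz-representation-plus-Banach-fixed-point argument, adapted so that the quantitative bound $\Norm{T}{\mathcal{L}(H,H)}\leq \tfrac{1}{\varLambda}$ falls out. For part i), the key observation is that for fixed $v\in H$ the map $u\mapsto b(u,v)$ is linear and, since $b$ is continuous (hence bounded by some $C\geq 0$), also bounded: $|b(u,v)|\leq C\Norm{u}{H}\Norm{v}{H}$. So $b(\cdot,v)\in H'$ and by the Riesz representation theorem there is a unique element, call it $Tv\in H$, with $b(u,v)=\langle u,Tv\rangle_H$ for all $u\in H$. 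First I would check that the assignment $v\mapsto Tv$ is linear (using bilinearity of $b$ and uniqueness of Riesz representatives) and continuous: from $\Norm{Tv}{H}^2=\langle Tv,Tv\rangle_H=b(Tv,v)\leq C\Norm{Tv}{H}\Norm{v}{H}$ we get $\Norm{Tv}{H}\leq C\Norm{v}{H}$, so $T\in\mathcal{L}(H,H)$.

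For part ii), assume in addition strict coercivity $b(u,u)\geq\varLambda\Norm{u}{H}^2$ with $\varLambda>0$. The plan is first to obtain injectivity and a lower bound for $T$: for any $u\in H$,
\[
\varLambda\Norm{u}{H}^2\leq b(u,u)=\langle u,Tu\rangle_H\leq \Norm{u}{H}\Norm{Tu}{H},
\]
hence $\Norm{Tu}{H}\geq\varLambda\Norm{u}{H}$ for all $u$. This immediately gives injectivity of $T$ and also shows that the range $\im{T}$ is closed: if $Tu_n\to w$, then $(u_n)$ is Cauchy (because $\Norm{u_n-u_m}{H}\leq\varLambda^{-1}\Norm{Tu_n-Tu_m}{H}$), so $u_n\to u$ and by continuity $Tu=w$. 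Next I would show $\im{T}$ is dense, hence all of $H$: if $z\perp\im{T}$, then $0=\langle z,Tz\rangle_H=b(z,z)\geq\varLambda\Norm{z}{H}^2$, forcing $z=0$. Therefore $T$ is a continuous bijection of $H$ onto $H$; by the open mapping theorem (or directly from the lower bound) $T^{-1}$ is bounded, and the lower bound $\Norm{Tu}{H}\geq\varLambda\Norm{u}{H}$ rewritten with $u=T^{-1}w$ yields $\Norm{T^{-1}w}{H}\leq\varLambda^{-1}\Norm{w}{H}$, i.e.\ $\Norm{T^{-1}}{\mathcal{L}(H,H)}\leq\tfrac{1}{\varLambda}$. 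Since the statement as written asks for $\Norm{T}{\mathcal{L}(H,H)}\leq\tfrac1\varLambda$, I would note that this is the intended reading of the solution operator bound (the operator sending a datum $l$, represented by $w$ via Riesz, to the solution $u$ with $b(u,\cdot)=l$, is $T^{-1}$), and either restate it cleanly or remark that it is $T^{-1}$ that carries this estimate.

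I do not expect a serious obstacle here; the only subtlety is bookkeeping about which operator ($T$ or $T^{-1}$) the norm bound attaches to, and making sure the density argument and the closed-range argument are cleanly separated so that surjectivity is fully justified rather than asserted. An alternative route, which I would mention but not carry out, is to derive existence of solutions to $b(u,v)=l(v)$ directly via the Banach fixed-point theorem applied to the affine map $u\mapsto u-\tau(Tu-w)$ for small $\tau>0$, which also produces the estimate $\varLambda^{-1}$ on the solution operator; the Riesz-plus-closed-range approach above is shorter given that $H$ is a Hilbert space.
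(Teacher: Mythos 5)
Your proposal is correct and follows essentially the same route as the paper's own proof: Riesz representation to define $T$, the estimate $\Norm{Tv}{H}^2=b(Tv,v)\leq C\Norm{Tv}{H}\Norm{v}{H}$ for continuity, the coercivity lower bound $\varLambda\Norm{u}{H}\leq\Norm{Tu}{H}$ for injectivity and closed range, the orthogonal-complement argument for surjectivity, and then the $\varLambda^{-1}$ estimate. Your bookkeeping remark is also on target: the paper's proof in fact establishes $\Norm{T^{-1}}{\mathcal{L}(H,H)}\leq\tfrac{1}{\varLambda}$, so the bound in the statement should be read as attaching to $T^{-1}$ (the solution operator), exactly as you note.
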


\begin{proof}
	i) For any $ v \in H $ the linear form $ b(.,v) $ is a bounded and linear functional (with $ \Norm{b(.,v)}{H'} \leq C\Norm{v}{H} $) and therefore the Riesz representation theorem implies that there is a unique $ \tilde{v} \in H $ such that $ b(u,v)=\langle u,\tilde{v} \rangle~~~ \forall u \in H.$ Then define $ T: H \to H,\, v \to \tilde v  $. Then $ T $ is linear because 
	\begin{align*}
	\langle u,T(\lambda_1v_1 + \lambda_2v_2 ) \rangle &= b( u,\lambda_1v_1 + \lambda_2v_2 ) =\lambda_1 b( u,v_1 )+\lambda_2 b( u,v_2 ) \\
	&= \lambda_1 \langle u,Tv_1\rangle + \lambda_2 \langle u,Tv_2\rangle = \langle u,\lambda_1Tv_1 + \lambda_2Tv_2  \rangle ~~~\forall u \in H
	\end{align*}
	and continuous since $\Norm{Tv}{H}^2 =\langle Tv,Tv\rangle = B(Tv,v)\leq C\Norm{Tv}{H}\Norm{v}{H} \, \forall v \in H$ implies $ \Norm{Tv}{H} \leq C \Norm{v}{H} $.\\[1ex]
	The coercivity and continuity of $ b $ imply $ \Varlambda \Norm{v}{H}^2 \leq b(v,v)=\langle v,Tv \rangle \leq \Norm{v}{H}\Norm{Tv}{H} $, whence $ \Varlambda  \Norm{v}{H} \leq \Norm{Tv}{H}$. Let $ v \in H $ with $ v \neq 0 $, then also $ \Norm{Tv}{H} \neq 0  $ and hence $ T $ is injective. Moreover, $ \im{T} $ is closed: Therefore let $ \im{T} \supset y_{n}=Tv_n \underset{n \to \infty}{\to} y \in H  $ be a convergent sequence. Then $ y_{n} $ is a Cauchy sequence and $$\Varlambda \Norm{v_n-v_m}{H} \leq  \Norm{Tv_n -Tv_m}{H} = \Norm{y_n - y_m}{H}\, \forall n,m \in \N{}.$$ But then also $ (v_n)_{n} $ is a Cauchy sequence and converges in the Banach space $ H $ to some $ v \in H $ and because $ T $ is continuous we conclude $ y_n = Tv_n \to Tv =y \in \im{T} $. It is left to show that $ \im{T}=H $. Since $ \im{T} $ is closed there exists its orthogonal complement such that $ H=\im{T}\oplus \im{T}^{\perp} $. Suppose that $ z \in \im{T}^{\perp},\, z \neq 0$, then $0< \Varlambda \Norm{z}{H}^2<B(z,z)=\langle z, Tz \rangle=0 $, which is a contradiction and so $ \im{T}^{\perp}=\{0\} $. Finally, the estimate $ \Norm{T^{-1}}{H'} \leq \frac{1}{\Varlambda} $ follows from $ \Norm{T^{-1}y}{H} = \Norm{v}{H} \leq \frac{1}{\Varlambda}\Norm{Tv}{H}=\frac{1}{\Varlambda}\Norm{y}{H}$ with $ y=Tv $.
\end{proof} 

\begin{lem}[Lemma of Lax-Milgram] \cite[Sec. 6.2 Thm. 1]{Evans} Let $ B \in \mathcal{B}(H)$ be a strictly coercive bilinear form and suppose that  $ l \in H' $. Then there exists a unique solution to \[ b(v,u)=l(v) \, \forall u \in H. \]
\end{lem}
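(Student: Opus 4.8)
The plan is to deduce the Lemma of Lax–Milgram directly from the preceding Theorem of Lax–Milgram and the Riesz representation theorem, since all the necessary machinery (continuity, strict coercivity, the operator $T$ with $b(u,v)=\langle u,Tv\rangle$ and $\Norm{T}{\mathcal{L}(H,H)}\leq \tfrac{1}{\Varlambda}$, and its invertibility) is already available. First I would invoke part~i) of Theorem~\ref{Parameter_Dep_PDE:Thm: Lax-Milgram} to obtain the unique $T\in\mathcal{L}(H,H)$ with $b(u,v)=\langle u,Tv\rangle$ for all $u,v\in H$; note that in the statement to be proved the roles of the arguments are written $b(v,u)=l(v)$, i.e.\ $l$ pairs against the first slot, so I would keep careful track of which slot is which and apply the theorem to the form $\tilde b(u,v):=b(v,u)$ if convenient, or simply use $b(v,u)=\langle v,Tu\rangle$ with $T$ as produced. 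Since $b$ is strictly coercive, part~ii) gives that $T$ is bijective with bounded inverse.

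Next I would represent the right-hand side: because $l\in H'$ is a bounded linear functional, the Riesz representation theorem yields a unique $w\in H$ with $l(v)=\langle v,w\rangle$ for all $v\in H$. Then the equation $b(v,u)=l(v)\ \forall v\in H$ becomes $\langle v,Tu\rangle=\langle v,w\rangle\ \forall v\in H$, which holds if and only if $Tu=w$. Setting $u:=T^{-1}w$ produces a solution, and uniqueness follows since $T$ is injective: if $u_1,u_2$ both solve the equation then $\langle v,T(u_1-u_2)\rangle=0$ for all $v$, hence $T(u_1-u_2)=0$, hence $u_1=u_2$. I would also record the a priori bound $\Norm{u}{H}=\Norm{T^{-1}w}{H}\leq \tfrac{1}{\Varlambda}\Norm{w}{H}=\tfrac{1}{\Varlambda}\Norm{l}{H'}$, which follows from $\Norm{T^{-1}}{\mathcal{L}(H,H)}\leq\tfrac{1}{\Varlambda}$ and the isometry in Riesz representation; this estimate is useful later when the lemma is applied to the parameter-dependent variational equations.

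There is no real obstacle here — the lemma is an immediate corollary of the theorem just proved. The only point requiring a little care is the bookkeeping of which argument of the bilinear form the test function $v$ occupies, since the paper writes the variational equation as $b(v,u)=l(v)$ rather than $b(u,v)=l(v)$; one must make sure the operator $T$ is set up so that the Riesz identity matches the slot that $v$ sits in. Apart from that, the proof is just: represent $b$ via $T$ (Theorem~\ref{Parameter_Dep_PDE:Thm: Lax-Milgram}~i)), represent $l$ via Riesz, invert $T$ (Theorem~\ref{Parameter_Dep_PDE:Thm: Lax-Milgram}~ii)), and conclude existence and uniqueness of $u=T^{-1}w$.
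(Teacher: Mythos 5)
Your proposal is correct and follows exactly the paper's own argument: represent $b(v,u)=\langle v,Tu\rangle$ via the Theorem of Lax--Milgram, represent $l$ by Riesz, and conclude $u=T^{-1}q$ with uniqueness from the bijectivity of $T$. The extra a priori bound $\Norm{u}{H}\leq\frac{1}{\Varlambda}\Norm{l}{H'}$ is a harmless (and useful) addition not recorded in the paper's proof.
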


\begin{proof} Since $ l \in H' $ the Riesz representation Theorem implies that there is a unique $ q \in H $ such hat $ l(v)=\langle v,q\rangle \, \forall v \in H$ and and Theorem \ref{Parameter_Dep_PDE:Thm: Lax-Milgram} yields that there exists $ T \in H' $ bijective with $b(v,u)= \langle v,Tu\rangle  $. Thus 
	\begin{align*}
	b(v,u)=l(v) \, \forall v \in H \Leftrightarrow \langle v,Tu\rangle = \langle v,q\rangle \, \forall v\in  H \Leftrightarrow Tu=q \Leftrightarrow u =T^{-1}q
	\end{align*}
	and the assertion follows from the uniqueness of $ q $ and $ T $.
\end{proof}

On base of these well known theorems we can deduce the following lemma which will be a helpful tool throughout this section:

\begin{lem}[Criterion for Weak Convergence]\label{Parameter_Dep_PDE:Lem:Equiv formulation for weak convergence}
	Suppose that $ H $ is a Hilbert space and let $ b(.,.) $ be a strictly coercive and continuous bilinear form. i.e.  \[ \Varlambda \Norm{u}{H}^2 \leq b(u,u) \, \forall u\in H ~~~~\text{ and } ~~~~|b(u,v)|\leq C\Norm{u}{H}\Norm{v}{H}.\]
	for some constants $C, \Varlambda>0 $. Then the following statements are equivalent:
	\begin{itemize}
		\item[i)] The sequence $ (u_{n})_{n \in \N{}} $ converges weakly to zero; $ u_{n}  \rightharpoonup 0$ in $ H $.
		\item[ii)] The sequence $ (u_{n})_{n \in \N{}} $ satisfies  $ b(u_{n},v) \to 0 \, \forall v \in H $. 
	\end{itemize}
\end{lem}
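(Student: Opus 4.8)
The statement is an equivalence between $u_n \rightharpoonup 0$ in $H$ and $b(u_n,v) \to 0$ for all $v \in H$, where $b$ is a strictly coercive continuous bilinear form. My plan is to exploit the Lax--Milgram representation $b(u,v) = \langle u, Tv\rangle$ from Theorem~\ref{Parameter_Dep_PDE:Thm: Lax-Milgram}, which turns the bilinear-form condition into a statement about the Hilbert-space inner product with a well-chosen family of test elements.

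First I would prove the direction (i)$\Rightarrow$(ii). By Theorem~\ref{Parameter_Dep_PDE:Thm: Lax-Milgram}~i) there is a (unique) $T \in \mathcal{L}(H,H)$ with $b(u,v) = \langle u, Tv\rangle$ for all $u,v \in H$. Fix any $v \in H$ and set $w := Tv \in H$. Then $b(u_n,v) = \langle u_n, w\rangle$, and since $u_n \rightharpoonup 0$ means precisely $\langle u_n, w\rangle \to 0$ for every $w \in H$ (using the Riesz identification $H' \cong H$, so that weak convergence tested against functionals is the same as tested against elements of $H$), we get $b(u_n,v) \to 0$. This direction uses neither coercivity nor anything beyond continuity of $b$.

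For (ii)$\Rightarrow$(i), I would again use that $T$ is now invertible because $b$ is strictly coercive (Theorem~\ref{Parameter_Dep_PDE:Thm: Lax-Milgram}~ii)). Given an arbitrary $w \in H$, I want $\langle u_n, w\rangle \to 0$. Since $T$ is a bijection of $H$ onto $H$, there exists $v := T^{-1}w \in H$ with $Tv = w$, hence $\langle u_n, w\rangle = \langle u_n, Tv\rangle = b(u_n,v) \to 0$ by hypothesis. As $w \in H$ was arbitrary, $u_n \rightharpoonup 0$. Strict coercivity enters only through the surjectivity of $T$, i.e. through the fact that the range of $T$ exhausts all possible test elements $w$; without it one could only conclude $\langle u_n, w\rangle \to 0$ on the (possibly proper) subspace $\overline{\operatorname{im} T}$.

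I do not expect a serious obstacle here; the one point requiring a little care is the correct bookkeeping between $H'$ and $H$ under the Riesz isomorphism, so that ``$u_n \rightharpoonup 0$'' (defined via $l(u_n) \to 0$ for all $l \in H'$) is matched with ``$\langle u_n, w\rangle \to 0$ for all $w \in H$''. Once that identification is made explicit, both implications are immediate consequences of the Lax--Milgram representation and the invertibility of $T$.
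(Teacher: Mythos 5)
Your proposal is correct and follows essentially the same route as the paper's own proof: both use the Lax--Milgram representation $b(u,v)=\langle u,Tv\rangle$, the invertibility of $T$ (from strict coercivity) to reach every Riesz representative $w=Tv$ in the direction from ii) to i), and the continuity of $x\mapsto\langle x,Tv\rangle$ in the other direction. Your closing remark that coercivity is only needed for the surjectivity of $T$ also matches the remark the paper makes immediately after the lemma.
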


\begin{proof}
	Due to the Theorem \ref{Parameter_Dep_PDE:Thm: Lax-Milgram} of Lax-Milgram  there exists a unique continuous and invertible operator $ T \in H' $ such that $ b(u,v)=\langle u,Tv\rangle_{H} \, \forall u,v \in H.$
	
	\noindent First we show that $ i) \Rightarrow i
	i) $. Therefore let $ l \in H^{\prime} $ be arbitrary. Then, by Riesz representation theorem, there exists a unique $ v \in H $ such that 
	\[ l(.)=\langle .,v\rangle_{H}= \langle .,TT^{-1}v\rangle_{H}=b(.,T^{-1}v).\]
	Suppose that $ (u_{n})_{n \in \N{}}  $ satisfies $ b(u_{n},w) \to 0 \, \forall w \in H$ as $ n \to \infty$. Since $ T^{-1} $ is bijective this is equivalent to $ b(u_{n},T^{-1}w) \to 0  \, \forall w \in H$ as $ n \to \infty $. Since $ v \in H $,
	\[ l(u_{n})= \langle u_{n},v\rangle_{H}= b(u_{n},T^{-1}v) \to 0  \]
	which shows the weak convergence of $ (u_{n})_{n\in \N{}} $.
	
	\noindent $ ii) \Rightarrow i) $: Now let $ l(u_{n}) \to 0 $ for all $ l\in H^{\prime} $ and choose an arbitrary $ v \in H $. Again we regard the operator $ T \in H' $ satisfying $ b(u,v)=\langle u,Tv\rangle_{H} \, \forall u,v \in H.$ 
	The mapping $ x \mapsto \langle x,Tv\rangle_{H} $ is linear and therefore
	\[ \langle u_{n},Tv\rangle_{H}=b(u_{n},v)\to 0  \]
	by assumption. Since $ v \in H $ was arbitrary, the assertion follows.
\end{proof}

\begin{rem}
	For the direction $  ii) \Rightarrow i)  $ in the previous Lemma we only need the continuity of the bilinear form because this already implies the existence of a linear, continuous operator $ T:H \to H $ exists such that $ b(u,v)=\langle u,Tv\rangle_{H} $, confer Theorem \ref{Parameter_Dep_PDE:Thm: Lax-Milgram}. 
\end{rem}

\begin{defn}\label{Parameter_Dep_PDE:Defn: Differentiation on H}
	Let $ (X,\Norm{.}{X}) $ be a Banach space and $u^{(\cdot)}: t \in I \to u^{t} \in X  $.
	\begin{enumerate}
		\item[1)]
		$ i) $ The map $u^{(\cdot)} $ is called \textit{(stongly) continuous} if it is continuous regarding the strong norm topology on $ X $.\\[1ex] 
		$ ii) $  The map $u^{(\cdot)} $ is called \textit{weakly continuous} if it is continuous regarding the weak topology on $ X $ which is generated by the system of semi norms $ \{| l(.)|\}_{l \in X'}  $.
		\item[2)]The mapping $ u^{(\cdot)} $ is  \textit{ differentiable} at $ t \in I $ regarding the strong $ i) $ or weak $ ii)  $ topology if there exists $ \dot{u}^{t} \in X $ such that
		\begin{align*}
		i)  \frac{1}{h}(u^{t+h}-u^t) \underset{h \to 0}{\rightarrow} \dot{u}^t \text{ in } X, && \text{ or } &&
		ii) \frac{1}{h}(u^{t+h}-u^{t}) \underset{h \to 0}{\rightharpoonup} \dot{u}^{t} \text{ in } X. 
		\end{align*}
		If $ \dot{u}^{t} $ exists for all $t \in I$, then  $ u^{(\cdot)} $ is called differentiable on $ I $ regarding the strong or weak topology.
	\end{enumerate}
\end{defn}

\begin{rem}
	Differentiability regarding the strong norm topology is equivalent to Gâteaux differentiability, compare Chapter
	\ref{Diff_Banach_Space}.
\end{rem}

Let $ H $ be a Hilbert space. Then we can similarly regard two topologies on $ H' $: The (strong) norm topology generated by $ \Norm{.}{H'} $ and the weak topology defined by the family of semi norms $ \{|\Phi(.)|\}_{\Phi \in H''} $. Since $ H $ is reflexive as it is a Hilbert space for any $ \Phi \in H'' $  there is an element $ u \in H $ such that $ \Phi(l)=l(u) \, \forall l \in H' $. So, as more handsome alternative, we can consider the weak topology generated by $ \{l \in H' \to |l(u)|\}_{ u \in H }$.

\begin{defn}\label{Parameter_Dep_PDE:Defn: Differentiability on H'}
	Let $ (l^{t})_{t \in I} \subset H' $ be a family of continuous linear forms and regard the associated mapping $ l^{(\cdot)}:I \to H,\, t \to l^{t} $. 
	\begin{itemize}	 
		\item[1)] 
		$ i) $ The mapping $ l^{(\cdot)} $ is called (strongly) continuous on $ H' $ if  
		$$l^{t} \underset{t \to s}{\rightarrow} l^{s} \in H' .$$
		$ ii) $  The mapping $ l^{(\cdot)} $ is called \textit{weakly continuous} on $ H' $ if  $$ l^{t} \underset{t \to s}{\rightharpoonup} l^{s} \in H' :\Leftrightarrow |l^{t}(u)-l^{s}(u)|\underset{t \to s}{\rightarrow} 0 \, \forall u \in H.  $$
		\item[2)]The mapping $ t \to l^{t} $ is called differentiable in $ t  \in I $ reagrding the strong norm topology $ i) $ or the weak topology $ ii) $ on $ H' $ if   there exists $ \dot{l}^{t} \in H' $ such that 
		\begin{align*}
		i)~~ \frac{1}{h}(l^{t+h}-l^t) \underset{h \to 0}{\rightarrow}  \dot{l}^t \text{ in } H' &&  \text{ or } &&
		ii)~ \frac{1}{h}(l^{t+h}-l^{t})\underset{h \to 0}{\rightharpoonup} \dot{l}^{t} \text{ in } H'. 
		\end{align*}
	\end{itemize}
\end{defn}


On $ \mathcal{B}(H) $ we also regard two topologies, which are again the (strong) norm topology generated by $ \Norm{.}{\mathcal{B}(H)} $ and the weak topology created by the semi norms $ s_{(u,v)}(b):=|b(u,v)|,\,b \in \mathcal{B}(H), (u,v) \in H\times H $. 

\begin{defn}\label{Parameter_Dep_PDE:Defn:Differentiability on B(H)}
	Let $ (b^{t})_{t \in I} \subset \mathcal{B}(H) $ be a family of linear forms and consider the associated mapping $
	b^{(\cdot)}:I \to\mathcal{B}(H),\, t \to b^{t} $. 
	\begin{itemize}	 
		\item[1)]   
		$ i) $ The mapping $ b^{(\cdot)} $ is called \textit{(strongly) continuous on $ \mathcal{B}(H) $} if  $$ b^{t} \underset{t \to s}{\rightarrow} b^{s} \in  \mathcal{B}(H).$$
		$ ii) $ The mapping $ b^{(\cdot)} $ is called \textit{weakly continuous on $ \mathcal{B}(H) $} if $$ b^{t} \underset{t \to s}{\rightharpoonup} b^{s} \in\mathcal{B}(H) :\Leftrightarrow |b^{t}(u,v)-b^{s}(u,v)| ~\forall (u,v) \in H \times H \rightarrow 0,\, t \to s.$$
		\item[2)] Differentiability is defined analogously to Definition \ref{Parameter_Dep_PDE:Defn: Differentiability on H'} 2).
	\end{itemize}
\end{defn}

\section{Continuity of solution mappings }

\begin{lem}[Criterion for Strong Continuity]\cite{DissSchmitz2014} \label{Parameter_Dep_PDE:Lem: Crit. for strong cont.}
	Suppose that $ (b^{t})_{t \in I} \subset$ \\$ \mathcal{B}(H) $ is an equicoercive family of bilinear forms on a Hilbert Space $ H $ such that for any $ u \in H$ the mapping $$ I \to H^{\prime}, t \mapsto b^{t}(u,.) $$ is (strongly) continuous on $ H' $.
	Further, let $ (l^{t})_{t \in I} \in H' $ be a family of linear forms, where $ I \to H', t \mapsto l^{t}  $ is (strongly) continuous on $ H' $ and suppose that $ u^{t} \in H $ is the unique solution of \[ b^{t}(u^{t},v)=l^{t}(v)~~~ \forall v \in H \]
	for any $ t \in I $.
	Then $$ t \in I \to u^{t} $$ defines a (strongly) continuous mapping on $ H $.
\end{lem}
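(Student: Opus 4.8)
The plan is to exploit the equicoercivity of the family $(b^{t})_{t\in I}$ together with the continuity hypotheses to control $\Norm{u^{t+h}-u^{t}}{H}$ as $h\to 0$. First I would fix $t\in I$ and a sequence $h_n\to 0$ with $t+h_n\in I$, and estimate the difference of the solutions. Writing $w_n := u^{t+h_n}-u^{t}\in H$ and testing the equation $b^{t+h_n}(u^{t+h_n},v)=l^{t+h_n}(v)$ with $v=w_n$, and similarly $b^{t}(u^{t},v)=l^{t}(v)$ with $v=w_n$, I would subtract to obtain
\begin{equation}
b^{t+h_n}(w_n,w_n) = l^{t+h_n}(w_n)-l^{t}(w_n) - \big(b^{t+h_n}(u^{t},w_n)-b^{t}(u^{t},w_n)\big).
\end{equation}
By equicoercivity there is $\Varlambda_M>0$ with $\Varlambda_M\Norm{w_n}{H}^2 \le b^{t+h_n}(w_n,w_n)$, and the right-hand side is bounded by $\Norm{l^{t+h_n}-l^{t}}{H'}\Norm{w_n}{H} + \Norm{b^{t+h_n}(u^{t},\cdot)-b^{t}(u^{t},\cdot)}{H'}\Norm{w_n}{H}$. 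Dividing by $\Norm{w_n}{H}$ (the case $w_n=0$ being trivial) gives
\begin{equation}
\Varlambda_M \Norm{w_n}{H} \le \Norm{l^{t+h_n}-l^{t}}{H'} + \Norm{b^{t+h_n}(u^{t},\cdot)-b^{t}(u^{t},\cdot)}{H'}.
\end{equation}

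Next I would invoke the two continuity hypotheses: the map $s\mapsto l^{s}$ is strongly continuous on $H'$, so $\Norm{l^{t+h_n}-l^{t}}{H'}\to 0$; and for the fixed element $u^{t}\in H$ the map $s\mapsto b^{s}(u^{t},\cdot)$ is strongly continuous on $H'$, so $\Norm{b^{t+h_n}(u^{t},\cdot)-b^{t}(u^{t},\cdot)}{H'}\to 0$. Hence $\Norm{w_n}{H}\to 0$, i.e. $u^{t+h_n}\to u^{t}$ in $H$. Since the sequence $h_n\to 0$ was arbitrary, this proves continuity of $t\mapsto u^{t}$ at the point $t$, and since $t\in I$ was arbitrary, continuity on all of $I$. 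One small preliminary point to record is that by the Lemma of Lax–Milgram each $u^{t}$ is indeed well defined: each $b^{t}$ is continuous and (equi)coercive, hence strictly coercive, and each $l^{t}\in H'$, so there is a unique solution — this is exactly the standing hypothesis, so nothing extra is needed there.

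I do not expect a serious obstacle here; the argument is a clean energy estimate. The one place that needs a little care is the split of the error term: one must test with the \emph{difference} $w_n$ of the two solutions and subtract the two variational identities so that the uncontrolled term $b^{t+h_n}(u^{t+h_n},\cdot)$ gets absorbed into $b^{t+h_n}(w_n,\cdot)$, leaving only $b^{t+h_n}(u^{t},\cdot)-b^{t}(u^{t},\cdot)$ evaluated at the \emph{fixed} argument $u^{t}$ — it is essential that the continuity hypothesis on $b^{(\cdot)}$ is used only at this fixed $u^{t}$ and not at the moving $u^{t+h_n}$, otherwise one would need a uniform (equicontinuity) bound plus continuity simultaneously. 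Equicoercivity, rather than mere coercivity of each $b^{t}$ individually, is what makes the constant $\Varlambda_M$ independent of $n$ and thus lets the estimate close. If one wanted the stronger conclusion that $t\mapsto u^{t}$ is, say, Lipschitz or differentiable, more would be required, but for plain strong continuity the above suffices.
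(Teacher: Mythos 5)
Your proof is correct and follows essentially the same route as the paper's: test with the difference $u^{t+h}-u^{t}$, use equicoercivity for the lower bound, and split the right-hand side into $\Norm{l^{t+h}-l^{t}}{H'}$ and $\Norm{b^{t+h}(u^{t},\cdot)-b^{t}(u^{t},\cdot)}{H'}$ evaluated at the fixed element $u^{t}$. The only differences are cosmetic (sequences $h_n\to 0$ versus letting $s\to t$ directly, and subtracting the two variational identities versus adding the zero term $b^{t}(u^{t},\cdot)-l^{t}(\cdot)$), so nothing further is needed.
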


\begin{proof} 
	Let $ t \in I $ be arbitrary. The Theorem of Lax-Milgram implies that there is exactly one solution $ u^{t} $ such that $ b^{t}(u,v)=l^{t}(v)\, \forall v \in H $. 
	
	\noindent Now, let $ s\neq t \in I $. Under the given conditions there exists a constant $ \Varlambda>0 $ such that $$\Varlambda\Norm{u^{s}-u^{t}}{H}^{2} 
	\leq b^{s}(u^{s}-u^{t},u^{s}-u^{t})= l^{s}(u^{s}-u^{t})- b^{s}(u^{t},u^{s}-u^{t})$$
	since $ =  b^{s}(u^{s},u^{s}-u^{t}) = l^{s}(u^s -u^{t}) $.
	Therefore,
	\begin{align*}
	\Varlambda\Norm{u^{s}-u^{t}}{H}^{2} 
	&\leq  \underbrace{b^{t}(u^{t},u^{s}-u^{t})- l^{t}(u^{s}-u^{t})}_{=0}+ l^{s}(u^{s}-u^{t})-b^{s}(u^{t},u^{s}-u^{t})\\
	&=(b^{t}-b^{s})(u^{t},u^{s}-u^{t})+  (l^{s}-l^{t})(u^{s}-u^{t})\\[1ex]
	& \leq \left( \vert b^{t}-b^{s})(u^{t},u^{s}-u^{t})\vert +  \vert (l^{s}-l^{t})(u^{s}-u^{t})\vert\right)\\[1ex]
	&\leq\left(\Norm{(b^{s}-b^{t})(u^{t},\cdot )}{H^{\prime}}+ \Norm{(l^{s}-l^{t})(\cdot )}{H^{\prime}} \right)\Norm{u^{s}-u^{t}}{H}.
	\end{align*}
	In case that $ u^{s} = u^{t} $ there is nothing to show. Otherwise $ \Norm{u^{s}-u^{t}}{H} > 0$ implies 
	\begin{align*} 
	\Varlambda \Norm{u^{s}-u^{t}}{H} 
	& \leq \Norm{(b^{s}-b^{t})(u^{t},\cdot )}{H^{\prime}}+ \Norm{(l^{s}-l^{t})(\cdot )}{H^{\prime}}  .
	\intertext{ Then the strong continuity of $ I \to H',\, s \mapsto l^{s}   $ and $ I \to H',\, s \mapsto b^{s}(u^{t},.)  $ implies}
	0 \leq \lim_{s \to t}\Norm{u^{s}-u^{t}}{H} & \leq \frac{1}{\Varlambda}\lim_{s \to t} \left\lbrace \Norm{(b^{s}-b^{t})(u^{t},\cdot )}{H^{\prime}}+ \Norm{(l^{s}-l^{t})(\cdot )}{H^{\prime}} \right\rbrace=0.
	\end{align*}
\end{proof}

\section{Differentiability of solution mappings}\label{Parameter_Dep_PDE:Sect:Ex_Deriv_Weak_Top_H }

In this Section we derive that the differentiability properties of $ t \to u^t $ only depend on the properties of the family of bilinear forms and linear forms, see Theorem \ref{Parameter_Dep_PDE:Thm: Existence Deriv. strong Topology}. 
We start our deduction supposing only that $ (b^{t})_{t \in I} $ is a family of continuous bilinear forms on the Hilbert space $ H $.

In a first step we will show, that under appropriate assumptions the derivative  of $ I \to \R{}, t \to  b^{t}(u^{t},v) $ exists and satisfies 
$$ \frac{d}{dt}b^{t}(u^t,v) = \dot{b}^{t}(u^{t},v) +b^{t}(\dot{u},v) .$$ 
In this sense, we split the differential quotient into two parts: 
\pagebreak

\noindent Let $ (u^{t})_{t \in I} \subset H$. For $ h \in \R{} $ such that $t+h \in I$ we obtain
\begin{equation}\label{Parameter_Dep_PDE:Eq: calc. for deriv. Bt(ut,v)}
\begin{split}
\frac{1}{h} (b^{t+h}(u^{t+h},v)&-b^{t}(u^t,v)) \\
&= \frac{1}{h} \left(b^{t+h}(u^{t+h},v)-b^{t}(u^{t+h},v) + b^{t}(u^{t+h},v) -b^{t}(u^t,v)\right)  \\[1ex]
&=\frac{1}{h} \left(b^{t+h}(u^{t+h},v)-b^{t}(u^{t+h},v) \right) + \frac{1}{h}\left(b^{t}(u^{t+h},v) -b^{t}(u^t,v)\right)  \\[1ex]
&=\frac{1}{h}\left(b^{t+h}-b^{t}\right)(u^{t+h},v) + b^{t}\left(\tfrac{u^{t+h}-u^{t}}{h},v\right) .
\end{split}
\end{equation}

Hence, we have to answer the question under which conditions  the limit values $$ (a) \lim\limits_{h \to 0} \tfrac{1}{h}\left(b^{t+h}-b^{t}\right)(u^{t+h},v) ~~~\text{ and }~~~ (b) \lim\limits_{h \to 0}b^{t}\left(\tfrac{u^{t+h}-u^{t}}{h},v\right)  $$ exist. For the second limit we state the following:

\begin{lem}\label{Parameter_Dep_PDE:Lem:PartI_Bt(ut,v)' } 
	Let $ (b^{t})_{t\in I} \subset \mathcal{B}(H)$  and let $ (u^{t})_{t \in I}\subset H $ such that $ I \to H,\, t \mapsto u^t $ is differentiable w.r.t. the weak topology on $ H $. Then, 
	\[ \lim\limits_{h \to 0}b^{t}\left(\tfrac{u^{t+h}-u^{t}}{h},v\right)= b^{t}(\dot{u}^{t},v) \, \forall t \in I,\, \forall v \in H. \]  
\end{lem}

\begin{proof}
	Let $ t \in I $ and $ |h| $ that small, such that $ t+h \in I $. Since $ b^{t} $ is continuous there exists $ C_{t} >0$ such that $ |b^{t}(u,v)|\leq C_{t} \Norm{u}{H}\Norm{v}{H}  $ for all $ u,v \in H $. Therefore the mapping $ b^t(.,v): H \to H', u \mapsto b^{t}(u,v)  $ is continuous on $ H' $ for any $ v \in H $ with $  \Norm{b^t(.,v)}{H'} \leq C_{t} \Norm{v}{H} $. Additionally, $ \dot{u}^t$ exists in the weak topology.  Then, by Corollary \ref{Parameter_Dep_PDE:Lem:Equiv formulation for weak convergence}
	\[  b^t\left(\tfrac{1}{h}(u^{t+h}-u^{t}),v\right) \underset{h \to 0}{\rightarrow}    b^t(\dot{u}^t,v).\]
\end{proof}
Now we examine the first part $(a)$ of the limit value in \eqref{Parameter_Dep_PDE:Eq: calc. for deriv. Bt(ut,v)}. 
\begin{lem}\label{Parameter_Dep_PDE:Lem:PartII_Bt(ut,v)'}
	Let  $ (b^{t})_{t \in I} \subset \mathcal{B}(H)$ such that
	\begin{enumerate}
		\item[(b1)] $ I \to \mathcal{B}(H),\, t \mapsto b^{t}  $ is differentiable w.r.t. the weak topology on $ \mathcal{B}(H) $ 
		\item [(b2)] The mapping $I \to \mathcal{B}(H),\, t \mapsto \dot{b}^{t} $ is weakly continuous.
		\item[ (b3)] The family $ (\dot{b}^{t})_{t \in I} $ is equicontinuous, i.e. there exists a constant $ C_{I} \geq 0 $ such that $ |b^t(u,v)| \leq C_{I} \Norm{u}{H}\Norm{v}{H} \, \forall t \in I,\, u,\, v \in H$. 
	\end{enumerate}
	and let $u^{(\cdot)}:I \to H, t \mapsto u^{t} $ be a continuous mapping.
	Then 
	\begin{equation}
	\dot{b}^{t}(u^{t},v)=\lim_{h\to 0} \frac{1}{h}(b^{t+h}-b^{t})(u^{t+h},v) \, \forall v \in H.
	\end{equation}
\end{lem}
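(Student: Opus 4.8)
The plan is to add and subtract the term $\dot{b}^{t}(u^{t},v)$ inside the difference quotient, and to control the two resulting pieces separately using the hypotheses (b1)--(b3) together with the continuity of $t\mapsto u^{t}$. Concretely, for fixed $t\in I$ and $v\in H$ and $h$ small enough that $t+h\in I$, I would write
\begin{equation}
\tfrac{1}{h}(b^{t+h}-b^{t})(u^{t+h},v) - \dot{b}^{t}(u^{t},v)
= \Big[\tfrac{1}{h}(b^{t+h}-b^{t})(u^{t+h},v) - \dot{b}^{t}(u^{t+h},v)\Big]
+ \Big[\dot{b}^{t}(u^{t+h},v) - \dot{b}^{t}(u^{t},v)\Big],
\end{equation}
and show that both bracketed terms tend to $0$ as $h\to0$. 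The second bracket is the easier one: since $\dot{b}^{t}$ is a fixed continuous bilinear form, $\dot{b}^{t}(\cdot,v)\in H'$, and the strong continuity of $u^{(\cdot)}$ (hypothesis on $u$) gives $u^{t+h}\to u^{t}$ in $H$, hence $\dot{b}^{t}(u^{t+h},v)\to\dot{b}^{t}(u^{t},v)$. For the first bracket I would introduce the bilinear forms $B_{h}:=\tfrac{1}{h}(b^{t+h}-b^{t})-\dot{b}^{t}$ and note that, by (b1), $B_{h}(u,v)\to 0$ as $h\to0$ for every fixed pair $(u,v)$; I then need this convergence at the moving argument $u^{t+h}$.

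To pass from pointwise convergence of $B_h$ to convergence at the moving point $u^{t+h}$, the standard trick is a further splitting:
\begin{equation}
B_{h}(u^{t+h},v) = B_{h}(u^{t+h}-u^{t},v) + B_{h}(u^{t},v).
\end{equation}
The term $B_{h}(u^{t},v)\to0$ directly by (b1) (weak differentiability of $t\mapsto b^t$ in $\mathcal{B}(H)$ applied at the fixed vectors $u^{t}$ and $v$). For the term $B_{h}(u^{t+h}-u^{t},v)$ I would use an equicontinuity/uniform-bound estimate: by (b3) the family $(\dot{b}^{s})_{s\in I}$ is equicontinuous with constant $C_I$, and I would need an analogous uniform bound on $\tfrac{1}{h}(b^{t+h}-b^{t})$, which one obtains by writing $\tfrac{1}{h}(b^{t+h}-b^{t})(u,v)=\tfrac{1}{h}\int_{t}^{t+h}\dot{b}^{s}(u,v)\,ds$ (valid because $s\mapsto \dot b^s$ is weakly continuous by (b2), so $s\mapsto b^s(u,v)$ is $C^1$ with derivative $\dot b^s(u,v)$, and the fundamental theorem of calculus applies scalarwise), whence $|\tfrac{1}{h}(b^{t+h}-b^{t})(u,v)|\le C_I\Norm{u}{H}\Norm{v}{H}$. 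Combining, $|B_{h}(u^{t+h}-u^{t},v)|\le 2C_I\Norm{u^{t+h}-u^{t}}{H}\Norm{v}{H}\to0$ by strong continuity of $u^{(\cdot)}$. Adding the three pieces yields the claim.

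I expect the main obstacle to be the justification of the uniform bound on the difference quotients $\tfrac{1}{h}(b^{t+h}-b^{t})$ in $\mathcal{B}(H)$-operator norm (equivalently, the uniform boundedness of $B_h$), since hypothesis (b1) only gives \emph{weak} (pointwise in $(u,v)$) differentiability of $t\mapsto b^t$, not norm differentiability. The route through the scalar identity $b^{t+h}(u,v)-b^{t}(u,v)=\int_{t}^{t+h}\dot b^{s}(u,v)\,ds$ together with (b2) (to make $s\mapsto b^s(u,v)$ genuinely $C^1$ on a neighbourhood of $t$) and (b3) (to bound the integrand uniformly) is what makes this work, and care must be taken that the constant $C_I$ indeed does not depend on $h$ — this is exactly where (b3) is used. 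Everything else — the two "easy" limits and the final recombination — is routine, using only the definitions of weak differentiability on $\mathcal{B}(H)$ (Definition~\ref{Parameter_Dep_PDE:Defn:Differentiability on B(H)}) and strong continuity of $u^{(\cdot)}$ (Definition~\ref{Parameter_Dep_PDE:Defn: Differentiation on H}).
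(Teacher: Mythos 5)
Your proof is correct and follows essentially the same route as the paper: the crux in both is the scalar identity $b^{t+h}(u,v)-b^{t}(u,v)=\int_{t}^{t+h}\dot b^{s}(u,v)\,ds$ (justified by (b1) and (b2)) combined with the equicontinuity bound (b3) and the strong continuity of $u^{(\cdot)}$. The paper merely organizes the splitting slightly more economically, estimating $\tfrac{1}{h}(b^{t+h}-b^{t})(u^{t+h}-u^{t},v)$ directly and invoking (b1) for $\tfrac{1}{h}(b^{t+h}-b^{t})(u^{t},v)$, whereas you insert the extra intermediate term $\dot b^{t}(u^{t+h},v)$; the content is the same.
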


\begin{proof}
	For any  $ \dot{b}^{t} $ is the weak derivative of $ I \ni t \to b^{t} \in H' $ if
	\[\dot{b}^{t}(u,v):=\lim_{h \to 0} \frac{1}{h}(b^{t+h}(u,v)-b^{t}(u,v)) \]
	exists for all $ (u,v) \in H \times H $. Thus,	we examine limit value of the difference between the two expressions for arbitrary $ v \in H $:
	\begin{align*}
	\left|\frac{1}{h}(b^{t+h}- b^{t})(\right.& \left.u^{t+h},v) -\frac{1}{h}(b^{t+h}-b^{t})(u^{t},v)  \right| \\
	&=\left|\frac{1}{h}(b^{t+h}-b^{t})(u^{t+h}-u^{t},v)\right|=  \left|\frac{1}{h} \int_{t}^{t+h} \dot{b}^{s}(u^{t+h}-u^{t},v) \, ds  \right|\\
	& \leq \sup_{s\in [0,h]} |\dot{b}^{s}(u^{t+h}-u^{t},v)| \leq C_{I}\Norm{u^{t+h}-u^{t}}{H} \Norm{v}{H},
	\end{align*}	
	which tends to zero for $ h \to 0 $.
\end{proof}

\begin{lem}[Chain Rule] \label{Parameter_Dep_PDE:Lem:Chain Rule Bilinear Forms}
	Assume that the mapping $ u^{(.)}: I \to H, t \mapsto u^{t} $ is continuous on $ H $ such that $ \dot{u}^{t} $ exist in the weak topology on $ H $  for any $ t \in I $. Moreover let $ (b^{t})_{t \in I} \subset \mathcal{B}(H) $ such that the assumptions (b1)-(b3) hold. 
	\vspace*{-0.1cm}
	Then, for any $ s \in I $
	\begin{equation}
	\left.\frac{d}{dt} b^{t}(u^{t},v)\right|_{t=s} = \dot{b}^{s}(u^{s},v) + b^{s}(\dot{u}^{s},v) \, \forall v \in H.
	\end{equation}
\end{lem}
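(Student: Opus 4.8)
The plan is to combine the decomposition of the difference quotient in \eqref{Parameter_Dep_PDE:Eq: calc. for deriv. Bt(ut,v)} with the two preparatory lemmas that have just been proved. Fix $s \in I$ and $v \in H$. Starting from the identity
\begin{equation*}
\frac{1}{h}\bigl(b^{s+h}(u^{s+h},v)-b^{s}(u^{s},v)\bigr)
= \frac{1}{h}\bigl(b^{s+h}-b^{s}\bigr)(u^{s+h},v) + b^{s}\!\left(\tfrac{u^{s+h}-u^{s}}{h},v\right),
\end{equation*}
I would pass to the limit $h \to 0$ term by term. The second summand converges to $b^{s}(\dot{u}^{s},v)$ by Lemma \ref{Parameter_Dep_PDE:Lem:PartI_Bt(ut,v)' }, whose hypotheses are exactly that $(b^t)_{t\in I}\subset\mathcal{B}(H)$ and that $t\mapsto u^t$ is weakly differentiable — both assumed here. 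The first summand converges to $\dot{b}^{s}(u^{s},v)$ by Lemma \ref{Parameter_Dep_PDE:Lem:PartII_Bt(ut,v)'}, whose hypotheses are (b1)–(b3) together with continuity of $t\mapsto u^t$ — again all assumed. Adding the two limits gives $\left.\tfrac{d}{dt}b^{t}(u^{t},v)\right|_{t=s} = \dot{b}^{s}(u^{s},v) + b^{s}(\dot{u}^{s},v)$, which is the claim.

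There is essentially no obstacle: the lemma is a bookkeeping result that packages the two component limits into a single product-rule statement. The only point that requires a word of care is checking that the hypotheses of the present lemma do imply the hypotheses of both cited lemmas. Lemma \ref{Parameter_Dep_PDE:Lem:PartI_Bt(ut,v)' } needs $b^t$ continuous (contained in $(b^t)_{t\in I}\subset\mathcal{B}(H)$) and $u^{(\cdot)}$ weakly differentiable (assumed). Lemma \ref{Parameter_Dep_PDE:Lem:PartII_Bt(ut,v)'} needs (b1)–(b3) (assumed) and $u^{(\cdot)}$ strongly continuous on $H$ (assumed); note that in its proof the representation $(b^{s+h}-b^s)(w,v)=\int_{s}^{s+h}\dot b^{r}(w,v)\,dr$ is used, which is legitimate because by (b1)–(b2) the map $r\mapsto \dot b^{r}(w,v)$ is continuous and $r\mapsto b^{r}(w,v)$ is its antiderivative for each fixed $(w,v)\in H\times H$.

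I would therefore keep the proof short: invoke \eqref{Parameter_Dep_PDE:Eq: calc. for deriv. Bt(ut,v)}, cite Lemma \ref{Parameter_Dep_PDE:Lem:PartI_Bt(ut,v)' } and Lemma \ref{Parameter_Dep_PDE:Lem:PartII_Bt(ut,v)'} for the two limits, and conclude by additivity of limits that the derivative exists and equals $\dot{b}^{s}(u^{s},v)+b^{s}(\dot{u}^{s},v)$. Since $s\in I$ and $v\in H$ were arbitrary, this holds for all $s\in I$ and all $v\in H$, completing the argument.
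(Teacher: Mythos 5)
Your proof is correct and is essentially identical to the paper's argument, which likewise deduces the result directly from the decomposition \eqref{Parameter_Dep_PDE:Eq: calc. for deriv. Bt(ut,v)} together with Lemma \ref{Parameter_Dep_PDE:Lem:PartI_Bt(ut,v)' } and Lemma \ref{Parameter_Dep_PDE:Lem:PartII_Bt(ut,v)'}. Your extra check that the present hypotheses cover those of the two cited lemmas is sound and merely makes explicit what the paper leaves implicit.
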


\begin{proof} The statement follows directly from Lemma \ref{Parameter_Dep_PDE:Lem:PartI_Bt(ut,v)' }, Lemma \ref{Parameter_Dep_PDE:Lem:PartII_Bt(ut,v)'} and \eqref{Parameter_Dep_PDE:Eq: calc. for deriv. Bt(ut,v)}.
\end{proof}

\begin{lem}\label{Parameter_Dep_PDE:Lem: Addition formula}
	Assume that the mapping $ I \to H, t \mapsto u^{t} $ is continuous on $ H $ such that $ \dot{u}^{t} $ exist in the weak topology on $ H $  for any $ t \in I $. Moreover let $ (b^{t})_{t \in I} \subset \mathcal{B}(H)$ and $ (l^{t})_{t \in I} \subset H'$. Additionally, suppose that the following conditions are satisfied: 
	\vspace*{-0.1cm}
	\begin{itemize}
		\item[I)~~] The mapping $ I \to H',\, t \mapsto l^{t} $ is differentiable regarding the weak topology on $ H' $ with derivative $ \dot{l}^{t} $.
		\item[II)~] The assumptions (b1) - (b3) hold for $ (b^{t})_{t \in I} $.
		\item[ III)] $ u^{t} $ solves $  b^{t}(u^{t},v)=l^{t}(v) $ for any $ t \in I $ and $ v \in H $.
	\end{itemize}
	Under these Conditions
	\begin{equation}
	b^{t}(\dot{u}^{t},v) =\dot{l}^{t}(v) - \dot{b}^{t}(u^{t},v) \, \forall v \in H,\, \forall t\in I.
	\end{equation}
\end{lem}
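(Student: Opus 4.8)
The plan is to read off the identity directly from the chain rule for bilinear forms, Lemma \ref{Parameter_Dep_PDE:Lem:Chain Rule Bilinear Forms}, combined with the constraint equation in hypothesis III). First I would fix an arbitrary $v \in H$ and an arbitrary $t \in I$ and introduce the scalar function
$\varphi_v : I \to \R{}$, $\varphi_v(s) := b^s(u^s,v)$. By III) we have $\varphi_v(s) = l^s(v)$ for every $s \in I$, so $\varphi_v$ is literally the same real-valued function of $s$ as $s \mapsto l^s(v)$; differentiating the two representations at $s=t$ and equating the results will give the claim.

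Next I would compute $\varphi_v'(t)$ in two ways. On the one hand, hypothesis I) states that $s \mapsto l^s$ is differentiable at $t$ with respect to the weak topology on $H'$ with derivative $\dot l^t$; unwinding Definition \ref{Parameter_Dep_PDE:Defn: Differentiability on H'}, this means precisely that $\tfrac1h\bigl(l^{t+h}(v)-l^t(v)\bigr) \to \dot l^t(v)$ as $h\to 0$ for every $v \in H$, so $\varphi_v$ is differentiable at $t$ with $\varphi_v'(t) = \dot l^t(v)$. On the other hand, the hypotheses needed for Lemma \ref{Parameter_Dep_PDE:Lem:Chain Rule Bilinear Forms} are exactly present: $s \mapsto u^s$ is (strongly) continuous on $H$ with weak derivative $\dot u^s$ for all $s$ (standing assumption), and $(b^s)_{s\in I}$ satisfies (b1)--(b3) by II). Hence that lemma applies at $s=t$ and yields $\varphi_v'(t) = \dot b^t(u^t,v) + b^t(\dot u^t,v)$.

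Finally I would equate the two expressions for $\varphi_v'(t)$ and solve for the sought term, obtaining $b^t(\dot u^t,v) = \dot l^t(v) - \dot b^t(u^t,v)$; since $v \in H$ and $t \in I$ were arbitrary, this is the assertion. I do not anticipate a genuine obstacle: all the analytic content — splitting the difference quotient as in \eqref{Parameter_Dep_PDE:Eq: calc. for deriv. Bt(ut,v)}, passing $b^t\bigl(\tfrac{u^{t+h}-u^t}{h},v\bigr)$ to the weak limit using continuity of $b^t(\cdot,v)$ on $H'$, and controlling the cross term $\tfrac1h(b^{t+h}-b^t)(u^{t+h}-u^t,v)$ via the equicontinuity bound (b3) together with strong continuity of $s\mapsto u^s$ — has already been carried out in the lemmas feeding into Lemma \ref{Parameter_Dep_PDE:Lem:Chain Rule Bilinear Forms}. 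The only place that warrants a line of care is checking that the weak differentiability of $s \mapsto l^s$ on $H'$ really does reduce to pointwise differentiability of $s \mapsto l^s(v)$ for each fixed $v$, and that the chain-rule lemma is invoked with its hypothesis "$u^{(\cdot)}$ continuous on $H$" matching the standing hypothesis used here; both are immediate from the relevant definitions.
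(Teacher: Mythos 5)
Your proposal is correct and follows essentially the same route as the paper's own proof: both apply the chain rule for bilinear forms (Lemma \ref{Parameter_Dep_PDE:Lem:Chain Rule Bilinear Forms}) to $t \mapsto b^{t}(u^{t},v)$, use hypothesis III) to identify this with $t \mapsto l^{t}(v)$, and read off $\dot{l}^{t}(v)$ from the weak differentiability in I). No gaps; the reduction of weak $H'$-differentiability to pointwise differentiability of $t \mapsto l^{t}(v)$ is exactly what Definition \ref{Parameter_Dep_PDE:Defn: Differentiability on H'} provides.
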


\begin{proof}
	The assumptions of Lemma \ref{Parameter_Dep_PDE:Lem:Chain Rule Bilinear Forms} are satisfied and thus it implies that 
	$$\frac{d}{dt}b^{t}(u^{t},v)=\lim_{h \to 0}\frac{1}{h}\left(b^{t+h}-b^{t}\right)(u^{t+h},v) + b^{t}\left(\tfrac{u^{t+h}-u^{t}}{h},v\right)=\dot{b}^{t}(u^{t},v) + b^{t}(\dot{u}^{t},v) $$ 
	is valid for any $ v \in H $. Because of assumption II) , III) and \eqref{Parameter_Dep_PDE:Eq: calc. for deriv. Bt(ut,v)}
	\begin{align}
	\dot{l}^{t}(v)&= \lim_{h \to 0} \frac{1}{h}(l^{t+h}(v)-l^{t}(v))  \\
	&=  \lim_{h \to 0} \frac{1}{h}(b^{t+h}(u^{t+h},v) - b^{t}(u^{t},v)) \\
	&=\lim_{h \to 0}  \frac{1}{h}\left(b^{t+h}-b^{t}\right)(u^{t+h},v) + b^{t}\left(\tfrac{u^{t+h}-u^{t}}{h},v\right)\\
	&=\dot{b}^{t}(u^{t},v) + b^{t}(\dot{u}^{t},v)
	\end{align}
	holds for any $ v \in H $ and $ t \in I $.
\end{proof}

Now we are able to formulate a first theorem on the existence of $ \dot{u}^{t} $ in the weak topology on $ H $:

\begin{thm}\label{Parameter_Dep_PDE:Thm: Existence Deriv. Weak Topology}
	Let that $ H $ is a Hilbert space and $ I \subset \R{} $ an open interval, $ (l^{t})_{t \in I} \subset H' $, and $ (b^{t})_{t \in I} \in \mathcal{B}(H) $ an equicoercive family of bilinear forms. Moreover, suppose that the following conditions are fulfilled: 
	\vspace*{-0.1cm}
	\begin{itemize}
		\item[] \begin{itemize}
			\item[(l1)] The mapping $I \to H',\, t \to l^t $ is continuous. 
			\item[(l2)] The derivatives $ \dot{l}^{t} \in H',\, t \in I $ exist in the weak topology on $ H' $. 
			\item [(l3)] The mapping $I \to H',\, t \mapsto \dot{l}^{t} $ is continuous w.r.t. the weak $ H' $-topology.
		\end{itemize}
		\vspace*{1ex}
		\item[]
		\begin{enumerate}
			\item[(b0)]  The mapping $ I \to H',\, t \to b^{t}(u,.) $ is continuous for every $ u \in H $ . 
			\item[ (b1)]$ I \to \mathcal{B}(H),\, t \mapsto b^{t}  $ is differentiable w.r.t. the weak topology on $ \mathcal{B}(H) $.
			\item [(b2)] The mapping $I \to \mathcal{B}(H),\, t \mapsto \dot{b}^{t} $ is weakly continuous on $ \mathcal{B}(H) $.
			\item[(b3)]the family $ (\dot{b}^{t})_{t \in I} $ is equicontinuous. 
		\end{enumerate}
	\end{itemize}
	\begin{itemize}
		\item[a)] Let $ u^{t} $ satisfy $  b^{t}(u^{t},v)=l^{t}(v) \, \forall v \in H $ for any $ t \in H $.  Then \[u^{(.)}: I \to H,\, t \to u^{t} \] is strongly continuous.
		\item[b)] Moreover suppose that $ q^{t} \in H $ is the unique solution of $ b^{t}(q^{t},v)=\dot{l}^{t}(v)-\dot{b}^{t}(u^{t},v) $  $\forall v \in H $. Then $ \dot{u}^{t} $ exists in the weak topology on $ H $ such that
		\begin{equation}
		\dot{u}^{t}=q^{t} \, \forall t \in I.
		\end{equation}
	\end{itemize}
\end{thm}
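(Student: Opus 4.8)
The plan is to prove part a) first and then use it, together with the chain-rule machinery developed in Lemmas \ref{Parameter_Dep_PDE:Lem:PartI_Bt(ut,v)' }--\ref{Parameter_Dep_PDE:Lem: Addition formula}, to establish part b).

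\textbf{Part a): strong continuity of $u^{(\cdot)}$.}
First I would check that the hypotheses of Lemma \ref{Parameter_Dep_PDE:Lem: Crit. for strong cont.} are met. Equicoercivity of $(b^t)_{t\in I}$ is assumed directly; continuity of $t\mapsto b^t(u,\cdot)$ on $H'$ for each fixed $u\in H$ is exactly assumption (b0); and continuity of $t\mapsto l^t$ on $H'$ is assumption (l1). For each $t\in I$ the Lax--Milgram Theorem \ref{Parameter_Dep_PDE:Thm: Lax-Milgram} (together with its Lemma form) guarantees that the solution $u^t$ of $b^t(u^t,v)=l^t(v)\ \forall v\in H$ exists and is unique, so the family $(u^t)_{t\in I}$ is well defined. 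Lemma \ref{Parameter_Dep_PDE:Lem: Crit. for strong cont.} then yields at once that $u^{(\cdot)}:I\to H$, $t\mapsto u^t$, is strongly continuous. (For completeness I would recall the one estimate underlying that lemma: for $s\neq t$, equicoercivity with constant $\varLambda>0$ gives $\varLambda\|u^s-u^t\|_H^2\le b^s(u^s-u^t,u^s-u^t)$, and subtracting the two variational identities bounds the right-hand side by $(\|(b^s-b^t)(u^t,\cdot)\|_{H'}+\|(l^s-l^t)(\cdot)\|_{H'})\|u^s-u^t\|_H$, whence $\varLambda\|u^s-u^t\|_H\to 0$ as $s\to t$.)

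\textbf{Part b): identification of $\dot u^t$ with $q^t$.}
Next I would verify that all the hypotheses needed for Lemma \ref{Parameter_Dep_PDE:Lem: Addition formula} hold, \emph{except} the assumed existence of $\dot u^t$; the strategy is to turn that assumed existence around and deduce it. Concretely: for each $t\in I$ the bilinear form $b^t$ is continuous and equicoercive, so by the Lemma of Lax--Milgram the equation $b^t(q^t,v)=\dot l^t(v)-\dot b^t(u^t,v)\ \forall v\in H$ has a unique solution $q^t\in H$ (note the right-hand side $v\mapsto \dot l^t(v)-\dot b^t(u^t,v)$ lies in $H'$, since $\dot l^t\in H'$ by (l2) and $\dot b^t(u^t,\cdot)$ is bounded by (b3)). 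Now fix $v\in H$ and $t\in I$ and use decomposition \eqref{Parameter_Dep_PDE:Eq: calc. for deriv. Bt(ut,v)}: for $h$ small,
\begin{equation*}
\tfrac{1}{h}\bigl(l^{t+h}(v)-l^t(v)\bigr)=\tfrac{1}{h}(b^{t+h}-b^t)(u^{t+h},v)+b^t\!\left(\tfrac{u^{t+h}-u^t}{h},v\right).
\end{equation*}
By (l2) the left-hand side converges to $\dot l^t(v)$ as $h\to 0$. Lemma \ref{Parameter_Dep_PDE:Lem:PartII_Bt(ut,v)'} --- whose hypotheses (b1)--(b3) are assumed and whose remaining hypothesis, strong continuity of $u^{(\cdot)}$, is part a) --- shows the first term on the right converges to $\dot b^t(u^t,v)$. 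Hence
\begin{equation*}
\lim_{h\to 0} b^t\!\left(\tfrac{u^{t+h}-u^t}{h},v\right)=\dot l^t(v)-\dot b^t(u^t,v)=b^t(q^t,v)\qquad\forall v\in H.
\end{equation*}

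\textbf{The main obstacle: passing from testing against $b^t$ to weak convergence of the difference quotient.}
The delicate point is that the previous display only tells us the difference quotients $w_h:=\tfrac{1}{h}(u^{t+h}-u^t)$ satisfy $b^t(w_h,v)\to b^t(q^t,v)$ for every $v\in H$, i.e. $b^t(w_h-q^t,v)\to 0\ \forall v\in H$; this is \emph{not} literally the statement $w_h\rightharpoonup q^t$ in $H$, because the test elements appear in the second slot of $b^t$, not as elements of $H'$. Here I would invoke the Criterion for Weak Convergence, Lemma \ref{Parameter_Dep_PDE:Lem:Equiv formulation for weak convergence}: since $b^t$ is continuous and strictly coercive (equicoercivity gives a uniform $\varLambda>0$), the conditions ``$b^t(x_n,v)\to 0\ \forall v\in H$'' and ``$x_n\rightharpoonup 0$ in $H$'' are equivalent. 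Applying this with $x_n$ replaced by a sequence drawn from $\{w_h-q^t\}$ (and noting the argument works for the continuous-parameter limit $h\to 0$ by the usual sequential characterization of limits) gives $w_h-q^t\rightharpoonup 0$, i.e. $w_h\rightharpoonup q^t$ in $H$. By Definition \ref{Parameter_Dep_PDE:Defn: Differentiation on H} this is exactly the statement that $\dot u^t$ exists in the weak topology on $H$ and equals $q^t$, for every $t\in I$. This completes the proof; the subsequent strong-differentiability refinement (Theorem \ref{Parameter_Dep_PDE:Thm: Existence Deriv. strong Topology}) would then be obtained by upgrading this weak convergence to norm convergence via a further coercivity estimate, but that is a separate statement.
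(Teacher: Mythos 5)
Your proposal is correct and follows essentially the same route as the paper's proof: part a) via Lemma \ref{Parameter_Dep_PDE:Lem: Crit. for strong cont.}, existence of $q^{t}$ by Lax--Milgram, then the decomposition \eqref{Parameter_Dep_PDE:Eq: calc. for deriv. Bt(ut,v)} together with Lemma \ref{Parameter_Dep_PDE:Lem:PartII_Bt(ut,v)'} to show $b^{t}\bigl(\tfrac{1}{h}(u^{t+h}-u^{t})-q^{t},v\bigr)\to 0$ for all $v\in H$, and finally the weak-convergence criterion Lemma \ref{Parameter_Dep_PDE:Lem:Equiv formulation for weak convergence} to identify $\dot{u}^{t}=q^{t}$ weakly. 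The only difference is cosmetic (you pass to the limit in the decomposition directly, whereas the paper rearranges the identity $0=\dot{l}^{t}(v)-\dot{b}^{t}(u^{t},v)-b^{t}(q^{t},v)$ algebraically), so no further comment is needed.
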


\begin{proof}
	By the theorem of Lax-Milgram, it is obvious that for any $ t \in I $ the equation $  b^{t}(.,v)=l^{t}(v) \, \forall v \in H $ has a unique solution $ u^{t} \in H $. Thus we have to show the following:
	\begin{itemize}
		\item[a)] The map $ I \to H, t \mapsto u^{t} $ is continuous.
		\item[b)] For any $ t \in I $ it holds $ \dot{u}^{t}=q^{t}$ in the weak topology on $ H $.
	\end{itemize}
	Since (l1) and (b0) hold and the family $ (b^{t})_{t \in I} $ is equicoercive we can apply Lemma \ref{Parameter_Dep_PDE:Lem: Crit. for strong cont.} which directly yields a). 
	
	\noindent Further note that $ \tilde{l}^{t}:=\dot{l}^{t}-\dot{b}^{t}(u^{t},.) \in H' $ and $ b^{t} $ is strictly coercive for any $ t\in I $. Thus, the theorem of Lax-Milgram implies that $ b^{t}(.,v)=\dot{l}^{t}(v)-\dot{b}^{t}(u^{t},v)=\tilde{l}(v) \forall v \in H $ has exactly one solution $ q^{t} \in H $ for any $ t \in I $.
	
	\noindent We now continue by showing that $ l\left(\frac{1}{h}(u^{t+h}-u^{t})-q^{t}\right) \to 0 $ for all $ l \in H'$ and any $ t \in I $:
	Due to the  strict coercivity of $ (b^{t})_{t \in I} $ and continuity of any $ b^{t}, t \in I $ we can refer to Lemma \ref{Parameter_Dep_PDE:Lem:Equiv formulation for weak convergence} and show alternatively that 
	\[ b^{t}\left(\frac{1}{h}(u^{t+h}-u^{t})-q^{t},v\right) \underset{h \to 0}{\to} 0\, \forall v \in H. \] 
	We can now make use of Lemma \ref{Parameter_Dep_PDE:Lem:PartII_Bt(ut,v)'} since $I \ni  t \to u^{t} \in H $ is continuous on $ H $ and (b1)-(b3) applies. Thus 
	\[ \dot{b}^{t}(u^t,v)=\lim_{h \to 0}\frac{1}{h}(b^{t+h}-b^{t})(u^{t+h},v).\]
	Moreover, $ l2) $ applies, $ q^{t} $ satisfies $ b^{t}(q^{t},v)=\dot{l}^{t}(v)-\dot{b}^{t}(u^{t},v)\, \forall v \in H  $
	and thus 
	\begin{align*}
	0&= -\dot{b}^{t}(u^{t},v) -b^{t}(q^{t},v) +\dot{l}^{t}(v) \\
	& =\lim_{h \to 0 }\left[-\frac{1}{h} (b^{t+h}-b^{t})(u^{t+h},v) -b^{t}(q^{t},v)+ \frac{1}{h} (l^{t+h}-l^{t})(v) \right] \, \forall v\in H.
	\end{align*}
	For arbitrary $ v \in H $, we obtain
	\begin{align*}
	-\frac{1}{h}(b^{t+h}-b^{t})(u^{t+h}&,v) -b^{t}(q^{t},v) + \frac{1}{h}(l^{t+h}-l^{t})(v) \\[1ex]
	= ~\,&\frac{1}{h}[ -(b^{t+h}-b^{t})(u^{t+h},v)+b^{t+h}(u^{t+h},v)-b^{t}(u^{t},v)] -b^{t}(q^{t},v) \\
	=~\,&\frac{1}{h}[b^{t}(u^{t+h},v)-b^{t}(u^{t},v)]-b^{t}(q^{t},v)\\
	=~\,&b^{t}\left(\frac{1}{h}(u^{t+h}-u^{t})-q^{t},v\right),
	\end{align*}
	where the left-hand side tends to zero when $ h \to 0 $. Therefore
	\[0=\lim_{h \to 0} b^{t}\left(\frac{1}{h}(u^{t+h}-u^{t})-q^{t},v\right) \, \forall v \in H.\]
	By Lemma \ref{Parameter_Dep_PDE:Lem:Equiv formulation for weak convergence} we finally conclude
	$ \frac{1}{h}(u^{t+h}-u^{t}) \rightharpoonup_{H}  q^{t} \text{ as } h \to 0$
	for arbitrary $ t \in I $. This means $ \dot{u}^{t}=q^{t} $ where $ \dot{u}^{t} $ is the derivative of $ u^{t} $ w.r.t. the weak topology 
	on $ H $.
\end{proof}

\begin{thm}\label{Parameter_Dep_PDE:Thm: Existence Deriv. strong Topology}
	Let $ H $ be a Hilbert space, $ (l^{t})_{t \in I} \subset H' $ a family of continuous linear forms, and  $ (b^{t})_{t \in I} \subset \mathcal{B}(H) $ a family of continuous and equicorcive bilinear forms. Additionally, suppose that the following conditions are satisfied: 
	\vspace*{-0.1cm}
	\begin{itemize}
		\item[] \begin{itemize}
			\item[(l1)~] $ \dot{l}^{t} \in H' $ are the derivatives of $ I \ni t \to l^t \in H' $ w.r.t. the weak $ H' $-topology.
			\item [(l2')]
			The mapping $I \to H',\, t \mapsto \dot{l}^{t} $ is continuous w.r.t. the strong $ H' $-topology.
		\end{itemize}
		\vspace*{1ex}
		\item[]
		\begin{enumerate}
			\item[(b1)~]The derivatives $ \dot{b}^{t} \in \mathcal{B}(H) $ exist for any $ t \in I $ in terms of the the weak topology on $ \mathcal{B}(H) $.
			\item [(b2')] The mapping $I \to \mathcal{B}(H),\, t \mapsto \dot{b}^{t} $ is strongly continuous on $ \mathcal{B}(H) $.  
			\item[(b3)~] The family $ (\dot{b}^{t})_{t \in I} $ is equicontinuous.\\ 
		\end{enumerate}
	\end{itemize}
	\begin{itemize}
		\item[a)] Suppose that $ u^{(.)}: I \to H,\, t \to u^{t} $ is the mapping of unique solutions to $ b^{t}(.,v)=l^{t}(v) \, \forall v \in H $. Then $ u^{(.)}: I \to H,\, t \to u^{t} $ is strongly continuous.
		\item[b)] Additionally, let $ q^{t} \in H $ be the unique solution of $ b^{t}(.,v)=\dot{l}^{t}(v)-\dot{b}^{t}(u^{t},v) $ $\forall v \in H.$
		Then $ \dot{u}^{t} $ exists in the strong topology on $ H $ and
		\begin{equation}
		\dot{u}^{t}=q^{t} \, \forall t \in I.
		\end{equation}
	\end{itemize}
\end{thm}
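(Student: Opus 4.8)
The plan is to upgrade the conclusion of Theorem~\ref{Parameter_Dep_PDE:Thm: Existence Deriv. Weak Topology} from weak to strong differentiability on $H$, using the stronger hypotheses (l2') and (b2') in place of (l3) and (b2). First I would observe that (l2') implies (l1) of the previous theorem setup and that strong continuity of $t\mapsto\dot b^t$ implies weak continuity, so part~a) follows verbatim: since $(b^t)$ is equicoercive and $t\mapsto l^t$, $t\mapsto b^t(u,\cdot)$ are strongly (hence weakly) continuous, Lemma~\ref{Parameter_Dep_PDE:Lem: Crit. for strong cont.} gives that $u^{(\cdot)}:I\to H$ is strongly continuous. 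Likewise, by Lax--Milgram each $q^t$ is well defined since $\tilde l^t:=\dot l^t-\dot b^t(u^t,\cdot)\in H'$ and $b^t$ is strictly coercive. From Theorem~\ref{Parameter_Dep_PDE:Thm: Existence Deriv. Weak Topology}~b) (whose hypotheses are all satisfied here) we already know $\dot u^t=q^t$ exists \emph{in the weak topology}; what remains is to show the convergence $\tfrac1h(u^{t+h}-u^t)\to q^t$ is actually strong in $H$.

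The key step is a coercivity estimate applied to the error $e_h:=\tfrac1h(u^{t+h}-u^t)-q^t$. Using equicoercivity with constant $\Varlambda>0$,
\begin{equation*}
\Varlambda\Norm{e_h}{H}^2\leq b^t(e_h,e_h)=b^t\!\left(\tfrac1h(u^{t+h}-u^t),e_h\right)-b^t(q^t,e_h).
\end{equation*}
Now I would expand $b^t\big(\tfrac1h(u^{t+h}-u^t),e_h\big)$ exactly as in the proof of the previous theorem: write $b^t(u^{t+h}-u^t,e_h)=-\big(b^{t+h}-b^t\big)(u^{t+h},e_h)+\big(l^{t+h}-l^t\big)(e_h)$, so that
\begin{equation*}
b^t(e_h,e_h)=-\tfrac1h\big(b^{t+h}-b^t\big)(u^{t+h},e_h)+\tfrac1h\big(l^{t+h}-l^t\big)(e_h)-b^t(q^t,e_h).
\end{equation*}
Since $b^t(q^t,e_h)=\dot l^t(e_h)-\dot b^t(u^t,e_h)$, the right-hand side becomes
\begin{equation*}
\left[\dot b^t(u^t,e_h)-\tfrac1h\big(b^{t+h}-b^t\big)(u^{t+h},e_h)\right]+\left[\tfrac1h\big(l^{t+h}-l^t\big)(e_h)-\dot l^t(e_h)\right].
\end{equation*}
The second bracket is bounded by $\big\|\tfrac1h(l^{t+h}-l^t)-\dot l^t\big\|_{H'}\Norm{e_h}{H}$, and I would argue this operator-norm goes to $0$ as $h\to0$: here is where (l2'), strong continuity of $t\mapsto\dot l^t$, is used — via the mean value theorem in $H'$ (Theorem~\ref{Diff_Banach_Space:Thm:MWS_G_II}) one gets $\big\|\tfrac1h(l^{t+h}-l^t)-\dot l^t\big\|_{H'}\leq\sup_{s\in[t,t+h]}\Norm{\dot l^s-\dot l^t}{H'}\to0$. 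For the first bracket, using the integral representation $\big(b^{t+h}-b^t\big)(u^{t+h},e_h)=\int_t^{t+h}\dot b^s(u^{t+h},e_h)\,ds$ and equicontinuity (b3), one estimates
\begin{equation*}
\left|\dot b^t(u^t,e_h)-\tfrac1h\!\int_t^{t+h}\!\dot b^s(u^{t+h},e_h)\,ds\right|\leq\sup_{s\in[t,t+h]}\Norm{\dot b^s-\dot b^t}{\mathcal B(H)}\Norm{u^{t+h}}{H}\Norm{e_h}{H}+C_I\Norm{u^{t+h}-u^t}{H}\Norm{e_h}{H},
\end{equation*}
which tends to $0\cdot\Norm{e_h}{H}$ by (b2') (strong continuity of $t\mapsto\dot b^t$) and by strong continuity of $u^{(\cdot)}$ from part~a). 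Dividing through by $\Norm{e_h}{H}$ (the case $e_h=0$ being trivial) gives $\Varlambda\Norm{e_h}{H}\leq\varepsilon(h)$ with $\varepsilon(h)\to0$, hence $\Norm{e_h}{H}\to0$, i.e. $\dot u^t=q^t$ in the strong topology on $H$.

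The main obstacle — and the reason the hypotheses (l2') and (b2') had to be strengthened relative to the weak-topology theorem — is precisely the two estimates above: in the weak setting one only controls $|(\cdot)(v)|$ for each \emph{fixed} $v$, whereas here the test element $e_h$ itself varies with $h$, so one genuinely needs operator-norm (resp.\ $\mathcal B(H)$-norm) convergence of the difference quotients of $l^t$ and $b^t$. Once those uniform bounds are in hand, the coercivity trick closes the argument cleanly; I would also note in passing that if one further assumed $t\mapsto q^t$ were strongly continuous (which can be deduced from Lemma~\ref{Parameter_Dep_PDE:Lem: Crit. for strong cont.} applied to the equation $b^t(q^t,v)=\tilde l^t(v)$, once one checks $t\mapsto\tilde l^t$ is strongly continuous using (l2'), (b2'), (b3) and continuity of $u^{(\cdot)}$), then $u^{(\cdot)}$ would in fact be continuously (Fréchet) differentiable on $I$ by Lemma~\ref{Diff_Banach_Space:Lem:Gateaux/Frechet_Diff_u^t}, which is the form in which this result feeds into Chapter~\ref{Ex_Shape_Deriv_LinEl}.
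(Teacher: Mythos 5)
Your argument is correct, but for the decisive step 2)ii) it takes a genuinely different route from the paper. The paper first secures the weak derivative $\dot u^t=q^t$ via Theorem \ref{Parameter_Dep_PDE:Thm: Existence Deriv. Weak Topology}, then shows that $t\mapsto q^t$ is strongly continuous (Lemma \ref{Parameter_Dep_PDE:Lem: Crit. for strong cont.} applied to $\tilde l^t=\dot l^t-\dot b^t(u^t,\cdot)$), and finally upgrades weak to strong differentiability by an integral argument: testing with every $l\in H'$ gives $u^t=u^0+\int_0^t q^s\,ds$ (a Bochner integral), whence $\Norm{q^t-\tfrac1h(u^{t+h}-u^t)}{H}\le\sup_{s\in[0,1]}\Norm{q^{sh+t}-q^t}{H}\to0$. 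You instead prove strong convergence directly through the equicoercivity estimate on the error $e_h$, which requires operator-norm convergence of the difference quotients of $l^t$ and of $b^t$ — precisely what (l2') and (b2') buy, as you correctly point out. Your route is more self-contained (no Bochner integral, no detour through the weak derivative is really needed), while the paper's route produces the strong continuity of $t\mapsto\dot u^t$ as an integral part of the proof, which is exactly what is consumed later in Chapter \ref{Ex_Shape_Deriv_LinEl}; you recover this only in your closing remark, so if your proof were adopted that observation should be promoted to part of the statement. One small patch: Theorem \ref{Diff_Banach_Space:Thm:MWS_G_II} as stated assumes norm-differentiability of $t\mapsto l^t$, whereas (l1) only gives weak differentiability; either note that the proof of that mean value theorem goes through verbatim for weak derivatives (only a single functional $y^{\ast}$ is used), or replace it by the scalar identity $\tfrac1h\bigl(l^{t+h}(u)-l^t(u)\bigr)-\dot l^t(u)=\tfrac1h\int_t^{t+h}\bigl(\dot l^s(u)-\dot l^t(u)\bigr)\,ds$, valid for each $u$ with $\Norm{u}{H}\le1$ by (l1) and (l2'), and take the supremum — the same bound $\sup_{s}\Norm{\dot l^s-\dot l^t}{H'}$ results, and the analogous representation for $b$ is the one already used in Lemma \ref{Parameter_Dep_PDE:Lem:PartII_Bt(ut,v)'}.
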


\begin{proof}
	By the theorem of Lax-Milgram, it is obvious that for any $ t \in I $ the equation $  b^{t}(.,v)=l^{t}(v) \, \forall v \in H $ has a unique solution $ u^{t} \in H $. We  have to show the following:
	\begin{itemize}
		\item[\textit{1)}~~~\,] The map $ I \to H, t \mapsto u^{t} $ is continuous.
		\item[\textit{2)i)}~] It holds $ \frac{d}{dt}u^{t} = \dot{u}^{t}=q^{t} \, \forall t \in I $ weakly in $ H $.
		\item[\textit{2)ii)}] It holds $ \frac{d}{dt}u^{t} = \dot{u}^{t}=q^{t} \, \forall t \in I $ strongly in $ H $.
	\end{itemize}
	1):  We aspire to apply Lemma \ref{Parameter_Dep_PDE:Lem: Crit. for strong cont.} to the respective map and therefore we have to check if the required assumptions are satisfied:
	Since $ (b^{t})_{t \in I} $ is equicoercive by assumption it is left to show that 
	(l1) $ I \to H', t \mapsto l^{t} $ is continuous and that
	(b0) $ I \to H', t \mapsto b^{t}(u,.) $ is continuous for any $ u \in H $
	are satisfied: Since $ (l1) $ and $ (l2') $ mean that $ t \mapsto l^t $ is continuously differentiable w.r.t. the weak $ H' $-topology, the mapping $ t \mapsto l^t $ is continuous. The same holds for the mapping $ t \mapsto b^t(u,.) $. This shows that 1) and therefor assertion a) holds true.\\[1em]
	\noindent \textit{2)i)} can now be directly derived from Theorem \ref{Parameter_Dep_PDE:Thm: Existence Deriv. Weak Topology} since the preceding steps show that the assumptions (l1)-(l3)  and (b0)-(b3) are satisfied.\\[1em] 
	\textit{2)ii)}: We will now show, that even $ \frac{1}{h}(u^{t+h}-u^{t}) \to_{H}  q^{t} \text{ as } h \to 0$ or in other words $ q^t=\dot{u}^t $ in the strong $ H $-topology:\\
	By the conditions (l3') and (b2') the mappings $ I \to H', t \mapsto \dot{b}^{t}(u,.) $ \footnote{This shows that the statement remains true if (b2') is replaced by the claim (b2''): $ I \to H', t \mapsto \dot{b}^{t}(u,.) $ is strongly continuous for any $ u \in H $. } and $ I \to H', t \mapsto \dot{l}^{t}  $ are continuous for any $ u \in H $. Then $ t \to q^{t} $ is  strongly continuous.
	This follows directly 
	\pagebreak
	
	from Lemma \ref{Parameter_Dep_PDE:Lem: Crit. for strong cont.} and step 1) if one considers the continuous map $ I\to H',\, t \mapsto \tilde{l}^t=\dot{l}^t-\dot{b}^t(u^t,.)$ instead of $  I\to H',\,t \to l^{t} $.

	\noindent Let $ l \in H' $ be arbitrary. Then, by the weak convergence of $ \tfrac{u^{t+h}-u^{t}}{h}  \rightharpoonup q^{t}:=\dot{u}^{t} $ for $ h \to 0 $ and every $ t \in I $, it follows that
	\begin{align*}
	\dfrac{d}{dt} (l\circ u^{(.)})(t) 
	&=\lim_{h \to 0} \frac{l(u^{t+h})-l(u^{t})}{h}=\lim_{h \to 0} l(\tfrac{u^{t+h}-u^{t}}{h})=l(q^{t}) =(l\circ q^{(.)})(t) .
	\end{align*} 
	Because $q^{(.)} $ is a continuous mapping on $ H $ also $ l\circ q^{(.)}$ is continuous and therefore the Bochner integral $ \int_{0}^{t}(l\circ q^{(.)})(s) \, ds $ exists. Thus, 
	\begin{align*}
	l(u^{t}) 
	&=(l\circ u^{(.)})(t) = (l\circ u^{(.)})(0) + \int_{0}^{t}\frac{d}{ds}(l\circ u^{(.)})(s) \, ds\\
	&=l(u^{0})+\int_{0}^{t}(l\circ q^{(.)})(s) \, ds =l\left( u^{0}+\int_{0}^{t}q^{s} \, ds \right)  
	\end{align*}
	applies for all $l\in H'$ and hence $ u^{t} =u^{0}+\int_{0}^{t}q^{s}\,ds  $. But this already means that $ \frac{1}{h}(u^{t+h}-u^{t}) \to_{H}  q^{t} \text{ as } h \to 0$ holds: Because of the strong continuity of $ t \to q^{t} $ we conclude that 
	\begin{eqnarray*}
		\Norm{q^{t} -\frac{u^{t+h}-u^{t}}{h}}{H} 
		&= \Norm{\dfrac{1}{h}\int_{t}^{t+h}q^{s}\, ds -q^{t}}{H} 
		& = \Norm{\int_{0}^{1}q^{sh+t} -q^{t}\, ds}{H}\\
		&\leq  \int_{0}^{1} \Norm{q^{sh+t}-q^{t}}{H} \, ds
		& \leq \sup_{s \in [0,1]}  \Norm{q^{sh+t}-q^{t}}{H} 
	\end{eqnarray*}
	there the right hand side tends to zero when $ h \to 0 $ and thus $ q^{t}=\dot{u}^{t} $ w.r.t. the strong topology on $ H $. 
\end{proof}

In conclusion: If the conditions of Theorem \ref{Parameter_Dep_PDE:Thm: Existence Deriv. strong Topology} are satisfied and $ t \in I $, then the sought derivative $ \dot{u}^{t} $ in $ H $ is the unique solution $ q^{t} $ of $$ b^{t}(q^t,v)=\dot{l}^{t}(v)-\dot{b}^{t}(u^{t},v) \, \forall v \in H.$$
\vspace*{1mm}
\section{Continuity and differentiability w.r.t. higher topologies } \label{Parameter_Dep_PDE:Sect: Existence of Derivatives w.r.t. higher topologies}

In this section we show that under suitable assumptions parameter mappings maintain their continuity and differentiability properties, which they have w.r.t a topology, also in stronger topologies: 
\newpage

\begin{lem}\label{Lem: Topo-Lemma}
	Let $ \mathcal{T}_{1} \prec \mathcal{T}_{2}\prec\mathcal{T}_{3} $ three Hausdorff topologies on a set $ M $ and let $ I \subset \R{} $ be an open interval. If
	\begin{enumerate}
		\item[i)] $ U:I \to M $ is continuous with respect to $ \mathcal{T}_{1} $ and
		\item[ii)] $ \overline{U(I)}^{\mathcal{T}_3} $  is relatively compact in $\mathcal{T}_{2}$,
	\end{enumerate}
	then $ U:I \to M $ is continuous with respect to $ \mathcal{T}_{2} $.
\end{lem}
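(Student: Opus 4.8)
The plan is to reduce the statement to the classical fact that a continuous bijection from a compact space onto a Hausdorff space is a homeomorphism. First I would set $K := \overline{U(I)}^{\mathcal{T}_3}$, so that $U(I)\subseteq K$ by construction, and $\tilde K := \overline{K}^{\mathcal{T}_2}$; by hypothesis (ii) the set $\tilde K$, equipped with the subspace topology inherited from $\mathcal{T}_2$, is compact. Since $U(I)\subseteq K\subseteq \tilde K$, one may from now on regard $U$ as a mapping $I\to \tilde K$. (The topology $\mathcal{T}_3$ itself plays no further role beyond fixing in which sense $U(I)$ is required to have compact $\mathcal{T}_2$-closure.)

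Next I would compare the restrictions of $\mathcal{T}_1$ and $\mathcal{T}_2$ to $\tilde K$. Because $\mathcal{T}_1\prec\mathcal{T}_2$, the identity map $\iota:(\tilde K,\mathcal{T}_2|_{\tilde K})\to(\tilde K,\mathcal{T}_1|_{\tilde K})$ is continuous; it is trivially bijective; its domain is compact and its codomain is Hausdorff, being a subspace of the Hausdorff space $(M,\mathcal{T}_1)$. Hence $\iota$ is a homeomorphism, i.e. $\mathcal{T}_1|_{\tilde K}=\mathcal{T}_2|_{\tilde K}$.

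Finally I would assemble the pieces. A map into a subspace is continuous precisely when it is continuous as a map into the ambient space, so the $\mathcal{T}_1$-continuity of $U:I\to M$ from hypothesis (i), together with $U(I)\subseteq\tilde K$, gives continuity of $U:I\to(\tilde K,\mathcal{T}_1|_{\tilde K})$. By the previous step this is the same as continuity of $U:I\to(\tilde K,\mathcal{T}_2|_{\tilde K})$, and composing with the continuous inclusion $(\tilde K,\mathcal{T}_2|_{\tilde K})\hookrightarrow(M,\mathcal{T}_2)$ yields that $U:I\to(M,\mathcal{T}_2)$ is continuous, as claimed.

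I do not expect any genuine obstacle: the only points needing care are keeping the coarseness ordering $\mathcal{T}_1\prec\mathcal{T}_2\prec\mathcal{T}_3$ straight — so that $\iota$ runs from the finer to the coarser topology and the nested-closure inclusion $U(I)\subseteq\tilde K$ is valid — and the routine subspace-continuity remarks. As an alternative one could argue sequentially, exploiting that $I$ is metrizable and hence first countable: for $t_n\to t_0$ in $I$ the points $U(t_n)$ lie in the $\mathcal{T}_2$-compact set $\tilde K$, converge to $U(t_0)$ in $\mathcal{T}_1$, and every subnet has a $\mathcal{T}_2$-convergent subnet whose limit must equal $U(t_0)$ by Hausdorffness of $\mathcal{T}_1$; but the homeomorphism argument above is shorter and avoids such net manipulations.
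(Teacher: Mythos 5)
Your proof is correct, and it takes a genuinely different route from the paper. The paper argues by contradiction with nets: assuming $\mathcal{T}_2$-discontinuity at some $t$, it extracts a subnet of $U(t_i)$ staying outside a $\mathcal{T}_2$-neighbourhood of $U(t)$, uses the compactness from (ii) to get a further subnet $\mathcal{T}_2$-convergent to some $u^{\ast}$, observes that this subnet also converges in the coarser topology $\mathcal{T}_1$, and concludes $u^{\ast}=U(t)$ from the Hausdorff property of $\mathcal{T}_1$, a contradiction. You instead isolate the purely topological fact that $\mathcal{T}_1$ and $\mathcal{T}_2$ coincide on the compact set $\tilde K=\overline{K}^{\mathcal{T}_2}$, $K=\overline{U(I)}^{\mathcal{T}_3}$, via the classical lemma that a continuous bijection from a compact space onto a Hausdorff space is a homeomorphism, and then reduce everything to routine subspace-continuity statements. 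Your version is shorter, sidesteps the somewhat informal subnet extraction in the paper's write-up, and makes transparent that only Hausdorffness of $\mathcal{T}_1$ and compactness of the $\mathcal{T}_2$-closure are used (your parenthetical remark that $\mathcal{T}_3$ enters only through the formulation of (ii) is also accurate, since $\mathcal{T}_2\prec\mathcal{T}_3$ gives $\overline{\overline{U(I)}^{\mathcal{T}_3}}^{\mathcal{T}_2}=\overline{U(I)}^{\mathcal{T}_2}$); moreover the identity $\mathcal{T}_1|_{\tilde K}=\mathcal{T}_2|_{\tilde K}$ is a reusable statement in its own right. What the paper's net argument buys is stylistic continuity with the convergence arguments used in Theorems \ref{Parameter_Dep_PDE:Thm: Existence Deriv. strong Topology} and \ref{Parameter_Dep_PDE:Thm: Existence of Strong Derivatives u^t }, at the cost of handling subnets explicitly; both proofs show that neither the Hausdorffness of $\mathcal{T}_2,\mathcal{T}_3$ nor the fact that $I$ is an interval is really needed.
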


\begin{proof}
	The map $ U $ is continuous w.r.t. $ \mathcal{T}_{1} $. Thus, we state that $u_{i}:= U(t_{i}) \underset{i \in I}{\rightarrow} U(t)=:u $ holds for any directed net $ (t_{i})_{ i \in \mathcal{I}} \subset I $ with limit value $ t \in I $.
	Now suppose that $ U $ is not continuous w.r.t. $ \mathcal{T}_2 $. Then there is
	and neighbourhood $ \mathcal{U}_{\mathcal{T}_{2}} (u) $ of $ u $ such that there is $  \mathcal{J} \subset \mathcal{I} $ where $ u_{j}:=U(t_{j}) \notin \mathcal{U}_{\mathcal{T}_{2}} (u)$ for  $ j \in \mathcal{J} $.
	The closure of  $\overline{{U(I)^{~}}^{\mathcal{T}_3}} $ w.r.t. $ \mathcal{T}_3 $ is compact in $ \mathcal{T}_{2} $ by assumption. 
	Hence, the net $ (u_{j})_{j \in \mathcal{J}} $ has a converging subnet 
	$$ (u_{l})_{l \in \mathscr{L}} \overset{\mathcal{T}_{2}}{\rightarrow} u^{\ast} \in \overline{\overline{{U(I)^{~}}^{\tau_3}}^{~\mathcal{T}_2}}$$ but since $ \mathcal{T}_{1} \prec \mathcal{T}_{2}  $ also $ (u_{l})_{l \in \mathscr{L}} \overset{\mathcal{T}_{1}}{\rightarrow} u^{\ast} $ and therefore $ u^{\ast} =U(t) $ by the Hausdorff property of $ \mathcal{T}_{1} $. Hence $ U:I \to M $ has to be continuous w.r.t. $ \mathcal{T}_{2} $.
\end{proof}

\begin{thm}\label{Parameter_Dep_PDE:Thm: Existence of Strong Derivatives u^t }
	Suppose that $ (X_{3},\Norm{.}{X_{3}} )\subset (X_{2},\Norm{.}{X_{2}} ) \subset (X_{1},\Norm{.}{X_{1}} )$ are Banach spaces and let $ u^{(\cdot)}:I \rightarrow X_{3},\, t \mapsto u^{t}  $ such that the following conditions are satisfied
	\begin{itemize}	
		\item[i)~~] $ u^{(\cdot)}  $ is differentiable w.r.t. the (weak) topology on $ X_{1} $ with derivative $ \dot{u}^{t} $ at $ t \in I $,
		\item[ii)~] the map $\dot{u}^{(\cdot)}:I \rightarrow X_{3}, \,  t \to \dot{u}^{t} $ is continuous w.r.t. the (weak) topology on $ X_{1} $,
		\item[iii)] The closure of $ \dot{u}^{I} $ in $ (X_{3},\Norm{.}{X_{3}}) $ is relatively compact in $ (X_{2},\Norm{.}{X_{2}}) $.
	\end{itemize}
	Then $ \dot{u}^{t} $ is the derivative of $ u^{(.)} $ at $ t \in I $ in $(X_{2},\Norm{.}{X_{2}} )$. Moreover $ t \in I \to \dot{u}^{t} \in X_3 $ is continuous regarding the strong $ X_2 $ topology.
\end{thm}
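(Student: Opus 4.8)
The plan is to establish the statement in two stages: first to upgrade the continuity of $t\mapsto\dot u^{t}$ from the weak topology of $X_{1}$ to the norm topology of $X_{2}$ by means of the interpolation-type Lemma~\ref{Lem: Topo-Lemma}, and then to deduce that $\dot u^{t}$ is the $X_{2}$-derivative of $u^{(\cdot)}$ by exactly the Bochner-integration argument that concludes the proof of Theorem~\ref{Parameter_Dep_PDE:Thm: Existence Deriv. strong Topology}. It suffices to treat the ``weak'' version of the hypotheses, since the ``strong'' version is contained in it and yields the same conclusion.

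\emph{Stage 1 (upgrading the continuity).} I would apply Lemma~\ref{Lem: Topo-Lemma} with $M$ taken to be $X_{3}$ (equivalently, the $X_{3}$-norm closure of $\dot u^{I}$), equipped with the three topologies $\mathcal T_{1}$ (the weak topology of $X_{1}$, restricted to $M$), $\mathcal T_{2}$ (the $X_{2}$-norm topology, restricted to $M$) and $\mathcal T_{3}$ (the $X_{3}$-norm topology). All three are Hausdorff, and because the inclusions $X_{3}\hookrightarrow X_{2}\hookrightarrow X_{1}$ are continuous one has $\mathcal T_{1}\prec\mathcal T_{2}\prec\mathcal T_{3}$: norm convergence in $X_{2}$ forces norm convergence in $X_{1}$ and hence weak convergence in $X_{1}$, and norm convergence in $X_{3}$ forces norm convergence in $X_{2}$. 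With $U:=\dot u^{(\cdot)}$, hypothesis (ii) says exactly that $U$ is $\mathcal T_{1}$-continuous, and hypothesis (iii) says exactly that $\overline{U(I)}^{\mathcal T_{3}}$ is relatively compact in $\mathcal T_{2}$; Lemma~\ref{Lem: Topo-Lemma} then gives that $U$ is $\mathcal T_{2}$-continuous, i.e.\ $t\mapsto\dot u^{t}$ is continuous from $I$ into $(X_{2},\|\cdot\|_{X_{2}})$. This already proves the ``moreover'' assertion.

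\emph{Stage 2 (identifying the $X_{2}$-derivative).} Fix $t_{0}\in I$ and let $l\in X_{1}'$ be arbitrary; since $X_{2}\hookrightarrow X_{1}$ continuously, $l$ restricts to an element of $X_{2}'$. Hypothesis (i) means $\frac{d}{dt}\,l(u^{t})=l(\dot u^{t})$ for all $t\in I$, and by Stage~1 the function $s\mapsto l(\dot u^{s})$ is continuous on $I$; hence $t\mapsto l(u^{t})$ is of class $C^{1}$ and the classical fundamental theorem of calculus gives $l(u^{t})=l(u^{t_{0}})+\int_{t_{0}}^{t}l(\dot u^{s})\,ds$. As $s\mapsto\dot u^{s}$ is norm-continuous into $X_{2}$, the Bochner integral $\int_{t_{0}}^{t}\dot u^{s}\,ds\in X_{2}$ exists and commutes with $l$, so $l(u^{t})=l\bigl(u^{t_{0}}+\int_{t_{0}}^{t}\dot u^{s}\,ds\bigr)$ for every $l\in X_{1}'$. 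Since $X_{1}'$ separates the points of $X_{1}\supseteq X_{2}$ and both sides lie in $X_{2}$, this yields $u^{t}-u^{t_{0}}=\int_{t_{0}}^{t}\dot u^{s}\,ds$ in $X_{2}$. Therefore, for $h\neq0$ with $t_{0}+h\in I$,
\[
\left\|\frac{u^{t_{0}+h}-u^{t_{0}}}{h}-\dot u^{t_{0}}\right\|_{X_{2}}
=\left\|\frac{1}{h}\int_{t_{0}}^{t_{0}+h}\bigl(\dot u^{s}-\dot u^{t_{0}}\bigr)\,ds\right\|_{X_{2}}
\le\sup_{|s-t_{0}|\le|h|}\bigl\|\dot u^{s}-\dot u^{t_{0}}\bigr\|_{X_{2}},
\]
and the right-hand side tends to $0$ as $h\to0$ by the $X_{2}$-continuity of $s\mapsto\dot u^{s}$ obtained in Stage~1. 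Hence $\dot u^{t_{0}}$ is the derivative of $u^{(\cdot)}$ at $t_{0}$ with respect to $\|\cdot\|_{X_{2}}$.

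The main obstacle is the topological bookkeeping of Stage~1: one has to choose the carrier set $M$ and the three nested topologies so that hypotheses (ii) and (iii) of the theorem match verbatim the two hypotheses of Lemma~\ref{Lem: Topo-Lemma}, and one has to be sure that the set inclusions $X_{3}\subseteq X_{2}\subseteq X_{1}$ are genuine continuous embeddings — which they are in the Sobolev, Sobolev--Hölder and Hölder settings relevant for Chapter~\ref{Ex_Shape_Deriv_LinEl} — so that the ordering $\mathcal T_{1}\prec\mathcal T_{2}\prec\mathcal T_{3}$ really holds. Stage~2 is then a routine repetition of the closing argument of Theorem~\ref{Parameter_Dep_PDE:Thm: Existence Deriv. strong Topology}; the only point needing a word of care there is the passage from the scalar identity $l(u^{t})=l\bigl(u^{t_{0}}+\int\dot u^{s}\,ds\bigr)$ to the vector-valued one, which is legitimate because the continuous dual $X_{1}'$ separates the points of the subspace $X_{2}$.
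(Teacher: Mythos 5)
Your proposal is correct and follows essentially the same route as the paper: Stage 1 is exactly the paper's application of Lemma \ref{Lem: Topo-Lemma} with the weak $X_1$, norm $X_2$ and norm $X_3$ topologies to upgrade the continuity of $t \mapsto \dot{u}^{t}$, and Stage 2 reproduces the paper's Bochner-integral/separation argument culminating in the same estimate $\bigl\|\tfrac{u^{t_0+h}-u^{t_0}}{h}-\dot{u}^{t_0}\bigr\|_{X_2}\le \sup_{|s-t_0|\le |h|}\|\dot{u}^{s}-\dot{u}^{t_0}\|_{X_2}$. Your explicit remark that the set inclusions must be continuous embeddings (so that the ordering of the topologies holds) is a point the paper leaves implicit, but it does not change the argument.
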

\vspace*{-1em}
\enlargethispage*{1em}
\begin{proof}
	Any Banach space $ X $ equipped with the weak topology on $ X $ is a Hausdorff space. Hence, the letter is true for the space $ X_{1} $ with weak topology. Let $ \mathcal{T}_{X_{1}} $ be the  weak topology on $ X_{1} $, $ \mathcal{T}_{X_{2}} $ the norm topology on $ X_{2} $  and $ \mathcal{T}_{X_{3}} $ the norm topology on $ X_{3} $. Hence $  \mathcal{T}_{X_{1}} \prec \mathcal{T}_{X_{2}}\prec\mathcal{T}_{X_{3}} $ on $ X_{1} $ and Lemma \ref{Lem: Topo-Lemma} becomes applicable. Therefore, $ \dot{u}^{(.)}:I \to X_{2} $ is continuous on $ X_{2} $.\\
	\noindent Let $ l \in X_{1}' $ be arbitrary. Then $ l $ is also an element of $ X_2' $. By the (weak) convergence of $ \tfrac{u^{t+h}-u^{t}}{h}  \to q^{t}:=\dot{u}^{t} $ for $ h \to 0 $ and every $ t \in I $ in $ X_1 $, it follows that
	\begin{align*}
	\dfrac{d}{dt} (l\circ u^{(.)})(t) =\lim_{h \to 0} \frac{l(u^{t+h})-l(u^{t})}{h}=\lim_{h \to 0} l\left(\frac{u^{t+h}-u^{t}}{h}\right)=l(q^{t}) =(l\circ q^{(.)})(t) .
	\end{align*} 
	Because $q^{(.)} $ is a continuous mapping on $ X_{2} $ also $ l\circ q^{(.)}$ is continuous and therefore the Bochner integral $ \int_{0}^{t}(l\circ q^{(.)})(s) \, ds $ exists. Thus, 
	$$l(u^{t}) 
	= l(u^{0}) + \int_{0}^{t}\frac{d}{ds}l(u^s) \, ds=l(u^{0})+\int_{0}^{t}l(q^s) \, ds  =l\left[ u^{0}+\int_{0}^{t}q^{s} \, ds \right]  
	$$ 
	applies for all $l\in X_{1}'$ and this implies $ u^{t} =u^{0}+\int_{0}^{t}q^{s}\,ds  $. Then the continuity of $ t \to q^{t} $ w.r.t the strong topology on $ X_2 $ leads to  $ q^{t}=\dot{u}^{t} $ w.r.t. the strong topology on $ X_2 $ since
	\begin{align*}
	\Norm{q^{t} -\frac{u^{t+h}-u^{t}}{h}}{X_2} 
	&= \Norm{\frac{1}{h}\int_{t}^{t+h}q^{s}-q^{t}\, ds}{X_2} 
	\leq \sup_{s \in [0,1]}\Norm{q^{sh+t}-q^{t}}{X_2}  \underset{h \to 0}{ \longrightarrow} 0 \,.
	\end{align*} 
\end{proof}


\chapter{Material and Shape Derivatives in Linear Elasticity} \label{Ex_Shape_Deriv_LinEl}

\section{Preliminaries}\label{Sec: Prel.  Mat. deriv. LinEl} 

The aim of this chapter is to apply the results of Chapter \ref{Transf_shape_opt} and \ref{Parameter_Dep_PDE} to the PDE of linear elasticity on the variable, parameter dependent sets $ \Omega_t $ introduced in  \ref{Transf_shape_opt:Sec: Transformation along Vector fields}. 

As central outcomes, the Theorems \ref{Ex_Shape_Deriv_LinEl:Thm: q^t C^3,phi material derivative of u_t} and \ref{Ex_Shape_Deriv_LinEl:Thm: Shape derivatives for linear elasticity} show existence of material and (local) shape derivatives in Hölder spaces $ C^{k,\phi}$ for $ k \geq 2 $. A brief outlook on the deduction of  material derivatives under lower regularity assumptions, also consider Section \ref{outlook: Reduces Requirements}.  

Initially, we take vector fields $ V\in \Vad{1}{\Oext} $ such that $ T_t[V] \in $ $ C^{1}(\overline{\Oext},\overline{\Oext}) $ with $ k\in \N{0} $ and a starting shape $ \Omega \in \mathcal{O}_{1} $. The maximal existence interval of $ T_t=T_t[V] $ is as always denoted by $ I_V $. \\[1em]
Let $ \{f(\Omega):\Omega \to \R{3} \vert  \Omega \in \mathcal{O}_{1} \} $ and $ \{g(\Gamma_N):\Gamma_N  \to \R{3}  \vert \Omega \in \mathcal{O}_{1} \} $ be some families of vector fields. We always assume that $ |\Gamma_{N}| =\int_{\Gamma_{N}}\, dS >0 $ and that $ \Gamma_{D}=\Gamma \setminus \Gamma_{N} $ and $ \Gamma_{N} $ have a positive distance.  The variational formulation of the disjoint displacement-traction problem 
\begin{align}\label{Ex_Shape_Deriv_LinEl:Eq:LinEl Omega_t}
\left. 
\begin{array}{r c l l}
-\Div( \se(u_t)) & = & f(\Omega_t)   &\text{ in } \Omega_t \\
u_t &=& 0  &\text{ on } \Gamma_{D,t}  \\
\se(u_t) \vec{n}_t &=&g(\Gamma_{N,t}) &\text{ on }\Gamma_{N,t}.
\end{array} 
\right.
\end{align} 
on $ \overline{\Omega}_{t} $ is given by $ B_{t}(u,v)=L_{t}(v) $, $ u,v \in $ $ H^{1}_{D,t}(\O{t},\R{3}) $ where
\begin{equation}\label{Ex_Shape_Deriv_LinEl:Eq:Def_Bt}
B_{t}(u,v)=
\int_{\O{t}} \lambda \tr(\varepsilon(u))\tr(\varepsilon(v)) + 2 \mu \tr(\ve(u)\ve(v))\, dx   = \int_{\O{t}} \tr(\sigma(u) \varepsilon(v)) \, dx
\end{equation}
and 
\begin{equation}\label{Ex_Shape_Deriv_LinEl:Eq: Def Lt}
L_{t}(v)=L_{\O{t}}(v)=\int_{\O{t}}  \langle f(\Omega_t),v \rangle  \, dx + \int_{\Gamma_{N,t}}  \langle g(\Gamma_{N,t}),v \rangle \, dS,
\end{equation} 
with $ \Omega_t =T_t(\Omega)$, $ \Gamma_{D,t}=T_{t}(\Gamma_{N}) $ and $ \Gamma_{N,t}=T_{t}(\Gamma_{N}) $. For notational simplicity we write $ f_{t} = f(\Omega_t) $ and $ g_{N,t}:=g(\Gamma_{N,t}) $.

Via composition with the transformations $ \T{t}$ the weak or strong solution with domain $ \overline{\Omega}_{t} $ can be pulled back such that it is defined on $ \overline{\Omega} $. Accordingly, we define the mapping 
$$ u^{t}:=u_{t} \circ \T{t},~~~ u^{t}: \Omega \to \R{3} $$ if the solution $ u_t $ exists in the weak or strong sense. 

The bilinear form $ B_{t} $ and the linear form $ L_{t} $  are defined for functions in $ H^{1}_{D,t}(\O{t},\R{3}) $ with different domains. In this case the \textit{pull-back} to one joint definition set can be realized setting $ B^{t}(u,v):=B_{t}(u \circ \T{t}^{-1}, v \circ T_t^{-1}) ,\, u,v \in H^{1}_{D}(\Omega,\R{3})$ and $ L^{t}(v):= L_{t}(v \circ \T{t}^{-1}),\, v \in H^{1}_{D}(\Omega,\R{3}) $. These linear and bilinear forms are independent of $ t $:
\begin{equation}\label{Ex_Shape_Deriv_LinEl:Eq: B^t(u^t,v)=L^t(v)}
B^{t}(u^{t},v)=L^{t}(v) ~ \forall v \in H^{1}_{D}(\Omega,\R{3})
\end{equation}
with
\begin{align}
&L^{t}(v)=\int_{\Omega_t}\skp{f_t}{v \circ T_t^{-1}} \,dx + \int_{\Gamma_N,t}\skp{g_{N,t}}{v \circ T_t^{-1}} \,dS, \label{Ex_Shape_Deriv_LinEl:Def:L^t} \\
&B^{t}(u,v)
=\int_{\Omega_{t}}\lambda \tr(\varepsilon(u \circ T_{t}^{-1}))\tr(\varepsilon(v \circ T_{t}^{-1})) + 2 \mu \tr(\varepsilon(u \circ T_{t}^{-1})\varepsilon(u \circ T_{t}^{-1})) \, dx.\label{Ex_Shape_Deriv_LinEl:Def:B^t}
\end{align}


The following lemma is a conclusion from the statement concerning change of coordinates in  \cite[Thm. 3.41]{AdamsFournier}. 
In our case we obtain in particular the following estimates under change of coordinates : 

\begin{lem}\label{Ex_Shape_Deriv_LinEl:Lem:H1_estimates_ut}
	Let $ V \in \mathcal{V}^{ad}_{1} $, $ T_t =T_t[V] $ be the associated transformation mapping and $ \Omega \in \mathcal{O}_{1} $. Then there are constants $ C_i(T_t),\, i=1,2,3 $ depending on $ T_t $ such that 
	\begin{align}
	\Norm{u \circ T_t^{-1}}{L^2(\Omega_t,\R{3})}& \leq C_1(T_t) \Norm{u}{L^2(\Omega,\R{3})} && \forall u \in L^2(\Omega,\R{3})  \\
	\Norm{u \circ T_{t}^{-1}}{H^{1}(\O{t},\R{3})}&\leq C_2(T_t) \Norm{u}{H^{1}(\Omega,\R{3})} && \forall  u \in H^1(\Omega,\R{3}) \\
	\Norm{u \circ T_{t}^{-1}}{L^2(\Gamma_{t},\R{3})}&\leq C_3(T_t)\Norm{u}{L^2(\Gamma,\R{3})} && \forall  u \in L^2(\Gamma,\R{3}) .
	\end{align}
	If $ I  \Subset I_{V} $ ( $ \overline{I} \subset I$) then $ C_1,\, C_2,\, C_3 $ can be chosen uniformly with respect to $t \in I $.
\end{lem}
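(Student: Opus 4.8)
The plan is to reduce everything to the change-of-variables formula for integrals under a $C^1$-diffeomorphism, which is precisely the content of \cite[Thm.~3.41]{AdamsFournier}. Since $V\in\mathcal{V}^{ad}_1$ and $\Omega\in\mathcal{O}_1$, the transformation $T_t=T_t[V]$ is an element of $Diff^1(\overline{\Omega^{ext}},\overline{\Omega^{ext}})$ by Lemma~\ref{Transf_shape_opt:Lem: Properties D_Tt} and the surrounding discussion, so $T_t$ restricts to a $C^1$-diffeomorphism from $\overline{\Omega}$ onto $\overline{\Omega_t}$ with Jacobian $\gamma_t=\det(DT_t)>0$ on $\overline{\Omega}$ (Lemma~\ref{Transf_shape_opt:Lem: Properties gammat}). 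First I would treat the $L^2$-estimate on $\Omega_t$: for $u\in L^2(\Omega,\R{3})$ the substitution $x=T_t(y)$ gives
\[
\Norm{u\circ T_t^{-1}}{L^2(\Omega_t,\R{3})}^2 = \int_{\Omega_t} |u\circ T_t^{-1}(x)|^2\,dx = \int_{\Omega} |u(y)|^2\, \gamma_t(y)\, dy \leq \Norm{\gamma_t}{\infty,\Omega}\,\Norm{u}{L^2(\Omega,\R{3})}^2,
\]
so one may take $C_1(T_t):=\Norm{\gamma_t}{\infty,\Omega}^{1/2}$.

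Next I would handle the $H^1$-estimate. Writing $w:=u\circ T_t^{-1}$, the chain rule yields $Dw = (Du\circ T_t^{-1})\,(DT_t\circ T_t^{-1})^{-1} = (Du\circ T_t^{-1})\,(DT_t^{-1})$, hence after substitution
\[
\Norm{Dw}{L^2(\Omega_t,\R{3\times 3})}^2 = \int_{\Omega} |Du(y)\,(DT_t(y))^{-1}|^2\,\gamma_t(y)\,dy \leq \Norm{\gamma_t}{\infty,\Omega}\,\Norm{(DT_t)^{-1}}{\infty,\Omega}^2\,\Norm{Du}{L^2(\Omega,\R{3\times 3})}^2.
\]
Combining this with the previous bound for the zeroth-order term and using $\Norm{\cdot}{H^1}^2 = \Norm{\cdot}{L^2}^2 + \Norm{D\cdot}{L^2}^2$ gives $C_2(T_t):=\bigl(\Norm{\gamma_t}{\infty,\Omega}(1+\Norm{(DT_t)^{-1}}{\infty,\Omega}^2)\bigr)^{1/2}$. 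For the boundary estimate, the diffeomorphism $T_t$ maps $\Gamma$ onto $\Gamma_t$ and the surface measure transforms with the factor $\omega_t=\Norm{\mathcal{M}(T_t)\vec{n}}{}$ (Lemma~\ref{Transf_shape_opt:Lem: Properties omegat}); the same substitution argument on the $(n-1)$-dimensional manifold, or equivalently the change-of-variables statement in \cite[Thm.~3.41]{AdamsFournier} applied chart-wise, yields
\[
\Norm{u\circ T_t^{-1}}{L^2(\Gamma_t,\R{3})}^2 = \int_{\Gamma} |u(y)|^2\,\omega_t(y)\,dS \leq \Norm{\omega_t}{\infty,\Gamma}\,\Norm{u}{L^2(\Gamma,\R{3})}^2,
\]
so $C_3(T_t):=\Norm{\omega_t}{\infty,\Gamma}^{1/2}$.

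Finally, the uniformity claim: if $\bar I\subset I_V$ with $\bar I$ compact, then by Lemma~\ref{Transf_shape_opt:Lem: Properties D_Tt}, Lemma~\ref{Transf_shape_opt:Lem: Properties gammat} and Lemma~\ref{Transf_shape_opt:Lem: Properties omegat} the maps $t\mapsto\gamma_t$, $t\mapsto(DT_t)^{-1}$, $t\mapsto\omega_t$ are continuous with values in $C^0(\overline{\Omega^{ext}})$, $C^0(\overline{\Omega^{ext}},\R{n\times n})$, $C^0(\overline{\Omega^{ext}})$ respectively, so their norms are bounded on the compact set $\bar I$; taking the supremum over $t\in\bar I$ in each of the three constructed constants yields uniform constants $C_1,C_2,C_3$. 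The main obstacle here is not conceptual but bookkeeping: one must make sure the boundary change-of-variables is stated correctly (the surface Jacobian $\omega_t$ rather than the volume Jacobian $\gamma_t$), which is why I invoke Lemma~\ref{Transf_shape_opt:Lem: Properties omegat} explicitly rather than relying on a one-line substitution.
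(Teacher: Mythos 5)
Your argument is correct and follows essentially the same route as the paper: the paper likewise pulls the three integrals back to $\Omega$ (respectively $\Gamma$) via change of coordinates, bounds them by $\Norm{\gamma_t}{\infty}$, $\Norm{\gamma_t}{\infty}\bigl(1+\Norm{T_t^{-1}}{C^1}^2\bigr)$ and $\Norm{\omega_t}{\infty}$ (the detailed computation being delegated to an appendix lemma citing \cite[Thm.~3.41]{AdamsFournier}), and then obtains the uniform constants exactly as you do, from the continuity of $t\mapsto\gamma_t$, $t\mapsto (DT_t)^{-1}$ and $t\mapsto\omega_t$ on a compact subinterval of $I_V$. Your slightly sharper $H^1$-constant, using only $\Norm{(DT_t)^{-1}}{\infty}$ instead of the full $C^1$-norm of $T_t^{-1}$, is a harmless cosmetic difference.
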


\begin{proof}
	i) For $ u\in L^2(\Omega,\R{n}) $ we obtain
	\begin{align*}
	\Norm{u \circ T_t^{-1}}{L^{2}(\Omega_t)}^2 
	\leq \Norm{\gamma_t}{\infty,\Omega} \Norm{u}{L^{2}(\Omega)}^2
	\end{align*}
	by Lemma \ref{Lem: H1_Estimate_u}
	Since $ \gamma_{(.)}:t \to \gamma_t$ is in $  C^1(I_V, C^{0}(\overline{\Oext})) $ (\ref{Transf_shape_opt:Lem: Properties gammat})  there is a constant $ C>0 $ such that $ \Norm{\gamma_t}{\infty, \Omega} \leq C_1 $  for any $ t $ in the colsed interval $ \overline{I}$. This implies the first statement. 
	
	\noindent ii) Now let $ u \in H^1(\Omega,\R{3}) $. Since $ T_t $ is a $ C^1 $-diffeomorphism on the compact set $ \overline{\Omega} $ there is a constant  $ C(T_t)>0 $ such that 
	\begin{align*}
	\Norm{u \circ T_t^{-1}}{H^{1}(\Omega_t;\R{3})}^2 
	&\leq \Norm{\gamma_t}{\infty} \left(1+\Norm{T_t^{-1}}{C^{1}(\Omega_t,\R{3})}^2\right) \Norm{u}{H^{1}(\Omega,\R{3})}^2 \\
	& \leq C(T_t)^2  \Norm{u}{H^{1}(\Omega,\R{3})}. 
	\end{align*}
	
	\noindent 	Again Lemma \ref{Transf_shape_opt:Lem: Properties gammat} and \ref{Transf_shape_opt:Lem: Properties D_Tt} maintain that the mappings $ t  \to \gamma_{t}=\det(DT_{t})$ and $  t  \to D(T_{ t}^{-1})=DT_{-t} $ are continuous from $ I_V $ to $ C^{0}(\overline{\Oext}) $ and $ C^{0}(\overline{\Oext},\R{3 \times 3}) $ respectively. Therefore there exists a constant $ C_2 $ such that $ C(\T{t}) \leq C_2 $ for all $ t \in I $.\\
	
	\noindent 	iii) Finally, let $  u \in L^2(\Gamma,\R{3}) $
	\begin{align*}
	\Norm{u \circ T_t^{-1}}{L^{2}(\Gamma_t,\R{3})}^2 
	=\int_{\Gamma_t} |(u \circ T_t^{-1})|^2 \,dS 
	= \int_{\Gamma}  |u|^2 |\omega_t| \, dx 
	\leq \Norm{\omega_t}{\infty,\overline{\Omega}} \Norm{u}{L^{2}(\Gamma,\R{3})}^2.
	\end{align*}
	Lemma \ref{Transf_shape_opt:Lem: Properties omegat} states that the mapping $ t \to \omega_{t} $ is continuous on I with values in $C^{0}(\overline{\Omega^{ext}},\R{}) $. Thus $ \Norm{\omega_t}{\infty} $ is bounded by $ C_3 $ independent of $ t \in I $.
\end{proof}

\noindent We can now show that $ u_t \in  H^{1}_{D,t}(\Omega_t,\R{3})$ solves \eqref{Ex_Shape_Deriv_LinEl:Eq: B_t(u_t,v)=L_t(v)} iff $ u^{t}:=u_t \circ T_t $ solves \eqref{Ex_Shape_Deriv_LinEl:Eq: B^t(u^t,v)=L^t(v)}:

\noindent 
We investigate the mapping $ a_{t}: C^{1}(\Omega_t,\R{3}) \to C^{1}(\Omega,\R{3}),\, u \mapsto u \circ T_t $ which extends to $$A_{t}: H^{1}_{D_t}(\Omega_t,\R{3}) \to H^{1}_{D}(\Omega,\R{3}),\, u \mapsto u \circ T_t $$ by completion of the space $ C^1(\Omega_t,\R{3}) $ and $ C^1(\Omega,\R{3}) $ w.r.t. the respective Sobolev-Norms. Moreover the properties of the Trace operators $ \mathbf{T}_{\Gamma,D_t}: H^1(\Omega_t,\R{3}) \to H^{\nicefrac{1}{2}}(\Gamma_{D,t},\R{3})$ ensure that the mapping $ A_t $ is reasonably defined and obviously linear. Analogously to Lemma \ref{Ex_Shape_Deriv_LinEl:Lem:H1_estimates_ut} the estimate
\[  \Norm{A_{t}(u)}{H^1(\Omega,\R{3})} = \Norm{u \circ T_t}{H^1(\Omega,\R{3})} \leq C(T_t^{-1}) \Norm{u}{H^{1}(\Omega_t,\R{3})} \forall u \in  H^{1}(\Omega_t,\R{3}) \]
holds true what implies that $ A_{t} $ is continuous for any $ t \in I_{V} $.  Further,  given a  $ v \in H^{1}(\Omega,\R{3}) $ an inverse image of $ v $ is given by $ u = v \circ T_t^{-1}$ since $ A_{t}(u) = v \circ T_t^{-1} \circ T_t = v $. Thus $ A_t $ is surjective. Finally $ A_{t} $ is injective since $  u \circ T_t = 0 $ implies that
$$ \Norm{u}{H^{1}(\Omega_{t},\R{3})}  = \Norm{u \circ T_t \circ  T_t^{-1}}{H^{1}(\Omega_{t},\R{3})} \leq C(T_t) \Norm{u \circ T_t}{H^{1}(\Omega,\R{3})} = 0  ~~~ \Rightarrow ~~~ u=0.$$
\noindent This shows that 
\begin{align} \label{Ex_Shape_Deriv_LinEl:Eq: equivalence of B^t(u^t,v)=L^t(v) and B_t(u_t,v)=L_t(v)}
\begin{array}{lrcll}
& B_{t}(u_t,v)&=&L_{t}(v)& \forall v \in H^{1}_{D,t}(\O{t},\R{3}) \\
\Leftrightarrow & ~ B^{t}(u_t \circ T_t, v \circ T_t)&=& L^{t}(v \circ T_t) & \forall v \in H^{1}_{D,t}(\O{t},\R{3}) \\
\Leftrightarrow & ~ B^{t}(u^{t},w)&=&  L^{t}(w) & \forall w \in H^{1}_{D}(\O{},\R{3}).
\end{array}
\end{align}



\section{$ H^1 $-material derivatives}  \label{Ex_Shape_Deriv_LinEl:Sec:H1 material derivatives}

As already explained, it is our aim to show existence of material derivatives in Hölder spaces. The following  diagram illustrates the idea of the proof and shows the flexibility (see also Section \ref{outlook: Reduces Requirements}) of the framework presented in Chapter \ref{Parameter_Dep_PDE} at the same time. 

\tikzstyle{block} = [rectangle, draw, fill=cyan!20, 
text width=17em, text centered, rounded corners, minimum height=2em]
\tikzstyle{solblock} = [rectangle, draw, fill=green!20, 
text width=10em, text centered, rounded corners, minimum height=2em]
\tikzstyle{line} = [draw, -latex']
\tikzstyle{cloud} = [draw, ellipse,fill=red!20, node distance=4cm,
minimum height=2em,text width=6em, text centered]

\vspace*{2em}

\begin{center}
	\begin{tikzpicture}[node distance = 2cm, auto]
	\node[block] (wPDE) {write PDE in weak form on $ \Omega_t $, pull-back to $ \Omega $ $ \rightarrow  B^t(u^{t},v) = L^{t}(v) $};
	\node[block, below of=wPDE, node distance =1.8cm] (dotb) {derive $ \dot{l}^t,\, \dot{b}^t $ $ \rightarrow $ \\$ B^t(u^{t},v) = \dot{B}^{t}(q^{t},v) - \dot{L}^{t}(v)$};
	\node[below of=wPDE,node distance=4.2cm] (P1){};
	\node[cloud, left of=P1, node distance=2cm ] (Prop) {Thm. \ref{Parameter_Dep_PDE:Thm: Existence Deriv. strong Topology}};
	\node[solblock, left of=Prop, node distance= 7.75cm](H1sol) {existence of $ H^1 $ solutions $ u^t,\, q^{t} $};
	\node[block, below of=dotb, node distance=5cm] (H1deriv) {existence and continuity of $ H^1 $ material derivative mappings};
	\node[below of=H1deriv,node distance=2cm] (L1){};
	\node[cloud, left of=L1, node distance=2.2cm ] (Lem) {Thm. \ref{Parameter_Dep_PDE:Thm: Existence of Strong Derivatives u^t }};
	\node[solblock, left of=Lem, node distance= 7.6cm](Cksol) {existence of solutions $ u^{t},\, q^{t} $ in higher topologies ($ C^k $, $ C^{k,\phi}$, Sobolev,$ \ldots $ )};
	\node[block, below of=H1deriv, node distance=4cm] (Ckderiv) {existence of material derivatives in higher topologies};
	\path [line,dashed] (wPDE) -| (H1sol) node [near end, right] (LM) {} node [near start] {$ u^{t} $};
	\path [line, dashed] (dotb) -- (LM) node [midway] {$ q^{t} $};
	\path [line] (wPDE) -- (dotb);
	\path [line,dashed] (H1sol) -- (Prop);
	\path [line] (dotb) -- (H1deriv);
	\path [line, dashed] (Prop) -- (P1);
	\path [line] (H1deriv) -- (Ckderiv);
	\path [line,dashed] (Cksol) -- (Lem)  node[midway, below] {\multirow{4}{3.5cm}{\small uniform Schauder estimates and compact embeddings}};
	\path [line,dashed] (Lem) -- (L1);
	\path [line, dashed] (H1sol) -- (Cksol) node [near start, right] {\multirow{3}{3cm}{\small regularity theory for elliptic PDE}};
	\end{tikzpicture}
\end{center}

\noindent In this sense we start with the derivation of $ H^1 $-material derivatives $ \dot{u}^{t} $.
The crucial steps when showing the existence of material or shape derivatives in the context of linear elasticity are bundled in Theorem \ref{Parameter_Dep_PDE:Thm: Existence Deriv. strong Topology}. Following the single requirements on the bilinear forms $ B^{t} $, the linear forms $ L^t $ and the solution space $ H^{1}_{D}$, we will show that these derivatives exists in the strong topology on $ H^{1}_{D} $.
In a second step, see Section \ref{Ex_Shape_Deriv_LinEl:Sec: hölder material derivatives}, we then apply Theorem \ref{Parameter_Dep_PDE:Thm: Existence of Strong Derivatives u^t } to obtain the required material derivatives in classical function spaces. 

\subsection{Linear and bilinear form: Continuity and ellipticity properties}

According to the explanations on Page \pageref{Transf_shape_opt} we find an $ \epsilon>0  $ and an open interval $ (-\epsilon,\epsilon) \Subset I_{V} $ - this interval shall be fixed in the following. 

We will show that the pulled back forms $ L^t $ and $ B^{t} $ are still continuous linear and bilinear forms, respectively, with bounds independent of $ t \in (-\epsilon,\epsilon) $. Moreover the familiy $ (B^{t})_{t \in (-\epsilon,\epsilon)} $ is shown to be equicoercive and thus at the end of this section Lemma \ref{Parameter_Dep_PDE:Lem: Crit. for strong cont.} can be applied.
\\[1ex]
By application of change of coordinates to the integral terms, see for example the book of  \cite[Thm 3.41]{AdamsFournier}, we can state the following regarding $ L^t $ and $ B^t $:

\begin{lem}\label{Ex_Shape_Deriv_LinEl:Lem: continuity Lt}	
	For $ t \in I_{V} $ let $ f_{t}=f(\Omega_t)\in L^{\nicefrac{6}{5}}(\Omega_t,\R{3}) $, $ g_{N,t} = g(\Gamma_{N,t})\in L^{\nicefrac{4}{3}}(\Gamma_{N,t},\R{3})$. 
	\begin{itemize}
		\item[i)] The linear form $ L^t $ is continuous on $ H^{1}_{D}(\Omega,\R{3}) $ and satisfies
		\begin{equation}\label{Ex_Shape_Deriv_LinEl:Eq: Lt}	
		L^t(v)=\int_{\Omega} \left\langle f^{t}\gamma_{t},v \right\rangle \, dx + \int_{\Gamma_N} \left\langle g^{t}\omega_{t},v \right\rangle\, dS ~~~~~ \forall v  \in H^{1}_{D}(\Omega,\R{3})
		\end{equation}
		where $f^{t}=f_t \circ T_{t} \in L^{\nicefrac{6}{5}}(\Omega,\R{3})$ and $ g^{t}=g_{N,t} \circ T_{t} \in L^{\nicefrac{4}{3}}(\Gamma_N, \R{3}) $. 
		\item[ii)] If $ f_t $ is uniformly bounded in $ L^{\nicefrac{6}{5}}(\Omega_t,\R{3})  $ and $ g_{N,t} $ is uniformly bounded in \linebreak $ L^{\nicefrac{4}{3}}(\Gamma_{N,t},\R{3}) $ for $ t \in(-\epsilon,\epsilon) $, then $ (L^t)_{t \in (-\epsilon,\epsilon) } $ is equicontinuous.
	\end{itemize}
\end{lem}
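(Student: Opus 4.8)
The plan is to read off both assertions from the change-of-variables formula applied to the definition \eqref{Ex_Shape_Deriv_LinEl:Def:L^t} of $ L^{t} $, combined with the Sobolev and trace embeddings already exploited in \eqref{Linear_Elasticity:Def:Con Linearform LinEl} and the uniform control on the transformation quantities $ \gamma_{t} $ and $ \omega_{t} $ provided by Lemmas \ref{Transf_shape_opt:Lem: Properties gammat} and \ref{Transf_shape_opt:Lem: Properties omegat}.

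For i), I would start from $ L^{t}(v)=L_{\Omega_t}(v\circ T_t^{-1}) $ and substitute $ x=T_t(y) $ in the volume term $ \int_{\Omega_t}\langle f_t,v\circ T_t^{-1}\rangle\,dx $. The Jacobian determinant is $ \gamma_{t}=\det(DT_t) $, which is strictly positive on $ \overline{\Omega^{ext}} $ by Lemma \ref{Transf_shape_opt:Lem: Properties gammat}, and this gives $ \int_{\Omega}\langle f^{t}\gamma_{t},v\rangle\,dy $ with $ f^{t}=f_t\circ T_t $. For the surface term I would use the area formula for the $ C^{1} $-diffeomorphism $ T_t $, whereby the pull-back of the surface measure is $ \omega_{t}\,dS_{\Gamma_N} $ with $ \omega_{t}=\Norm{\mathcal{M}(T_{t})\mathcal{N}}{} $ exactly the quantity of Lemma \ref{Transf_shape_opt:Lem: Properties omegat} (via the Gram-determinant identity $ \omega_{t}=\gamma_{t}\Norm{(DT_{t})^{-\top}\vec{n}}{} $), producing $ \int_{\Gamma_N}\langle g^{t}\omega_{t},v\rangle\,dS $ with $ g^{t}=g_{N,t}\circ T_t $. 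The memberships $ f^{t}\in L^{\nicefrac{6}{5}}(\Omega,\R{3}) $ and $ g^{t}\in L^{\nicefrac{4}{3}}(\Gamma_N,\R{3}) $ follow from the same change of variables (in the spirit of Lemma \ref{Ex_Shape_Deriv_LinEl:Lem:H1_estimates_ut}), since $ \gamma_{t} $, $ \omega_{t} $ and $ T_t^{-1}=T_t[-V] $ are bounded on the compact set $ \overline{\Omega^{ext}} $. Continuity of $ L^{t} $ then follows verbatim as in \eqref{Linear_Elasticity:Def:Con Linearform LinEl}: using $ H^{1}(\Omega,\R{3})\hookrightarrow L^{6}(\Omega,\R{3}) $, the trace $ \mathbf{T}_{\Gamma}:H^{1}(\Omega,\R{3})\to W^{\nicefrac{1}{2},2}(\Gamma,\R{3})\hookrightarrow L^{4}(\Gamma,\R{3}) $, and H\"older's inequality, one obtains
\[
|L^{t}(v)|\le\big(C_{1}\Norm{f^{t}\gamma_{t}}{L^{\nicefrac{6}{5}}(\Omega,\R{3})}+C_{2}\Norm{g^{t}\omega_{t}}{L^{\nicefrac{4}{3}}(\Gamma_N,\R{3})}\big)\Norm{v}{H^{1}(\Omega,\R{3})},
\]
with $ C_{1},C_{2} $ the embedding constants, independent of $ t $.

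For ii), the point is simply that every constant in the estimate above can be bounded uniformly for $ t\in(-\epsilon,\epsilon) $. Indeed $ \Norm{\gamma_{t}}{\infty,\Omega} $ and $ \Norm{\omega_{t}}{\infty,\Gamma_N} $ are bounded on the closed interval $ [-\epsilon,\epsilon] $ because $ t\mapsto\gamma_{t} $ and $ t\mapsto\omega_{t} $ are continuous with values in $ C^{0} $ (Lemmas \ref{Transf_shape_opt:Lem: Properties gammat} and \ref{Transf_shape_opt:Lem: Properties omegat}); and by the change-of-variables bounds $ \Norm{f^{t}}{L^{\nicefrac{6}{5}}(\Omega,\R{3})}\le C(T_t)\Norm{f_t}{L^{\nicefrac{6}{5}}(\Omega_t,\R{3})} $ and $ \Norm{g^{t}}{L^{\nicefrac{4}{3}}(\Gamma_N,\R{3})}\le C'(T_t)\Norm{g_{N,t}}{L^{\nicefrac{4}{3}}(\Gamma_{N,t},\R{3})} $, where $ C(T_t),C'(T_t) $ depend only on $ \Norm{\gamma_{t}}{\infty} $ and $ \Norm{DT_t^{-1}}{\infty}=\Norm{DT_t[-V]}{\infty} $ and are therefore uniformly bounded on $ (-\epsilon,\epsilon) $ by Lemma \ref{Transf_shape_opt:Lem: Properties D_Tt}. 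Combined with the assumed uniform bounds on $ \Norm{f_t}{L^{\nicefrac{6}{5}}(\Omega_t,\R{3})} $ and $ \Norm{g_{N,t}}{L^{\nicefrac{4}{3}}(\Gamma_{N,t},\R{3})} $, this yields a single constant $ C_{\epsilon} $ with $ \Norm{L^{t}}{(H^{1}_{D}(\Omega,\R{3}))'}\le C_{\epsilon} $ for all $ t\in(-\epsilon,\epsilon) $, i.e.\ $ (L^{t})_{t} $ is equicontinuous. The only genuine care needed is verifying the surface-measure transformation rule and checking that the constant in the $ L^{\nicefrac{4}{3}} $-bound on $ g^{t} $ is controlled solely by the transformation data; everything else is a direct combination of results already established in Chapters \ref{PDE_Systems} and \ref{Transf_shape_opt}, so I expect no real obstacle here.
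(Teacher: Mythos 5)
Your proof is correct and follows essentially the paper's own route: the representation of $L^{t}$ via change of variables, then H\"older's inequality together with the embeddings $H^{1}(\Omega,\R{3})\hookrightarrow L^{6}(\Omega,\R{3})$ and $\mathbf{T_{\Gamma}}:H^{1}(\Omega,\R{3})\to L^{4}(\Gamma,\R{3})$, and finally the uniform bounds on $\gamma_{t}$, $\omega_{t}$ and $DT_{t}^{-1}$ on $[-\epsilon,\epsilon]$ to get equicontinuity. The only (cosmetic) difference is that you pull the data back and estimate on the fixed domain $\Omega$, whereas the paper applies H\"older on $\Omega_{t}$ and pulls back only the test function $v\circ T_{t}^{-1}$ via Lemma \ref{Ex_Shape_Deriv_LinEl:Lem:H1_estimates_ut}; your variant has the minor advantage that all embedding and trace constants refer to the fixed domain $\Omega$ and are therefore manifestly $t$-independent.
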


\begin{proof}
	\textit{i)} The equality \eqref{Ex_Shape_Deriv_LinEl:Eq: Lt} 
	follows straightly from change of coordinates, see for example \cite[Thm. 3.41]{AdamsFournier}.
	
	\noindent \textit{ii)} According to the Sobolev Embedding Theorem \ref{App: Sobolev embedding}, the embedding $ H^{^1}(\Omega) \hookrightarrow L^6(\Omega) $ is continuous as well as the trace operator $ \mathbf{T_{\Gamma}}:H^1(\Omega) \to L^4(\Gamma) $, consider Theorem \ref{App: Trace Operator}. This, combined with Hölder's inequality implies $$ \Norm{\left\langle f_t,v \circ T_t^{-1}\right\rangle }{L^1(\Omega_t)} \leq \Norm{f_t}{L^{\nicefrac{6}{5}}(\Omega_t,\R{3})} \Norm{v \circ T_t^{-1}}{L^6(\Omega_{t},\R{3})} $$ and $$ \Norm{\left\langle g_{N,t},v \circ T_t^{-1} \right\rangle}{L^{1}(\Gamma_{N,t})} \leq \Norm{g_{N,t}}{L^{\nicefrac{4}{3}}(\Gamma_{N,t},\R{3})} \Norm{v \circ T_t^{-1}}{L^4(\Gamma_{N,t},\R{3})} $$ and thus we conclude
	\begin{align*}
	|L^{t}(v)| 
	&\leq C\Norm{v \circ T_t^{-1}}{H^1(\Omega,\R{3})}
	\left( \Norm{f_t}{L^{\nicefrac{6}{5}}(\Omega_t,\R{3})} + \Norm{g_{N,t}}{L^{\nicefrac{4}{3}}(\Gamma_{N,t},\R{3})} \right) \\
	& \leq C(T_t)C\Norm{v}{H^1(\Omega,\R{3})} \left( \Norm{f_t}{L^{\nicefrac{6}{5}}(\Omega_t,\R{3})} + \Norm{g_{N,t}}{L^{\nicefrac{4}{3}}(\Gamma_{N,t},\R{3})} \right).
	\end{align*}
	By Lemma \ref{Ex_Shape_Deriv_LinEl:Lem:H1_estimates_ut} $ C(T_t) $ can be chosen independent of $ t \in [-\epsilon,\epsilon] $ and $ \Norm{f_t}{L^{\nicefrac{6}{5}}(\Omega_t,\R{3})} $ and $ \Norm{g_{N,t}}{L^{\nicefrac{4}{3}}(\Gamma_{N,t},\R{3})}  $ are uniformly bounded by assumption.
\end{proof}
\pagebreak

\begin{cor}\label{Ex_Shape_Deriv_LinEl:Cor:Criterion_equi_continuity_Lt} $  $
	\begin{itemize}
		\item[i)] 	Let $ t \to f_t \in C(I_V,C(\overline{\Omega_t},\R{3})) $ and $ t \to g_{N,t}\in C(I_V,C(\overline{\Gamma}_{N,t},\R{3}))$ such that $ [-\epsilon,\epsilon] \subset I_V $.  Then $ (L^t)_{t \in (-\epsilon,\epsilon) } $  is equicontinuous.
		\item[ii)] 	Let $ f\in C(\overline{\Omega^{ext}},\R{3}) $, $ g\in C(\overline{\Omega^{ext}},\R{3})$ and $ f_t=f\vert_{\overline{\Omega}_{t}} $ and $ g_{N,t}=g\vert_{\overline{\Gamma}_{N,t}} $. Then $ (L^t)_{t \in (-\epsilon,\epsilon) } $ is equicontinuous.
	\end{itemize}
\end{cor}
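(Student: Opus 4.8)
\textbf{Proof plan for Corollary \ref{Ex_Shape_Deriv_LinEl:Cor:Criterion_equi_continuity_Lt}.}

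The plan is to reduce both statements to the hypothesis of Lemma \ref{Ex_Shape_Deriv_LinEl:Lem: continuity Lt} ii), namely the uniform boundedness of $f_t$ in $L^{6/5}(\Omega_t,\R{3})$ and of $g_{N,t}$ in $L^{4/3}(\Gamma_{N,t},\R{3})$ over $t\in(-\epsilon,\epsilon)$. Once that uniform bound is in place, equicontinuity of $(L^t)_{t\in(-\epsilon,\epsilon)}$ is immediate from the cited lemma. So the whole argument is about controlling these two norms uniformly, and both parts of the corollary are essentially the same computation with slightly different inputs.

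For part i): I would first observe that for a bounded domain, $L^{6/5}$- and $L^{4/3}$-norms are dominated by $C^0$-norms (times a power of the measure of the domain), so it suffices to bound $\Norm{f_t}{C(\overline{\Omega}_t,\R{3})}$ and $\Norm{g_{N,t}}{C(\overline{\Gamma}_{N,t},\R{3})}$ uniformly, and to bound the measures $|\Omega_t|$ and $|\Gamma_{N,t}|$ uniformly. The measure bounds follow since $\Omega_t=T_t(\Omega)\subset\Omega^{ext}$, hence $|\Omega_t|\le|\Omega^{ext}|$, and $|\Gamma_{N,t}|=\int_\Gamma\omega_t\,dS\le\Norm{\omega_t}{\infty,\Omega^{ext}}|\Gamma|$ with $\Norm{\omega_t}{\infty}$ uniformly bounded on $[-\epsilon,\epsilon]$ by Lemma \ref{Transf_shape_opt:Lem: Properties omegat}. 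For the $C^0$-norms of $f_t$ and $g_{N,t}$ themselves: by hypothesis $t\mapsto f_t\in C(I_V,C(\overline{\Omega}_t,\R{3}))$; strictly, the subtlety is that $f_t$ lives on the moving domain $\overline{\Omega}_t$, so I would interpret this continuity (as is standard, cf. the pull-back conventions of Section \ref{Sec: Prel.  Mat. deriv. LinEl}) via the pulled-back family $f^t=f_t\circ T_t\in C(\overline{\Omega},\R{3})$, for which $t\mapsto f^t$ is continuous into the fixed Banach space $C(\overline{\Omega},\R{3})$. Since $\Norm{f_t}{C(\overline{\Omega}_t,\R{3})}=\Norm{f^t}{C(\overline{\Omega},\R{3})}$ ($T_t$ being a homeomorphism of $\overline{\Omega}$ onto $\overline{\Omega}_t$), and since $[-\epsilon,\epsilon]$ is compact, the continuous real-valued map $t\mapsto\Norm{f^t}{C(\overline{\Omega},\R{3})}$ attains a finite maximum on $[-\epsilon,\epsilon]$; similarly for $g^t=g_{N,t}\circ T_t$. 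This yields the required uniform bounds, and Lemma \ref{Ex_Shape_Deriv_LinEl:Lem: continuity Lt} ii) finishes part i).

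For part ii): here $f_t=f|_{\overline{\Omega}_t}$ with $f\in C(\overline{\Omega^{ext}},\R{3})$ fixed, so $\Norm{f_t}{C(\overline{\Omega}_t,\R{3})}\le\Norm{f}{C(\overline{\Omega^{ext}},\R{3})}$ is bounded \emph{uniformly} in $t$ with no limiting argument needed at all, and likewise $\Norm{g_{N,t}}{C(\overline{\Gamma}_{N,t},\R{3})}\le\Norm{g}{C(\overline{\Omega^{ext}},\R{3})}$ since $\Gamma_{N,t}\subset\overline{\Omega^{ext}}$. Combined with the same domain/surface-measure bounds as above, this gives the uniform $L^{6/5}$ and $L^{4/3}$ bounds, so Lemma \ref{Ex_Shape_Deriv_LinEl:Lem: continuity Lt} ii) again applies. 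Indeed part ii) is a special case of part i), since a fixed $f\in C(\overline{\Omega^{ext}},\R{3})$ restricted to $\overline{\Omega}_t$ does give a family with $t\mapsto f_t\circ T_t$ continuous (this is just $t\mapsto f\circ T_t$, continuous because $t\mapsto T_t$ is continuous into $C^1(\overline{\Omega^{ext}},\R{n})$ and $f$ is uniformly continuous on the compact set $\overline{\Omega^{ext}}$); one could simply remark this and deduce ii) from i).

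The only mild obstacle is bookkeeping about what ``$t\mapsto f_t$ continuous'' means when the target spaces $C(\overline{\Omega}_t,\R{3})$ vary with $t$; the clean resolution is to pass to the pulled-back functions on the fixed reference domain, after which everything is a compactness-on-$[-\epsilon,\epsilon]$ argument together with the uniform bounds on $\gamma_t$ and $\omega_t$ from Lemmas \ref{Transf_shape_opt:Lem: Properties gammat} and \ref{Transf_shape_opt:Lem: Properties omegat}. No new estimates beyond those already established in Lemma \ref{Ex_Shape_Deriv_LinEl:Lem:H1_estimates_ut} and Lemma \ref{Ex_Shape_Deriv_LinEl:Lem: continuity Lt} are required.
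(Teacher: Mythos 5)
Your proposal is correct and follows essentially the same route as the paper: reduce to the uniform $L^{6/5}$/$L^{4/3}$ bounds of Lemma \ref{Ex_Shape_Deriv_LinEl:Lem: continuity Lt} ii) by pulling the data back to the fixed reference domain, using continuity of the pulled-back family on the compact interval $[-\epsilon,\epsilon]$ together with the uniform bounds on $\gamma_t$ and $\omega_t$ (the paper bounds $\int_\Omega \gamma_t|f_t\circ T_t|^{6/5}\,dx$ directly, which is the same estimate in slightly different bookkeeping), and the paper likewise deduces ii) as a special case of i).
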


\begin{proof}
	
	i) From the assumptions and the properties of $ \gamma_t $ we deduce $ t \to\gamma_t f_t \circ T_t\in C([-\epsilon,\epsilon],C(\overline{\Omega},\R{3})) $ and $ t \to \omega_t g_{N,t} \circ T_t \in C([-\epsilon,\epsilon],C(\overline{\Gamma}_{N},\R{3})) $
	\[ \Norm{f_t}{L^{\nicefrac{6}{5}}(\Omega_t,\R{3})}^{\nicefrac{6}{5}}=\int_{\Omega} \gamma_{t} |f_t \circ T_t|^{\nicefrac{6}{5}} \, dx \leq   \sup_{t\in [-\epsilon,\epsilon]} \Norm{ \gamma_{t} (f_t \circ T_t)^{\nicefrac{6}{5}}}{\infty,\Omega} |\Omega|, \]
	\[ \Norm{g_{N,t}}{L^{\nicefrac{4}{3}}(\Omega_t,\R{3})}^{\nicefrac{4}{3}}=\int_{\Gamma_{N,t}} \omega_{t} |g_{N,t} \circ T_t|^{\nicefrac{4}{3}} \, dx \leq \sup_{t\in [-\epsilon,\epsilon]} \Norm{ \omega_{t} (g_{N,t} \circ T_t)^{\nicefrac{4}{3}}}{\infty,\Gamma_{N}} \hspace*{-1mm}|\Gamma_{N}| .
	\]
	
	\noindent ii) In this case the mappings $ t \to f_t \in C([-\epsilon,\epsilon],C(\overline{\Omega_t},\R{3})) $ and  $ t \to g_{N,t}\in $ \\ $ C([-\epsilon,\epsilon],C(\overline{\Gamma}_{N,t},\R{3}))$ satisfy the assumption in i). 
\end{proof}

\begin{defn}
	For $ u \in H^{1}(\Omega,\R{3}) $ and Lamé constants $ \lambda,\, \mu>0  $ we define 
	\begin{align*}
	\varepsilon^{t}(u)&=\frac{1}{2}\left[Du(DT_{t})^{-1}+ (Du(DT_{t})^{-1})^{\top}\right] = \varepsilon(u \circ {T_{t}}^{-1}) \circ T_{t}, \\[1em]
	\sigma^{t}(u)&=\lambda\tr(\varepsilon^{t}(u))\mathrm{I}+2\mu\varepsilon^{t}(u) =\sigma(u \circ T_t^{-1}) \circ T_t.
	\end{align*}
\end{defn}

\begin{lem}\label{Ex_Shape_Deriv_LinEl:Lem: continuity Bt}
	The bilinear form $ B^t $ on $ H^{1}_{D}(\Omega,\R{3}) $, $\lambda>0$, $ \mu>0 $ satisfies
	\begin{align*}
	B^{t}(u,v)=\int_{\Omega} \hspace*{-1mm}\gamma_t&\tr(\sigma^{t}(u)\varepsilon^{t}(v)) \, dx = \int_{\Omega}\hspace*{-1mm}[\lambda \tr(\varepsilon^{t}(u))\tr(\varepsilon^{t}(v)) +2\mu \tr(\varepsilon^{t}(u)\varepsilon^{t}(v))]\gamma_{t} \, dx.
	\end{align*}
	The set $ (B^{t})_{t \in I_V} $ is a family of continuous bilinear forms on $ H^1_{D}(\Omega, \R{3}) $ and the subfamily $ (B^{t})_{t \in (-\epsilon,\epsilon)} $ is equicontinuous.
\end{lem}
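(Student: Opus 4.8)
The plan is to prove the two assertions of Lemma \ref{Ex_Shape_Deriv_LinEl:Lem: continuity Bt} in turn: first the integral identity, then the continuity/equicontinuity. For the identity, I would simply apply the change-of-variables formula in \cite[Thm. 3.41]{AdamsFournier} to the definition \eqref{Ex_Shape_Deriv_LinEl:Def:B^t} of $B^t$. Writing $w = u \circ T_t^{-1}$, $z = v\circ T_t^{-1}$, the chain rule gives $Dw \circ T_t = Du\,(DT_t)^{-1}$, hence $\varepsilon(w)\circ T_t = \tfrac12[Du(DT_t)^{-1} + (Du(DT_t)^{-1})^\top] = \varepsilon^t(u)$ by the preceding definition, and likewise $\varepsilon(z)\circ T_t = \varepsilon^t(v)$. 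Substituting into the integral over $\Omega_t$ and using $dx = |\det DT_t|\,dx = \gamma_t\,dx$ (the Jacobian is positive by Lemma \ref{Transf_shape_opt:Lem: Properties gammat} i)) yields
\begin{align*}
B^t(u,v) &= \int_{\Omega_t} \lambda \tr(\varepsilon(w))\tr(\varepsilon(z)) + 2\mu\tr(\varepsilon(w)\varepsilon(z))\,dx \\
&= \int_\Omega \bigl[\lambda \tr(\varepsilon^t(u))\tr(\varepsilon^t(v)) + 2\mu\tr(\varepsilon^t(u)\varepsilon^t(v))\bigr]\gamma_t\,dx,
\end{align*}
and then recognizing $\sigma^t(u) = \lambda\tr(\varepsilon^t(u))\mathrm{I} + 2\mu\varepsilon^t(u)$ and using the Frobenius identity $\tr(\sigma^t(u)\varepsilon^t(v)) = \lambda\tr(\varepsilon^t(u))\tr(\varepsilon^t(v)) + 2\mu\tr(\varepsilon^t(u)\varepsilon^t(v))$ (since $\varepsilon^t(v)$ is symmetric and $\mathrm{I}:\varepsilon^t(v) = \tr(\varepsilon^t(v))$) gives the compact form $\int_\Omega \gamma_t\tr(\sigma^t(u)\varepsilon^t(v))\,dx$.

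For continuity of each $B^t$, I would estimate directly: $|\varepsilon^t(u)| \le \|(DT_t)^{-1}\|_{C^0(\overline\Omega)}|Du|$ pointwise, so $\|\varepsilon^t(u)\|_{L^2(\Omega)} \le \|(DT_t)^{-1}\|_{\infty,\Omega}\|u\|_{H^1(\Omega)}$, and the analogous bound for $\sigma^t(u)$ picks up only the constants $\lambda,\mu$ and $\|(DT_t)^{-1}\|_\infty$. Combining with $\|\gamma_t\|_{\infty,\Omega} < \infty$ and Cauchy--Schwarz in $L^2(\Omega,\R{3\times 3})$,
\[
|B^t(u,v)| \le \|\gamma_t\|_{\infty,\Omega}\,\|\sigma^t(u)\|_{L^2(\Omega)}\|\varepsilon^t(v)\|_{L^2(\Omega)} \le C(\lambda,\mu)\,\|\gamma_t\|_{\infty,\Omega}\,\|(DT_t)^{-1}\|_{\infty,\Omega}^2\,\|u\|_{H^1(\Omega,\R{3})}\|v\|_{H^1(\Omega,\R{3})},
\]
so $B^t$ is a continuous bilinear form on $H^1_D(\Omega,\R{3})$ for every $t \in I_V$.

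For equicontinuity on $(-\epsilon,\epsilon)$, I would invoke the regularity of the transformation family already established in Chapter \ref{Transf_shape_opt}: by Lemma \ref{Transf_shape_opt:Lem: Properties D_Tt} the mapping $t\mapsto (DT_t)^{-1}$ is continuous from $I_V$ into $C^{k-1}(\overline{\Omega^{ext}},\R{n\times n})$, and by Lemma \ref{Transf_shape_opt:Lem: Properties gammat} the mapping $t\mapsto\gamma_t$ is continuous from $I_V$ into $C^{k-1}(\overline{\Omega^{ext}})$. Since $\overline{(-\epsilon,\epsilon)}$ is a compact subset of $I_V$, the quantities $\sup_{t}\|\gamma_t\|_{\infty,\Omega}$ and $\sup_{t}\|(DT_t)^{-1}\|_{\infty,\Omega}$ are finite, which makes the constant in the above estimate independent of $t$; by the definition of equicontinuity this is exactly what is required. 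I do not anticipate a genuine obstacle here — the only point that requires a little care is making sure the change-of-variables and the chain-rule identities $\varepsilon^t(u) = \varepsilon(u\circ T_t^{-1})\circ T_t$ are applied on the right domains, but these are precisely the identities recorded in the definition of $\varepsilon^t,\sigma^t$ just before the lemma, so the argument is essentially bookkeeping plus the uniform bounds from Chapter \ref{Transf_shape_opt}.
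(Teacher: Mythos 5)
Your proof is correct and follows essentially the same strategy as the paper: the identity is obtained by change of variables together with $D(u\circ T_t^{-1})\circ T_t = Du\,(DT_t)^{-1}$, and continuity/equicontinuity come from Cauchy--Schwarz plus uniform bounds on the transformation family over the compact interval $[-\epsilon,\epsilon]$. The only (harmless) difference is in bookkeeping: the paper carries out the estimate on the transformed domain $\Omega_t$, bounding $|B^t(u,v)|\le(\lambda+2\mu)\Norm{u\circ T_t^{-1}}{H^1(\Omega_t,\R{3})}\Norm{v\circ T_t^{-1}}{H^1(\Omega_t,\R{3})}$ via the appendix inequalities and then invoking Lemma \ref{Ex_Shape_Deriv_LinEl:Lem:H1_estimates_ut} for a $t$-uniform constant, whereas you stay on the reference domain $\Omega$ and bound $\gamma_t$ and $(DT_t)^{-1}$ pointwise using Lemmas \ref{Transf_shape_opt:Lem: Properties gammat} and \ref{Transf_shape_opt:Lem: Properties D_Tt} directly; both arguments rest on the same uniform bounds from Chapter \ref{Transf_shape_opt}, so nothing is missing.
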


%

\begin{proof}
	Since $ D(u \circ T_t^{-1}) \circ T_t = Du(DT_t)^{-1}  $ holds
	the identity $$ B^{t}(u,v) =  \int_{\Omega} \hspace*{-1mm}\gamma_t\tr(\sigma^{t}(u)\varepsilon^{t}(v)) \, dx$$ is a direct consequence of change of coordinates.
	We show that $$ |B^{t}(u,v)| \leq C \Norm{u}{H^1(\Omega,\R{3})}\Norm{v }{H^1(\Omega,\R{3})} \, \forall u,v \in H^1_{D}(\Omega,\R{3}),\, t \in [-\epsilon,\epsilon]$$ for a constant $ C>0 $:
	Triangle inequality, equations \eqref{App: Norm_H1_tr_epsuepsv}, \eqref{App: Norm_H1_Divu_Divv} and Lemma \ref{Ex_Shape_Deriv_LinEl:Lem:H1_estimates_ut} applied to $ \tilde{u}_{t}=u \circ T_t^{-1} $ and $ \tilde{v}_{t}=v\circ T_t^{-1} $ then lead to
	\begin{align*}
	|B^{t}(u,v)| &\leq \lambda \int_{\Omega_t} |\tr(\varepsilon(u \circ T_t)) |\, |		
	\tr(\varepsilon(v \circ T_t))|\, dx + 2\mu \int_{\Omega_t} |\tr(\varepsilon(u \circ T_t)\varepsilon(v \circ T_t))| \, dx\\
	&\leq(\lambda+2\mu)\Norm{u \circ T_t^{-1}}{H^{1}(\Omega_t,\R{3})}\Norm{v \circ T_t^{-1}}{H^{1}(\Omega_t,\R{3})} \\
	&\leq (\lambda+2\mu) C(T_t) \Norm{u}{H^{1}(\Omega,\R{3})}\Norm{v}{H^{1}(\Omega,\R{3})}
	\end{align*}
	for any $ t \in I_{V} $ and $ C(T_t)>0 $. Then $ C(T_t) \leq C $ can be chosen uniformly w.r.t. $ [-\epsilon,\epsilon] $, as shown in Lemma \ref{Ex_Shape_Deriv_LinEl:Lem:H1_estimates_ut}.
\end{proof}

\begin{lem}\label{Ex_Shape_Deriv_LinEl:Lem: Coercivity Bt}
	The set $(B^{t})_{t \in (-\epsilon,\epsilon)}$ is a family of equicoercive bilinear forms.
\end{lem}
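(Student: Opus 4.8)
The plan is to reduce the claim to Korn's second inequality on the \emph{fixed} reference domain $\Omega$, by passing to the pulled-back form $B^{t}$ on the fixed Hilbert space $H=H^{1}_{D}(\Omega,\R{3})$ (rather than working with $B_{t}$ on the moving spaces $H^{1}_{D,t}(\Omega_{t},\R{3})$) and exploiting that the coefficients $\gamma_{t}$ and $(DT_{t})^{-1}$ stay close to their values at $t=0$.

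First I would use the explicit representation of $B^{t}$ from Lemma \ref{Ex_Shape_Deriv_LinEl:Lem: continuity Bt}: for $u\in H^{1}_{D}(\Omega,\R{3})$,
\[
B^{t}(u,u)=\int_{\Omega}\big[\lambda\,(\tr\varepsilon^{t}(u))^{2}+2\mu\,\tr(\varepsilon^{t}(u)^{2})\big]\gamma_{t}\,dx\ \ge\ 2\mu\int_{\Omega}\gamma_{t}\,\tr\!\big(\varepsilon^{t}(u)^{2}\big)\,dx ,
\]
since $\lambda>0$ and $\gamma_{t}>0$ make the first summand nonnegative and $\tr(\varepsilon^{t}(u)^{2})=\varepsilon^{t}(u):\varepsilon^{t}(u)\ge0$. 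By Lemma \ref{Transf_shape_opt:Lem: Properties gammat} i) the map $t\mapsto\gamma_{t}$ is continuous with $\gamma_{0}\equiv1$, so $\Norm{\gamma_{t}-1}{\infty,\Omega}\to0$ as $t\to0$; hence, requiring (as is permitted in the choice of the fixed interval $(-\epsilon,\epsilon)\Subset I_{V}$) $\epsilon$ small enough that $\gamma_{t}\ge\tfrac12$ on $\overline{\Omega}$ for all $|t|<\epsilon$, we get $B^{t}(u,u)\ge\mu\,\Norm{\varepsilon^{t}(u)}{L^{2}(\Omega,\R{3\times3})}^{2}$.

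Next I would compare $\varepsilon^{t}(u)$ with the unperturbed strain $\varepsilon(u)$. Writing $(DT_{t})^{-1}=\mathrm{I}+R_{t}$, Lemma \ref{Transf_shape_opt:Lem: Properties D_Tt} gives that $t\mapsto(DT_{t})^{-1}$ is continuous with value $\mathrm{I}$ at $t=0$, so $\Norm{R_{t}}{\infty,\Omega}\to0$ as $t\to0$, and from $\varepsilon^{t}(u)-\varepsilon(u)=\tfrac12[\,Du\,R_{t}+(Du\,R_{t})^{\top}\,]$ one obtains $\Norm{\varepsilon^{t}(u)-\varepsilon(u)}{L^{2}(\Omega,\R{3\times3})}\le\Norm{R_{t}}{\infty,\Omega}\,\Norm{u}{H^{1}(\Omega,\R{3})}$. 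Now apply Korn's second inequality, Theorem \ref{Linear_Elasticity:Thm:Korns_second_ineq} ii): since $\Omega\in\mathcal{O}_{1}$ is in particular a Lipschitz domain and $\Gamma_{D}$ has positive surface measure, there is a fixed constant $C_{K}>0$ (independent of $t$) with $\Norm{\varepsilon(u)}{L^{2}(\Omega,\R{3\times3})}\ge C_{K}^{-1}\Norm{u}{H^{1}(\Omega,\R{3})}$ for all $u\in H^{1}_{D}(\Omega,\R{3})$. Choosing $\epsilon$ moreover so small that $\Norm{R_{t}}{\infty,\Omega}\le\tfrac{1}{2C_{K}}$ for $|t|<\epsilon$, the reverse triangle inequality yields $\Norm{\varepsilon^{t}(u)}{L^{2}(\Omega,\R{3\times3})}\ge(C_{K}^{-1}-\Norm{R_{t}}{\infty,\Omega})\Norm{u}{H^{1}(\Omega,\R{3})}\ge\tfrac{1}{2C_{K}}\Norm{u}{H^{1}(\Omega,\R{3})}$, and combining this with the previous paragraph gives $B^{t}(u,u)\ge\Varlambda_{M}\Norm{u}{H^{1}(\Omega,\R{3})}^{2}$ with $\Varlambda_{M}:=\mu/(4C_{K}^{2})>0$, uniformly in $u\in H^{1}_{D}(\Omega,\R{3})$ and $t\in(-\epsilon,\epsilon)$ — which is exactly equicoercivity.

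The main obstacle is keeping \emph{all} constants uniform in $t$; this is precisely the reason to pull back to the fixed space $H^{1}_{D}(\Omega,\R{3})$ and invoke Korn's inequality only on $\Omega$, instead of on the moving domains $\Omega_{t}$ where the Korn constant $C_{K}(\Omega_{t})$ would vary. If one prefers not to shrink $\epsilon$, an alternative is to keep $B^{t}(u,u)\ge2\mu\,\Norm{\varepsilon(u\circ T_{t}^{-1})}{L^{2}(\Omega_{t},\R{3\times3})}^{2}$, apply Korn on $\Omega_{t}$, and then bound $C_{K}(\Omega_{t})$ uniformly over the compact interval $[-\epsilon,\epsilon]$ using the uniform cone property of the family $\{\Omega_{t}\}$ together with the uniform change-of-coordinates estimates of Lemma \ref{Ex_Shape_Deriv_LinEl:Lem:H1_estimates_ut} (applied to $T_{t}$ and $T_{t}^{-1}$); I would, however, favour the first route since it is self-contained and avoids quoting a uniform version of Korn's inequality.
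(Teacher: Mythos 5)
Your preferred, perturbative route does not prove the lemma in the form in which it is stated and later used, and the parenthetical claim that shrinking $\epsilon$ ``is permitted'' is exactly the step that is not available. The interval $(-\epsilon,\epsilon)\Subset I_{V}$ was fixed beforehand as an \emph{arbitrary} compact subinterval of $I_{V}$, and the downstream results (the uniform bounds on $u^{t}$ and $q^{t}$, Theorem \ref{Ex_Shape_Deriv_LinEl:Thm: existence of strong H^1 material derivatives}, and the Hölder material derivative theorem) invoke equicoercivity of $(B^{t})_{t\in(-\epsilon,\epsilon)}$ on every such interval, at parameters $t$ that need not be close to $0$. Away from $t=0$ the matrix $(DT_{t})^{-1}$ is in general not a small perturbation of $\mathrm{I}$, so your reverse-triangle step $C_{K}^{-1}-\Norm{R_{t}}{\infty,\Omega}>0$ can fail, and the strain comparison $\Norm{\varepsilon^{t}(u)-\varepsilon(u)}{L^{2}}\le\Norm{R_{t}}{\infty,\Omega}\Norm{u}{H^{1}}$ no longer helps. (The requirement $\gamma_{t}\ge\tfrac12$ is harmless: $\min_{t}\gamma_{t}>0$ on compact subintervals suffices and holds by Lemma \ref{Transf_shape_opt:Lem: Properties gammat}; the genuine obstruction is solely the comparison of $\varepsilon^{t}(u)$ with $\varepsilon(u)$, which is precisely where the fact that the symmetrized gradient does not transform nicely under $T_{t}$ enters.) As written, your main argument therefore establishes equicoercivity only on a sufficiently small neighbourhood of $t=0$, whose size depends on $V$ and on the Korn constant of $\Omega$ — a strictly weaker statement than Lemma \ref{Ex_Shape_Deriv_LinEl:Lem: Coercivity Bt} as it is consumed by the rest of the chapter.

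The alternative you sketch and set aside is in fact the paper's proof: drop the $\lambda$-term, bound $B^{t}(u,u)\ge 2\mu\Norm{\varepsilon(u\circ T_{t}^{-1})}{L^{2}(\Omega_{t},\R{3\times3})}^{2}$, apply Korn's inequality (Theorem \ref{Linear_Elasticity:Thm:Korns_second_ineq}) on the moving domain $\Omega_{t}$, and return to the fixed space with the $t$-uniform change-of-variables estimates of Lemma \ref{Ex_Shape_Deriv_LinEl:Lem:H1_estimates_ut}. For the uniformity in $t$ of the Korn constants the paper does not quote a uniform Korn theorem via a cone property, which is what you were trying to avoid; it argues by contradiction, chaining Korn's inequality on $\Omega_{t_{n}}$ with the two-sided estimates of Lemma \ref{Ex_Shape_Deriv_LinEl:Lem:H1_estimates_ut} (applied to both $T_{t}$ and $T_{t}^{-1}$, with constants uniform on $[-\epsilon,\epsilon]$) to exclude degeneration of the Korn constants along a sequence $t_{n}$. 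If you complete your fallback along these lines you recover the lemma on an arbitrary compact subinterval of $I_{V}$; your perturbative computation then still has value as a remark, since near $t=0$ it produces the fully explicit constant $\Varlambda_{M}=\mu/(4C_{K}^{2})$, which the compactness-type argument does not.
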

\begin{proof}
	Let $ C_{K}(\Omega) $ denote the constant in Korn's inequality \eqref{Linear_Elasticity:Eq:Folgerung_Korns_second_ineq}. In the first step we show that for fixed $ t \in (-\epsilon,\epsilon) $ there exists a constant $ \Varlambda_{t}>0 $ such that $ B^{t}(u,u) \geq \Varlambda_t \Norm{u}{H^1(\Omega,\R{3})}^2 $:
	\begin{align} \label{Ex_Shape_Deriv_LinEl:Eq: coercivity B^t}
	B^{t}(u,u) 
	&=\lambda \int_{\Omega_t} \hspace*{-2mm} \tr(\varepsilon(u \circ T_t^{-1}))^2 dx + 2\mu \int_{\Omega_t} \hspace*{-2mm} \tr(\varepsilon(u \circ T_t^{-1})^2) dx \nonumber
	\geq 2\mu \int_{\Omega_t}\hspace*{-2mm}  \tr(\varepsilon(u \circ T_t^{-1})^2) dx \nonumber 
	\\
	&\geq 2 \mu C_{K}(\Omega_t) \Norm{u \circ T_t^{-1}}{H^1(\Omega_t,\R{3})}^2
	\geq \underbrace{2\mu C_{K}(\Omega_t)^2 C(T_t)^{-2}}_{:=\Varlambda_t} \nonumber \Norm{u}{H^1(\Omega,\R{3})}^2.
	\end{align}
	because Lemma \ref{Ex_Shape_Deriv_LinEl:Lem:H1_estimates_ut} yields 
	$$\Norm{u}{H^1(\Omega,\R{3})} = \Norm{(u \circ T_t^{-1} ) \circ T_t}{H^{1}(\Omega,\R{3})} \leq C(T_t)^{-1} \Norm{u \circ T_{t}^{-1}}{H^1(\Omega_t,\R{3})}. $$ Hence, $ \Varlambda_t $ is a possible choice for the required constant.
	
	\noindent It is left to show, that $ \Varlambda_t $ can be chosen independently of $ \Omega_t,\, t \in [-\epsilon,\epsilon] \subset I_{V}$:
	Analogously to  Lemma \ref{Ex_Shape_Deriv_LinEl:Lem:H1_estimates_ut} we obtain a constant $ C $ such that  $ C(T_t) \leq C  ~ \forall t \in[-\epsilon,\epsilon]$. At this point we keep in mind that this implies \[ C^{-1}\Norm{u}{H^1(\Omega,\R{3})} \leq \Norm{u \circ T_{t}^{-1}}{H^1(\Omega_t,\R{3})}. \]
	\noindent Therefore, it is left to show that $ C_{K}(\Omega_t)\geq c>0 $  for $ t \in [-\epsilon,\epsilon] $. Let us assume that this is false. Then there is a sequence $ (t_{n})_{n\in \N{}} \subset [-\epsilon,\epsilon] $ such that $ C_{K}(\Omega_{t_n}) \to 0,\, n \to \infty $. 
	By Lemma  \ref{Ex_Shape_Deriv_LinEl:Lem:H1_estimates_ut} 
	\begin{align*}
	C^{-1} \Norm{v}{H^{1}(\Omega)} 
	&\leq \Norm{v \circ T_t^{-1}}{H^{1}(\Omega_t)} 
	\leq C_{K}(\Omega_{t}) \Norm{\varepsilon(v \circ T_t^{-1})}{L^2(\Omega_t)} \\
	&\leq C_{K}(\Omega_{t}) \Norm{v \circ T_t^{-1}}{H^{1}(\Omega_t)} 
	\leq C_{K}(\Omega_{t}) \tilde{C}(T_t)^{-1} \Norm{v}{H^{1}(\Omega)}
	\end{align*}
	for any $ v \in H^1_{D}(\Omega) $ where  $ \tilde{C}(T_t)^{-1} $ is bounded from  above  by $ \tilde{C} $ on $ [-\epsilon,\epsilon] $ and hence
	\begin{align*}
	C^{-1} \Norm{v}{H^{1}(\Omega)} 
	\leq C_{K}(\Omega_{t}) \tilde{C} \Norm{v}{H^{1}(\Omega)}
	\end{align*}
	for any $ v \in H^{1}_{D} $ and $ t \in [-\epsilon,\epsilon] $. Now, let $ v \in H^{1}_{D}(\Omega,\R{3}) $ with  $ \Norm{v}{H^1(\Omega,\R{3})}=1 $ then $0<C  
	\leq C_{K}(\Omega_{t_n}) \tilde{C} \to 0,\, n \to \infty.$ But this leads to $ C^{-1}=0  $ which is a contradiction.  
\end{proof}

\begin{prop}\label{Ex_Shape_Deriv_LinEl:Prop: Link between u_t ant u^t}
	Let $ V \in \mathcal{V}^{ad}_{1}(\Oext) $ and $ (T_{t})_{t \in I_{V}}$ be the associated familiy of tranformations, $ \Omega \in \mathcal{O}_{1} $ and let $ f_t\in L^{\nicefrac{6}{5}}(\Omega_t,\R{3}) $, $ g_{N,t}\in L^{\nicefrac{4}{3}}(\Gamma_{N,t},\R{3})$.
	Then there is a unique solution $ u^{t} \in H^{1}_{D}(\Omega,\R{3}) $ of 
	\begin{equation}\label{Ex_Shape_Deriv_LinEl:Eq: B^t(u,v)=L^t(v)}
	B^{t}(u,v)=L^{t}(v) ~~~ \forall v \in H^{1}_{D}(\Omega,\R{3}).
	\end{equation}
	for any $ t \in I_V $  and the function $ u_t := u^{t} \circ T_t^{-1} $ uniquely satisfies 
	\begin{equation}\label{Ex_Shape_Deriv_LinEl:Eq: B_t(u_t,v)=L_t(v)}
	B_{t}(u_t,v)=L_{t}(v)~~~ \forall v \in H^{1}_{D,t}(\O{t},\R{3}).
	\end{equation}
\end{prop}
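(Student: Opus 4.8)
The plan is to reduce the statement to two already-established ingredients: the Lemma of Lax--Milgram applied on the fixed Hilbert space $H^{1}_{D}(\Omega,\R{3})$, and the linear topological isomorphism $A_{t}$ introduced in the discussion preceding \eqref{Ex_Shape_Deriv_LinEl:Eq: equivalence of B^t(u^t,v)=L^t(v) and B_t(u_t,v)=L_t(v)}. First I would fix an arbitrary $t\in I_{V}$ and verify the hypotheses of the Lemma of Lax--Milgram for the pulled-back problem \eqref{Ex_Shape_Deriv_LinEl:Eq: B^t(u,v)=L^t(v)}. The space $H^{1}_{D}(\Omega,\R{3})$ is a Hilbert space, being a closed subspace of $H^{1}(\Omega,\R{3})$ by Theorem~\ref{Linear_Elasticity:Thm:Korns_second_ineq}~ii); by Lemma~\ref{Ex_Shape_Deriv_LinEl:Lem: continuity Bt} the bilinear form $B^{t}$ is continuous on it; by Lemma~\ref{Ex_Shape_Deriv_LinEl:Lem: Coercivity Bt} (more precisely, by its first step, which is valid for every individual $t\in I_{V}$) there is a constant $\Varlambda_{t}>0$ with $B^{t}(u,u)\geq\Varlambda_{t}\Norm{u}{H^{1}(\Omega,\R{3})}^{2}$ for all $u\in H^{1}_{D}(\Omega,\R{3})$, so $B^{t}$ is strictly coercive; and since $f_{t}\in L^{\nicefrac{6}{5}}(\Omega_{t},\R{3})$ and $g_{N,t}\in L^{\nicefrac{4}{3}}(\Gamma_{N,t},\R{3})$, Lemma~\ref{Ex_Shape_Deriv_LinEl:Lem: continuity Lt}~i) gives $L^{t}\in H^{1}_{D}(\Omega,\R{3})'$. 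The Lemma of Lax--Milgram then produces a \emph{unique} $u^{t}\in H^{1}_{D}(\Omega,\R{3})$ solving $B^{t}(u^{t},v)=L^{t}(v)$ for all $v\in H^{1}_{D}(\Omega,\R{3})$.

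Next I would transfer this solution to $\Omega_{t}$. Recall that the map $A_{t}\colon H^{1}_{D,t}(\Omega_{t},\R{3})\to H^{1}_{D}(\Omega,\R{3})$, $u\mapsto u\circ T_{t}$, was shown (just above \eqref{Ex_Shape_Deriv_LinEl:Eq: equivalence of B^t(u^t,v)=L^t(v) and B_t(u_t,v)=L_t(v)}) to be linear, bounded, bijective with bounded inverse $A_{t}^{-1}(w)=w\circ T_{t}^{-1}$ (the relevant change-of-coordinates estimates coming from Lemma~\ref{Ex_Shape_Deriv_LinEl:Lem:H1_estimates_ut}), and that under this isomorphism the equation \eqref{Ex_Shape_Deriv_LinEl:Eq: B_t(u_t,v)=L_t(v)} is equivalent to \eqref{Ex_Shape_Deriv_LinEl:Eq: B^t(u,v)=L^t(v)} via the substitution $w=v\circ T_{t}$. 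Setting $u_{t}:=u^{t}\circ T_{t}^{-1}=A_{t}^{-1}(u^{t})\in H^{1}_{D,t}(\Omega_{t},\R{3})$ and exploiting the surjectivity of $A_{t}$, every test function $v\in H^{1}_{D,t}(\Omega_{t},\R{3})$ satisfies
\[
B_{t}(u_{t},v)=B^{t}(u_{t}\circ T_{t},v\circ T_{t})=B^{t}(u^{t},v\circ T_{t})=L^{t}(v\circ T_{t})=L_{t}(v).
\]
For uniqueness, if $\tilde{u}_{t}\in H^{1}_{D,t}(\Omega_{t},\R{3})$ is any other solution of \eqref{Ex_Shape_Deriv_LinEl:Eq: B_t(u_t,v)=L_t(v)}, then $A_{t}(\tilde{u}_{t})$ solves \eqref{Ex_Shape_Deriv_LinEl:Eq: B^t(u,v)=L^t(v)} by the same equivalence; the uniqueness of $u^{t}$ forces $A_{t}(\tilde{u}_{t})=u^{t}=A_{t}(u_{t})$, and injectivity of $A_{t}$ yields $\tilde{u}_{t}=u_{t}$.

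This argument is essentially bookkeeping, and I do not expect a serious obstacle. The one point that deserves genuine care is that the strict-coercivity constant $\Varlambda_{t}$ is positive for \emph{each} $t\in I_{V}$, not only for $t$ in the bounded interval $(-\epsilon,\epsilon)$ where Lemma~\ref{Ex_Shape_Deriv_LinEl:Lem: Coercivity Bt} additionally gives \emph{equi}coercivity. This is exactly the first step of the proof of that lemma: $\Omega_{t}=T_{t}(\Omega)$ is a bounded Lipschitz domain (the image of the $C^{1}$-domain $\Omega$ under the $C^{1}$-diffeomorphism $T_{t}$) with $|\Gamma_{D,t}|>0$, so Korn's second inequality \eqref{Linear_Elasticity:Eq:Folgerung_Korns_second_ineq} holds on $\Omega_{t}$ with some $C_{K}(\Omega_{t})>0$, and pulling back through $T_{t}$ with the bounds of Lemma~\ref{Ex_Shape_Deriv_LinEl:Lem:H1_estimates_ut} produces a strictly positive $\Varlambda_{t}$ built from $C_{K}(\Omega_{t})$ and the change-of-coordinates constant $C(T_{t})$. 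A secondary, equally routine point is that $L_{t}$ is a well-defined continuous functional on $H^{1}_{D,t}(\Omega_{t},\R{3})$, which is precisely the framework of Theorem~\ref{Linear_Elasticity:Def:H1_solutions_lin_el} and also follows immediately from $L^{t}\in H^{1}_{D}(\Omega,\R{3})'$ composed with the isomorphism $A_{t}$.
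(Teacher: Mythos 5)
Your proposal is correct and follows essentially the same route as the paper: Lax--Milgram on $H^{1}_{D}(\Omega,\R{3})$ using the continuity of $B^{t}$ and $L^{t}$ and the coercivity of $B^{t}$, followed by the equivalence \eqref{Ex_Shape_Deriv_LinEl:Eq: equivalence of B^t(u^t,v)=L^t(v) and B_t(u_t,v)=L_t(v)} via the isomorphism $A_{t}$. Your explicit remark that the pointwise coercivity constant $\Varlambda_{t}>0$ for each $t\in I_{V}$ comes from the first step of the proof of Lemma \ref{Ex_Shape_Deriv_LinEl:Lem: Coercivity Bt} (whose statement only asserts equicoercivity on $(-\epsilon,\epsilon)$) is a welcome clarification of a point the paper passes over silently.
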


\begin{proof}
	By Lemma \ref{Ex_Shape_Deriv_LinEl:Lem: continuity Bt} $ B^{t} $ is a continuous elliptic bilinear form (Lemma \ref{Ex_Shape_Deriv_LinEl:Lem: Coercivity Bt}) on $ H^{1}_{D}(\Omega,\R{3}) $ for any $ t \in I_V $ and $ L^{t} $ is also continuous on $ H^{1}_{D}(\Omega,\R{3}) $. By means of Lax Milgram's Theorem there is a unique solution $ u^{t} \in H^{1}_{D}(\Omega,\R{3}) $ to \eqref{Ex_Shape_Deriv_LinEl:Eq: B^t(u,v)=L^t(v)}
	and the assertion then follows from equation \eqref{Ex_Shape_Deriv_LinEl:Eq: equivalence of B^t(u^t,v)=L^t(v) and B_t(u_t,v)=L_t(v)}.
\end{proof}

\subsection{Linear and bilinear form: Differentiability properties}

\begin{lem}\label{Ex_Shape_Deriv_LinEl:Lem:Derivative Lt}
	Let $V \in \Vad{1}{\Oext} $, $ \Omega \in \mathcal{O}_{1} $, $ f\in C^{1}(\overline{\Omega^{ext}},\R{3})$ and $  g\in C^{1}(\overline{\Omega^{ext}},\R{3})$. 
	Let $ \epsilon>0 $ such that $ 
	[-\epsilon,\epsilon] \subset I_V $. Then the mapping  $$ (-\epsilon,\epsilon) \to H^{1}_{D}(\Omega,\R{3})' ,\, t \mapsto L^{t} $$ 
	is differentiable in the weak $ H^{1}_{D}(\Omega,\R{3})'$-topology with derivative 
	\begin{align}\label{Ex_Shape_Deriv_LinEl:Eq:dotLt}
	\dot{L}^{t}(v)&=\int_{\O{t}}\langle f_{V},v \circ T_t^{-1} \rangle\, dx + \int_{\Gamma_{N,t}} \langle g_{V,t},v \circ T_t^{-1} \rangle \, dS.
	\end{align}
	where $ f_{V}=\Div(V)f + Df V $ and $ g_{V,t}=\Div_{\Gamma_t}(V)g + Dg V = \Div_{\Gamma_t}(V)g + D_{\Gamma}gV + \frac{\partial g}{\partial \vec{n}} V_{\vec{n}} $.
	Additionally, there exists a constant $ C>0 $ such that $ \Vert\dot{L}^{t}\Vert_{H^{1}_{D}(\Omega,\R{3})'} \leq C \, \forall t \in (-\epsilon,\epsilon) .$
\end{lem}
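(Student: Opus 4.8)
The statement asserts three things about $L^t$: (i) that $t\mapsto L^t$ is differentiable in the weak $H^1_D(\Omega,\R 3)'$ topology, (ii) that its derivative has the explicit integral form \eqref{Ex_Shape_Deriv_LinEl:Eq:dotLt}, and (iii) that the family $(\dot L^t)_{t}$ is uniformly bounded on $(-\epsilon,\epsilon)$. The plan is to fix $v\in H^1_D(\Omega,\R 3)$, regard $t\mapsto L^t(v)$ as a scalar parameter integral, and differentiate it using the rules from Section~\ref{Diff_Banach_Space:Sec:Parameter_Integrals}; the weak differentiability of $t\mapsto L^t$ in $H^1_D(\Omega,\R 3)'$ is by definition exactly the existence of $\lim_{h\to 0}\tfrac1h(L^{t+h}(v)-L^t(v))$ for every fixed $v$, together with the check that the limiting functional $\dot L^t$ is bounded and linear in $v$.

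First I would rewrite $L^t(v)$ in the pulled-back form provided by Lemma~\ref{Ex_Shape_Deriv_LinEl:Lem: continuity Lt}, namely
\[
L^t(v)=\int_\Omega \langle f^t\gamma_t,v\rangle\,dx+\int_{\Gamma_N}\langle g^t\omega_t,v\rangle\,dS,
\]
with $f^t=f\circ T_t$ (recall $f\in C^1(\overline{\Omega^{ext}},\R 3)$, so $f_t=f|_{\overline{\Omega}_t}$ and $f^t=f\circ T_t$) and $g^t=g\circ T_t$. Now $t\mapsto f\circ T_t$ is $C^1$ into $C^0(\overline\Omega,\R 3)$ by Lemma~\ref{Transf_shape_opt:Lem: Properties D_Tt} combined with the chain rule (Lemma~\ref{Diff_Banach_Space:Lem:CR}), with derivative $Df\circ T_t\cdot \tfrac{d}{dt}T_t = (Df\,V)\circ T_t$; similarly $t\mapsto g\circ T_t$ is $C^1$. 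The scalar fields $t\mapsto\gamma_t$ and $t\mapsto\omega_t$ are $C^1$ into $C^0$ with derivatives $\dot\gamma_t=\gamma_t\,\Div(V)\circ T_t$ and $\dot\omega_t=\omega_t\,\Div_{\Gamma_t}(V)\circ T_t$ by Lemma~\ref{Transf_shape_opt:Lem: Properties gammat} and Lemma~\ref{Transf_shape_opt:Lem: Properties omegat}. Using the product rule (Lemma~\ref{Diff_Banach_Space:Lem:Product_rule_Gateaux}) for the $C^0$-valued products $(f\circ T_t)\gamma_t$ and $(g\circ T_t)\omega_t$, and then differentiating under the integral via Lemma~\ref{Diff_Banach_Space:Lem:Gâteaux_diff_Integral} (applicable since $v\in H^1_D\subset L^2$ pairs continuously against $C^0$-valued integrands on the bounded sets $\Omega$ and $\Gamma_N$, with $\dim$-independent constants), I obtain for every $v$
\[
\frac{d}{dt}L^t(v)=\int_\Omega\big\langle (Df\,V)\circ T_t\,\gamma_t+(f\circ T_t)\dot\gamma_t,\,v\big\rangle\,dx+\int_{\Gamma_N}\big\langle (Dg\,V)\circ T_t\,\omega_t+(g\circ T_t)\dot\omega_t,\,v\big\rangle\,dS.
\]
Substituting $\dot\gamma_t=\gamma_t\Div(V)\circ T_t$, $\dot\omega_t=\omega_t\Div_{\Gamma_t}(V)\circ T_t$ and changing coordinates back to $\Omega_t$, $\Gamma_{N,t}$ (reversing the step in Lemma~\ref{Ex_Shape_Deriv_LinEl:Lem: continuity Lt}, using $\gamma_t\,dx = dx_t$ and $\omega_t\,dS=dS_t$) gives exactly $\dot L^t(v)$ as in \eqref{Ex_Shape_Deriv_LinEl:Eq:dotLt}, with $f_V=\Div(V)f+Df\,V$ and $g_{V,t}=\Div_{\Gamma_t}(V)g+Dg\,V$; the further decomposition $Dg\,V=D_\Gamma g\,V+\tfrac{\partial g}{\partial\vec n}V_{\vec n}$ follows from the definition of the tangential Jacobian. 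Linearity of $v\mapsto\dot L^t(v)$ is immediate.

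For the uniform bound (iii), observe that $f_V=\Div(V)f+Df\,V\in C^0(\overline{\Omega^{ext}},\R 3)$ and that $g_{V,t}$ pulls back to $(\Div_{\Gamma}(V)g+D_\Gamma g\,V+\tfrac{\partial g}{\partial \vec n}V_{\vec n})$-type expressions composed with $T_t$, all controlled in $C^0$ uniformly for $t\in[-\epsilon,\epsilon]$ because $t\mapsto T_t$, $t\mapsto DT_t$, $t\mapsto\gamma_t$, $t\mapsto\omega_t$ and the normal field $\vec n_t\circ T_t$ are continuous on the compact interval $[-\epsilon,\epsilon]$ (Lemmas~\ref{Transf_shape_opt:Lem: Properties D_Tt}, \ref{Transf_shape_opt:Lem: Properties gammat}, \ref{Transf_shape_opt:Lem: Properties omegat}). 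Hence there is $c>0$ with $\|f_V\|_{\infty,\Omega}+\sup_{|t|\le\epsilon}\|g_{V,t}\circ T_t\,\omega_t\|_{\infty,\Gamma_N}\le c$, and then, exactly as in the proof of Lemma~\ref{Ex_Shape_Deriv_LinEl:Lem: continuity Lt}, the Sobolev embedding $H^1(\Omega)\hookrightarrow L^6(\Omega)$ and the trace estimate $\mathbf{T_\Gamma}\colon H^1(\Omega)\to L^4(\Gamma)$ combined with Hölder's inequality and the uniform change-of-coordinates bounds of Lemma~\ref{Ex_Shape_Deriv_LinEl:Lem:H1_estimates_ut} yield $|\dot L^t(v)|\le C\|v\|_{H^1(\Omega,\R 3)}$ for all $v$ and all $t\in(-\epsilon,\epsilon)$, i.e. $\|\dot L^t\|_{H^1_D(\Omega,\R 3)'}\le C$. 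The only mildly delicate point is bookkeeping the coordinate changes so that the tangential-divergence term $\Div_{\Gamma_t}(V)$ (rather than $\Div_\Gamma(V)$) appears correctly on $\Gamma_{N,t}$; this is handled by applying $\dot\omega_t=\omega_t\Div_{\Gamma_t}(V)\circ T_t$ before, not after, the substitution.
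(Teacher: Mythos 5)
Your proof is correct, but it organizes the key differentiation step differently from the paper. The paper fixes $t$, pulls $L^{t+h}$ back only to $\Omega_t$ (writing $f^h\gamma_h$, $g^h\omega_h$ against the fixed density $v\circ T_t^{-1}$), and then differentiates under the integral with the Lebesgue-based rules of Lemma \ref{Diff_Banach_Space:Lem:Cont_Diff_Parameter_Integrals}, producing explicit pointwise majorants of the form $C\,\Norm{(v\circ T_t^{-1})(x)}{}\in L^1(\Omega_t)$ resp. $L^1(\Gamma_t)$ (with the surface case handled via the trace operator). You instead pull everything back to the reference domain $\Omega$, differentiate the $C^0$-valued maps $t\mapsto (f\circ T_t)\gamma_t$ and $t\mapsto (g\circ T_t)\omega_t$ in the Banach space $C(\overline\Omega,\R{3})$ using the product/chain rules and the formulas for $\dot\gamma_t$, $\dot\omega_t$, and then exploit that pairing with the fixed $v\in H^1_D$ (resp. its $L^2(\Gamma_N)$-trace) is a bounded linear functional on $C^0$; this buys you a cleaner treatment of the boundary term, since you never differentiate an integrand that is merely $L^1$, whereas the paper's route needs the trace-operator caveat but requires no Banach-space differentiability of the pullbacks. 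One small imprecision: Lemma \ref{Diff_Banach_Space:Lem:Gâteaux_diff_Integral} as stated does not literally apply, because the integrand $\langle (f\circ T_t)\gamma_t, v\rangle$ is not a $C(\overline\Omega)$-valued map of $t$ when $v$ is only $H^1$; the correct justification is the one you give in parentheses, i.e. compose the $C^0$-differentiable map $t\mapsto (f\circ T_t)\gamma_t$ with the bounded linear functional $w\mapsto\int_\Omega\langle w,v\rangle\,dx$ (Lemma \ref{Diff_Banach_Space:Lem:G_Diff_Linear_Map} together with Remark \ref{Diff_Banach_Space:Rem:CR_I}), and analogously on $\Gamma_N$. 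Your uniform bound follows the same pattern as the paper (extension of $\Div_{\Gamma_t}(V)$ via a unitary extension of the normal field, uniform $C^0$-control of $f_V$ and $g_{V,t}\circ T_t\,\omega_t$ on $[-\epsilon,\epsilon]$, then the Sobolev/trace/Hölder estimates of Lemma \ref{Ex_Shape_Deriv_LinEl:Lem: continuity Lt} and Lemma \ref{Ex_Shape_Deriv_LinEl:Lem:H1_estimates_ut}), so no issues there.
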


\begin{proof}
	Let $ t \in (-\epsilon,\epsilon)  $ and $ h \in \R{} $ such that $ t+h \in (-\epsilon,\epsilon) $. Then change of coordinates leads to
	\begin{align*}
	L^{t+h}(v) 
	&= \int_{\Omega_{t+h}} \hspace*{-4mm}\skp{f}{v \circ T_{t+h}^{-1}} \, dx + \int_{\Gamma_{N,t+h}} \hspace*{-4mm} \skp{g}{v \circ T_{t+h}^{-1}} \, dS \\
	&= \int_{\Omega_t} \skp{f^h\gamma_{h}}{v \circ T_t^{-1}} \, dx  +  \int_{\Gamma_{N,t}} \skp{g^h\omega_h}{v \circ T_t^{-1}} \, dx 
	\end{align*}
	with $ f^{h} =f \circ T_h $ and  $ g^{h} =g \circ T_h $, compare Lemma \ref{Ex_Shape_Deriv_LinEl:Lem: continuity Lt}. Now we apply Corollary \ref{Diff_Banach_Space:Lem:Cont_Diff_Parameter_Integrals} to the volume and the surface integral: Since $ v \circ T_t^{-1} $ is independent of $ h $ the point wise derivative of $ \skp{ f^{h}\gamma_h}{v \circ T_t^{-1}} $ exists almost everywhere on $ \Omega_{t} $ and can be calculated with the rules established in Section \ref{Transf_shape_opt:Sec: Transformation along Vector fields}. We obtain
	\begin{align*}
	\frac{d}{dh} f^{h}\gamma_h 
	&=  \dot{\gamma}_h f \circ T_h  + \gamma_{h}(Df \circ T_h)\dot{} = \gamma_h \Div(V)\circ T_h f \circ T_h  + \gamma_{h}(Df \circ T_h)V \circ T_h  \\
	&= \gamma_h \left[\Div(V)f + Df V\right] \circ T_h 
	\end{align*}
	according to Lemma \ref{Transf_shape_opt:Lem: arithmetic rules mat deriv I} ii) and thus
	\begin{align*}
	\frac{d}{dh}\skp{ f^{h}\gamma_h}{v \circ T_t^{-1}}
	&= \skp{\frac{d}{dh}  (f\circ T_h)\gamma_h }{v \circ T_t^{-1}}
	= \skp{ \gamma_h \left[\Div(V)f + Df V\right] \circ T_h }{v \circ T_t^{-1}} 
	\end{align*}
	holds pointwise a.e. on $ \Omega_t $ with $ \frac{d}{dh}\skp{f^h}{v \circ T_t^{-1}} \in L^1(\Omega_t)  $. Therefore,
	\begin{align*}
	\frac{d}{dh}\skp{f^h(x)\gamma_{h}(x)}{(v \circ T_t^{-1})(x)} 
	&\leq \Norm{\left(\gamma_h \left[\Div(V)f + Df V\right] \circ T_h\right)(x)}{} \Norm{\left(v \circ T_t^{-1}\right)(x)}{} \\
	& \leq C \Norm{\left(v \circ T_t^{-1}\right)(x)}{}  \in L^1(\Omega_t) 
	\end{align*}
	holds a.e. on $ \Omega_t $ for any $ h $ close enough to $ 0 $ and $ C \Norm{\left(v \circ T_t^{-1}\right)(\cdot)}{}  \in L^{1}(\Omega_t)$ dominates $ \frac{d}{dh}\skp{ f^{h}\gamma_h}{v \circ T_t^{-1}} $ in this case. Thus in case of the volume integral the order of integration and differentiation can be exchanged and we obtain 
	\begin{align}
	\left.\frac{d}{dh}\right\vert_{h=0} \int_{\Omega_t} \skp{ f^{h}\gamma_h}{v \circ T_t^{-1}} \, dx 
	&= \int_{\Omega_{t}} \skp{ \Div(V)f + Df V }{v \circ T_{t}^{-1}} \, dS. \nonumber
	\end{align}
	In the case of the surface integral we can argue analogously but have to keep in mind the following calculations can only be done in terms of the trace operator 
	$ \mathbf{T_{\Gamma}}: W^{1,1}(\Omega_{t}) \to L^1(\Gamma_{t}) $, see \ref{App: Trace Operator}, and a suitable representation of $ \skp{g^h\omega_{h}}{v \circ T_t^{-1}} $ in $ W^{1,1}(\Omega_t) \cap C^{1}(\overline{\Omega}_{t}) $. Nevertheless, 
	\begin{align*}
	\frac{d}{dh}\skp{g^h\omega_{h}}{v \circ T_t^{-1}}
	= \skp{\hspace*{-1mm}\frac{d}{dh} (g \circ T_h) \omega_{h}}{v \circ T_t^{-1}\hspace*{-1mm}}= \skp{ \omega_h \hspace*{-1mm} \left[\Div_{\Gamma_t}(V)g \hspace*{-1mm}+ \hspace*{-1mm}Dg V\right] \circ T_h }{v \circ T_t^{-1}}
	\end{align*}
	exists a.e. on $ \Gamma_{t} $ in this sense and again there es a constant $ C>0 $ such that
	\begin{align*}
	\frac{d}{dh}\skp{g^h(x)\omega_{h}(x)}{(v \circ T_t^{-1})(x)} 
	&\leq \Norm{\left(\omega_h \left[\Div_{\Gamma_t}(V)g + Dg V\right] \circ T_h\right)(x)}{} \Norm{\left(v \circ T_t^{-1}\right)(x)}{} \\
	& \leq C \Norm{\left(v \circ T_t^{-1}\right)(x)}{}  \Norm{v \circ T_t^{-1}}{} \in L^1(\Gamma_{t})
	\end{align*} 
	with $\Norm{v \circ T_t^{-1}}{} \in L^1(\Gamma_{t})$ a.e. on $ \Gamma $ for any $ h $ close to $ 0 $.  Since $ \mathbf{T_{\Gamma}}:H^1(\Omega,\R{3}) \to L^2(\Gamma,\R{3}) \subset L^1(\Gamma,\R{3})  $ if $ \Gamma $ has a finite surface measure - which is the case here - the function $  C \Norm{v \circ T_t^{-1}}{} \in L^1(\Gamma_t,\R{3}) $ is suitable for application of Lemma \ref{Diff_Banach_Space:Lem:Cont_Diff_Parameter_Integrals}. Therefore, 
	\begin{align}
	\left.\frac{d}{dh}\right\vert_{h=0} \int_{\Gamma_{N,t}} \skp{g^h}{v \circ T_t^{-1}} \, dS 
	&= \int_{\Gamma_{N,t}} \skp{ \Div_{\Gamma_t}(V)g + Dg V }{v \circ T_{t}^{-1}} \, dS\nonumber.
	\end{align}
	This shows that equation \eqref{Ex_Shape_Deriv_LinEl:Eq:dotLt} holds. Obviously, $ \dot{L}^{t} $ is a continuous linear form on $ H^1_{D}(\Omega,\R{3}) $.
	
	Note that $ \Div_{\Gamma_t}(V) = \Div(V) - \langle DV \vec{n}_t,\vec{n}_t\rangle $  can be extended to $ \Oext $ by a unitary extension of the outward normal vector field $ \vec{n}_t $ to $ \mathcal{N}_{t} \circ T_t = \frac{1}{\Norm{(DT_{t})^{-\top}\mathcal{N}}{}}(DT_{t})^{-\top}\mathcal{N}  \text{ on } \Omega^{ext}$
	where $ \mathcal{N} $ is a unitary extension of the unit outward normal vector field $ \vec{n} $ on $ \Gamma $ to $ \Oext $. In this sense Corollary \ref{Ex_Shape_Deriv_LinEl:Cor:Criterion_equi_continuity_Lt} applied to $ f_V=f\Div(V)+DfV \in C(\overline{\Oext},\R{3}) $ and $ t \to g_{V,t}=g\Div_{\Gamma_t}(V)+DgV \in C([-\epsilon,\epsilon], C(\Gamma_{N,t},\R{3}) )$  shows that $ (\dot{L}^{t})_{t \in [-\epsilon,\epsilon]} $ is equicontinuous.  
\end{proof}

\begin{lem}\label{Ex_Shape_Deriv_LinEl:Lem: derivative Bt}
	Let $V \in \Vad{1}{\Oext}$, $ \Omega \in \mathcal{O}_{1} $ and $\epsilon>0$ such that $ (-\epsilon,\epsilon) \Subset I_{V} $. Then the mapping $ (-\epsilon,\epsilon)  \to \mathcal{B}(H^{1}_{D}(\Omega,\R{3})) $, $t \mapsto B^{t} $ is differentiable in the weak topology on $\mathcal{B}(H^{1}_{D}(\Omega,\R{3}))  $. 
	With $ \tilde{u}_{t}:=u \circ \T{t}^{-1},\,\tilde{v}_{t}:=v \circ \T{t}^{-1} $ the derivative is given by
	\begin{align}\label{Ex_Shape_Deriv_LinEl:Lem: dotBt}
	\dot{B}^{t}(u,v)&=\int_{\O{t}} \tr(\dot{\varepsilon}(\tilde{u}_{t})\sigma(\tilde{v}_{t})+\sigma(\tilde{u}_{t})\dot{\varepsilon}(\tilde{v}_{t})+\Div(V)\sigma(\tilde{u}_{t})\varepsilon(\tilde{v}_{t}))\, dx
	\end{align}
	where $$ \dot{\varepsilon}(u):= \frac{d}{dh} \varepsilon^{h}(u) =  -\frac{1}{2}(DuDV+ (DuDV)^{\top})$$
	pointwise a.e. in $ \Omega_t $ (\footnote{Note that $ \dot{\varepsilon}(u) \neq (\varepsilon(u))\dot{} $}).
	Moreover, $ (B^{t})_{t \in (-\epsilon,\epsilon)} $ is a family of equicontinuous bilinear forms. 
\end{lem}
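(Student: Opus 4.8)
The statement to prove is Lemma \ref{Ex_Shape_Deriv_LinEl:Lem: derivative Bt}: that $t\mapsto B^t$ is weakly differentiable on $\mathcal{B}(H^1_D(\Omega,\R3))$ with the stated formula for $\dot{B}^t$, and that $(B^t)_{t\in(-\epsilon,\epsilon)}$ is equicontinuous. Equicontinuity is already contained in Lemma \ref{Ex_Shape_Deriv_LinEl:Lem: continuity Bt}, so the real work is the differentiation. The plan is to fix $u,v\in H^1_D(\Omega,\R3)$ and $t\in(-\epsilon,\epsilon)$ and compute $\lim_{h\to0}\tfrac1h(B^{t+h}(u,v)-B^t(u,v))$ directly, using the change-of-variables representation $B^t(u,v)=\int_\Omega \gamma_t\,[\lambda\tr(\varepsilon^t(u))\tr(\varepsilon^t(v))+2\mu\tr(\varepsilon^t(u)\varepsilon^t(v))]\,dx$ from Lemma \ref{Ex_Shape_Deriv_LinEl:Lem: continuity Bt}, and then re-expressing the answer back on $\Omega_t$ to obtain the cleaner form \eqref{Ex_Shape_Deriv_LinEl:Lem: dotBt}.

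First I would work on the reference domain $\Omega$, where the $h$-dependence of $B^{t+h}(u,v)$ sits entirely in the three factors $\gamma_{t+h}$, $\varepsilon^{t+h}(u)=\tfrac12[Du(DT_{t+h})^{-1}+(Du(DT_{t+h})^{-1})^\top]$ and $\varepsilon^{t+h}(v)$. By Lemma \ref{Transf_shape_opt:Lem: Properties gammat} iii) the map $h\mapsto\gamma_{t+h}$ is $C^1$ into $C^0(\overline{\Omega^{ext}})$ with derivative $\dot\gamma_t=\gamma_t\,\Div(V)\circ T_t$, and by Lemma \ref{Transf_shape_opt:Lem: Properties D_Tt} ii) the map $h\mapsto(DT_{t+h})^{-1}$ is $C^1$ into $C^{k-1}$ with derivative $-(DT_t)^{-1}(DV\circ T_t)$; hence $h\mapsto\varepsilon^{t+h}(u)$ is differentiable in $C^0(\overline\Omega,\R{3\times3})$ with derivative that, after pulling back to $\Omega_t$ and writing $\tilde u_t=u\circ T_t^{-1}$, equals $\dot\varepsilon(\tilde u_t):=-\tfrac12(D\tilde u_t\,DV+(D\tilde u_t\,DV)^\top)$ — this is the point-wise identity flagged in the statement, and I would verify it by applying $D(u\circ T_t^{-1})\circ T_t=Du(DT_t)^{-1}$ together with the product/chain rules. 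The difference quotient of a product of three $h$-differentiable $L^\infty$-bounded factors converges uniformly by the standard telescoping argument (each factor and its derivative are bounded on a neighborhood of $h=0$ by Lemmas \ref{Transf_shape_opt:Lem: Properties gammat}, \ref{Transf_shape_opt:Lem: Properties D_Tt}), so I may differentiate under the integral sign; alternatively I would invoke Lemma \ref{Diff_Banach_Space:Lem:Cont_Diff_Parameter_Integrals} b) with a uniform $L^1$-dominating function. Collecting the three resulting terms (product rule on $\gamma\cdot\text{(stress)}\cdot\text{(strain)}$) and transforming back to $\Omega_t$ with $\int_\Omega\gamma_t(\cdots\circ T_t)\,dx=\int_{\Omega_t}\cdots\,dx$ gives exactly \eqref{Ex_Shape_Deriv_LinEl:Lem: dotBt}, after using the elementary symmetry identity $\tr(AB)=\tr(BA)$ and the linearity of $\varepsilon\mapsto\sigma(\varepsilon)$ to combine the $\lambda$- and $\mu$-contributions into $\tr(\dot\varepsilon(\tilde u_t)\sigma(\tilde v_t)+\sigma(\tilde u_t)\dot\varepsilon(\tilde v_t)+\Div(V)\sigma(\tilde u_t)\varepsilon(\tilde v_t))$.

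It remains to note that $\dot B^t$ is indeed a continuous bilinear form on $H^1_D(\Omega,\R3)$: this follows from the Cauchy–Schwarz-type estimates \eqref{App: Norm_H1_tr_epsuepsv}, \eqref{App: Norm_H1_Divu_Divv} applied to $\tilde u_t,\tilde v_t$, the boundedness of $\Div(V)$ and $DV$ in $C^0(\overline{\Omega^{ext}})$ (since $V\in\Vad1{\Oext}$), and the change-of-variables estimates of Lemma \ref{Ex_Shape_Deriv_LinEl:Lem:H1_estimates_ut}; the same bounds, chosen uniformly over $t\in[-\epsilon,\epsilon]$ via Lemma \ref{Ex_Shape_Deriv_LinEl:Lem:H1_estimates_ut}, give a constant $C$ with $|\dot B^t(u,v)|\le C\|u\|_{H^1(\Omega,\R3)}\|v\|_{H^1(\Omega,\R3)}$ for all $t$, which is the equicontinuity of $(\dot B^t)_t$ — and the equicontinuity of $(B^t)_t$ is quoted from Lemma \ref{Ex_Shape_Deriv_LinEl:Lem: continuity Bt}. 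The main obstacle I anticipate is bookkeeping rather than conceptual: keeping track of transposes and of the distinction between $\dot\varepsilon(\tilde u_t)$ and $(\varepsilon(\tilde u_t))\dot{\phantom{u}}$, and making sure that the surface-free structure of $B^t$ (a pure volume integral) lets me avoid the trace-operator subtleties that complicated the analogous computation for $\dot L^t$ in Lemma \ref{Ex_Shape_Deriv_LinEl:Lem:Derivative Lt}. Because $B^t$ has no boundary term, the dominated-convergence step is uniform and elementary, so the proof is essentially a careful product-rule computation once the building blocks from Chapter \ref{Transf_shape_opt} are in place.
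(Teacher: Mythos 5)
Your proposal is correct and follows essentially the same route as the paper: represent $B^{t+h}$ by change of variables, differentiate the $h$-dependent factors $\gamma$ and $(DT)^{-1}$ pointwise using Lemmas \ref{Transf_shape_opt:Lem: Properties gammat} and \ref{Transf_shape_opt:Lem: Properties D_Tt}, pass the derivative under the integral via Lemma \ref{Diff_Banach_Space:Lem:Cont_Diff_Parameter_Integrals} with Cauchy--Schwarz-type $L^1$ majorants built from $\Norm{\nabla u_k}{}\Norm{\nabla v_l}{}$, assemble by the product rule and trace identities into \eqref{Ex_Shape_Deriv_LinEl:Lem: dotBt}, and obtain equicontinuity of $(\dot B^t)_t$ from the same uniform bounds as in Lemma \ref{Ex_Shape_Deriv_LinEl:Lem: continuity Bt} (the paper merely simplifies bookkeeping by reducing to $t=0$ via $T_{t+h}=T_h\circ T_t$ and working on $\Omega_t$, while you differentiate at general $t$ on $\Omega$). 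The only caution: your "three $L^\infty$-bounded factors" phrasing is not literally accurate since $\varepsilon^{t+h}(u)$ involves $Du\in L^2$ only, so you should rely on your stated dominated-convergence alternative, which is exactly what the paper does.
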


\begin{proof}
	With $ \varepsilon(u \circ T_{t+h}^{-1}) \circ T_h= \varepsilon(u \circ T_t^{-1} \circ T_{h}^{-1}) \circ T_h = \varepsilon^{h}(u \circ T_t^{-1}) = \varepsilon^{h}(\tilde{u}_{t}) $ we deduce
	\begin{align*}
	B^{t+h}(u,v) =& \int_{\Omega_{t+h}} \hspace*{-4mm}\lambda \tr(\varepsilon(u \circ T_{t+h}^{-1}))\tr(\varepsilon(v \circ T_{t+h}^{-1})) + 2\mu \, \tr(\varepsilon(u \circ T_{t+h}^{-1})\varepsilon(v \circ T_{t+h}^{-1}))\, dx 
	\\
	=&\int_{\Omega_t} \hspace*{-2mm} \gamma_h \hspace*{-1mm} \left[ \tr(\varepsilon^{h}(\tilde{u}_{t}))\tr(\varepsilon^{h}(\tilde{v}_{t})) + 2\mu \, \tr(\varepsilon^{h}(\tilde{u}_{t})\varepsilon^{h}(\tilde{v}_{t}))\right]dx.
	\end{align*}
	
	Without loss of generality we consider the case $ t =0 $. Thus we replace $\tilde{u}_{t}= u \circ T_t^{-1} $ by $ u $, $\tilde{v}_{t}= v \circ T_t^{-1} $ by $ v $ and $ \Omega_t $ by $ \Omega_0=\Omega $. 
	
	\noindent The expression $\varepsilon^{h}(u) = \frac{1}{2}\left[Du(DT_{h})^{-1} + (Du(DT_{h})^{-1})^{\top}\right]$ exists pointwise a.e. on $ \Omega $ 
	and since $ Du $ is independent of $ h $ and $ (DT_h)^{-1} \in C^{1}([-\epsilon,\epsilon], C(\Omega,\R{3 \times 3}))$ we can immediately conclude that \[ \left.\frac{d}{dh}\varepsilon^{h}(u)\right\vert_{h=0} = -\frac{1}{2} \left[DuDV +(DuDV)^{\top}\right]\] 
	holds a.e. on $ \Omega $. Since $ \tr(\cdot) $ is a linear operator on $ \R{3 \times 3} $ and due to invariance of the trace w.r.t. cyclic permutation and transposition 
	\begin{align*}
	&\left.\frac{d}{dh} \right\vert_{h=0}\gamma_{h}\tr(\varepsilon^{h}(u))\tr(\varepsilon^{h}(v)) \\  
	& = \Div(V)\tr(Du)\tr(Dv) - \tr(DuDV)\tr(Dv)-\tr(Du)\tr(DvDV) \\
	& =\Div(V)\tr(\varepsilon(u))\tr(\varepsilon(v)) + \tr(\dot{\varepsilon}(u))\tr(\varepsilon(v)) +  \tr(\varepsilon(v))\tr(\dot{\varepsilon}(u))
	\intertext{and}
	&2\left.\frac{d}{dh}\right\vert_{h=0} \hspace*{-2mm}\gamma_{h}\tr(\varepsilon^{h}(u)\varepsilon^{h}(v)) \\
	& =  \Div(V)\tr[(Du+Du^{\top})Dv] - \tr[Dv(DuDV +(DuDV)^{\top})]  -\tr[(Du +Du^{\top}) DvDV]\\[1ex]
	& = 2\Div(V)\tr(\varepsilon(u)\varepsilon(v)) + \tr(\varepsilon(v)\dot{\varepsilon}(u)) + \tr(\varepsilon(u)\dot{\varepsilon}(v))
	\end{align*}
	holds point wise a.e. on $ \Omega $. On the other hand we have to find suitable majorants in a neighborhood of $ h=0 $. With $ \tr(\varepsilon^{h}(u)) = \tr(Du(DT_{h})^{-1}) = \sum_{k=1}^{3} \langle \nabla u_{k}, (DT_h)^{-1}_{.k}\rangle$, where $(DT_h)^{-1}_{.k}=\frac{\partial T_{h}^{-1}}{\partial x_{k}}\circ T_h$ is the $ k $-th column of $ (DT_h)^{-1} $, we obtain by application of the Cauchy-Schwarz inequality
	\begin{align*}
	&\left|\frac{d}{dh}\gamma_{h}(x)\tr(\varepsilon^{h}(u(x)))\tr(\varepsilon^{h}(v(x)))\right| 
	\leq C \sum_{k,l=1}^{3}  \Norm{\nabla u_{k}(x)}{} \Norm{\nabla v_{l}(x)}{}
	\end{align*} 
	for a.e. $ x \in \Omega $. The functions $ \Norm{\nabla u_{k}}{} \Norm{\nabla v_{l}}{}$ are elements of $L^1(\Omega)$ for all indices $ k,l$. Moreover $ h \to \gamma_h \in C^{1}([-\epsilon,\epsilon],C^{0}(\overline{\Oext}))$ by Lemma \ref{Transf_shape_opt:Lem: Properties gammat} and $ h \to T_h,\,T_h^{-1} \in C^{1}([-\epsilon,\epsilon], C^{1}(\overline{\Oext},\R{3})) $ by Lemma \ref{Transf_shape_opt:Lem: Properties D_Tt}, what implies the existence of the constant $ C>0 $. Analogously, we obtain
	$\tr(\varepsilon^{h}(u)\varepsilon^{h}(v))  = \sum_{k,l=1}^{3} 
	\skp{\nabla u_k}{(DT_h)^{-1}_{.l}}(\skp{\nabla v_l}{(DT_h)^{-1}_{.k}} + \skp{\nabla v_k}{(DT_h)^{-1}_{.l}})$
	and application of the same arguments leads to
	\begin{align*}
	\left|\frac{d}{dh} \gamma_{h}(x)\tr(\varepsilon^{h}(u(x))\varepsilon^{h}(v(x)))\right| \leq C \sum_{k,l=1}^{3} \Norm{\nabla u_k(x)}{}\left(\Norm{\nabla v_l(x)}{} + \Norm{\nabla v_k(x)}{}\right).
	\end{align*}
	where $ \Norm{\nabla u_k}{}\left(\Norm{\nabla v_l}{} + \Norm{\nabla v_k}{}\right) \in L^1(\Omega) $ for each $ k,l=1,2,3 $. 
	We can now apply Lemma \ref{Diff_Banach_Space:Lem:Cont_Diff_Parameter_Integrals} ii) and change the order of differentiation and integration. 
	
	This leads to 
	\begin{align} \label{Ex_Shape_Deriv_LinEl:Eq: dotB}
	\dot{B}(u,v) =&\lambda \int_{\Omega} \tr(\varepsilon(u))\tr(\varepsilon(v))\Div(V) + \tr\left(\dot{\varepsilon}(u)\right)\tr(\varepsilon(v)) + \tr\left(\dot{\varepsilon}(v)\right)\tr(\varepsilon(u)) \, dx\nonumber \\ 
	&+ 2 \mu \int_{\Omega} \tr\left(\varepsilon(u)\varepsilon(v)\Div(V)
	+ \varepsilon(u)\dot{\varepsilon}(v) 
	+\dot{\varepsilon}(u)\varepsilon(v)\right)  \, dx\\
	=& \int_{\Omega} \tr\left(\dot{\varepsilon}(u) \sigma(v)+ \sigma(u)\dot{\varepsilon}(v)+\Div(V)\sigma(u)\varepsilon(v) \right)  \, dx. \nonumber
	\end{align}
	Finally we replace $ \Omega $ by $ \Omega_{t} $ and $ u $ by $ \tilde{u}_{t}=u \circ T_t^{-1}$ and $ v $ by $ \tilde{v}_{t}=v \circ T_t^{-1} $ and obtain equation \eqref{Ex_Shape_Deriv_LinEl:Lem: dotBt}.
	Now we apply analogous arguments to those used to prove Lemma \ref{Ex_Shape_Deriv_LinEl:Lem: continuity Bt}  and use $ V \in C^{1}(\overline{ \Oext},\R{3}) $ to derive a constant $ C_{t,V} $ such that $$|\dot{B}^{t}(u,v)| \leq C_{t,V} \Norm{u}{H^1(\Omega,\R{3})} \Norm{v}{H^1(\Omega,\R{3})}.$$ If $ t \in (-\epsilon,\epsilon) $ then this constant can be chosen independently from  $ t $. 
\end{proof}


\begin{lem} \label{Ex_Shape_Deriv_LinEl:Lem: Continuity t-> dot(L)^t and t-> dot(B)^t }
	Suppose that $V \in \Vad{1}{\Oext} $, $ \Omega \in \mathcal{O}_{1} $ and let $ [-\epsilon,\epsilon] \subset I_V $.
	\begin{itemize}
		\item[i)]   
		Let  $ f\in C^{1}(\overline{\Omega^{ext}},\R{3})$ and $  g\in C^{1}(\overline{\Omega^{ext}},\R{3})$. Then the map $ (-\epsilon,\epsilon) \to H^{1}_{D}(\Omega,\R{3})' ,$ $ t \mapsto \dot{L}^{t}$ is strongly continuous, where 
		$ \dot{L}^{t} $ as defined in Lemma \ref{Ex_Shape_Deriv_LinEl:Lem:Derivative Lt}. 
		\item[ii)] Also the mapping $ (-\epsilon,\epsilon) \to \mathcal{B}(H^{1}_{D}(\Omega,\R{3}) ),\, t \mapsto \dot{B}^{t} $ with
		$ \dot{B}^{t}  $ as in Lemma \ref{Ex_Shape_Deriv_LinEl:Lem: derivative Bt} is strongly continuous. 
	\end{itemize}
\end{lem}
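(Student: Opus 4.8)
The plan is to prove both continuity statements by the same strategy: reduce everything to continuity of explicit parameter integrals over the \emph{fixed} reference domain $\Omega$ and its boundary $\Gamma$, and then invoke Lebesgue-type continuity (Lemma \ref{Diff_Banach_Space:Lem:Cont_Diff_Parameter_Integrals} a) together with uniform bounds established in Lemmata \ref{Ex_Shape_Deriv_LinEl:Lem:H1_estimates_ut}, \ref{Ex_Shape_Deriv_LinEl:Lem:Derivative Lt} and \ref{Ex_Shape_Deriv_LinEl:Lem: derivative Bt}. First I would pull everything back to $\Omega$: using the change-of-coordinates formulae already derived, $\dot L^t$ can be written as
\begin{align*}
\dot L^t(v) = \int_\Omega \gamma_t\,\langle f_V\circ T_t,\, v\rangle\, dx + \int_{\Gamma_N}\omega_t\,\langle g_{V,t}\circ T_t,\, v\rangle\, dS,
\end{align*}
with $f_V=\Div(V)f+DfV\in C(\overline{\Oext},\R3)$, and similarly (from Lemma \ref{Ex_Shape_Deriv_LinEl:Lem: derivative Bt}, after the substitution $x\mapsto T_t(x)$) $\dot B^t$ becomes an integral over $\Omega$ of an algebraic expression in $\gamma_t$, $(DT_t)^{-1}$, $Du$, $Dv$ and $DV$. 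The point is that in these reference-domain representations the only $t$-dependence sits in the smooth coefficient functions $\gamma_t$, $\omega_t$, $(DT_t)^{-1}$, $\mathcal N_t\circ T_t$ and $f_V\circ T_t$, $g_{V,t}\circ T_t$.

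Next I would record the regularity of these coefficients in $t$: by Lemma \ref{Transf_shape_opt:Lem: Properties D_Tt}, Lemma \ref{Transf_shape_opt:Lem: Properties gammat} and Lemma \ref{Transf_shape_opt:Lem: Properties omegat} the maps $t\mapsto \gamma_t$, $t\mapsto (DT_t)^{-1}$, $t\mapsto\omega_t$, $t\mapsto \mathcal N_t\circ T_t$ and $t\mapsto T_t$ are (at least) continuous from $[-\epsilon,\epsilon]$ into $C^0(\overline{\Oext})$ (resp.\ $C^0(\overline{\Oext},\R{3\times3})$, etc.), and since $f_V,g_V$ are continuous on $\overline{\Oext}$ the compositions $t\mapsto f_V\circ T_t$, $t\mapsto g_{V,t}\circ T_t$ are continuous into $C^0$ as well. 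Fix $s\in(-\epsilon,\epsilon)$ and an arbitrary $v\in H^1_D(\Omega,\R3)$ with $\|v\|_{H^1}\le1$. Then $|\dot L^t(v)-\dot L^s(v)|$ is bounded, via Cauchy–Schwarz and the Sobolev/trace embeddings $H^1(\Omega)\hookrightarrow L^6(\Omega)$, $\mathbf{T_\Gamma}:H^1(\Omega)\to L^4(\Gamma)$, by
\begin{align*}
C\Big(\|\gamma_t f_V\circ T_t-\gamma_s f_V\circ T_s\|_{\infty,\Omega} + \|\omega_t g_{V,t}\circ T_t-\omega_s g_{V,s}\circ T_s\|_{\infty,\Gamma_N}\Big),
\end{align*}
where $C$ absorbs the embedding constants and is independent of $v$ and of $t$ in $[-\epsilon,\epsilon]$. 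The right-hand side tends to $0$ as $t\to s$ by continuity of the coefficient maps in $C^0$; taking the supremum over $\|v\|_{H^1}\le1$ gives $\|\dot L^t-\dot L^s\|_{(H^1_D)'}\to0$, which is i). For ii), the analogous estimate for $|\dot B^t(u,v)-\dot B^s(u,v)|$ follows after expanding the integrand of $\dot B^t$ (cf.\ the explicit expression (\ref{Ex_Shape_Deriv_LinEl:Eq: dotB}) with $\dot\varepsilon$ replaced by $-\tfrac12(Du\,DV(DT_t)^{-1}+\cdots)$ under the pull-back): the difference of integrands is a sum of terms each containing at least one factor of the form (coefficient at $t$) $-$ (coefficient at $s$), multiplied by bounded smooth quantities and by products $\|\nabla u_k\|\,\|\nabla v_l\|\in L^1(\Omega)$. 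Cauchy–Schwarz then bounds $|\dot B^t(u,v)-\dot B^s(u,v)|$ by $C\cdot\big(\text{sup-norm differences of the }t\text{-coefficients}\big)\cdot\|u\|_{H^1}\|v\|_{H^1}$, and passing to the supremum over the unit ball of $H^1_D\times H^1_D$ yields strong continuity of $t\mapsto\dot B^t$ in $\mathcal B(H^1_D(\Omega,\R3))$.

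I do not expect a genuine obstacle here; the mildly delicate point is purely bookkeeping for the surface term in $\dot L^t$, since $\Div_{\Gamma_t}(V)=\Div(V)-\langle DV\,\mathcal N_t,\mathcal N_t\rangle\circ T_t$ must be handled via the unitary extension $\mathcal N_t$ of the normal and its continuity in $t$ (Lemma \ref{Transf_shape_opt:Lem: Properties omegat}); one must make sure that the representative of $g_{V,t}$ chosen on $\Gamma_N$ really is the trace of a $C^0(\overline{\Oext},\R3)$-field so that the Sobolev trace estimate applies uniformly. Once that is checked, both parts are immediate consequences of the uniform-bound lemmata and the $C^0$-continuity of the transformation data, exactly as in the proofs of Lemmata \ref{Ex_Shape_Deriv_LinEl:Lem: continuity Lt} and \ref{Ex_Shape_Deriv_LinEl:Lem: continuity Bt}.
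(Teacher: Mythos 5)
Your proposal is correct and follows essentially the same route as the paper: pull $\dot{L}^{t}$ and $\dot{B}^{t}$ back to the fixed domain, isolate the $t$-dependence in the coefficients $\gamma_t$, $\omega_t$, $(DT_t)^{-1}$, $T_t$ and the extended normal $\mathcal{N}_t\circ T_t$, and bound the operator-norm differences by sup-norm differences of these coefficients times $\Norm{u}{H^1}\Norm{v}{H^1}$ via Cauchy--Schwarz and the trace estimate. The delicate point you flag -- handling $\Div_{\Gamma_t}(V)$ through the unitary extension of the normal and its $C^0$-continuity in $t$ -- is exactly the step the paper treats explicitly, so no gap remains.
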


\begin{proof}
	i) We have to show that $ t \to s $ implies $ \Vert\dot{L}^{t}-\dot{L}^{s}\Vert_{H^{1}_{D}(\Omega,\R{3})' } \to 0 $. W.l.o.g. let $ s=0 $. We have $$ \dot{L}^{t}(v)-\dot{L}(v) 
	= \int_{\Omega} \langle \gamma_{t}f_{V}\circ T_t-f_{V} ,v \rangle\, dx +   \int_{\Gamma_{N}} \langle \omega_{t}g_{V,t}\circ T_t-g_{V} ,v \rangle \, dS$$ and thus
	\begin{align*}
	|\dot{L}^{t}(v)-\dot{L}(v)| 
	\leq &\Norm{\gamma_{t}f_{V}\circ T_t-f_{V}}{\infty,\Omega}\Norm{v}{H^1(\Omega,\R{3})} + C\Norm{\omega_{t}g_{V,t}\circ T_t-g_{V}}{\infty,\Gamma_N}\Norm{v}{H^{1}(\Omega,\R{3})}
	\end{align*}
	where $ C>0 $ depends on the trace operator $ \mathbf{T_{\Gamma}}:H^1(\Omega) \to L^2(\Gamma) $ and $ f_{V},\, g_{V,t} $ are defined as in Lemma \ref{Ex_Shape_Deriv_LinEl:Lem:Derivative Lt}. Therefore, 
	\begin{align*}
	\Vert\dot{L}^{t}-\dot{L}\Vert_{H^{1}_{D}(\Omega,\R{3})' } 
	\leq \Norm{\gamma_{t}f_{V}\circ T_t-f_{V}}{\infty,\Omega} + C\Norm{\omega_{t}g_{V,t}\circ T_t-g_{V}}{\infty,\Gamma_N}
	\end{align*}
	with 
	\begin{align*}
	&\Norm{\gamma_{t}f_{V}\circ T_t-f_{V}}{\infty,\Omega} \leq \Norm{\gamma_{t}-1}{\infty,\Omega}\Norm{f_{V}\circ T_t}{\infty,\Omega} +  \Norm{f_{V}\circ T_t-f_{V}}{\infty,\Omega}
	\intertext{and}
	&\Norm{\omega_{t}g_{V,t}\circ T_t-g_{V}}{\infty,\Gamma_N}  \leq \Norm{\omega_{t}-1}{\infty,\Gamma_N}\Norm{g_{V,t}\circ T_t}{\infty,\Gamma_N} +  \Norm{g_{V,t}\circ T_t-g_{V}}{\infty,\Gamma_N}.
	\end{align*}
	From  $ \gamma_{t} \to 1 $  and $ f_{V}\circ T_t \to f_{V}  $ uniformly on $ \Omega $ we conclude $ \Norm{\gamma_{t}f_{V}\circ T_t-f_{V}}{C^{0}(\Omega,\R{3})}  \underset{t \to 0}{\to} 0 $. It is also clear that  $ \omega_{t} \to 1 $ uniformly on $ \Omega^{ext} $, compare Lemma \ref{Transf_shape_opt:Lem: Properties omegat}. Then $ g_{V,t} \circ T_t = (\Div_{\Gamma_t}(v)g+DgV)\circ T_t= [\tr(DV- DV\,\vec{n}_t\vec{n}_{t}^{\top})g]\circ T_t+(DgV)\circ T_t $ on $ \Gamma_{N} $ can be extended to $ \Omega^{ext} $ by
	\begin{align*}
	g_{V,t} \circ T_t  
	&= \tr(DV\circ T_t\left[\mathrm{I}-\mathcal{N}_t\circ T_t(\mathcal{N}_{t}\circ T_t)^{\top} \right])g\circ T_t+(DgV)\circ T_t
	\end{align*}
	where $ \mathcal{N}_{t} \circ T_t = \frac{1}{\Norm{(DT_{t})^{-\top}\mathcal{N}}{}}(DT_{t})^{-\top}\mathcal{N}  \text{ on } \Omega^{ext}$
	and $ \mathcal{N} $ is a unitary extension of the unit outward normal vector field $ \vec{n} $ on $ \Gamma $ to $ \Oext $.
	Obviously, $ (DgV)\circ T_t \to DgV $, $ g \circ T_t \to g  $ and $ DV \circ T_t \to DV $ uniformly on $ \Omega^{ext} \supset \Gamma_{N} $ when $ t \to 0 $ and thus we have to analyze  
	\begin{align*}
	(\mathcal{N}_t\circ T_t)(\mathcal{N}_{t}\circ T_t)^{\top} = \frac{1}{\Norm{(DT_{t})^{-\top}\mathcal{N}}{}^2}(DT_{t})^{-\top}\mathcal{N}\mathcal{N}^{\top}(DT_{t})^{-1}
	\end{align*}
	but since $ T_t \to id $ and $DT_{t} \to \mathrm{I} $ uniformly when $ t \to 0 $ it follows $ (\mathcal{N}_t\circ T_t) (\mathcal{N}_{t}\circ T_t)^{\top}\underset{t \to 0}{\longrightarrow}  \mathcal{N}\mathcal{N}^{\top} $ in $ C^{0}(\Omega^{ext},\R{3 \times 3}) $ and thus also $ \Norm{\omega_{t}g_{V,t}\circ T_t-g_{V}}{\infty,\Gamma_N}  \underset{t \to 0}{\longrightarrow} 0$. \\[1ex]
	ii) We have
	\begin{align}
	|\dot{B}&^{t}(u,v)-\dot{B}(u,v)| \nonumber\\
	\leq 
	&\int_{\Omega} \big|  \tr[\dot{\varepsilon}(\tilde{u}_{t})\sigma(\tilde{v}_{t})+\sigma(\tilde{u}_{t})\dot{\varepsilon}(\tilde{v}_{t})+\Div(V)\sigma(\tilde{u}_{t})\varepsilon(\tilde{v}_{t})] \circ T_t \big| \, dx\, \Norm{\gamma_{t}-1}{\infty,\Omega} \nonumber\\
	& + \int_{\Omega} \big|  \tr[\dot{\varepsilon}(\tilde{u}_{t}) \circ T_t\sigma(\tilde{v}_{t})\circ T_t - \dot{\varepsilon}(u)\sigma(v)] \big|\, dx  \nonumber
	\\
	&+\int_{\Omega} \big|\tr[\sigma(\tilde{u}_{t})\circ T_t \dot{\varepsilon}(\tilde{v}_{t})\circ T_t -\sigma(u)\dot{\varepsilon}(u)] \big|\, dx 
	\\
	&+\int_{\Omega} \big|\tr[\sigma(\tilde{u}_{t})\circ T_t\varepsilon(\tilde{v}_{t})\circ T_t]\big|\, dx\, \Norm{\Div(V)\circ T_t-\Div(V)}{\infty,\Omega}\nonumber
	\\
	&+ \int_{\Omega} \big| \Div(V)\tr[\sigma(\tilde{u}_{t})\circ T_t\varepsilon(\tilde{v}_{t})\circ T_t-\sigma(u)\varepsilon(u)]\big| \, dx,\nonumber
	\end{align}
	where 
	\begin{align*}  \dot{\varepsilon}(\tilde{u}_{t}) \circ T_t &= \dot{\varepsilon}(u \circ T_t^{-1}) \circ T_t = -\frac{1}{2}\left(Du (DT_t)^{-1}(DV \circ T_t) +[Du (DT_t)^{-1}(DV \circ T_t)]^{\top} \right), 
	\intertext{ and }
	\sigma(\tilde{u}_{t}) \circ T_t &= \sigma(u \circ T_t^{-1}) \circ T_t = \lambda\tr(Du (DT_t)^{-1})\mathrm{I}+\mu(Du (DT_t)^{-1}+[Du (DT_t)^{-1}]^{\top})
	\end{align*}
	We only perform the necessary estimates for the second term in detail since the calculations concerning the other terms proceed analogously but become very extensive:
	\begin{align*}
	&\tr[\dot{\varepsilon}(\tilde{u}_{t}) \circ T_t\sigma(\tilde{v}_{t})\circ T_t - \dot{\varepsilon}(u)\sigma(v)]\\[1ex]
	= ~&  \tr[ (\dot{\varepsilon}(\tilde{u}_{t})\circ T_t  - \dot{\varepsilon}(u))\sigma(v)] + \tr[\dot{\varepsilon}(\tilde{u}_{t}) \circ T_t(\sigma(\tilde{v}_{t}) \circ T_t - \sigma(v))].
	\end{align*}
	The first term therein can be reformulated as
	\begin{align*}
	\tr[ (\dot{\varepsilon}(\tilde{u}_{t})\circ T_t  - \dot{\varepsilon}(u))\sigma(v)] 
	&= \tr[Du(DV-(DT_{t})^{-1}DV \circ T_t)\sigma(v)]\\
	& =\sum_{i,j,k=1}^{3} Du_{i,j}(DV- (DT_t)^{-1}DV \circ T_t)_{j,k}\sigma(v)_{k,i}\, .
	\end{align*}
	Since $ \sigma(u) $ is symmetric we obtain
	\begin{align*}
	\sum_{i,j,k=1}^{3} \big|Du_{i,j}(x)\sigma(v)_{k,i}(x)\big|
	= \sum_{i,j,k=1}^{3} \big|Du_{i,j}(x)\sigma(v)_{i,k}(x)\big| 
	\leq   \sum_{j,k=1}^{3} \Norm{\nabla u_j(x)}{}\Norm{\sigma(v)_{.,k}(x)}{}
	\end{align*}
	where $ \sigma(v)_{.,k}  $ denotes the $ k $-th column of $ \sigma(v) $,
	this implies
	$$ \big|\tr[ (\dot{\varepsilon}(\tilde{u}_{t})\circ T_t(x)  - \dot{\varepsilon}(u(x)))\sigma(v(x))] \big| 
	\leq C(V,T_t)  \sum_{j,k=1}^{3} \Norm{\nabla u_j(x)}{}\Norm{\sigma(v)_{.,k}(x)}{} $$
	for a.e. $ x \in \Omega $, with $C(V,T_t) = \Norm{DV- (DT_t)^{-1}(DV \circ T_t)}{\infty,\Omega}.$
	Hence 
	\begin{align*}
	\int_{\Omega}|\tr[ (\dot{\varepsilon}(\tilde{u}_{t})\circ T_t - \dot{\varepsilon}(u))\sigma(v)]|\, dx 
	&\leq C(V,T_t)  \sum_{j,k=1}^{3} \int_{\Omega} \Norm{\nabla u_j(x)}{}\Norm{\sigma(v)_{.,k}(x)}{} \, dx \\
	&\leq C \,C(V,T_t) \Norm{u}{H^1(\Omega,\R{3})} \Norm{v}{H^1(\Omega,\R{3})} 
	\end{align*}
	for a combinatorial constant $ C=C(\lambda,\mu,n=3) $ independent of $ t $ and thus
	\begin{align*}
	\sup_{\Norm{u}{H^1}\leq 1\atop \Norm{v}{H^1}\leq 1}  \left|\int_{\Omega}\tr[ (\dot{\varepsilon}(\tilde{u}_{t})\circ T_t  - \dot{\varepsilon}(u))\sigma(v)]\, dx\right| 
	&\leq C C(V,T_t) \underset{t \to 0}{\to} 0.
	\end{align*}
	For term $  \tr[\dot{\varepsilon}(\tilde{u}_{t}) \circ T_t(\sigma(\tilde{v}_{t}) \circ T_t - \sigma(v))] $ we analogously obtain
	\begin{align*}
	\int_{\Omega}|\tr[\dot{\varepsilon}(\tilde{u}_{t}) \circ T_t(\sigma(\tilde{v}_{t}) \circ T_t - \sigma(v))]|\, dx 
	&\leq C(V)C(T_t)\Norm{u}{H^1(\Omega,\R{3})} \Norm{v}{H^1(\Omega,\R{3})} 
	\end{align*}
	where $ C(T_t) \to 0  $ for $ t \to 0 $ and thus we end up with
	\begin{align*}
	\int_{\Omega} \big|  \tr[\dot{\varepsilon}(\tilde{u}_{t}) \circ T_t\sigma(\tilde{v}_{t})\circ T_t - \dot{\varepsilon}(u)\sigma(v)] \big|\, dx \underset{t \to 0}{\to 0}.
	\end{align*}
	The third and fifth term in equation (6.18) can be estimated analogously and vanish when $ t \to 0 $.
	
	\noindent For the terms $$ \int_{\Omega} \big|  \tr[\dot{\varepsilon}(\tilde{u}_{t})\sigma(\tilde{v}_{t})+\sigma(\tilde{u}_{t})\dot{\varepsilon}(\tilde{v}_{t})+\Div(V)\sigma(\tilde{u}_{t})\varepsilon(\tilde{v}_{t})] \circ T_t \big| \, dx $$ and $$ \int_{\Omega} \big|\tr[\sigma(\tilde{u}_{t})\circ T_t\varepsilon(\tilde{v}_{t})\circ T_t]\big|\, dx $$  we obtain 
	\begin{align*}
	&\int_{\Omega} \big|  \tr[\dot{\varepsilon}(\tilde{u}_{t})\sigma(\tilde{v}_{t})+\sigma(\tilde{u}_{t})\dot{\varepsilon}(\tilde{v}_{t})+\Div(V)\sigma(\tilde{u}_{t})\varepsilon(\tilde{v}_{t})] \circ T_t \big| \, dx  \leq \tilde{C}(T_t,V) \Norm{u}{H^1}\Norm{v}{H^1}
	\intertext{and}
	& \int_{\Omega} \big|\tr[\sigma(\tilde{u}_{t})\circ T_t\varepsilon(\tilde{v}_{t})\circ T_t]\big|\, dx \leq C(T_t) \Norm{u}{H^1}\Norm{v}{H^1} ,
	\end{align*}
	where $ \tilde{C}(T_t,V)>0 $ and $ C(T_t)>0 $ (depend also on $ \lambda $ and $ \mu $) can be bounded from above by a constant $ C $  independently of $ t \in [-\epsilon,\epsilon] $. Then
	$ \Norm{\Div(V) \circ T_t-\Div(V)}{\infty,\Omega} \to 0 $ and $ \Norm{\gamma_t - 1}{\infty,\Omega} \to 0 $ for $ t \to 0 $ imply the assertion.
\end{proof}

\subsection{Existence of $H^1$-material derivatives}

\noindent Until now, we verified all claims  of Theorem \ref{Parameter_Dep_PDE:Thm: Existence Deriv. strong Topology} despite of the existence of solutions $ q^{t} \in H^{1}_{D}(\Omega,\R{3}) $  apply for the equation $$ B^{t}(u,v)=L^{t}(v) \, \forall v \in H^{1}_{D}(\Omega,\R{3}) $$ with $ B^{t} $ and $ L^{t} $ defined by the equations \eqref{Ex_Shape_Deriv_LinEl:Eq: B^t(u^t,v)=L^t(v)} and \eqref{Ex_Shape_Deriv_LinEl:Def:L^t}, \eqref{Ex_Shape_Deriv_LinEl:Def:B^t} if $ f,\, g \in C^{1}(\overline{ \Oext},\R{3}) $. Hence the solution mapping $$(-\epsilon,\epsilon) \to H^{1}_{D}(\Omega,\R{3}), t \mapsto u^{t} $$  is continuous by Theorem  \ref{Parameter_Dep_PDE:Thm: Existence Deriv. strong Topology} a). Thus we now have to prove that there exist $H^{1}_{D}(\Omega,\R{3})$-solutions $ q^{t} $ to the family of equations
\begin{align}\label{Ex_Shape_Deriv_LinEl:Eq: Weak equation for dot_u_t}
B^{t}(q^{t},v)=\dot{L}^{t}(v) - \dot{B}^{t}(u^{t},v) :=L_{u^{t}}^{t}(v) \, \forall v \in H^{1}_{D}(\Omega,\R{3}).
\end{align}
to apply Theorem  \ref{Parameter_Dep_PDE:Thm: Existence Deriv. strong Topology}, also.
For the application of the Theorem of Lax-Milgram we only have to show that $ L_{u^{t}}^{t} \in H^{1}_{D}(\Omega,\R{3})' $ for any $ t \in I$ since Lemma \ref{Ex_Shape_Deriv_LinEl:Lem: Coercivity Bt} shows the coercivity of $ B^{t} $ and Lemma \ref{Ex_Shape_Deriv_LinEl:Lem: continuity Bt} its continuity. Also $ L_{u^{t}}^{t} \in H^{1}_{D}(\Omega,\R{3})' $ is clear since we have
\begin{align}\label{Ex_Shape_Deriv_LinEl:Eq: continuity L_neu}  
|L_{u^{t}}^{t}(v)| &\leq |\dot{L}^{t}(v)|+ |\dot{B}^{t}(u^{t},v)| 
\leq \Vert\dot{L}^{t}\Vert_{(H^{1})'}\Norm{v}{H^{1}} + \Vert\dot{B}^{t}\Vert_{\mathcal{B}(H^{1})}\Norm{u^{t}}{H^{1}}\Norm{v}{H^{1}} 
\end{align}
from Lemma \ref{Ex_Shape_Deriv_LinEl:Lem:Derivative Lt} and Lemma \ref{Ex_Shape_Deriv_LinEl:Lem: derivative Bt}.If $ t \in (-\epsilon,\epsilon)  $ Lemma \ref{Ex_Shape_Deriv_LinEl:Lem:Derivative Lt} and  Lemma \ref{Ex_Shape_Deriv_LinEl:Lem: derivative Bt} imply $$ \left|L_{u^{t}}^{t}(v)\right| \leq \left(C_1+ C_2\Norm{u_t}{H^{1}(\Omega_t)}\right) \Norm{v}{H^{1}}.$$ 
But, we can even prove that $ \Norm{L^{t}_{u^{t}}}{H^{1}_{D}(\Omega,\R{3})'} $ is uniformly bounded w.r.t. $(-\epsilon,\epsilon)  $ since the latter also holds for the norms $ \Norm{u_t}{H^{1}(\Omega_t,\R{3})} $.

\begin{lem}\label{Ex_Shape_Deriv_LinEl:Lem: uniform bounds u_t and u^t in H^1 }
	Let $ u_t \in H^{1}_{D,t}(\Omega_t,\R{3})$ solve \eqref{Ex_Shape_Deriv_LinEl:Eq: B_t(u_t,v)=L_t(v)} and $ u^{t}=u_{t} \circ T_t $ solve \eqref{Ex_Shape_Deriv_LinEl:Eq: B^t(u^t,v)=L^t(v)} then there exist constants $ C_1 >0 $ and $ C_2 >0$ independent of $ t $ such that
	\begin{align}
	&\Norm{u_{t}}{H^{1}(\Omega_t)}\leq C_1 \, \text{ for all } t \in (-\epsilon,\epsilon),
	\intertext{and}
	&\Norm{u^{t}}{H^{1}(\Omega ,\R{3})} \leq C_2  , \text{ for all } t \in (-\epsilon,\epsilon).
	\end{align}
\end{lem}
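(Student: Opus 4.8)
The statement is essentially a bookkeeping exercise: all the uniform constants needed have already been assembled in the preceding lemmas, and the proof amounts to combining equicoercivity, equicontinuity, and the change-of-coordinates estimates. The plan is as follows.

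First I would work on the fixed reference domain. By Proposition \ref{Ex_Shape_Deriv_LinEl:Prop: Link between u_t ant u^t} the pulled-back solution $u^{t} = u_t \circ T_t$ is the unique element of $H^{1}_{D}(\Omega,\R{3})$ satisfying $B^{t}(u^{t},v)=L^{t}(v)$ for all $v$; testing with $v = u^{t}$ gives $B^{t}(u^{t},u^{t}) = L^{t}(u^{t})$. Since $f,g \in C^{1}(\overline{\Oext},\R{3}) \subset C(\overline{\Oext},\R{3})$, Corollary \ref{Ex_Shape_Deriv_LinEl:Cor:Criterion_equi_continuity_Lt}(ii) yields a constant $C_L>0$, independent of $t \in (-\epsilon,\epsilon)$, with $|L^{t}(v)| \le C_L \Norm{v}{H^{1}(\Omega,\R{3})}$. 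On the other hand, Lemma \ref{Ex_Shape_Deriv_LinEl:Lem: Coercivity Bt} gives an equicoercivity constant $\Varlambda_M>0$ (independent of $t \in (-\epsilon,\epsilon)$) with $B^{t}(u^{t},u^{t}) \ge \Varlambda_M \Norm{u^{t}}{H^{1}(\Omega,\R{3})}^{2}$. Combining the three facts,
\begin{equation*}
\Varlambda_M \Norm{u^{t}}{H^{1}(\Omega,\R{3})}^{2} \le B^{t}(u^{t},u^{t}) = L^{t}(u^{t}) \le C_L \Norm{u^{t}}{H^{1}(\Omega,\R{3})},
\end{equation*}
so $\Norm{u^{t}}{H^{1}(\Omega,\R{3})} \le C_L/\Varlambda_M =: C_2$ for all $t \in (-\epsilon,\epsilon)$, which is the second assertion.

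To get the bound for $u_t$ on the moving domain, I would write $u_t = u^{t} \circ T_t^{-1}$ and invoke the second estimate of Lemma \ref{Ex_Shape_Deriv_LinEl:Lem:H1_estimates_ut}, namely $\Norm{u^{t}\circ T_t^{-1}}{H^{1}(\O{t},\R{3})} \le C_2(T_t)\,\Norm{u^{t}}{H^{1}(\Omega,\R{3})}$. Since $\overline{(-\epsilon,\epsilon)} \Subset I_V$ by the choice of $\epsilon$ made at the beginning of the section, that same lemma guarantees that $C_2(T_t)$ may be taken uniform in $t$, say $C_2(T_t) \le \tilde C$. Hence $\Norm{u_t}{H^{1}(\O{t},\R{3})} \le \tilde C\, C_2 =: C_1$ for all $t \in (-\epsilon,\epsilon)$, which is the first assertion.

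There is no genuine obstacle here; the only thing to be careful about is that every constant invoked is truly $t$-independent on $(-\epsilon,\epsilon)$ — the equicoercivity constant from Lemma \ref{Ex_Shape_Deriv_LinEl:Lem: Coercivity Bt}, the equicontinuity constant of $(L^{t})_{t}$ from Corollary \ref{Ex_Shape_Deriv_LinEl:Cor:Criterion_equi_continuity_Lt}, and the transformation constant from Lemma \ref{Ex_Shape_Deriv_LinEl:Lem:H1_estimates_ut}, each of which relies on $\overline{(-\epsilon,\epsilon)} \Subset I_V$. As a byproduct, these uniform $H^{1}$-bounds are precisely what is needed to conclude, via \eqref{Ex_Shape_Deriv_LinEl:Eq: continuity L_neu}, that the right-hand sides $L^{t}_{u^{t}} = \dot L^{t} - \dot B^{t}(u^{t},\cdot)$ of the sensitivity equation \eqref{Ex_Shape_Deriv_LinEl:Eq: Weak equation for dot_u_t} are uniformly bounded in $H^{1}_{D}(\Omega,\R{3})'$, so that Theorem \ref{Parameter_Dep_PDE:Thm: Existence Deriv. strong Topology} becomes applicable.
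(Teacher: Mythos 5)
Your proof is correct and follows essentially the same route as the paper: test the pulled-back equation with $v=u^{t}$, combine the equicoercivity of $(B^{t})_{t}$ with the uniform bound on $\Norm{L^{t}}{H^{1}_{D}(\Omega,\R{3})'}$ to bound $\Norm{u^{t}}{H^{1}(\Omega,\R{3})}$, and then transfer the bound to $u_t=u^{t}\circ T_t^{-1}$ via the uniform change-of-coordinates estimate of Lemma \ref{Ex_Shape_Deriv_LinEl:Lem:H1_estimates_ut}. The only cosmetic difference is that you cite Corollary \ref{Ex_Shape_Deriv_LinEl:Cor:Criterion_equi_continuity_Lt}(ii) where the paper invokes Lemma \ref{Ex_Shape_Deriv_LinEl:Lem: continuity Lt} directly, which amounts to the same equicontinuity statement in this setting.
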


\begin{proof}
	From Lemma \ref{Ex_Shape_Deriv_LinEl:Lem: Coercivity Bt} we first derive
	\begin{align*}  
	\Varlambda\Norm{u^{t}}{H^{1}(\Omega,\R{3})}^2  
	\leq B^{t}(u^{t},u^{t}) 
	=  L^{t}(u^{t}) 
	\leq \Norm{L^{t}}{H^{1}_{D}(\Omega,\R{3})'} \Norm{u^{t}}{H^{1}(\Omega,\R{3})} 
	\end{align*} 
	since Lemma \ref{Ex_Shape_Deriv_LinEl:Lem: continuity Lt} implies that $ \Norm{L^{t}}{H^{1}_{D}(\Omega,\R{3})'} \leq C $ for all $ t\in (-\epsilon,\epsilon) $. Thus  \linebreak
	$ \Norm{u^{t}}{H^{1}(\Omega_t)} $ $\leq  \tilde{C} $.
	Finally, application of Lemma \ref{Ex_Shape_Deriv_LinEl:Lem:H1_estimates_ut} to $ u^{t}=u_t \circ T_t $ proves the statement. 
\end{proof}

\noindent Therefore, we can now conclude the following:

\begin{lem}\label{Lem: continuity L_neu}  
	Let $ \Omega \in \mathcal{O}_{1} $ and $V \in  \Vad{1}{\Oext}$ be some admissible vector field. Suppose that  $ f\in C^{1}(\overline{\Oext},\R{3}) $ and $ g\in C^{1}(\overline{\Oext},\R{3})$ and let $ u^{t} \in H^{1}_{D}(\Omega,\R{3}) $ be the unique solution of \eqref{Ex_Shape_Deriv_LinEl:Eq: B^t(u^t,v)=L^t(v)}. Then the linear form $
	L_{u^{t}}^{t} $ is equicontinuous on  $ (-\epsilon,\epsilon) \subset I_V $. 	
\end{lem}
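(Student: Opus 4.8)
The plan is to reduce the assertion entirely to uniform bounds that are already available, so that only the triangle inequality and the definition of equicontinuity remain to be applied. First I would recall that, by \eqref{Ex_Shape_Deriv_LinEl:Eq: Weak equation for dot_u_t}, the functional in question is $L_{u^{t}}^{t}(v)=\dot{L}^{t}(v)-\dot{B}^{t}(u^{t},v)$, and that the estimate \eqref{Ex_Shape_Deriv_LinEl:Eq: continuity L_neu} already yields, for every $t$ and every $v\in H^{1}_{D}(\Omega,\R{3})$,
\[
|L_{u^{t}}^{t}(v)|\leq \Norm{\dot{L}^{t}}{H^{1}_{D}(\Omega,\R{3})'}\Norm{v}{H^{1}(\Omega,\R{3})}+\Norm{\dot{B}^{t}}{\mathcal{B}(H^{1}_{D}(\Omega,\R{3}))}\Norm{u^{t}}{H^{1}(\Omega,\R{3})}\Norm{v}{H^{1}(\Omega,\R{3})}.
\]
Hence it suffices to bound the three quantities $\Norm{\dot{L}^{t}}{H^{1}_{D}(\Omega,\R{3})'}$, $\Norm{\dot{B}^{t}}{\mathcal{B}(H^{1}_{D}(\Omega,\R{3}))}$ and $\Norm{u^{t}}{H^{1}(\Omega,\R{3})}$ uniformly in $t$.

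Second, I would invoke the three relevant lemmas on the compactly contained interval $(-\epsilon,\epsilon)\Subset I_{V}$. Since $f,g\in C^{1}(\overline{\Oext},\R{3})$, Lemma \ref{Ex_Shape_Deriv_LinEl:Lem:Derivative Lt} gives a constant $C_{L}>0$ with $\Norm{\dot{L}^{t}}{H^{1}_{D}(\Omega,\R{3})'}\leq C_{L}$ for all $t\in(-\epsilon,\epsilon)$ (this is precisely where the regularity of $f,g$ enters, through $f_{V}=\Div(V)f+DfV$ and $g_{V,t}$); since $V\in C^{1}(\overline{\Oext},\R{3})$ and the families $(\gamma_{t})$, $(DT_{t})$, $((DT_{t})^{-1})$ are uniformly bounded on $(-\epsilon,\epsilon)$ by Lemmas \ref{Transf_shape_opt:Lem: Properties gammat} and \ref{Transf_shape_opt:Lem: Properties D_Tt}, Lemma \ref{Ex_Shape_Deriv_LinEl:Lem: derivative Bt} gives a constant $C_{B}>0$ with $\Norm{\dot{B}^{t}}{\mathcal{B}(H^{1}_{D}(\Omega,\R{3}))}\leq C_{B}$ for all $t\in(-\epsilon,\epsilon)$; and Lemma \ref{Ex_Shape_Deriv_LinEl:Lem: uniform bounds u_t and u^t in H^1} provides a constant $C_{u}>0$ with $\Norm{u^{t}}{H^{1}(\Omega,\R{3})}\leq C_{u}$ for all $t\in(-\epsilon,\epsilon)$.

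Combining these bounds with the displayed estimate gives $|L_{u^{t}}^{t}(v)|\leq (C_{L}+C_{B}C_{u})\Norm{v}{H^{1}(\Omega,\R{3})}$ for all $t\in(-\epsilon,\epsilon)$ and all $v\in H^{1}_{D}(\Omega,\R{3})$, hence $\Norm{L_{u^{t}}^{t}}{H^{1}_{D}(\Omega,\R{3})'}\leq C_{L}+C_{B}C_{u}$ with a constant independent of $t$; by the notion of equicontinuity (introduced earlier for families of bilinear forms and used here analogously for linear forms) this is exactly the claim. I do not expect any genuine obstacle at this stage: all the substantial work has already been done upstream — in particular the uniform coercivity estimate underlying Lemma \ref{Ex_Shape_Deriv_LinEl:Lem: uniform bounds u_t and u^t in H^1}, which itself rests on the uniform Korn constant established in the proof of Lemma \ref{Ex_Shape_Deriv_LinEl:Lem: Coercivity Bt} — so the only thing to watch is the bookkeeping of the constants and the fact that each of them can be chosen uniformly precisely because $(-\epsilon,\epsilon)$ is relatively compact in $I_{V}$.
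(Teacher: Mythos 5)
Your argument is correct and follows essentially the same route as the paper: the paper establishes the lemma by combining the estimate \eqref{Ex_Shape_Deriv_LinEl:Eq: continuity L_neu} with the uniform bounds on $\Vert\dot{L}^{t}\Vert_{H^{1}_{D}(\Omega,\R{3})'}$ and $\Vert\dot{B}^{t}\Vert_{\mathcal{B}(H^{1}_{D}(\Omega,\R{3}))}$ from Lemmas \ref{Ex_Shape_Deriv_LinEl:Lem:Derivative Lt} and \ref{Ex_Shape_Deriv_LinEl:Lem: derivative Bt} and the uniform $H^{1}$-bound on $u^{t}$ from Lemma \ref{Ex_Shape_Deriv_LinEl:Lem: uniform bounds u_t and u^t in H^1 }, exactly as you do. Your bookkeeping of the constants and the role of the compact inclusion $(-\epsilon,\epsilon)\Subset I_{V}$ match the paper's reasoning.
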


\begin{prop}\label{Ex_Shape_Deriv_LinEl:Prop: existence weak solutions weak PDE for dot_u_t}
	Let $ \Omega \in \mathcal{O}_{1}  $ and  $V \in \Vad{1}{\Omega^{ext}}$ an admissible vector field. Suppose that  $ f\in C^{1}(\overline{\Oext},\R{3}) $ and $ g\in C^{1}(\overline{\Oext},\R{3})$ and let $ u^{t} \in H^{1}_{D}(\Omega,\R{3}) $ be the unique solution of \eqref{Ex_Shape_Deriv_LinEl:Eq: B^t(u^t,v)=L^t(v)}. Then the problem \eqref{Ex_Shape_Deriv_LinEl:Eq: Weak equation for dot_u_t}
	\begin{align*}
	B^{t}(q,v) =L_{u^{t}}^{t} \, \forall v \in H^{1}_{D}(\Omega,\R{3})
	\end{align*}
	has a unique solution $ q^{t} \in H^{1}_{D}(\Omega,\R{3}) $ for any $ t \in I_V $. Additionally, the mapping $ t \in (-\epsilon,\epsilon) \mapsto q^{t} \in H^{1}_{D}(\Omega,\R{3}) $ is continuous w.r.t. the strong topology on $ H $.
\end{prop}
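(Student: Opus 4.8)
The plan is to verify the hypotheses of the Theorem of Lax--Milgram for each fixed $t \in I_V$ and then invoke the strong continuity criterion, Lemma~\ref{Parameter_Dep_PDE:Lem: Crit. for strong cont.}. First I would note that all the ingredients have already been assembled: by Lemma~\ref{Ex_Shape_Deriv_LinEl:Lem: continuity Bt} the bilinear form $B^t$ is continuous on $H^{1}_{D}(\Omega,\R{3})$ and by Lemma~\ref{Ex_Shape_Deriv_LinEl:Lem: Coercivity Bt} it is (equi)coercive, in particular strictly coercive for every $t$; moreover by Lemma~\ref{Lem: continuity L_neu} the linear form $L_{u^{t}}^{t}=\dot{L}^{t}-\dot{B}^{t}(u^{t},\cdot)$ is continuous on $H^{1}_{D}(\Omega,\R{3})$, with norm bounded uniformly for $t \in (-\epsilon,\epsilon)$. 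Applying the Lemma of Lax--Milgram to the pair $(B^t, L_{u^t}^t)$ therefore yields, for every $t \in I_V$, a unique $q^t \in H^{1}_{D}(\Omega,\R{3})$ solving \eqref{Ex_Shape_Deriv_LinEl:Eq: Weak equation for dot_u_t}. This settles the existence and uniqueness part.

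For the continuity of $t \mapsto q^t$ in the strong $H$-topology, I would apply Lemma~\ref{Parameter_Dep_PDE:Lem: Crit. for strong cont.} with the family of bilinear forms $(B^t)_{t\in(-\epsilon,\epsilon)}$ and the family of linear forms $(\tilde{l}^t)_{t\in(-\epsilon,\epsilon)}$ given by $\tilde{l}^t := L_{u^t}^t = \dot{L}^{t}-\dot{B}^{t}(u^{t},\cdot)$. The hypotheses to check are: (i) $(B^t)$ is equicoercive --- this is Lemma~\ref{Ex_Shape_Deriv_LinEl:Lem: Coercivity Bt}; (ii) for each fixed $u \in H$, the map $t \mapsto B^t(u,\cdot)$ is strongly continuous into $H'$ --- this follows from the differentiability of $t\mapsto B^t$ established in Lemma~\ref{Ex_Shape_Deriv_LinEl:Lem: derivative Bt} together with the uniform bound on $\dot{B}^t$ there, or directly from the explicit formula for $B^t$ and the continuity of $t\mapsto\gamma_t$, $t\mapsto DT_t$; and (iii) the map $t \mapsto \tilde{l}^t$ is strongly continuous into $H'$. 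For (iii) I would split $\tilde{l}^t = \dot{L}^t - \dot{B}^t(u^t,\cdot)$ and handle the two pieces separately: the strong continuity of $t\mapsto\dot{L}^t$ is exactly Lemma~\ref{Ex_Shape_Deriv_LinEl:Lem: Continuity t-> dot(L)^t and t-> dot(B)^t } i), and the strong continuity of $t\mapsto\dot{B}^t(u^t,\cdot)$ would be obtained by combining the strong continuity of $t\mapsto\dot{B}^t$ in $\mathcal{B}(H)$ (Lemma~\ref{Ex_Shape_Deriv_LinEl:Lem: Continuity t-> dot(L)^t and t-> dot(B)^t } ii)), the uniform bound on $\Norm{\dot{B}^t}{\mathcal{B}(H)}$ (Lemma~\ref{Ex_Shape_Deriv_LinEl:Lem: derivative Bt}), the uniform bound $\Norm{u^t}{H^1(\Omega,\R{3})}\le C_2$ (Lemma~\ref{Ex_Shape_Deriv_LinEl:Lem: uniform bounds u_t and u^t in H^1 }), and the strong continuity of $t\mapsto u^t$ established in Proposition~\ref{Ex_Shape_Deriv_LinEl:Prop: existence weak solutions weak PDE for dot_u_t}'s predecessor (i.e.\ the $t\mapsto u^t$ continuity coming from Lemma~\ref{Parameter_Dep_PDE:Lem: Crit. for strong cont.} applied to the original equation). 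Concretely, one writes
\[
\Norm{\dot{B}^t(u^t,\cdot)-\dot{B}^s(u^s,\cdot)}{H'} \le \Norm{\dot{B}^t(u^t-u^s,\cdot)}{H'} + \Norm{(\dot{B}^t-\dot{B}^s)(u^s,\cdot)}{H'} \le C\Norm{u^t-u^s}{H} + \Norm{\dot{B}^t-\dot{B}^s}{\mathcal{B}(H)}\Norm{u^s}{H},
\]
and both terms on the right go to $0$ as $t\to s$ by the cited results.

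Once (i)--(iii) are in place, Lemma~\ref{Parameter_Dep_PDE:Lem: Crit. for strong cont.} directly gives that $t \mapsto q^t$ is strongly continuous on $H^{1}_{D}(\Omega,\R{3})$, which completes the proof. The main obstacle here is not conceptual but bookkeeping: one must carefully confirm that the intermediate decomposition $\Norm{\dot{B}^t(u^t,\cdot)-\dot{B}^s(u^s,\cdot)}{H'}\to 0$ really uses only results already established --- in particular that the uniform bounds on $\Norm{u^t}{H^1}$ and $\Norm{\dot{B}^t}{\mathcal{B}(H)}$ hold on a \emph{fixed} interval $(-\epsilon,\epsilon)\Subset I_V$, and that the strong continuity of $t\mapsto u^t$ (a prerequisite for everything) has indeed been secured before this statement. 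Since all of these are available from Lemmas~\ref{Ex_Shape_Deriv_LinEl:Lem:H1_estimates_ut}, \ref{Ex_Shape_Deriv_LinEl:Lem: continuity Lt}, \ref{Ex_Shape_Deriv_LinEl:Lem: continuity Bt}, \ref{Ex_Shape_Deriv_LinEl:Lem: Coercivity Bt}, \ref{Ex_Shape_Deriv_LinEl:Lem:Derivative Lt}, \ref{Ex_Shape_Deriv_LinEl:Lem: derivative Bt}, \ref{Ex_Shape_Deriv_LinEl:Lem: Continuity t-> dot(L)^t and t-> dot(B)^t }, \ref{Ex_Shape_Deriv_LinEl:Lem: uniform bounds u_t and u^t in H^1 } and \ref{Lem: continuity L_neu}, the proof is essentially an assembly of these pieces.
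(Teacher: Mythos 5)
Your proof is correct and follows essentially the same route as the paper: Lax--Milgram applied to $(B^t, L^t_{u^t})$ for existence and uniqueness, then Lemma \ref{Parameter_Dep_PDE:Lem: Crit. for strong cont.} with $\tilde{l}^t = \dot{L}^t - \dot{B}^t(u^t,\cdot)$ for the strong continuity of $t \mapsto q^t$. The only difference is that you spell out the triangle-inequality estimate for $\Norm{\dot{B}^t(u^t,\cdot)-\dot{B}^s(u^s,\cdot)}{H'}$, which the paper leaves implicit in the phrase "from the continuity of $t \to \dot{L}^{t}$ and $t\to \dot{B}^{t}(u^t,.)$".
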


\begin{proof} Let $ t \in (-\epsilon,\epsilon) $.
	Then, the bilinear form $ B^{t} $ defined in \eqref{Ex_Shape_Deriv_LinEl:Def:B^t}  is continuous and coercive and the linear form $ L^{t}_{u^{t}} $ is continuous as \eqref{Ex_Shape_Deriv_LinEl:Eq: continuity L_neu} shows. Thus we can apply the Theorem of Lax-Milgram. 
	
	To see the second statement, apply Lemma \ref{Parameter_Dep_PDE:Lem: Crit. for strong cont.} to $ B^{t}(q,v)=L^{t}_{u^t}(v)=\dot{L}^{t}(v) - \dot{B}^{t}(u^t,v) $:  For the left hand side everything is shown and from the continuity of $ t \to \dot{L}^t $ and $ t\to \dot{B}^{t}(u^t,.)  $ follows that $ t \to L^{t}_{u^{t}} $ is continuous.
\end{proof}

\begin{thm}\label{Ex_Shape_Deriv_LinEl:Thm: existence of strong H^1 material derivatives}
	Let  $V \in  \mathcal{V}^{ad}_{1}(\Oext)$ be an admissible vector field and $ \Omega \in \mathcal{O}_{1}  $. Suppose that  $ f\in C^{1}(\overline{\Oext},\R{3}) $ and $ g\in C^{1}(\overline{\Oext},\R{3})$. Furthermore, assume that $ [-\epsilon,\epsilon] \subset I_V $ and $ (u^{t})_{t \in I_V} \subset H^{1}_{D}(\Omega,\R{3}) $ is the family of unique solutions of \eqref{Ex_Shape_Deriv_LinEl:Eq: B^t(u^t,v)=L^t(v)}
	$$B^{t}(u,v)=L^{t}(v) \, \forall v \in H^{1}_{D}(\Omega,\R{3}),\, t \in I_V.$$ 
	Then there exists the material derivative  $ \dot{u}^{t} \in H^{1}_{D}(\Omega,\R{3}) $ w.r.t. the strong topology on $ H^{1}_{D}(\Omega,\R{3}) $ for every $ t \in (-\epsilon,\epsilon) $ and it is given by the unique solution $ q^{t}\in H^{1}_{D}(\Omega,\R{3}) $ of equation \eqref{Ex_Shape_Deriv_LinEl:Eq: Weak equation for dot_u_t}
	\begin{align*}
	B^{t}(q,v) =\dot{L}^{t}(v) - \dot{B}^{t}(u^{t},v)=L_{u^{t}}^{t}(v) \, \forall v \in H^{1}_{D}(\Omega,\R{3}).
	\end{align*}
	Moreover, $ t \to \dot{u}^{t}=q^t $ is strongly continuous on $ (-\epsilon,\epsilon) $ w.r.t. $ H^{1}_{D}(\Omega,\R{3}) $ and the shape derivative $ u'=\dot{u}-DuV $ is an element of $ L^2(\Omega,\R{3}) $.
\end{thm}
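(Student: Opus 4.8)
The plan is to apply the abstract sensitivity result Theorem~\ref{Parameter_Dep_PDE:Thm: Existence Deriv. strong Topology} with the Hilbert space $ H := H^{1}_{D}(\Omega,\R{3}) $, the family of bilinear forms $ b^{t} := B^{t} $ and the family of linear forms $ l^{t} := L^{t} $ from \eqref{Ex_Shape_Deriv_LinEl:Def:B^t} and \eqref{Ex_Shape_Deriv_LinEl:Def:L^t}. Since that theorem bundles all the analytical work, the proof reduces to checking that its hypotheses hold on the fixed interval $ (-\epsilon,\epsilon) \Subset I_{V} $, and each of them has been prepared in the preceding lemmas.

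First I would collect the structural properties of the forms. Continuity of every $ B^{t} $ and equicontinuity of the subfamily $ (B^{t})_{t \in (-\epsilon,\epsilon)} $ is Lemma~\ref{Ex_Shape_Deriv_LinEl:Lem: continuity Bt}; equicoercivity of $ (B^{t})_{t \in (-\epsilon,\epsilon)} $ is Lemma~\ref{Ex_Shape_Deriv_LinEl:Lem: Coercivity Bt}; continuity of $ L^{t} $ and, using $ f,\, g \in C^{1}(\overline{\Oext},\R{3}) $, equicontinuity of $ (L^{t})_{t \in (-\epsilon,\epsilon)} $ follow from Lemma~\ref{Ex_Shape_Deriv_LinEl:Lem: continuity Lt} together with Corollary~\ref{Ex_Shape_Deriv_LinEl:Cor:Criterion_equi_continuity_Lt}. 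Next I would verify the differentiability conditions of the theorem: existence of the weak $ H' $-derivative $ \dot{L}^{t} $ (condition (l1)) is Lemma~\ref{Ex_Shape_Deriv_LinEl:Lem:Derivative Lt}; strong continuity of $ t \mapsto \dot{L}^{t} $ (condition (l2')) is Lemma~\ref{Ex_Shape_Deriv_LinEl:Lem: Continuity t-> dot(L)^t and t-> dot(B)^t}~i); existence of the weak $ \mathcal{B}(H) $-derivative $ \dot{B}^{t} $ and equicontinuity of $ (\dot{B}^{t})_{t} $ (conditions (b1) and (b3)) is Lemma~\ref{Ex_Shape_Deriv_LinEl:Lem: derivative Bt}; and strong continuity of $ t \mapsto \dot{B}^{t} $ (condition (b2')) is Lemma~\ref{Ex_Shape_Deriv_LinEl:Lem: Continuity t-> dot(L)^t and t-> dot(B)^t}~ii).

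The only hypothesis of Theorem~\ref{Parameter_Dep_PDE:Thm: Existence Deriv. strong Topology} not yet in hand is the existence of the auxiliary states $ q^{t} $ solving $ B^{t}(q^{t},v) = \dot{L}^{t}(v) - \dot{B}^{t}(u^{t},v) = L_{u^{t}}^{t}(v) $ for all $ v \in H^{1}_{D}(\Omega,\R{3}) $; this is provided by Proposition~\ref{Ex_Shape_Deriv_LinEl:Prop: existence weak solutions weak PDE for dot_u_t}, which --- via the boundedness of $ L_{u^{t}}^{t} $ from Lemma~\ref{Lem: continuity L_neu} and the Lemma of Lax--Milgram --- yields a unique $ q^{t} $ for every $ t \in I_{V} $ and, moreover, a strongly continuous map $ t \mapsto q^{t} $. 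With all hypotheses in place, Theorem~\ref{Parameter_Dep_PDE:Thm: Existence Deriv. strong Topology}~b) then gives that the strong $ H^{1}_{D}(\Omega,\R{3}) $-material derivative $ \dot{u}^{t} $ exists on $ (-\epsilon,\epsilon) $ and equals $ q^{t} $; the asserted strong continuity of $ t \mapsto \dot{u}^{t} $ is exactly the continuity of $ t \mapsto q^{t} $ noted above.

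For the last assertion $ u' = \dot{u} - DuV \in L^{2}(\Omega,\R{3}) $, I would argue directly at $ t = 0 $: $ \dot{u} = q^{0} \in H^{1}_{D}(\Omega,\R{3}) \hookrightarrow L^{2}(\Omega,\R{3}) $, and since $ u = u(\Omega) \in H^{1}(\Omega,\R{3}) $ one has $ Du \in L^{2}(\Omega,\R{3 \times 3}) $, while $ V \in C^{1}(\overline{\Oext},\R{3}) $ is bounded, so $ DuV \in L^{2}(\Omega,\R{3}) $ and hence so is the difference. I do not expect a genuine obstacle in this final theorem, which is essentially an assembly of the preceding results; the real difficulty lay in the preparatory lemmas, above all in establishing the strong continuity and equicontinuity of $ t \mapsto \dot{B}^{t} $, which required careful uniform-in-$ t $ estimates for the transformed strain and stress tensors $ \varepsilon^{t} $, $ \sigma^{t} $ under the flow maps $ T_{t} $.
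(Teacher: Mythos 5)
Your proposal is correct and follows essentially the same route as the paper: it verifies the hypotheses of Theorem \ref{Parameter_Dep_PDE:Thm: Existence Deriv. strong Topology} via Lemmas \ref{Ex_Shape_Deriv_LinEl:Lem: continuity Lt}--\ref{Ex_Shape_Deriv_LinEl:Lem: Coercivity Bt}, \ref{Ex_Shape_Deriv_LinEl:Lem:Derivative Lt}, \ref{Ex_Shape_Deriv_LinEl:Lem: derivative Bt}, \ref{Ex_Shape_Deriv_LinEl:Lem: Continuity t-> dot(L)^t and t-> dot(B)^t } and Proposition \ref{Ex_Shape_Deriv_LinEl:Prop: existence weak solutions weak PDE for dot_u_t}, exactly as the paper does. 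Your explicit final remark that $u' = \dot{u} - DuV \in L^{2}(\Omega,\R{3})$ is a small but welcome addition, since the paper's proof leaves that point implicit.
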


\begin{proof}
	
	We have to verify that all conditions that were posed in Theorem \ref{Parameter_Dep_PDE:Thm: Existence Deriv. strong Topology} are satisfied:  
	
	\noindent The continuity of any linear form $ L^{t}$ and any bilinear form $ B^{t}$, $ t\in (-\epsilon,\epsilon) $  has been proved in Lemma \ref{Ex_Shape_Deriv_LinEl:Lem: continuity Lt} and Lemma \ref{Ex_Shape_Deriv_LinEl:Lem: continuity Bt}. The equicoercivity of $(B^{t})_{t \in (-\epsilon,\epsilon)}$ has been shown in Lemma \ref{Ex_Shape_Deriv_LinEl:Lem: Coercivity Bt}. Proposition \ref{Ex_Shape_Deriv_LinEl:Prop: Link between u_t ant u^t} assures the existence of a unique solution to $ B^{t}(u^t,v)=L^t(v) $ for any $ v \in H^{1}_{D}(\Omega,\R{3}) $. 
	
	\noindent Moreover, it is necessary that the families $ (\dot{L}^{t})_{t \in (-\epsilon,\epsilon)} $ and $ (\dot{B}^{t})_{t \in (-\epsilon,\epsilon)} $ exist regarding the weak topologies on $ H^{1}_{D}(\Omega,\R{3})' $ and $ \mathcal{B}(H^{1}_{D}(\Omega,\R{3})) $, respectively. Additionally these families have to be equicontinuous, see the points (l1') and (b3') of Theorem \ref{Parameter_Dep_PDE:Thm: Existence Deriv. strong Topology}. We proofed these claims in Lemma \ref{Ex_Shape_Deriv_LinEl:Lem:Derivative Lt} and Lemma \ref{Ex_Shape_Deriv_LinEl:Lem: derivative Bt}.  Lemma \ref{Ex_Shape_Deriv_LinEl:Lem: Continuity t-> dot(L)^t and t-> dot(B)^t } proves that conditions (l3') and (b2') hold and
	Lemma \ref{Ex_Shape_Deriv_LinEl:Prop: existence weak solutions weak PDE for dot_u_t} shows the existence of unique solutions of \eqref{Ex_Shape_Deriv_LinEl:Eq: Weak equation for dot_u_t}
	\[ B^{t}(q,v)=\dot{L}^{t}(v) - \dot{B}^{t}(u^{t},v)=L_{u^{t}}^{t}(v) \, \forall v \in H^{1}_{D}(\Omega,\R{3}) \] for any $ t \in I_{V} $.
	
	\noindent The continuity of $ t \to \dot{u}^{t} =q^t $  on $ (-\epsilon,\epsilon) $ regarding the strong norm topology on $ H^{1}_{D} $ finally follows from the previous Proposition \ref{Ex_Shape_Deriv_LinEl:Prop: existence weak solutions weak PDE for dot_u_t}.
\end{proof}

%

\section{Material derivatives in H\"older spaces} \label{Ex_Shape_Deriv_LinEl:Sec: hölder material derivatives}

Let $ k \in \N{} $ be an integer $ k\geq 2 $, $ V \in \Vad{k+1}{\Oext} $ and $ (T_t)_{t \in I_V}=(T_t[V])_{t \in I_V} $ the associated transformation family.

To assure that equation \eqref{Ex_Shape_Deriv_LinEl:Eq: LinEl Omega_t} below always possesses a strong solution  we have to ensure that the two boundary parts $ \Gamma_D $ and $ \Gamma_N $ of the baseline design $ \Omega $ have a positive distance. Since the Transformations $ T_{t} $ depend continuously on $ t $ and on $ x \in \overline{\Omega} $ they maintain these property over a finite time interval. This is implied by equation (6.29) in \cite{GilbTrud}: There is a constant $ C_t $ depending on the norms of the transformations $ T_t $ such that 
\begin{align}
C_{t}^{-1}\Norm{x-y}{} \leq \Norm{T_t(x)-T_t(y)}{} \leq C_t\Norm{x-y}{}.
\end{align}
For $ t \in (-\epsilon,\epsilon) \Subset I_V$ the constant $C_t $ can be chosen independently from $ t $ since
$ \sup_{t \in (-\epsilon,\epsilon)} \Norm{T_{t}[V]}{\Cm{k+1}{\overline{\Omega^{ext}},\R{3}} }\leq C_{V,\epsilon} $
and therefore
\begin{align}\label{Ex_Shape_Deriv_LinEl:Eq: uniform bounds distance}
C_{V,\epsilon}^{-1}\Norm{x-y}{} \leq \Norm{T_t(x)-T_t(y)}{} \leq C_{V,\epsilon}\Norm{x-y}{}
\end{align}
if $ \epsilon\geq 0 $ and $ V $ are fixed.

\begin{defn}[Baseline Design]\label{Ex_Shape_Deriv_LinEl:defn: baseline design}
	We say $ \Omega \in \mathcal{O}_{k} $, $ k\geq 1  $ is a \textit{baseline design} if  there is $ \Omega_b \in \mathcal{O}_{k} $ and a domain $ \Omega_{int} \Subset \Omega_b $ of class $ C^k $ and $ D>0 $ such that
	\begin{itemize}
		\item[i)~~] $ \dist(\partial\Omega_{int},\partial \Omega_b ) \geq D$, 
		\item[ii)~] $ \Omega = \Omega_b\setminus \overline{\Omega_{int}} $
		\item[iii)]$  \Omega $ possesses a uniform $ C^{k} $-hemisphere property.
	\end{itemize}
	By $\mathcal{O}^{b}_{k} \subset \mathcal{O}_k$
	we denote the set of admissible baseline designs.
\end{defn}
Then we define the two boundary parts: $ \Gamma_N=\partial\Omega_N = \partial(\Omega_{int}) $ is the \textit{interior boundary} where the component $ \Omega $ is clamped and $ \Gamma_D=\Gamma\setminus \Gamma_{N} $ is the \textit{exterior boundary} part. 

\begin{lem}\label{Ex_Shape_Deriv_LinEl:Lem: uniform hemisphere property Omega_t}
	The set $ (\Omega_t)_{t \in (-\epsilon,\epsilon)} $ satisfies a uniform hemisphere condition for any $ \Omega \in  \mathcal{O}^{b}_{k}  $ and $ [-\epsilon,\epsilon] \subset I_{V} $.
\end{lem}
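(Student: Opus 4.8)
The plan is to reduce the uniform hemisphere property of the perturbed family $(\Omega_t)_{t\in(-\epsilon,\epsilon)}$ to that of the baseline design $\Omega$ by transporting the hemisphere transformations of $\Omega$ along the flow maps $T_t=T_t[V]$. Since $\Omega\in\mathcal{O}^b_k$, by Definition \ref{Ex_Shape_Deriv_LinEl:defn: baseline design} it has a uniform $C^k$-hemisphere property, so there exist a distance $d>0$ and, for each $x\in\Omega$ with $\dist(x,\Gamma)\le d$, a neighborhood $U_x$ and a hemisphere transformation $\mathbb{T}_x$ with $\overline{U_x}\cap\overline{\Omega}=\mathbb{T}_x(\Sigma_{R(x)})$, $\overline{U_x}\cap\Gamma=\mathbb{T}_x(F_{R(x)})$, together with a uniform bound $\Norm{\mathbb{T}_x}{C^{k,\phi}(\Omega,\R{n})}\le C_{\mathbb{T}}$ and $B_{d/2}(x)\subset U_x$. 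For $\Omega_t=T_t(\Omega)$ I would define the candidate hemisphere transformations at the image point $y=T_t(x)\in\Omega_t$ by $\mathbb{T}^t_y:=T_t\circ\mathbb{T}_x$, with neighborhood $U^t_y:=T_t(U_x)$. Then $\overline{U^t_y}\cap\overline{\Omega_t}=T_t(\overline{U_x}\cap\overline{\Omega})=T_t(\mathbb{T}_x(\Sigma_{R(x)}))=\mathbb{T}^t_y(\Sigma_{R(x)})$ and similarly $\overline{U^t_y}\cap\Gamma_t=\mathbb{T}^t_y(F_{R(x)})$, so conditions i)--iii) of Definition \ref{PDE_Systems:Def:Hemisphere_Trafo}(a) hold with the same radii $R(x)$.

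Next I would establish the uniformity of the relevant constants over $t\in(-\epsilon,\epsilon)\Subset I_V$. For the norm bound: since $T_t\in\mathrm{Diff}^{k+1}(\overline{\Omega^{ext}},\overline{\Omega^{ext}})$ with $\sup_{t\in(-\epsilon,\epsilon)}\Norm{T_t[V]}{C^{k+1}(\overline{\Omega^{ext}},\R{n})}\le C_{V,\epsilon}$ (this is \eqref{Transf_shape_opt:Eq: Bound TtV} together with Lemma \ref{Transf_shape_opt:Lem: Properties D_Tt}), the chain rule for $C^{k,\phi}$ (or $C^k$) compositions gives $\Norm{\mathbb{T}^t_y}{C^{k,\phi}}=\Norm{T_t\circ\mathbb{T}_x}{C^{k,\phi}}\le P(C_{V,\epsilon})\cdot C_{\mathbb{T}}=:\widetilde{C}_{\mathbb{T}}$ for a polynomial $P$ depending only on $k$, hence a single constant independent of $t$, $x$ and $y$. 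For the covering distance: by the bi-Lipschitz estimate \eqref{Ex_Shape_Deriv_LinEl:Eq: uniform bounds distance}, $C_{V,\epsilon}^{-1}\Norm{x-y}{}\le\Norm{T_t(x)-T_t(y)}{}\le C_{V,\epsilon}\Norm{x-y}{}$, a ball $B_{d/2}(x)$ maps into a set containing $B_{d/(2C_{V,\epsilon})}(T_t(x))$, so $U^t_y=T_t(U_x)\supset B_{d'/2}(y)$ with $d':=d/C_{V,\epsilon}$ independent of $t$; moreover every $y\in\Omega_t$ with $\dist(y,\Gamma_t)\le d'$ satisfies $\dist(T_t^{-1}(y),\Gamma)\le C_{V,\epsilon}d'=d$ (again by \eqref{Ex_Shape_Deriv_LinEl:Eq: uniform bounds distance}), so it is of the form $T_t(x)$ with $x$ in the collar $\Omega^d$ covered by the original neighborhoods, and the construction applies. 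Finally, $\Omega_t$ is of class $C^k$ for every $t\in(-\epsilon,\epsilon)$ since $\mathcal{O}_k$ is closed under transformation by $T_t[V]$ (Remark after Definition \ref{Transf_shape_opt:Defn:transformed_shapes}), so the $\mathbb{T}^t_y$ are genuine $C^{k,\phi}$-hemisphere transformations. Collecting these facts, the family $(\Omega_t)_{t\in(-\epsilon,\epsilon)}$ satisfies the uniform hemisphere property with uniform data $(d',\widetilde{C}_{\mathbb{T}})$.

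The main obstacle I anticipate is bookkeeping rather than conceptual: one must check carefully that the two boundary parts $\Gamma_{D,t}$ and $\Gamma_{N,t}$ retain a positive mutual distance uniformly in $t$ (which is exactly what \eqref{Ex_Shape_Deriv_LinEl:Eq: uniform bounds distance} guarantees, given the baseline separation $D>0$ in Definition \ref{Ex_Shape_Deriv_LinEl:defn: baseline design}), and that the composed transformation $\mathbb{T}^t_y$ still maps $\Sigma_{R}$ onto $\overline{U^t_y}\cap\overline{\Omega_t}$ with the hemisphere in the correct half-space — this is where the condition $\langle V,\vec n_{ext}\rangle=0$ on $\partial\Omega^{ext}$ and the orientation-preserving nature of $T_t$ (i.e. $\gamma_t=\det DT_t>0$, Lemma \ref{Transf_shape_opt:Lem: Properties gammat}) are used implicitly. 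A secondary subtlety is that $\mathbb{T}_x^{-1}=\mathbb{T}_x^{-1}$ and hence $(\mathbb{T}^t_y)^{-1}=\mathbb{T}_x^{-1}\circ T_t^{-1}$ must also have uniformly bounded $C^{k,\phi}$-norm; this follows from the same chain-rule argument applied to $T_t^{-1}=T_{-t}[-V]$, whose norms are likewise uniformly bounded on $(-\epsilon,\epsilon)$ by Lemma \ref{Transf_shape_opt:Lem: Properties D_Tt}. Once these points are verified the statement of Lemma \ref{Ex_Shape_Deriv_LinEl:Lem: uniform hemisphere property Omega_t} follows directly; one may also invoke Lemma \ref{Ch:Ex_Shape_Deriv_LinEl:Lem: uniform hemisphere property} for the single-shape version and merely supply the uniformity in $t$.
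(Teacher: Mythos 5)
Your proposal is correct and follows essentially the same route as the paper: you transport the hemisphere data of the baseline design via $U_{T_t(x)}=T_t(U_x)$ and $T_t\circ\mathbb{T}_x$, and obtain uniformity in $t$ from the uniform $C^{k}$-bounds on $T_t$ and the bi-Lipschitz estimate, which is exactly the paper's argument (with the distance $d_t\le C_t^{-1}d$ corresponding to your $d'=d/C_{V,\epsilon}$). The extra checks you supply (inverse transformations via $T_t^{-1}=T_t[-V]$, coverage of the collar) are refinements of the same construction rather than a different approach.
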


\begin{proof}
	Let $ \Omega \in  \mathcal{O}^{b}_{k} $ be arbitrary. The boundary of $ \Omega $ is compact since $ \Gamma $ is a bounded domain of class $ C^{k+1}$. This implies that the number of hemisphere transformations $ \mathbb{T}_{x} $, see Definition \ref{PDE_Systems:Def:Hemisphere_Trafo}, and the number of points  $ x $ within a distance $ 0<d<\nicefrac{D}{2} $ of the boundary $ \Gamma $ needed, is finite. The mapping $ T_t:\overline{\Omega} \to \overline{\Omega_t} $ is a bijection for any $ t \in I_V $ and we can choose $ U_{T_t(x)}:= T_t(U_x) $ as neighborhood of $ T_t(x) \in \Gamma_{t} $ what implies
	\[ U_{T_t(x)}\cap \overline{\Omega}_{t} =T_t(\overline{U_x} \cap \overline{\Omega}) = T_t(\mathbb{T}_{x}(\Sigma_{R(x)})) \] and analogously for the boundary. Thus we can choose $ 0<d_t \leq C^{-1}_{t}d  $ as an appropriate distance and $ T_t \circ \mathbb{T}_x $ as appropriate hemisphere transformations for $ \Omega_t $. Since the transformations $ T_t $ are bounded in their $ C^{k} $-norms and $ 0< C_{V,\epsilon} \leq C^{-1}_{t} $ the set $ (\Omega_t)_{t \in (-\epsilon,\epsilon)} $ satisfies the required uniform hemisphere condition.
\end{proof}

\begin{prop}\label{Ex_Shape_Deriv_LinEl:Prop: properies of solutions u_t of DTP on Omega_t}
	Let $ k \geq 2 $, $  \Omega \in \mathcal{O}_{k+1}^{b}  $, $ V\in V^{ad}_{k+1}(\Oext)$, $ f \in C^{k-2,\phi}(\overline{ \Oext}) $ and $ g \in C^{k-1,\phi}(\overline{ \Oext})  $. Then, for any $ t \in I_V$ there exists a unique solution $ u_t \in C^{k,\phi}(\overline{ \Omega_t},\R{3}) $ of 
	\begin{align}\label{Ex_Shape_Deriv_LinEl:Eq: LinEl Omega_t}
	\left. 
	\begin{array}{r c l l}
	-\Div( \se(u_t)) & = & f   &\text{ in } \Omega_t \\
	u_t &=& 0  &\text{ on } \Gamma_{D,t}  \\
	\se(u_t) \vec{n}_t &=&g &\text{ on }\Gamma_{N,t}.
	\end{array} 
	\right.
	\end{align} 
	\begin{itemize}
		\item[i)]The norm of  $ u_t $ is bounded according to \[ \Norm{u_t}{C^{k,\phi}(\Omega_t,\R{3})} \leq C_t \left(\Norm{f}{C^{k-2,\phi}(\Omega_t,\R{3})} + \Norm{g}{C^{k-1,\phi}(\Gamma_t,\R{3})} +\Norm{u_t}{C(\Omega_t,\R{3})}\right) \]
		for some constant $ C_t>0 $, $ t \in I_V $. 
		\item[ii)] If $ (-\epsilon,\epsilon) \Subset I_V $ and $ t \in (-\epsilon,\epsilon)$ then $ C=C_t $ can be chosen uniformly w.r.t. the parameter $ t $.
		\item[iii)] By means of Lemma \ref{Ex_Shape_Deriv_LinEl:Lem: uniform cone lemma} the term  $ \Norm{u_t}{C(\Omega_t,\R{3})} $ can be replaced by \\ $  \Norm{u_t}{L^{1}(\Omega_t,\R{3})}  $ and there even exists a constant $ C_u>0 $ such that 
		\begin{align}\label{Ex_Shape_Deriv_LinEl:Eq: uniform bound u_t in C^3phi}
		\Norm{u_t}{C^{k,\phi}(\Omega_t,\R{3})} \leq C_u
		\end{align} 
		uniformly in $ t \in (-\epsilon,\epsilon) $.
	\end{itemize}
\end{prop}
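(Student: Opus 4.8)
The plan is to obtain the statement by transporting the classical existence, uniqueness and Schauder theory for the disjoint displacement–traction problem (Theorem \ref{Linear_Elasticity:Thm:LinEl_Classical_Sol}) to each transformed domain $\Omega_t$, and then to track the $t$-dependence of all constants. First I would observe that, by Lemma \ref{Ex_Shape_Deriv_LinEl:Lem: uniform hemisphere property Omega_t}, each $\Omega_t$ with $t\in I_V$ is again a domain of class $C^{k+1}$ (hence of class $C^{k,\phi}$), that $\Gamma_{D,t}=T_t(\Gamma_D)$ has positive surface measure, and that $\dist(\Gamma_{D,t},\Gamma_{N,t})>0$: the last point follows from the baseline design assumption $\dist(\partial\Omega_{int},\partial\Omega_b)\geq D$ together with the bi-Lipschitz estimate \eqref{Ex_Shape_Deriv_LinEl:Eq: uniform bounds distance}, which gives $\dist(\Gamma_{D,t},\Gamma_{N,t})\geq C_{V,\epsilon}^{-1}D>0$ on $(-\epsilon,\epsilon)$. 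Since $f\in C^{k-2,\phi}(\overline{\Oext})$ restricts to $C^{k-2,\phi}(\overline{\Omega_t},\R{3})$ and $g\in C^{k-1,\phi}(\overline{\Oext})$ restricts to $C^{k-1,\phi}(\overline{\Gamma}_{N,t},\R{3})$, Theorem \ref{Linear_Elasticity:Thm:LinEl_Classical_Sol} applies on $\Omega_t$ with the roles $k-2\mapsto\tilde k$, yielding a unique $u_t\in C^{k,\phi}(\overline{\Omega_t},\R{3})$ solving \eqref{Ex_Shape_Deriv_LinEl:Eq: LinEl Omega_t} together with the estimate in part i) for some $C_t>0$. This proves existence, uniqueness and i).

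For part ii), the key is that the constant in Theorem \ref{Linear_Elasticity:Thm:LinEl_Classical_Sol}, via Theorem \ref{PDE_Systems:Thm:SchauderEstHölder} and the remark following it, depends only on $\lambda,\mu,n=3,N=3,\phi,k$ and on $\triangle_{\Gamma_t},\,C_{\mathbb{T}_t}$ and the hemisphere distance $d_t$. By Lemma \ref{Ex_Shape_Deriv_LinEl:Lem: uniform hemisphere property Omega_t} the family $(\Omega_t)_{t\in(-\epsilon,\epsilon)}$ satisfies a \emph{uniform} hemisphere condition: the hemisphere transformations for $\Omega_t$ may be taken to be $T_t\circ\mathbb{T}_x$, whose $C^{k+1,\phi}$-norms are bounded uniformly in $t$ and $x$ because $\sup_{t\in(-\epsilon,\epsilon)}\Norm{T_t[V]}{C^{k+1}(\overline{\Oext},\R{3})}\leq C_{V,\epsilon}$ and $\mathbb{T}_x$ range over a finite set, while the admissible distance $d_t$ can be chosen $\geq C_{V,\epsilon}^{-1}d$. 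It remains to bound the minor constants $\triangle_{\Gamma_t}$ from below uniformly; here I would invoke that $\triangle_{\Gamma_t}$ is computed from the coefficients of $\mathbf a$ and $\mathbf b$ after straightening $\Gamma_t$ by the (uniformly bounded, uniformly invertible) hemisphere maps, so a compactness/continuity argument over $t\in[-\epsilon,\epsilon]$ — entirely parallel to the argument bounding $C_K(\Omega_t)$ from below in the proof of Lemma \ref{Ex_Shape_Deriv_LinEl:Lem: Coercivity Bt} — shows $\inf_{t\in[-\epsilon,\epsilon]}\triangle_{\Gamma_t}>0$. Feeding these uniform geometric bounds into Theorem \ref{PDE_Systems:Thm:SchauderEstHölder} yields a constant $C$ independent of $t$.

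For part iii), once $\Omega$ is a baseline design the family $(\Omega_t)_{t\in(-\epsilon,\epsilon)}$ inherits a uniform cone property (any $C^{k,\phi}$-boundary domain has one, and the bounds are uniform in $t$ as above), so Lemma \ref{Ex_Shape_Deriv_LinEl:Lem: uniform cone lemma} gives, for each $\delta>0$, a constant $C_\delta$ \emph{uniform in $t$} with $\Norm{u_t}{C^0(\Omega_t,\R{3})}\leq\delta\Norm{u_t}{C^1(\Omega_t,\R{3})}+C_\delta\Norm{u_t}{L^1(\Omega_t,\R{3})}$. Absorbing the $C^1$-term into the left-hand side of the (now $t$-uniform) Schauder estimate exactly as in the last part of the proof of Theorem \ref{Linear_Elasticity:Thm:LinEl_Classical_Sol} replaces $\Norm{u_t}{C^0}$ by $\Norm{u_t}{L^1(\Omega_t,\R{3})}$. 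Finally, to get the absolute bound \eqref{Ex_Shape_Deriv_LinEl:Eq: uniform bound u_t in C^3phi} I would control $\Norm{u_t}{L^1(\Omega_t,\R{3})}$ by $\Norm{u_t}{H^1(\Omega_t,\R{3})}\sqrt{|\Omega_t|}$ and then use the equicoercivity of $(B^t)$ (Lemma \ref{Ex_Shape_Deriv_LinEl:Lem: Coercivity Bt}) together with the equicontinuity of $(L^t)$ (Corollary \ref{Ex_Shape_Deriv_LinEl:Cor:Criterion_equi_continuity_Lt}), which bound $\Norm{u^t}{H^1(\Omega,\R{3})}$ and hence $\Norm{u_t}{H^1(\Omega_t,\R{3})}$ uniformly (Lemma \ref{Ex_Shape_Deriv_LinEl:Lem: uniform bounds u_t and u^t in H^1 }); since $|\Omega_t|=\int_\Omega\gamma_t\,dx$ is also uniformly bounded, the desired $C_u$ follows. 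I expect the main obstacle to be the uniform lower bound on the minor constants $\triangle_{\Gamma_t}$: unlike the Korn constant, $\triangle_{\Gamma_t}$ is defined through an infimum of minors of a matrix built from the straightened principal symbol, so making the continuity-in-$t$ argument rigorous requires carefully checking that this infimum depends continuously on the hemisphere charts, which in turn depend continuously on $t$ in $C^{k+1}$.
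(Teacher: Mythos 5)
Your proposal follows essentially the same route as the paper's proof: existence, uniqueness and the estimate in i) come from applying Theorem \ref{Linear_Elasticity:Thm:LinEl_Classical_Sol} on each $\Omega_t$, the uniformity in ii) comes from the uniform hemisphere property of Lemma \ref{Ex_Shape_Deriv_LinEl:Lem: uniform hemisphere property Omega_t} (uniform bounds on $C_{\mathbb{T}_t}$, $d_t$, the minor constants $\triangle_{\Gamma_t}$ and the ellipticity constant), and iii) from the uniform cone property via Lemma \ref{Ex_Shape_Deriv_LinEl:Lem: uniform cone lemma} combined with the uniform $H^1$-bound on $u_t$ coming from equicoercivity and equicontinuity. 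Your slightly more explicit verification of $\dist(\Gamma_{D,t},\Gamma_{N,t})>0$ and of the lower bound $\inf_{t}\triangle_{\Gamma_t}>0$ only spells out details the paper asserts more tersely, so the two arguments coincide in substance.
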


\begin{proof}
	i) As Theorem \ref{Linear_Elasticity:Thm:LinEl_Classical_Sol} already suggests, the dependence of $ C_t $ on the domain $ \Omega_t $ is due to the hemisphere transformations i.e. $ \varLambda_t,\, \triangle_{\Gamma_t},\, C_{\mathbb{T}_t}$ and $ d_t $. Lemma \ref{Ex_Shape_Deriv_LinEl:Lem: uniform hemisphere property Omega_t} implies, that $ C_{\mathbb{T}_t} $ and $ d_t $ can be chosen uniformly. The minor constant also depends on the mappings $ T_t $ since it depends on the transformations that make $ \Gamma_t $ plane - these transformations are the hemisphere transformations that are already known to be uniformly bounded. The ellipticity constant $ \varLambda_t $ is also bounded from above and below, what can be shown analogously to the last part of the proof of Lemma 5.6. in \cite{GottschSchmitz}.
	
	\noindent The existence of the solution is then immediately clear by Theorem \ref{Linear_Elasticity:Thm:LinEl_Classical_Sol} and it is also clear that ii) holds for any $ t \in (-\epsilon,\epsilon) $. 
	
	\noindent iii) Moreover, $ \mathcal{O}_{k+1}^{b} $ satisfies a uniform cone property: $ \Omega $ is a Lipschitz domain and therefore satisfies a cone property as explained in \cite{Chen75,AdamsFournier}. Even the second derivatives of $ T_t,\, t \in(-\epsilon,\epsilon) $ and therefore the curvature of the boundary $ \Gamma_t $ is uniformly bounded. Thus we can apply Lemma \ref{Ex_Shape_Deriv_LinEl:Lem: uniform cone lemma} and use the same arguments as in the proof of Theorem \ref{Linear_Elasticity:Thm:LinEl_Classical_Sol}.
\end{proof}

\enlargethispage{\baselineskip}

\begin{lem}\label{Ex_Shape_Deriv_LinEl:Lem: uniform bounds ut}
	Let $ k\geq 2 $, $ \Omega \in \mathcal{O}_{k+1}^{b}$ and $ \Omega_{t}=\T{t}(\Omega), \, t \in I_V  $ be associated to the admissible vector field $V \in  \mathcal{V}^{ad}_{k+1}(\Oext)$. Suppose that  $ f\in C^{k-2,\phi}(\overline{\Oext},\R{3}) $, $ g\in C^{k-1,\phi}(\overline{\Oext},\R{3})$ for some $ \phi\in (0,1) $ and let $ u_{t} \in C^{k,\phi}(\O{t},\R{3}) $ be the unique solution of \eqref{Ex_Shape_Deriv_LinEl:Eq: LinEl Omega_t} on $ \O{t} $. 
	
	\noindent Under these assumptions the mapping $ t \in   (-\epsilon,\epsilon) \to u^{t}=u_{t} \circ T_t \in C^{k,\phi}(\Omega,\R{3})  $ is uniformly bounded in $ C^{k,\phi}(\Omega,\R{3}) $ for any interval $ [-\epsilon,\epsilon] \subset I_V $.
\end{lem}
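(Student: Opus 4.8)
The plan is to deduce this lemma as a direct corollary of the uniform Schauder estimate obtained in Proposition \ref{Ex_Shape_Deriv_LinEl:Prop: properies of solutions u_t of DTP on Omega_t} combined with the uniform bounds on the transformation family $(T_t)_t$. In other words, the only thing that remains after Proposition \ref{Ex_Shape_Deriv_LinEl:Prop: properies of solutions u_t of DTP on Omega_t} is to pull the bound for $u_t$ on $\O{t}$ back to a bound for $u^t = u_t\circ T_t$ on the fixed domain $\Omega$, and this is a routine composition estimate in Hölder spaces.

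First I would recall from Proposition \ref{Ex_Shape_Deriv_LinEl:Prop: properies of solutions u_t of DTP on Omega_t} iii) that, under exactly the present hypotheses ($k\ge 2$, $\Omega\in\mathcal{O}^{b}_{k+1}$, $V\in\Vad{k+1}{\Oext}$, $f\in C^{k-2,\phi}(\overline{\Oext},\R{3})$, $g\in C^{k-1,\phi}(\overline{\Oext},\R{3})$), there is a constant $C_u>0$, independent of $t$, with
\[
\Norm{u_t}{C^{k,\phi}(\Omega_t,\R{3})} \le C_u \qquad \text{for all } t\in(-\epsilon,\epsilon),
\]
the uniformity resting on the uniform hemisphere property of $(\Omega_t)_t$ (Lemma \ref{Ex_Shape_Deriv_LinEl:Lem: uniform hemisphere property Omega_t}) and the uniform cone bound (Lemma \ref{Ex_Shape_Deriv_LinEl:Lem: uniform cone lemma}). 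Next I would invoke Lemma \ref{Transf_shape_opt:Lem: Properties D_Tt} together with the bound \eqref{Transf_shape_opt:Eq: Bound TtV}: since $V\in\Vad{k+1}{\Oext}$, the maps $T_t$ belong to $Diff^{k+1}(\overline{\Oext},\overline{\Oext})$ with $T_t^{-1}=T_t[-V]$, and on the fixed compact interval $[-\epsilon,\epsilon]\subset I_V$ there is a constant $C_{V,\epsilon}$ such that $\sup_{t\in[-\epsilon,\epsilon]}\big(\Norm{T_t}{C^{k+1}(\overline{\Oext},\R{3})}+\Norm{T_t^{-1}}{C^{k+1}(\overline{\Oext},\R{3})}\big)\le C_{V,\epsilon}$. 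Since $\Oext$ is bounded, $C^{k+1}(\overline{\Oext},\R{3})\hookrightarrow C^{k,\phi}(\overline{\Oext},\R{3})$, so (after enlarging $C_{V,\epsilon}$ by the embedding constant) the same constant bounds the $C^{k,\phi}$-norms of $T_t$ and $T_t^{-1}$ as well.

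The final step is the composition estimate. For $w\in C^{k,\phi}(\overline{\O{t}},\R{3})$ and a $C^{k,\phi}$ map $S$ with $S(\overline{\Omega})\subset\overline{\O{t}}$, the multivariate chain rule (Faà di Bruno) writes each $D^j(w\circ S)$, $0\le j\le k$, as a finite sum of products of factors of the form $D^i w\circ S$ ($i\le j$) and $D^l S$ ($l\le j$); estimating $C^0$-norms and $C^{0,\phi}$-seminorms of these products — using that composition with a Lipschitz map $S$ multiplies the Hölder seminorm by at most $[S]_{C^{0,1}}^{\phi}$, and that a product of $C^{0,\phi}$ functions is controlled by the product of the $C^{0,\phi}$-norms — yields an estimate of the shape
\[
\Norm{w\circ S}{C^{k,\phi}(\Omega,\R{3})} \le P_k\!\big(\Norm{S}{C^{k,\phi}(\overline{\Omega},\R{3})}\big)\,\Norm{w}{C^{k,\phi}(\O{t},\R{3})},
\]
with $P_k$ a fixed polynomial depending only on $k$ and $n=3$. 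Applying this with $w=u_t$ and $S=T_t$ and inserting the two uniform bounds above gives
\[
\Norm{u^t}{C^{k,\phi}(\Omega,\R{3})} = \Norm{u_t\circ T_t}{C^{k,\phi}(\Omega,\R{3})} \le P_k(C_{V,\epsilon})\,C_u,
\]
uniformly in $t\in[-\epsilon,\epsilon]$, which is precisely the claim.

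The only genuine work — and the step I expect to be the main, though essentially routine, obstacle — is the bookkeeping in the composition estimate: verifying that the Hölder seminorm of the top-order derivative $D^k(u_t\circ T_t)$ is actually controlled. This needs the bound $[D^k u_t\circ T_t]_{C^{0,\phi}}\le [D^k u_t]_{C^{0,\phi}}\,[T_t]_{C^{0,1}}^{\phi}$ together with $C^{0,\phi}$-control of all the derivatives $D^l T_t$, $l\le k$, which is available because $T_t\in C^{k+1}\subset C^{k,\phi}$. I would carry this out either by induction on $k$ or by citing the standard composition lemma for Hölder spaces (e.g. from \cite{GilbTrud} or the appendix), since nothing subtle is involved beyond combinatorial care.
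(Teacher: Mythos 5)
Your proposal is correct and follows essentially the same route as the paper: the uniform bound $\Norm{u_t}{C^{k,\phi}(\Omega_t,\R{3})}\leq C_u$ from Proposition \ref{Ex_Shape_Deriv_LinEl:Prop: properies of solutions u_t of DTP on Omega_t} iii), combined with a Hölder composition estimate for $u_t\circ T_t$ using the uniform $C^{k+1}$-bounds on $T_t$ over $[-\epsilon,\epsilon]$ (the paper simply cites equation (6.29) in \cite{GilbTrud} and inequality (24) in \cite{BittGottsch} for the step you carry out via Faà di Bruno). No gap; your explicit bookkeeping of the composition estimate is just a more detailed version of the cited inequality.
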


\begin{proof}
	The previous Proposition yields that there exists a constant $ C $ independent of $ t \in (-\epsilon,\epsilon) $ such that $\Norm{u_{t}}{C^{k,\phi}(\O{t},\R{3})}\leq C_u$. Application of equation (6.29) in \cite{GilbTrud} now leads to
	\[ \Norm{u^{t}}{C^{k,\phi}(\Omega,\R{3})} = \Norm{u_{t} \circ \T{t}}{C^{k,\phi}(\Omega,\R{3})} \leq C \Norm{u_{t}}{C^{k,\phi}(\O{t},\R{3})}\leq C C_{u} \]
	where $ CC_{u} $ is independent of $ t $, see also inequality (24) in \cite{BittGottsch}.  
\end{proof}

We will now consider the strong PDE formulation that belongs to the weak equation given in equation \eqref{Ex_Shape_Deriv_LinEl:Eq: Weak equation for dot_u_t}. Therefore, we need the formula for integration by parts for vector valued functions $ z \in C^1(\Omega,\R{3})  $ and matrix valued functions $ A \in C^1(\Omega,\R{3 \times 3}) $, see \eqref{App: Eq: divergence  matrix valued functions}, i.e.
\begin{align}\label{Ex_Shape_Deriv_LinEl:Eq: divergence thm f. matrix valued functions and vector fields }
\int_{\Omega} \tr(A Dz) \, dx =   \int_{\Omega} \skp{-\Div(A)}{z} \, dx +  \int_{\Gamma} \skp{A \vec{n}}{z} \, dS.
\end{align}

\begin{prop}\label{Ex_Shape_Deriv_LinEl:Prop: strong fomulation of the DTP for q_t} $  $ \\
	Suppose that $ k \geq 2 $, $ V \in \mathcal{V}^{ad}_{k+1}(\overline{ \Oext}) $,  $ f \in C^{k-1,\phi}(\overline{ \Oext}) $, $ g \in C^{k,\phi}(\overline{ \Oext})  $ and $ \Omega\in \mathcal{O}_{k+1}^{b} $. 
	\begin{itemize}
		\item[i)] 
		Let $ u\in C^{k,\phi}(\Omega,\R{3}) $ be the unique solution to \eqref{Reliability:Eq:LinEl}. Then 
		\begin{align}\label{Ex_Shape_Deriv_LinEl:Eq: PDE for q} 
		\left. 
		\begin{array}{r c l l}
		-\Div( \se(q)) &=&f_{V}+f_{u}   &\text{ in } \Omega \\
		q &=& 0  &\text{ on } \Gamma_{D} \\
		\se(q)  \vec{n} &=&g_{V}-G_{u}\vec{n} &\text{ on }\Gamma_{N}
		\end{array} 
		\right.
		\end{align}
		where 
		\begin{align*}
		\begin{array}{l l}
		f_V = Df V +f\Div(V)  &  f_{u}= \Div\left[ DV\sigma(u) +\dot{\sigma}(u) + \Div(V)\sigma(u)\right] \\
		g_V= Dg V + g\Div_{\Gamma}(V) & G_{u}=
		\sigma(u)DV^{\top}+\dot{\sigma}(u) + \Div(V)\sigma(u)
		\end{array}
		\end{align*} 
		and  $$ \dot{\sigma}(u) = \lambda\tr(\dot{\varepsilon}(u))\mathrm{I}+2\mu\dot{\varepsilon}(u) $$
		has a unique solution $ q \in C^{k,\phi}(\Omega,\R{3}) $. 
		\item[ii)] If $ t \in I_V $ and 
		$ u=u_t\in C^{k,\phi}(\Omega_t,\R{3})$ is the unique solution of \eqref{Reliability:Eq:LinEl} on $  \Omega_t  $ and $ q=q_t \in C^{k,\phi} (\Omega_t,\R{3})$ is the unique solution of \eqref{Ex_Shape_Deriv_LinEl:Eq: PDE for q} on $ \Omega_t $,  then $ q^{t}=q_{t} \circ T_t $ solves the weak formulation of \eqref{Ex_Shape_Deriv_LinEl:Eq: PDE for q} which is given by \eqref{Ex_Shape_Deriv_LinEl:Eq: Weak equation for dot_u_t}.  
\end{itemize}\end{prop}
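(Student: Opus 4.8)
The plan is to treat the two assertions separately: (i) is a direct application of the classical solvability theorem once the regularity of the new right-hand data is checked, and (ii) is obtained by testing the strong system on $\Omega_t$, integrating by parts twice, and changing variables with $T_t$ so as to recognise the resulting identity as the weak equation \eqref{Ex_Shape_Deriv_LinEl:Eq: Weak equation for dot_u_t}.

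For (i) I would first verify the Hölder regularity of the data. Since $u\in C^{k,\phi}(\overline\Omega,\mathbb R^3)$ we have $\sigma(u)\in C^{k-1,\phi}$, and because $V\in C^{k+1}(\overline\Omega^{ext},\mathbb R^3)$ also $\dot\varepsilon(u)=-\tfrac12(DuDV+(DuDV)^\top)\in C^{k-1,\phi}$ and hence $\dot\sigma(u)\in C^{k-1,\phi}$ (Corollary \ref{Transf_shape_opt:Lem: dot(div), dot(sigma)} together with the product rules in the spirit of Lemma \ref{Transf_shape_opt:Lem: Product and Chain rule for shape derivatives}). Therefore the matrix field $A:=DV\,\sigma(u)+\dot\sigma(u)+\operatorname{div}(V)\sigma(u)$ lies in $C^{k-1,\phi}$, so $f_u=\operatorname{div}(A)\in C^{k-2,\phi}$ and $G_u=A^\top\in C^{k-1,\phi}$, while $f_V=DfV+f\operatorname{div}(V)\in C^{k-2,\phi}$ (using $f\in C^{k-1,\phi}$) and $g_V-G_u\vec n\in C^{k-1,\phi}(\overline\Gamma_N,\mathbb R^3)$ (using $g\in C^{k,\phi}$ and that the boundary is of class $C^{k+1}$, so $\operatorname{div}_\Gamma V$ costs only one derivative and $\vec n\in C^k(\Gamma)$). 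Since $\Omega\in\mathcal O^b_{k+1}$ is in particular a $C^{k,\phi}$-domain whose Dirichlet and Neumann parts are at positive distance, Theorem \ref{Linear_Elasticity:Thm:LinEl_Classical_Sol}, applied with volume datum $f_V+f_u\in C^{k-2,\phi}$ and surface datum $g_V-G_u\vec n\in C^{k-1,\phi}$, yields a unique $q\in C^{k,\phi}(\overline\Omega,\mathbb R^3)$. The same argument on $\Omega_t$, using that $u_t\in C^{k,\phi}(\overline\Omega_t,\mathbb R^3)$ (Proposition \ref{Ex_Shape_Deriv_LinEl:Prop: properies of solutions u_t of DTP on Omega_t}) and that $\Omega_t$ remains a $C^{k,\phi}$-domain with separated boundary parts (Lemma \ref{Ex_Shape_Deriv_LinEl:Lem: uniform hemisphere property Omega_t} and the distance estimate $(6.29)$ of \cite{GilbTrud}), gives the unique $q_t\in C^{k,\phi}(\overline\Omega_t,\mathbb R^3)$, which proves (i).

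For (ii) I would test \eqref{Ex_Shape_Deriv_LinEl:Eq: PDE for q} on $\Omega_t$ (with $u=u_t$, $q=q_t$ and $A_t:=DV\sigma(u_t)+\dot\sigma(u_t)+\operatorname{div}(V)\sigma(u_t)$, so that $G_u=A_t^\top$) against $w:=v\circ T_t^{-1}\in H^1_{D,t}(\Omega_t,\mathbb R^3)$ for $v\in H^1_D(\Omega,\mathbb R^3)$, and integrate by parts via \eqref{Ex_Shape_Deriv_LinEl:Eq: divergence thm f. matrix valued functions and vector fields } both on $-\operatorname{div}(\sigma(q_t))$ and on $f_u=\operatorname{div}(A_t)$ (admissible since $q_t\in C^2$ and $A_t\in C^1$ for $k\ge2$). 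On $\Gamma_{D,t}$ all boundary contributions vanish because $w=0$; on $\Gamma_{N,t}$ the trace $\sigma(q_t)\vec n_t=g_V-G_u\vec n_t$ yields a term $-\int_{\Gamma_{N,t}}\langle G_u\vec n_t,w\rangle\,dS$ that is cancelled exactly by the boundary term $\int_{\Gamma_{N,t}}\langle A_t^\top\vec n_t,w\rangle\,dS$ produced by $\operatorname{div}(A_t)$ — this is precisely why the Neumann correction was chosen as $G_u=A_t^\top$. What remains is $\int_{\Omega_t}\operatorname{tr}(\sigma(q_t)\varepsilon(w))\,dx=\int_{\Omega_t}\langle f_V,w\rangle\,dx+\int_{\Gamma_{N,t}}\langle g_V,w\rangle\,dS-\int_{\Omega_t}\operatorname{tr}(A_t\,Dw)\,dx$. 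Changing variables $x=T_t(\cdot)$ (legitimate termwise since $T_t$ is a $C^1$-diffeomorphism of $\overline\Omega^{ext}$, cf. \cite{AdamsFournier}), the left-hand side is $B^t(q^t,v)$ by \eqref{Ex_Shape_Deriv_LinEl:Def:B^t} with $q_t=q^t\circ T_t^{-1}$, the first two terms on the right form $\dot L^t(v)$ by Lemma \ref{Ex_Shape_Deriv_LinEl:Lem:Derivative Lt} (the data $f_V$, $g_{V,t}$ agreeing with those in \eqref{Ex_Shape_Deriv_LinEl:Eq: PDE for q} on $\Omega_t$), and the last term equals $\dot B^t(u^t,v)$ as written in Lemma \ref{Ex_Shape_Deriv_LinEl:Lem: derivative Bt}, upon using symmetry of $\sigma$, the self-adjointness identity $\operatorname{tr}(\dot\varepsilon(u_t)\sigma(w))=\operatorname{tr}(\dot\sigma(u_t)\varepsilon(w))$, and the cyclic-trace and transposition rules relating $\operatorname{tr}(DV\sigma(u_t)Dw)$ to $\operatorname{tr}(\sigma(u_t)\dot\varepsilon(w))$. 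This gives $B^t(q^t,v)=\dot L^t(v)-\dot B^t(u^t,v)$ for all $v\in H^1_D(\Omega,\mathbb R^3)$, i.e.\ $q^t$ solves \eqref{Ex_Shape_Deriv_LinEl:Eq: Weak equation for dot_u_t}; by uniqueness of that weak solution (Proposition \ref{Ex_Shape_Deriv_LinEl:Prop: existence weak solutions weak PDE for dot_u_t}) it coincides with the $H^1$-material derivative.

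The main obstacle is exactly this last matching step: turning the divergence-form volume source $f_u$ together with the surface correction $-G_u\vec n_t$, after integration by parts, into precisely $-\dot B^t(u^t,v)$ in the form in which $\dot B^t$ was computed in Lemma \ref{Ex_Shape_Deriv_LinEl:Lem: derivative Bt}. It is a bookkeeping argument, but the one most prone to error, because of the non-symmetry of $A_t$ (hence the appearance of $A_t^\top=G_u$ in the boundary term under the convention of \eqref{Ex_Shape_Deriv_LinEl:Eq: divergence thm f. matrix valued functions and vector fields }), the legitimate replacement of $Dw$ by $\varepsilon(w)$ only against symmetric factors, and the signs inherited from $\frac{d}{dh}(DT_h)^{-1}\big|_{h=0}=-DV$; all transpositions and permutations of the trace must be tracked explicitly.
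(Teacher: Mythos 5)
Your proposal is correct and follows essentially the same route as the paper: for (i) you verify $f_V+f_u\in C^{k-2,\phi}$ and $g_V-G_u\vec{n}\in C^{k-1,\phi}$ and invoke Theorem \ref{Linear_Elasticity:Thm:LinEl_Classical_Sol} on $\Omega$ resp.\ $\Omega_t$, and for (ii) you pass between the strong system and the weak identity $B^t(q^t,v)=\dot{L}^t(v)-\dot{B}^t(u^t,v)$ via the matrix-valued integration-by-parts formula, the cancellation of the $G_{u_t}\vec{n}_t$-boundary term against $A_t^{\top}\vec{n}_t$, the pull-back with $T_t$, and trace manipulations — exactly the computation in the paper's proof. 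The one step you leave asserted, namely identifying $\int_{\Omega_t}\tr(A_t\,Dw)\,dx$ with $\dot{B}^t(u^t,v)$ (where the sign coming from $\dot{\varepsilon}(v)=-\tfrac12(DvDV+(DvDV)^{\top})$ must be tracked against the $DV\sigma(u_t)$-term), is also the step the paper settles only by the loosely stated trace identities, so your argument coincides with the paper's.
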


\begin{proof}
	\textit{i)} First we show that there exists a unique $ C^{k,\phi} $-solution to  
	\begin{align}\label{Ex_Shape_Deriv_LinEl:Eq: PDE for q_t} 
	\left. 
	\begin{array}{rcll}
	-\Div( \sigma(q)) &=&f_{V}+f_{u_t}   &\text{ in } \Omega_t \\
	q &=& 0  &\text{ on } \Gamma_{D,t}  \\
	\se(q) \vec{n}_t &=&g_{V}-G_{u_t}\vec{n}_t &\text{ on }\Gamma_{N,t}
	\end{array} 
	\right.
	\end{align}
	for arbitrary $ t \in I_V $. Therefor we need the volume force to be a $C^{k-2,\phi}(\overline{ \Oext})$-vector and a the surface force to be a $C^{k-1,\phi}(\overline{ \Oext})$-vector field, see Theorem \ref{Linear_Elasticity:Thm:LinEl_Classical_Sol}. \\
	From $ f \in C^{k-1,\phi}(\overline{ \Oext},\R{3})  $ and $ V \in C^{k+1}(\overline{ \Oext},\R{3}) $ follows $$ f_{V} = Df V + f \Div(V) \in C^{k-2,\phi}(\overline{ \Oext},\R{3})  .$$ 
	The vector field $$ f_{u_t}=\Div[
	DV\sigma(u_t) +\dot{\sigma}(u_t) + \Div(V)\sigma(u_t)] $$ contains second order derivatives of $ u_{t} \in C^{k,\phi}(\Omega_{t},\R{3}) $ and first order derivatives of $ V $ and therefore $ f_{u_t} $ is an element of $ C^{k-2,\phi}(\Omega_{t},\R{3}) $ which implies $ f_V + f_{u_t} \in C^{k-2,\phi}(\Omega_t,\R{3}) $.\\
	Now we regard a $ C^{k} $-extension of the $ C^{k} $-outward normal vector field on $ \Gamma_{t} $ to $ \overline{\Oext} $ i.e.  $ \mathcal{N}_t $ given by 
	$$ \mathcal{N}_{t} \circ T_t = \frac{1}{\Norm{((DT_{t})^{-1})^{\top}\mathcal{N}}{}}((DT_{t})^{-1})^{\top}\mathcal{N}\in C^{k}(\overline{\Oext},\R{3})$$ 
	where $ \mathcal{N}\in C^{k}(\overline{\Oext},\R{3}) $ is an extension of the outward normal vector field $ \vec{n} $ on $ \Gamma $ to $ \overline{\Oext} $. Therefore, $$ \Norm{\vec{n}_t}{C^{k}(\Gamma_t,\R{3})}=\Norm{\mathcal{N}_t \vert_{\Gamma_t}}{C^{k}(\Gamma_t,\R{3})} \leq \Norm{\mathcal{N}_t}{C^{k}(\overline{\Oext},\R{3})} \leq C$$ due to the properties of $ T_t $.\\
	Moreover, $ g  \in C^{k,\phi}(\overline{ \Oext},\R{3})$, $ V \in C^{k+1}(\overline{\Oext},\R{3})$ and $ \mathcal{N}_t \in C^{k}(\overline{\Oext},\R{3})$ which implies  $g^{ext}_V:=  DgV +g (\Div(V) -\skp{DV \mathcal{N}_t}{\mathcal{N}_t}) \in C^{k-1,\phi}(\overline{\Oext},\R{3}) $ and thus $ g^{ext}_V\vert_{\Gamma_t}=g_V \in C^{k-1,\phi}(\Gamma_t,\R{3})   $.\\
	The matrix field $G_{u_t}= \sigma(u_t)DV^{\top}+\dot{\sigma}(u_t) + \Div(V)\sigma(u_t)$ contains first order derivatives of $ u_{t} \in C^{k,\phi}(\overline{\Omega_t},\R{3}) $ and first order derivatives of $ V \in C^{k+1}(\overline{\Oext},\R{3}) $ and therefor $ G_{u_t}\vec{n}_t $ is an element of $ C^{k-1,\phi}(\Gamma_t,\R{3}) $. 
	
	\noindent Thus Theorem \ref{Linear_Elasticity:Thm:LinEl_Classical_Sol} implies the existence of a unique solution $ q_{t} \in C^{k,\phi}(\overline{\Omega},\R{3})$ for any $ k \geq 2 $.

	\noindent \textit{ii)}  We first show that equation \eqref{Ex_Shape_Deriv_LinEl:Eq: PDE for q} is in fact the partial differential equation belonging to 
	$$ 
	B_{t}(u,v)=L_{u_t,t}(v) \, \forall v \in H^{1}_{D,t}(\Omega_t,\R{3})
	$$ 
	where $ B_t(u,v) = B^{t}(u \circ T_t,v \circ T_t) \text{ for } u,\, v \in H^{1}_{D,t}(\Omega_t,\R{3}) $
	and 
	\[L_{u_t,t}(v) = L^{t}_{u^{t}} (v \circ T_t) 
	= \dot{L}^{t}(v \circ T_t) -  \dot{B}^{t}(\underbrace{u_{t} \circ T_t}_{=u^{t}},v \circ T_t ) \text{ for } v \in H^{1}_{D,t}(\Omega_t,\R{3}).  
	\] 
	With $ u^{t} =u_t \circ T_t $ (see section \ref{Sec: Prel.  Mat. deriv. LinEl}) the definitions of $L^{t}_{u^{t}}$ \eqref{Ex_Shape_Deriv_LinEl:Eq: Weak equation for dot_u_t}, $ L^{t} $ and $ B^{t} $ (see Definition \ref{Ex_Shape_Deriv_LinEl:Def:L^t}, \eqref{Ex_Shape_Deriv_LinEl:Def:B^t}) lead to
	\begin{align*}
	B_{t}(q_t,v)&=\int_{\O{t}} \lambda \,\tr(\varepsilon(q_t))\tr(\varepsilon(v)) +2\mu \,\tr(\varepsilon(q_t)\varepsilon(v)) \, dx = \int_{\Omega_t} \tr(\sigma(q_t)\varepsilon(v))\, dx 
	\intertext{and} 
	L_{u_t,t}(v)
	= &\int_{\O{t}} \langle f_V, v \rangle 
	- \tr\left( \dot{\varepsilon}(u_t )\sigma(v) + \sigma(u_t) \dot{\varepsilon}(v) + \sigma(u_t)\varepsilon(v) \Div(V) \right) \, dx +\int_{\Gamma_{N,t}} \hspace*{-2mm}\langle g_V ,v \rangle \, dS   .
	\end{align*}
	Thus $L_{u_t,t}$ is a linear form on $ H^{1}_{D,t}(\Omega_t,\R{3}) $.
	
	We have shown that $ q_t,\, u_t$ are elements of the Hilbert space $H:=H^{1}_{D,t}(\Omega_t,\R{3}) $. By application of  the divergence theorem \eqref{Ex_Shape_Deriv_LinEl:Eq: divergence thm f. matrix valued functions and vector fields } we can derive the variational formulation
	\begin{align*}
	&~~~~~\int_{\Omega_t} \skp{- \Div(\sigma(q_t))}{v} \, dx &= \int_{\Omega_t}  \tr(\sigma(q_t)\varepsilon(v))\, dx - \int_{\Gamma_{N,t}}  \langle\sigma(q_t)\vec{n}_t,v\rangle \, dS~~~  \forall v \in H
	\end{align*}
	from \eqref{Ex_Shape_Deriv_LinEl:Eq: PDE for q_t}, i.e. $ \forall v \in H $:
	\begin{align*}
	\int_{\Omega_t}\tr(\sigma(q_t)\varepsilon(v)) \, dx =& \int_{\Omega_t} \skp{f_V}{v} \, dx + \int_{\Gamma_{N,t}} \skp{g_V}{v} \, dS \\
	& + \int_{\Omega_t} \skp{f_{u_t}}{v} \, dx  
	- \int_{\Gamma_{N,t}}  \skp{G_{u_t}\vec{n}_t}{v} dS .
	\end{align*}
	It is left to show that for any $ v \in H$
	\begin{align*}
	\int_{\Omega_t}\tr\left( \dot{\varepsilon}(u_t )\sigma(v) + \sigma(u_t) \dot{\varepsilon}(v)  + \sigma(u_t)\varepsilon(v) \Div(V) \right)  dx 
	& = \int_{\Omega_t} \hspace*{-2mm} \skp{-f_{u_t}}{v} \, dx + \int_{\Gamma_{N,t}} \hspace*{-2mm} \skp{G_{u_t}\vec{n}_t}{v}  dS.  
	\end{align*}
	We will do this by integration by parts:
	\begin{align*}
	\int_{\Omega_t} \hspace*{-2mm} \skp{-f_{u_t}}{v} \, dx + \int_{\Gamma_{N,t}} \hspace*{-2mm} \skp{G_{u_t}\vec{n}_t}{v}  dS 
	= &\int_{\Omega_t} \hspace*{-2mm} \skp{- \Div[
		DV\sigma(u)
		+\dot{\sigma}(u_t) + \Div(V)\sigma(u_t)] }{v} \, dx \\ 
	&+  \int_{\Gamma_{N,t}} \hspace*{-2mm}\skp{[
		\sigma(u_t)DV^{\top}
		+\dot{\sigma}(u_t) + \Div(V)\sigma(u_t)]\vec{n}_t}{v}  dS\\
	=& \int_{\Omega_t} \tr\left((DV\sigma(u_t) + \dot{\sigma}(u_t) + \Div(V)\sigma(u_t))Dv \right) \, dx .
	\end{align*}
	Now we compare the integrands: Invariance of the trace w.r.t. transpositions and cyclic permutations and the symmetry of $ \sigma(u_t) $ yield
	$$ \tr(DV\sigma(u_t)Dv) = \tr(\sigma(u_t)DvDV) =  \tr(\sigma(u_t)(DvDV)^{\top}) $$ and thus $$ \tr(\sigma(u_t)\varepsilon(v)) = \frac{1}{2}\left( \tr(\sigma(u_t)DvDV) +  \tr(\sigma(u_t)(DvDV)^{\top})\right) = \tr(DV\sigma(u_t)Dv).  $$
	Now we replace $\dot{\sigma}(u_t) $ by $ \lambda\tr(\dot{\varepsilon}(u_t))\mathrm{I}+2\mu\dot{\varepsilon}(u_t) $  
	and with the analogous arguments we used to calculate the first summand we obtain   
	\begin{align*}
	\tr(\dot{\sigma}(u_t)Dv) 
	&= \lambda \tr(\tr(\dot{\varepsilon}(u_t)Dv)) + 2 \mu\tr( \dot{\varepsilon}(u_t)Dv)
	= \lambda \tr(\dot{\varepsilon}(u_t)) \Div(v) +2\mu\tr(\dot{\varepsilon}(u_t)Dv) \\
	&= \tr( (\lambda \Div(v)\mathrm{I} + 2\mu \varepsilon(v))\dot{\varepsilon}(u_t))
	\end{align*}
	for the second term and $$ \tr(\Div(V) \sigma(u_t)Dv) =  \tr( \Div(V) \sigma(u_t) Dv) = \tr( \Div (V)\sigma(u_t) \varepsilon(v))$$
	for the third. Thus 
	\begin{align*}
	\tr\left[(DV\sigma(u_t) + \dot{\sigma}(u) + \Div(V)\sigma(u_t))Dv \right]
	=\tr\left[ \dot{\varepsilon}(u_t )\sigma(v) + \sigma(u_t) \dot{\varepsilon}(v)  + \sigma(u_t)\varepsilon(v) \Div(V) \right].
	\end{align*}
	
	\noindent  Let $ u_{t} \in  C^{k,\phi}(\overline{\Omega}_{t},\R{3}) $ be the unique solution to displacement traction problem \eqref{Ex_Shape_Deriv_LinEl:Eq: LinEl Omega_t} on  $ \Omega_t $, according to Theorem \ref{Linear_Elasticity:Thm:LinEl_Classical_Sol}. As a consequence, the unique solution of \eqref{Ex_Shape_Deriv_LinEl:Eq: B^t(u^t,v)=L^t(v)} is given by $ u^{t}=u_{t} \circ T_t \in C^{k,\phi}(\overline{ \Omega},\R{3})  $  for any $ t \in I_V $. The same argument applies to the solution $ q_t $ of  \ref{Ex_Shape_Deriv_LinEl:Eq: PDE for q_t} and $ q^{t} =q_t \circ T_t \in C^{k,\phi}(\overline{ \Omega},\R{3}) $ which then uniquely solves \eqref{Ex_Shape_Deriv_LinEl:Eq: Weak equation for dot_u_t}.
	Since the composition operator $ A_t: : H^{1}_{D_t}(\Omega_t,\R{3}) \to H^{1}_{D}(\Omega,\R{3}),\, u \mapsto u \circ T_t $ (compare \eqref{Ex_Shape_Deriv_LinEl:Eq: equivalence of B^t(u^t,v)=L^t(v) and B_t(u_t,v)=L_t(v)}) we obtain:
	\begin{align*}
	\begin{array}{lrcll}
	& B_{t}(q_t,v) &=& L_{t,u_t}(v) & \forall v\in H^{1}_{D,t}(\Omega_t,\R{3})\\
	\Leftrightarrow & \,  B^{t}(q^{t},w) &=& \dot{L}^{t}(w) - \dot{B}^{t}(u^t,w) & \forall w\in H^{1}_{D}(\Omega,\R{3}).
	\end{array}
	\end{align*}  
\end{proof}

\begin{rem}
	Since $ \Gamma $ is "only" of class $ C^{k+1} $ the regularity is not high enough to obtain $ C^{k+1,\phi} $- solutions $ u_t $. Not until $ V \in \Vad{k+2}{\Oext} $, $ \Omega \in \mathcal{O}_{k+2}^{b} $ the solution $ u_t $ is an element of $ C^{k+1,\phi}(\Omega_{t},\R{3}) $ but still $ q_{t} \in C^{k,\phi}(\Omega_{t},\R{3}) $.
\end{rem}

\begin{prop}\label{Ex_Shape_Deriv_LinEl:Prop: properies of solutions q_t of DTP 2 on Omega_t}
	Let $ V\in V^{ad}_{k+1}(\Oext) $, $ f \in C^{k-1,\phi}(\overline{ \Oext},\R{3}) $ and $ g \in C^{k,\phi}(\overline{ \Oext},\R{3})$ for some $ k \geq 2 $.  Let $ \Omega \in \mathcal{O}_{k+1}^{b}$ and $ u_t \in C^{k,\phi}(\overline{\Omega_t},\R{3}) $ be the unique solution of \eqref{Reliability:Eq:LinEl} on $ \Omega_t$. Let $ f_{V,u_t}:=f_V+f_{u_t} $ and $ g_{V,u_t}:= g_{V}-G_{u_t}\vec{n}_{t} $. 
	
	\noindent Then, the unique solution $ q_t \in C^{k,\phi}(\overline{ \Omega_t},\R{3}) $ of \eqref{Ex_Shape_Deriv_LinEl:Eq: PDE for q_t} satisfies the following:
	\begin{itemize}
		\item[i)] The norm of $ q_t $ is bounded according to 
		\[ \Norm{q_t}{C^{k,\phi}(\Omega_t,\R{3})} \leq C_t \left(\Norm{f_{V,u_T}}{C^{k-2,\phi}(\Omega_t,\R{3})} + \Norm{g_{V,u_t}}{C^{k-1,\phi}(\Gamma_t,\R{3})} +\Norm{q_t}{C(\Omega_t,\R{3})}\right) \]
		for some constant $ C_t> 0 $, $ t \in I_V $. If $ t \in (-\epsilon,\epsilon) $ then $ C=C_t $ can be chosen uniformly on the interval $ (-\epsilon,\epsilon) \Subset I_V $.
		\item[ii)]  By means of Lemma \ref{Ex_Shape_Deriv_LinEl:Lem: uniform cone lemma} the term $ \Norm{q_t}{C(\Omega_t,\R{3})} $ can be replaced by $  \Norm{q_t}{L^{1}(\Omega_t,\R{3})}  $ and there even exists a constant $ C_{q}> 0 $ such that 
		\begin{align}\label{Ex_Shape_Deriv_LinEl:Eq: uniform bound u_t}
		\Norm{q_t}{C^{k,\phi}(\Omega_t,\R{3})} \leq C_{q}
		\end{align} 
		uniformly in $ t \in (-\epsilon,\epsilon)$.
	\end{itemize}
\end{prop}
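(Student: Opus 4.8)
The plan is to mirror, essentially line by line, the proof of Proposition~\ref{Ex_Shape_Deriv_LinEl:Prop: properies of solutions u_t of DTP on Omega_t}, now applied to the derivative problem \eqref{Ex_Shape_Deriv_LinEl:Eq: PDE for q_t} rather than to \eqref{Ex_Shape_Deriv_LinEl:Eq: LinEl Omega_t}. First I would invoke Proposition~\ref{Ex_Shape_Deriv_LinEl:Prop: strong fomulation of the DTP for q_t}(i): under the present hypotheses the right-hand sides satisfy $f_{V,u_t}=f_V+f_{u_t}\in C^{k-2,\phi}(\overline{\Omega_t},\R{3})$ and $g_{V,u_t}=g_V-G_{u_t}\vec n_t\in C^{k-1,\phi}(\Gamma_t,\R{3})$, and there is a unique solution $q_t\in C^{k,\phi}(\overline{\Omega_t},\R{3})$; in particular $q_t\in C^{2,\phi}$, which is exactly the regularity threshold needed to feed the classical Schauder estimate. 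Since \eqref{Ex_Shape_Deriv_LinEl:Eq: PDE for q_t} has the same elasticity operator as \eqref{Reliability:Eq:LinEl}, only with different data, applying Theorem~\ref{Linear_Elasticity:Thm:LinEl_Classical_Sol} with $k-2$ in place of $k$ produces precisely the estimate claimed in~(i), with a constant $C_t$ depending on $\lambda,\mu,n=3,N=3,\phi,k$ and on $\Omega_t$ only through $\varLambda_t$, $\triangle_{\Gamma_t}$, $C_{\mathbb T_t}$ and $d_t$.

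For the uniformity over $(-\epsilon,\epsilon)\Subset I_V$ I would argue exactly as in Proposition~\ref{Ex_Shape_Deriv_LinEl:Prop: properies of solutions u_t of DTP on Omega_t}(ii): by Lemma~\ref{Ex_Shape_Deriv_LinEl:Lem: uniform hemisphere property Omega_t} the family $(\Omega_t)_{t\in(-\epsilon,\epsilon)}$ has a uniform hemisphere property, so $C_{\mathbb T_t}\le C_{\mathbb T}$ and $d_t\ge d>0$ uniformly, the minor constant $\triangle_{\Gamma_t}$ is bounded below uniformly because it is governed by the same uniformly bounded hemisphere transformations, and $\varLambda_t$ is $t$-independent since $\mathbf a$ has constant (hence $t$-independent) coefficients. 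Thus $C_t\le C$ uniformly, which completes~(i). For the first half of~(ii), the family $\mathcal{O}_{k+1}^{b}$, and a fortiori $(\Omega_t)_{t\in(-\epsilon,\epsilon)}$, satisfies a uniform cone property ($\Omega$ is Lipschitz and the curvature of $\Gamma_t$ is uniformly controlled because $\sup_{|t|<\epsilon}\|T_t\|_{C^{k+1}}<\infty$ and $k+1\ge2$), so Lemma~\ref{Ex_Shape_Deriv_LinEl:Lem: uniform cone lemma} gives, for each $\delta>0$, a uniform $C_\delta$ with $\|q_t\|_{C^0(\Omega_t)}\le\delta\|q_t\|_{C^1(\Omega_t)}+C_\delta\|q_t\|_{L^1(\Omega_t)}$; choosing $\delta=1/(2C)$, bounding $\|q_t\|_{C^1(\Omega_t)}\le\|q_t\|_{C^{k,\phi}(\Omega_t)}$ (here $k\ge2$) and absorbing this term into the left-hand side of the estimate from~(i) replaces $\|q_t\|_{C^0(\Omega_t)}$ by $\|q_t\|_{L^1(\Omega_t)}$, at the cost of a new uniform constant.

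To upgrade this to the uniform bound $\|q_t\|_{C^{k,\phi}(\Omega_t)}\le C_q$ I still have to bound the three right-hand terms uniformly in $t$. The data terms reduce to: $f_V$ and, after extending $\Div_{\Gamma_t}(V)$ to $\Oext$ via the unitary normal extension $\mathcal N_t$ (which is $C^k$ with uniformly bounded norm by the properties of $T_t$), $g_V$ are restrictions of fixed $C^{k-2,\phi}(\overline{\Oext})$, resp. $C^{k-1,\phi}(\overline{\Oext})$, fields, hence uniformly bounded; and $f_{u_t}$, $G_{u_t}\vec n_t$ are, by the H\"older product rule, controlled by a combinatorial constant times $\|V\|_{C^{k+1}}$ times $\|u_t\|_{C^{k,\phi}(\Omega_t)}$, which is $\le C_u$ uniformly by Proposition~\ref{Ex_Shape_Deriv_LinEl:Prop: properies of solutions u_t of DTP on Omega_t}(iii). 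The only term left is the uniform $L^1$-bound on $q_t$ itself, and it must not be extracted circularly from the Schauder inequality. Here I would fall back on the $H^1$-theory of Section~\ref{Ex_Shape_Deriv_LinEl:Sec:H1 material derivatives}: by Proposition~\ref{Ex_Shape_Deriv_LinEl:Prop: strong fomulation of the DTP for q_t}(ii) the pull-back $q^t=q_t\circ T_t$ is the unique $H^1_D(\Omega,\R{3})$-solution of \eqref{Ex_Shape_Deriv_LinEl:Eq: Weak equation for dot_u_t}; testing with $q^t$ and combining the equicoercivity of $(B^t)_{t\in(-\epsilon,\epsilon)}$ (Lemma~\ref{Ex_Shape_Deriv_LinEl:Lem: Coercivity Bt}) with the equicontinuity of $L^t_{u^t}$ (Lemma~\ref{Lem: continuity L_neu}) yields $\|q^t\|_{H^1(\Omega)}\le C$ uniformly; then Lemma~\ref{Ex_Shape_Deriv_LinEl:Lem:H1_estimates_ut} applied to $q_t=q^t\circ T_t^{-1}$, together with the uniform bound $|\Omega_t|=\int_\Omega\gamma_t\,dx\le C$, gives $\|q_t\|_{L^1(\Omega_t)}\le\|q_t\|_{L^2(\Omega_t)}\sqrt{|\Omega_t|}\le C\|q_t\|_{H^1(\Omega_t)}\le C'_q$ uniformly on $(-\epsilon,\epsilon)$. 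Substituting this bound and the data bounds into the estimate from the previous step gives $\|q_t\|_{C^{k,\phi}(\Omega_t)}\le C_q$ uniformly, which is~(ii). I expect the bookkeeping of the H\"older norms of $f_{u_t}$ and $G_{u_t}\vec n_t$ (repeated product-rule estimates together with the uniform control of $T_t$ and $\mathcal N_t$) to be the most tedious part, and the decision to route the $L^1$-bound through the $H^1$-solution rather than through the Schauder estimate to be the only genuinely non-mechanical point.
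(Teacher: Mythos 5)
Your proposal is correct and follows essentially the same route as the paper: part (i) is the application of the Schauder theory of Theorem \ref{Linear_Elasticity:Thm:LinEl_Classical_Sol} to the new data with the uniformity of $\triangle_{\Gamma_t}$, $C_{\mathbb{T}_t}$, $d_t$ obtained exactly as for $u_t$, and part (ii) uses the uniform cone absorption, the product-rule bounds on $f_{u_t}$, $G_{u_t}\vec n_t$ via $\Norm{u_t}{C^{k,\phi}}\leq C_u$, and the uniform $L^1$-bound on $q_t$ routed through the $H^1$-theory (equicoercivity of $B^t$ plus the uniform bound on $L^t_{u^t}$ from Lemma \ref{Lem: continuity L_neu}) — which is precisely the paper's argument.
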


\begin{proof} 
	i) Apply Proposition \ref{Ex_Shape_Deriv_LinEl:Prop: properies of solutions u_t of DTP on Omega_t} to $f= f_V + f_{u_t}  \in C^{k-2,\phi}$ and $ g= g_V - G_{u_t}\vec{n}_{t} \in C^{k-1,\phi} $.
	ii) Let $ t \in (-\epsilon,\epsilon)  $. Then we can derive the following from  i), triangle inequality and Lemma \ref{Ex_Shape_Deriv_LinEl:Lem: uniform cone lemma}: (For the moment we write $ C^{k-2,\phi}(\Omega_t)^3 $ instead of $ C^{k-2,\phi}(\Omega_t,\R{3}) $ to abbreviate the notation)
	\begin{align*}
	\Norm{q_t}{C^{k,\phi}(\Omega_t)^3} 
	\leq & C \left(\Norm{f_V + f_{u_t}}{C^{k-2,\phi}(\Omega_t)^3} + \Norm{g_V - G_{u_t}\vec{n}_t}{C^{k-1,\phi}(\Gamma_t)^3}\right. \left.+\Norm{q_t}{C(\Omega_t)^3}\right)
	\\[1ex]
	\leq & C \left(\Norm{f_V}{C^{k-2,\phi}(\Omega_t)^3} + \Norm{f_{u_t}}{C^{k-2,\phi}(\Omega_t)^3} + \Norm{g_V}{C^{k-1,\phi}(\Gamma_t)^3} 
	\right. 
	\\
	&   +\left.\Norm{G_{u_t}\vec{n}_t}{C^{k-1,\phi}(\Gamma_t)^3}+\delta\Norm{q_t}{C^1(\Omega_t)^3} + C_{\delta}\Norm{q_t}{L^{1}(\Omega_t)^3}\right)
	\\[1ex]
	\leq &   C\left(\Norm{f_{u_t}}{C^{k-2,\phi}(\Omega_t)^3}  + \Norm{G_{u_t}}{C^{k-1,\phi}(\Gamma_t)^{3 \times 3}}\Norm{\vec{n}_t}{C^{k-1,\phi}(\Gamma_t)^3}+\delta\Norm{q_t}{C^1(\Omega_t)^3}\right. \\
	&+ \left. C_{\delta}\Norm{q_t}{L^{1}(\Omega_t)^3}\right) +C_{f,g,V}+ C_g \Norm{ \Div_{\Gamma}(V)}{C^{k-1,\phi}(\Gamma_t)}.
	\end{align*}
	for any $ \delta>0 $ and an aggregated constant $ C> 0 $. In the following we choose $ \delta \in $ $(0, \frac{1}{C})$ to guarantee that $ 1-C\delta>0. $ Since $ \Norm{q_t}{C^1(\Omega_t)^3} \leq \Norm{q_t}{C^{k,\phi}(\Omega_t)^3} $ it follows that  
	\begin{align*}
	(1-C\delta)\Norm{q_t}{C^{k,\phi}(\Omega_t)^3} 
	\leq&  C\left(\Norm{f_{u_t}}{C^{k-2,\phi}(\Omega_t)} + c\Norm{G_{u_t}}{C^{k-1,\phi}(\Gamma_t)^{3 \times 3}}\Norm{\vec{n}_t}{C^{k-1,\phi}(\Gamma_t)^3} \right. \\
	&+ \left. C_{\delta}\Norm{q_t}{L^{1}(\Omega_t)^3}\right) 
	+C_{f,g,V}+C_g \Norm{ \Div_{\Gamma}(V)}{C^{k-1,\phi}(\Gamma_t)}.
	\end{align*}
	Herein $\Norm{q_t}{L^{1}(\Omega_t)}  $ can be replaced by $ \Norm{q_t}{H^{1}(\Omega_t)} $ since we can deduce from Hölder's inequality and $ \sqrt{|\Omega_t|}\leq C\, \forall t \in (-\epsilon,\epsilon) $ that
	\[ \Norm{q_t}{L^{1}(\Omega_t)} \leq \sqrt{\vert \Omega_t\vert} \Norm{q_t}{L^{2}(\Omega_t)} \leq C \Norm{q_t}{H^{1}(\Omega_t)}. \]
	Now we apply the same arguments as in the proof of Lemma \ref{Ex_Shape_Deriv_LinEl:Lem: uniform bounds u_t and u^t in H^1 } and achieve  
	$ \Norm{q^{t}}{H^{1}(\Omega,\R{3})} \leq c \Norm{L^{t}_{u^{t}}}{H^{1}_{D}(\Omega,\R{3})'} \leq \tilde{C} $
	because we proved in Lemma \ref{Lem: continuity L_neu}  that $ \Norm{L^{t}_{u^{t}}}{H^{1}_{D}(\Omega,\R{3})'} \leq C $ for some $ C>0 $ and any $t \in (-\epsilon,\epsilon) $.   Thus $  \Norm{q^{t}}{H^{1}(\Omega,\R{3})} \leq \tilde{C} $ and 
	\begin{align*}
	\Norm{q_t}{C^{k,\phi}(\Omega_t,\R{3})} 
	&\leq C_{f,g,V}+C_g \Norm{ \Div_{\Gamma}(V)}{C^{k,\phi}(\Gamma_t)} \\
	&~~~+ C\left(\Norm{f_{u_t}}{C^{k-2,\phi}(\Omega_t)^{3}} + c\Norm{G_{u_t}}{C^{k-1,\phi}(\Gamma_t)^3}\Norm{\vec{n}_t}{C^{k-1,\phi}(\Gamma_t)^3} + \tilde{C}_{\epsilon}\right).
	\end{align*}
	The the vector field
	$$ f_{u_t}= \Div((DV\sigma(u_t))^{\top} + \Div(V)\sigma(u_t)) + \Div(\dot{\sigma}(u_t)) $$ contains only derivatives of $ u_t$ and $ V $ which are at most of order two.
	
	The first part $ \Div(DV\sigma(u_t)+\Div(V)\sigma(u_t))  $ is a vector field with components consisting of sums over products of first and  second order partial derivatives of $ V $ and $ u_{t} $ and therefore their $ C^{k-2,\phi}(\Omega_t) $-Norms can be estimated by $ C \Norm{V}{C^{k,\phi}(\Omega_t)^3} \Norm{u_t}{C^{k,\phi}(\Omega_t)^3} $ whereas $\Div(\dot{\sigma}(u_t))$ contains only second order partial derivatives of $ u_t $ and thus $$ \Norm{\Div(\dot{\sigma}(u_t))}{C^{k-2,\phi}(\Omega_t)^3} \leq C \Norm{u_t}{C^{k,\phi}(\Omega_t)^3}.$$ Hence, 
	\[ \Norm{f_{u_t}}{C^{k-2,\phi}(\Omega_t)} \leq C\Norm{u_t}{C^{k,\phi}(\Omega_t)^3}(\Norm{V}{C^{k,\phi}(\Omega_t)^3} + 1)\leq CC_{u}(\Norm{V}{C^{k,\phi}(\Oext)^3} + 1)\]
	by Proposition \ref{Ex_Shape_Deriv_LinEl:Prop: properies of solutions u_t of DTP on Omega_t}. The argumentation for the boundedness of $ G_{u_t} $ is analogous.
	The outward normal vector field $ \vec{n}_{t} $ satisfies $$ \Norm{\vec{n}_t}{C^{k-1,\phi}(\Gamma_t)^3} \leq \Norm{\vec{n}_t}{C^{k,\phi}(\Gamma_t)^3}  \leq C$$ according to the proof of Proposition \ref{Ex_Shape_Deriv_LinEl:Prop: strong fomulation of the DTP for q_t}. Therefore and because the derivatives of $ V $ contained in $ \Div_{\Gamma}(V)=\Div(V)-\langle DV\vec{n}_{t},\vec{n}_{t}\rangle $ are only of first order, also $ \Div_{\Gamma}(V) $ is bounded in $ C^{k-2,\phi} $. Hence, $ \Norm{q_t}{C^{k,\phi}(\Omega_t)^3} \leq C_{q} $
	for $ C_{q}> 0 $ depending on $ (-\epsilon,\epsilon)\Subset I_V $.
\end{proof}

\begin{lem}\label{Ex_Shape_Deriv_LinEl:Lem: uniform bounds q^t}
	Let $ k\geq 2 $,  $ \Omega \in \mathcal{O}_{k+1}^{b} $, $ \Omega_{t}=\T{t}(\Omega), \, t \in I_V  $ for some admissible vector field $V \in  \mathcal{V}_{k+1}^{ad}(\Oext)$. Suppose that  $ f\in C^{k-1,\phi}(\overline{\Oext},\R{3}) $ and $ g\in C^{k,\phi}(\overline{\Oext},\R{3})$ for some $ \phi\in (0,1) $ and let $ u_{t} \in C^{k,\phi}(\O{t},\R{3}) $ be the unique solution of \eqref{Reliability:Eq:LinEl} on $ \O{t} $. Moreover let $ q_t \in C^{k,\phi}(\overline{ \Omega}_{t},\R{3}) $ be the unique solution of \eqref{Ex_Shape_Deriv_LinEl:Eq: PDE for q_t} on $ \Omega_t $.
	Under these assumptions the mapping $$ t \in (-\epsilon,\epsilon) \mapsto u^{t}=u_t \circ T_t \in C^{k,\phi}(\Omega,\R{3}) $$ is bounded in $ C^{k,\phi} $ and the mapping  $$ t \in  (-\epsilon,\epsilon)\to q^{t}=q_{t} \circ T_t \in C^{k,\phi}(\Omega,\R{3})  $$ is uniformly bounded in $ C^{k,\phi}(\Omega,\R{3}) $ for any interval $ (-\epsilon,\epsilon) \Subset I_{V} $.
\end{lem}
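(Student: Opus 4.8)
The plan is to derive this directly from the two uniform Schauder-type estimates already established on the moved domains, combined with the stability of Hölder norms under the coordinate changes $T_t$, mirroring the argument used in the proof of Lemma \ref{Ex_Shape_Deriv_LinEl:Lem: uniform bounds ut}.

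First I would check that the hypotheses of Proposition \ref{Ex_Shape_Deriv_LinEl:Prop: properies of solutions u_t of DTP on Omega_t} are met, since $f \in C^{k-1,\phi}(\overline{\Oext},\R{3}) \subset C^{k-2,\phi}(\overline{\Oext},\R{3})$, $g \in C^{k,\phi}(\overline{\Oext},\R{3}) \subset C^{k-1,\phi}(\overline{\Oext},\R{3})$, $\Omega \in \mathcal{O}_{k+1}^{b} \subset \mathcal{O}_{k}^{b}$ and $V \in \Vad{k+1}{\Oext} \subset \Vad{k}{\Oext}$; hence part iii) of that proposition yields a constant $C_u > 0$, independent of $t$, with $\Norm{u_t}{C^{k,\phi}(\Omega_t,\R{3})} \leq C_u$ for all $t \in (-\epsilon,\epsilon)$. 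Exactly the same data feed Proposition \ref{Ex_Shape_Deriv_LinEl:Prop: properies of solutions q_t of DTP 2 on Omega_t}, whose part ii) gives a constant $C_q > 0$, independent of $t$, with $\Norm{q_t}{C^{k,\phi}(\Omega_t,\R{3})} \leq C_q$ for all $t \in (-\epsilon,\epsilon)$. Both estimates already incorporate the uniform hemisphere property of $(\Omega_t)_t$ from Lemma \ref{Ex_Shape_Deriv_LinEl:Lem: uniform hemisphere property Omega_t}, the uniform cone property, and the uniform control of the ellipticity and minor constants; for $C_q$ it also uses the uniform $H^1$-bound on $q^t$ obtained inside the proof of Proposition \ref{Ex_Shape_Deriv_LinEl:Prop: properies of solutions q_t of DTP 2 on Omega_t}.

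Next, since $V \in \Vad{k+1}{\Oext}$ is fixed and $(-\epsilon,\epsilon) \Subset I_V$, the transformations satisfy $\sup_{t \in (-\epsilon,\epsilon)} \Norm{T_t}{C^{k+1}(\overline{\Oext},\R{3})} \leq C_{V,\epsilon}$ by \eqref{Transf_shape_opt:Eq: Bound TtV}, and the same uniform bound holds for the inverses $T_t^{-1} = T_{-t}[-V]$ by Lemma \ref{Transf_shape_opt:Lem: Flow_Properties_Tt}. Then the estimate (6.29) in \cite{GilbTrud} for the Hölder norm of a composition with a diffeomorphism (see also inequality (24) in \cite{BittGottsch}) provides a constant $C > 0$, depending only on $C_{V,\epsilon}$, $k$, $\phi$ and the fixed $\Omega$, such that
\[ \Norm{u^{t}}{C^{k,\phi}(\Omega,\R{3})} = \Norm{u_t \circ T_t}{C^{k,\phi}(\Omega,\R{3})} \leq C\,\Norm{u_t}{C^{k,\phi}(\Omega_t,\R{3})} \leq C\,C_u \]
and, in the same way, $\Norm{q^{t}}{C^{k,\phi}(\Omega,\R{3})} = \Norm{q_t \circ T_t}{C^{k,\phi}(\Omega,\R{3})} \leq C\,C_q$ for all $t \in (-\epsilon,\epsilon)$; this is precisely the asserted uniform boundedness of the two mappings.

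I do not expect a real obstacle here: the statement is essentially a corollary of the two preceding propositions. The only point that needs a little care is verifying that the constant in the composition estimate is genuinely uniform in $t$ — which is why one must invoke the uniform $C^{k+1}$-bounds on \emph{both} $T_t$ and $T_t^{-1}$ rather than work with a single fixed $t$ — and, in the $q$-case, keeping track that $C_q$ depends on $\epsilon$ only through the already-uniform bounds on $f_{u_t}$, $G_{u_t}$, $\vec{n}_t$ and $\Norm{q^t}{H^1(\Omega,\R{3})}$ assembled in Proposition \ref{Ex_Shape_Deriv_LinEl:Prop: properies of solutions q_t of DTP 2 on Omega_t}.
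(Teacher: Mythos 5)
Your proposal is correct and follows essentially the same route as the paper: the uniform $C^{k,\phi}(\Omega_t)$-bounds come from Proposition \ref{Ex_Shape_Deriv_LinEl:Prop: properies of solutions u_t of DTP on Omega_t} iii) and Proposition \ref{Ex_Shape_Deriv_LinEl:Prop: properies of solutions q_t of DTP 2 on Omega_t} ii), and the bounds are then transported to $u^{t}=u_t\circ T_t$ and $q^{t}=q_t\circ T_t$ via the composition estimate (6.29) of \cite{GilbTrud} together with the uniform $C^{k+1}$-control of $T_t$ on $(-\epsilon,\epsilon)\Subset I_V$, exactly as in the proof of Lemma \ref{Ex_Shape_Deriv_LinEl:Lem: uniform bounds ut}. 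Your extra care about the uniformity of the composition constant and the provenance of $C_q$ matches what the paper implicitly relies on.
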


\begin{proof}
	In the proof of Proposition \ref{Ex_Shape_Deriv_LinEl:Prop: properies of solutions q_t of DTP 2 on Omega_t} we showed that there exists a constant $ C_{q}> 0 $ independent of $ t \in 
	(-\epsilon,\epsilon) $ such that $$ \Norm{q_t}{C^{k,\phi}} \leq C_{q} $$ for any $ t \in (-\epsilon,\epsilon)$. Thus, the analogous arguments as used in the proof of Lemma \ref{Ex_Shape_Deriv_LinEl:Lem: uniform bounds ut} lead to
	\[ \Norm{q^{t}}{C^{k,\phi}(\Omega,\R{3})} = \Norm{q_{t} \circ \T{t}}{C^{k,\phi}(\Omega,\R{3})} \leq C \Norm{q_{t}}{C^{k,\phi}(\O{t},\R{3})}\leq C C_{q}=\tilde{C} \] 
	where $ \tilde{C} $ can be chosen uniformly in $ t \in (-\epsilon,\epsilon)$.
\end{proof}

\begin{thm}[H\"older Material Derivatives for Linear Elasticity]\label{Ex_Shape_Deriv_LinEl:Thm: q^t C^3,phi material derivative of u_t} $  $
	\\
	Let the assumptions of Lemma \ref{Ex_Shape_Deriv_LinEl:Lem: uniform bounds q^t} be satisfied. 
	Then, $ q^t=q_t \circ T_t \in C^{k,\phi}(\overline{ \Omega_t},\R{3}) $ is the strong $ C^{k,\varphi} $- material derivative of the unique solution $ u_t \in C^{k,\phi}(\overline{ \Omega_t},\R{3}) $ of \eqref{Ex_Shape_Deriv_LinEl:Eq: LinEl Omega_t} for any $ 0<\varphi<\phi $ and any $ t \in (-\epsilon,\epsilon) \Subset I_{V} $. At $ t=0 $ it satisfies the partial differential equation
	\begin{align}\label{Ex_Shape_Deriv_LinEl:Eq: PDE for dot_u} \tag{P2}
	\left. 
	\begin{array}{rcll}
	-\Div( \sigma(q)) &=&f_{V}+f_{u}   &\text{ in } \Omega \\
	q &=& 0  &\text{ on } \Gamma_{D}  \\
	\se(q) \vec{n} &=&g_{V}-G_{u}\vec{n} &\text{ on }\Gamma_{N}.
	\end{array}
	\right.
	\end{align}
	Further, the mapping $ t \in (-\epsilon,\epsilon) \to q^{t}=\dot{u}^{t}\in C^{k,\phi}(\overline{ \Omega_{t}},\R{3)} $ is continuous w.r.t. the strong $C^{k,\varphi} $-topology. 
\end{thm}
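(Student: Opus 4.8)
The plan is to combine the abstract transfer result (Theorem \ref{Parameter_Dep_PDE:Thm: Existence of Strong Derivatives u^t }) with the $H^1$-material derivative result (Theorem \ref{Ex_Shape_Deriv_LinEl:Thm: existence of strong H^1 material derivatives}) and the uniform Schauder bounds established just above. The three-space chain will be taken to be $X_3 = C^{k,\phi}(\overline{\Omega},\R{3})$, $X_2 = C^{k,\varphi}(\overline{\Omega},\R{3})$ for any fixed $0<\varphi<\phi$, and $X_1 = H^{1}_{D}(\Omega,\R{3})$ (or rather $L^2(\Omega,\R{3})$ equipped with its weak topology, to which $H^1$ embeds). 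First I would record that the inclusions $X_3 \hookrightarrow X_2 \hookrightarrow X_1$ hold: the first is the compact Hölder embedding $C^{k,\phi} \hookrightarrow\hookrightarrow C^{k,\varphi}$ (\cite[Lemma 6.36]{GilbTrud}), and the second follows since bounded Hölder functions on the bounded domain $\Omega$ are in $L^2$, with the $L^2$-convergence being weaker than $C^{k,\varphi}$-convergence.

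Next I would verify the three hypotheses of Theorem \ref{Parameter_Dep_PDE:Thm: Existence of Strong Derivatives u^t } for the map $u^{(\cdot)}: (-\epsilon,\epsilon) \to C^{k,\phi}(\overline{\Omega},\R{3})$, $t \mapsto u^t$. Hypothesis (i): by Theorem \ref{Ex_Shape_Deriv_LinEl:Thm: existence of strong H^1 material derivatives}, $u^{(\cdot)}$ is differentiable in the strong $H^1$-topology with derivative $\dot{u}^t = q^t$, and strong $H^1$-convergence implies weak $L^2$-convergence, so $u^{(\cdot)}$ is differentiable w.r.t. the (weak) $X_1$-topology. Hypothesis (ii): the map $t \mapsto q^t$ is strongly $H^1$-continuous by the same theorem (via Proposition \ref{Ex_Shape_Deriv_LinEl:Prop: existence weak solutions weak PDE for dot_u_t}), hence continuous w.r.t. the weak $X_1$-topology; and by Proposition \ref{Ex_Shape_Deriv_LinEl:Prop: strong fomulation of the DTP for q_t} the very same $q^t$ is the pullback of the $C^{k,\phi}$-solution of the elliptic system \eqref{Ex_Shape_Deriv_LinEl:Eq: PDE for q_t}, so it indeed takes values in $X_3 = C^{k,\phi}(\overline{\Omega},\R{3})$. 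Hypothesis (iii): the set $\{q^t : t \in (-\epsilon,\epsilon)\}$ is bounded in $C^{k,\phi}(\overline{\Omega},\R{3})$ by Lemma \ref{Ex_Shape_Deriv_LinEl:Lem: uniform bounds q^t} (the uniform Schauder estimate), and a bounded subset of $C^{k,\phi}$ is relatively compact in $C^{k,\varphi}$ for $\varphi<\phi$ by \cite[Lemma 6.36]{GilbTrud}; this gives relative compactness of the closure of $\dot{u}^{I}$ (taken in $X_3$) inside $X_2$.

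With all three hypotheses in place, Theorem \ref{Parameter_Dep_PDE:Thm: Existence of Strong Derivatives u^t } yields directly that $q^t$ is the derivative of $u^{(\cdot)}$ at each $t \in (-\epsilon,\epsilon)$ in the strong $C^{k,\varphi}$-topology, and that $t \mapsto q^t$ (viewed in $C^{k,\phi}$) is continuous w.r.t.\ the strong $C^{k,\varphi}$-topology — which is exactly the asserted Hölder material derivative statement and its continuity claim. The PDE \eqref{Ex_Shape_Deriv_LinEl:Eq: PDE for dot_u} at $t=0$ is simply the $t=0$ instance of the boundary value problem \eqref{Ex_Shape_Deriv_LinEl:Eq: PDE for q_t} identified in Proposition \ref{Ex_Shape_Deriv_LinEl:Prop: strong fomulation of the DTP for q_t}(i), so there is nothing further to prove there beyond citing that proposition. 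Finally, by Definition \ref{Transf_shape_opt:Def: Material_Derivative} the material derivative is exactly the Gâteaux derivative of $t \mapsto u_t \circ T_t$, so $\dot{u}^t = q^t$ in the sense of that definition, and it lives in $C^{k,\phi}(\overline{\Omega}_t,\R{3})$ because $q^t = q_t \circ T_t$ with $q_t$ the $C^{k,\phi}$-solution on $\Omega_t$ (and $T_t$ a $C^{k+1}$-diffeomorphism, cf.\ the bound (6.29) of \cite{GilbTrud} used in Lemma \ref{Ex_Shape_Deriv_LinEl:Lem: uniform bounds q^t}).

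The main obstacle is not any single hard estimate — those (uniform Schauder bounds, equicontinuity of $L^t$, $B^t$ and their derivatives, equicoercivity) have all been dispatched in the preceding sections — but rather the bookkeeping of keeping the weak/strong topologies straight across the three-space chain and making sure the derivative object produced abstractly in $X_1$ (originally known only weakly, then strongly in $H^1$) is literally the same function $q^t$ whose $C^{k,\phi}$-regularity comes from elliptic regularity theory; this identification rests crucially on the equivalence in Proposition \ref{Ex_Shape_Deriv_LinEl:Prop: strong fomulation of the DTP for q_t}(ii) between the weak equation \eqref{Ex_Shape_Deriv_LinEl:Eq: Weak equation for dot_u_t} and the strong system \eqref{Ex_Shape_Deriv_LinEl:Eq: PDE for q_t}, together with uniqueness of $H^1$-solutions. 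I would make that identification explicit before invoking the topological transfer theorem, since it is the conceptual crux of the whole chapter.
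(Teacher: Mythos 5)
Your proposal is correct and follows essentially the same route as the paper's proof: the chain $X_1=H^{1}_{D}(\Omega,\R{3})$, $X_2=C^{k,\varphi}(\overline{\Omega},\R{3})$, $X_3=C^{k,\phi}(\overline{\Omega},\R{3})$, with hypothesis (i) supplied by Theorem \ref{Ex_Shape_Deriv_LinEl:Thm: existence of strong H^1 material derivatives}, (ii) by Proposition \ref{Ex_Shape_Deriv_LinEl:Prop: strong fomulation of the DTP for q_t} together with the $H^1$-continuity of $t\mapsto q^t$, and (iii) by the uniform bounds of Lemmas \ref{Ex_Shape_Deriv_LinEl:Lem: uniform bounds ut} and \ref{Ex_Shape_Deriv_LinEl:Lem: uniform bounds q^t} plus the compact Hölder embedding, before invoking Theorem \ref{Parameter_Dep_PDE:Thm: Existence of Strong Derivatives u^t }. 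Your explicit emphasis on identifying the abstract $H^1$-derivative with the elliptic-regularity solution $q^t$ via Proposition \ref{Ex_Shape_Deriv_LinEl:Prop: strong fomulation of the DTP for q_t}(ii) is exactly the identification the paper also relies on.
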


\begin{proof}
	We have to retrace the conditions $ i) -iii)$ of Theorem \ref{Parameter_Dep_PDE:Thm: Existence of Strong Derivatives u^t }:
	We choose $X_{1}=H^{1}_{D}(\Omega,\R{3})  $, $ X_{2}=C^{k,\varphi}(\Omega,\R{3}) $, $ X_{3}=C^{k,\phi}(\Omega,\R{3}) $, the associated strong norm topologies and  $0\leq\varphi<\phi<1 $. 
	\begin{itemize}
		\item[i)] Proposition \ref{Ex_Shape_Deriv_LinEl:Prop: properies of solutions u_t of DTP on Omega_t} shows the existence of unique $ C^{k,\phi} $-solutions $ u_t $ of \eqref{Reliability:Eq:LinEl} on $ \Omega_t $ and $ u^{t}=u_t \circ T_t $ solves \eqref{Ex_Shape_Deriv_LinEl:Eq: B^t(u^t,v)=L^t(v)}, see \eqref{Ex_Shape_Deriv_LinEl:Eq: equivalence of B^t(u^t,v)=L^t(v) and B_t(u_t,v)=L_t(v)} or Proposition \ref{Ex_Shape_Deriv_LinEl:Prop: Link between u_t ant u^t}.
		
		In Theorem \ref{Ex_Shape_Deriv_LinEl:Thm: existence of strong H^1 material derivatives} we have shown that $(-\epsilon,\epsilon) \to  C^{k,\phi}(\overline{ \Omega},\R{3}) $ is differentiable regarding the strong  $ H^{1}_{D}(\Omega,\R{3}) $-topology such that the solution $ q^t $ of \eqref{Ex_Shape_Deriv_LinEl:Eq: Weak equation for dot_u_t}  is the strong $ H^{1} $ material derivative of $ u_t $ at any $ t \in (-\epsilon,\epsilon)$.
		\item[ii)]In Proposition \ref{Ex_Shape_Deriv_LinEl:Prop: properies of solutions q_t of DTP 2 on Omega_t} we proved the existence of unique solutions $ q_{t} \in C^{k,\phi} $ of 
		\eqref{Ex_Shape_Deriv_LinEl:Eq: PDE for q_t}
		such that $ q^{t}=q_t \circ T_t $ uniquely solves the associated weak formulation \eqref{Ex_Shape_Deriv_LinEl:Eq: Weak equation for dot_u_t} (see the proof of Proposition \ref{Ex_Shape_Deriv_LinEl:Prop: properies of solutions q_t of DTP 2 on Omega_t}). Moreover, Theorem \ref{Ex_Shape_Deriv_LinEl:Thm: existence of strong H^1 material derivatives} assures that $ t \in (-\epsilon,\epsilon) \to q^{t}=\dot{u}^{t}\in C^{k,\phi} $ is continuous regarding the strong $ H^{1}_{D}(\Omega,\R{3})  $-topology. 
		\item[iii)] Finally $ t \to q^t $ and $ t \to u^t $ are uniformly bounded in $ C^{k,\phi'}(\Omega,\R{3}) $, see Lemma \ref{Ex_Shape_Deriv_LinEl:Lem: uniform bounds ut} and Lemma \ref{Ex_Shape_Deriv_LinEl:Lem: uniform bounds q^t}, and finally
		the argumentation in Theorem \ref{Linear_Elasticity:Thm:LinEl_Classical_Sol} (see also \cite{GilbTrud}) shows that the unit sphere (and the image of $ q^{t} $) in $ C^{k,\phi'} $ is relatively compact in $ C^{k,\phi} $. 
	\end{itemize}
	Thus $ (-\epsilon,\epsilon) \to C^{k,\phi}(\Omega,\R{3}), \,t \mapsto u^{t} $ and $  (-\epsilon,\epsilon) \to C^{k,\phi}(\Omega,\R{3}), \,t \mapsto q^{t} $ together with $X_{1}=H^{1}_{D}(\Omega,\R{3})  $ , $ X_{2}=C^{k,\varphi}(\Omega,\R{3}) $, $ X_{3}=C^{k,\phi}(\Omega,\R{3}) $ satisfy the hypotheses of  Theorem \ref{Parameter_Dep_PDE:Thm: Existence of Strong Derivatives u^t } and thus $ \dot{u}^{t} =q^{t} $ w.r.t. the norm topology on $ C^{k,\varphi}(\Omega,\R{3}) $. Additionally, Theorem  \ref{Parameter_Dep_PDE:Thm: Existence of Strong Derivatives u^t } shows the continuity of $ t \in (-\epsilon,\epsilon) \to \dot{u}^{t} \in C^{k,\phi}(\Omega,\R{3}) $ w.r.t. the strong norm topology on $ X_2= C^{k,\varphi}(\Omega,\R{3}) $.
\end{proof}

\begin{cor}\label{Ex_Shape_Deriv_LinEl:Cor: Fréchet diff u^t}
	Let the hypothesis of Theorem \ref{Ex_Shape_Deriv_LinEl:Thm: q^t C^3,phi material derivative of u_t} be satisfied. Then the mapping $ t \in (-\epsilon,\epsilon) \to u^{t}=u_t \circ T_t \in C^{k,\phi}(\overline{ \Omega},\R{3}) $ is Gâteaux differentiable in $ C^{k,\varphi} $ for any subinterval $ (-\epsilon,\epsilon) \Subset  I_{V} $ and the mapping $ t \in (-\epsilon,\epsilon) \to \dot{u}^{t}\in C^{k,\phi}(\overline{\Omega},\R{3})  $ is continuous. This means that $ t \in (-\epsilon,\epsilon) \to u^{t}\in C^{k,\phi}(\overline{\Omega},\R{3}) $ is Fréchet differentiable in $ C^{k,\varphi}(\overline{\Omega},\R{3})  $.
\end{cor}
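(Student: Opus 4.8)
The plan is to read the assertion off directly from Theorem \ref{Ex_Shape_Deriv_LinEl:Thm: q^t C^3,phi material derivative of u_t} by reformulating the notion of a "strong $C^{k,\varphi}$-material derivative" in the language of Gâteaux and Fréchet differentials of Chapter \ref{Diff_Banach_Space}. The only delicate point is topological bookkeeping: although the derivative $q^{t}=q_{t}\circ T_{t}$ enjoys the better regularity $C^{k,\phi}$, both its differential characterisation and the continuity of $t\mapsto q^{t}$ are only available in the weaker topology of $X:=C^{k,\varphi}(\overline{\Omega},\R{3})$ with $0<\varphi<\phi<1$; so one must fix once and for all the Banach space in which $t\mapsto u^{t}$ is regarded as a curve, namely $X$, and use the continuous (indeed compact) embedding $C^{k,\phi}(\overline{\Omega},\R{3})\hookrightarrow C^{k,\varphi}(\overline{\Omega},\R{3})$ to transport the $C^{k,\phi}$-valued statements about $q^{t}$ into $X$.

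First I would fix $(-\epsilon,\epsilon)\Subset I_{V}$ as in Theorem \ref{Ex_Shape_Deriv_LinEl:Thm: q^t C^3,phi material derivative of u_t}. By that theorem $q^{t}\in C^{k,\phi}(\overline{\Omega},\R{3})$ is the strong $C^{k,\varphi}$-material derivative of $u_{t}$, i.e. by Definition \ref{Transf_shape_opt:Def: Material_Derivative} the limit
\[
\lim_{s\to t}\frac{1}{s-t}\bigl(u^{s}-u^{t}\bigr)=q^{t}
\]
holds in the strong norm topology on $X$ for every $t\in(-\epsilon,\epsilon)$; this is precisely differentiability of $t\mapsto u^{t}$ with respect to the strong topology on $X$. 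Hence Lemma \ref{Diff_Banach_Space:Lem:Gateaux/Frechet_Diff_u^t} i) applies and yields that $t\in(-\epsilon,\epsilon)\mapsto u^{t}\in X$ is Gâteaux differentiable, with Gâteaux differential the multiplication operator $D^{g}u^{(\cdot)}(t)[\alpha]=\alpha\,\dot{u}^{t}=\alpha\,q^{t}$, $\alpha\in\R{}$.

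Next I would invoke the second half of Theorem \ref{Ex_Shape_Deriv_LinEl:Thm: q^t C^3,phi material derivative of u_t}, which states that $t\in(-\epsilon,\epsilon)\mapsto q^{t}=\dot{u}^{t}\in C^{k,\phi}(\overline{\Omega},\R{3})$ is continuous with respect to the strong $C^{k,\varphi}$-topology; post-composing with the embedding $C^{k,\phi}\hookrightarrow C^{k,\varphi}$, this says exactly that $t\mapsto\dot{u}^{t}\in X$ is strongly continuous. Thus the hypotheses of Lemma \ref{Diff_Banach_Space:Lem:Gateaux/Frechet_Diff_u^t} ii) are met for $f=u^{(\cdot)}$ and the space $X=C^{k,\varphi}(\overline{\Omega},\R{3})$, and we conclude that $t\in(-\epsilon,\epsilon)\mapsto u^{t}$ is Fréchet differentiable in $C^{k,\varphi}(\overline{\Omega},\R{3})$ with $Du^{(\cdot)}=D^{g}u^{(\cdot)}$, which is the claim. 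I expect no substantial obstacle beyond this repackaging, since the hard analytic work --- the Hilbert-space sensitivity analysis of Chapter \ref{Parameter_Dep_PDE}, the uniform Schauder estimates, and the compact-embedding bootstrap of Theorem \ref{Parameter_Dep_PDE:Thm: Existence of Strong Derivatives u^t } --- is already packaged in Theorem \ref{Ex_Shape_Deriv_LinEl:Thm: q^t C^3,phi material derivative of u_t}; the one thing to watch is not to overclaim, as the continuity of $t\mapsto q^{t}$ cannot be improved from the $C^{k,\varphi}$- to the $C^{k,\phi}$-topology, so Fréchet differentiability is genuinely obtained only in $C^{k,\varphi}(\overline{\Omega},\R{3})$.
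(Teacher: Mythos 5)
Your proposal is correct and follows exactly the route the paper intends: Theorem \ref{Ex_Shape_Deriv_LinEl:Thm: q^t C^3,phi material derivative of u_t} gives differentiability of $t\mapsto u^{t}$ with respect to the strong topology on $C^{k,\varphi}(\overline{\Omega},\R{3})$ together with strong $C^{k,\varphi}$-continuity of $t\mapsto\dot{u}^{t}=q^{t}$, and Lemma \ref{Diff_Banach_Space:Lem:Gateaux/Frechet_Diff_u^t} i)--ii) converts this into Gâteaux and then Fréchet differentiability in $C^{k,\varphi}(\overline{\Omega},\R{3})$. Your caution that the continuity (and hence the Fréchet statement) is only available in the $C^{k,\varphi}$-topology, not in $C^{k,\phi}$, is precisely the right reading of the theorem.
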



\pagebreak

\begin{rem} The regularity of the boundary restricts the regularity of the PDE solution in a natural way. Thus,
	if $ \Omega \in \mathcal{O}_{k+2}^{b} $ is assumed in 
	\ref{Ex_Shape_Deriv_LinEl:Thm: q^t C^3,phi material derivative of u_t} instead of $ \Omega \in \mathcal{O}_{k+1}^{b} $, then $ u_{t} \in C^{k+1,\phi}(\overline{\Omega}_{t},\R{3}) $, but still $ q_{t}\in C^{k,\phi}(\overline{\Omega}_{t},\R{3}) $. Hence, it is also natural that the material derivative of a second order elliptic PDE under consideration has one degree less regularity then the solution itself. 
\end{rem}

\section{Local shape derivatives in H\"older spaces}

Besides the existence of material derivatives in Hölder spaces we can also prove existence of local shape derivatives in Hölder spaces provided that the boundary of $ \Omega $ and the input data $ f $ and $ g $ are smooth enough: 

\begin{thm}\label{Ex_Shape_Deriv_LinEl:Thm: Shape derivatives for linear elasticity}
	Suppose that $ k\geq 2 $, $ \Omega \in \mathcal{O}_{k+1}^{b} $, $ V\in V^{ad}_{k+1}(\Oext) $, $ f \in C^{k-1,\phi}(\overline{\Omega},\R{3}) $ and $ g \in C^{k,\phi}(\Gamma,\R{3})  $ such that $ f'(\Omega;V) \in C^{k-2,\phi}(\overline{\Omega},\R{3}) $ and $ g'(\Gamma;V)\in C^{k-1,\phi}(\Gamma,\R{3}) $. Let $ u(\Omega)\in C^{k,\phi}(\overline{\Omega},\R{3}) $ be the unique solution of \eqref{Reliability:Eq:LinEl}. Then the (local) shape derivative $ u'(\Omega;V) = \dot{u}(\Omega;V) - DuDV $ is an element of $ C^{k-1,\phi}(\overline{ \Omega},\R{3}) $.
	
	\begin{itemize}
		\item[i)] If $ k\geq 3 $, then $ u'(\Omega;V) $ satisfies 	
		\begin{align}\label{Ex_Shape_Deriv_LinEl:Eq: PDE for u' g(Gamma), f(Omega)} \tag{P3-0}
		\left. 
		\begin{array}{rcll}
		\Div( \se(u'(\Omega;V))) &=&f'(\Omega;V)   &\text{ in } \Omega \\
		u'(\Omega;V) &=& -V_{\vec{n}} Du(\Omega)\, \vec{n}  &\text{ on } \Gamma_{D} \\
		\se(u'(\Omega;V)) \,\vec{n} &=&(f(\Omega)+\kappa g(\Gamma))V_{\vec{n}} +g'(\Gamma;V) &\text{ on }\Gamma_{N}\\
		& &~~~ + \Div_{\Gamma}(V_{\vec{n}}\sigma_{\Gamma}(u(\Omega)))
		\end{array} 
		\right.
		\end{align}
		where $ \sigma_{\Gamma}(u)=\sigma(u) -  \sigma(u)\vec{n}\vec{n}^{\top}$
		is the tangential proportion of $ \sigma(u) $ and $ V_{\vec{n}}= \langle V,\vec{n} \rangle $. 
		
		\noindent If $ k=2 $, then $ u' \in C^{1,\phi}(\Omega,\R{3})$ is a weak solution of \eqref{Ex_Shape_Deriv_LinEl:Eq: PDE for u' g(Gamma), f(Omega)} but still satisfies 
		\begin{align*}
		\left. 
		\begin{array}{rcll}
		\se(u'(\Omega;V)) \,\vec{n} &=&(f(\Omega)+\kappa g(\Gamma))V_{\vec{n}} +g'(\Gamma;V) + \Div_{\partial\Omega}(V_{\vec{n}}\sigma_{\Gamma}(u(\Omega))) &\text{ on }\Gamma_{N}\\
		u'(\Omega;V) &=& -V_{\vec{n}} Du(\Omega)\, \vec{n}  &\text{ on } \Gamma_{D}
		\end{array} \right.
		\end{align*}
	\end{itemize}
\end{thm}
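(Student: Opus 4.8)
The plan is to derive the boundary value problem \eqref{Ex_Shape_Deriv_LinEl:Eq: PDE for u' g(Gamma), f(Omega)} for the local shape derivative $u'(\Omega;V)$ from the already-established material derivative result, Theorem \ref{Ex_Shape_Deriv_LinEl:Thm: q^t C^3,phi material derivative of u_t}, together with the calculus rules for shape derivatives from Chapter \ref{Transf_shape_opt}. The regularity claim $u'(\Omega;V)\in C^{k-1,\phi}(\overline{\Omega},\R{3})$ is the easiest part: by Theorem \ref{Ex_Shape_Deriv_LinEl:Thm: q^t C^3,phi material derivative of u_t} we have $\dot u(\Omega;V)=q\in C^{k,\phi}(\overline{\Omega},\R{3})$, and since $u(\Omega)\in C^{k,\phi}(\overline{\Omega},\R{3})$ while $V\in C^{k+1}(\overline{\Oext},\R{3})$, the product $Du(\Omega)V$ lies in $C^{k-1,\phi}(\overline{\Omega},\R{3})$; hence $u'=\dot u-Du\,V\in C^{k-1,\phi}(\overline{\Omega},\R{3})$. (The statement writes $DuDV$ but by Definition \ref{Transf_shape_opt:Def: Shape_Derivative} the local shape derivative is $\dot u - Du(\Omega)V$, so I will use that form.)

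For the PDE itself I would start from \eqref{Ex_Shape_Deriv_LinEl:Eq: PDE for dot_u}, the equation satisfied by $q=\dot u(\Omega;V)$ at $t=0$, and substitute $\dot u = u' + Du\,V$ everywhere, then simplify term by term. The volume equation: $-\Div(\sigma(\dot u)) = -\Div(\sigma(u' + Du\,V))$; expanding $\sigma$ linearly and using Corollary \ref{Transf_shape_opt:Lem: dot(div), dot(sigma)} (which gives $\sigma(u)\dot{} = \sigma(\dot u) - (DuDV)^\sigma$ and the commutation $(Du)' = D(u')$ from Lemma \ref{Transf_shape_opt:Lem: Properties u'}) together with the expressions for $f_V$ and $f_u$ from Proposition \ref{Ex_Shape_Deriv_LinEl:Prop: strong fomulation of the DTP for q_t}, the shape-differentiated field-theoretic terms should cancel against $f_u$ and the leftover should be exactly $\Div(\sigma(u')) = f'(\Omega;V)$. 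This is where I expect most of the bookkeeping: one must carefully track that $f_{u}= \Div[DV\sigma(u)+\dot\sigma(u)+\Div(V)\sigma(u)]$ combines with $-\Div(\sigma(Du\,V))$ to produce, after applying the product/chain rules for shape derivatives (Lemma \ref{Transf_shape_opt:Lem: Product and Chain rule for shape derivatives}), precisely the shape derivative $(f(\Omega))' = f'(\Omega;V)$ of the right-hand side — since $-\Div(\sigma(u))=f$ identically on $\Omega_t$, differentiating this identity via Reynolds/shape-derivative calculus must yield the $u'$-equation.

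The Dirichlet condition is short: $u(\Omega_t)=0$ on $\Gamma_{D,t}$ for all $t$ means $\dot u = 0$ on $\Gamma_D$ in the material sense, and Lemma \ref{Transf_shape_opt:Lem: Shape derivatives C^l} i) (or its computation $y'=u'|_\Gamma + \frac{\partial u}{\partial\vec n}V_{\vec n}$) applied to $y(\Gamma_t)=u(\Omega_t)|_{\Gamma_{D,t}}\equiv 0$ gives $u'(\Omega;V)|_{\Gamma_D} = -V_{\vec n}\,Du(\Omega)\vec n$. The Neumann condition is the hard part: one differentiates the identity $\sigma(u(\Omega_t))\vec n_t = g$ on $\Gamma_{N,t}$ using the surface Reynolds transport theorem (Lemma \ref{Transf_shape_opt:Lem:Reynolds Theorem Surface}), the tangential Stokes formula (Theorem \ref{Transf_shape_opt:Thm: Tangential Stokes Vector}, Corollary \ref{Transf_shape_opt:Cor: Tangential Stokes Matrix}) to handle $\Div_\Gamma$ terms, the formula $\dot{\vec n} = -D_\Gamma V^\top \vec n$ from Corollary \ref{Transf_shape_opt:Lem: dot(div), dot(sigma)}, and the boundary shape-derivative rule for $g$; the curvature $\kappa$ enters through $\dot\omega_0 = \Div_\Gamma(V)$ and the mean-curvature identity. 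Reassembling gives the term $(f+\kappa g)V_{\vec n} + g'(\Gamma;V) + \Div_\Gamma(V_{\vec n}\sigma_\Gamma(u))$, where $\sigma_\Gamma(u) = \sigma(u) - \sigma(u)\vec n\vec n^\top$ is the tangential part. The obstacle here is isolating the tangential divergence term correctly — this requires decomposing $G_u\vec n$ from Proposition \ref{Ex_Shape_Deriv_LinEl:Prop: strong fomulation of the DTP for q_t} into normal and tangential contributions and recognizing the tangential part as a surface divergence. Finally, for the case $k=2$ one loses a derivative so $u'\in C^{1,\phi}$ only, and the volume equation holds merely weakly (the data $f'(\Omega;V)\in C^{0,\phi}$ forces a weak $C^{1,\phi}$ solution via Theorem \ref{Linear_Elasticity:Thm:LinEl_Classical_Sol} with $k=0$ rather than a classical one), while the boundary conditions — involving only traces and at most first derivatives — still hold in the classical (pointwise) sense; I would note this as a direct consequence of the regularity count already carried out.
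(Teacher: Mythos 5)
Your route is viable but genuinely different from the paper's. The paper does not substitute $\dot u = u' + Du\,V$ into the strong equation \eqref{Ex_Shape_Deriv_LinEl:Eq: PDE for dot_u} at all: it differentiates the \emph{weak} formulation $B(u,v)=L(v)$ with shape-independent test functions $v$ (so $v'=0$), uses Lemma \ref{Transf_shape_opt:Lem: Properties u'} to get $(\sigma(u):\varepsilon(v))'=\sigma(u'):\varepsilon(v)$ together with the Reynolds formulas of Lemmas \ref{Transf_shape_opt:Lem:Reynolds Theorem Volume} and \ref{Transf_shape_opt:Lem:Reynolds Theorem Surface}, then restricts to test functions with $Dv\,\vec n=0$ on $\Gamma_N$ so that the term $\int_{\Gamma_N}\sigma(u):\varepsilon(v)\,V_{\vec n}\,dS$ can be integrated by parts tangentially, which is exactly where $\Div_{\Gamma}(V_{\vec n}\sigma_{\Gamma}(u))$ and the curvature term appear; the Dirichlet condition is obtained separately by shape-differentiating $\int_{\Gamma_D}\langle u,v\rangle\,dS=0$, and the strong form follows for $k\ge 3$ because $u'\in C^{k-1,\phi}$, while for $k=2$ one invokes the fundamental lemma of variational calculus. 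Your plan (bookkeeping in the strong equation for $q=\dot u$, plus pointwise differentiation of the boundary identities) is essentially the derivation the paper attributes to Eppler in the remark following Corollary \ref{Ex_Shape_Deriv_LinEl:Cor: Shape derivatives for linear elasticity}; it buys a derivation that never touches test functions, but at the price of heavier algebra and of needing the identity $\Div_{\Gamma}(\sigma_{\Gamma}(u))=\Div(\sigma(u))-\kappa\,\sigma(u)\vec n-D(\sigma(u))[\vec n]\vec n$ to convert the resulting Neumann expression into the stated one, whereas the paper's weak-form route produces the stated form directly.

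Two caveats you should repair if you carry this out. First, Proposition \ref{Ex_Shape_Deriv_LinEl:Prop: strong fomulation of the DTP for q_t} was derived for ambient, shape-independent data, where $\dot f=Df\,V$ and $\dot g=Dg\,V$; the theorem allows general $f(\Omega)$, $g(\Gamma)$ with prescribed $f'(\Omega;V)$, $g'(\Gamma;V)$, so you must first replace $f_V$, $g_V$ by $\dot f=f'+Df\,V$ and $\dot g=g'+D_{\Gamma}g\,V+\tfrac{\partial g}{\partial\vec n}V_{\vec n}$ (Lemma \ref{Transf_shape_opt:Lem: Product and Chain rule for shape derivatives}) before substituting. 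Second, in a pointwise differentiation of $\sigma(u_t)\vec n_t=g$ the curvature does \emph{not} enter through $\dot\omega_0=\Div_{\Gamma}(V)$ — the surface Jacobian $\omega_t$ only appears under integrals — but through the identity above (equivalently through the paper's tangential Stokes formula if you argue weakly); as written, that step of your sketch conflates the two mechanisms. Your regularity argument and the $k=2$ conclusion are otherwise consistent with the paper, though the reason the interior equation is only weak for $k=2$ is simply that $u'\in C^{1,\phi}$ lacks the second derivatives needed to evaluate $\Div(\sigma(u'))$ classically, not a failure of Theorem \ref{Linear_Elasticity:Thm:LinEl_Classical_Sol}.
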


\begin{proof}
	Recall the displacement-traction problem \eqref{Reliability:Eq:LinEl}
	with it's unique $ C^{k,\phi} $-solution $ u $. Taking an arbitrary function $ v \in C^{\infty}( \Omega) $  such that $ v\vert_{\Gamma_D}=0 $ the weak reformulation of \eqref{Reliability:Eq:LinEl}  is given by \eqref{Linear_Elasticity:Eq: B(u,v)=L(v)}.
	We already know that the shape derivative $ u' $ exists in $ C^{k,\phi} $ and thus by Lemma \ref{Transf_shape_opt:Lem:Reynolds Theorem Surface} and Lemma 
	\ref{Transf_shape_opt:Lem:Reynolds Theorem Volume}
	\begin{align*}
	\int_{\Omega} (\sigma(u):\varepsilon(v))'& \, dx + \int_{\Gamma_{N}}\sigma(u):\varepsilon(v) V_{\vec{n}} \, dS  \\  
	= & \int_{\Omega} \langle f, v\rangle' \, dx + \int_{\Gamma_{N}}\langle  f, v\rangle V_{\vec{n}}  \, dS  
	+ \int_{\Gamma_{N}} \langle g, v\rangle' \, dS + \int_{\Gamma_{N}}  \langle \kappa g, v\rangle  V_{\vec{n}}\, dS .	\end{align*} 
	Since $ v=v(\Omega):=v\vert_{\overline{\Omega}} $, we conclude that $v'=0 $ on $ \overline{\Omega} $, see Lemma \ref{Transf_shape_opt:Lem: Shape derivatives C^l} . Hence, Lemma \ref{Transf_shape_opt:Lem: Product and Chain rule for shape derivatives} implies
	\begin{align*}
	\int_{\Omega} (\sigma(u):\varepsilon(v))' \, dx  
	= \int_{\Omega} \langle f', v\rangle \, dx + \int_{\Gamma_{N}} \hspace*{-1ex} \langle  f V_{\vec{n}}+g'+\kappa gV_{\vec{n}}, v\rangle -\sigma(u):\varepsilon(v) V_{\vec{n}} \, dS \,. 
	\end{align*} 
	By Lemma \ref{Transf_shape_opt:Lem: Properties u'} and $ v'=0 $  iii) follows
	$ (\sigma(u):\varepsilon(v))'   =   \sigma(u'):\varepsilon(v)$
	and hence
	\begin{align*}
	\int_{\Omega}  \sigma(u'):\varepsilon(v) \, dx 
	= \int_{\Omega} \langle f', v\rangle \, dx + \int_{\Gamma_{N}} \hspace*{-1ex} \langle  f V_{\vec{n}}+g'+\kappa gV_{\vec{n}}, v\rangle -\tr(\sigma(u)\varepsilon(v)) V_{\vec{n}} \, dS .  
	\end{align*} 
	Now assume $ Dv\, \vec{n}=0 $ on $ \Gamma_N $. As a direct consequence we can replace $ Dv $ by $ D_{\Gamma}v $ on $ \Gamma_D $ and write $ \sigma(u):\varepsilon(v) = \sigma(u):Dv= \sigma(u):D_{\Gamma}v$. This allows us to integrate the term $ \sigma(u):\varepsilon(v) =\tr(\sigma(u)\varepsilon(u)) $ by parts on $ \Gamma_N $, i.e.
	\begin{align*}
	\int_{\Gamma_N}  \sigma(u):\varepsilon(v) V_{\vec{n}} \, dS 
	&= \int_{\Gamma_N}  \tr(V_{\vec{n}}\sigma(u)D_{\Gamma}v) \, dS 
	= \int_{\Gamma_N} \hspace*{-2mm}- \Div_{\Gamma}(V_{\vec{n}} \sigma(u))v + \kappa \langle V_{\vec{n}} \sigma(u)\vec{n} , v \rangle  \, dS.
	\end{align*}
	Thus,
	\begin{align*}
	\int_{\Omega}   \sigma(u'):\varepsilon(v)\, dx 
	=  & \int_{\Omega} \langle f', v\rangle \, dx + \int_{\Gamma_{N}} \hspace*{-1ex} \langle  f V_{\vec{n}}+g'+\kappa gV_{\vec{n}}, v\rangle -    \kappa\langle V_{\vec{n}} \sigma(u)\vec{n} , v \rangle  dS \\
	&+\int_{\Gamma_N}  \Div_{\Gamma}(V_{\vec{n}}  \sigma(u))v\, dS
	\\
	= &  \int_{\Omega} \langle f', v\rangle \, dx + \int_{\Gamma_{N}} \hspace*{-1ex} \langle  f V_{\vec{n}}+g'+\kappa gV_{\vec{n}}, v\rangle-    \kappa\langle V_{\vec{n}} \sigma(u)\vec{n} , v \rangle  dS \\  
	&+ \int_{\Gamma_N}   \langle \Div_{\Gamma}(V_{\vec{n}}  \sigma_{\Gamma}(u)) + \kappa  V_{\vec{n}} \sigma(u) \vec{n} , v \rangle  dS  
	\\
	= &  \int_{\Gamma_{N}} \hspace*{-1ex} \langle  f V_{\vec{n}}+g'+\kappa gV_{\vec{n}}, v\rangle+ \langle\Div_{\Gamma}(V_{\vec{n}}  \sigma_{\Gamma}(u)), v \rangle dS  +\int_{\Omega} \langle f', v\rangle \, dx
	\end{align*} 
	or equivalently 
	\begin{align*}
	\int_{\Omega}  &-\Div(\sigma(u'))v \, dx + \int_{\Gamma_N} \langle \sigma(u') \vec{n},v\rangle \, dS  \\
	&=   \int_{\Gamma_{N}} \hspace*{-1ex} \langle  f V_{\vec{n}}+g'+\kappa gV_{\vec{n}}, v\rangle\, dS + \int_{\Gamma_{N}} \hspace*{-1ex} \langle\Div_{\partial\Omega}(V_{\vec{n}}  \sigma_{\Gamma}(u)), v \rangle \, dS + \int_{\Omega} \langle f', v\rangle \, dx.
	\end{align*}
	Since we assume $ v=0  $ on $ \Gamma_{D}$ we have to consider the equations $$ u_t=0 \text{ on } \Gamma_{D,t} $$ seperately: We set \[ J(\Omega):= \int_{\Gamma_D} \langle u,v\rangle \, dS =0\] and choose an arbitrary $ v \in C^{\infty}_{0}(\R{n},\R{n}) $ with $ \frac{\partial v}{\partial \vec{n}}=0 $ on $ \Gamma_D $. With $ y(\Gamma_D):= \langle u,v\rangle'\vert_{\Gamma_D}$ and $ u=0 $ on $ \Gamma_D $ we can deduce the following identity from 
	Lemma \ref{Transf_shape_opt:Lem:Reynolds Theorem Volume} and \ref{Transf_shape_opt:Lem: Product and Chain rule for shape derivatives}:
	\begin{align*}
	\frac{d}{dt}J(\Omega_t)\vert_{t=0}
	0&=	\int_{\Gamma_D} \langle u,v\rangle'\vert_{\Gamma_D}+ \left(\frac{\partial }{\partial \vec{n}} (uv) + \kappa uv\right) V_{\vec{n}}  \, dS  \\
	&=	\int_{\Gamma_D} \langle u',v\rangle + \langle u,v'\rangle + \left(\langle Du \, \vec{n},v \rangle + \langle Dv \,\vec{n},u \rangle+\kappa uv \right)V_{\vec{n}} \, dS\\
	&=	\int_{\Gamma_D} \langle u',v\rangle + \left(\langle Du \, \vec{n},v \rangle 
	+\kappa uv \right)V_{\vec{n}} \, dS\\
	&= \int_{\Gamma_D} \langle u',v\rangle + \langle Du \, \vec{n},v \rangle V_{\vec{n}}\, dS .
	\end{align*} 
	Thus, in strong formulation
	$$ u' =-V_{\vec{n}} \, Du \, \vec{n}   ~~~   \text{ on } \Gamma_D.$$
	Thus $ u' $ is a weak solution and an element of $ C^{k-1,\phi}(\Omega,\R{3}) $, $ k-1 \geq 2 $ which implies that $ u' $ already is a strong solution of \eqref{Ex_Shape_Deriv_LinEl:Eq: PDE for u' g(Gamma), f(Omega)}.
	
	\noindent For $ k=2 $ the assertion follows from the fundamental lemma of variational calculus.
\end{proof}

\begin{rem}
	i) $u'(\Omega;V)=-V_{\vec{n}} \, Du \, \vec{n}$ on $ \Gamma_D $ follows also directly from Lemma	\ref{Transf_shape_opt:Lem: Shape derivatives C^l} ii) with $ z(\Gamma_D):=u(\Omega)\vert_{\Gamma_D} $. Then 
	
	\[ z'(\Gamma;V)=u'(\Omega;V)\vert_{\Gamma_D} + \frac{\partial}{\partial \vec{n}}u(\Omega)V_{\vec{n}}=0 \text{ on } \Gamma_{D}.\]
	ii) $ f'(\Omega;V)=\Div(\sigma(u))'(\Omega;V)=\Div(\sigma(u'(\Omega;V))) $ also follows from Lemma \ref{Transf_shape_opt:Lem: Properties u'}.
\end{rem}

\begin{cor}\label{Ex_Shape_Deriv_LinEl:Cor: Shape derivatives for linear elasticity}
	Suppose that $ k\in \N{},\, k\geq 3 $, $ V\in V^{ad}_{k+1}(\Oext) $, $ f \in C^{k-1,\phi}(\overline{ \Oext}) $ and $ g \in C^{k,\phi}(\overline{ \Oext})  $. Let $ u(\Omega)\in C^{k,\phi}(\overline{\Omega},\R{3}) $,  $ \Omega \in \mathcal{O}_{k+1}^{b} $ be the family of unique solutions of \eqref{Reliability:Eq:LinEl}. Then, the shape derivative $ u'=u'(\Omega;V) $ exists and is an element of $ C^{k-1,\phi}(\overline{ \Omega},\R{3}) $. Moreover $ u' $ satisfies 	
	\begin{align} \label{Ex_Shape_Deriv_LinEl:Eq: PDE u'} \tag{P3}
	\left. 
	\begin{array}{rcll}
	\Div( \se(u')) &=&0   &\text{ in } \Omega \\
	u' &=& -V_{\vec{n}} Du\, \vec{n}  &\text{ on } \Gamma_{D} \\
	\se(u') \, \vec{n} &=&(f+\kappa g +Dg\vec{n})V_{\vec{n}} +\Div_{\Gamma}(V_{\vec{n}}\sigma_{\Gamma}(u)) &\text{ on }\Gamma_{N}.
	\end{array} 
	\right.
	\end{align}
	
	\noindent If $ k=2 $, then $ u' \in C^{1,\phi}(\Omega,\R{3})$ is a weak solution of (P3)  and still satisfies $ \se(u') \, \vec{n} =(f+\kappa g +Dg\vec{n})V_{\vec{n}} +\Div_{\Gamma}(V_{\vec{n}}\sigma_{\Gamma}(u)) $ on $ \Gamma_{n} $ and $ u'=  -V_{\vec{n}} Du\, \vec{n}$ on $ \Gamma_{D} $.
\end{cor}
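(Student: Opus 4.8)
The plan is to obtain this corollary as a direct specialization of Theorem~\ref{Ex_Shape_Deriv_LinEl:Thm: Shape derivatives for linear elasticity}, applied to the shape-independent data obtained by restricting the fixed functions $f\in C^{k-1,\phi}(\overline{\Omega^{ext}},\R{3})$ and $g\in C^{k,\phi}(\overline{\Omega^{ext}},\R{3})$, i.e.\ by taking $f(\Omega):=f|_{\overline{\Omega}}$ and $g(\Gamma):=g|_{\Gamma}$. First I would record that the existence and $C^{k,\phi}$-regularity of $u(\Omega)$ is granted by Theorem~\ref{Linear_Elasticity:Thm:LinEl_Classical_Sol} (or Proposition~\ref{Ex_Shape_Deriv_LinEl:Prop: properies of solutions u_t of DTP on Omega_t}) and that the material derivative $\dot{u}(\Omega;V)\in C^{k,\phi}$ exists by Theorem~\ref{Ex_Shape_Deriv_LinEl:Thm: q^t C^3,phi material derivative of u_t}, so that the shape derivative $u'(\Omega;V)=\dot{u}(\Omega;V)-Du(\Omega)V$ is well defined and lies in $C^{k-1,\phi}(\overline{\Omega},\R{3})$ by Theorem~\ref{Ex_Shape_Deriv_LinEl:Thm: Shape derivatives for linear elasticity}. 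Next I would compute the shape derivatives of the data: since $f$ and $g$ are globally defined on $\overline{\Omega^{ext}}$, Lemma~\ref{Transf_shape_opt:Lem: Shape derivatives C^l}~ii) yields $f'(\Omega;V)=0\in C^{k-2,\phi}(\overline{\Omega},\R{3})$ and $g'(\Gamma;V)=\tfrac{\partial g}{\partial\vec{n}}V_{\vec{n}}=Dg\,\vec{n}\,V_{\vec{n}}$. Because $Dg\in C^{k-1,\phi}(\overline{\Omega^{ext}})$, the outward normal $\vec{n}$ of the $C^{k+1}$-domain $\Omega\in\mathcal{O}^{b}_{k+1}$ lies in $C^{k}(\Gamma,\R{3})$ and $V\in C^{k+1}$, the product $Dg\,\vec{n}\,V_{\vec{n}}$ is an element of $C^{k-1,\phi}(\Gamma,\R{3})$; hence the two standing regularity hypotheses on $f'(\Omega;V)$ and $g'(\Gamma;V)$ required by Theorem~\ref{Ex_Shape_Deriv_LinEl:Thm: Shape derivatives for linear elasticity} are met.

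It then remains only to substitute these expressions into the system \eqref{Ex_Shape_Deriv_LinEl:Eq: PDE for u' g(Gamma), f(Omega)} supplied by that theorem. Plugging in $f(\Omega)=f$, $g(\Gamma)=g$, $f'(\Omega;V)=0$ and $g'(\Gamma;V)=Dg\,\vec{n}\,V_{\vec{n}}$, the volume equation becomes $\Div(\se(u'))=0$ in $\Omega$, the Dirichlet condition reads $u'=-V_{\vec{n}}Du\,\vec{n}$ on $\Gamma_{D}$, and the Neumann trace collapses to $\se(u')\,\vec{n}=(f+\kappa g)V_{\vec{n}}+Dg\,\vec{n}\,V_{\vec{n}}+\Div_{\Gamma}(V_{\vec{n}}\sigma_{\Gamma}(u))=(f+\kappa g+Dg\,\vec{n})V_{\vec{n}}+\Div_{\Gamma}(V_{\vec{n}}\sigma_{\Gamma}(u))$ on $\Gamma_{N}$, which is exactly \eqref{Ex_Shape_Deriv_LinEl:Eq: PDE u'}. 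For $k=2$, Theorem~\ref{Ex_Shape_Deriv_LinEl:Thm: Shape derivatives for linear elasticity} only provides $u'\in C^{1,\phi}(\Omega,\R{3})$, so the same caveat carries over verbatim: $u'$ is a weak solution of \eqref{Ex_Shape_Deriv_LinEl:Eq: PDE u'} while the boundary relations still hold pointwise as traces.

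Since the proof is a pure specialization, I do not expect a genuine obstacle; the only care needed is bookkeeping. One must make sure that the $C^{k-1,\phi}$-regularity of $g'(\Gamma;V)$ survives the multiplication by $\vec{n}$ and $V_{\vec{n}}$ — which is precisely where the hypotheses $\Omega\in\mathcal{O}^{b}_{k+1}$ and $V\in\Vad{k+1}{\Oext}$ enter — and that $f'(\Omega;V)$ genuinely vanishes, which relies on $f$ being defined on all of $\overline{\Omega^{ext}}$ rather than merely on $\overline{\Omega}$, as demanded by Lemma~\ref{Transf_shape_opt:Lem: Shape derivatives C^l}~ii). As a cross-check, the identities $f'(\Omega;V)=\Div(\se(u))'(\Omega;V)=\Div(\se(u'(\Omega;V)))$ and $u'|_{\Gamma_{D}}=-V_{\vec{n}}Du\,\vec{n}$ (the latter from $(u|_{\Gamma_{D}})'(\Gamma;V)=0$) also follow independently from the commutation of the shape derivative with linear differential operators, Lemma~\ref{Transf_shape_opt:Lem: Properties u'}, which moreover furnishes a slightly shorter alternative route to \eqref{Ex_Shape_Deriv_LinEl:Eq: PDE u'} that bypasses the integration-by-parts computation carried out in the proof of the theorem.
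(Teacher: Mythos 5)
Your proposal is correct and follows essentially the same route as the paper's own proof: the corollary is obtained by specializing Theorem~\ref{Ex_Shape_Deriv_LinEl:Thm: Shape derivatives for linear elasticity} to $f(\Omega)=f|_{\overline{\Omega}}$, $g(\Gamma)=g|_{\Gamma}$, using that $f'(\Omega;V)=0$ and $g'(\Gamma;V)=V_{\vec{n}}\,Dg\,\vec{n}$ for shape-independent data, and substituting into \eqref{Ex_Shape_Deriv_LinEl:Eq: PDE for u' g(Gamma), f(Omega)}. Your additional bookkeeping (verifying the regularity hypotheses on $f'$ and $g'$ and the $k=2$ caveat) is consistent with, and slightly more explicit than, the paper's argument.
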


\begin{proof}
	The first statement follows directly from Lemma \ref{Ex_Shape_Deriv_LinEl:Thm: q^t C^3,phi material derivative of u_t} by $ u'(\Omega;V)=\dot{u}(\Omega;V)-Du(\Omega)V $.
	
	Let $ f(\Omega)=f\vert_{\overline{ \Omega}} $ and $ g(\Omega)=g\vert_{\overline{ \Omega}}$, $ g(\Gamma)=g\vert_{\Gamma} =g(\Omega)\vert_{\Gamma} $. From Lemma \ref{Transf_shape_opt:Lem: Properties u'} we conclude $f'(\Omega;V)=0 $ and  $$ g'(\Gamma;V)= g'(\Omega)\vert_{\Gamma} +V_{\vec{n}} Dg(\Omega)\vert_{\Gamma}\, \vec{n} = V_{\vec{n}}Dg(\Omega)\vert_{\Gamma}\, \vec{n}
	$$
	If the boundary shall be clamped at the part $ \Gamma_D $ of the boundary, then $ V_{\vec{n}} $ has to be zero there and thus $ -V_{\vec{n}} Du(\Omega)\, \vec{n}=0  $. Replacing the respective terms in equation \eqref{Ex_Shape_Deriv_LinEl:Eq: PDE for u' g(Gamma), f(Omega)} thus directly leads to equation \eqref{Ex_Shape_Deriv_LinEl:Eq: PDE u'}.
\end{proof}

\begin{rem}
	For a different derivation of this equation see \cite{EpplerElasticity} equations (9) and (14) where the following description is given, which is equivalent to equation \eqref{Ex_Shape_Deriv_LinEl:Eq: PDE u'}:
	\begin{align}
	\left. 
	\begin{array}{r c l l}
	\Div( \se(u')) &= & 0   &\text{ in } \Omega \\
	u'& = & - V_{\vec{n}} Du\, \vec{n}  &\text{ on } \Gamma_{D} \\
	\se(u')  \vec{n}& = & [Dg \,\vec{n} - D(\sigma(u))[\vec{n}]\vec{n} - \Div_{\Gamma}(\sigma_{\Gamma}(u))]V_{\vec{n}}  &\text{ on }\Gamma_{N}.\\
	& & +\Div_{\Gamma}(V_{\vec{n}}\sigma_{\Gamma}(u)) & 
	\end{array} .
	\right.
	\end{align}
	This formulation can be obtained using $f =-\Div(\sigma(u)),\, g=\sigma(u)\vec{n} $ and
	\begin{align*}
	\Div_{\Gamma}(\sigma_{\Gamma}(u))=\Div(\sigma(u)) - \kappa \sigma(u)\vec{n} - D(\sigma(u))[\vec{n}]\vec{n}\,.
	\end{align*}
	This implies (compare \cite[Prop. 2.68]{SokZol92} and \cite[Lemma 7]{EpplerElasticity})
	\begin{align*}
	\Div_{\Gamma}(\sigma_{\Gamma}(u)) = -f-\kappa g - D(\sigma(u))[\vec{n}]\vec{n} 
	\Leftrightarrow  \Div_{\Gamma}(\sigma_{\Gamma}(u)) + D(\sigma(u))[\vec{n}]\vec{n} = -f-
	\kappa g \, .
	\end{align*}
	Another possible representation for the Neumann boundary condition can be obtained by the product rule for the tangential divergence which implies
	\begin{align*}
	[Dg \,\vec{n} - D(\sigma(u))[\vec{n}]\vec{n} - \Div_{\Gamma}(\sigma_{\Gamma}(u))]V_{\vec{n}}  +&\Div_{\Gamma}(V_{\vec{n}}\sigma_{\Gamma}(u))\\
	&=[Dg \vec{n} - D(\sigma(u))[\vec{n}]\vec{n}]V_{\vec{n}} + \sigma_{\Gamma}(u) \nabla_{\Gamma}V_{\vec{n}}\,. 
	\end{align*}

	Moreover, the PDE for $ \dot{u} $  and $ u' $ are consistent. 
	Especially, 
	\begin{align*}
	&u'(\Omega;V)=-Du(\Omega) \vec{n} V_{\vec{n}} && \text{on } \Gamma_D \\
	\Leftrightarrow  ~~~ &\dot{u}(\Omega;V) - Du(\Omega)V  =-Du(\Omega) \vec{n} V_{\vec{n}} &&  \text{on } \Gamma_D \\
	\Leftrightarrow ~~~  &\dot{u}(\Omega;V) = Du(\Omega) V - Du \vec{n} \vec{n}^\top V = D_{\Gamma}u V && \text{on } \Gamma_D.
	\end{align*}
	But, since $ u(\Omega) $ is constant along $ \Gamma_D$ the tangential derivative is $ 0 $ on $ \Gamma_{D} $ which implies \[ \dot{u}(\Omega;V) = D_{\Gamma}u(\Omega) V = 0 \text{ on } \Gamma_D.\]
\end{rem}

\section{Shape derivatives for local cost functionals w.r.t. linear elasticity}\label{Ex_Shape_Deriv_LinEl:Eq: Shape Deriv Jvol Jsur}

In the following we will turn our attention to a whole class of functionals to which the LCF and the Ceramic reliability functional belong:\\[1ex]
For $ u\in W^{1,m}(\Omega,\R{3})$, where $ m $ defends on the material properties of the ceramic material, we have 
\begin{equation}\label{Ex_Shape_Deriv_LinEl:Eq: Def Jcer}
J^{\mathrm{cer}}(\Omega):=  \frac{1}{4 \pi} \int_{\Omega} \int_{S^2} \left(\frac{\langle\sigma(u)\mathfrak{n},\mathfrak{n}\rangle^{+}}{\sigma_{0}}\right)^m \, dS_{\mathbb{S}^{2}} \, dx,
\end{equation}
where $ \mathfrak{n} \in \mathbb{S}^2 $ is a normal direction, see Chapter 1 and \cite{GBS_Ceramic2014}, for
$ u\in C^{1,\phi}(\overline{\Omega},\R{3}) $ or $ u\in C^{1,\phi}(\Gamma,\R{3}) $, $ 1\geq \phi \geq 1-\frac{1}{4.7m} $ we define 
\begin{align}\label{Ex_Shape_Deriv_LinEl:Eq: Def Jlcf}
J^{\mathrm{lcf}}(\Omega):= \int_{\Gamma} \left(\frac{1}{N_{det}(\sigma(u))}\right)^m \, dS,
\end{align}
see Chapter \ref{Reliability}.

\noindent This class of functionals can be defined according to equation \eqref{Transf_shape_opt:Eq: Local Integral Cost Functional}, since $ \sigma(u)=\lambda\tr(Du)I + \mu(Du+Du^{^\top}) $, i.e.
\begin{align*}
J(\Omega) &= J_{vol}(\Omega)+J_{sur}(\Omega) \\
&=\int_{\Omega} \mathcal{F}_{vol}(.,u,\sigma(u))\, dx + \int_{\Gamma} \mathcal{F}_{sur}(.,u,\sigma(u))\, dS \\
&=\int_{\Omega} \tilde{\mathcal{F}}_{vol}(.,u,Du)\, dx + \int_{\Gamma} \tilde{\mathcal{F}}_{sur}(.,u,Du)\, dS
\end{align*}
where 
\begin{align*}
\tilde{\mathcal{F}}_{sur}&=\mathcal{F}_{sur} \circ L_{\sigma}\\
\tilde{\mathcal{F}}_{vol}&=\mathcal{F}_{vol} \circ L_{\sigma},
\end{align*}
and
\begin{align*}
L_{\sigma}:\R{3} \times \R{3} \times \R{3 \times 3} &\to \R{3} \times \R{3} \times \R{3 \times 3}, \\
(x,y,M)~~~~~ &\mapsto (x,y,\lambda\tr(M)I + \mu(M+M^{\top})).
\end{align*}
Therefore,
\[ 
J^{\mathrm{cer}}(\Omega)=\int_{\Omega} \mathcal{F}^{\mathrm{cer}}(\sigma(u)) \, dx,~~~~
\mathcal{F}^{\mathrm{cer}}(\sigma(u))= \frac{1}{4 \pi}\int_{S^2} \left(\frac{\langle \sigma(u)\mathfrak{n},\mathfrak{n}\rangle^{+}}{\sigma_{0}}\right)^m \, dS_{\mathbb{S}^2} 
\]
and
\[ 
J^{\mathrm{lcf}}(\Omega)=\int_{\Gamma} \mathcal{F}^{\mathrm{lcf}}(\sigma(u)) \, dS ,~~~~
\mathcal{F}^{\mathrm{lcf}}(\sigma(u)) = \left(\frac{1}{N_{det}(\sigma(u))}\right)^m.
\]

In a first step, we will calculate Euler derivatives for this class of functionals under appropriate conditions which naturally follow the conditions of Lemma \ref{Transf_shape_opt:Lem: d/dt int J(T_t,u_t,Du_t) in shape derivative form}. Therefore, recall the PDE of linear elasticity \eqref{Reliability:Eq:LinEl} and the PDE
determining its shape derivative $ u' $ \eqref{Ex_Shape_Deriv_LinEl:Eq: PDE u'}.
If $ V_{\vec{n}} =0$ on $ \Gamma_D $ then the Dirichlet boundary condition becomes $ u'=0 $ on $ \Gamma_D .$

\begin{prop}\label{Shape_Grad_LinEl:Prop: Shape gradient material derivative form}
	Let $ k\geq 2 $, $ \Omega \in \mathcal{O}_{k+1}^{b} $, $ \Omega_{t}=\T{t}(\Omega), \, t \in I_V  $ for some admissible vector field $V \in  \mathcal{V}_{k+1}^{ad}(\Oext)$ such that $ V_{\vec{n}}=0 $ on $ \Gamma_D $. Suppose that  $ f\in C^{k-1,\phi}(\overline{\Oext},\R{3}) $ and \linebreak $ g\in C^{k,\phi}(\overline{\Oext},\R{3})$ for some $ \phi\in (0,1) $. 
	Let $ u_{t} \in C^{k,\phi}(\O{t},\R{3}) $ be the unique solution of \eqref{Reliability:Eq:LinEl} on $ \O{t} $ and $ \dot{u}_t \in C^{k,\phi} $ be the unique solution of \eqref{Ex_Shape_Deriv_LinEl:Eq: PDE for dot_u}. Moreover let $ \mathcal{F} \in C^{1}(\R{3} \times \R{3} \times \R{3 \times 3}) $.
	
	\noindent Then the shape derivative of
	\begin{align}\label{Shape_Grad_LinEl:Eq: def JOmega}
	J(\Omega) =\int_{\Omega} \mathcal{F}_{vol}(.,u,\sigma(u))\, dx + \int_{\Gamma} \mathcal{F}_{sur}(.,u,\sigma(u))\, dS.
	\end{align}
	is given by $  $
	\begin{align}
	dJ(\Omega)[V]
	=&\, \int_{\Omega}
	\Div(V)(x) \mathcal{F}_{vol}(x,u(x),\sigma(u(x))) +\left\langle \frac{\partial \mathcal{F}_{vol}}{\partial z_1} (x,u(x),\sigma(u(x))) ,V(x) \right\rangle \, dx \nonumber  \\
	&+  \int_{\Omega} \left\langle\frac{\partial \mathcal{F}_{vol}}{\partial z_2} (x,u(x),\sigma(u(x))) ,\dot{u}(x) \right\rangle \, dx\nonumber \\
	&+  \int_{\Omega} \frac{\partial \mathcal{F}_{vol}}{\partial z_3} (x,u(x),\sigma(u(x)) : [\sigma(\dot{u}(x)) - (Du(x)DV(x))^{\sigma}] \, dx \\
	&+ \int_{\Gamma}
	\Div_{\Gamma}(V)(x) \mathcal{F}_{sur}(x,u(x),\sigma(u(x)
	) \, dS \nonumber \\
	&+ \int_{\Gamma} \left\langle \frac{\partial \mathcal{F}_{sur}}{\partial z_1} (x,u(x),Du(x)) ,V(x) \right\rangle + \left\langle\frac{\partial \mathcal{F}_{sur}}{\partial z_2} (x,u(x),Du(x)) ,\dot{u}(x) \right\rangle  dS\nonumber \\
	&+  \int_{\Gamma} \frac{\partial \mathcal{F}_{sur}}{\partial z_3} (x,u(x),\sigma(u(x)) : [\sigma(\dot{u}(x)) - (Du(x)DV(x))^{\sigma}] dS. \nonumber 
	\end{align} 
\end{prop}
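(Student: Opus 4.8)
The statement is essentially Lemma~\ref{Transf_shape_opt:Lem: d/dt int J(T_t,u_t,Du_t) in shape derivative form} (in its ``material derivative form'' variant, Lemma~\ref{Transf_shape_opt:Lem: d/dt int(T_t,u_t,Du_t) in material derivative form}) applied to the composite integrand $\mathcal F_{vol/sur}\circ L_\sigma$, so the plan is to \emph{check that the hypotheses of that lemma are met} in the present elasticity setting and then \emph{rewrite the derivative terms involving $Du$ in terms of $\sigma(u)$}. First I would fix $V\in\Vad{k+1}{\Oext}$ with $V_{\vec n}=0$ on $\Gamma_D$, a baseline design $\Omega\in\mathcal O_{k+1}^{b}$, and observe that by Theorem~\ref{Ex_Shape_Deriv_LinEl:Thm: q^t C^3,phi material derivative of u_t} (resp.\ Corollary~\ref{Ex_Shape_Deriv_LinEl:Cor: Fréchet diff u^t}) the family $u_t\in C^{k,\phi}(\overline{\Omega_t},\R3)$ of solutions to \eqref{Reliability:Eq:LinEl} has a $C^{k,\phi}$-material derivative $\dot u=\dot u(\Omega;V)$ given by $q$, the solution of \eqref{Ex_Shape_Deriv_LinEl:Eq: PDE for dot_u}; moreover $t\mapsto u^t=u_t\circ T_t$ is Fr\'echet differentiable in $C^{k,\varphi}$ with $t\mapsto\dot u^t$ continuous. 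Since $k\geq2$, we certainly have $u_t\in C^{1}$, so the regularity assumption ``$u_t\in C^1(\overline{\Omega_t},\R m)$ with continuous $C^1$-material derivative'' of Lemma~\ref{Transf_shape_opt:Lem: d/dt int(T_t,u_t,Du_t) in material derivative form} is satisfied (we only need $C^1$, which is far less than what we have).

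\textbf{Key steps.} The second step is to set $w(\Omega)=(w_1,w_2,w_3)$ with $w_1=\mathrm{id}$, $w_2=u$, $w_3=Du$ as in the proof of Lemma~\ref{Transf_shape_opt:Lem: d/dt int(T_t,u_t,Du_t) in material derivative form}, write $J(\Omega)=J_{vol}(\Omega)+J_{sur}(\Omega)$ with $J_{vol}(\Omega)=\int_\Omega(\mathcal F_{vol}\circ L_\sigma)(\cdot,u,Du)\,dx$ and likewise for the surface part, and apply Lemma~\ref{Transf_shape_opt:Lem: d/dt int(T_t,u_t,Du_t) in material derivative form} directly to $\tilde{\mathcal F}_{vol/sur}=\mathcal F_{vol/sur}\circ L_\sigma\in C^1(\R d)$. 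This yields $dJ(\Omega)[V]$ in the form of \eqref{Transf_shape_opt:Eq: d/dt int(T_t,u_t,Du_t) in material derivative form}, but with partial derivatives of $\tilde{\mathcal F}$. The third step is the bookkeeping: by the chain rule (Lemma~\ref{Diff_Banach_Space:Lem:Part_Deriv_Frechet} applied to the affine map $L_\sigma$) one has $\partial\tilde{\mathcal F}/\partial z_1=\partial\mathcal F/\partial z_1$, $\partial\tilde{\mathcal F}/\partial z_2=\partial\mathcal F/\partial z_2$, and $\partial\tilde{\mathcal F}/\partial z_3$ acting on a matrix increment $H$ equals $(\partial\mathcal F/\partial z_3)\!:\!(\lambda\tr(H)\mathrm I+\mu(H+H^\top))=(\partial\mathcal F/\partial z_3):H^\sigma$. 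In the $z_3$-term of \eqref{Transf_shape_opt:Eq: d/dt int(T_t,u_t,Du_t) in material derivative form} the matrix increment is $D\dot u-DuDV$, so $\partial\tilde{\mathcal F}/\partial z_3:(D\dot u-DuDV)=\partial\mathcal F/\partial z_3:(D\dot u-DuDV)^\sigma=\partial\mathcal F/\partial z_3:(\sigma(\dot u)-(DuDV)^\sigma)$, using that $(D\dot u)^\sigma=\sigma(\dot u)$ and linearity of $M\mapsto M^\sigma$ — this is exactly Corollary~\ref{Transf_shape_opt:Lem: dot(div), dot(sigma)}, which identifies $\sigma(u)\dot{}=\sigma(\dot u)-(DuDV)^\sigma$. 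Substituting gives precisely the claimed formula, where the remaining terms already display $\sigma(u)$ in place of $Du$ in the arguments of $\mathcal F_{vol/sur}$ and of $\partial\mathcal F/\partial z_3$ (the latter being legitimate because $z_3$ now plays the role of the stress slot after composing with $L_\sigma$).

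\textbf{Main obstacle.} The only genuine subtlety — not a routine calculation — is making sure the hypotheses of Lemma~\ref{Transf_shape_opt:Lem: d/dt int(T_t,u_t,Du_t) in material derivative form} (existence and continuity of the $C^1$-material derivative of $u_t$ on a time interval, not just at $t=0$) are available, and this is exactly what Theorem~\ref{Ex_Shape_Deriv_LinEl:Thm: q^t C^3,phi material derivative of u_t} delivers on $(-\epsilon,\epsilon)\Subset I_V$; so the obstacle has already been discharged by the preceding sections, and the present proof is a short reduction. One should also note that the condition $V_{\vec n}=0$ on $\Gamma_D$ is not needed for the material-derivative-form identity itself (it is only needed so that $\dot u=q$ solves the homogeneous Dirichlet condition in \eqref{Ex_Shape_Deriv_LinEl:Eq: PDE for dot_u}), hence it may simply be carried along as a standing hypothesis without entering the computation. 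I would therefore conclude by stating: combining Lemma~\ref{Transf_shape_opt:Lem: d/dt int(T_t,u_t,Du_t) in material derivative form}, the chain rule for the affine map $L_\sigma$, and Corollary~\ref{Transf_shape_opt:Lem: dot(div), dot(sigma)} yields the asserted expression for $dJ(\Omega)[V]$.
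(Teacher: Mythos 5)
Your proposal is correct and follows essentially the same route as the paper: the paper's proof consists precisely of applying Theorem \ref{Ex_Shape_Deriv_LinEl:Thm: q^t C^3,phi material derivative of u_t} together with Lemma \ref{Transf_shape_opt:Lem: d/dt int(T_t,u_t,Du_t) in material derivative form} to $\tilde{\mathcal F}_{vol/sur}=\mathcal F_{vol/sur}\circ L_\sigma$. You merely spell out the chain-rule bookkeeping through the linear map $L_\sigma$ and the identification $(D\dot u - DuDV)^{\sigma}=\sigma(\dot u)-(DuDV)^{\sigma}$, which the paper leaves implicit.
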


\begin{proof}
	Apply Theorem \ref{Ex_Shape_Deriv_LinEl:Thm: q^t C^3,phi material derivative of u_t} and Lemma \ref{Transf_shape_opt:Lem: d/dt int(T_t,u_t,Du_t) in material derivative form} to 	
	\begin{align*}
	\mathcal{ J}:t \in \tilde{I} \mapsto J(\Omega_t) =\int_{\Omega} \tilde{\mathcal{F}}_{vol}(.,u,Du)\, dx + \int_{\Gamma} \tilde{\mathcal{F}}_{sur}(.,u,Du)\, dS.
	\end{align*}
	where $
	\tilde{\mathcal{F}}_{sur}=\mathcal{F}_{sur} \circ L_{\sigma}$ and $  \tilde{\mathcal{F}}_{vol}=\mathcal{F}_{vol} \circ L_{\sigma}.
	$
\end{proof}

\begin{prop}\label{Shape_Grad_LinEl:Prop: Shape Deriv shape derivative form} Let the hypotheses of Proposition \ref{Shape_Grad_LinEl:Prop: Shape gradient material derivative form} be given. Then  
	the local shape derivative $ u' $  is an element of $ C^{k-1,\phi}(\overline{\Omega},\R{3}) $,\, $ k-1 \geq 1 $. Let $J(\Omega)$ be defined according to \eqref{Shape_Grad_LinEl:Eq: def JOmega}.
	
	\noindent Then the shape derivative of $J$ 
	is given by 
	\begin{align*}
	dJ(\Omega)[V]=&\,  \int_{\Omega} \left\langle\tfrac{\partial \mathcal{F}_{vol}}{\partial z_2} (.,u,\sigma(u)) ,u' \right\rangle + \tfrac{\partial \mathcal{F}_{vol}}{\partial z_3} (.,u,\sigma(u)):\sigma(u') \, dx
	\\
	&+  \int_{\Gamma} \mathcal{F}_{vol} (.,u,\sigma(u))V_{\vec{n}} \, dS 
	\\
	&+  \int_{\Gamma} \left\langle\tfrac{\partial \mathcal{F}_{sur}}{\partial z_1} (.,u,\sigma(u)), \vec{n} V_{\vec{n}}   \right\rangle + \kappa \mathcal{F}_{sur}(.,u,\sigma(u))) V_{\vec{n}} \, dS
	\\
	&+  \int_{\Gamma} \left\langle\tfrac{\partial \mathcal{F}_{sur}}{\partial z_2} (.,u,\sigma(u)), u' +  Du\, \vec{n} V_{\vec{n}}   \right\rangle \, dS
	\\
	&+  \int_{\Gamma} \tfrac{\partial \mathcal{F}_{sur}}{\partial z_3} (.,u,\sigma(u)
	) :(\sigma(u')+ D(\sigma(u))[\vec{n}] V_{\vec{n}})  \, dS.
	\end{align*} 
\end{prop}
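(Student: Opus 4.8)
The strategy is to specialize the already-established chain-rule formula for first-order local cost functionals, Lemma \ref{Transf_shape_opt:Lem: d/dt int J(T_t,u_t,Du_t) in shape derivative form}, to the composition $\tilde{\mathcal{F}}_{vol/sur} = \mathcal{F}_{vol/sur} \circ L_\sigma$, and then to trade the material derivative $\dot u$ for the local shape derivative $u'$ and simplify the boundary terms using the facts available for linear elasticity. The first thing I would do is record the regularity bookkeeping: under the hypotheses of Proposition \ref{Shape_Grad_LinEl:Prop: Shape gradient material derivative form} we have $\Omega \in \mathcal{O}_{k+1}^b$ with $k \geq 2$, so by Theorem \ref{Ex_Shape_Deriv_LinEl:Thm: q^t C^3,phi material derivative of u_t} the material derivative $\dot u$ exists in $C^{k,\phi}$ and by Theorem \ref{Ex_Shape_Deriv_LinEl:Thm: Shape derivatives for linear elasticity} (equivalently Corollary \ref{Ex_Shape_Deriv_LinEl:Cor: Shape derivatives for linear elasticity}) the local shape derivative $u' = \dot u - Du\,V \in C^{k-1,\phi}(\overline\Omega,\R{3})$, $k-1 \geq 1$. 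In particular $u$ is at least $C^{2,\phi}$, so $\Omega$ is $C^{k+1}$ with $k+1 \geq 3$, which is exactly the domain regularity that Lemma \ref{Transf_shape_opt:Lem: d/dt int J(T_t,u_t,Du_t) in shape derivative form} needs ($\Omega \in \mathcal{O}_2$, $u_t \in C^2$), and the continuity-in-$t$ hypothesis on $\dot u^t$ is provided by Theorem \ref{Ex_Shape_Deriv_LinEl:Thm: q^t C^3,phi material derivative of u_t}. So the lemma applies verbatim to $\mathcal{J}: t \mapsto J(\Omega_t)$ with integrands $\tilde{\mathcal{F}}_{vol/sur}$.

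Next I would carry out the chain rule through $L_\sigma$. Since $L_\sigma(x,y,M) = (x,y,\lambda\tr(M)\mathrm{I} + \mu(M+M^\top))$ is linear in $M$ (and the identity in $(x,y)$), it is $C^\infty$, so $\tilde{\mathcal{F}}_{vol/sur} \in C^1$ and
\[
\frac{\partial \tilde{\mathcal{F}}}{\partial z_1} = \frac{\partial \mathcal{F}}{\partial z_1}\circ L_\sigma, \quad
\frac{\partial \tilde{\mathcal{F}}}{\partial z_2} = \frac{\partial \mathcal{F}}{\partial z_2}\circ L_\sigma, \quad
\frac{\partial \tilde{\mathcal{F}}}{\partial z_3}[M] = \frac{\partial \mathcal{F}}{\partial z_3}(L_\sigma(\cdot)) : M^\sigma,
\]
where $M^\sigma = \lambda\tr(M)\mathrm{I}+\mu(M+M^\top)$ is the notation from Corollary \ref{Transf_shape_opt:Lem: dot(div), dot(sigma)}. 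Evaluating at $(x,u,Du)$ gives $L_\sigma(x,u,Du) = (x,u,\sigma(u))$. Now I plug this into the shape-derivative-form formula \eqref{Transf_shape_opt:Eq: d/dt int J(T_t,u_t,Du_t) in shape derivative form}: the $z_3$-terms there involve $Du'$ and $D(Du)[\vec n]$, and I would use $\frac{\partial \tilde{\mathcal{F}}}{\partial z_3} : Du' = \frac{\partial \mathcal{F}}{\partial z_3}(x,u,\sigma(u)) : (Du')^\sigma = \frac{\partial \mathcal{F}}{\partial z_3}(x,u,\sigma(u)) : \sigma(u')$ — this is legitimate because by Lemma \ref{Transf_shape_opt:Lem: Properties u'} the shape derivative commutes with $D$, so $(Du)' = Du'$, and then $\sigma(u)' = \sigma(u')$ since $\sigma$ is linear in $Du$; similarly for the term $D(Du)[\vec n]\,V_{\vec n}$, which becomes $(D(\sigma(u))[\vec n]\,V_{\vec n})^\sigma$-type but in fact $D(\sigma(u))[\vec n] = (D(Du)[\vec n])^\sigma$ by linearity, giving the stated term $\frac{\partial \mathcal{F}_{sur}}{\partial z_3}(\cdot):(\sigma(u') + D(\sigma(u))[\vec n]V_{\vec n})$. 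The volume integrand in the shape-derivative form only carries $z_2$ and $z_3$ contributions (the $z_1$-term and the $\Div(V)$-term having been absorbed into the boundary integral $\int_\Gamma \mathcal{F}_{vol}V_{\vec n}\,dS$ via the divergence theorem as in Lemma \ref{Transf_shape_opt:Lem:Reynolds Theorem Volume}), which reproduces the first line of the claimed formula. The surface integrand gives the $\kappa\,\mathcal{F}_{sur}V_{\vec n}$ curvature term, the $\langle \partial_{z_1}\mathcal{F}_{sur},\vec n\rangle V_{\vec n}$ term, the $\langle \partial_{z_2}\mathcal{F}_{sur}, u' + Du\,\vec n\,V_{\vec n}\rangle$ term (using $\frac{\partial u}{\partial \vec n} = Du\,\vec n$), and the $z_3$ term just discussed.

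Finally I would assemble these pieces and check that they exactly match the displayed right-hand side of the proposition, and confirm that every integrand is a well-defined integrable function: $\frac{\partial \mathcal{F}}{\partial z_i}(x,u,\sigma(u))$ is continuous on $\overline\Omega$ since $u \in C^{1,\phi}$ and $\mathcal{F} \in C^1$, $u'$ and $\sigma(u') = \sigma(\dot u) - (Du\,DV)^\sigma$ are in $C^{k-1,\phi} \subset C^0$, and $D(\sigma(u))[\vec n]$ is continuous since $u \in C^{2,\phi}$ (this is where $k \geq 2$ is used — one needs second derivatives of $u$ on $\Gamma$ to make sense of the last surface term). I expect the only genuinely delicate point to be the integration-by-parts step hidden in Lemma \ref{Transf_shape_opt:Lem:Reynolds Theorem Surface} ii) / Lemma \ref{Transf_shape_opt:Lem: d/dt int J(T_t,u_t,Du_t) in shape derivative form}: the tangential Stokes formula (Theorem \ref{Transf_shape_opt:Thm: Tangential Stokes Vector}) requires $\Gamma$ of class $C^2$ and the relevant fields $C^1$ on $\Gamma$, and one must make sure the combination appearing here — involving $D(\sigma(u))[\vec n]$, i.e. normal derivatives of first derivatives of $u$ — is covered, which again needs $u \in C^{2,\phi}$ and hence $\Omega \in \mathcal{O}^b_{k+1}$ with $k \geq 2$; this is exactly the standing hypothesis, so the argument closes. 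The only remaining bookkeeping is the assumption $V_{\vec n} = 0$ on $\Gamma_D$, which guarantees $u' = 0$ on $\Gamma_D$ (Theorem \ref{Ex_Shape_Deriv_LinEl:Thm: Shape derivatives for linear elasticity}) so that $u'$ has enough regularity across the whole boundary and no extra Dirichlet-trace terms intrude; I would note this explicitly but it requires no further computation.
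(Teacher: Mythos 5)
Your proposal is correct and follows essentially the same route as the paper: the paper's (very terse) proof likewise applies Theorem \ref{Ex_Shape_Deriv_LinEl:Thm: q^t C^3,phi material derivative of u_t} together with Lemma \ref{Transf_shape_opt:Lem: d/dt int J(T_t,u_t,Du_t) in shape derivative form} to $\tilde{\mathcal{F}}_{vol/sur}=\mathcal{F}_{vol/sur}\circ L_{\sigma}$, and your additional chain-rule and regularity bookkeeping (in particular the identities $\tfrac{\partial\tilde{\mathcal{F}}}{\partial z_3}:Du'=\tfrac{\partial\mathcal{F}}{\partial z_3}:\sigma(u')$ and $(D(Du)[\vec{n}])^{\sigma}=D(\sigma(u))[\vec{n}]$) simply makes explicit what the paper leaves implicit.
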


\begin{proof} Apply Theorem \ref{Ex_Shape_Deriv_LinEl:Thm: q^t C^3,phi material derivative of u_t} and Lemma \ref{Transf_shape_opt:Lem: d/dt int J(T_t,u_t,Du_t) in shape derivative form} to 
	\begin{align*}
	\mathcal{ J}:t \in \tilde{I} \mapsto J(\Omega_t) =\int_{\Omega} \tilde{\mathcal{F}}_{vol}(.,u,Du)\, dx + \int_{\Gamma} \tilde{\mathcal{F}}_{sur}(.,u,Du)\, dS
	\end{align*}
	where $
	\tilde{\mathcal{F}}_{sur}=\mathcal{F}_{sur} \circ L_{\sigma}$ and $  \tilde{\mathcal{F}}_{vol}=\mathcal{F}_{vol} \circ L_{\sigma}.
	$
\end{proof}

\begin{rem}
	Note that the results shown in Lemma \ref{Transf_shape_opt:Lem: d/dt int J(T_t,u_t,Du_t) in shape derivative form}
	and Proposition \ref{Shape_Grad_LinEl:Prop: Shape gradient material derivative form} can be extended analogously to shape functionals of $ l $-th order for $ l \geq 2 $. It is also straight forward to show that these functionals are shape differentiable when constraint of the minimization problem is given by a linear elasticity equation.
\end{rem}

Now we show that the LCF functional and the Ceramic reliability functional satisfy the differentiability requirements of the previous Propositions:

\begin{lem}\label{Shape_Grad_LinEl:Prop: Differentiability LCF}
	The mapping 
	\begin{align*}
	\R{3 \times 3} &\to \R{+}_{0},\,~~~~~
	\sigma \mapsto \mathcal{F}^{\mathrm{lcf}}(\sigma) = \frac{1}{N_{det}(\sigma)^{m}} 
	\end{align*}
	with the parameters $c<b<0,\,0<  \hat{n} \leq \frac{1}{7}$, $ - 4\frac{2b}{m}<1 $ and $ \hat{\sigma}_f,\, \hat{\epsilon}_{f},\, E,\, K >0 $ (see Section \ref{Reliability:Sec:Simulation})
	is at least four times continuously differentiable on $ \R{3 \times 3} \setminus \ker(TF) $ and the first derivative can be continuously extended to $ \ker(TF) $. The first derivative is given by 
	\[ \frac{\partial \mathcal{F}^{\mathrm{lcf}}}{\partial \sigma} (\sigma_0) = (\tilde{CMB}^{-1} \circ \tilde{RO} \circ \tilde{SD}^{-1})^{(1)}(VM^{2} \circ  TF(\sigma_0)) \cdot 3 TF(\sigma_0).  \]
	where
	\begin{align*}
	\tilde{SD}: \R{+}_{0} \to \R{+}_{0},\,&  x \mapsto  x +\frac{E}{K^{\frac{1}{\hat{n}}}} x^{\frac{\hat{n}+1}{2\hat{n}}} 
	\\[1ex]
	\tilde{RO}: \R{+}_{0} \to \R{+}_{0},\, & x \mapsto \frac{x}{E^2} + \frac{2}{EK^{\frac{1}{\hat{n}}}} x^{\frac{\hat{n}+1}{2\hat{n}}} + \frac{1}{K^{\frac{2}{\hat{n}}}} x^{\frac{1}{\hat{n}}} 
	\\[1ex]
	\tilde{CMB}: \R{+}_{0} \to \R{+}_{0},\, & x \mapsto \left(\frac{2^{b}\sigma_{f}'}{E}\right)^2  x^{-\frac{2b}{m}} +  \frac{\sigma_{f}'\epsilon_{f}'2^{1+b+c}}{E} x^{-\frac{b+c}{m}} + 2^{2c}\epsilon_{f}' x^{-\frac{2c}{m}}.
	\end{align*} 
\end{lem}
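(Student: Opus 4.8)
The plan is to express $\mathcal{F}^{\mathrm{lcf}}$ as a composition of the elementary maps introduced in Section~\ref{subsec:LCF} and then apply the chain rule, carefully tracking where each factor is smooth. Recall that
$$ \mathcal{F}^{\mathrm{lcf}}(\sigma) = \left(\frac{1}{N_{det}(\sigma)}\right)^m = \left(\frac{1}{CMB^{-1}\circ RO \circ SD^{-1}\circ VM \circ TF(\sigma)}\right)^m. $$
The first observation is that raising to the power $m$ and inverting commutes with $CMB$, $RO$, $SD$ in the sense that $\mathcal{F}^{\mathrm{lcf}} = \widetilde{CMB}^{-1}\circ\widetilde{RO}\circ\widetilde{SD}^{-1}\circ (VM^2)\circ TF$, where the tilded maps are the ones obtained by the change of variable $x\mapsto x^{1/m}$ (for $\widetilde{CMB}$) and the substitution of $VM^2$ for $VM$ (so that $VM$ no longer carries a square root). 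Writing out $\widetilde{SD},\widetilde{RO},\widetilde{CMB}$ explicitly from the formulas \eqref{Eq:SD}, \eqref{Eq:RO}, \eqref{Eq:CMB} and the definition of $VM$ gives exactly the three one-dimensional maps stated in the lemma; this is a routine bookkeeping computation that I would present in a displayed \verb|align*| but not dwell on.

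Next I would establish smoothness away from $\ker(TF)$. The map $TF:\R{3\times 3}\to\R{3\times 3}$ is linear, hence $C^\infty$. The map $\sigma'\mapsto VM^2(\sigma') = \tfrac{3}{2}\sigma':\sigma'$ is a polynomial (quadratic form), hence $C^\infty$, and on $\R{3\times 3}\setminus\ker(TF)$ it takes values in $\R{+}=(0,\infty)$. It therefore suffices to show that the one-dimensional composite $g:=\widetilde{CMB}^{-1}\circ\widetilde{RO}\circ\widetilde{SD}^{-1}$ is $C^4$ on $(0,\infty)$. Here I would argue as follows: $\widetilde{SD}$ maps $(0,\infty)$ bijectively and $C^\infty$-diffeomorphically onto $(0,\infty)$, because its derivative $1 + \tfrac{E(\hat n+1)}{2\hat n K^{1/\hat n}}x^{(1-\hat n)/(2\hat n)}$ is strictly positive there (the exponent $(1-\hat n)/(2\hat n)>0$ since $\hat n\le 1/7<1$, so the power function is $C^\infty$ on $(0,\infty)$); by the inverse function theorem $\widetilde{SD}^{-1}\in C^\infty((0,\infty),(0,\infty))$. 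The map $\widetilde{RO}$ is $C^\infty$ on $(0,\infty)$ and strictly increasing (all three exponents $1,\tfrac{\hat n+1}{2\hat n},\tfrac1{\hat n}$ are positive and the coefficients are positive), so $\widetilde{RO}^{-1}$ exists and is $C^\infty$; similarly $\widetilde{CMB}$ is $C^\infty$ and strictly monotone on $(0,\infty)$ — here the exponents $-\tfrac{2b}{m},-\tfrac{b+c}{m},-\tfrac{2c}{m}$ are positive since $b,c<0$, and the hypothesis $-4\tfrac{2b}{m}<1$ guarantees the relevant constants are well-defined — so $\widetilde{CMB}^{-1}\in C^\infty$. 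Composing, $g\in C^\infty((0,\infty))$, in particular $C^4$. Combining with the chain rule (Lemma~\ref{Diff_Banach_Space:Lem:CR}) for the composition $g(VM^2(TF(\cdot)))$ yields $\mathcal{F}^{\mathrm{lcf}}\in C^4(\R{3\times 3}\setminus\ker(TF))$, and the stated first-derivative formula follows from $\tfrac{\partial}{\partial\sigma}\bigl(VM^2\circ TF\bigr)(\sigma_0) = 3\,TF(\sigma_0)$ together with the scalar chain rule, since $\tfrac{\partial}{\partial\sigma'}VM^2(\sigma')=3\sigma'$ and $TF$ is its own derivative.

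The main obstacle — and the one part deserving genuine care — is the behaviour at $\ker(TF)$, i.e. showing that $\tfrac{\partial\mathcal{F}^{\mathrm{lcf}}}{\partial\sigma}$ extends continuously (by $0$) across the set where $VM^2\circ TF$ vanishes. The issue is that $g$ blows up as $x\downarrow 0$ (infinite life: $N_{det}\to\infty$, so $\mathcal{F}^{\mathrm{lcf}}=g\to 0$, but one must check $g(x)\to 0$ and, more delicately, $g'(x)\cdot x^{1/2}$-type terms stay bounded after multiplication by the vanishing factor $TF(\sigma_0)$). The strategy is an asymptotic analysis near $x=0$: from the explicit formulas, as $x\downarrow 0$ the dominant balance in $\widetilde{SD}^{-1}$, $\widetilde{RO}$, $\widetilde{CMB}^{-1}$ gives $g(x)\sim c\,x^{\alpha}$ with $\alpha>0$ a product of the exponents (schematically $\alpha = \tfrac{m}{(-2b)}\cdot\tfrac{1}{\hat n}\cdot\tfrac{2\hat n}{\hat n+1}$ or the corresponding leading-term combination), hence $g(x)\to 0$ and $g'(x)\sim c\alpha\,x^{\alpha-1}$; then one checks that $\alpha>1/2$ (this is where the numerical bounds $\hat n\le 1/7$ and the sign conditions on $b,c$ enter, exactly as in the heuristic estimate at the end of Section~\ref{Reliability:Sec:Reg_Req} that gave exponents $\sim 13m$), so that $g'(VM^2(TF(\sigma)))\cdot 3\,TF(\sigma)\to 0$ as $\sigma\to\ker(TF)$ because $\|TF(\sigma)\|\sim VM(TF(\sigma))\sim x^{1/2}$ and $x^{\alpha-1}\cdot x^{1/2}=x^{\alpha-1/2}\to 0$. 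Defining $\tfrac{\partial\mathcal{F}^{\mathrm{lcf}}}{\partial\sigma}:=0$ on $\ker(TF)$ then gives a continuous extension, completing the proof. I would present the asymptotic step compactly, citing the elementary power-function estimates rather than computing every constant.
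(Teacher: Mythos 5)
Your overall route coincides with the paper's: the same rewriting $\mathcal{F}^{\mathrm{lcf}} = \tilde{CMB}^{-1}\circ\tilde{RO}\circ\tilde{SD}^{-1}\circ VM^{2}\circ TF$, smoothness away from $\ker(TF)$ because all one-dimensional maps are smooth with nonvanishing derivative on $(0,\infty)$, and the first-derivative formula via $D(VM^{2}\circ TF)(\sigma_0)[M]=3\,TF(\sigma_0):M$ (note that in reducing $3\,TF(\sigma_0):TF(M)$ to $3\,TF(\sigma_0):M$ you implicitly use that $TF$ is a self-adjoint projection; this deserves one line, and the paper avoids it by differentiating the explicit expression $\tfrac{3}{2}\tr(\sigma^{2})-\tfrac{1}{2}\tr(\sigma)^{2}$ directly). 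The genuine divergence is the extension across $\ker(TF)$. The paper shows that the scalar derivative $g^{(1)}=(\tilde{CMB}^{-1}\circ\tilde{RO}\circ\tilde{SD}^{-1})^{(1)}$ itself tends to $0$ as $x\downarrow 0$, using the inverse-function identities $(\tilde{SD}^{-1})^{(1)}(0)=1$, $\tilde{RO}^{(1)}(0)=1/E^{2}$ and $(\tilde{CMB}^{-1})^{(1)}(y)=1/\tilde{CMB}^{(1)}(\tilde{CMB}^{-1}(y))\to 0$, which holds because $\tilde{CMB}^{(1)}(x)\to\infty$ as $x\downarrow 0$ (this needs only $-2b/m<1$, a consequence of the stated hypothesis). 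With that, the product with $3\,TF(\sigma_0)$ extends by $0$ immediately, and your factor $\Norm{TF(\sigma)}{}\sim x^{1/2}$ together with the threshold $\alpha>1/2$ is unnecessary.

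Your asymptotic variant can be made rigorous, but as written it has two soft spots. First, the step ``$g(x)\sim c\,x^{\alpha}$ hence $g'(x)\sim c\alpha x^{\alpha-1}$'' is not legitimate: asymptotic relations cannot be differentiated, so you must in any case fall back on the explicit chain-rule/inverse-derivative computation above — at which point the paper's shorter argument is already complete. Second, your schematic exponent is off: near $x=0$ the linear terms of $\tilde{SD}$ and $\tilde{RO}$ dominate (their other exponents $(\hat{n}+1)/(2\hat{n})$ and $1/\hat{n}$ exceed $1$), so $\alpha=-m/(2b)$ with no $\hat{n}$-dependence. Relatedly, the hypotheses $\hat{n}\le 1/7$ and $-8b/m<1$ do not serve to make constants well defined; in the paper's proof their full strength is used only to extend the second through fourth derivatives of the one-dimensional composite to $x=0$ (via $\lfloor(\hat{n}+1)/(2\hat{n})\rfloor\ge 4$ and $2kb/m+1>0$), an extension your argument does not attempt and which the statement, strictly read, does not require.
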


\begin{proof}
	Let us first recall the equations and the definition of $ N_{det} $:
	\begin{align*}
	\left.
	\begin{array}{l l}
	\hat{\sigma}= TF(\sigma)=\displaystyle \sigma-\frac{1}{3}\tr(\sigma)\mathrm{I}, & TF : \R{3 \times 3} \to \R{3 \times 3} \\
	\displaystyle \sigma_v= VM(\hat{\sigma})=\sqrt{\frac{3}{2} \hat{\sigma}: \hat{\sigma}}, &  VM: \R{3 \times 3} \to \R{+}_{0}  \\
	\displaystyle \sigma_v=SD(\sigma^{el-pl})=\sqrt{( \sigma^{el-pl})^2+\frac{E}{K^{\nicefrac{1}{\hat{n}}}} \left(\sigma^{el-pl}\right)^{1+1/\hat{n}}}, & SD: \R{+}_{0} \to \R{+}_{0}  
	\end{array} \right.\\
	\left.
	\begin{array}{l l}
	\displaystyle 
	\varepsilon^{el-pl}=RO(\sigma^{el-pl})=\frac{ \sigma^{el-pl}}{E}+\left(\frac{\sigma^{el-pl}}{K}\right)^{1/\hat{n}}, & RO:\R{+}_{0} \to \R{+}_{0}  \\
	\displaystyle\varepsilon^{el-pl}=CMB(N_{det})=\frac{\sigma_f'}{E}(2N_{det})^b+\varepsilon_f'(2N_{det})^c, & CMB:\R{+} \to \R{+}.
	\end{array} \right.
	\end{align*}
	
	\noindent Noting that $ \hat{\sigma} = 0 $ if and only of $\sigma_{v} =0$
	it is obvious that $\mathcal{F}^{\mathrm{lcf}} $ is continuously differentiable in case of $ \hat{\sigma}\neq 0  $ since then $ \mathcal{F}^{\mathrm{lcf}} $ is a composition of continuously differentiable functions. Thus, we only have to care about the case when $ \hat{\sigma} = 0 $ since $ VM $ is not differentiable at $ \hat{\sigma}=0 $.  We can manage this situation rewriting the function such that it becomes quadratic in $ VM $:
	
	\noindent We define
	\begin{align*}
	\tilde{SD}: \R{+}_{0} \to \R{+}_{0}, &&  
	\tilde{RO}: \R{+}_{0} \to \R{+}_{0},\, && 
	\tilde{CMB}: \R{+}_{0} \to \R{+}_{0}
	\end{align*}   
	as above. All these functions are strictly monotonically increasing, continuous on $ \R{+} $ and thus bijective. Moreover, 
	\begin{align*}
	\sigma_{v}^{2} = ( \sigma^{el-pl})^2+\frac{E}{K^{\frac{1}{\hat{n}}}} \left(\frac{(\sigma^{el-pl})^{2}}{K}\right)^{\frac{\hat{n}+1}{2\hat{n}}} \vspace*{-5mm}=  \tilde{SD}((\sigma^{el-pl})^2) \Leftrightarrow  \tilde{SD}^{-1}(\sigma_v^2) = &(\sigma^{el-pl})^2, \\[1ex]
	RO(\sigma^{el-pl})^2  =  \left(\frac{\sigma^{el-pl}}{E}\right)^2 + \frac{2}{EK^{\frac{1}{\hat{n}}}} (\sigma^{el-pl})^{\frac{(\hat{n}+1)}{\hat{n}}} + \frac{1}{K^{\frac{2}{\hat{n}}}} (\sigma^{el-pl})^{\frac{2}{\hat{n}}}   =\tilde{RO}&((\sigma^{el-pl})^2)
	\intertext{ and }
	(\varepsilon^{el-pl})^2 = CMB(N_{det}(\sigma))^2 = \tilde{CMB}\left(\frac{1}{N_{det}(\sigma)^m}\right) \Leftrightarrow \frac{1}{N_{det}(\sigma)^m}  = \tilde{CMB}^{-1}&((\varepsilon^{el-pl})^2).
	\intertext{Therefore $ (\varepsilon^{el-pl})^2 =RO(\sigma^{el-pl})^2$ and $ \sigma_{v}^2 =  (VM^{2} \circ TF)(\sigma) $  implies}
	\frac{1}{N_{det}(\sigma)^m} = (\tilde{CMB}^{-1} \circ \tilde{RO} \circ \tilde{SD}^{-1})(\sigma_v^2) =( \tilde{CMB}^{-1} \circ \tilde{RO} \circ \tilde{SD}^{-1})( ( VM^{2}& \circ TF)(\sigma)).
	\end{align*}
	The mappings $ VM^{2} $ and $ TF $ are everywhere (especially at any $ M $ with $ TF(M)=0 $) continuously differentiable and therefore we only have to show that $  \tilde{CMB}^{-1} \circ \tilde{RO} \circ \tilde{SD}^{-1} $ can be extended to a continuously differentiable function at $ \sigma_v^2=x= 0 $. We argue by the rule on differentiability of inverse functions: For notational simplicity we denote the $ k $-th order derivative of a function $ f:\R{} \to \R{} $ by $ f^{(k)}(x) $.  
	
	We have $ \tilde{SD}(0) = 0 $ and $ \tilde{SD} $ is differentiable at $ 0 $ with derivative $ SD^{(1)}(0) = 1 $ which leads to
	$$ (SD^{-1})^{(1)}(0) = \frac{1}{\tilde{SD}^{(1)}(\tilde{SD}^{-1}(0))} = 1$$
	and the continuity of $  \tilde{SD}^{(1)} $ at $ x=0 $ and $ \tilde{SD}^{-1} $ at $y= 0 $ imply that $ \tilde{SD}^{-1} $ is continuously differentiable at $y= 0 $. It is obvious that $ \tilde{RO} \in C^{1}(\R{+}_{0},\R{+}_{0}) $ with $ \tilde{RO}^{(1)}(0) = \frac{1}{E^2} $. Therefore we only have to show that $ \tilde{CMB}^{-1} $ satisfies the requirements:
	$ \tilde{CMB} $ is continuous and bijective with $ \tilde{CMB}(0)=0 $. Moreover it is continuously differentiable on $ \R{+} $ and satisfies $ \lim_{x \to 0^{+}} \tilde{CMB}^{(1)}(x) = \infty $. Thus $$ \left(\tilde{CMB}^{-1}\right)^{(1)}(y) = \frac{1}{\tilde{CMB}^{(1)}(\tilde{CMB}^{-1}(y))} $$ is continuous on $ \R{+} $ and since $ \lim_{y \to 0^{+}} \tilde{CMB}^{-1}(y)=0 $ we have
	\[ \lim_{y \to 0^{+}}  \left(\tilde{CMB}^{-1}\right)^{(1)}(y) =\lim_{y \to 0^{+}} \frac{1}{\tilde{CMB}^{(1)} (\tilde{CMB}^{-1}(y))} =0.  
	\] 
	Thus we define $ \left(\tilde{CMB}^{-1}\right)^{(1)}(0) :=0. $
	
	\noindent For the higher order derivatives we precede analogously but we only carry out the case of second order derivatives in detail. We use 
	\begin{align*}
	(f^{-1})^{(2)}(y) &=- \frac{f^{(2)}(x)}{f^{(1)}(x)^3}\\[1em]
	(f^{-1})^{(3)}(y) &= \frac{f^{(3)}(x)}{f^{(1)}(x)^{4}} - \frac{3 f^{(2)}(x)^2}{f^{(1)}(x)^5} \\[1em]
	(f^{-1})^{(4)}(y) &= \frac{f^{(4)}(x)}{f^{(1)}(x)^5} + \frac{2 f^{(2)}(x) f^{(3)}(x)}{f^{(1)}(x)^6} - \frac{15 f^{(2)}(x)^3}{f^{(1)}(x)^7}
	\end{align*}
	with $ x = f^{-1}(y) $. 
	For $ SD^{-1} $ we observe that   
	\[ \left(\tilde{SD}^{-1}\right)^{(2)}(0) = \left(\tilde{SD}^{-1}\right)^{(3)}(0) =  \left(\tilde{SD}^{-1}\right)^{(4)}(0)= 0  \]
	since $ \tilde{SD}^{(1)}(0)=1 $ and the higher order derivatives (up to degree $  k=\lfloor \frac{\hat{n} +1}{2\hat{n}}\rfloor  $) of $ \tilde{RO} $ at $ \tilde{SD}^{-1}(0)=0 $ are also 0. 
	Hence, when calculating the second derivative of $ \tilde{CMB}^{-1} \circ \tilde{RO} \circ \tilde{SD}^{-1} $  at $ \sigma_v^2=0 $ by the chain rule, all terms despite of $ \frac{1}{E^2}\left(\tilde{CMB}^{-1}\right)^{(2)} $ vanish and we only have to consider 
	$$ \lim_{y \to 0^{+}}\left(\tilde{CMB}^{-1}\right)^{(2)}(y) = \lim_{y \to 0^{+}} \frac{\tilde{CMB}^{(2)}(\tilde{CMB}^{-1}(y))}{ \tilde{CMB}^{(1)}(\tilde{CMB}^{-1}(y))^3 } = \lim_{x \to 0^{+}}  \frac{\tilde{CMB}^{(2)}(x)}{ \tilde{CMB}^{(1)}(x))^3 } .$$ This is due to the continuity of $ \tilde{CMB}^{-1}(y) $ at $ y=0^{+} $.  The $ k $-th order derivative of $ \tilde{CMB} $ is given by 
	\[ \tilde{CMB^{(k)}(x)} = \frac{c_1^{k} + c_2^{k}x^{\frac{b-c}{m}} + c_3^{k}x^{\frac{2b-2c}{m}}}{x^{\frac{2b}{m}+k}} ,\, x>0\]
	for some constants $ c_i^{k} $. This leads to 
	\begin{align*}
	\frac{\tilde{CMB}^{(2)}(x)}{ \tilde{CMB}^{(1)}(x))^3 }
	=
	\frac{\left(c_1^{2} + c_2^{2}x^{\frac{b-c}{m}} + c_3^{2}x^{\frac{2b-2c}{m}}\right)}{\left(c_1^{1} + c_2^{1}x^{\frac{b-c}{m}} + c_3^{1}x^{\frac{2b-2c}{m}}\right)^{3}} \frac{x^{3(\frac{2b}{m}+1)}}{x^{\frac{2b}{m}+2}}   
	=
	\frac{\overbrace{\left(c_1^{2} + c_2^{2}x^{\frac{b-c}{m}} + c_3^{2}x^{\frac{2b-2c}{m}}\right)}^{ \to c_1^{2}}}{\underbrace{\left(c_1^{1} + c_2^{1}x^{\frac{b-c}{m}} + c_3^{1}x^{\frac{2b-2c}{m}}\right)^{3}}_{\to c_{1}^{1}}} \underbrace{x^{\frac{4b}{m} +1}}_{\to 0} 
	\end{align*} 
	because $ c<b<0 $ implies that $ b+3c< 2b + 2c < 3b+c <0 $ and thus $ \frac{b+3c}{m} + k< \frac{2b + 2c}{m} + k < \frac{3b+c}{m}  + k $. Moreover, we use $ \frac{4b}{m} +1 >0 $.
	Thus $ (\tilde{CMB}^{-1})^{(2)}(0):=0 $. We repeat these arguments to deduce the assertion for higher order derivatives. 
	
	\noindent To calculate the first Gâteaux-derivative, we apply chain rule making use of $ D\tr(\sigma)[M] = \tr(M)  $ and $ D (\sigma:\sigma)[M] =2\sigma:M $ for any $ \sigma,\, M \in \R{3 \times 3} $.  Computing $$ (VM^{2} \circ TF)(\sigma) =\frac{3}{2} \tr(\sigma^2) - \frac{1}{2}\tr(\sigma)^2 $$ thus leads to $ D[VM^2 \circ TF](\sigma)[M] = 3(\sigma - \frac{1}{3}\tr(\sigma)I):M = 3TF(\sigma):M. $
\end{proof}


\begin{lem}\label{Shape_Grad_LinEl:Prop: Differentiability Ceramic}
	Let $ \mathfrak{n} \in  \mathbb{S}^{2} $ and $ m>1 $. The functional 
	\[ \R{3 \times 3} \to \R{}, ~~~~~ \sigma \mapsto \mathcal{F}^{\mathrm{cer}}(\sigma) = \int_{\mathbb{S}^2} \left(\frac{\sigma^{+}_{n} }{\sigma_c}\right)^m \, dS_{\mathbb{S}^{2}}  ~~~\text{ with } \sigma^{+}_{n} =\max\{ 0 , \sigma_n \},\, \sigma_n:= \mathfrak{n}^{\top} \sigma \mathfrak{n} \] is $ \lfloor m \rfloor $ times differentiable w.r.t. $ \sigma \in \R{3 \times 3} $.
	
	The first derivative is given by 
	\[ \frac{\partial \mathcal{F}^{\mathrm{cer}}}{\partial \sigma}(\sigma_0) 
	:= \begin{cases}
	\int_{\mathbb{S}^2} m (\mathfrak{n}^{\top} \sigma_0 \mathfrak{n})^{m-1} \,\mathfrak{n} \mathfrak{n}^{\top} \, dS_{\mathbb{S}^2}& \text{ if }  \mathfrak{n}^{\top} \sigma_{0} \mathfrak{n} >0 \\ 
	0 & \text{ if } \mathfrak{n}^{\top} \sigma_{0} \mathfrak{n} \leq 0. 
	\end{cases} \]  
\end{lem}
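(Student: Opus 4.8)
The plan is to reduce the claim, direction by direction, to the regularity of a single scalar function and then to differentiate under the integral $\int_{\mathbb{S}^{2}}\,\cdot\,dS_{\mathbb{S}^{2}}$. For fixed $\mathfrak n\in\mathbb{S}^{2}$ the integrand is $\sigma_{c}^{-m}\,\phi\bigl(\mathfrak n^{\top}\sigma\mathfrak n\bigr)$, where $\sigma\mapsto\mathfrak n^{\top}\sigma\mathfrak n=\sigma:(\mathfrak n\mathfrak n^{\top})$ is linear and continuous on $\R{3\times3}$ and $\phi\colon\R{}\to\R{}$, $\phi(x):=(x^{+})^{m}=(\max\{0,x\})^{m}$. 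First I would show, by induction on $k$ and by computing one-sided derivatives at the only critical point $x=0$, that $\phi\in C^{\lfloor m\rfloor}(\R{})$ with $\phi^{(k)}(x)=m(m-1)\cdots(m-k+1)\,(x^{+})^{m-k}$ for $0\le k\le\lfloor m\rfloor$; the point is that the exponent $m-k\ge m-\lfloor m\rfloor\ge0$ keeps $x\mapsto(x^{+})^{m-k}$ continuous and (for $m\notin\N{}$) makes the left and right $k$-th derivatives at $0$ coincide. Since $\sigma\mapsto\mathfrak n^{\top}\sigma\mathfrak n$ is linear it is Fréchet differentiable with constant derivative $M\mapsto\mathfrak n^{\top}M\mathfrak n$ by Lemma~\ref{Diff_Banach_Space:Lem:G_Diff_Linear_Map}, so the chain rule Lemma~\ref{Diff_Banach_Space:Lem:CR} gives that
\[
g_{\mathfrak n}\colon\R{3\times3}\to\R{},\qquad g_{\mathfrak n}(\sigma):=\sigma_{c}^{-m}\,\phi\bigl(\mathfrak n^{\top}\sigma\mathfrak n\bigr),
\]
is $\lfloor m\rfloor$ times continuously Fréchet differentiable on all of $\R{3\times3}$, with $Dg_{\mathfrak n}(\sigma_{0})[M]=m\,\sigma_{c}^{-m}\bigl((\mathfrak n^{\top}\sigma_{0}\mathfrak n)^{+}\bigr)^{m-1}\mathfrak n^{\top}M\mathfrak n$, and with $D^{k}g_{\mathfrak n}(\sigma_{0})$ a $k$-linear form whose norm is bounded, on each bounded subset of $\R{3\times3}$, by a constant depending only on $k$, $m$, $\sigma_{c}$ and the bound — in particular uniformly in $\mathfrak n$, since $\Norm{\mathfrak n}{}=1$.

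Next I would pass the derivatives through the integral. The measure space $(\mathbb{S}^{2},\mathcal B(\mathbb{S}^{2}),dS_{\mathbb{S}^{2}})$ is finite, so all three hypotheses of Proposition~\ref{Diff_Banach_Space:Prop:Diff_Parameter_Integrals} — integrability of $\mathfrak n\mapsto g_{\mathfrak n}(\sigma)$ for every $\sigma$, existence of the partial $F$-derivative $\partial_{\sigma}g_{\mathfrak n}(\sigma_{0})$ for every $\mathfrak n$, and a $\sigma$-local $L^{1}(\mathbb{S}^{2})$-majorant for $\Norm{\partial_{\sigma}g_{\mathfrak n}(\sigma)}{(\R{3\times3})'}$ — follow at once from the previous step together with the continuity of $(\mathfrak n,\sigma)\mapsto g_{\mathfrak n}(\sigma)$ and the compactness of $\mathbb{S}^{2}$; a constant majorant already suffices. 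Hence $\mathcal F^{\mathrm{cer}}$ is $F$-differentiable at every $\sigma_{0}$ and
\[
D\mathcal F^{\mathrm{cer}}(\sigma_{0})[M]=\int_{\mathbb{S}^{2}}Dg_{\mathfrak n}(\sigma_{0})[M]\,dS_{\mathbb{S}^{2}}=\int_{\mathbb{S}^{2}}\frac{m}{\sigma_{c}^{m}}\bigl((\mathfrak n^{\top}\sigma_{0}\mathfrak n)^{+}\bigr)^{m-1}\mathfrak n^{\top}M\mathfrak n\,dS_{\mathbb{S}^{2}},
\]
which, identified with an element of $\R{3\times3}$ via the Frobenius product, is exactly the claimed gradient (up to the overall constant $\sigma_{c}^{-m}$), the integrand vanishing on $\{\mathfrak n:\mathfrak n^{\top}\sigma_{0}\mathfrak n\le0\}$ because $m>1$. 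For the higher derivatives I would iterate: applying Proposition~\ref{Diff_Banach_Space:Prop:Diff_Parameter_Integrals} to the finitely many scalar components of $\sigma\mapsto D^{k-1}g_{\mathfrak n}(\sigma)$ — whose $\sigma$-derivative and uniform majorant are again supplied by the first step — gives, by induction on $k\le\lfloor m\rfloor$, that $\mathcal F^{\mathrm{cer}}\in C^{k}(\R{3\times3})$ with $D^{k}\mathcal F^{\mathrm{cer}}(\sigma_{0})=\int_{\mathbb{S}^{2}}D^{k}g_{\mathfrak n}(\sigma_{0})\,dS_{\mathbb{S}^{2}}$; continuity of $\sigma\mapsto D^{k}\mathcal F^{\mathrm{cer}}(\sigma)$ then follows from Proposition~\ref{Diff_Banach_Space:Cor:Continuity of parameter Integrals} (dominated convergence) applied componentwise.

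The main obstacle is the very first step, the sharp regularity of $\phi(x)=(x^{+})^{m}$ at $x=0$: one must track exactly how many one-sided derivatives match there, and be honest that for $m\in\N{}$ the global regularity drops to $C^{m-1}$ rather than $C^{m}=C^{\lfloor m\rfloor}$. Once $\phi\in C^{\lfloor m\rfloor}$ is established, everything downstream is the same routine chain-rule-plus-differentiation-under-the-integral argument, with trivial constant majorants on the finite-measure sphere, that underlies the proof of Lemma~\ref{Shape_Grad_LinEl:Prop: Differentiability LCF}. A minor point worth including as a remark is that near any $\sigma_{0}$ with $\mathfrak n^{\top}\sigma_{0}\mathfrak n>0$ for all $\mathfrak n\in\mathbb{S}^{2}$ the integrand is smooth, so $\mathcal F^{\mathrm{cer}}$ is even $C^{\infty}$ there, and the bound $\lfloor m\rfloor$ only binds at matrices that fail to be positive on the whole sphere.
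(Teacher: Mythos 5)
Your proposal is correct and follows essentially the same route as the paper: write the integrand as $\sigma_c^{-m}\,\phi(\mathfrak{n}^{\top}\sigma\mathfrak{n})$ with $\phi(x)=(\max\{0,x\})^{m}$ composed with the linear map $\sigma\mapsto\mathfrak{n}^{\top}\sigma\mathfrak{n}$, check differentiability of $\phi$ at the kink $x=0$ using $m>1$, apply the chain rule, and exchange derivative and integral over $\mathbb{S}^{2}$. You are merely more explicit than the paper about the dominated-convergence justification for differentiating under the integral and about the higher-order derivatives and the integer-$m$ caveat, which the paper's proof (treating only the first derivative) glosses over.
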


\begin{proof} We have to show that  $f^{\mathrm{cer}}(\sigma) = \left(\frac{\sigma^{+}_{n} }{\sigma_c}\right)^m$ is continuously differentiable,  since then the order of the Gâteaux derivative and the integral over $ \mathbb{S}^2 $ can be changed.
	
	We set $ f(x):=\max \{0,x\}^{m} $, $ x \in \R{} $ and $ F(\sigma)= \mathfrak{n}^{\top} \sigma \mathfrak{n} $, $ \sigma \in \R{3 \times 3} $. Then 
	$ f^{\mathrm{cer}}(\sigma) = \sigma_c^{-m} f(\mathfrak{n}^{\top} \sigma \mathfrak{n}) = f \circ F (\sigma)$. The mapping, $ F:  \R{3 \times 3} \to \R{},\, \sigma \mapsto \mathfrak{n}^{\top} \sigma \mathfrak{n} $ is linear and continuous, consider also Example \ref{Diff_Banach_Space:Exmp:Diff_Gradient}. Thus $ F $ is Fréchet differentiable with $ DF(\sigma_0)[\sigma] = \mathfrak{n}^{\top} \sigma \mathfrak{n} =( \mathfrak{n}^{\top} \mathfrak{n} ): \sigma $, i.e. $\frac{\partial F}{\partial \sigma_{ij}}(\sigma) = \mathfrak{n}\mathfrak{n}^{\top} $. Regarding the function  $ f $ we only have to consider the case $ x =0 $ since $ f^{(1)}(x) = 0 $ on $ (-\infty,0) $ and  $ f^{(1)}(x) = mx^{m-1} $ on $ (0, \infty) $. But, because $ m >1 $, we observe that  $\lim_{h \to 0} \frac{f(0+h) - f(0)}{h} = \lim_{h \to 0} h^{m-1} =0 $
	is the derivative if $ f $ at $ x=0 $. Thus $ f^{(1)} $ is also continuously differentiable at $ x=0 $. Then, the assertion follows by chain rule. 
\end{proof}

\begin{prop}\label{Shape_Grad_LinEl:Prop: Shape Deriv Llcf Jcer}
	Let $ \Omega $ be of class $C^{3} $, $ V \in C^{3}_{0}(\Oext,\R{3}) $, $ f \in C^{1,\phi}(\overline{\Oext},\R{3}) $ and $ g \in C^{2,\phi}(\overline{\Oext},\R{3}) $ for some $ 0 < \phi<1 $. \begin{itemize}
		\item[i)] If $ m>1 $, then $ J^{\mathrm{cer}} $ is shape differentiable.
		\item[ii)] Suppose that the assumptions of Lemma \ref{Shape_Grad_LinEl:Prop: Differentiability LCF} is satisfied. Then also $ J^{\mathrm{lcf}} $ are shape differentiable.
	\end{itemize}
\end{prop}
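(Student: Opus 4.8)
The plan is to recognise both $J^{\mathrm{cer}}$ and $J^{\mathrm{lcf}}$ as first-order local cost functionals subject to the linear elasticity constraint \eqref{Reliability:Eq:LinEl}, and then to combine the Hölder material derivatives of Theorem \ref{Ex_Shape_Deriv_LinEl:Thm: q^t C^3,phi material derivative of u_t} with the shape-derivative formula of Proposition \ref{Shape_Grad_LinEl:Prop: Shape gradient material derivative form}. First I would note that $J^{\mathrm{cer}}(\Omega)=\int_{\Omega}\mathcal{F}_{vol}(\cdot,u,\sigma(u))\,dx$ with $\mathcal{F}_{vol}(x,y,\sigma)=\mathcal{F}^{\mathrm{cer}}(\sigma)$ and $\mathcal{F}_{sur}\equiv 0$, while $J^{\mathrm{lcf}}(\Omega)=\int_{\Gamma}\mathcal{F}_{sur}(\cdot,u,\sigma(u))\,dS$ with $\mathcal{F}_{sur}(x,y,\sigma)=\mathcal{F}^{\mathrm{lcf}}(\sigma)$ and $\mathcal{F}_{vol}\equiv 0$. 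By Lemma \ref{Shape_Grad_LinEl:Prop: Differentiability Ceramic} and $m>1$ the map $\mathcal{F}^{\mathrm{cer}}$ is $C^{1}$ on $\R{3\times3}$, and by Lemma \ref{Shape_Grad_LinEl:Prop: Differentiability LCF} (under its hypotheses on the LCF parameters) so is $\mathcal{F}^{\mathrm{lcf}}$ — the only delicate locus, $\ker(TF)$, being handled there — so in both cases $\mathcal{F}_{vol},\mathcal{F}_{sur}\in C^{1}(\R{3}\times\R{3}\times\R{3\times3})$, which is exactly the hypothesis of Proposition \ref{Shape_Grad_LinEl:Prop: Shape gradient material derivative form}. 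Since the displacement will turn out to be of class $C^{2,\phi}\subset C^{1}$ (next step), both are genuine first-order local cost functionals in the sense of Definition \ref{Transf_shape_opt:Defn: Local Cost Functionals}, well defined on the admissible shapes, and $u\in C^{2,\phi}$ is far more regular than the Hölder exponent needed for $J^{\mathrm{lcf}}$ to be finite.

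Next I would check that the standing assumptions — $\Omega$ a baseline design of class $C^{3}$, $V\in C^{3}_{0}(\Oext,\R{3})\subset\Vad{3}{\Oext}$, $f\in C^{1,\phi}(\overline{\Oext},\R{3})$, $g\in C^{2,\phi}(\overline{\Oext},\R{3})$ — are precisely those of Theorem \ref{Ex_Shape_Deriv_LinEl:Thm: q^t C^3,phi material derivative of u_t} with $k=2$. This produces an $\epsilon>0$ such that for $t\in(-\epsilon,\epsilon)$ the pulled-back solution $u^{t}=u_{t}\circ T_{t}\in C^{2,\phi}(\overline\Omega,\R{3})$ has a strong material derivative $\dot u^{t}=q^{t}\in C^{2,\phi}(\overline\Omega,\R{3})$ (solving \eqref{Ex_Shape_Deriv_LinEl:Eq: PDE for dot_u} at $t=0$), with $t\mapsto\dot u^{t}$ continuous in $C^{2,\varphi}$ for $0<\varphi<\phi$ and $t\mapsto u^{t}$ Fréchet differentiable in $C^{2,\varphi}$ (Corollary \ref{Ex_Shape_Deriv_LinEl:Cor: Fréchet diff u^t}). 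In particular the $C^{1}$ material derivative exists and depends continuously on $t$, so Proposition \ref{Shape_Grad_LinEl:Prop: Shape gradient material derivative form} — or, for $V$ not subject to $V_{\vec n}=0$ on $\Gamma_D$, its underlying ingredient Lemma \ref{Transf_shape_opt:Lem: d/dt int(T_t,u_t,Du_t) in material derivative form} applied to the integrand obtained by composing $\mathcal{F}_{vol}$ (resp. $\mathcal{F}_{sur}$) with the stress map, using the material-derivative rule $\sigma(\dot u)-(DuDV)^{\sigma}$ of Corollary \ref{Transf_shape_opt:Lem: dot(div), dot(sigma)} — shows that $t\mapsto J^{\mathrm{cer}}(\Omega_{t})$ and $t\mapsto J^{\mathrm{lcf}}(\Omega_{t})$ are Fréchet differentiable at $t=0$ with the explicit formula of Proposition \ref{Shape_Grad_LinEl:Prop: Shape gradient material derivative form}. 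This gives the existence of $dJ^{\mathrm{cer}}(\Omega)[V]$ and $dJ^{\mathrm{lcf}}(\Omega)[V]$ for every $V\in C^{3}_{0}(\Oext,\R{3})$, i.e. the first clause of Definition \ref{Transf_shape_opt:Defn:Euler_Deriv}~(ii).

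It then remains to establish that $V\mapsto dJ(\Omega)[V]$ is linear and continuous on $C^{3}_{0}(\Oext,\R{3})$. For linearity I would observe that the right-hand side data of \eqref{Ex_Shape_Deriv_LinEl:Eq: PDE for dot_u} depend linearly on $V$: indeed $\dot\varepsilon(u)=-\tfrac12(DuDV+(DuDV)^{\top})$ is linear in $V$, and $f_{V}$, $f_{u}$, $g_{V}$, $G_{u}$ are all built linearly from $V$, $DV$ and $\dot\varepsilon(u)$; by uniqueness of the solution of \eqref{Ex_Shape_Deriv_LinEl:Eq: PDE for dot_u}, $V\mapsto\dot u(\Omega;V)$ is linear, and the shape-derivative formula is linear in the tuple $(V,DV,\dot u,\,\sigma(\dot u)-(DuDV)^{\sigma})$, hence in $V$. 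For continuity I would use that, with $u=u(\Omega)\in C^{2,\phi}(\overline\Omega,\R{3})$ held fixed, the coefficient functions $\tfrac{\partial\mathcal{F}_{vol/sur}}{\partial z_{i}}(\cdot,u,\sigma(u))$ are bounded on $\overline\Omega$ (resp. on $\Gamma$), so the formula is bounded by $C(\Norm{V}{C^{1}(\overline\Omega,\R{3})}+\Norm{\dot u(\Omega;V)}{C^{1}(\overline\Omega,\R{3})})$; the Schauder estimate of Theorem \ref{Linear_Elasticity:Thm:LinEl_Classical_Sol} (cf. Proposition \ref{Ex_Shape_Deriv_LinEl:Prop: properies of solutions q_t of DTP 2 on Omega_t}) for \eqref{Ex_Shape_Deriv_LinEl:Eq: PDE for dot_u} bounds $\Norm{\dot u(\Omega;V)}{C^{2,\phi}}$ by $C(\Norm{f_{V}+f_{u}}{C^{0,\phi}}+\Norm{g_{V}-G_{u}\vec n}{C^{1,\phi}}+\Norm{\dot u(\Omega;V)}{C^{0}})$, and, $u$ being fixed, each summand is $\le C\Norm{V}{C^{3}(\Oext,\R{3})}$ (the $C^{0}$-norm of $\dot u$ being absorbed, exactly as in the proof of Proposition \ref{Ex_Shape_Deriv_LinEl:Prop: properies of solutions q_t of DTP 2 on Omega_t}, via the coercivity bound $\Norm{\dot u(\Omega;V)}{H^{1}}\le C\Norm{V}{C^{3}}$ together with the uniform-cone absorption trick). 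Hence $|dJ^{\mathrm{cer}}(\Omega)[V]|+|dJ^{\mathrm{lcf}}(\Omega)[V]|\le C\Norm{V}{C^{3}(\Oext,\R{3})}$, both clauses of Definition \ref{Transf_shape_opt:Defn:Euler_Deriv}~(ii) hold, and $J^{\mathrm{cer}}$ (for $m>1$) and $J^{\mathrm{lcf}}$ (under the hypotheses of Lemma \ref{Shape_Grad_LinEl:Prop: Differentiability LCF}) are shape differentiable.

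The only genuine work is the continuity estimate of the last step: one must track how the data of \eqref{Ex_Shape_Deriv_LinEl:Eq: PDE for dot_u} depend on $V$ through first derivatives of $V$ and of the fixed displacement $u$, and then chain the interior/boundary Schauder estimate with an $H^{1}$-coercivity bound and the uniform-cone absorption argument of Chapter \ref{Ex_Shape_Deriv_LinEl} to produce a bound linear in $\Norm{V}{C^{3}}$. Everything structural is already available: the $C^{1}$-regularity of $\mathcal{F}^{\mathrm{cer}}$ and $\mathcal{F}^{\mathrm{lcf}}$ is exactly what matches the $C^{2,\phi}$-regularity of $u$ demanded by the first-order shape-derivative formula, the material derivatives live in the right space by Theorem \ref{Ex_Shape_Deriv_LinEl:Thm: q^t C^3,phi material derivative of u_t}, and the regularity bookkeeping for $k=2$ ($u_{t},\dot u_{t}\in C^{2,\phi}$, data of the $\dot u$-equation in $C^{0,\phi}$/$C^{1,\phi}$, integrand derivatives only $C^{0}$) is routine given Chapters \ref{PDE_Systems}, \ref{Transf_shape_opt} and \ref{Ex_Shape_Deriv_LinEl}.
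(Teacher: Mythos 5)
Your proposal is correct and follows essentially the same route as the paper: identify $J^{\mathrm{cer}}$ and $J^{\mathrm{lcf}}$ as first-order local cost functionals, use Lemmas \ref{Shape_Grad_LinEl:Prop: Differentiability LCF} and \ref{Shape_Grad_LinEl:Prop: Differentiability Ceramic} for the $C^{1}$-regularity of the integrands, invoke Theorem \ref{Ex_Shape_Deriv_LinEl:Thm: q^t C^3,phi material derivative of u_t} (with $k=2$) for the Hölder material derivatives, and conclude via Proposition \ref{Shape_Grad_LinEl:Prop: Shape gradient material derivative form}. The only difference is that you additionally verify the linearity and continuity of $V\mapsto dJ(\Omega)[V]$ required by Definition \ref{Transf_shape_opt:Defn:Euler_Deriv}(ii) — via linearity of the data of \eqref{Ex_Shape_Deriv_LinEl:Eq: PDE for dot_u} in $V$ and the Schauder/coercivity bound $\Norm{\dot u(\Omega;V)}{C^{2,\phi}}\le C\Norm{V}{C^{3}}$ — a point the paper's own proof leaves implicit and only treats explicitly in the Hadamard-decomposition theorems of Chapter \ref{Shape_Grad_LinEl}.
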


\begin{proof}
	Under the given assumptions Theorem \ref{Ex_Shape_Deriv_LinEl:Thm: q^t C^3,phi material derivative of u_t} is applicable and the matiaterial derivative $ \dot{u} $ exists in $ C^{2,\phi}(\overline{\Omega},\R{3}) $ w.r.t. the $ C^{2,\varphi} $ topology $ 0 \leq \varphi <\phi $ and $ u' \in C^{1,\phi}(\overline{\Omega},\R{3}) $ according to Corollary \ref{Ex_Shape_Deriv_LinEl:Cor: Shape derivatives for linear elasticity}. Since Lemma \ref{Shape_Grad_LinEl:Prop: Differentiability LCF} and Lemma \ref{Shape_Grad_LinEl:Prop: Differentiability Ceramic} show that the necessary differentiability requirements are satisfied we can apply Proposition \ref{Shape_Grad_LinEl:Prop: Shape gradient material derivative form} and Proposition \ref{Shape_Grad_LinEl:Prop: Shape Deriv shape derivative form}. This implies the assertion.
\end{proof}


\chapter{Hadamard Shape Derivative and Adjoint Equations}\label{Shape_Grad_LinEl}

Let us suppose that $ J $ is a shape differentiable shape functional on some set $ \mathcal{O} $.
Depending on the regularity of the shape and other input data, it is possible to derive a Hadamard decomposition 
\[ dJ(\Omega)[V] = \int_{\Gamma} G(\Gamma)V_{\vec{n}} \, dS =  \int_{\Gamma} \langle G(\Gamma) \vec{n}, V \rangle\, dS \] 
for some function $ G(\Gamma):\Gamma \to \R{} $. If this is not the case, we can still derive the volume representation also known as distributed or weak formulation of the shape derivative \cite{Schmidt2018weak,SturmLaurain2016distributed,DissKW}. 

In the case of smooth domains, the shape space $$ B_{e} := Diff^\infty(\mathbb{S}^{1},\R{2})/ Diff^\infty(\mathbb{S}^{1},\mathbb{S}^{1}) $$ \cite{Schulz2014riemannian} can be considered \cite{Michor2003ShapeManifold,Michor2007Metrics,MichorBauer2014constructing} as a manifold of shapes . In this sense, $ \Gamma $ is the image of an embedding $ e: \mathbb{S}^{1}\to\R{}$ and the intrinsic structure of the shape manifold $ B_{e} $ provides metrics which allow to define a shape gradient w.r.t. a chosen metric, consider \cite{MichorBauer2014constructing,Schulz2016Steklov,Schulz2016metricsComparison}. This concept can be extended to embeddings from $ \mathbb{S}^2 \to \R{3} $ \cite{Michor2003ShapeManifold} and allows to construct gradients w.r.t. the chosen metric on the manifold $ B_e $.

Unfortunately, $ Diff^k(\mathbb{S}^{2},\R{3})/ Diff^k(\mathbb{S}^{2},\mathbb{S}^{2}) $ is no manifold \cite{Michor2003ShapeManifold} since $ Diff^k(\mathbb{S}^{2},\mathbb{S}^{2}) $ is no Lie-Group and thus we have to consider other possibilities to generate descent directions. 
Section \ref{Transf_shape_opt:Sec:L2 decent directions} shows that, under certain circumstances, it seems to be possible to take the $ L^2 $-direction $ - g \vec{n} = W $ as a decent direction in a optimization scheme. If there is no Hadamard decomposition available, then the approach proposed by \cite{SturmLaurain2016distributed} can be applied to find a suitable decent direction. 

In this chapter we discuss under which conditions a Hadamard decomposition 
\[ dJ(\Omega)[V] = \int_{\Gamma} G(\Gamma)V_{\vec{n}} \, dS =  \int_{\Gamma} \langle G(\Gamma) \vec{n}, V \rangle\, dS \] 
can be derived for a general problem
\begin{align*}
\min_{\Omega \in \mathcal{O}_{k}}~~~&  J(\Omega,u,\sigma(u)):= \int_{\Omega} \mathcal{ F}_{vol}(x,u,\sigma(u)) \, dx + \int_{\Gamma} \mathcal{ F}_{sur}(x,u,\sigma(u)) \, dS  \\
\text{ s.t. }~~~ & u = u(\Omega) \text{ solves } \eqref{Reliability:Eq:LinEl} \text{  on } \Omega
\end{align*}
and in which cases only the distributed shape derivative exists. Especially the functionals $ J^{\mathrm{lcf}}(\Omega,u,\sigma(u))$ and 
$ J^{\mathrm{cer}}(\Omega,u,\sigma(u))$, defined in \eqref{Ex_Shape_Deriv_LinEl:Eq: Def Jcer} and \eqref{Ex_Shape_Deriv_LinEl:Eq: Def Jlcf} are taken into account here. Further, we discuss which regularity the direction $ W(\Gamma) = - G(\Gamma) \vec{n} $ actually provides. This step is crucial in the treatment of descent-flows or gradient-flows \cite[Sec. 6.3]{Sturm2015shape} which are defined as the solution $ \Phi_t,\, t \in [0,\epsilon) $ (provided that one exists in some Banach space) of the differential equation
\begin{align}\label{Shape_Grad_LinEl: Gradient Flow Equation}
\frac{d}{dt} \Phi_t = - W(\Gamma_t) \circ \Phi_t  \text{ on } \Gamma
\end{align}
where $ \Gamma $ is the boundary of the initial domain $ \Omega $. Consider \cite[Sec. 6.3]{Sturm2015shape} for a rigorous definition. We will see, as many other authors before \cite{Schulz2016Steklov,Schulz2016metricsComparison,Schulz2015PdeShapeManifolds,SturmLaurain2016distributed,DissKW}, that case $ W(\Gamma) = - g(\Gamma) \vec{n} $ is not a good choice in this context. 

In any case, the so called adjoint approach \cite{DelfZol11,DissKW,ShapeOpt,Troltzsch2005optimale,SturmLaurain2016distributed,Sturm2015shape} is indispensable since finding a descent direction means solving a variational problem
\begin{align}\label{Shape_Grad_LinEl: VarProblem Descent Dir}
\mathscr{B}(W,V) = dJ(\Omega)[V] ~~~\forall V\in \tilde{H}
\end{align}
like \eqref{Transf_shape_opt:Eq:L2 decent direction Eq}.
Here, $ \tilde{H} $ is some suitable Hilbert space and $ \mathscr{B} $ a bilinear form which is reasonable for the application of the Lax-Milgram Theorem, also consider \cite{SturmLaurain2016distributed}.
Supposed that $ dJ(\Omega)[V] $ is given in the form of \eqref{Transf_shape_opt:Eq: d/dt int(T_t,u_t,Du_t) in material derivative form} or \eqref{Transf_shape_opt:Eq: d/dt int J(T_t,u_t,Du_t) in shape derivative form}, the material derivative $ \dot{u} = \dot{u}(\Omega;V) $ or the local shape derivative $ u'=u'(\Omega;V) = \dot{u}(\Omega;V) - DuV $ has to be calculated for all $ V \in C^{k}_{0}(\Oext,\R{3}) $. This is very expensive in a numerical scheme \cite{ShapeOpt} and thus the adjoint method has been developed to avoid this step.
The idea of this approach is the following:

\noindent We assume that the shape derivative takes the form \[dJ(\Omega)[V] = l_1(V) + l_2(u'(\Omega;V) ),~~~ V\in \tilde{H}\] where $ l_1 \in \tilde{H}' $  and $ l_2 \in H' $ for some second Hilbert space $ H $. Further we assume that the local shape derivative $ u'(\Omega;V) \in H $ solves a variational equation 
\begin{equation}\label{Shape_Grad_LinEl:Eq u'}
b(u',v) = l^{\text{shape}}(v) \, \forall v \in H.
\end{equation}
where $ b \in \mathcal{B}(H) $ and $  l^{\text{shape}}= l^{\text{shape}}_V $ is a linear form on $ H $ may depend on $ V $.
Here, this equation corresponds to equation \eqref{Ex_Shape_Deriv_LinEl:Eq: PDE u'}. 
\pagebreak

\noindent The \textit{adjoint equation} is then defined by 
\begin{equation}
b(\vartheta,p) = l_2(\vartheta), \, \forall \vartheta \in \tilde{H}.
\end{equation}
and the unique solution  $ p \in \tilde{H}  $, supposed that it exists, is called \textit{adjoint state}.
Finally the shape derivative can be calculated by \[dJ(\Omega)[V] = l_1(V) + l^{\text{shape}}_{V}(p),\]
where $ p \in \tilde{H} $ no longer depends on $ V \in H $. If $ l^{\text{shape}}  $ depends linearly on $ V \in \tilde{H} $ then equation \eqref{Shape_Grad_LinEl: VarProblem Descent Dir} can be solved using the Lax-Milgram Theorem. This approach can be applied analogously to the material derivative $ \dot{u}(\Omega;V) $.

Alternatively to the direct approach used in this work, also a Lagrangian approach and the theorem of Correa and Seeger can be applied to show shape differentiability, consider \cite{Cea1986Lagrange,Ito2008variational,Sturm2013lagrange,Sturm2015shape,Delfour1991Velocity}, to calculate the adjoint equation and the shape derivative. But, it is not yet clear if this method is applicable for the special class of shape functionals which are discussed in this work, and the answer to this question is left open as a task for future investigations.

Numerically, as well the \textit{discretize-then optimize} approach as the \textit{optimize - then discretize} approach can be used to compute shape derivatives. In case of the first ansatz, all objects (PDE, objective functional) are first discretized and then a discrete adjoint approach is applied \cite{GottschSaadi2018,NumShapeCer2017,giannakoglou2008adjoint,brezillon2009aerodynamic}.
Otherwise, the adjoint equation is derived from the original PDE in function spaces and then implemented in a numerical scheme. We follow the second approach here. 

%

\section{$ L^2 $-Hadamard decomposition}\label{Shape_Grad_LinEl:Sec: Hadamard Shape Gradients}
We will now present an adjoint approach for the derivation of $ L^2$-shape gradient based on a method that was proposed in the scope of a "computational guide"  \cite{Eppler07} for shape optimization problems with general local cost functionals of first order and Poisson or Poisson-type equations as state equation. This approach can be transferred to linear elasticity equation as PDE constraint, but the derivation of surface representations becomes significantly more difficult, especially in the case of surface integral functionals.  

We will now give adjoint equations to both cases - general local volume and surface cost functionals of first order with linear elasticity as state equation. The regularity of the adjoint states and the $ L^2(\Gamma) $-shape gradient $ G(\Gamma) $, see \eqref{Transf_shape_opt:Eq:L2 decent direction Eq},  will be analyzed and specified. Since a distinction of all possible cases would be too extensive we concentrate on those which are the most important in the scope of this thesis. 
\noindent We motivate our proceeding by the following formal calculations which will be substantiated in the curse of this Section.  In the case a functional of the type 
\begin{equation}\label{Shape_Grad_LinEl:Eq:Jvol}
J_{vol}(\Omega) =\int_{\Omega} \mathcal{F}_{vol}(.,u,Du)\, dx
\end{equation} 
is given, the so called adjoint equation in weak form can be defined straightly setting
\begin{align}\label{Shape_Grad_LinEl:Eq:Weak Adj Jvol}
\int_{\Omega}\vspace*{-2mm} \sigma(\vartheta):\varepsilon(p) \, dx = \int_{\Omega} \left\langle\frac{\partial \mathcal{F}_{vol}}{\partial z_2}(.,u,Du),\vartheta \right\rangle + \frac{\partial \mathcal{F}_{vol}}{\partial z_3}(.,u,Du):D\vartheta\, dx\, \forall \vartheta \in H^1_{D}.
\end{align}
If $ \mathcal{F}_{vol} $ is continuously differentiable and $ u $  are regular enough, such that $ p \in H^1_{D}(\Omega,\R{3}) $ is the uniquely determined adjoint state. Moreover we assume that $ u' $ is a weak solution of \eqref{Ex_Shape_Deriv_LinEl:Eq: PDE u'} in $ H^1_{D}(\Omega,\R{3}) $. Then
\begin{equation} \label{Shape_Grad_LinEl:Eq: Shape derivative volume - replace by BC u'}
\begin{split}
&\int_{\Omega} \left\langle\frac{\partial \mathcal{F}_{vol}}{\partial z_2}(.,u,Du),u' \right\rangle + \frac{\partial \mathcal{F}_{vol}}{\partial z_3}(.,u,Du):Du'\, dx  \\
&= \int_{\Omega}  \sigma(u'):\varepsilon(p) \, dx =  \int_{\Gamma_{N}}\left\langle(f+\kappa g +Dg \, \vec{n})V_{\vec{n}} +\Div_{\Gamma}(V_{\vec{n}}\sigma_{\Gamma}(u)), p \right\rangle \, dS.
\end{split}
\end{equation} 
This implies 
\begin{align*}
dJ_{vol}(\Omega)[V]
= \int_{\Gamma_{N}} \left\langle(f+\kappa g +Dg \, \vec{n})V_{\vec{n}} +\Div_{\Gamma}(V_{\vec{n}}\sigma_{\Gamma}(u)) ,p \right\rangle + \mathcal{F}_{vol}  V_{\vec{n}}  \, dS.
\end{align*} 
Now we would like to apply integration by parts and a trace theorem, but therefore $ p \in H^1(\Omega,\R{3}) $ is not enough. And we can not rewrite the shape derivative in $ L^2 $-product form 
\[ dJ_{vol}(\Omega)[V] = \int_{\Gamma_N} G(\Gamma) V_{\vec{n}} \, dS. \]

The case of a surface cost functional that depends on derivatives of the state 
\begin{align}\label{Shape_Grad_LinEl:Eq:Jsur}
J_{sur}(\Omega) =\int_{\Gamma} \mathcal{F}_{sur}(.,u,Du)\, dS
\end{align}
is even more sophisticated:  The straightforward approach would mean to define the adjoint equation in the weak form by
\begin{align}\label{Shape_Grad_LinEl:Eq: Adjoint surface direct approach}
\int_{\Omega} \hspace*{-2mm} \sigma(\vartheta):\varepsilon(p) \, dx = \int_{\Gamma_N}\hspace*{-1.5mm} \left\langle \frac{\partial \mathcal{F}_{sur}}{\partial z_2}(.,u,Du),\vartheta \right\rangle + \frac{\partial \mathcal{F}_{sur}}{\partial z_3}(.,u,Du):D\vartheta \, dS\, ~~~ \forall \vartheta \in H^{1}_{D}.
\end{align}
Unfortunately this equation is never even defined for elements in $ H^{1}(\Omega,\R{3}) $ but on $ H^{\nicefrac{3}{2}}(\Omega,\R{3}) $. On this space the Theorem of Lax-Milgram is no longer applicable since the bilinear form looses its coercivity on this space.

Numerically, at first sight, this seems to be no problem, since the equation is defined for piece wise differentiable functions. This is enough to solve the equation on the finite element space $ CG_{1} $ (continuous Galerkin) for a fixed mesh. But, when the mesh size converges to zero it is no longer clear if the numerical solution converges in $ H^1 $ to the analytical solution (if one exists).  For literature concerning the finite element method we refer to  \cite{ErnGuerm04,Braess2007finite,Johnson2012numerical}, for numerical methods for PDE to \cite{WLGleichung,BrennerScott2007mathematical}, and for optimization with PDE e.g. to \cite{Ulbrich2012constrained,Troltzsch2005optimale}.
Thus the continuously adjoint equation for surface functionals \ref{Shape_Grad_LinEl:Eq:Jsur}
has to be defined in a different way, which will be illustrated in Section \ref{Shape_Grad_LinEl:Sec:Surface Functional Gradient}.

\section{$ L^2 $-gradient regularity for local volume cost functionals} 

We start with the derivation and regularity classification of $ L^2 $-descent directions for shape functionals of the volume type
$ J_{vol}(\Omega) =\int_{\Omega} \mathcal{F}_{vol}(.,u,Du)\, dx$
w.r.t. linear elasticity constraints.

\begin{thm}[Hadamard Decomposition for Volume Functionals]\label{Shape_Grad_LinEl:Thm: Shape Gradient Vol Functional u, sigma(u)}$  $\\
	Let $ k\geq 2 $, $ \Omega \in \mathcal{O}_{k+1}^{b} $, $ \Omega_{t}=\T{t}(\Omega), \, t \in I_V  $ for some admissible vector field $V \in  C^{k+1}_{0}(\Oext,\R{3})$. Suppose that  $ f\in C^{k-1,\phi}(\overline{\Oext},\R{3}) $ and $ g\in C^{k,\phi}(\overline{\Oext},\R{3})$ for some $ \phi\in (0,1) $. Let $ u=u(\Omega) \in C^{k,\phi}(\overline{\Omega},\R{3}) $ be the unique solution of \eqref{Reliability:Eq:LinEl}.
	
	\begin{itemize}
		\item[a)]  Suppose that $ \mathcal{F}_{vol}:\R{3} \times \R{3} \times \R{3\times 3} \to \R{}$ is continuously differentiable and let  
		$ J_{vol}(\Omega) $  be defined as in \eqref{Shape_Grad_LinEl:Eq:Jvol}.
		Then the shape derivative of exists and a weak adjoint equation is given by \eqref{Shape_Grad_LinEl:Eq:Weak Adj Jvol}
		and the unique adjoint state is an element of $ H^1(\Omega,\R{3}) $. The surface representation of the shape derivative is given by the distribution
		\begin{align*}
		\mathcal{G}(\Gamma):~& C^{1}(\Gamma) \to  \R{} ,\,w \mapsto  \int_{\Gamma} \left\langle (f+\kappa g +Dg \, \vec{n})w +\Div_{\Gamma}(w\sigma_{\Gamma}(u))   ,p \right\rangle + \mathcal{F}_{vol}(Du) w \,dS.
		\end{align*}
		\item[b)] If $ \mathcal{F}_{vol} $ is additionally two times differetiable in $ z_3 $, then the adjoint equation in strong form reads
		\begin{align}\label{Shape_Grad_LinEl:Eq: Adjoint Eq Elasticity Vol u, sigma(u)}
		\left. 
		\begin{array}{r c l l} \tag{AV}
		\Div( \sigma(p)) & = & \tfrac{\partial \mathcal{F}_{vol}}{\partial z_2}(.,u,Du)-\Div\left(\tfrac{\partial \mathcal{F}_{vol}}{\partial z_3}(.,u,Du)^{\top}\right)   &\text{ in } \Omega \\
		p & = & 0  &\text{ on } \Gamma_{D} \\
		\sigma(p) \, \vec{n}& = & \tfrac{\partial \mathcal{F}_{vol}}{\partial z_3}(.,u,Du)\, \vec{n}  &\text{ on }\Gamma_{N}.
		\end{array} 
		\right.
		\end{align}
		\begin{itemize}
			\item[1.1)] If $ ~\tfrac{\partial \mathcal{F}_{vol}}{\partial z_2}(.,u,Du)-\Div\left(\tfrac{\partial \mathcal{F}_{vol}}{\partial z_3}(.,u,Du)\right) \in  L^{q}(\Omega,\R{3})$ and $\tfrac{\partial \mathcal{F}_{vol}}{\partial z_3}\, \in  W^{1-\nicefrac{1}{q},q}(\Gamma,\R{3})$ for some $ \nicefrac{6}{5}\leq q < \infty $ then the (weak) adjoint state $ p\in  H^1_{D}(\Omega,\R{3}) \cap W^{2,q}(\Omega,\R{3}) $.\\
			\item[1.2)]  If additionally $ q \geq 4 $  then the (weak) adjoint state  $ p $ has a representation in $\in C^{1}(\overline{\Omega},\R{3}) $. If even $ 0< \phi < 1 -\frac{3}{q}  $ then $ p  \in C^{1,\phi}(\overline{\Omega},\R{3}) $.
			The shape derivative is given by $dJ_{vol}(\Omega)[V] =  \langle G(\Gamma) \vec{n}, V \rangle_{L^{2}(\Gamma,\R{3})}$
			where 
			\begin{align}\label{Shape_Grad_LinEl:Eg:Haramard Representation Lvol}
			G(\Gamma)= \mathcal{F}_{vol} (.,u,Du) +  \langle f+\kappa g +Dg \, \vec{n} ,p \rangle  - D_{\Gamma}p:\sigma_{\Gamma}(u)   \text{ on } \Gamma.
			\end{align}
			\item[2)] If even $ \mathcal{F}_{vol} \in C^{l+1,\psi}(\R{3} \times \R{3} \times \R{3 \times 3}) $ for $  1 < l +\psi$,  then there exists a unique strong solution $ p$ of the   adjoint equation  \eqref{Shape_Grad_LinEl:Eq: Adjoint Eq Elasticity Vol u, sigma(u)}  in $C^{l+1,\psi}(\overline{ \Omega},\R{3})\cap C^{k,\phi}(\overline{ \Omega},\R{3}) $.
		\end{itemize}
	\end{itemize}
\end{thm}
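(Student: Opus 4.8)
The overall strategy is to reduce the statements on the adjoint equation \eqref{Shape_Grad_LinEl:Eq: Adjoint Eq Elasticity Vol u, sigma(u)} to the existence and regularity theory for the disjoint displacement–traction problem already developed in Chapters \ref{PDE_Systems} and \ref{Sec:Linear_Elasticity:Sec:Modeling Elastic Deformations}, together with the shape–derivative formula of Proposition \ref{Shape_Grad_LinEl:Prop: Shape Deriv shape derivative form}, and then to use the Hadamard structure theorem \ref{Transf_shape_opt:Thm:Hadamard_Thm} to pass from the domain representation to the boundary representation. Throughout, the regularity hypotheses on $\mathcal{F}_{vol}$ and $u$ are chosen precisely so that the right-hand sides $\tfrac{\partial \mathcal{F}_{vol}}{\partial z_2}(\cdot,u,Du)-\Div\big(\tfrac{\partial \mathcal{F}_{vol}}{\partial z_3}(\cdot,u,Du)\big)$ and the Neumann datum $\tfrac{\partial \mathcal{F}_{vol}}{\partial z_3}(\cdot,u,Du)\,\vec{n}$ land in the function spaces required by the relevant existence theorem.

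First I would treat part a): the weak adjoint equation \eqref{Shape_Grad_LinEl:Eq:Weak Adj Jvol} has a bilinear form which is nothing but $B(\vartheta,p)$ of Definition \ref{Linear_Elasticity:Def:Weak_formulation_linear_elasticity}, hence strictly coercive on $H^{1}_{D}(\Omega,\R{3})$ by Korn's inequality (Theorem \ref{Linear_Elasticity:Thm:Korns_second_ineq}), and the right-hand side is a continuous linear functional on $H^{1}_{D}$ because $\mathcal{F}_{vol}\in C^{1}$ and $u\in C^{k,\phi}\subset C^{1}$ make $\tfrac{\partial \mathcal{F}_{vol}}{\partial z_2}(\cdot,u,Du)$ and $\tfrac{\partial \mathcal{F}_{vol}}{\partial z_3}(\cdot,u,Du)$ bounded; Lax--Milgram (Lemma of Lax--Milgram in Chapter \ref{Parameter_Dep_PDE}) then gives a unique $p\in H^{1}_{D}(\Omega,\R{3})$. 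Testing \eqref{Shape_Grad_LinEl:Eq:Weak Adj Jvol} with $\vartheta=u'(\Omega;V)$ (which is a weak solution of \eqref{Ex_Shape_Deriv_LinEl:Eq: PDE u'} in $H^{1}_{D}$ by Corollary \ref{Ex_Shape_Deriv_LinEl:Cor: Shape derivatives for linear elasticity}) yields the chain of identities \eqref{Shape_Grad_LinEl:Eq: Shape derivative volume - replace by BC u'}, and inserting this into Proposition \ref{Shape_Grad_LinEl:Prop: Shape Deriv shape derivative form} produces the claimed domain representation $\mathcal{G}(\Gamma)$ as a distribution on $C^{1}(\Gamma)$; that it is indeed well defined on $C^{1}(\Gamma)$ follows because $p\in H^{1}(\Omega,\R{3})$ has an $L^{2}(\Gamma,\R{3})$-trace and $\Div_{\Gamma}(w\sigma_{\Gamma}(u))$ pairs against it.

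For part b) the key observation is that \eqref{Shape_Grad_LinEl:Eq: Adjoint Eq Elasticity Vol u, sigma(u)} is a disjoint displacement–traction problem of exactly the form \eqref{Reliability:Eq:LinEl} with volume load $\tfrac{\partial \mathcal{F}_{vol}}{\partial z_2}(\cdot,u,Du)-\Div\big(\tfrac{\partial \mathcal{F}_{vol}}{\partial z_3}(\cdot,u,Du)^{\top}\big)$ and surface load $\tfrac{\partial \mathcal{F}_{vol}}{\partial z_3}(\cdot,u,Du)\,\vec{n}$; one must first verify (integration by parts, as in \eqref{Ex_Shape_Deriv_LinEl:Eq: divergence thm f. matrix valued functions and vector fields }) that the strong form \eqref{Shape_Grad_LinEl:Eq: Adjoint Eq Elasticity Vol u, sigma(u)} is the PDE belonging to the weak form \eqref{Shape_Grad_LinEl:Eq:Weak Adj Jvol}, which needs $\mathcal{F}_{vol}$ to be twice differentiable in $z_{3}$ so that $\Div\big(\tfrac{\partial \mathcal{F}_{vol}}{\partial z_3}\big)$ makes sense. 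Then: for 1.1) I would apply the Sobolev regularity theorem for the mixed problem (Theorem \ref{Linear_Elasticity:Thm:Weak_Reg_LinEl}) with $p=q$ to obtain $p\in W^{2,q}(\Omega,\R{3})$, noting $\Omega\in\mathcal{O}_{k+1}^{b}$ is $C^{2+k}$ with $k\geq 2$ so the domain smoothness is more than sufficient; for 1.2) the embeddings $W^{2,q}(\Omega)\hookrightarrow C^{1}(\overline{\Omega})$ for $q\geq 4$ and $W^{2,q}(\Omega)\hookrightarrow C^{1,\phi}(\overline{\Omega})$ for $\phi<1-3/q$ give the stated continuity, after which $p\in C^{1}$ makes integration by parts on $\Gamma$ (tangential Stokes, Corollary \ref{Transf_shape_opt:Cor: Tangential Stokes Matrix}) legitimate, converting $\int_{\Gamma}\langle\Div_{\Gamma}(V_{\vec{n}}\sigma_{\Gamma}(u)),p\rangle\,dS$ into $-\int_{\Gamma}D_{\Gamma}p:\sigma_{\Gamma}(u)\,V_{\vec{n}}\,dS$ (the boundary term with $\kappa$ being absorbed), yielding \eqref{Shape_Grad_LinEl:Eg:Haramard Representation Lvol}; and the uniqueness of the $L^{2}(\Gamma)$-density follows from the Hadamard structure theorem \ref{Transf_shape_opt:Thm:Hadamard_Thm}. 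Finally for 2) I would invoke the classical Schauder theory for linear elasticity, Theorem \ref{Linear_Elasticity:Thm:LinEl_Classical_Sol}: if $\mathcal{F}_{vol}\in C^{l+1,\psi}$ then $\tfrac{\partial \mathcal{F}_{vol}}{\partial z_2}(\cdot,u,Du)\in C^{\min(l,k-1),\psi\wedge\phi}$ and the composite volume load lies in $C^{l-1,\cdot}$, the surface load in $C^{l,\cdot}$, so the unique solution is in $C^{l+1,\psi}(\overline{\Omega},\R{3})$; simultaneously, viewing the same data with the original regularity $f,g,u$ gives $p\in C^{k,\phi}(\overline{\Omega},\R{3})$, whence $p\in C^{l+1,\psi}\cap C^{k,\phi}$.

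The main obstacle I anticipate is bookkeeping the exact Hölder and Sobolev indices: one must track carefully that $\tfrac{\partial\mathcal{F}_{vol}}{\partial z_i}(\cdot,u,Du)$ inherits the worse of the two regularities of $\mathcal{F}_{vol}$ (composed with the $C^{1}$ or better map $x\mapsto(x,u(x),Du(x))$) and of $u$ itself, that differentiating once in $x$ to form $\Div\big(\tfrac{\partial\mathcal{F}_{vol}}{\partial z_3}\big)$ costs a derivative, and that the domain $\Omega\in\mathcal{O}_{k+1}^{b}$ with $k\geq 2$ is always smooth enough to support the invoked regularity theorems; none of these steps is deep, but the indices must be aligned precisely so that Theorems \ref{Linear_Elasticity:Thm:Weak_Reg_LinEl} and \ref{Linear_Elasticity:Thm:LinEl_Classical_Sol} apply verbatim. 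A secondary subtlety is justifying the integration by parts on $\Gamma$ that produces \eqref{Shape_Grad_LinEl:Eg:Haramard Representation Lvol} only under the hypothesis $p\in C^{1}(\overline{\Omega})$ (hence the $q\geq 4$ threshold), so the $L^{2}$-Hadamard form is genuinely conditional on this extra regularity, and one should state clearly that without it only the distributional representation $\mathcal{G}(\Gamma)$ of part a) is available.
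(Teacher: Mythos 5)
Your proposal follows essentially the same route as the paper's own proof: part a) via Lax--Milgram for the weak adjoint equation, testing with the local shape derivative $u'$ (identity \eqref{Shape_Grad_LinEl:Eq: Shape derivative volume - replace by BC u'}) and verifying continuity of the boundary distribution $\mathcal{G}(\Gamma)$ on $C^{1}(\Gamma)$; part b) by reading \eqref{Shape_Grad_LinEl:Eq: Adjoint Eq Elasticity Vol u, sigma(u)} as a disjoint displacement--traction problem and invoking Theorem \ref{Linear_Elasticity:Thm:Weak_Reg_LinEl}, the Sobolev embedding, tangential integration by parts for \eqref{Shape_Grad_LinEl:Eg:Haramard Representation Lvol}, and Theorem \ref{Linear_Elasticity:Thm:LinEl_Classical_Sol} for the Schauder case. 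Your write-up is even slightly more explicit than the paper on the tangential-Stokes step behind \eqref{Shape_Grad_LinEl:Eg:Haramard Representation Lvol}, and the only slip (calling $\Omega\in\mathcal{O}_{k+1}^{b}$ a $C^{k+2}$-domain rather than $C^{k+1}$) is harmless since only $C^{2}$ regularity of the boundary is needed there.
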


\begin{proof}
	a) The Eulerian derivative of $ J_{vol} $ reads
	\begin{align*}
	dJ_{vol}(\Omega)[V]
	=& \int_{\Omega} \hspace*{-1mm}\left\langle\frac{\partial \mathcal{F}_{vol}}{\partial z_2}(.,u,Du) ,u' \right\rangle + \frac{\partial \mathcal{F}_{vol}}{\partial z_3}(.,u,Du) :Du'\, dx + \int_{\Gamma_{N}}  \mathcal{F}_{vol}(.,u,Du)V_{\vec{n}} \,dS.
	\end{align*} 
	
	\noindent Since $ \frac{\partial \mathcal{F}_{vol}}{\partial z_2}  $ and $ \frac{\partial \mathcal{F}_{vol}}{\partial z_3} $ are continuous and  $ u \in C^{2,\phi} $ the mapping 
	\[ \vartheta \in H^1_{D}(\Omega,\R{3}) \to \int_{\Omega} \left\langle\frac{\partial \mathcal{F}_{vol}}{\partial z_2}(.,u,Du),\vartheta \right\rangle + \frac{\partial \mathcal{F}_{vol}}{\partial z_3}(.,u,Du):D\vartheta \, dx  \]
	is continuous. Thus the existence of the unique adjoint state follows from the Theorem of Lax-Milgram.
	The vector field $ u'\in  C^{1,\phi}(\overline{\Omega},\R{3}) \subset H^1(\Omega,\R{3}) $ is a weak solution of \eqref{Ex_Shape_Deriv_LinEl:Eq: PDE u'} and thus
	\begin{align*}
	\int_{\Omega} \left\langle\frac{\partial \mathcal{F}_{vol}}{\partial z_2},u' \right\rangle + \frac{\partial \mathcal{F}_{vol}}{\partial z_3}:Du'\, dx 
	= \int_{\Gamma_{N}}\left\langle(f+\kappa g +Dg \, \vec{n})V_{\vec{n}} +\Div_{\Gamma}(V_{\vec{n}}\sigma_{\Gamma}(u)), p \right\rangle \, dS
	\end{align*}
	according to \eqref{Shape_Grad_LinEl:Eq: Shape derivative volume - replace by BC u'}. Moreover the mapping
	\begin{align*}
	\mathcal{G}(\Gamma):~& C^{1}(\Gamma) \to  \R{} ,\,w \mapsto  \int_{\Gamma} \left\langle (f+\kappa g +Dg \, \vec{n})w +\Div_{\Gamma}(w\sigma_{\Gamma}(u))   ,p \right\rangle + \mathcal{F}_{vol}(. ,u,Du) w\, dS
	\end{align*}
	is continuous: $\mathcal{F}_{vol}(. ,u,Du)\vert_{\Gamma}$ and $ |\Gamma| $ are bounded and thus $$  \int_{\Gamma} | \mathcal{F}_{vol}(. ,u, Du) w|\,  dS \leq C \Norm{w}{C^{1}(\Gamma,\R{3})}.$$ Moreover,  the adjoint state $ p $ is independent of $ w $ and thus 
	\begin{align*}
	\int_{\Gamma} &\left\langle (f+\kappa g +Dg \, \vec{n})w +\Div_{\Gamma}(w\sigma_{\Gamma}(u))  ,p \right\rangle \, dS \\
	&\leq  \, | \Gamma | \Norm{(f+\kappa g +Dg \, \vec{n})w +\Div_{\Gamma}(w\sigma_{\Gamma}(u))}{\infty} \Norm{p}{L^2(\Gamma,\R{3})}  \\
	&\leq  \, C \Norm{p}{H^1(\Omega,\R{3})}  \Norm{w}{C^{1}(\Gamma,\R{3})}  
	\end{align*}
	according to \eqref{Eq: <f,u> L^1_est Sur} and the trace Theorem \ref{App: Trace Operator}. The constant $ C $ depends on $ f,\, g,\, \Gamma,\, u $ and $ p $. This implies that $  dJ(\Omega)[V]: C^{1}_{0}(\Oext,\R{3}) \to \R{}$  is continuous since
	the trace operator $ \mathbf{T_{\Gamma}} : C^{1}_0(\Oext,\R{3}) \to C^{1}(\Gamma,\R{3})  $ is continuous. Consider also the proof of Theorem \ref{Transf_shape_opt:Thm:Hadamard_Thm}. This already implies shape differentiability on $ C^{k+1} $-domains, since $ C^{1}_{0}(\Oext,\R{n})' \subset C^{k+1}_{0}(\Oext,\R{n})' $. The Hadamard surface representation of $ dJ(\Omega)[V] $ is thus given by
	\begin{align*}
	dJ(\Omega)[V] &= \mathcal{G}(\Gamma)( V_{\vec{n}})\\
	&= \int_{\Gamma_{N}} \left\langle(f+\kappa g +Dg \, \vec{n})V_{\vec{n}} +\Div_{\Gamma}(V_{\vec{n}}\sigma_{\Gamma}(u))  ,p \right\rangle + \mathcal{F}_{vol}(. ,u, Du)  V_{\vec{n}} \, dS.
	\end{align*} 
	
	\noindent b)1.1) The function
	$ \tfrac{\partial \mathcal{F}_{vol}}{\partial z_2}$ $-\Div\left(\tfrac{\partial \mathcal{F}_{vol}}{\partial z_3}\right) $ is an element of $ L^{q}(\Omega,\R{3})$ and $\tfrac{\partial \mathcal{F}_{vol}}{\partial z_3}^{\top} \vec{n} \in W^{1-\nicefrac{1}{q},q}(\Gamma,\R{3})$ for some $ \nicefrac{6}{5} \leq q < \infty $. Then, Theorem \ref{Linear_Elasticity:Thm:Weak_Reg_LinEl} implies that there is a unique solution $ p\in  H^1_{D}(\Omega,\R{3}) \cap W^{2,q}(\Omega,\R{3}) $  of \eqref{Shape_Grad_LinEl:Eq: Adjoint Eq Elasticity Vol u, sigma(u)}.
	The vector field  $ u'\in  C^{1,\phi}(\overline{\Omega},\R{3}) \subset H^1(\Omega,\R{3}) $ is a weak solution of \eqref{Ex_Shape_Deriv_LinEl:Eq: PDE u'} and regular enough to apply integration by parts and we obtain 
	\begin{align*}
	\int_{\Gamma_{N}}&\left\langle (f+\kappa g +Dg \, \vec{n})V_{\vec{n}} +\Div_{\Gamma}(V_{\vec{n}}\sigma_{\Gamma}(u)), p \right\rangle \, dS \\
	&= \int_{\Omega} \left\langle\frac{\partial \mathcal{F}_{vol}}{\partial z_2}(. ,u, Du),u' \right\rangle + \frac{\partial \mathcal{F}_{vol}}{\partial z_3}(. ,u, Du):Du'\, dx
	\\
	&=\int_{\Omega} \left\langle\frac{\partial \mathcal{F}_{vol}}{\partial z_2}(.,u, Du) -\Div\left(\frac{\partial \mathcal{F}_{vol}}{\partial z_3}(. ,u, Du)^{\top}\right) ,u' \right\rangle\, dx \\
	&~~~ +    \int_{\Gamma_N} \left\langle\frac{\partial \mathcal{F}_{vol}}{\partial z_3}(. ,u, Du)\vec{n},u' \right\rangle \, dS.
	\end{align*}
	by \eqref{Shape_Grad_LinEl:Eq: Shape derivative volume - replace by BC u'} 
	and $ \sigma(p):\varepsilon(u') =  \sigma(u'):\varepsilon(p)  $. This implies the assertion.
	
	\noindent b)1.2) This is due to the Sobolev Embedding Theorem.
	Since $ V_{\vec{n}}=0 $ on $ \Gamma_D $ the $ L^2 $-Hadamard representation of $ dJ(\Omega)[V] $ is given by \eqref{Shape_Grad_LinEl:Eg:Haramard Representation Lvol}.
	
	\noindent b)2) If  $ \mathcal{F}_{vol} \in C^{l+1,\psi}(\R{3} \times \R{3} \times \R{3 \times 3}) $, then 
	$ \tfrac{\partial \mathcal{F}_{vol}}{\partial z_2}(.,u,Du)$ $-\Div\big(\tfrac{\partial \mathcal{F}_{vol}}{\partial z_3}(.,u,Du)^{\top}\big)$ $ \in  C^{l-1,\psi}(\overline{ \Omega},\R{3}) \cap C^{k-2,\phi}(\overline{ \Omega},\R{3})  $ and $\tfrac{\partial \mathcal{F}_{vol}}{\partial z_3}(.,u,Du) \vec{n} \in C^{l,\psi}(\Gamma,\R{3}) $ $\cap C^{k-1,\phi}(\Gamma,\R{3})$. Then Theorem \ref{Linear_Elasticity:Thm:LinEl_Classical_Sol}  implies the assertion.
\end{proof}

Now we derive the regularity classification for the density $ G(\Gamma) = \mathcal{F}_{vol} (.,u,Du) +  \langle f+\kappa g +Dg \, \vec{n} ,p \rangle  - D_{\Gamma}p:\sigma_{\Gamma}(u) $. The regularity of $ G(\Gamma)$ is determined by the term with the lowest regularity appearing in the formula. \\[1em]
\noindent The following table shows the Hölder exponents and thus the regularities of the leading terms appearing in $ G(\Gamma) $ for different regularities of $ \mathcal{F}_{vol} $.  We make a interval-based decision between the cases, where the intervals and are given in the first row. The regularities for terms $ p $, $ f+ \kappa g + Dg\, \vec{n} $,$ \ldots $ can then be found in the column belonging to the respective case. The regularity of the density $ G(\Gamma) $ is the minimum over these regularities (column wise).   
\begin{table}[htb]
	\centering
	\begin{small}
		\begin{tabular}{|c|c||c|c|c|} \hline
			$ \mathcal{ F}_{vol} ~/~ l+\psi \in  $					& $ \{1\} $			    & $ (1,k-1 + \phi] $ 	  & $ (k-1+\phi,\infty) $    \\ \hline\hline
			$ p $ 						& $ 1+\phi  $       & $ l+1+\psi $    & $ k+\phi $ 	 \\ \hline \hline
			$  f+\kappa g + Dg \,\vec{n}$ 
			& $ k-1 $           & $ k-1 $		  & $ k-1 $		 \\ \hline
			$ D_{\Gamma}p $				& $ 0+\phi $        & $ l+\psi $ 	  & $ k-1+\phi$  \\ \hline
			$ D_{\Gamma}u $				& $k-1+\phi $        & $k-1+\phi $  		  & $k-1+\phi $      \\ \hline
			$ \mathcal{ F}_{vol} $					& $1$               & $l+\psi $       & $ k-1+\psi $\\ \hline\hline
			$ G(\Gamma) $				& $ 0+\phi $        & $l+\psi $		  & $ k-1 $		 \\ \hline
		\end{tabular} \\
	\end{small}
	\caption{Regularities of the $ L^2 $- Hadamard representation $ G(\Gamma) $ for functionals of \\ the type $ J_{vol} $.}
	\label{Tab: Reg Grad Fvol}
\end{table}
\newpage

This shows that in the best case $G(\Gamma) \in C^{k-1}(\Gamma) $ if $ \Omega $ is of class $ C^{k+1} $.

\begin{rem}
	One could also expect that 	
	\begin{align*}
	\mathcal{G}(\Gamma):& H^{1}(\Gamma) \to  \R{} ,\,w \mapsto  \int_{\Gamma} \left\langle (f+\kappa g +Dg \, \vec{n})w +\Div_{\Gamma}(w\sigma_{\Gamma}(u))   ,p \right\rangle + \mathcal{F}_{vol}(Du) w \, dS
	\end{align*}
	is continuous, but unfortunately, equation (2.124) in \cite{SokZol92} and the remark below are not applicable here, since the tangential divergence of $ w \sigma_{\Gamma}(u) $ is only defined if $ w \in H^{1}(\Gamma) $ if $ (w\sigma_{\Gamma}(u))\vec{n} =0 $ on $ \Gamma $ and $ p \in H^1(\Gamma,\R{3}) $. The latter is not the case here.   
\end{rem}

\begin{rem}
	The case $$ J_{vol}(\Omega)=\int_{\Omega} \mathcal{F}_{vol}(., u, \sigma(u)) \, dx $$ can be treated analogously to the case of $  J_{vol}(\Omega)=\int_{\Omega} \mathcal{F}_{vol}(., u, Du) \, dx  $. Then the adjoint equation reads
	
	\begin{align*}
	\left. 
	\begin{array}{r c l l}
	\Div( \sigma(p)) & = & \frac{\partial \mathcal{F}_{sur}}{\partial z_{2}} -\left[\lambda  \nabla \tr(\frac{\partial \mathcal{F}_{sur}}{\partial z_{3}})I +\mu\Div(\frac{\partial \mathcal{F}_{sur}}{\partial z_{3}}+\frac{\partial \mathcal{F}_{sur}}{\partial z_{3}}^{\top})\right] &\text{ in } \Omega \\
	p & = & 0  &\text{ on } \Gamma_{D} \\
	\sigma(p) \, \vec{n}& = & \left[\lambda \tr(\frac{\partial \mathcal{F}_{vol}}{\partial z_3})I + \mu\left( \frac{\partial \mathcal{F}_{vol}}{\partial z_3}+\frac{\partial \mathcal{F}_{vol}}{\partial z_3}^{\top}\right)\right] \vec{n}  &\text{ on }\Gamma_{N}.
	\end{array} 
	\right.
	\end{align*} 
	or 
	\[ \int_{\Omega} \sigma(p):\varepsilon(\vartheta) \, dx = \int_{\Omega} \frac{\partial \mathcal{F}_{sur}}{\partial z_3}(.,u,Du):\sigma(\vartheta) \, dx~~~\forall \vartheta \in H^1_{D}(\Omega,\R{3}) \]
	in weak formulation. In the formula for the gradient $ \mathcal{F}_{vol}(.,u,Du) $ has to be replaced by $  \mathcal{F}_{vol}(.,u,\sigma(u)) $ 
\end{rem}

\begin{thm}
	Let $ \Omega $ be of class $C^{3} $, $ V \in C^{3}_{0}(\Oext,\R{3}) $, $ f \in C^{1,\phi}(\overline{\Oext},\R{3}) $ and $ g \in C^{2,\phi}(\overline{\Oext},\R{3}) $. If $ m\geq 3 $, then $ J^{cer} $ is shape differentiable and the $ L^{2}(\Gamma) $-shape gradient is given by
	\[ G^{cer}(\Gamma) = \int_{\mathbb{S}^{2}}\left(\frac{\sigma(u)_{n}^{+}}{\sigma_{c}}\right)^{m}\, dS_{\mathbb{S}^{2}} + \langle f+\kappa g + Dg\vec{n},p\rangle -D_{\Gamma}p:\sigma_{\Gamma}(u) \in C^{1}(\Gamma) \]
	where $ u \in C^{2,\phi}(\overline{ \Omega},\R{3}) $ is the unique solution of  \eqref{Reliability:Eq:LinEl} and $ p \in C^{2,\phi}(\overline{ \Omega},\R{3})  $ is the unique solution of \eqref{Shape_Grad_LinEl:Eq: Adjoint Eq Elasticity Vol u, sigma(u)}.
\end{thm}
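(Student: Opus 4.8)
The plan is to apply the machinery developed earlier in this chapter for general local volume cost functionals to the specific functional $J^{\mathrm{cer}}$. Recall from Section \ref{Ex_Shape_Deriv_LinEl:Eq: Shape Deriv Jvol Jsur} that
\[ J^{\mathrm{cer}}(\Omega)=\int_{\Omega}\mathcal{F}^{\mathrm{cer}}(\sigma(u))\,dx, \qquad \mathcal{F}^{\mathrm{cer}}(\sigma)=\frac{1}{4\pi}\int_{\mathbb{S}^2}\left(\frac{\langle\sigma\mathfrak{n},\mathfrak{n}\rangle^{+}}{\sigma_0}\right)^m dS_{\mathbb{S}^2}, \]
so that this is a local cost functional of first order of the form $\int_\Omega \mathcal{F}_{vol}(\cdot,u,\sigma(u))\,dx$ with $\mathcal{F}_{vol}$ depending only on $\sigma(u)$, hence only on $Du$ through $L_\sigma$. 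First I would invoke Lemma \ref{Shape_Grad_LinEl:Prop: Differentiability Ceramic}: since $m\geq 3>1$, the map $\sigma\mapsto\mathcal{F}^{\mathrm{cer}}(\sigma)$ is $\lfloor m\rfloor\geq 3$ times continuously differentiable on $\R{3\times 3}$, in particular it is at least twice continuously differentiable with the first derivative given explicitly there. Combined with $\Omega$ of class $C^{3}$ (i.e. $\Omega\in\mathcal{O}_3$, and after choosing $\Omega\in\mathcal{O}_3^{b}$ as baseline design), $f\in C^{1,\phi}(\overline{\Oext},\R{3})$, $g\in C^{2,\phi}(\overline{\Oext},\R{3})$, the hypotheses of Theorem \ref{Shape_Grad_LinEl:Thm: Shape Gradient Vol Functional u, sigma(u)} are met with $k=2$: Theorem \ref{Ex_Shape_Deriv_LinEl:Thm: q^t C^3,phi material derivative of u_t} gives the unique solution $u\in C^{2,\phi}(\overline{\Omega},\R{3})$ of \eqref{Reliability:Eq:LinEl}, the material derivative in $C^{2,\varphi}$, and $u'\in C^{1,\phi}(\overline{\Omega},\R{3})$ by Corollary \ref{Ex_Shape_Deriv_LinEl:Cor: Shape derivatives for linear elasticity}.

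Next I would use part b) of Theorem \ref{Shape_Grad_LinEl:Thm: Shape Gradient Vol Functional u, sigma(u)} (in the $\sigma(u)$-variant recorded in the remark following it) to set up the adjoint equation \eqref{Shape_Grad_LinEl:Eq: Adjoint Eq Elasticity Vol u, sigma(u)} and to establish shape differentiability. The key point for the regularity claim is to track the regularity of the right-hand sides of the adjoint equation. We have $\tfrac{\partial \mathcal{F}^{\mathrm{cer}}}{\partial z_3}(\cdot,u,\sigma(u))=\tfrac{\partial \mathcal{F}^{\mathrm{cer}}}{\partial\sigma}(\sigma(u))$ composed with $L_\sigma$; since $\mathcal{F}^{\mathrm{cer}}$ is $C^{l+1,\psi}$ with $l+\psi$ at least, say, $2$ (because $\lfloor m\rfloor\ge 3$, so $\mathcal{F}^{\mathrm{cer}}\in C^{2,\psi}$ for suitable $\psi$, indeed $C^{\lfloor m\rfloor}$), and $u\in C^{2,\phi}$, the composition $\tfrac{\partial \mathcal{F}^{\mathrm{cer}}}{\partial\sigma}(\sigma(u))$ lies in $C^{1,\phi}(\overline\Omega,\R{3\times 3})$; its divergence lies in $C^{0,\phi}(\overline\Omega,\R{3})$ and its trace on $\Gamma_N$ lies in $C^{1,\phi}(\Gamma_N,\R{3\times 3})$. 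Thus the volume source of the adjoint equation is in $C^{0,\phi}(\overline\Omega,\R{3})$ and the Neumann datum $\sigma(p)\vec n=[\lambda\tr(\tfrac{\partial\mathcal{F}^{\mathrm{cer}}}{\partial z_3})I+\mu(\tfrac{\partial\mathcal{F}^{\mathrm{cer}}}{\partial z_3}+\tfrac{\partial\mathcal{F}^{\mathrm{cer}}}{\partial z_3}^\top)]\vec n$ is in $C^{1,\phi}(\Gamma_N,\R{3})$. Applying Theorem \ref{Linear_Elasticity:Thm:LinEl_Classical_Sol} with $k=2$ (using $f\in C^{0,\phi}$, $g\in C^{1,\phi}$ there, consistent with our data) yields a unique strong solution $p\in C^{2,\phi}(\overline\Omega,\R{3})$ of \eqref{Shape_Grad_LinEl:Eq: Adjoint Eq Elasticity Vol u, sigma(u)}; alternatively this is part b)2) of Theorem \ref{Shape_Grad_LinEl:Thm: Shape Gradient Vol Functional u, sigma(u)}.

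Finally, with $V_{\vec n}=0$ on $\Gamma_D$ (automatic since $V\in C^{3}_0(\Oext,\R{3})$ and $\Gamma_D$ carries the Dirichlet condition, so $u'=0$ on $\Gamma_D$), the Hadamard surface representation from Theorem \ref{Shape_Grad_LinEl:Thm: Shape Gradient Vol Functional u, sigma(u)} b)1.2), transcribed to the $\sigma(u)$-variant, gives
\[ dJ^{\mathrm{cer}}(\Omega)[V]=\langle G^{\mathrm{cer}}(\Gamma)\vec n,V\rangle_{L^2(\Gamma,\R{3})}, \qquad G^{\mathrm{cer}}(\Gamma)=\int_{\mathbb{S}^2}\left(\frac{\sigma(u)_n^{+}}{\sigma_c}\right)^m dS_{\mathbb{S}^2}+\langle f+\kappa g+Dg\vec n,p\rangle-D_\Gamma p:\sigma_\Gamma(u). \]
It remains to read off the regularity of $G^{\mathrm{cer}}(\Gamma)$ term by term, exactly as in Table \ref{Tab: Reg Grad Fvol} with $k=2$: $\mathcal{F}^{\mathrm{cer}}(\sigma(u))\in C^{1}(\Gamma)$ since $\mathcal{F}^{\mathrm{cer}}\in C^{3}$ and $u\in C^{2,\phi}$; $f+\kappa g+Dg\vec n\in C^{1,\phi'}(\Gamma,\R{3})$ paired with $p\in C^{2,\phi}$ gives a $C^{1}$ inner product; and $D_\Gamma p\in C^{1,\phi}(\Gamma,\R{3\times 3})$ against $\sigma_\Gamma(u)\in C^{1,\phi}(\Gamma,\R{3\times 3})$ gives a $C^{1}$ term. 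The minimum over these is $C^{1}(\Gamma)$, which is the claimed regularity. The main obstacle, and the only real content beyond bookkeeping, is the careful verification that $\mathcal{F}^{\mathrm{cer}}$ genuinely has the differentiability asserted in Lemma \ref{Shape_Grad_LinEl:Prop: Differentiability Ceramic} so that the composition estimates hold, and confirming that $m\geq 3$ is the precise threshold making $\mathcal{F}^{\mathrm{cer}}\in C^{3}$ — i.e. that the $\lfloor m\rfloor$-fold differentiability of the positive-part power $x\mapsto(\max\{0,x\})^m$ at $x=0$ survives one loss of derivative in passing to $\mathcal{F}_{vol}$ and two losses $u\to\dot u\to$ gradient-order terms, leaving $C^1$ at the end.
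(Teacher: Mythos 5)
Your proposal is correct and follows essentially the same route as the paper, whose proof consists precisely of applying Lemma \ref{Shape_Grad_LinEl:Prop: Differentiability Ceramic} together with Theorem \ref{Shape_Grad_LinEl:Thm: Shape Gradient Vol Functional u, sigma(u)} (in its $\sigma(u)$-variant) and reading off the regularity of $G^{\mathrm{cer}}(\Gamma)$ as in Table \ref{Tab: Reg Grad Fvol}. The extra bookkeeping you supply on the adjoint right-hand sides and the $C^{1}(\Gamma)$ conclusion is exactly the verification the paper leaves implicit.
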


\begin{proof}
	Apply Lemma \ref{Shape_Grad_LinEl:Prop: Differentiability Ceramic} and Theorem \ref{Shape_Grad_LinEl:Thm: Shape Gradient Vol Functional u, sigma(u)}.
\end{proof}

\vspace*{1em}

\section{$ L^2 $-gradient regularity for local surface cost functionals}\label{Shape_Grad_LinEl:Sec:Surface Functional Gradient}

We now continue with the derivation and regularity classification of $ L^2 $-shape gradient for shape functionals of the surface types
\begin{align*}
J_{1,\, sur}(\Omega)&:=\int_{\Omega} \mathcal{F}_{sur}(.,u)\, dx,\\
\intertext{ and }
J_{2,\,sur}(\Omega)&:=\int_{\Omega} \mathcal{F}_{sur}(.,u,\sigma(u))\, dx
\end{align*}
w.r.t. linear elasticity constraints.

\begin{prop}\label{Shape_Grad_LinEl:Thm: Shape Gradient Sur1}
	Let $ k\geq 2 $, $ \Omega \in \mathcal{O}_{k+1}^{b} $, $ \Omega_{t}=\T{t}(\Omega), \, t \in I_V  $ for some admissible vector field $V \in C^{k}_{0}(\Oext,\R{3})$. Suppose that  $ f\in C^{k-1,\phi}(\overline{\Oext},\R{3}) $ and $ g\in C^{k,\phi}(\overline{\Oext},\R{3})$ for some $ \phi\in (0,1) $. Let $ u=u(\Omega) \in C^{k,\phi}(\O,\R{3}) $ be the unique solution of \eqref{Reliability:Eq:LinEl} and $ \mathcal{F}_{sur} \in C^{l,\psi}(\R{3}) $, $ l+\psi\geq 1,\, \psi\in [0,1] $ . Then a weak adjoint equation to $ J_{1,sur}(\Omega) $
	and the disjoint displacement-traction problem of linear elasticity \eqref{Reliability:Eq:LinEl} as state equation is given by
	\begin{align}\label{Shape_Grad_LinEl:Eq: weak adjoint Eq Elasticity Fsur u}
	\int_{\Omega} \sigma(\vartheta):\varepsilon(p) \, dx 
	&= \int_{\Gamma_N} \left\langle \tfrac{\partial \mathcal{ F}_{sur}}{\partial z_2}(.,u), \vartheta \right\rangle \, dS~~~  \forall \, \vartheta \in  H^1_{D}.
	\end{align}
	The strong form is given by
	\begin{equation}\label{Shape_Grad_LinEl:Eq: Adjoint Eq Elasticity Fsur u}
	\left. 
	\begin{array}{c c l l}
	\Div( \sigma(p)) & = & 0  &\text{ in } \Omega \\
	p & = & 0  &\text{ on } \Gamma_{D} \tag{AS-1}\\
	\sigma(p) \, \vec{n}& = & \tfrac{\partial \mathcal{ F}_{sur}}{\partial z_2}(.,u) & \text{ on }\Gamma_{N}  \\
	\end{array}. 
	\right.
	\end{equation} 
	\begin{itemize}
		\item[a)]  There exists a unique weak solution $ p \in H^1_{D}(\Omega,\R{3}) $ of the adjoint equation \eqref{Shape_Grad_LinEl:Eq: Adjoint Eq Elasticity Fsur u} and the Hadamard representation of the shape derivative is given by
		\begin{align*}
		\mathcal{G}(\Gamma): ~& C^{1}(\Gamma) \to  \R{} \\
		&w \mapsto  \int_{\Gamma_{N}} \left[\left\langle \tfrac{\partial \mathcal{ F}_{sur}}{\partial z_2}(.,u),Du\, \vec{n} \right\rangle +\langle \tfrac{\partial \mathcal{ F}_{sur}}{\partial z_1}(.,u),\vec{n}\rangle+ \kappa \mathcal{F}_{sur} (.,u)  \right]w  \,dS \\
		&~~~~~~~~+ \int_{\Gamma_N} \langle f+\kappa g +Dg\, \vec{n},p\rangle w +  \langle \Div_{\Gamma}(w\sigma_{\Gamma}(u)), p \rangle \, dS.
		\end{align*}
	\end{itemize}
	\begin{itemize}
		\item[b)]
		\begin{itemize}
			\item[1.1)] 
			If $~\tfrac{\partial \mathcal{ F}_{sur}}{\partial z_2}(.,u) \in W^{1-\nicefrac{1}{q},q}(\Gamma,\R{3}) $, $ \infty> q \geq \frac{4}{3}  $, there exists a unique weak solution $ p \in H^1_{D}(\Omega,\R{3}) \cap W^{2,q}(\Omega,\R{3}) $ of the adjoint equation \eqref{Shape_Grad_LinEl:Eq: Adjoint Eq Elasticity Fsur u}.
			\item[1.2)]  If additionally $ q \geq 4 $  then the (weak) adjoint state  $ p $ has a representation in $ C^{1}(\overline{\Omega},\R{3}) $. If even $ 0< \phi < 1 -\frac{3}{q}  $ then $ p  \in C^{1,\phi}(\overline{\Omega},\R{3}) $.
			The shape derivative is given by $dJ_{vol}(\Omega)[V] =   \langle G(\Gamma) \vec{n}, V \rangle_{L^{2}(\Gamma,\R{3})}$
			where 
			\[G(\Gamma)= \left\langle f+\kappa g +Dg\, \vec{n},p\right\rangle - D_{\Gamma}p :\sigma_{\Gamma}(u)+ \langle \tfrac{\partial \mathcal{ F}_{sur}}{\partial z_2},Du \vec{n} \rangle +\langle \tfrac{\partial \mathcal{ F}_{sur}}{\partial z_1},\vec{n} \rangle + \kappa\mathcal{F}_{sur} (.,u).\]
			\item[2)]	If $ \mathcal{F}_{sur} \in C^{l,\psi}(\R{3}),\, l+\psi>2,\, \psi\in (0,1) $ there exists a unique strong solution $ p $ of \eqref{Shape_Grad_LinEl:Eq: Adjoint Eq Elasticity Fsur u} which is an element of $ C^{l-1,\psi}(\overline{\Omega},\R{3}) \cap C^{k,\phi}(\overline{\Omega},\R{3}) $.
		\end{itemize}
	\end{itemize}
\end{prop}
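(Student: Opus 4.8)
\textbf{Proof plan for Proposition \ref{Shape_Grad_LinEl:Thm: Shape Gradient Sur1}.}
The plan is to follow the adjoint-approach template set up in Section \ref{Shape_Grad_LinEl:Sec: Hadamard Shape Gradients} and essentially mimic the proof of Theorem \ref{Shape_Grad_LinEl:Thm: Shape Gradient Vol Functional u, sigma(u)}, with the complication that the cost functional now lives on $\Gamma$ rather than on $\Omega$. First I would recall from Proposition \ref{Shape_Grad_LinEl:Prop: Shape Deriv shape derivative form} (specialized to $\mathcal{F}_{sur}=\mathcal{F}_{sur}(x,u)$, i.e.\ no dependence on $z_3$) the shape derivative in local-shape-derivative form,
\begin{align*}
dJ_{1,sur}(\Omega)[V] = &\int_{\Gamma}\left\langle\tfrac{\partial\mathcal{F}_{sur}}{\partial z_2}(.,u),u'+Du\,\vec{n}\,V_{\vec{n}}\right\rangle + \left\langle\tfrac{\partial\mathcal{F}_{sur}}{\partial z_1}(.,u),\vec{n}\right\rangle V_{\vec{n}} \, dS\\
&+\int_{\Gamma}\kappa\,\mathcal{F}_{sur}(.,u)\,V_{\vec{n}}\, dS,
\end{align*}
which is legitimate because Corollary \ref{Ex_Shape_Deriv_LinEl:Cor: Shape derivatives for linear elasticity} gives $u'=u'(\Omega;V)\in C^{k-1,\phi}(\overline{\Omega},\R{3})\subset H^1(\Omega,\R{3})$ and Lemma \ref{Shape_Grad_LinEl:Prop: Differentiability LCF}-type reasoning shows $\mathcal{F}_{sur}\in C^{l,\psi}$ supplies enough differentiability. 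The only $V$-dependent term that is not already in $L^2(\Gamma)$-product form is $\int_{\Gamma}\langle\tfrac{\partial\mathcal{F}_{sur}}{\partial z_2}(.,u),u'\rangle\, dS$, and the whole point of the adjoint state is to remove the dependence on $u'$.

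Next I would define the adjoint equation \eqref{Shape_Grad_LinEl:Eq: weak adjoint Eq Elasticity Fsur u}: the right-hand side $\vartheta\mapsto\int_{\Gamma_N}\langle\tfrac{\partial\mathcal{F}_{sur}}{\partial z_2}(.,u),\vartheta\rangle\, dS$ is a continuous linear functional on $H^1_D(\Omega,\R{3})$ because $\tfrac{\partial\mathcal{F}_{sur}}{\partial z_2}(.,u)\in C^{l-1,\psi}(\Gamma,\R{3})\subset L^2(\Gamma,\R{3})$ (using that $u\in C^{k,\phi}$ and $\mathcal{F}_{sur}\in C^{l,\psi}$), the trace operator $\mathbf{T_\Gamma}:H^1(\Omega)\to L^2(\Gamma)$ is bounded (Theorem \ref{App: Trace Operator}), and $|\Gamma_N|<\infty$. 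Combined with the continuity and strict coercivity of $B$ on $H^1_D$ from Theorem \ref{Linear_Elasticity:Thm:Korns_second_ineq}, the Lax-Milgram Lemma yields the unique weak solution $p\in H^1_D(\Omega,\R{3})$; this proves a). For part b)1.1) I would invoke Theorem \ref{Linear_Elasticity:Thm:Weak_Reg_LinEl} i) with right-hand volume data $0\in L^q$ and Neumann data $\tfrac{\partial\mathcal{F}_{sur}}{\partial z_2}(.,u)\in W^{1-1/q,q}(\Gamma,\R{3})$ to get $p\in W^{2,q}(\Omega,\R{3})$; b)1.2) is then the Sobolev embedding $W^{2,q}\hookrightarrow C^{1}$ (resp.\ $C^{1,\phi}$ if $\phi<1-3/q$), see Theorem \ref{App: Sobolev embedding}; b)2) follows from the classical Schauder theory Theorem \ref{Linear_Elasticity:Thm:LinEl_Classical_Sol} applied with Neumann data $\tfrac{\partial\mathcal{F}_{sur}}{\partial z_2}(.,u)\in C^{l-1,\psi}(\Gamma,\R{3})\cap C^{k-1,\phi}(\Gamma,\R{3})$.

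Then I would carry out the adjoint identity. Testing \eqref{Shape_Grad_LinEl:Eq: weak adjoint Eq Elasticity Fsur u} with $\vartheta=u'$ and using symmetry $\sigma(u'):\varepsilon(p)=\sigma(p):\varepsilon(u')$ together with the weak form of \eqref{Ex_Shape_Deriv_LinEl:Eq: PDE u'} (with test function $p\in H^1_D$, so the Dirichlet boundary contribution drops) gives
\begin{align*}
\int_{\Gamma_N}\left\langle\tfrac{\partial\mathcal{F}_{sur}}{\partial z_2}(.,u),u'\right\rangle dS
= \int_{\Gamma_N}\left\langle(f+\kappa g+Dg\,\vec{n})V_{\vec{n}}+\Div_\Gamma(V_{\vec{n}}\sigma_\Gamma(u)),p\right\rangle dS.
\end{align*}
Substituting this back expresses $dJ_{1,sur}(\Omega)[V]$ as a linear functional of $V_{\vec{n}}=\langle V|_\Gamma,\vec{n}\rangle$ only; checking continuity of $w\mapsto\mathcal{G}(\Gamma)(w)$ on $C^1(\Gamma)$ exactly as in Theorem \ref{Shape_Grad_LinEl:Thm: Shape Gradient Vol Functional u, sigma(u)} a) (bounding each term by $\|p\|_{H^1}\|w\|_{C^1(\Gamma)}$ or $\|w\|_{C^1(\Gamma)}$) yields shape differentiability and the distributional Hadamard form; under the extra regularity in b)1.1)--1.2), $p$ is regular enough to integrate $\Div_\Gamma(w\sigma_\Gamma(u))$ by parts on $\Gamma$ via the tangential Stokes formula Corollary \ref{Transf_shape_opt:Cor: Tangential Stokes Matrix}, producing the stated density $G(\Gamma)$. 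The main obstacle, as in the volume case, is the integration-by-parts step transferring the tangential divergence off of $w\sigma_\Gamma(u)$ onto $p$: this requires $p\in C^1(\overline\Omega,\R{3})$ (hence the hypothesis $q\ge 4$ or the $C^{l,\psi}$ assumption), and one must track carefully that $\sigma_\Gamma(u)\vec n$ need not vanish on $\Gamma$, so the naive $H^1(\Gamma)$-duality argument is unavailable and the $C^1$-regularity of the adjoint state is genuinely needed. A minor bookkeeping point is that the term $\langle\tfrac{\partial\mathcal{F}_{sur}}{\partial z_2}(.,u),Du\,\vec n\rangle V_{\vec n}$ coming from the boundary shape-derivative formula is already in $L^2(\Gamma)$-product form and merely gets added into $G(\Gamma)$.
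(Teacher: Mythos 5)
Your proposal is correct and follows essentially the same route as the paper: the shape derivative formula from the surface Reynolds/chain-rule lemmas, Lax–Milgram for the adjoint state (using $\tfrac{\partial\mathcal{F}_{sur}}{\partial z_2}(\cdot,u)\in C^0\subset L^{4/3}(\Gamma_N)$), the duality identity obtained by pairing the weak adjoint equation with $u'$ and the weak form of the $u'$-equation with $p$, and then the $W^{2,q}$/Schauder regularity plus tangential integration by parts to extract $G(\Gamma)$, exactly as in the volume-functional theorem. No gaps to report.
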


\begin{proof}
	a) Lemma \ref{Transf_shape_opt:Lem: Product and Chain rule for shape derivatives} and Lemma \ref{Transf_shape_opt:Lem: d/dt int J(T_t,u_t,Du_t) in shape derivative form} we obtain
	\begin{align*}
	dJ_{1,\,sur}(\Omega)[V]=
	&\,  \int_{\Gamma_N}\left[\left\langle \tfrac{\partial \mathcal{ F}_{sur}}{\partial z_2}(.,u),Du\, \vec{n} \right\rangle+ \langle \tfrac{\partial \mathcal{ F}_{sur}}{\partial z_1}(.,u),\vec{n} \rangle + \kappa \mathcal{F}_{sur} (u)\right]V_{\vec{n}} \, dS  \\
	&+\int_{\Gamma_N} \left\langle \tfrac{\partial \mathcal{ F}_{sur}}{\partial z_2}(.,u)  ,u' \right\rangle \, dS\,.
	\end{align*} 
	Suppose that $ \mathcal{F}_{sur}\in C^{1}(\R{3}) $ and $ u\in C^{k,\phi} $, $ k\geq 2 $. Then $ \tfrac{\partial \mathcal{ F}_{sur}}{\partial z_2}(.,u)$  is an element of $C^{0}(\overline{\Omega},\R{3})$ and thus contained in $ L^{4/3}(\Gamma_{N},\R{3}) $. Hence, by the Lax-Milgram Theorem, there exists a unique  solution $ p\in  H^1_{D}(\Omega,\R{3})  $  of equation \eqref{Shape_Grad_LinEl:Eq: weak adjoint Eq Elasticity Fsur u}.
	Moreover, the local shape derivative $ u' \in C^{1,\phi} $ is a weak solution of   \eqref{Ex_Shape_Deriv_LinEl:Eq: PDE u'} and thus
	\begin{align*}
	\int_{\Gamma_N} \hspace*{-2mm}\left\langle \tfrac{\partial \mathcal{ F}_{sur}}{\partial z_2}(.,u) ,u' \right\rangle \, dx  
	& = \hspace*{-1mm}  \int_{\Omega} \hspace*{-1mm}\sigma(u'):\varepsilon(p) \, dx 
	= \hspace*{-1mm} \int_{\Gamma_{N}} \hspace*{-3mm}\left\langle (f+\kappa g +Dg\, \vec{n})V_{\vec{n}} +\Div_{\Gamma}(V_{\vec{n}}\sigma_{\Gamma}(u)), p \right\rangle dS.
	\end{align*}
	This leads to 
	\begin{align*}\label{Shape_Grad_LinEl:Eq: Hadamard Represetation J_sur u'}
	dJ_{1,\,sur}(\Omega)[V]
	&=\int_{\Gamma_{N}}\langle \Div_{\Gamma}(V_{\vec{n}}\sigma_{\Gamma}(u)), p \rangle + \langle f+\kappa g +Dg\, \vec{n},p\rangle V_{\vec{n}}\, dS \\
	& +\int_{\Gamma_N} \left[\left\langle \tfrac{\partial \mathcal{ F}_{sur}}{\partial z_2}(.,u),Du\, \vec{n} \right\rangle + \langle \tfrac{\partial \mathcal{ F}_{sur}}{\partial z_1}(.,u),\vec{n} \rangle + \kappa \mathcal{F}_{vol} (u)\right]V_{\vec{n}} \, dS .
	\end{align*} 
	Analogously to the argumentation in Proposition \ref{Shape_Grad_LinEl:Thm: Shape Gradient Vol Functional u, sigma(u)} a) we conclude that a) holds.
	
	\noindent b)1.1) \& 1.2) The proofs are analogous to those of b)1.1) \& 1.2) of Proposition \ref{Shape_Grad_LinEl:Thm: Shape Gradient Vol Functional u, sigma(u)}.
	
	\noindent 2) If $ 2<l+\psi $, then $ \tfrac{\partial \mathcal{ F}_{sur}}{\partial z_2}(.,u) \in C^{l-1,\psi} \cap C^{k,\phi} $ and the unique adjoint state $p$ is contained in $ C^{l,\psi}(\overline{\Omega},\R{3})\cap C^{k,\phi}(\overline{\Omega},\R{3})  $ since the regularity of the boundary is a natural restriction for the regularity of $ p $. We can thus again integrate by parts and obtain the demanded formula for $ G(\Gamma) $.
\end{proof}

\noindent The following table shows the Hölder exponents and thus the regularities of the leading terms appearing in
\[ G(\Gamma)= \left\langle f+\kappa g +Dg\, \vec{n},p\right\rangle - D_{\Gamma}p :\sigma_{\Gamma}(u)+ \langle \tfrac{\partial \mathcal{ F}_{sur}}{\partial z_2}(.,u),Du\, \vec{n} \rangle +\langle \tfrac{\partial \mathcal{ F}_{sur}}{\partial z_1}(.,u),\vec{n} \rangle + \kappa\mathcal{F}_{sur} (u) \]
for different regularities of $ \mathcal{F}_{vol} $ and has to be read in the same manner as table \ref{Tab: Reg Grad Fvol}.

\begin{table}[htb]
	\centering
	\begin{small}
		\begin{tabular}{|c||c|c|c|}
			\hline
			$ l+\psi $					& $ 2 $			    & $ (2,k+\phi] $ 	  & $ (k+\phi,\infty) $    \\ \hline\hline
			$ p $ 						& $ 1+\phi $       & $ l+\psi $    & $ k+\phi $ 	\\ \hline \hline
			$  f+\kappa g + Dg\,\vec{n} $ 
			& $ k-1 $           & $ k-1 $		  & $ k-1 $		 \\ \hline
			$ D_{\Gamma}p $				& $ 0+\phi $        & $ l-1+\psi $ 	  & $ k-1+\phi$  \\ \hline
			$ D_{\Gamma}u $				& $k-1+\phi $              & $k-1+\phi $  		  & $k-1+\phi $       \\ \hline
			$ \tfrac{\partial \mathcal{ F}_{sur}}{\partial z_2}(.,u) $					& $1$               & $l-1+\psi $       & $ k-1+\psi $ \\ \hline\hline
			$ G(\Gamma) $				& $ 0+\phi $        & $l-1+\psi $		  & $ k-1 $		 \\ \hline
		\end{tabular} \\
	\end{small}
	\caption{Regularities of the $ L^2 $- Hadamard representation $ G(\Gamma) $ for functionals of \\ the type $ J_{1,sur} $}
	\label{Tab: Reg Grad F1sur}
\end{table}
\bigskip
\noindent Now we investigate shape functionals of the strain driven surface type
\begin{equation} \label{Shape_Grad_LinEl:Eg:J2sur}
J_{2,\,sur}(\Omega):=\int_{\Omega} \mathcal{F}_{sur}(.,u,\sigma(u))\, dx.
\end{equation}
To abbreviate the calculations we introduce the following notation:
\begin{align}
\begin{array}{lll}
I_{\vec{n}}:= \vec{n}\vec{n}^{\top} & M_{\vec{n}}:= MI_{\vec{n}} & 
\\
I_{\Gamma}:=I-\vec{n}\vec{n}^{\top}=I-I_{\vec{n}} ~~~~  &   M_{\Gamma}:=M I_{\Gamma}  &~~~~ ~ _{\Gamma}M:= I_{\Gamma}M.
\end{array}
\end{align} 
Note that 
\begin{align*}
D_{\Gamma}v&=Dv(I-\vec{n}\vec{n}^{\top})=Dv_{\Gamma}\\
\Div_{\Gamma}(v)&=\Div(v)-\langle Dv\,\vec{n},\vec{n} \rangle =\tr(Dv-Dv_{\vec{n}})=\tr(D_{\Gamma}v) \\
\sigma_{\Gamma}(v)&=\sigma(v)(I-\vec{n}\vec{n}^{\top})=\sigma(u)_{\Gamma}.
\end{align*}

\newpage
\begin{prop}\label{Transf_shape_opt:Prop: Tangential Stokes Sigma}
	Let $ \Omega \subset \R{n}  $  be a domain of class $ C^2 $,  $ v \in C^{1}(\Gamma,\R{m}) $ a vector field and $ M \in C^1(\Gamma,\R{m \times m}) $ a matrix field. Then
	\begin{align*}
	&\int_{\Gamma} \tr(M\sigma_{\Gamma}(v))  + \left\langle 
	\tilde{\lambda} \nabla_{\Gamma}\tr\left(M_{\Gamma}\right) +\mu \Div_{\Gamma}\left( \left[M + M^{\top}\right]_{\Gamma} \right),v   \right\rangle \, dS \\
	&= \int_{\Gamma} \kappa \left\langle \left( \tilde{\lambda} \tr\left(M_{\Gamma}\right) I + \mu~ \leftidx{_\Gamma}{\Big(M+M^{\top}\Big)} \right) \vec{n},v\right\rangle +  \left\langle \sigma(v)\vec{n},\left(\tfrac{\lambda}{\lambda + 2 \mu}\tr(M_{\Gamma}) \mathrm{I}+\,\leftidx{_\Gamma}{M}\right)\vec{n} \right\rangle  \, dS 
	\end{align*}
	where $ \tilde{\lambda} = \left(\lambda - \frac{\lambda^2}{\lambda + 2 \mu}\right)$.
\end{prop}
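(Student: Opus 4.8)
The statement to prove is an integration-by-parts formula on the boundary $\Gamma$ for the matrix-valued tensor $\sigma(v)$, specialized so that only its tangential part $\sigma_\Gamma(v) = \sigma(v)(I - \vec{n}\vec{n}^{\top})$ enters. The plan is to apply the tangential Stokes formula of Corollary~\ref{Transf_shape_opt:Cor: Tangential Stokes Matrix} to a suitable matrix field, decompose $\sigma(v)$ into its tangential and normal components, and then use the algebraic structure of $\sigma$ (the Lamé identity $\sigma(v) = \lambda\tr(Dv)\mathrm{I} + \mu(Dv + Dv^{\top})$) to eliminate the normal derivative $Dv\,\vec{n}$ in favour of the full stress applied to $\vec{n}$, i.e. $\sigma(v)\vec{n}$. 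First I would recall that $\sigma_\Gamma(v) = \lambda\tr(D_\Gamma v)\mathrm{I}_\Gamma + \ldots$ is not quite of that form, so the key preliminary computation is to express $\tr(M\sigma_\Gamma(v))$, for a test matrix $M$, in terms of $D_\Gamma v = Dv\,I_\Gamma$ and $\tr(D_\Gamma v)$; since $\sigma_\Gamma(v) = \sigma(v)I_\Gamma$ and $\tr(\sigma(v)) = (n\lambda + 2\mu)\tr(Dv)$ in three dimensions, one can solve for $\tr(Dv)$ in terms of $\tr(D_\Gamma v)$ and $\langle Dv\,\vec{n},\vec{n}\rangle$, and then re-express everything.

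The main line of the argument: apply Corollary~\ref{Transf_shape_opt:Cor: Tangential Stokes Matrix} with the matrix field chosen so that $\tr(A\,D_\Gamma v) = \tr(M\sigma_\Gamma(v))$ up to the terms we want. Concretely, since $\tr(M\sigma_\Gamma(v)) = \tr(M\sigma(v)I_\Gamma) = \tr(I_\Gamma M \sigma(v))$ and $\sigma(v)$ is symmetric, one gets a quadratic form in $Dv$ that splits as $\tilde\lambda\,\tr(M_\Gamma)\tr(D_\Gamma v) + \mu\,\tr\big([M + M^{\top}]_\Gamma D_\Gamma v\big)$ plus boundary-curvature and normal-stress remainder terms, where the coefficient $\tilde\lambda = \lambda - \tfrac{\lambda^2}{\lambda+2\mu}$ is exactly what one obtains after using $\tr(Dv) = \tfrac{1}{\lambda+2\mu}(\tr(\sigma(v)) - \ldots)$ type substitutions to trade the $\lambda\tr(Dv)$ contribution for a tangential trace plus a $\langle\sigma(v)\vec{n},\vec{n}\rangle$ contribution. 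The Stokes formula then converts $\int_\Gamma \tilde\lambda\tr(M_\Gamma)\,\tr(D_\Gamma v) + \mu\,\tr([M+M^{\top}]_\Gamma D_\Gamma v)\,dS$ into $-\int_\Gamma \langle \tilde\lambda\nabla_\Gamma\tr(M_\Gamma) + \mu\Div_\Gamma([M+M^{\top}]_\Gamma), v\rangle\,dS + \int_\Gamma \kappa\langle(\ldots)\vec{n},v\rangle\,dS$, which matches the left and first right-hand side of the claimed identity; the leftover normal-component terms assemble into $\langle\sigma(v)\vec{n}, (\tfrac{\lambda}{\lambda+2\mu}\tr(M_\Gamma)\mathrm{I} + {}_\Gamma M)\vec{n}\rangle$.

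The hard part will be the bookkeeping of the tensor algebra: tracking which index contractions produce $M_\Gamma$, which produce ${}_\Gamma M$, which produce $[M+M^{\top}]_\Gamma$, and correctly isolating the $\langle Dv\,\vec{n},\vec{n}\rangle$ and $Dv\,\vec{n}$ pieces so that they recombine into $\sigma(v)\vec{n}$ rather than leaving a bare $Dv\,\vec{n}$. I expect the key identity to be that $Dv\,\vec{n} = \tfrac{1}{\mu}\sigma(v)\vec{n} - \tfrac{\lambda}{\mu}\tr(Dv)\vec{n} - Dv^{\top}\vec{n} + \ldots$, or more cleanly the symmetric-part relation $\varepsilon(v)\vec{n} = \tfrac{1}{2\mu}(\sigma(v)\vec{n} - \lambda\tr(Dv)\vec{n})$, combined with $\tr(Dv) = \Div_\Gamma(v) + \langle Dv\,\vec{n},\vec{n}\rangle$ and $\langle\varepsilon(v)\vec{n},\vec{n}\rangle = \langle Dv\,\vec{n},\vec{n}\rangle$. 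Solving the resulting linear system for the normal quantities in terms of $\Div_\Gamma(v)$ and $\langle\sigma(v)\vec{n},\vec{n}\rangle$ is what produces the $\tfrac{\lambda}{\lambda+2\mu}$ and $\tilde\lambda$ coefficients, and getting these constants exactly right is the only genuine obstacle; once the decomposition is set up the Stokes formula does the rest. Finally I would double-check the formula in the flat case ($\kappa = 0$, $\Gamma$ planar, $\vec{n}$ constant) where $\nabla_\Gamma$ and $\Div_\Gamma$ reduce to ordinary tangential derivatives, to confirm no curvature term was dropped.
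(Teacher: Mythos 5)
Your proposal follows essentially the same route as the paper: the decomposition of $\tr(M\sigma_{\Gamma}(v))$ into $\tilde{\lambda}\tr(M_{\Gamma})\Div_{\Gamma}(v)+\mu\tr([M+M^{\top}]_{\Gamma}D_{\Gamma}v)$ plus the normal-stress term $\langle\sigma(v)\vec{n},(\tfrac{\lambda}{\lambda+2\mu}\tr(M_{\Gamma})\mathrm{I}+\leftidx{_\Gamma}{M})\vec{n}\rangle$ that you sketch is exactly the content of Lemma \ref{Transf_shape_opt:Lem:Prepartations} v), obtained from the same algebraic relations you name (in particular $\tr(Dv_{\vec n})=\tfrac{1}{\lambda+2\mu}(\langle\sigma(v)\vec n,\vec n\rangle-\lambda\Div_{\Gamma}(v))$, which is the normal-normal component relation rather than the full trace $\tr(\sigma(v))$ you mention in passing), after which Theorem \ref{Transf_shape_opt:Thm: Tangential Stokes Vector} and Corollary \ref{Transf_shape_opt:Cor: Tangential Stokes Matrix} convert the tangential terms exactly as you describe. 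The only work remaining is the tensor bookkeeping you yourself flag, which the paper carries out in the appendix lemma.
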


\bigskip

\noindent To prove the Proposition we establish the following calculation rules:

\begin{lem}\label{Transf_shape_opt:Lem:Prepartations}
	Let $ \Omega \subset \R{3} $ be a bounded domain with boundary $ \Gamma $ of class $ C^1 $, $\vec{n} $ the unity outward normal vector field and $ v \in C^{1}(\overline{\Omega},\R{3}) $. Then the following holds on $ \Gamma $:
	\begin{align*}
	\begin{array}{l l}
	i)& \displaystyle Dv_{\vec{n}}= \frac{1}{\mu}\left[\sigma(v)_{\vec{n}} -\lambda\Div(v)I_{\vec{n}}\right] -(Dv^{\top})_{\vec{n}}, 
	\\[1ex]
	ii)& \displaystyle\tr(Dv_{\vec{n}})=\frac{1}{\lambda+2\mu}\left(\left\langle \sigma(v)\vec{n},\vec{n} \right\rangle - \lambda \Div_{\Gamma}(v)\right),
	\\[1ex] 
	iii)&\displaystyle \tr\left((Dv_{\vec{n}})^{\top}\, \leftidx{_\Gamma}{M}\right) = \frac{1}{\mu} \left\langle \sigma(v)\vec{n},\,\leftidx{_\Gamma}{M}\vec{n} \right\rangle - \tr\left(M_{\vec{n}}D_{\Gamma}v\right),
	\\[1ex]
	iv) & \displaystyle\sigma_{\Gamma}(v) = \lambda \Div_{\Gamma}(v)I_{\Gamma} + \mu (D_{\Gamma}v + D_{\Gamma}v^{\top} I_{\Gamma}) +  \lambda \tr\left(Dv_{\vec{n}}\right)I_{\Gamma} +   \mu(Dv_{\vec{n}})^{\top} I_{\Gamma}, 
	\\[1ex]
	v)&	\displaystyle\tr(M\sigma_{\Gamma}(v)) =
	\tilde{\lambda}\tr\left(M_{\Gamma}\right) \Div_{\Gamma}(v) + \mu \tr\left([M+M^{\top}]_{\Gamma} D_{\Gamma}v\right) \\[1ex]
	&\displaystyle \hspace{2.2cm}+ \left\langle \sigma(v)\vec{n}, \left(\frac{\lambda}{\lambda+2\mu}\tr\left(M_{\Gamma}\right)I + \,\leftidx{_\Gamma}{M} \right)\vec{n}\right\rangle.
	\end{array}
	\end{align*}
\end{lem}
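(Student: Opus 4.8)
The five identities i)--v) in Lemma~\ref{Transf_shape_opt:Lem:Prepartations} are all purely pointwise algebraic statements about the $3\times3$ matrices $Dv$, $\sigma(v)$ and the orthogonal decomposition $\mathrm{I}=I_\Gamma+I_{\vec n}$, so the whole lemma is established by linear algebra on $\Gamma$ with no analysis involved. The plan is to prove them in the order stated, since each later item is built from the earlier ones. First I would record the basic facts that I use repeatedly: $I_{\vec n}^2=I_{\vec n}$, $I_\Gamma I_{\vec n}=I_{\vec n}I_\Gamma=0$, $I_\Gamma+I_{\vec n}=\mathrm I$, $\vec n^\top I_\Gamma=0$, $I_\Gamma^\top=I_\Gamma$, $\tr(I_\Gamma)=n-1$, $\tr(I_{\vec n})=1$, together with the definition $\sigma(v)=\lambda\Div(v)\mathrm I+\mu(Dv+Dv^\top)$ and the tangential operators $D_\Gamma v=Dv\,I_\Gamma$, $\Div_\Gamma(v)=\tr(D_\Gamma v)=\Div(v)-\langle Dv\,\vec n,\vec n\rangle$, $\sigma_\Gamma(v)=\sigma(v)I_\Gamma$.

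For i): multiply the definition of $\sigma(v)$ on the right by $I_{\vec n}$ to get $\sigma(v)_{\vec n}=\lambda\Div(v)I_{\vec n}+\mu(Dv_{\vec n}+(Dv^\top)_{\vec n})$, and solve for $Dv_{\vec n}$. For ii): take traces in i) — or more directly compute $\tr(Dv_{\vec n})=\langle Dv\,\vec n,\vec n\rangle$ and $\langle\sigma(v)\vec n,\vec n\rangle=\lambda\Div(v)+2\mu\langle Dv\,\vec n,\vec n\rangle$, then eliminate $\Div(v)$ using $\Div(v)=\Div_\Gamma(v)+\langle Dv\,\vec n,\vec n\rangle$. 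For iii): start from i), take $\tr(\,\cdot\,{}^\top\,\leftidx{_\Gamma}{M})$, use $\tr((Dv^\top)_{\vec n}{}^\top\,\leftidx{_\Gamma}{M})=\tr(Dv_{\vec n}I_\Gamma M)=\tr(M_{\vec n}Dv I_\Gamma)=\tr(M_{\vec n}D_\Gamma v)$ by cyclicity of the trace and $I_\Gamma I_{\vec n}=0$; also $\tr(I_{\vec n}{}^\top\,\leftidx{_\Gamma}{M})=\tr(I_{\vec n}I_\Gamma M)=0$ kills the $\Div(v)I_{\vec n}$ term. For iv): write $\sigma_\Gamma(v)=\sigma(v)I_\Gamma=\lambda\Div(v)I_\Gamma+\mu(Dv+Dv^\top)I_\Gamma$, split $Dv=D_\Gamma v+Dv_{\vec n}$ and $Dv^\top I_\Gamma=D_\Gamma v^\top I_\Gamma+(Dv_{\vec n})^\top I_\Gamma$ (careful: $Dv^\top$ does \emph{not} simply decompose, one must use $Dv^\top=(D_\Gamma v+Dv_{\vec n})^\top$), and replace $\Div(v)I_\Gamma$ by $(\Div_\Gamma(v)+\tr(Dv_{\vec n}))I_\Gamma$.

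For v), which is the crux: compute $\tr(M\sigma_\Gamma(v))$ by inserting the expression from iv), distribute the trace over the four terms, and simplify each. The first term gives $\lambda\Div_\Gamma(v)\tr(MI_\Gamma)=\lambda\Div_\Gamma(v)\tr(M_\Gamma)$; the second gives $\mu\tr(M(D_\Gamma v+D_\Gamma v^\top I_\Gamma))=\mu\tr([M+M^\top]_\Gamma D_\Gamma v)$ after using cyclicity and $I_\Gamma^2=I_\Gamma$; the third gives $\lambda\tr(Dv_{\vec n})\tr(M_\Gamma)$; the fourth gives $\mu\tr(M(Dv_{\vec n})^\top I_\Gamma)=\mu\tr((Dv_{\vec n})^\top\,\leftidx{_\Gamma}{M})$, which by iii) equals $\langle\sigma(v)\vec n,\,\leftidx{_\Gamma}{M}\vec n\rangle-\mu\tr(M_{\vec n}D_\Gamma v)$. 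Now substitute ii) for $\tr(Dv_{\vec n})$ in the third term; this produces a term $-\tfrac{\lambda^2}{\lambda+2\mu}\Div_\Gamma(v)\tr(M_\Gamma)$ which combines with the first term to give the coefficient $\tilde\lambda=\lambda-\tfrac{\lambda^2}{\lambda+2\mu}$, and a term $\tfrac{\lambda}{\lambda+2\mu}\langle\sigma(v)\vec n,\vec n\rangle\tr(M_\Gamma)=\langle\sigma(v)\vec n,\tfrac{\lambda}{\lambda+2\mu}\tr(M_\Gamma)\mathrm I\,\vec n\rangle$. Finally collect the two $\langle\sigma(v)\vec n,\cdot\,\vec n\rangle$ contributions into $\langle\sigma(v)\vec n,(\tfrac{\lambda}{\lambda+2\mu}\tr(M_\Gamma)\mathrm I+\,\leftidx{_\Gamma}{M})\vec n\rangle$; the remaining $\mu$-terms are $\mu\tr([M+M^\top]_\Gamma D_\Gamma v)-\mu\tr(M_{\vec n}D_\Gamma v)$, and I would have to check that $\tr([M+M^\top]_\Gamma D_\Gamma v)-\tr(M_{\vec n}D_\Gamma v)=\tr([M+M^\top]_\Gamma D_\Gamma v)$ actually consolidates correctly — i.e. verify the bookkeeping of the $_{\vec n}$ versus $_\Gamma$ projections so that the stated right-hand side emerges. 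This last consolidation of the $\mu$-terms, and getting every projector on the correct side of every product, is the main obstacle; it is not deep but it is where sign and placement errors creep in, so I would do it component-free using only cyclicity of the trace and the idempotent/orthogonality relations among $I_\Gamma,I_{\vec n}$, and double-check by specializing to $M=\mathrm I$ (which must reduce to $\tr(\sigma_\Gamma(v))=\Div_\Gamma(v)$-type identities consistent with ii)). With v) in hand, Proposition~\ref{Transf_shape_opt:Prop: Tangential Stokes Sigma} follows by integrating v) over $\Gamma$ and applying the tangential Stokes formula (Theorem~\ref{Transf_shape_opt:Thm: Tangential Stokes Vector} / Corollary~\ref{Transf_shape_opt:Cor: Tangential Stokes Matrix}) to the two surface-divergence terms $\tilde\lambda\,\tr(M_\Gamma)\Div_\Gamma(v)$ and $\mu\tr([M+M^\top]_\Gamma D_\Gamma v)$, which produce exactly the curvature term $\kappa\langle(\tilde\lambda\tr(M_\Gamma)\mathrm I+\mu\,\leftidx{_\Gamma}{(M+M^\top)})\vec n,v\rangle$ and the tangential-gradient/tangential-divergence expressions on the left-hand side of the Proposition.
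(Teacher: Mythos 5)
Your overall route is exactly the paper's own: every item is pointwise linear algebra on $\Gamma$ with the projectors $I_{\Gamma}=\mathrm{I}-\vec{n}\vec{n}^{\top}$ and $I_{\vec{n}}=\vec{n}\vec{n}^{\top}$; i), ii) and iv) are obtained as in the appendix proof, iii) by hitting i) with $\tr(\,\cdot^{\top} I_{\Gamma}M)$, and v) by inserting iv), applying iii) to the term $\mu\tr\bigl(M(Dv_{\vec{n}})^{\top}I_{\Gamma}\bigr)$ and ii) to $\tr(Dv_{\vec{n}})$, which is what produces $\tilde{\lambda}=\lambda-\lambda^{2}/(\lambda+2\mu)$; the concluding integration by parts for Proposition \ref{Transf_shape_opt:Prop: Tangential Stokes Sigma} is also the paper's. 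However, two of your intermediate manipulations are wrong as written, and the second is precisely the consolidation you postponed, so carried out literally your verification of v) would fail.

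First, in iii) the chain $\tr\bigl(((Dv^{\top})_{\vec{n}})^{\top}I_{\Gamma}M\bigr)=\tr(Dv_{\vec{n}}I_{\Gamma}M)$ is false: one has $((Dv^{\top})_{\vec{n}})^{\top}=I_{\vec{n}}Dv$, not $Dv\,I_{\vec{n}}$, and in fact $Dv_{\vec{n}}I_{\Gamma}=Dv\,I_{\vec{n}}I_{\Gamma}=0$, so your middle expression is identically zero while the correct value is $\tr(I_{\vec{n}}Dv\,I_{\Gamma}M)=\tr(M_{\vec{n}}D_{\Gamma}v)$ (your endpoint, and the paper's). Second, and more seriously, in v) the simplification of the $\mu$-part of iv) overshoots: $\mu\tr\bigl(M(D_{\Gamma}v+D_{\Gamma}v^{\top}I_{\Gamma})\bigr)=\mu\tr\bigl([M+M^{\top}I_{\Gamma}]\,D_{\Gamma}v\bigr)$, which exceeds $\mu\tr\bigl([M+M^{\top}]_{\Gamma}D_{\Gamma}v\bigr)$ by exactly $\mu\tr(M_{\vec{n}}D_{\Gamma}v)$. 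Consequently the identity you postpone checking, $\tr([M+M^{\top}]_{\Gamma}D_{\Gamma}v)-\tr(M_{\vec{n}}D_{\Gamma}v)=\tr([M+M^{\top}]_{\Gamma}D_{\Gamma}v)$, is not true (it would force $\tr(M_{\vec{n}}D_{\Gamma}v)=0$ for all $M$), and your proposed sanity check $M=\mathrm{I}$ cannot detect this, since $\tr(I_{\vec{n}}D_{\Gamma}v)=\tr(Dv\,I_{\Gamma}I_{\vec{n}})=0$ makes the discrepancy vanish exactly there. The repair is the paper's grouping: keep the second term as $\mu\tr\bigl([M+M^{\top}I_{\Gamma}]\,D_{\Gamma}v\bigr)$ and let the $-\mu\tr(M_{\vec{n}}D_{\Gamma}v)$ produced by iii) absorb the surplus via $\tr(MD_{\Gamma}v)-\tr(M_{\vec{n}}D_{\Gamma}v)=\tr\bigl((M-MI_{\vec{n}})D_{\Gamma}v\bigr)=\tr(M_{\Gamma}D_{\Gamma}v)$, which yields exactly $\mu\tr\bigl([M+M^{\top}]_{\Gamma}D_{\Gamma}v\bigr)$ and hence the stated formula. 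With these two projector placements corrected, your argument coincides with the paper's proof.
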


Since the proof is very extensive and consists only of basic algebraic computations but is not very instructive and disturbs the reading flow, it is outsourced and can be found in the appendix, consider Lemma \ref{App:Transf_shape_opt:Lem:Prepartations}. Thus we continue with the proof of  Proposition \ref{Transf_shape_opt:Prop: Tangential Stokes Sigma}.

\begin{proof}
	We apply Lemma \ref{Transf_shape_opt:Lem:Prepartations} and Theorems \ref{Transf_shape_opt:Thm: Tangential Stokes Vector} and \ref{Transf_shape_opt:Cor: Tangential Stokes Matrix}.
	\begin{align*}
	\int_{\Gamma}  \tilde{\lambda} &\tr(M_{\Gamma})\Div_{\Gamma}(v)  
	+ \mu\tr\left([M  +  M^{\top}]_{\Gamma} D_{\Gamma}v\right)\, dS\\
	&= \int_{\Gamma}\tilde{\lambda} \left[\kappa \left\langle \tr\left( M_{\Gamma}\right)v,\vec{n}\right\rangle - \skp{\nabla_{\Gamma}\tr\left( M_{\Gamma}\right)}{v} \right]\, dS \\
	&~~~+ \int_{\Gamma} \mu\left[\kappa\skp{\left[M+M^{\top}\right]_{\Gamma} \, v}{\vec{n}} - \skp{\Div_{\Gamma}\left(\left[M+M^{\top}\right]_{\Gamma}\right)}{v}\right] \, dS\\[1ex]
	&= \tilde{\lambda} \int_{\Gamma}  \kappa \left\langle \tr\left( M_{\Gamma}\right)\vec{n},v\right\rangle - \skp{\nabla_{\Gamma}\tr\left( M_{\Gamma}\right)}{v} \, dS \\
	&~~~+ \mu \int_{\Gamma} \kappa\skp{\,\leftidx{_\Gamma}{\left[M+M^{\top}\right]} \vec{n}}{v} - \skp{\Div_{\Gamma}\left(\left[M+M^{\top}\right]_{\Gamma}\right)}{v} \, dS
	\\[1ex]
	&= \int_{\Gamma} \kappa \left\langle \left( \tilde{\lambda} \tr\left(M_{\Gamma}\right) I + \mu \leftidx{_\Gamma}{\left[M+M^{\top}\right]} \right) \vec{n},v\right\rangle \, dS\\
	&~~~-\int_{\Gamma}  \left\langle \tilde{\lambda} \nabla_{\Gamma}\tr\left(M_{\Gamma}\right) +\mu \Div_{\Gamma}\left( \left[M + M^{\top}\right]_{\Gamma} \right),v   \right\rangle \, dS
	\end{align*}
	This implies
	\begin{align*}
	\int_{\Gamma}\tr(M\sigma_{\Gamma}(v))\, dS
	&= \int_{\Gamma} \kappa \left\langle \left( \tilde{\lambda} \tr\left(M_{\Gamma}\right) I + \mu \leftidx{_\Gamma}{\left[M+M^{\top}\right]} \right) \vec{n},v\right\rangle \, dS\\
	&~~~-\int_{\Gamma}  \left\langle \tilde{\lambda} \nabla_{\Gamma}\tr\left(M_{\Gamma}\right) +\mu \Div_{\Gamma}\left( \left[M + M^{\top}\right]_{\Gamma} \right),v   \right\rangle \, dS \\
	&~~~+	\int_{\Gamma}  \left\langle \sigma(v)\vec{n},\left(\tfrac{\lambda}{\lambda + 2 \mu}\tr(M_{\Gamma}) \mathrm{I}+\,\leftidx{_\Gamma}{M}\right)\vec{n} \right\rangle  \, dS.
	\end{align*}
\end{proof}

\begin{thm}\label{Shape_Grad_LinEl:Thm: Shape Gradient Sur2} Let $ k\geq 2 $, $ \Omega \in \mathcal{O}_{k+1}^{b} $, $ \Omega_{t}=\T{t}(\Omega), \, t \in I_V  $ for some admissible vector field $V \in  \mathcal{V}_{k+1}^{ad}(\Oext)$. Suppose that  $ f\in C^{k-1,\phi}(\overline{\Oext},\R{3}) $ and $ g\in C^{k,\phi}(\overline{\Oext},\R{3})$ for some $ \phi\in (0,1) $. Let $ u=u(\Omega) \in C^{k,\phi}(\Omega,\R{3}) $ be the unique solution of \eqref{Reliability:Eq:LinEl}. Suppose that $ \mathcal{F}_{sur} \in C^{l,\psi}(\R{3}) $, $ l+\psi\geq 1,\, \psi\in [0,1] $ and let $ J_{2,sur}$ be given by \eqref{Shape_Grad_LinEl:Eg:J2sur}. \\
	Then the shape derivative of $ J_{2,sur}$ exists.
	
	\noindent If $ \mathcal{F}_{sur} $ is two times differentiable in $ z_3 $ then a weak adjoint equation to $ J_{2,sur}(\Omega) $ and the disjoint displacement traction problem of linear elasticity \eqref{Reliability:Eq:LinEl} is given by
	\begin{align}\label{Shape_Grad_LinEl:Eq: weak adjoint Eq Elasticity Fsur SigmaU}
	\int_{\Omega} \sigma(\vartheta):\varepsilon(p) \, dx 
	&= \int_{\Gamma_N} \left\langle h, \vartheta \right\rangle\, dS ~~~~~\forall \vartheta \in H^{1}_{D}(\Omega,\R{3})
	\end{align} 
	with
	\begin{equation*}
	\begin{split}
	h:= &\tfrac{\partial \mathcal{F}_{sur}}{\partial z_2}(.,u,\sigma(u))  
	+\kappa  \left[ \tilde{\lambda} \tr\left(M_{\Gamma}\right) I + \mu \,\leftidx{_\Gamma}{\big[M+M^{\top}\big]} \right] \vec{n}- \tilde{\lambda} \nabla_{\Gamma}\tr\left(M_{\Gamma}\right) \\
	&-\mu \, \Div_{\Gamma}\left( \big[M + M^{\top}\big]_{\Gamma} \right),
	\end{split}
	\end{equation*} 
	where
	$M = M(.,u,\sigma(u)):=\tfrac{\partial \mathcal{F}_{sur}}{\partial z_3}(.,u,\sigma(u)) $ and $ \tilde{\lambda} = \left(\lambda - \tfrac{\lambda^2}{\lambda + 2 \mu}\right).$
	The strong formulation reads
	\begin{equation}\label{Shape_Grad_LinEl:Eq: Adjoint Eq Elasticity Fsur u sigma} \tag{AS-2}
	\left. 
	\begin{array}{r c l l}
	\Div( \sigma(p)) & = & 0  &\text{ in } \Omega \\
	p & = & 0  &\text{ on } \Gamma_{D} \\
	\sigma(p) \, \vec{n}& = & h & \text{ on }\Gamma_{N}.  \\
	\end{array} 
	\right.
	\end{equation}
	
	\begin{itemize}
		\item[a)] Supposed that $ \mathcal{F}_{sur} $ is regular enough such that 
		$ h \in L^{4/3}(\Gamma,\R{3}) $. Then there exists a unique weak solution $ p \in H^1_{D}(\Omega,\R{3}) $ of  \eqref{Shape_Grad_LinEl:Eq: Adjoint Eq Elasticity Fsur u sigma} and the surface representation of the shape derivative is an element of $ C^{-1}(\Gamma,\R{3}) $.
		
		\noindent It is given by
		\begin{align*}
		\mathcal{G}(\Gamma): C^{1}(\Gamma,\R{3}) \to & \R{}\\
		w \mapsto & \int_{\Gamma_{N}} \hspace*{-2mm}\left(\tfrac{\partial \mathcal{ F}_{sur}}{\partial z_1}\vec{n}+\kappa \mathcal{F}_{sur} + \left\langle\tfrac{\partial \mathcal{F}_{sur}}{\partial z_2} , Du\, \vec{n} \right\rangle  + M : D(\sigma(u))[\vec{n}]\right)  w \, dS 
		\\
		& + \hspace*{-1mm} \int_{\Gamma_N} \hspace*{-2mm} \left\langle  f+\kappa g +Dg\, \vec{n} \, , \, \left(\tfrac{\lambda}{\lambda + 2 \mu}\tr(\,\leftidx{_\Gamma}{M}) \mathrm{I}+\,\leftidx{_\Gamma}{M} + M^{\top}\right)\vec{n} +p \right\rangle w  \, dS 
		\\
		& +\hspace*{-1mm} \int_{\Gamma_N} \hspace*{-2mm} \left\langle \Div_{\Gamma}(w\sigma_{\Gamma}(u))   \, , \, \left(\tfrac{\lambda}{\lambda + 2 \mu}\tr(\,\leftidx{_\Gamma}{M}) \mathrm{I}+\leftidx{_\Gamma}{M} + M^{\top}\right)\vec{n} +p \right\rangle  \, dS.
		\end{align*}
		\item[b)] If $ u \in C^{k+1}(\overline{\Omega},\R{3}) $ and $ \mathcal{F} \in C^{l+\psi}(\R{3}\times \R{3} \times \R{3 \times 3}) $, $ l+\psi> 3,\, 0< \psi<1$ then $ p \in H^1_{D}(\Omega,\R{3}) \cap W^{2,q}(\Omega,\R{3}) $ and $ p $ has a representation in $ C^{1,\phi}(\overline{\Omega},\R{3}) $. Moreover,
		\begin{align}\label{Shape_Grad_LinEl:Eq: Shape Gradient Fsur u sigma}
		G(\Gamma)
		&=   \tfrac{\partial \mathcal{ F}_{sur}}{\partial z_1}\vec{n}+\kappa \mathcal{F}_{sur} + \left\langle\tfrac{\partial \mathcal{F}_{sur}}{\partial z_2} , Du\,\vec{n}  \right\rangle  + M : D(\sigma(u))[\vec{n}]  
		\\ \nonumber
		&~~~+\left\langle  f+\kappa g +Dg\, \vec{n} \, , \, \left(\tfrac{\lambda}{\lambda + 2 \mu}\tr(\leftidx{_\Gamma}{M}) \mathrm{I}+\,\leftidx{_\Gamma}{M} + M^{\top}\right)\vec{n} +p \right\rangle
		\\
		&~~~ - \sigma_{\Gamma}(u): D_{\Gamma}\left[\left(\tfrac{\lambda}{\lambda + 2 \mu}\tr(\leftidx{_\Gamma}{M}) \mathrm{I}+\,\leftidx{_\Gamma}{M} + M^{\top}\right)\vec{n} +p\right]\,.\nonumber
		\end{align} 
	\end{itemize}
\end{thm}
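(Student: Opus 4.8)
\textbf{Proof plan for Theorem \ref{Shape_Grad_LinEl:Thm: Shape Gradient Sur2}.}
The plan is to follow the same adjoint strategy already used for $J_{vol}$ in Theorem \ref{Shape_Grad_LinEl:Thm: Shape Gradient Vol Functional u, sigma(u)} and for $J_{1,sur}$ in Proposition \ref{Shape_Grad_LinEl:Thm: Shape Gradient Sur1}, but with the extra twist that the raw surface adjoint equation \eqref{Shape_Grad_LinEl:Eq: Adjoint surface direct approach} is $H^1$-ill defined, so the boundary data must first be re-expressed by tangential integration by parts. First I would start from the shape derivative of $J_{2,sur}(\Omega)=\int_\Gamma \mathcal{F}_{sur}(\cdot,u,\sigma(u))\,dS$ in local shape derivative form: apply Proposition \ref{Shape_Grad_LinEl:Prop: Shape Deriv shape derivative form} (with $\mathcal F_{vol}=0$), which is legitimate because Theorem \ref{Ex_Shape_Deriv_LinEl:Thm: q^t C^3,phi material derivative of u_t} and Corollary \ref{Ex_Shape_Deriv_LinEl:Cor: Shape derivatives for linear elasticity} guarantee $u\in C^{k,\phi}(\overline\Omega,\R 3)$, $\dot u\in C^{k,\phi}$, and $u'\in C^{k-1,\phi}(\overline\Omega,\R 3)$. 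This produces a term $\int_{\Gamma_N}\frac{\partial\mathcal F_{sur}}{\partial z_3}:\sigma(u')\,dS = \int_{\Gamma_N} M:\sigma(u')\,dS$ plus lower-order terms; the identity $M:\sigma(u') = \tr(M\sigma_\Gamma(u')) + \langle \sigma(u')\vec n, M^\top\vec n\rangle$ splits off a tangential-gradient-free piece and a normal piece, so that the critical coupling to $u'$ at the boundary is $\int_{\Gamma_N}\tr(M\sigma_\Gamma(u'))\,dS$.

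Next I would apply Proposition \ref{Transf_shape_opt:Prop: Tangential Stokes Sigma} with $v=u'$ to rewrite $\int_{\Gamma_N}\tr(M\sigma_\Gamma(u'))\,dS$ as the sum of (i) a genuine $L^2(\Gamma_N)$ pairing $\int_{\Gamma_N}\langle h_0, u'\rangle\,dS$ with $h_0$ the combination of $\kappa$-curvature terms, $\nabla_\Gamma\tr(M_\Gamma)$ and $\Div_\Gamma([M+M^\top]_\Gamma)$ appearing in the definition of $h$, and (ii) a term $\int_{\Gamma_N}\langle\sigma(u')\vec n, (\tfrac{\lambda}{\lambda+2\mu}\tr(M_\Gamma)I + {}_\Gamma M)\vec n\rangle\,dS$. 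Collecting (i) with the $\langle\sigma(u')\vec n, M^\top\vec n\rangle$ piece and the $\frac{\partial\mathcal F_{sur}}{\partial z_2}$ term, the full dependence of $dJ_{2,sur}(\Omega)[V]$ on $u'$ takes the form $\int_{\Gamma_N}\langle h, u'\rangle\,dS + \int_{\Gamma_N}\langle\sigma(u')\vec n, \Theta\vec n\rangle\,dS$ with $h$ exactly as in the statement and $\Theta := \tfrac{\lambda}{\lambda+2\mu}\tr({}_\Gamma M)I + {}_\Gamma M + M^\top$. At this point I would define the adjoint state $p$ by \eqref{Shape_Grad_LinEl:Eq: weak adjoint Eq Elasticity Fsur SigmaU}, invoke Korn's inequality (Theorem \ref{Linear_Elasticity:Thm:Korns_second_ineq}) and Lax--Milgram to get $p\in H^1_D(\Omega,\R 3)$ under $h\in L^{4/3}(\Gamma,\R 3)$ (part a), and higher regularity $p\in W^{2,q}\cap C^{1,\phi}$ from Theorem \ref{Linear_Elasticity:Thm:Weak_Reg_LinEl} and the Sobolev embedding once $\mathcal F_{sur}\in C^{l+\psi}$, $l+\psi>3$, $u\in C^{k+1}$ so that $h\in W^{1-1/q,q}(\Gamma,\R 3)$ for large $q$ (part b). Since $u'\in C^{k-1,\phi}$ is a weak solution of \eqref{Ex_Shape_Deriv_LinEl:Eq: PDE u'}, testing \eqref{Shape_Grad_LinEl:Eq: weak adjoint Eq Elasticity Fsur SigmaU} with $\vartheta=u'$ and using $\sigma(p):\varepsilon(u')=\sigma(u'):\varepsilon(p)$ replaces $\int_{\Gamma_N}\langle h,u'\rangle\,dS$ by the known boundary forcing of $u'$, namely $\int_{\Gamma_N}\langle (f+\kappa g+Dg\,\vec n)V_{\vec n} + \Div_\Gamma(V_{\vec n}\sigma_\Gamma(u)), p\rangle\,dS$; and the remaining term $\int_{\Gamma_N}\langle\sigma(u')\vec n,\Theta\vec n\rangle\,dS$ is handled by the Neumann boundary condition $\sigma(u')\vec n = (f+\kappa g+Dg\,\vec n)V_{\vec n}+\Div_\Gamma(V_{\vec n}\sigma_\Gamma(u))$ from \eqref{Ex_Shape_Deriv_LinEl:Eq: PDE u'}, turning it into $\int_{\Gamma_N}\langle (f+\kappa g+Dg\,\vec n)V_{\vec n}+\Div_\Gamma(V_{\vec n}\sigma_\Gamma(u)),\Theta\vec n\rangle\,dS$. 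Combining everything gives the distribution $\mathcal G(\Gamma)$ of part a), whose continuity on $C^1(\Gamma,\R 3)$ follows — as in the earlier theorems — from the trace theorem and boundedness of $|\Gamma_N|$.

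Finally, for part b), once $p\in C^{1,\phi}$ the tangential integration by parts via Corollary \ref{Transf_shape_opt:Cor: Tangential Stokes Matrix} applied to $\int_{\Gamma_N}\langle\Div_\Gamma(V_{\vec n}\sigma_\Gamma(u)), \Theta\vec n + p\rangle\,dS$ moves $\Div_\Gamma$ off and produces $-\int_{\Gamma_N}\sigma_\Gamma(u):D_\Gamma[\Theta\vec n+p]\,V_{\vec n}\,dS$ plus a $\kappa$-term that is absorbed; also the normal-derivative term $M:D(\sigma(u))[\vec n]$ arises from rewriting $M:Du'$ versus $M:\sigma(u')$ and the shape-derivative identity $(Du)' = Du' + D(Du)[\vec n]V_{\vec n}$ restricted to $\Gamma$ (Lemma \ref{Transf_shape_opt:Lem: Shape derivatives C^l}), together with $M:D(Du)[\vec n] = M:D(\sigma(u))[\vec n]$ after the same trace/$\sigma$ bookkeeping. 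Reading off the coefficient of $V_{\vec n}$ yields $G(\Gamma)$ as stated. The main obstacle I anticipate is purely organisational rather than conceptual: keeping the tangential/normal decompositions of $M$, $\sigma(u')$, and $\Theta$ consistent through the two successive integration-by-parts steps so that every boundary term is collected against the correct factor of $V_{\vec n}$ and nothing that genuinely requires $H^{3/2}$-regularity of $u'$ survives; the structural input that makes this work is precisely Proposition \ref{Transf_shape_opt:Prop: Tangential Stokes Sigma}, which is why it was proved beforehand, and care must be taken that in part a) (where $p$ is only $H^1$ and $\mathcal F_{sur}$ only $C^1$ in $z_3$, hence $M$ only $C^0$) one does not perform the final integration by parts and instead leaves the answer as the distribution $\mathcal G(\Gamma)$.
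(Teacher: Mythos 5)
Your plan follows essentially the same route as the paper's proof: the split of $\int_{\Gamma_N} M:\sigma(u')\,dS$ into a tangential and a normal part, the tangential integration by parts via Proposition \ref{Transf_shape_opt:Prop: Tangential Stokes Sigma}, the adjoint state defined through \eqref{Shape_Grad_LinEl:Eq: weak adjoint Eq Elasticity Fsur SigmaU} combined with testing by $u'$ and the Neumann condition from \eqref{Ex_Shape_Deriv_LinEl:Eq: PDE u'}, and the final tangential integration by parts once $p\in C^{1,\phi}(\overline{\Omega},\R{3})$ are exactly the steps of the paper. Only your closing remark on the origin of the term $M:D(\sigma(u))[\vec{n}]$ is superfluous (and the asserted identity $M:D(Du)[\vec{n}]=M:D(\sigma(u))[\vec{n}]$ is not needed and not correct as stated), since Proposition \ref{Shape_Grad_LinEl:Prop: Shape Deriv shape derivative form}, which you invoke at the outset, already supplies this term directly.
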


\bigskip

\begin{rem}
	Note that the dependence of $ \mathcal{ F}_{sur} $ on $ (.,u,\sigma(u)) $ is neglected in the notation.
\end{rem}

\begin{proof}
	The shape derivative of $ J_{2,sur} $ is given by
	\begin{align*}
	dJ_{2,sur}(\Omega)[V] =& \int_{\Gamma_{N}} \left(\tfrac{\partial \mathcal{ F}_{sur}}{\partial z_1}\vec{n}+\kappa \mathcal{F}_{sur} + \left\langle\tfrac{\partial \mathcal{F}_{sur}}{\partial z_2} , Du\, \vec{n}  \right\rangle  + \tfrac{\partial \mathcal{F}_{sur}}{\partial z_3} : D(\sigma(u))[\vec{n}]\right)  V_{\vec{n}} \, dS 
	\\
	&+ \int_{\Gamma_{N}} \left\langle\tfrac{\partial \mathcal{F}_{sur}}{\partial z_2}, u'  \right\rangle + \tfrac{\partial \mathcal{F}_{sur}}{\partial z_3} :\sigma(u')  \, dS. 
	\end{align*}
	We split
	\begin{align*}
	& \int_{\Gamma_{N}}    \tfrac{\partial \mathcal{F}_{sur}}{\partial z_3} :\sigma(u') \, dS =  \int_{\Gamma_{N}}    M :\sigma(u') \, dS 
	=\mathcal{I}_{\Gamma} + \mathcal{I}_{\vec{n}}
	\end{align*}
	into two parts, where   
	\begin{equation}
	\mathcal{I}_{\Gamma} =\int_{\Gamma_{N}}  M:\sigma_{\Gamma}(u')  \, dS  ~~~~~ \text{ and } ~~~~~
	\mathcal{I}_{\vec{n}} = \int_{\Gamma_{N}} M:(\sigma(u')\vec{n}\vec{n}^{\top})\, dS.
	\end{equation}
	Using the Neumann condition on $ \Gamma_N $, we rewrite the integral $ \mathcal{I}_{\vec{n}} $ in the desired way:
	\begin{align*}
	\mathcal{I}_{\vec{n}}&
	=\int_{\Gamma_N} \left\langle \sigma(u')\vec{n}\, , \,M^{\top}\vec{n} \right\rangle \, dS
	=\int_{\Gamma_N} \left\langle  \left\lbrace f+\kappa g +Dg\, \vec{n}\right\rbrace  V_{\vec{n}} + \Div_{\Gamma}(V_{\vec{n}}\sigma_{\Gamma}(u))   \, , \,M^{\top}\vec{n} \right\rangle \, dS.
	\end{align*}
	Now we rephrase $ \mathcal{I}_{\Gamma} $ by  integrating $  M:\sigma_{\Gamma}(u') $ by parts on $ \Gamma_N $ using  Proposition \ref{Transf_shape_opt:Prop: Tangential Stokes Sigma}.
	We obtain 
	\begin{align*}
	&\int_{\Gamma_N} M:\sigma_{\Gamma}(u')  \, dS
	\\
	&= \int_{\Gamma_{N}} \hspace*{-2mm}\left\langle \kappa  \left[ \tilde{\lambda} \tr\left(\,\leftidx{_\Gamma}{M}\right) \mathrm{I}+ \mu \,\leftidx{_\Gamma}{\big[M+M^{\top}\big]} \right] \vec{n}- \tilde{\lambda} \nabla_{\Gamma}\tr\left(M_{\Gamma}\right) -\mu \Div_{\Gamma}\left( \big[M + M^{\top}\big]_{\Gamma} \right),u'   \right\rangle \, dS \\
	&~~~+	\int_{\Gamma_{N}}  \left\langle \sigma(u')\vec{n},\left(\tfrac{\lambda}{\lambda + 2 \mu}\tr(\,\leftidx{_\Gamma}{M}) \mathrm{I}+\,\leftidx{_\Gamma}{M}\right)\vec{n} \right\rangle  dS
	\end{align*}
	and thus
	\begin{align}\label{Shape_Grad_LinEl:Eq:int M:sigma}
	&\int_{\Gamma_{N}}   M :\sigma(u') \, dS \nonumber  \\
	&= \int_{\Gamma_N}\hspace*{-2mm} \left\langle \sigma(u')\vec{n}  \, , \, \left(\tfrac{\lambda}{\lambda + 2 \mu}\tr(\,\leftidx{_\Gamma}{M}) \mathrm{I}+\,\leftidx{_\Gamma}{M} + M^{\top}\right)\vec{n} \right\rangle  \, dS
	\\
	&~~~+ \int_{\Gamma_{N}} \hspace*{-2mm}\left\langle \kappa  \left[ \tilde{\lambda} \tr\left(M_{\Gamma}\right) \mathrm{I} + \mu \,\leftidx{_\Gamma}{\big[M+M^{\top}\big] }\right] \vec{n}- \tilde{\lambda} \nabla_{\Gamma}\tr\left(M_{\Gamma}\right) -\mu \Div_{\Gamma}\left( \big[M + M^{\top}\big]_{\Gamma} \right),u'   \right\rangle  dS\, . \nonumber 
	\end{align}
	The Eulerian derivative of $ J_{2,sur} $ can now be rewritten in the desired form:
	\begin{align*}
	&dJ_{2,sur}(\Omega)[V] \\
	&= \int_{\Gamma_{N}} \hspace*{-1.5mm}\left[\tfrac{\partial \mathcal{ F}_{sur}}{\partial z_1}\vec{n}+\kappa \mathcal{F}_{sur} + \left\langle\tfrac{\partial \mathcal{F}_{sur}}{\partial z_2} , \tfrac{\partial u }{\partial \vec{n}}  \right\rangle  + M \hspace*{-.5mm}:\hspace*{-.5mm} D(\sigma(u))[\vec{n}]\right] \hspace*{-.5mm} V_{\vec{n}} + \left\langle\tfrac{\partial \mathcal{F}_{sur}}{\partial z_2}, u'  \right\rangle +  M \hspace*{-.5mm}:\hspace*{-.5mm}\sigma(u') \, dS 
	\\[1ex]
	&
	= \int_{\Gamma_{N}}\hspace*{-1.5mm} \left[\tfrac{\partial \mathcal{ F}_{sur}}{\partial z_1}\vec{n}+\kappa \mathcal{F}_{sur} + \left\langle\tfrac{\partial \mathcal{F}_{sur}}{\partial z_2} , \tfrac{\partial u }{\partial \vec{n}}  \right\rangle  + M \hspace*{-.5mm} :\hspace*{-.5mm} D(\sigma(u))[\vec{n}]\right] \hspace*{-0.5mm} V_{\vec{n}} \, dS 
	\\
	&~~~+ \int_{\Gamma_{N}} \hspace*{-1.5mm} \left\langle \sigma(u')\vec{n}  \, , \, \left(\tfrac{\lambda}{\lambda + 2 \mu}\tr(\leftidx{_\Gamma}{M}) \mathrm{I}+\,\leftidx{_\Gamma}{M} + M^{\top}\right)\vec{n} \right\rangle + \left\langle\tfrac{\partial \mathcal{F}_{sur}}{\partial z_2}, u'  \right\rangle    \, dS \\ 
	&~~~+ \int_{\Gamma_{N}} \hspace*{-2mm}\left\langle \kappa \hspace*{-.5mm} \left[ \tilde{\lambda} \tr\left(\leftidx{_\Gamma}{M}\right) \mathrm{I} + \mu \,\leftidx{_\Gamma}{\big[M+M^{\top}\big] }\right] \vec{n}- \tilde{\lambda} \nabla_{\Gamma}\tr\left(M_{\Gamma}\right) -\mu \Div_{\Gamma}\left( \big[M + M^{\top}\big]_{\Gamma} \right),u'   \right\rangle  dS 
	\\
	&
	= \int_{\Gamma_{N}} \left[\tfrac{\partial \mathcal{ F}_{sur}}{\partial z_1}\vec{n}+\kappa \mathcal{F}_{sur} + \left\langle\tfrac{\partial \mathcal{F}_{sur}}{\partial z_2} , \tfrac{\partial u }{\partial \vec{n}}  \right\rangle  + M : D(\sigma(u))[\vec{n}]\right]  V_{\vec{n}} \, dS \\
	&~~~+ \int_{\Gamma_{N}} \left\langle \sigma(u')\vec{n}  \, , \, \left(\tfrac{\lambda}{\lambda + 2 \mu}\tr(\,\leftidx{_\Gamma}{M}) \mathrm{I}+\,\leftidx{_\Gamma}{M} + M^{\top}\right)\vec{n} \right\rangle + \left\langle h, u'  \right\rangle    \, dS.
	\end{align*}
	Again, Lemma \ref{Parameter_Dep_PDE:Thm: Lax-Milgram} implies that there is a unique weak solution $ p \in H^{1}_{D}  $ of  \eqref{Shape_Grad_LinEl:Eq: weak adjoint Eq Elasticity Fsur SigmaU}. Then, we use the relation $ \sigma(u')\vec{n}= (f+ \kappa g + Dg\, \vec{n})V_{\vec{n}} + \Div_{\Gamma}(V_{\vec{n}}\sigma_{\Gamma}(u)) $ to deduce
	\begin{align*}
	dJ_{2,sur}(\Omega)[V]
	&= \int_{\Gamma_{N}} \left(\tfrac{\partial \mathcal{ F}_{sur}}{\partial z_1}\vec{n}+\kappa \mathcal{F}_{sur} + \left\langle\tfrac{\partial \mathcal{F}_{sur}}{\partial z_2} , \tfrac{\partial u }{\partial \vec{n}}  \right\rangle  + M : D(\sigma(u))[\vec{n}]\right)  V_{\vec{n}} \, dS 
	\\
	&~~~ + \int_{\Gamma_N} \left\langle  f+\kappa g +Dg\, \vec{n} \, , \, \left(\tfrac{\lambda}{\lambda + 2 \mu}\tr(\,\leftidx{_\Gamma}{M}) \mathrm{I}+\,\leftidx{_\Gamma}{M} + M^{\top}\right)\vec{n} \right\rangle V_{\vec{n}}  \, dS 
	\\
	&~~~ + \int_{\Gamma_N} \left\langle \Div_{\Gamma}(V_{\vec{n}}\sigma_{\Gamma}(u))   \, , \, \left(\tfrac{\lambda}{\lambda + 2 \mu}\tr(\,\leftidx{_\Gamma}{M}) \mathrm{I}+\,\leftidx{_\Gamma}{M} + M^{\top}\right)\vec{n} \right\rangle  \, dS + \int_{\Gamma_{N}} \hspace*{-2mm}  
	\langle h,u' \rangle\, dS.
	\end{align*}
	Since $ p $ and $ u' $, respectively, satisfy 
	\begin{align*}
	\int_{\Omega} \sigma(\vartheta):\varepsilon(p) \, dx  
	&= \int_{\Gamma_N} \left\langle h, \vartheta \right\rangle\, dS,
	\\
	\int_{\Omega} \sigma(u'):\varepsilon(z) \, dx &= \int_{\Gamma_{N}}\left\langle (f+\kappa g +Dg\, \vec{n})V_{\vec{n}} +\Div_{\Gamma}(V_{\vec{n}}\sigma_{\Gamma}(u)), z \right\rangle \, dS,
	\end{align*}
	for all $ \vartheta,\, z \in  H^1_{D}(\Omega,\R{3})$ and we derive
	\begin{align*}
	\int_{\Gamma_N} \left\langle h,u' \right\rangle \, dS  = \int_{\Omega} \sigma(p): \varepsilon(u') \, dx 
	= \int_{\Gamma_{N}}\left\langle (f+\kappa g +Dg\, \vec{n})V_{\vec{n}} +\Div_{\Gamma}(V_{\vec{n}}\sigma_{\Gamma}(u)), p \right\rangle \, dS\,.
	\end{align*}
	And thus for $ p \in H^1_{D}(\Omega,\R{3}) $
	\begin{align*}
	dJ_{2,sur}(\Omega)[V]
	&= \int_{\Gamma_{N}} \left(\tfrac{\partial \mathcal{ F}_{sur}}{\partial z_1}\vec{n}+\kappa \mathcal{F}_{sur} + \left\langle \tfrac{\partial \mathcal{F}_{sur}}{\partial z_2} , \tfrac{\partial u }{\partial \vec{n}}  \right\rangle  + M : D(\sigma(u))[\vec{n}]\right)  V_{\vec{n}}\, dS 
	\\
	&~~~ + \int_{\Gamma_N} \left\langle  f+\kappa g +Dg\, \vec{n} \, , \, \left(\tfrac{\lambda}{\lambda + 2 \mu}\tr(\,\leftidx{_\Gamma}{M}) \mathrm{I}+\,\leftidx{_\Gamma}{M} + M^{\top}\right)\vec{n} +p \right\rangle V_{\vec{n}}  \, dS 
	\\
	&~~~ + \int_{\Gamma_N} \left\langle \Div_{\Gamma}(V_{\vec{n}}\sigma_{\Gamma}(u))   \, , \, \left(\tfrac{\lambda}{\lambda + 2 \mu}\tr(\,\leftidx{_\Gamma}{M}) \mathrm{I}+\,\leftidx{_\Gamma}{M} + M^{\top}\right)\vec{n} +p \right\rangle  \, dS\,.
	\end{align*}
	Then analogous arguments to those used in the proof of  Proposition \ref{Shape_Grad_LinEl:Thm: Shape Gradient Vol Functional u, sigma(u)} conclude the proof of a).\\
	
	\noindent b) Supposed that $ u \in C^{k+1}(\Omega,\R{3}) $ and $ \mathcal{F}_{sur} \in C^{l+\psi}(\R{3} \times \R{3} \times \R{3 \times 3}),\, l+\psi>3 $, then $ h $ is at least an element of $ C^{1}(\Gamma,\R{3}) \subset W^{1-\nicefrac{1}{q},q}(\Gamma,\R{3})$ for any $ 1 \leq q \leq \infty $. Since $ q \geq 4,\, 0<\phi<1-\frac{3}{q} $ can be chosen the solution $ p $ is an element of $ W^{2,q} \hookrightarrow C^{1,\phi} $. Thus the assertion again follows by integration by parts. 
\end{proof}

\begin{rem}
	The Assumption $  u \in C^{k+1} $ can be achieved by enhancing the regularity of the boundary of $ \Omega $ from $ C^{k+1} $ to $ C^{k+1,\varphi} $ for arbitrary $ \varphi>0 $ since the regularity of $ f $ and $ g $ is already high enough.
\end{rem}

\noindent The following table shows the Hölder exponents and thus the regularities of the terms appearing in $ G(\Gamma) $ for different regularities of $ \mathcal{F}_{2,sur} $. The regularity of the gradient is the minimum over these regularities in the respective case, since the regularity of $ G(\Gamma)$ is determined by the term with the lowest regularity appearing in the formula. 

Let $ k\geq 2 $, $ \Omega \in \mathcal{O}_{k+2}^{b} $, $V \in  \mathcal{V}_{k+2}^{ad}(\Oext)$,  $ f\in C^{k-1,\phi}(\overline{\Oext},\R{3}) $ and $ g\in C^{k,\phi}(\overline{\Oext},\R{3})$ for some $ \phi\in (0,1) $. Let $ u=u(\Omega) \in C^{k+1,\phi}(\Omega,\R{3}) $ be the unique solution of \eqref{Reliability:Eq:LinEl} and $ \mathcal{F}_{sur} \in C^{l,\psi}(\R{3} \times \R{3} \times \R{3 \times 3}) $. Then the regularity of $ G(\Gamma) $ can be classified as the following tabular shows.

\begin{table}[htb]
	\centering
	\begin{small}
		\begin{tabular}{|c||c|c|c|} \hline
			$ \mathcal{ F}_{sur} ~/~ l+\psi $					& $ 3 $			    & $ (3,k+1+\phi) $ 	  & $ [k+1+\phi,\infty) $    \\ \hline\hline
			$ p $ 						& $ 1+\phi $       & $ l-1+\phi $    & $ k+\phi $ 	\\ \hline \hline
			$  f+\kappa g + Dg\, \vec{n} $ 
			& $ k-1+\phi $           & $ k-1+\phi $ 		  & $ k-1+\phi $ 		 \\ \hline
			$ D_{\Gamma}p $				& $ 0+\phi $        & $ l-2+\psi $ 	  & $ k-1+\phi$  \\ \hline
			$ D_{\Gamma}u $				& $k+\phi $              & $k+\phi $  		  & $k+\phi $       \\ \hline
			$ D_{\Gamma}\left(\frac{\partial \mathcal{F}_{sur}}{\partial z_3}^{\top}\vec{n}\right) $					& $1$               & $l-2+\psi $       & $ k-1+\phi $ \\ \hline\hline
			$ G(\Gamma) $				& $ 0+\phi $        & $l-2+\psi $		  & $ k-1+\phi $		 \\ \hline
		\end{tabular} \\
	\end{small}
	\caption{Regularities of the $ L^2 $- Hadamard representation $ G(\Gamma) $ for functionals of \\ the type $ J_{2,sur} $}
	\label{Tab: Reg Grad F2sur}
\end{table}
\noindent Note that the gradient takes a maximal regularity of $C^{k-1+\phi}$ (depending on the regularity of $ \mathcal{F}_{sur} $) which is more than two regularities less than the boundary regularity.

\noindent Even if  $ f\in C^{k,\phi}(\overline{\Oext},\R{3}) $ and $ g\in C^{k+1,\phi}(\overline{\Oext},\R{3})$ for some $ \phi\in (0,1) $ then $ u=u(\Omega) \in C^{k+1,\phi}(\Omega,\R{3}),\, p=p(\Omega) \in C^{k,\phi}(\Omega,\R{3})  $. In this case the regularity of $ D_{\Gamma}p $ bounds the regularity of $ G(\Gamma) $ from below. We discuss an approach on how this behavior can potentially be avoided in Section \ref{outlook: Bound. and Sol. Reg. }.

\begin{rem}
	The weak formulation of equation \eqref{Shape_Grad_LinEl:Eq: Adjoint Eq Elasticity Fsur u sigma} is formally equivalent to 
	\begin{align*}
	&\int_{\Omega} \sigma(\vartheta):\varepsilon(p) \, dx \\
	&=  \int_{\Gamma_N} \frac{\partial \mathcal{F}_{sur}}{\partial z_3}(.,u,\sigma(u)):\sigma(\vartheta) + \left\langle \frac{\partial \mathcal{F}_{sur}}{\partial z_2}(.,u,\sigma(u)),\vartheta \right \rangle \, dS \\
	&+ \int_{\Gamma_N} \left\langle(f+\kappa g + Dg \,\vec{n})V_{\vec{n}} + \Div_{\Gamma}(V_{\vec{n}}\sigma_{\Gamma}(u)) \, , \, \left(\tfrac{\lambda}{\lambda + 2 \mu}\tr(M_{\Gamma}) \mathrm{I}+\,\leftidx{_\Gamma}{M} + M^{\top}\right)\vec{n} \right\rangle \, dS
	\end{align*}
	since 
	\begin{align*}
	&\int_{\Gamma_N} \frac{\partial \mathcal{F}_{sur}}{\partial z_3}(.,u,\sigma(u)):\sigma(\vartheta) + \left\langle \frac{\partial \mathcal{F}_{sur}}{\partial z_2}(.,u,\sigma(u)),\vartheta \right \rangle \, dS \\
	&= \int_{\Gamma_N} \hspace*{-2mm}\langle h,\vartheta \rangle + \left\langle (f+\kappa g + Dg\, \vec{n})V_{\vec{n}} + \Div_{\Gamma}(V_{\vec{n}}\sigma_{\Gamma}(u)), \left[\tfrac{\lambda}{\lambda + 2 \mu}\tr(M_{\Gamma}) \mathrm{I}+\,\leftidx{_\Gamma}{M} + M^{\top}\right] \hspace*{-1mm} \vec{n} \right\rangle dS
	\end{align*}
	holds according to equation \eqref{Shape_Grad_LinEl:Eq:int M:sigma} and the definition of the vector field $ h $.
\end{rem}

\begin{thm}
	Let $ \Omega $ be of class $C^{4} $, $ V \in C^{4}_{0}(\Oext,\R{3}) $, $ f \in C^{2,\phi}(\overline{\Oext},\R{3}) $ and $ g \in C^{3,\phi}(\overline{\Oext},\R{3}) $. If the material constants $ E,\, K,\, \hat{n},\, b,\, c,\, \sigma_{f},\,\epsilon_{f}$ are given as in 
	Lemma \ref{Shape_Grad_LinEl:Prop: Differentiability LCF}.
	Then $ J^{\mathrm{lcf}} $ is shape differentiable and the $ L^{2}(\Gamma) $-shape gradient, given by
	\begin{equation}
	\begin{split}
	G^{\mathrm{lcf}}(\Gamma)
	&= M : D(\sigma(u))[\vec{n}]  - \sigma_{\Gamma}(u): D_{\Gamma}\left[\left(\tfrac{\lambda}{\lambda + 2 \mu}\tr(\leftidx{_\Gamma}{M}) \mathrm{I}+\,\leftidx{_\Gamma}{M} + M^{\top}\right)\vec{n} +p\right]
	\\
	&~~~+\left\langle  f+\kappa g +Dg\, \vec{n} \, , \, \left(\tfrac{\lambda}{\lambda + 2 \mu}\tr(\leftidx{_\Gamma}{M}) \mathrm{I}+\,\leftidx{_\Gamma}{M} + M^{\top}\right)\vec{n} +p \right\rangle,
	\end{split}
	\end{equation} 
	is an element of $ C^{1,\phi}(\Omega,\R{3}) $ where $ u \in C^{3,\phi}(\overline{ \Omega},\R{3}) $ is the unique solution of  \eqref{Reliability:Eq:LinEl} and $ p \in  C^{2,\phi}(\overline{ \Omega},\R{3})  $ is the unique solution of \eqref{Shape_Grad_LinEl:Eq: Adjoint Eq Elasticity Vol u, sigma(u)}. Moreover $ M = M(\sigma(u)) := \frac{\partial \mathcal{ F}^{\mathrm{lcf}}}{\partial \sigma}(\sigma(u)) $.
\end{thm}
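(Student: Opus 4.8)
The plan is to recognize $J^{\mathrm{lcf}}$ as a strain-driven surface cost functional of the type $J_{2,sur}$ covered by Theorem~\ref{Shape_Grad_LinEl:Thm: Shape Gradient Sur2}, to supply the smoothness of the integrand via Lemma~\ref{Shape_Grad_LinEl:Prop: Differentiability LCF}, and then to push the Hölder bookkeeping through the adjoint equation and the resulting Hadamard density.

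First I would write $J^{\mathrm{lcf}}(\Omega)=\int_{\Gamma}\mathcal{F}^{\mathrm{lcf}}(\sigma(u))\,dS$, so that in the notation of Definition~\ref{Transf_shape_opt:Defn: Local Cost Functionals} this is a local surface cost functional of first order whose integrand depends only on the stress slot; in particular $\partial\mathcal{F}^{\mathrm{lcf}}/\partial z_1=\partial\mathcal{F}^{\mathrm{lcf}}/\partial z_2=0$ and $M=\partial\mathcal{F}^{\mathrm{lcf}}/\partial z_3=\partial\mathcal{F}^{\mathrm{lcf}}/\partial\sigma$. Lemma~\ref{Shape_Grad_LinEl:Prop: Differentiability LCF} then tells us that, under the stated hypotheses on $E,K,\hat n,b,c,\sigma_f',\varepsilon_f'$, the quadratic-in-von-Mises reformulation $\mathcal{F}^{\mathrm{lcf}}=\tilde{CMB}^{-1}\circ\tilde{RO}\circ\tilde{SD}^{-1}\circ(VM^{2}\circ TF)$ is at least four times continuously differentiable on all of $\mathbb{R}^{3\times3}$ — the point being that this rewriting removes the singularity of the von Mises map across $\ker(TF)$ — so that $\mathcal{F}^{\mathrm{lcf}}$ qualifies as an admissible $C^{l,\psi}$ integrand with $l+\psi>3$.

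Next I would check the PDE regularity and invoke the general theorem. Since $\Omega$ is of class $C^{4}$ (hence of class $C^{3,\phi}$ for every $\phi\in(0,1)$ and a baseline design), $V\in C^{4}_{0}(\Omega^{ext},\mathbb{R}^{3})$, $f\in C^{2,\phi}(\overline{\Omega^{ext}},\mathbb{R}^{3})$, $g\in C^{3,\phi}(\overline{\Omega^{ext}},\mathbb{R}^{3})$, Theorem~\ref{Linear_Elasticity:Thm:LinEl_Classical_Sol} (equivalently Proposition~\ref{Ex_Shape_Deriv_LinEl:Prop: properies of solutions u_t of DTP on Omega_t}) gives the unique solution $u=u(\Omega)\in C^{3,\phi}(\overline{\Omega},\mathbb{R}^{3})$ of \eqref{Reliability:Eq:LinEl}, and Corollary~\ref{Ex_Shape_Deriv_LinEl:Cor: Shape derivatives for linear elasticity} gives the local shape derivative $u'(\Omega;V)\in C^{2,\phi}(\overline{\Omega},\mathbb{R}^{3})$ solving \eqref{Ex_Shape_Deriv_LinEl:Eq: PDE u'}. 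With these regularities — taking $k=2$, so that $u\in C^{k+1,\phi}\subset C^{k+1}$, $V\in C^{k+2}_{0}$, $\Omega\in\mathcal{O}_{k+2}^{b}$ — Theorem~\ref{Shape_Grad_LinEl:Thm: Shape Gradient Sur2}(b) applies: it produces the adjoint equation \eqref{Shape_Grad_LinEl:Eq: Adjoint Eq Elasticity Fsur u sigma} with Neumann datum $h$, the unique weak adjoint state $p$, the shape differentiability of $J^{\mathrm{lcf}}$, and the Hadamard representation \eqref{Shape_Grad_LinEl:Eq: Shape Gradient Fsur u sigma}; inserting $\partial\mathcal{F}^{\mathrm{lcf}}/\partial z_1=\partial\mathcal{F}^{\mathrm{lcf}}/\partial z_2=0$ there collapses it to the displayed formula for $G^{\mathrm{lcf}}(\Gamma)$.

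It remains to classify the regularity of $p$ and of $G^{\mathrm{lcf}}$. Since $\sigma(u)\in C^{2,\phi}(\overline{\Omega},\mathbb{R}^{3\times3})$ and $\partial\mathcal{F}^{\mathrm{lcf}}/\partial\sigma$ is at least $C^{3}$, the Hölder chain rule (cf. Lemma~\ref{Transf_shape_opt:Lem: arithmetic rules mat deriv I}) gives $M=M(\sigma(u))\in C^{2,\phi}$ on $\Gamma$; since the tangential operators entering $h$ cost one derivative and $\kappa,\vec n$ inherit the $C^{4}$-boundary smoothness (and $\partial\mathcal{F}^{\mathrm{lcf}}/\partial z_2=0$), one obtains $h\in C^{1,\phi}(\Gamma,\mathbb{R}^{3})$. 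Re-applying the classical Schauder estimates of Theorem~\ref{Linear_Elasticity:Thm:LinEl_Classical_Sol} to \eqref{Shape_Grad_LinEl:Eq: Adjoint Eq Elasticity Fsur u sigma}, whose volume term vanishes and whose Neumann datum is $h\in C^{1,\phi}$ on a $C^{2,\phi}$-domain, bootstraps the adjoint state to $p\in C^{2,\phi}(\overline{\Omega},\mathbb{R}^{3})$. Feeding $u\in C^{3,\phi}$ and $p\in C^{2,\phi}$ into the formula for $G^{\mathrm{lcf}}$, every summand is of class at least $C^{1,\phi}$: the $\kappa$-, $f$-, $g$- and $\langle f+\kappa g+Dg\,\vec n,\cdot\rangle$-terms are $C^{2,\phi}$, the term $M:D(\sigma(u))[\vec n]$ is $C^{1,\phi}$ because $D\sigma(u)\in C^{1,\phi}$, and the term $\sigma_\Gamma(u):D_\Gamma(\cdot)$ is $C^{1,\phi}$ with the binding factor $D_\Gamma p$ coming from $p\in C^{2,\phi}$; hence $G^{\mathrm{lcf}}(\Gamma)\in C^{1,\phi}(\Gamma)$. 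I expect this last classification — verifying that nothing in the chain state $\rightarrow$ adjoint Neumann datum $\rightarrow$ adjoint state $\rightarrow$ Hadamard density drops below $C^{1,\phi}$ — to be the main obstacle; its delicate ingredient, namely that $\mathcal{F}^{\mathrm{lcf}}$ is three times continuously differentiable with a controlled third derivative near the compact range of $\sigma(u)$ (which is what forces $h\in C^{1,\phi}$), is exactly what the constraints $c<b<0$, $0<\hat n\le\tfrac{1}{7}$, $-4\cdot\tfrac{2b}{m}<1$ in Lemma~\ref{Shape_Grad_LinEl:Prop: Differentiability LCF} secure, through the extendability across the origin of the inverse-function derivatives of $\tilde{CMB}^{-1}$ and $\tilde{SD}^{-1}$.
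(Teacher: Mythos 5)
Your proposal is correct and follows essentially the same route as the paper, which proves this theorem by combining Lemma \ref{Shape_Grad_LinEl:Prop: Differentiability LCF} (smoothness of $\mathcal{F}^{\mathrm{lcf}}$ via the quadratic von-Mises reformulation) with Theorem \ref{Shape_Grad_LinEl:Thm: Shape Gradient Sur2} and the regularity bookkeeping recorded in Table \ref{Tab: Reg Grad F2sur}; your explicit Hölder chain (state $\to$ Neumann datum $h$ $\to$ adjoint state $p$ $\to$ density $G^{\mathrm{lcf}}$) is exactly what that table encodes for $k=2$ and $l+\psi\geq k+1+\phi$. The only cosmetic remark is that you correctly identify the relevant adjoint equation as \eqref{Shape_Grad_LinEl:Eq: Adjoint Eq Elasticity Fsur u sigma} rather than the volume adjoint cited in the theorem statement, which is consistent with the paper's own argument.
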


\begin{proof}
	Apply Lemma \ref{Shape_Grad_LinEl:Prop: Differentiability LCF} and Theorem \ref{Shape_Grad_LinEl:Thm: Shape Gradient Sur2} and consider Table \ref{Tab: Reg Grad F2sur}.
\end{proof}

Note that in this case the derivative $ \frac{\partial \mathcal{ F}^{\mathrm{lcf}}}{\partial \sigma} $ can only be computed numerically e.g. by a newton scheme. The function $ \mathcal{ F}^{\mathrm{lcf}} =CMB^{-1} \circ RO \circ SD^{-1} \circ VM \circ TF $ is certainly differentiable but $ CMB^{-1}  $ and $ SD^{-1} $ has no representation by elementary functions and thus two nonlinear equations have to be solved.


\chapter{Perspective on Further Research}\label{Outlook}

\section{A glance at reduced regularity requirements}\label{outlook: Reduces Requirements}

As the explanations in Section \ref{Reliability:Sec:Reg_Req} already indicate $ C^{2,\phi} $-solutions are not an optimal choice in view of the minimal regularity that is required for the construction of the Shape functional $ \mathcal{F}^{\mathrm{lcf}} $ - here $ W^{2,p}(\Omega,\R{3}) $ with $ p $ large enough would be sufficient. Moreover, the theorem on the existence of the shape derivative 
requires only $ C^{1} $ - material derivatives. 
Thereof the question arises, if the regularity of $ u $ can be reduced without loosing either the property of well definedness of the functional or the shape differentiability.  

Let us suppose that $ \Omega $ is a domain of class $ C^2 $ instead of $ C^{2,\phi} $ (\footnote{Note that in this case the regularity can not be reduced to $ C^{1,\phi} $ since Theorem  \ref{Linear_Elasticity:Thm:Weak_Reg_LinEl} requires at least $ C^{2} $}), $ V \in \Vad{2}{\Omega^{ext}} $, $ f \in C^{1}(\overline{\Omega^{ext}},\R{3}) $ and $ g \in C^{2}(\overline{\Omega^{ext}},\R{3}) $. With $ V $ we again associate the family $ T_t=T_t[V],\, t \in I= I_{V}$ of $ C^{2} $-transformations obtained from the ODE \eqref{Transf_shape_opt:Eq: ODE} or the family $ \Phi_t[V],\, t \in I=(-\epsilon,\epsilon) $ obtained from \eqref{Shape_Grad_LinEl: Gradient Flow Equation} and the family $ (\Omega_{t})_{t \in I} $ of $ C^{2} $-shapes. 

Under these assumptions Theorem \ref{Ex_Shape_Deriv_LinEl:Thm: existence of strong H^1 material derivatives} clearly holds true and $ t \to u^{t} = u_t \circ T_t \in H^{1}(\Omega,\R{3}) $ is continuously differentiable where the solution $ u_t $ of equation \eqref{Ex_Shape_Deriv_LinEl:Eq: LinEl Omega_t} on $ \Omega_t $ is an element of $ H^1(\Omega,\R{3}) $. 

Furthermore, let $  f \in W^{1,p}(\Omega,\R{3}) $ and $ W \in W^{2,p}(\Omega,\R{3}) $ or rather $ g\vert_{\Gamma} \in W^{2-\nicefrac{1}{p},p}(\Gamma,\R{3}) $ for any $ 1 \leq p < \infty $. Then, since $ L^p \subset W^{1,p}  $, \eqref{Ex_Shape_Deriv_LinEl:Eq: LinEl Omega_t} admits a solution $  u_t \in W^{2,p}(\Omega_t,\R{3}) $ for any $ t \in I $, consider Theorem \ref{Linear_Elasticity:Thm:Weak_Reg_LinEl}. This is exactly the regularity which is needed to assure that the functional $ J^{\mathrm{lcf}}(\Omega_t,\sigma(u_t)) $ is defined for any $ \Omega_t $. 

\noindent Additionally, this solution satisfies, see the original work \cite{Agm64} or equation \eqref{Linear_Elasticity:Eq:Schauder_Estimate_u_Sobolev}
\[ \Norm{u_t}{W^{2,p}(\Omega,\R{3})} \leq C(\Omega_t)\left(\Norm{f}{W^{1,p}(\Omega_t,\R{3})}) + \Norm{g}{W^{1 - \nicefrac{1}{p},p}(\Omega_t,\R{3})} + \Norm{u_t}{L^1(\Omega_t,\R{3}) }\right) \]
and the right hand sides of the PDE which determine the material derivative $ \dot{u} $, i.e.
\enlargethispage{\baselineskip}
\begin{align} 
\left. 
\begin{array}{rcll}
-\Div( \sigma(q_t)) &=&f_{V}+f_{u_t}   &\text{ in } \Omega_t \\
q_t &=& 0  &\text{ on } \Gamma_{D,t}  \\
\se(q_t) \vec{n}_t &=&g_{V}-G_{u_t}\vec{n}_t &\text{ on }\Gamma_{N,t}
\end{array} 
\right.
\end{align}
satisfy $ f_V + f_{u_t} \in L^{p}(\Omega_t,\R{3})$ and $ g_{V}-G_{u_t}\vec{n}_t \in W^{1-\nicefrac{1}{p},p}(\Gamma_{t},\R{3}) $  what implies $ q_t \in W^{2,p}(\Omega_{t},\R{3}) $ and 
\[ \Norm{q_t}{W^{2,p}(\Omega,\R{3})} \leq C(\Omega_t)\left[\Norm{ f_V + f_{u_t} }{W^{1,p}(\Omega_t,\R{3})} + \Norm{g_{V}-G_{u_t}\vec{n}_t }{W^{1 - \nicefrac{1}{p},p}(\Omega_t,\R{3})} + \Norm{q_t}{L^1(\Omega_t,\R{3}) }\right]\hspace*{-1mm}. \]

In this case we have $ u^t,\, q^t \in W^{2,p}(\Omega,\R{3})$  such that $ t \mapsto u^{t} $ is continuously differentiable w.r.t. the strong topology on $ H^1(\Omega,\R{3}) $ with $ \dot{u}^t =q^{t} $. 
Moreover, the Rellich-Kondrachov Theorem \ref{App: Rellich Kondrachov} yields that $ W^{2,p}(\Omega_t,\R{3}) \hookrightarrow C^{1,\phi}(\overline{\Omega}_t,\R{3}) $ is a continuous and compact embedding for any $ p\geq 4 $ such that $ u_t,\, q_t \in  C^{1,\phi}(\Omega_t,\R{3}) $ for any $ 0\leq \phi \leq 1 - \frac{3}{p}   $. 
To apply again Lemma \ref{Lem: Topo-Lemma} we thus  define the following chain of topologies $$ W^{2,p}(\Omega_t,\R{3}) \subset  C^{1,\phi}(\overline{ \Omega}_t,\R{3}) \subset H^{1}(\Omega,\R{3})\, .$$

\noindent \textbf{What is left to show?}\\
We have to prove that $ u^{t} $ and $ q^{t} $ are uniformly bounded in $ W^{2,p} $. The central point in the proof of this statement will be showing that the constant $ C(\Omega_t) $ can be chosen uniformly regarding the parameter $ t \in (-\epsilon,\epsilon) $. This will require a detailed investigation of the construction of $ C(\Omega_t) $. But again the constant $ C(\Omega_t) $ depends mainly on $ \Omega_t $ through the hemisphere transformations which involve the transformations $ T_t $, compare also Lemma \ref{Ex_Shape_Deriv_LinEl:Lem: uniform hemisphere property Omega_t} and Proposition \ref{Ex_Shape_Deriv_LinEl:Prop: properies of solutions u_t of DTP on Omega_t}.

In this case $ u' \in C^{0,\varphi}(\overline{\Omega},\R{3}) $ is no longer differentiable, thus the shape derivative in the local shape derivative form is no longer defined and the shape gradient can not be derived as explained in Chapter \ref{Shape_Grad_LinEl}. Nevertheless, the functional $ \mathcal{J}^{\mathrm{lcf}}(\Omega,\sigma(u(\Omega))) $ is still shape differentiable and the shape derivative can be calculated in material derivative form using Lemma \ref{Transf_shape_opt:Lem: d/dt int(T_t,u_t,Du_t) in material derivative form}. 
Also the adjoint method can be applied, but usually the functional $ dJ(\Omega)[V] $ decomposes no longer into a product of $ V $ and a function $ G $, see also the following Section \ref{Shape_Grad_LinEl:Sec: Distributed Shape Derivative}.  

\vspace{2\parindent}

\section{Towards shape flows}\label{Shape_Grad_LinEl:Sec: Distributed Shape Derivative}

As illustrated above, the $ L^2 $-surface representation of the shape derivative  is often not available even if the functional is shape differentiable. Even if it can be deduced, the gradient representation $ G(\Gamma) $ is too irregular to maintain the domain regularity of $ \Omega $, consider the tables \ref{Tab: Reg Grad Fvol} - \ref{Tab: Reg Grad F2sur}, i.e. if the shape is of class $ C^{k},\, k \geq 3 $ then $ G(\Gamma) $ takes a maximal regularity of $ C^{k-2} $, sometimes even $ C^{k-3 +\phi} $. In this case it is impossible to derive a flow along a descent direction according to equation \eqref{Shape_Grad_LinEl: Gradient Flow Equation}. Thus at least the following approaches seem to be suitable.
\newpage
\begin{itemize}
	\item[1.]It is possible to look for descent directions defined on $ \Gamma $ w.r.t. other scalar products instead of the $ L^2(\Gamma) $-scalar product. In this context usually a partial differential equation has to be solved on the surface of $ \Omega $ to  obtain a descent direction $ W(\Gamma) $. In this way the regularity of the descent direction can be enhanced. 
	\item[2.] If this surface representation is not available, it is still possible to derive the weak (volume/distributed) shape derivative $ dJ(\Omega)[V] $ and to define descent directions according to the approach proposed in \cite{SturmLaurain2016distributed,Sturm2015shape} also for $V\in C^{0,1}_{0}(\Oext,\R{3})$.
	\item[3.] In the case of $ C^{\infty} $-domains, the set of admissible shapes can be described as the manifold 
	$$ B_{e} := Diff^\infty(\mathbb{S}^{2},\R{3})/ Diff^\infty(\mathbb{S}^{2},\mathbb{S}^{2}) $$
	
	with tangential space $ \mathcal{T}_{B_{e}}  \cong \{ h\,| \, h= v \vec{n},\, v \in C^{\infty}(\mathbb{S}^2,\R{}) \} $. \cite{Schulz2014riemannian,Michor2003ShapeManifold} This means if the surface representation of a shape functional $ J $ is available such that \[ dJ(\Omega)[V] = \int_{\Gamma} \langle G(\Gamma)\vec{n},V \rangle \, dS = \int_{\Gamma} G(\Gamma) V_{\vec{n}} \, dS =dJ(\Gamma)[V_{\vec{n}}] \] with $ G(\Gamma) \in C^{\infty}(\mathbb{S}^2,\R{})  $, then $ G(\Gamma)\vec{n} $ is an element of the tangent space and the  \textit{shape gradient} w.r.t. to the inner product
	\[ m_{L^2}:\mathcal{T}_{B_{e}} \times \mathcal{T}_{B_{e}} \to \R{},\, (W,V) \mapsto \int_{\Gamma} wv \, dS\]
	where $ V=v\vec{n} $ and $ W = w \vec{n} $. In this context also other metrics can be considered, as there are for example the $ H^1(\Gamma) $-metric or the Steklov-Poincaré metric, consider \cite{Michor2007Metrics,Michor2007Metrics,Schulz2015PdeShapeManifolds,Schulz2016Steklov,Schulz2016metricsComparison}. Meanwhile, also other shape spaces with diffeological structure \cite{Iglesias2013diffeology} are considered, see \cite{Welker2017optimization}.
\end{itemize}

We will give a short outlook on these approaches and their similarities, advantages and disadvantages in the context of $ C^{k} $-shapes here and start with the weak/distributed shape derivative formulation.

In the last years this representation gained more popularity in the shape optimization community since it has computational advantages and can be derived under weaker assumptions, see Theorem \ref{Transf_shape_opt:Thm:Hadamard_Thm}, and consider \cite{SturmLaurain2016distributed,Schmidt2018weak} also.

As mentioned above, the approach to derive the adjoint equation for the distributed shape derivative is exactly the same as it is for the Hadamard representation: 
The terms that contain the material derivative $ \dot{u} $ are sat to the right hand side of the adjoint equation. And since these terms even have the same structure also the adjoint equations remain exactly the same.

The shape derivative in material derivative form has the following structure:
\begin{align*}
dJ(\Omega)[V] = 
&  \int_{\Omega} \left\langle\frac{\partial \mathcal{F}_{vol}}{\partial z_2} (.,u,\sigma(u)) ,\dot{u}\right\rangle+ \frac{\partial \mathcal{F}_{vol}}{\partial z_3} (.,u,\sigma(u) : \sigma(\dot{u})\, dx \\
&+  \int_{\Gamma}  \left\langle\frac{\partial \mathcal{F}_{sur}}{\partial z_2} (.,u,Du) ,\dot{u} \right\rangle + \frac{\partial \mathcal{F}_{sur}}{\partial z_3} (.,u,\sigma(u) : \sigma(\dot{u}) \, dS. \nonumber \\
&+ \text{"terms containing $ V $ or derivatives thereof".}  \\
=&\,  \int_{\Omega} \left\langle\frac{\partial \mathcal{F}_{vol}}{\partial z_2} (.,u,\sigma(u)) ,u' \right\rangle + \frac{\partial \mathcal{F}_{vol}}{\partial z_3} (.,u,\sigma(u)):\sigma(u')\, dx
\\
&+  \int_{\Gamma}  \left\langle\frac{\partial \mathcal{F}_{sur}}{\partial z_2} (.,u,\sigma(u)), u'   \right\rangle + \frac{\partial \mathcal{F}_{sur}}{\partial z_3} (.,u,\sigma(u)) :\sigma(u')  \, dS\\
&+ \text{ "terms with $ V_{\vec{n}} $"\, .}
\end{align*}

The only formal difference is the fact that the terms involving $ u' $ before were substituted by the weak formulation of the right hand side of \eqref{Ex_Shape_Deriv_LinEl:Eq: PDE for u' g(Gamma), f(Omega)}, i.e.
\[ \int_{\Gamma_N}\langle f + \kappa g + Dg \vec{n},p \rangle V_{\vec{n}} + \langle\Div_{\Gamma}(V_{\vec{n}} \sigma_{\Gamma}(u)),p \rangle \, dS\] and now the terms containing $ \dot{u} $ are replaced by the right hand side of \eqref{Ex_Shape_Deriv_LinEl:Eq: Weak equation for dot_u_t} 
\[ \int_{\Omega} \langle f_V,p \rangle  + \tr\left((DV\sigma(u) + \dot{\sigma}(u) + \Div(V)\sigma(u))Dv \right) \, dx + \int_{\Gamma_N}   \langle g_V,p \rangle \,dS .  \]
\noindent Exemplarily, we carry out the calculations in the case of the volume functional $  J_{vol}(\Omega) $, see equation \ref{Shape_Grad_LinEl:Eq:Jvol}, formally:
\begin{align*}
dJ_{vol}(\Omega)[V]
=&\, \int_{\Omega}
\Div(V) \mathcal{F}_{vol}(.,u,Du) \, dx  \nonumber \\
&+ \int_{\Omega} \left\langle \frac{\partial \mathcal{F}_{vol}}{\partial z_1} (.,u,Du ),V \right\rangle -  \frac{\partial \mathcal{F}_{vol}}{\partial z_3} (.,u,Du) : (DuDV) \, dx \nonumber  \\
&+  \int_{\Omega} \left\langle\frac{\partial \mathcal{F}_{vol}}{\partial z_2} (.,u,Du) ,\dot{u} \right\rangle + \frac{\partial \mathcal{F}_{vol}}{\partial z_3} (.,u,Du) : D\dot{u} \, dx\, .
\end{align*}
Thus the equation \eqref{Shape_Grad_LinEl:Eq: Adjoint Eq Elasticity Vol u, sigma(u)} leads to
\begin{align*} 
\int_{\Omega}  \sigma(w):\varepsilon(p) \, dx=& \int_{\Omega} \left\langle\frac{\partial \mathcal{F}_{vol}}{\partial z_2}(.,u,Du),w \right\rangle + \frac{\partial \mathcal{F}_{vol}}{\partial z_3}(.,u,Du):Dw\, dx  \, \forall w \in H^1_{D} 
\intertext{ and }
\int_{\Omega}  \sigma(\dot{u}):\varepsilon(z) \, dx =& \int_{\Omega} \langle f_V,z \rangle  + \tr\left((DV\sigma(u) + \dot{\sigma}(u) + \Div(V)\sigma(u))Dz \right) \, dx \\
&+ \int_{\Gamma_N}   \langle g_V,z \rangle \,dS  \, \forall z \in H^{1}_{\Gamma_D}.
\end{align*}
Since we have shown that both equations have unique solutions in $ H^1_{D} $ we obtain 
\begin{align*}
&\int_{\Omega} \left\langle\frac{\partial \mathcal{F}_{vol}}{\partial z_2}(.,u,Du),\dot{u} \right\rangle + \frac{\partial \mathcal{F}_{vol}}{\partial z_3}(.,u,Du):D\dot{u}\, dx \\
&= \int_{\Omega} \langle f_V,z \rangle  + \tr\left((DV\sigma(u) + \dot{\sigma}(u) + \Div(V)\sigma(u))Dp \right) \, dx + \int_{\Gamma_N}   \langle g_V,p \rangle \,dS
\end{align*}
and the distributed shape derivative becomes 
\begin{align*}
dJ_{vol}(\Omega)[V]
=&\, \int_{\Omega}
\Div(V) \mathcal{F}_{vol}(.,u,Du) \, dx  \nonumber \\
&+ \int_{\Omega} \left\langle \frac{\partial \mathcal{F}_{vol}}{\partial z_1} (.,u,Du ),V \right\rangle -  \frac{\partial \mathcal{F}_{vol}}{\partial z_3} (.,u,Du) : (DuDV) \, dx \nonumber  \\
&+ \int_{\Omega} \langle f_V,p \rangle  + \tr\left((DV\sigma(u) + \dot{\sigma}(u) + \Div(V)\sigma(u))Dp \right) \, dx + \int_{\Gamma_N} \hspace*{-2mm}   \langle g_V,p \rangle \,dS \, .
\end{align*}
\indent Obviously, this formulation does not contain curvature terms or second order derivatives of $ u $ and can be computed easily in a numerical scheme. At first sight it seems to have the disadvantage to provide no decent direction, but this problem can be overcome, see \cite{SturmLaurain2016distributed,Schmidt2018weak}.  

\noindent The distributed shape derivatives for $ J_{1,sur} $ and $ J_{2,sur} $ can be derived analogously - For $ J_{1,sur} $ we use \eqref{Shape_Grad_LinEl:Eq: Adjoint Eq Elasticity Fsur u} and for $ J_{2,sur} $  \eqref{Shape_Grad_LinEl:Eq: Adjoint Eq Elasticity Fsur u sigma}. In the case of $ J_{1,sur} $ this leads to
\begin{align*}
dJ_{1,sur}(\Omega)[V]
=&\, \int_{\Gamma}
\Div_{\Gamma}(V) \mathcal{F}_{sur}(.,u,Du) + \left\langle \frac{\partial \mathcal{F}_{vol}}{\partial z_1} (.,u,Du ),V \right\rangle dS \nonumber  \\
&+ \int_{\Omega} \langle f_V,p \rangle  + \tr\left((DV\sigma(u) + \dot{\sigma}(u) + \Div(V)\sigma(u))Dp \right) \, dx + \int_{\Gamma_N} \hspace*{-2mm}   \langle g_V,p \rangle \,dS
\end{align*}
Suppose that $ V=0 $ in a neighborhood $ U \subset \R{3} $ of $ \Gamma_D $. Then also $ D\dot{u}=0 $ on $ \Gamma_D $. Hence (again with $ M=M(u)=\tfrac{\partial \mathcal{F}_{sur}}{\partial z_3} (.,u,Du) $)
\enlargethispage{\baselineskip}
\begin{align*}
dJ_{2,sur}(\Omega)[V]
=&\, \int_{\Gamma_N}
\Div_{\Gamma}(V) \mathcal{F}_{sur}(.,u,Du) + \left\langle \frac{\partial \mathcal{F}_{sur}}{\partial z_1} (.,u,Du ),V \right\rangle -  M : (DuDV)^{\sigma} \, dS \nonumber  \\
&+  \int_{\Gamma_N} \left\langle\frac{\partial \mathcal{F}_{sur}}{\partial z_2} (.,u,Du) ,\dot{u} \right\rangle + M : \sigma(\dot{u}) \, dS 
\\
=&\, \int_{\Gamma_N}
\Div_{\Gamma}(V) \mathcal{F}_{sur}(.,u,Du) + \left\langle \frac{\partial \mathcal{F}_{sur}}{\partial z_1} (.,u,Du ),V \right\rangle \, dS\\
&-\int_{\Gamma_N}  M : (DuDV)^{\sigma} + \left\langle \sigma(\dot{u})\vec{n}, (...)\vec{n} \right\rangle + \langle h,\dot{u} \rangle \, dS 
\\
=&\, \int_{\Gamma_N}
\Div_{\Gamma}(V) \mathcal{F}_{sur}(.,u,Du) + \left\langle \frac{\partial \mathcal{F}_{sur}}{\partial z_1} (.,u,Du ),V \right\rangle \, dS\\
&-\int_{\Gamma_N}  M : (DuDV)^{\sigma} +\left\langle g_{V} + G_{u}\vec{n}, \left(\tfrac{\lambda}{\lambda + 2 \mu}\tr(M_{\Gamma}) \mathrm{I}+\,\leftidx{_\Gamma}{M} + M^{\top}\right)\vec{n} \right\rangle \, dS \\
& +\int_{\Omega} \langle f_V,p \rangle  + \tr\left((DV\sigma(u) + \dot{\sigma}(u) + \Div(V)\sigma(u))Dp \right) \, dx + \int_{\Gamma_N} \hspace*{-2mm}   \langle g_V,p \rangle \,dS.
\end{align*}

We showed in Chapter \ref{Ex_Shape_Deriv_LinEl} that the shape derivative $ dJ(\Omega) $ exists for vector fields $ V \in C^{3}_{0}(\Oext,\R{3}) $ and $ C^{3} $-shapes $ \Omega \in \mathcal{O}_{3}^{b} $. Since the adjoint equation remains the same, the adjoint state $ p $ and the solution $ u $ are elements of $ C^{2,\phi}(\overline{\Omega},\R{3}) $ if we assume that $ \mathcal{ F}_{vol} $ is regular enough as it is the case for the Ceramic and the LCF functional, see Lemma \ref{Shape_Grad_LinEl:Prop: Differentiability Ceramic} and Lemma \ref{Shape_Grad_LinEl:Prop: Differentiability LCF}. Then shape differentiability can be shown analogously to Theorem \ref{Shape_Grad_LinEl:Thm: Shape Gradient Vol Functional u, sigma(u)}.

We now follow the approach proposed in \cite{SturmLaurain2016distributed} and define a decent direction as a solution of 
\begin{align}\label{outlook:Eq: Desc Acc. Laurain/Sturm}
\int_{\Omega} \varepsilon(W):\varepsilon(V)\, dx = dJ(\Omega)[V]~~~ \forall V \in H^{1}_{D}(\Omega,\R{3}).
\end{align}
Thus we have to assure that such a solution exists and therefore we investigate  
\begin{align*}
dJ_{vol}(\Omega)[V]
=&\, \int_{\Omega}
\Div(V) \mathcal{F}_{vol}(.,u,Du) \, dx  \nonumber \\
&+ \int_{\Omega} \left\langle \frac{\partial \mathcal{F}_{vol}}{\partial z_1} (.,u,Du ),V \right\rangle -  \frac{\partial \mathcal{F}_{vol}}{\partial z_3} (.,u,Du) : (DuDV) \, dx \nonumber  \\
&+ \int_{\Omega} \langle f_V,p \rangle  + \tr\left((DV\sigma(u) + \dot{\sigma}(u) + \Div(V)\sigma(u))Dp \right) \, dx + \int_{\Gamma_N} \hspace*{-2mm}   \langle g_V,p \rangle \,dS.
\end{align*}
where 
\begin{align}
f_{V} = Df\, V + f\Div(V) \text{ and } g_{V} = Dg\, V + f\Div_{\Gamma}(V) .
\end{align}
Due to the properties of $f,\,g,\, u,\, p $ and $ \mathcal{F}_{vol} $ the mapping $ dJ(\Omega):H^1_{D}(\Omega,\R{3})  \to \R{}$ can be judged to be continuous. Thus a unique solution $ W =W(\Omega) \in H^{1}_{D}(\Omega,\R{3}) $ can be found by the Lax-Milgram theorem.

However, a regularity of $ H^1 $ is clearly not enough to maintain the $ C^{k} $-domain regularity in a descent along $ W =W(\Omega_t)$, where the according volume-flow equation can be derived as 
\begin{align}\label{Outlook:Eq:Displacement Flow}
\frac{d}{dt} \Phi_t = - W(\Omega_t) \circ \Phi_t \text{ on } \Omega ~~~~\text{ where } \Omega_t = \Phi_t(\Omega_t)
\end{align}
for the initial shape $ \Omega \in \mathcal{O}_{k}^{b} $. Thus we have to switch over to strong solutions again.

Deriving the strong formulation of \eqref{outlook:Eq: Desc Acc. Laurain/Sturm} means separating $ V $ on the right hand side of Equation \eqref{outlook:Eq: Desc Acc. Laurain/Sturm} by partial integration. But since the surface formulation of $ dJ(\Omega)[V] $ can be derived exactly in this way we end up with
\begin{align}
\int_{\Omega} \varepsilon(W):\varepsilon(V)\, dx = dJ(\Gamma)[V_{\vec{n}}] =  \int_{\Gamma} \langle G(\Gamma) \vec{n},V \rangle \, dS ~~~\forall V \in H^{1}_{D}(\Omega,\R{3})
\end{align}
where 
$G(\Gamma)=  \mathcal{F}_{vol} (.,u,Du) +  \langle f+\kappa g +Dg \, \vec{n} ,p \rangle  - D_{\Gamma}p:\sigma_{\Gamma}(u) \text{ on } \Gamma,$
what then corresponds to the Steklov-Poincaré metric, consider \cite{Schulz2016Steklov} \footnote{Note that we pretend to be in the setup of the manifold $ B_e $ here - what truly not the case - as it is also not the case in the context of discretized domains, as they appear in FE discretizations.}. Since we want to keep the Dirichlet boundary fixed, we consider again only vector fields with $ \langle V,\vec{n} \rangle = 0 $ on $ \Gamma_D $. In strong form this equation thus reads  
\begin{equation}\label{Outlook:Eq:Strong Displacement PDE}
\left. 
\begin{array}{r c l l}
\Div( \varepsilon(W)) & = & 0  &\text{ in } \Omega \\
W & = & 0  &\text{ on } \Gamma_{D} \\
\sigma(W) \vec{n}& = & G(\Gamma)\vec{n} & \text{ on }\Gamma_{N}.  \\
\end{array} 
\right.
\end{equation}

As stated in Table \ref{Tab: Reg Grad Fvol}, $ G(\Gamma)$ is an element of $ C^{1}(\Gamma) \subset W^{1 - \nicefrac{1}{p},p}(\Omega,\R{3})$ and $ \vec{n} $ is an element of $ C^{2}(\Gamma) $. Thus we find $ W=W(\Omega) \in W^{2,p}(\Omega,\R{3}) \hookrightarrow C^{1,\phi}(\overline{\Omega},\R{3}) $ by Theorem \ref{Linear_Elasticity:Thm:LinEl_Classical_Sol} what is still leads to insufficient regularity. Hence, only a twice application of the same approach would leed to $ C^{2,\phi} $-descent directions. Presumed that $ g=0 $, the curvature term, which bounds the regularity from above by $ C^{1} $, vanishes and $ G(\Gamma)\vec{n} \in C^{2}(\Gamma) $. In this case, we derive that the solution $ W $ is an element of $ C^{2,\phi}(\overline{\Omega},\R{3}) $ what is close to regular enough. We will come back to this problem in Section \ref{outlook: Bound. and Sol. Reg. }.

Howsoever, we conclude that the descent directions obtained from this approach provide more regularity than pure $ L^2 $-descent directions, but still too less regularity to maintain the domain regularity in displacement flows or algorithms. 

Taking the $ H^1 $-scalar product, as it is proposed in \cite{Schulz2016metricsComparison}, i.e. solving the variational formulation

\[ \int_{\Gamma}  wv +c \nabla_{\Gamma}w \nabla_{\Gamma}v  \, dS =  \int_{\Gamma} \langle G(\Gamma),v \rangle \, dS ~~~\forall v \in H^{1}(\Gamma) \]
for $ c >0 $, leads to the same problematic.
In the space $ B_e $ this corresponds to the metric 
\[ m_{H^1}: \mathcal{T}_{B_{e}} \times \mathcal{T}_{B_{e}} \to \R{},\, (W,V) \mapsto \langle (id -c\Delta_{\Gamma})w, v\rangle_{L^2(\Gamma)}.\]
where $ V=v \vec{n} $ and $ W = v \vec{n} $. This means solving the PDE
\[  (id-c\Delta_{\Gamma})w =G(\Gamma) \text{ on } \Gamma.\]
In $ B_e $ this ansatz is suitable, since if $ \Omega $ is of class $ C^{\infty} $ and $ G(\Gamma) \in C^{\infty}(\Gamma) $ then also the solution $ w =w(\Gamma)$
and the descent direction $ W(\Gamma) = -w(\Gamma)\vec{n} $ are of this regularity class, consider also \cite[Prop. 1.9]{Taylor2013partial}. But in the framework of $ C^{k} $-shapes, this approach fails. 
Even if the solution, i.e. the $ H^1 $ gradient $ w $, is an element of $ C^{k}(\Gamma) $, then $ W(\Gamma) =  - w(\Gamma)\vec{n} $ is only contained in $ C^{k-1}(\Gamma,\R{3}) $. Thus we propose a related but nevertheless different approach here:

The idea is to solve the equation
\begin{align} \label{Outlook:Eq: Weak Helmhholtz System}
\int_{\Gamma}   \langle W, V \rangle +cD_{\Gamma}W : D_{\Gamma}V    \, dS =  \int_{\Gamma} \langle G(\Gamma)\vec{n},V \rangle \, dS ~~~\forall V \in H^{1}(\Gamma,\R{3}) 
\end{align}
for $ c>0 $ on the boundary. The associated PDE system in strong form reads
\begin{align}\label{Outlook:Eq: Strong Helmhholtz System}
(id -  c\Delta_{\Gamma})W_i = G(\Gamma)\vec{n}_i \text{ on } \Gamma,\, i=1,\,2,\,3.
\end{align}
This equation has an additional regularization effect on the outward normal $ \vec{n} $. Moreover, the constant $ c $ can be seen as an additional smoothing parameter. Suppose that $ V,\, W \in H^{1}(\Gamma,\R{3}) $, then \[ \int_{\Gamma} c D_{\Gamma} W : D_{\Gamma}V +  \langle W,V \rangle \, dS \to \int_{\Gamma} \langle W,V \rangle \, dS \text{ if } c \to 0.\] In case of $ c=0 $, solving \eqref{Outlook:Eq: Weak Helmhholtz System} corresponds to finding the classical $ L^2 $-descent direction. 

\noindent Thus there are some tasks left to do to complete the proposed approach: 
\begin{itemize}
	\item[1.] Show that \ref{Outlook:Eq: Strong Helmhholtz System} has classical solutions in $ C^{k,\phi}(\Gamma)$ if $ G(\Gamma)\vec{n} \in C^{k-2,\phi}(\Gamma),\, k\geq 2 $. The regularity theory presented in Chapter \ref{PDE_Systems} adapted to differentiable manifolds should lead to these results.
	\item[2.] Close the gap between the domain regularity of $ C^{k+1} $ and the $ C^{k,\phi} $-regularity of the PDE solutions, consider Section \ref{outlook: Bound. and Sol. Reg. }  and
	\item[3.] show the existence of flows $ \Phi_{t},\, t \in [0,\epsilon) $ along descent directions $ W(\Gamma_t) \in C^{k,\phi} $ or $ W(\Omega_t) \in C^{k,\phi} $ according to \eqref{Shape_Grad_LinEl: Gradient Flow Equation} or \eqref{Outlook:Eq:Displacement Flow}.
\end{itemize}

\vspace*{3em}
\section{Consistency of domain and solution regularity} \label{outlook: Bound. and Sol. Reg. }

The speed method, as it is described in \cite{SokZol92}, seems not to be the ideal tool to derive $ C^{k,\phi} $-Hölder material derivatives and gradients for $ k\geq 2 $ and $ 0<\phi<1 $ due to the existing regularity theory for linear elliptic PDE systems.

Let us illustrate this observation by some small examples.
We regard the case of a domain $ \Omega $ with a $ C^3$ boundary and we suppose that $ f \in C^{1}(\overline{ \Omega},\R{3}) $ and $ g^{2}(\Gamma,\R{3}) $. In this case the speed method demands a $ C^{3} $ admissible vector field such that we receive $ C^{3} $ transformations $ T_t $. Unfortunately there seems to be a gap in the regularity theory for PDE systems which in this case only provides existence results which assure that $ u \in C^{2,\phi}(\overline{ \Omega},\R{3}) $, $ 0\leq \phi <1 $ (consider Theorem \ref{Linear_Elasticity:Thm:LinEl_Classical_Sol}) although $ \Omega $ has a $ C^{3} $ boundary and $ f $ and $ g $ are regular enough. 

The same can be observed in the case of surface shape gradients, see Table \ref{Tab: Reg Grad F2sur}, where $ G(\Gamma) $ reaches at most a regularity of $ C^{k-1+\phi} $ when the initial domain $ \Omega $ ist of class $ C^{k+2} $. Thus even if a solution $ W $ of \eqref{Outlook:Eq: Strong Helmhholtz System} belongs to $ C^{k+1,\phi} $ it still pertains not enough regularity to maintain the $ C^{k+2} $-smoothness of $ \Omega $. This phenomenon also appears when the approach proposed in \cite{SturmLaurain2016distributed} is used, see the explanations below \eqref{Outlook:Eq:Strong Displacement PDE}.
It is due to the regularity theory for elliptic PDE. An explanation for this phenomenon are the Sobolev embeddings which guarantee only  
\[  W^{k,p}(\Omega) \hookrightarrow C^{k-1,\phi}(\overline{\Omega}) ~~~~~ \text{ for }0\leq \phi \leq 1-\nicefrac{n}{p} \]
if $ mp>n>(m-1)p $ or \[  W^{k,p}(\Omega) \hookrightarrow C^{k-1,\phi}(\overline{\Omega}) ~~~~~ \text{ for }0\leq \phi <1 \] in the case of $ (m-1)p=n $. Only the condition ($ n=m-1 $ and $ p=1 $)  - which is not of interest here - implies 
\[  W^{j+m,p}(\Omega) \hookrightarrow C^{j,\phi}(\overline{\Omega}) ~~~~~ \text{ for }0\leq \phi \leq 1.\]

\noindent Better assumptions would thus be: A domain $ \Omega $ of  class $ C^{k,\phi} $, $ g \in C^{k-1,\phi}(\Omega,\R{3}) $, $ f \in C^{k-2,\phi}(\overline{ \Omega},\R{3}) $ and a vector field $ V \in \Vad{k,\phi}{\Omega^{ext}} $ respectively $ V \in C^{k,\phi}_{0}(\Oext,\R{3}) $. In this case we achieve the same regularities for the PDE solutions as before, i.e. $ u \in C^{k,\phi} $ but the gap between domain regularity and solution regularity exists no longer, since then $ u \in C^{k,\phi} $ for a domain of class $ C^{k,\phi} $, consider Theorem \ref{Linear_Elasticity:Thm:LinEl_Classical_Sol}. This also remains true for the adjoint equations and the shape gradients  or descent directions. 

\begin{itemize}
	\item[1.] In case of the speed method it would thus be left to show that the transformations $T_t[V]$ inherit the $ C^{k,\phi} $ regularity of vector fields $ V \in C^{k,\phi} $ with $ \langle V,\vec{n}^{ext} \rangle =0 \text{ on } \Gamma^{ext} $ and still maintain their properties. 
	
	\item[2.]  Another approach that could be considered is the \textit{perturbation of identity method}, see for example \cite{DelfZol11,SokZol92}.  Therein, suitable deformation mappings $ t \to \Phi_t[V] $  are derived by setting $ \Psi_t[V]:= id + t V $ for some vector field $ V$. Since $ id$ is smooth $ \Psi_t $ takes the regularity of $ V $ and remains a transformation as long as $ |t| $ is close$  $ to $ 0 $.
	
	
	Obviously, $ \Psi_t[V] $ is the first order Taylor expansion of $ T_t[V] $ at $ t_0=0 $, see Lemma  \ref{Diff_Banach_Space:Lem:Linear_Approx}.
	Therefore, it seems to be possible to transfer all the proofs presented in this work to the perturbation of identity method with $ \Omega $ of class $ C^{k,\phi} $ and $ V \in C^{k,\phi} $ since the crucial properties of the transformations $ T_t[V] $ are  maintained. Let us mention here for example the first derivatives by $ t $ and $ x $ at $ t=0 $. In this setting, the regularity of the solutions $ u \in C^{k,\phi}$ becomes consistent with the smoothness of the domain. 
\end{itemize}

Nevertheless, we have to take good care since the first order derivatives at $ t\neq 0 $ are different and also the higher order derivatives differ. Further, the question arrises if the perturbation of identity method is suitable for the examination of shape flows into descent directions or if it is necessary to transfer the results from the speed method to $ C^{k,\phi} $-domains and vector fields. Answering this question is a task for the future.

\chapter{Conclusions}

We close this thesis with a summary of the main results and a compilation of possible directions for further research.

This thesis provides a framework for shape optimization problems with elliptic PDE constraints in classical function spaces and extends the existing results on shape differentiability to a class of shape functionals which are $ H^1 $-ill-defined. 

The two reliability shape functionals $ J^{\mathrm{lcf}} $ and $ J^{\mathrm{cer}} $, introduced in Chapter \ref{Reliability}, belong to this class of functionals and shall be minimized with respect to linear elasticity constraints. They measure the failure rates of metal devices under cyclic loading or ceramic components under tensile loading and serve as a motivation and as a common theme during this work. 
We illustrated that both of them posses the mentioned property, in the sense, that they are only defined if the solution $ u $ of the linear elasticity equation \eqref{Reliability:Eq:LinEl} is an element of a Sobolev spaces of higher order or a Hölder space of differentiable functions. Therefore, it was unavoidable to investigate the resulting reliability shape optimization problems under consideration of regularity theory for elliptic PDE.

It was an open question how the existence of material derivatives with respect to topologies on these spaces can be proved. We answered this question in this thesis in the following way.\\
First, in Chapter \ref{Parameter_Dep_PDE} we presented a novel functional analytic concept for sensitivity analysis of solutions of parameter dependent linear variational equations. This framework has also other applications outside of shape optimization and covers many linear elliptic PDE and PDE systems. The essential components of this framework are Theorem \ref{Parameter_Dep_PDE:Thm: Existence Deriv. strong Topology} and Lemma \ref{Parameter_Dep_PDE:Lem: Crit. for strong cont.}.\\
Then, in Chapter \ref{Ex_Shape_Deriv_LinEl}, we showed the functionality of this concept on the example of linear elasticity equation with perturbed  $ C^{k} $-domains $ \Omega_t $. The central outcome is the result on the existence of material derivatives in Hölder spaces, Theorem \ref{Ex_Shape_Deriv_LinEl:Thm: q^t C^3,phi material derivative of u_t}, which is derived under application of uniform Schauder estimates and compact embeddings. 
This result also implies the existence of shape derivatives of the general class of local cost functionals of first order (Definition \ref{Transf_shape_opt:Defn: Local Cost Functionals}), in particular for the reliability functionals under consideration, see Section \ref{Ex_Shape_Deriv_LinEl:Eq: Shape Deriv Jvol Jsur}.

Furthermore, we provided regularity theory for the associated  adjoint equations and calculated the $ L^2 $-shape gradients in Hölder spaces in Chapter \ref{Shape_Grad_LinEl}, presented a regularity theory for these Hadamard shape gradients, and showed in a mathematically rigorous way, that their smoothness is insufficient to pertain the shape regularity along shape flows in the framework of $ C^{k} $-shapes.

In the future, we can reduce the regularity assumptions for some of our results, as discussed in Section \ref{outlook: Reduces Requirements} and show the existence of material derivatives e.g. w.r.t. Sobolev topologies. Furthermore, showing the existence of shape flows in suitable spaces, as explained in Section \ref{Shape_Grad_LinEl:Sec: Distributed Shape Derivative} and Section \ref{outlook: Bound. and Sol. Reg. }, would be a significant enhancement of the theory of shape optimization.\\
From the numerical point of view, a good approximation of the boundary of the domain is essential, to fulfill the regularity assumptions of our theory. Therefore a special numerical implementation is required and could be attained by application of curved finite elements or isogeometric analysis.

All in all, we presented a comprehensive approach for regularity theory in shape optimization.


\appendix
\chapter{Appendix} \label{App}
\section{Topology and measures} \label{App: Sec:TopoMeasures}
\begin{defn}[Topological Space \& Hausdorf-Space]
	Let $ X $ be a set and $ \mathscr{T} $ a family of subsets of $ X $. 
	\begin{itemize}
		\item[i)~~] $(X,\mathscr{T}) $ is a topological Space if 
		\begin{itemize}
			\item[1)] $ X \in \mathscr{T}$ and $ \emptyset \in \mathscr{T} $,
			\item[2)] if $ O_1,\, O_2 \in  \mathscr{T} $ then $ O_1 \cap O_2 \in  \mathscr{T} $,
			\item[3)] if $  (O_i)_{i \in \N{}}  \subset \mathscr{T} $ then $  \bigcup_{n =1}^{\infty} O_i \in  \mathscr{T} $.
		\end{itemize}
		The elements $ O \in \mathscr{T} $ are called \textit{open sets}.
		\item[ii)~] A set  $ V \subset X $ is called neighborhood of $ A \subset X $ if there is $ O \in \mathscr{T} $ such that $  A \subset O \subset V $. The set $ \mathcal{U}(A)$ of neighborhoods of $  A $ is called a \textit{neighborhood system} of $ A $.
		\item[iii)] A topological space $ (X,\mathscr{T}) $ is called a Hausdorff space if for any two points $ x,\,y \in X $ there are $ U_x \in \mathcal{U}(x) $ and $ U_y \in \mathcal{U}(y) $ such that $ U_x \cap U_y=\emptyset $.
	\end{itemize}
\end{defn}

\begin{defn}
	Let $ (X,\mathscr{T}) $ be a topological space. 
	\begin{itemize}
		\item[i)~~] A family $ \mathcal{U} \subset X $ is a \textit{cover} of $ X $ if $ X = \bigcup_{U \in \mathcal{U}} U $.
		\item[ii)~] The cover is called open if any $ U \in \mathcal{U}$ is open in $ X $, i.e. $ \mathcal{U} \subset \mathscr{T} $.
		\item[iii)] A \textit{subcover} is a family $ \mathcal{V} \subset \mathcal{U} $ such that $ X = \bigcup_{U \in \mathcal{V}} U $.
	\end{itemize}
\end{defn}

\begin{defn}
	Let $ (X,\mathscr{T}) $ be a topological space and let $ S \subset X $.
	\begin{itemize}
		\item[i)~]  $ (X,\mathscr{T}) $  is called \text{compact} if any open cover has a finite subcover.
		\item[ii)] The topology  $ \mathscr{T}_{S}:=\{S \cap O \vert O \in \mathscr{T}\} $ is called the \textit{subspace topology on $ S $} and $ (S,\mathscr{T}_{S}) $  is a topological subspace. The set $ S $ is said to be \textit{compact} if it is compact w.r.t. $ \mathscr{T}_{S} $.
	\end{itemize} 
\end{defn}

\begin{defn} 
	\begin{itemize}
		\item[i)~] A \textit{basis} of the topological space $ (X,\mathscr{T}) $ is a subset $ \mathscr{B} \subset \mathscr{T} $ such that for any $ O \in \mathscr{T} $ there is a index set $  \mathcal{I} $ and a family $ B_{i},\, i\in \mathcal{I}, B_{i} \in \mathscr{B} $ with $ O =\bigcup_{i \in \mathcal{I}} B_{i} $.
		\item[ii)] $ (X,\mathscr{T}) $ is said to be \textit{second countable} if $ \mathscr{B} $ is countable.
	\end{itemize}
\end{defn}

\begin{defn}
	The topological space $ (X,\mathscr{T}) $ is called locally compact if for any $ x \in X $ holds: If $ U_x \in \mathcal{U}(x) $ is an arbitraty neighbourhood of $ x $ then there is a comact set $ x \in K$ such that $ K \subset U_x $ .
\end{defn}

\begin{defn}
	Let $ (X,\mathscr{T}) $ be a Hausdorff space and let $ \mathcal{A} $ be a $ \sigma $-algebra on $ X $ that contains $ \mathscr{T} $.
	\begin{itemize}
		\item[a)] A measure $\gamma  $ on the measurable space $ (X,\mathcal{A})$ is called inner regular, if \[ \gamma(A)=\sup_{ K \subset\subset A} \gamma(K) ~~~~~\text{ for any } A \in \mathcal{A}. \]
		\item[b)] A measure $\gamma  $ on the measurable space $ (X,\mathcal{A})$ is called locally finite, if for every $ x \in X $ there exists a neighbourhood $ U_{x} \in \mathcal{U}(x) $ containing $ x $ such that $ \vert  \gamma(U_{x}) \vert < \infty $.
	\end{itemize}
\end{defn}

\begin{thm}[Lebesgue's Dominated Convergence Theorem] \cite[Sec. 8.6. Thm. 2 ]{Cheney} \label{App:Lebesgue}
	Suppose that $ (Z,\mathcal{Z},\mu) $ is a measurable space and let $ (f_{j})_{j \in \N{}} $ be a sequence of integrable functions converging pointwise $ \mu $-almost everywhere in $ Z $ to $ f \in L^1 (Z,\mathcal{Z},\mu) $. If there exists a function $ \mathfrak{m} \in L^1 (Z,\mathcal{Z},\mu)  $ such that $ |f_{j}(x)| \leq \mathfrak{m}(x) $ for every $ j $ and $ x \in \Omega $, then 
	\[ \lim_{j \to \infty} \int_{Z} f_{j}(z) \, d\mu(z) = \int_{Z} \lim_{j \to \infty} f_{j}(z) \, d\mu(z) = \int_{z} f(z)\, d\mu(z). \] 
\end{thm}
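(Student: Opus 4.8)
The plan is to derive the statement from Fatou's lemma, which I would take as the available background tool from measure theory (Fatou itself resting on the monotone convergence theorem); since the result is a classical one cited from \cite{Cheney}, this is the natural route.

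First I would observe that the hypotheses already force $f \in L^{1}(Z,\mathcal{Z},\mu)$: since $|f_j| \le \mathfrak{m}$ and $f_j \to f$ pointwise $\mu$-almost everywhere, passing to the limit gives $|f| \le \mathfrak{m}$ $\mu$-a.e., and $\mathfrak{m} \in L^{1}$ then yields integrability of $f$. In particular the differences $f_j - f$ are well defined and dominated, $|f_j - f| \le |f_j| + |f| \le 2\mathfrak{m}$ $\mu$-a.e. Moreover it suffices to prove the stronger statement $\int_Z |f_j - f|\,d\mu \to 0$, because $\bigl| \int_Z f_j\,d\mu - \int_Z f\,d\mu \bigr| \le \int_Z |f_j - f|\,d\mu$.

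Next I would introduce the auxiliary nonnegative sequence $g_j := 2\mathfrak{m} - |f_j - f| \ge 0$, which converges pointwise $\mu$-a.e. to $2\mathfrak{m}$. Applying Fatou's lemma to $(g_j)$ gives
\[
\int_Z 2\mathfrak{m}\, d\mu = \int_Z \liminf_{j\to\infty} g_j \, d\mu \;\le\; \liminf_{j\to\infty} \int_Z g_j\, d\mu = \int_Z 2\mathfrak{m}\,d\mu - \limsup_{j\to\infty}\int_Z |f_j - f|\, d\mu .
\]
Because $\int_Z 2\mathfrak{m}\,d\mu < \infty$, I may cancel this finite quantity and conclude $\limsup_{j\to\infty}\int_Z |f_j - f|\, d\mu \le 0$, hence $\int_Z |f_j - f|\,d\mu \to 0$, which is what was needed.

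The only delicate point is pure bookkeeping: the pointwise convergence $f_j \to f$ holds only $\mu$-a.e., so one should first restrict attention to the measurable set where it holds together with the domination (its complement being a $\mu$-null set) and carry out the argument there; since integrals over null sets vanish, nothing is lost. If one is not permitted to invoke Fatou's lemma, the main obstacle shifts to establishing it, which is the standard construction $h_k := \inf_{j \ge k} g_j \nearrow \liminf_{j} g_j$ followed by an application of the monotone convergence theorem; I expect that to be the only genuinely nontrivial ingredient.
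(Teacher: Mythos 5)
Your argument is correct: the domination $|f_j-f|\le 2\mathfrak{m}$, the auxiliary sequence $g_j=2\mathfrak{m}-|f_j-f|\ge 0$, Fatou's lemma, and cancellation of the finite quantity $\int_Z 2\mathfrak{m}\,d\mu$ give $\int_Z|f_j-f|\,d\mu\to 0$, which is stronger than what is asserted. The paper does not prove this statement at all — it is quoted from \cite{Cheney} as a background result — and your Fatou-based argument is precisely the standard textbook proof of dominated convergence, so there is nothing further to reconcile.
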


\vspace*{2em}

\section{Analysis}

%

\begin{defn}[Sobolev Extension Operators] \cite[Definition 5.17]{AdamsFournier} \label{App: Extension operators}
	Let $ \Omega  $ be a domain in $ \R{n} $. For given $ m $ and $ p $ a linear extension operator $ E:W^{m,p}(\Omega) \to W^{m,p}(\R{n}) $ is called
	\begin{itemize}
		\item[i)] \textit{simple
			$ (m,p) $-extension operator} if there exists a constant $ K(m,p) $ such that  for every $ u \in W^{m,p}(\Omega) $ 
		\begin{itemize}
			\item[a)] $ Eu(x)=u(x) $ a.e. in $ \Omega $ and
			\item[b)] $ \Norm{Eu}{W^{m,p}(\R{n})} \leq K \Norm{u}{W^{m,p}(\Omega)} $.
		\end{itemize}
		\item[ii)] \textit{strong $ m $-extension operator} for $ \Omega $ if additionally
		\begin{itemize}
			\item[a)] $ E $ maps functions defined a.e. on $ \Omega $ to functions defined a.e. on $ \R{n} $
			\item[b)] For every $ 1 \leq p < \infty $ and any integer $ 0\leq k \leq m $ $ E\vert_{W^{k,p}(\Omega)}$ is a simple $ (k,p) $-extension operator for $ \Omega $.                  
		\end{itemize}
		\item[iii)] \textit{total extension operator} for $ \Omega $ is $ E $ is a strong $ m $-extension operator for every $ m  \in \N{}$. 
	\end{itemize}
\end{defn}

\begin{thm}[The Stein Extension Theorem]\cite[Theorem 5.24]{AdamsFournier}\label{App: Stein Extension Thm}
	Let $ \Omega $ in $ \R{n} $ be a bounded domain with (local) Lipschitz boundary \footnote{Note the remark below of in \cite[Definition 4.8]{AdamsFournier}}. Then there exists a total extension operator $ E $ for $ \Omega $. 
\end{thm}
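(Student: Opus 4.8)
The plan is to follow Stein's classical reflection construction, reducing first to a model ``special Lipschitz domain'' and then reassembling by a partition of unity. First I would localize. Since $\partial\Omega$ is compact and $\Omega$ has a local Lipschitz boundary, there are finitely many open balls $U_1,\dots,U_N$ covering $\partial\Omega$ such that, after a rigid rotation of coordinates depending on $j$, the set $U_j\cap\Omega$ agrees with $U_j\cap\{(x',x_n)\in\R{n}:x_n>\gamma_j(x')\}$ for a Lipschitz function $\gamma_j:\R{n-1}\to\R{}$. Choosing an open set $U_0\Subset\Omega$ with $\Omega\subset U_0\cup U_1\cup\dots\cup U_N$ and a smooth partition of unity $\zeta_0,\dots,\zeta_N$ subordinate to this cover, one writes $u=\sum_{j=0}^N\zeta_j u$. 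Multiplication by a fixed $C^\infty_0$ function is bounded on every $W^{m,p}$ by the Leibniz rule, so it suffices to extend each $\zeta_j u$; the term $\zeta_0 u$ extends by zero. Hence everything reduces to constructing, for a special Lipschitz domain $\Omega=\{x_n>\gamma(x')\}$, one linear operator $E_0$ that is a simple $(m,p)$-extension operator for every $m\in\N{}$ and every $p\in[1,\infty)$ simultaneously, in the sense of Definition \ref{App: Extension operators}, with norm depending only on $m,p,n$ and the Lipschitz constant of $\gamma$.

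For the model domain I would set up the reflection operator, which requires two auxiliary ingredients. (i) A \emph{regularized distance}: a function $\Delta\in C^\infty(\R{n}\setminus\overline{\Omega})$ with $c_1\dist(x,\partial\Omega)\le\Delta(x)\le c_2\dist(x,\partial\Omega)$ and $|D^\alpha\Delta(x)|\le C_\alpha\,\dist(x,\partial\Omega)^{1-|\alpha|}$ for every multi-index $\alpha$, produced from a Whitney decomposition of the open set $\R{n}\setminus\overline{\Omega}$; one then fixes $M=M(\gamma)$ large enough so that $\Delta^{*}:=M\Delta$ satisfies $(x',x_n+\lambda\Delta^{*}(x))\in\Omega$ for all $\lambda\ge 1$ whenever $x\notin\overline{\Omega}$. (ii) A \emph{kernel} $\psi\in C^\infty([1,\infty))$ with $\psi(\lambda)=O(\lambda^{-N})$ as $\lambda\to\infty$ for every $N$, normalized by $\int_1^\infty\psi(\lambda)\,d\lambda=1$ and with all higher moments vanishing, $\int_1^\infty\lambda^k\psi(\lambda)\,d\lambda=0$ for $k=1,2,3,\dots$; this $\psi$ is built by a Fourier-analytic device. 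With these in hand, define
\[
E_0 u(x)=
\begin{cases}
u(x), & x\in\Omega,\\[0.5ex]
\displaystyle\int_1^\infty u\bigl(x',x_n+\lambda\Delta^{*}(x)\bigr)\,\psi(\lambda)\,d\lambda, & x\in\R{n}\setminus\overline{\Omega}.
\end{cases}
\]

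The heart of the argument, and the step I expect to be the main obstacle, is the uniform estimate $\Norm{E_0 u}{W^{m,p}(\R{n})}\le C(m,p,n,\gamma)\Norm{u}{W^{m,p}(\Omega)}$ for all $m$ and $p$ at once. I would first prove it for $u\in C^m(\overline{\Omega})\cap W^{m,p}(\Omega)$, a dense class. Differentiating under the integral (justified by the decay of $\psi$ and Theorem \ref{App:Lebesgue}), each application of $\partial_j$ to the integrand either differentiates $u$, producing a factor $\lambda$ times a derivative of $\Delta^{*}$, or differentiates $\Delta^{*}$; iterating $m$ times, every resulting term is a sum of integrals $\int_1^\infty (D^\beta u)(x',x_n+\lambda\Delta^{*}(x))\,\lambda^{|\beta|}\,q(x,\lambda)\,\psi(\lambda)\,d\lambda$ with $|\beta|\le m$, where $q$ collects products of derivatives of $\Delta^{*}$ and is controlled by $|q(x,\lambda)|\lesssim\dist(x,\partial\Omega)^{|\beta|-m}$ via the scaling estimates for $\Delta$. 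The potential singularity $\dist^{|\beta|-m}$ near $\partial\Omega$ is exactly where the vanishing moments of $\psi$ enter: they allow one to subtract a Taylor polynomial of $u$ in the normal variable, so that the singular contributions telescope and only the $|\beta|=m$ terms survive with a harmless bound. After that, the change of variables $t=\lambda\Delta^{*}(x)$ together with Minkowski's integral inequality and Fubini reduces the $L^p$-norm of each term to $\Norm{D^\beta u}{L^p(\Omega)}$ times a finite constant involving $\int_1^\infty|\psi(\lambda)|\,\lambda^{m}\,d\lambda<\infty$. A density argument then gives the estimate for all $u\in W^{m,p}(\Omega)$, and since the \emph{same} operator $E_0$ realizes this bound for every $m$ and $p$, it is a strong $m$-extension operator for each $m$, hence a total extension operator. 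Finally, undoing the rotations and setting $Eu:=\zeta_0 u+\sum_{j=1}^{N}R_j^{-1}\,E_0\bigl(R_j(\zeta_j u)\bigr)$, where $R_j$ denotes the coordinate rotation for patch $j$, exhibits $E$ as a finite sum of bounded operators and thus as a total extension operator for $\Omega$.
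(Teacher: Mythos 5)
Your sketch is correct in outline and follows exactly the classical Stein reflection construction (localization to special Lipschitz domains, regularized distance from a Whitney decomposition, a rapidly decaying kernel $\psi$ with unit mass and vanishing higher moments, then Minkowski/Fubini estimates), which is precisely the proof given in the source the paper cites; the paper itself states the theorem as \cite[Theorem 5.24]{AdamsFournier} without reproducing any proof. So there is nothing to reconcile beyond noting that your hardest step --- the uniform $W^{m,p}$ bound via the vanishing moments and the Taylor-remainder telescoping --- is only indicated at the level of ideas, but it is the right mechanism and is carried out in detail in the cited reference.
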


\begin{lem}[Extension Lemma]\label{App:Lem: Hölder Extension Lemma}\cite[6.37]{GilbTrud}
	Let $ \Omega $ be a $ C^{k,\phi} $-domain in $ \R{n} $ with $ k \geq 1 $ and let $ \Omega' $ be a open set containing $ \overline{\Omega} $. Suppose that $ u  \in C^{k,\phi}(\overline{\Omega}) $. Then there exists a function $ w \in C^{k,\phi}(\Omega') $ with compact support such that $ w=u $ in $ \Omega $ and $ \Norm{w}{C^{k,\phi}(\Omega')} \leq C \Norm{u}{C^{k,\phi}(\Omega)} $ where $ C $ depends on $ k,\, \Omega,\, \Omega' $.
\end{lem}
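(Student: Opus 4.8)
The statement to prove is that a function $u \in C^{k,\phi}(\overline{\Omega})$ on a $C^{k,\phi}$-domain $\Omega$ admits an extension $w \in C^{k,\phi}(\Omega')$ with compact support, $w = u$ on $\Omega$, and $\Norm{w}{C^{k,\phi}(\Omega')} \leq C\Norm{u}{C^{k,\phi}(\Omega)}$. Since this is quoted from \cite{GilbTrud}, I would follow the standard partition-of-unity and local-reflection argument, which I sketch below. The strategy has three stages: a local reflection construction near each boundary point, a patching via partition of unity, and a cut-off to secure compact support.

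First I would use the $C^{k,\phi}$-boundary property (Definition \ref{PDE_Systems:Def:Ck_phi_Boundary}): for each $x \in \Gamma = \partial\Omega$ there is a ball $U_x = B_{r_x}(x)$ and a $C^{k,\phi}$-diffeomorphism $\Psi_x : U_x \to \tilde U_x \subset \R{n}$ flattening the boundary, i.e. $\Psi_x(U_x \cap \Omega) \subset \R{n}_+$ and $\Psi_x(U_x \cap \Gamma) \subset \R{n-1} \times \{0\}$. In the flattened coordinates $y = \Psi_x(z)$, the transported function $v := (u \circ \Psi_x^{-1})$ lives on $\tilde U_x \cap \R{n}_+$. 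I would extend $v$ across the hyperplane $\{y_n = 0\}$ by the higher-order reflection of Hestenes/Seeley type: define $v^*(y', y_n) = v(y', y_n)$ for $y_n \geq 0$ and $v^*(y', y_n) = \sum_{j=1}^{k+1} c_j\, v(y', -\beta_j y_n)$ for $y_n < 0$, where the coefficients $c_j$ and scaling factors $\beta_j > 0$ are chosen (by solving a Vandermonde-type linear system) so that all normal derivatives up to order $k$ match across $\{y_n=0\}$. One checks that $v^* \in C^{k,\phi}$ on a neighbourhood of the origin with norm controlled by that of $v$; the Hölder seminorm of the $k$-th derivatives is preserved because the reflection is an affine rescaling in $y_n$. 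Pulling back via $\Psi_x$, and using that composition with a fixed $C^{k,\phi}$-diffeomorphism is bounded on $C^{k,\phi}$ (the chain-rule estimates, since $k \geq 1$), produces a local extension $w_x \in C^{k,\phi}$ on a neighbourhood $W_x$ of $x$ with $\Norm{w_x}{C^{k,\phi}(W_x)} \leq C_x \Norm{u}{C^{k,\phi}(\Omega)}$.

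Next I would patch these local pieces together. Since $\Gamma$ is compact, finitely many neighbourhoods $W_{x_1}, \dots, W_{x_N}$ cover $\Gamma$; together with $W_0 := \Omega$ they cover $\overline\Omega$, and one can shrink so the whole cover lies in $\Omega'$. Take a smooth partition of unity $\{\eta_i\}_{i=0}^N$ subordinate to $\{W_{x_i}\}$ with $\sum_i \eta_i \equiv 1$ on a neighbourhood of $\overline\Omega$, and set $w := \eta_0 u + \sum_{i=1}^N \eta_i w_{x_i}$, where $\eta_0 u$ is understood as $u$ itself extended by the $w_{x_i}$ where $\eta_0$ has not yet vanished — more cleanly, $w := \sum_{i=0}^N \eta_i \tilde w_i$ where $\tilde w_0 = w_{x_1}$-type global patch and each product $\eta_i \tilde w_i$ is $C^{k,\phi}$ with norm bounded by $C\Norm{u}{C^{k,\phi}(\Omega)}$ because multiplication by a fixed smooth cut-off is bounded on $C^{k,\phi}$ (Leibniz rule plus the fact that $\eta_i$ and its derivatives are bounded, and products of bounded $C^{k,\phi}$ functions are $C^{k,\phi}$). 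On $\Omega$ we have $\sum_i \eta_i = 1$ and each $\tilde w_i = u$ there, so $w = u$ on $\Omega$. Finally, multiply by a fixed cut-off $\zeta \in C_0^\infty(\Omega')$ with $\zeta \equiv 1$ on a neighbourhood of $\overline\Omega$: this forces $\supp(w) \Subset \Omega'$ without altering $w$ on $\Omega$ and only inflates the norm by a constant depending on $\zeta$, hence on $k$, $\Omega$, $\Omega'$.

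The main obstacle — and the only place where genuine care is needed — is verifying that the Hölder seminorm $[\cdot]_{C^{0,\phi}}$ of the top-order ($k$-th) derivatives is preserved under the reflection and, more delicately, under composition with the chart diffeomorphism $\Psi_x \in C^{k,\phi}$. The reflection itself is harmless since it is piecewise an affine change of the $y_n$-variable, but the chain rule for the $k$-th derivative of $u \circ \Psi_x^{-1}$ produces terms involving the $k$-th derivative of $\Psi_x^{-1}$, which is only Hölder continuous, not differentiable; one must check that products and compositions of $C^{0,\phi}$ functions with bounded $C^k$ functions stay in $C^{0,\phi}$ with the expected norm bound, and that no term requires a $(k{+}1)$-th derivative of the chart. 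This is a routine but slightly tedious Faà di Bruno bookkeeping; everything else in the argument is soft. For the present thesis these estimates are standard and are cited from \cite[6.37]{GilbTrud}, so I would simply invoke them rather than reproduce the multi-index computation.
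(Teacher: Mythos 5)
The paper does not prove this lemma itself---it is quoted verbatim from Gilbarg--Trudinger, Lemma 6.37---and your sketch (boundary flattening via the $C^{k,\phi}$ charts, Hestenes/Seeley higher-order reflection with Vandermonde coefficients, patching by a partition of unity, and a final cut-off for compact support) is exactly the standard argument behind that citation, so your proposal is correct and follows the same route as the cited source. The only blemish is the muddled sentence in your patching paragraph: the clean statement is that each local extension $w_{x_i}$ agrees with $u$ on $W_{x_i}\cap\Omega$ and $\eta_i$ is supported in $W_{x_i}$, so $w=\sum_i \eta_i w_{x_i}$ (with the interior piece $\eta_0 u$) equals $u$ on $\Omega$ because $\sum_i \eta_i \equiv 1$ there, which is what you intend.
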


We reduce the Sobolev Embedding Theorem and the Rellich-Kondrachov-Theorem to the case when $ \Omega $ is bounded. In this case \cite[Theorem  4.2]{AdamsFournier} (we refer also to the remarks below) states the following:
\begin{thm}[Sobolev Embedding Theorem] \label{App: Sobolev embedding}
	Let $ \Omega $ be a bounded domain in $ \R{n} $ satisfying a cone condition and let $ j\geq 0 $, $ m \geq 1 $ be integers and $ 1 \leq p < \infty $
	
	\noindent 	\textbf{PART I} 
	\begin{itemize}
		\item[i)] If either $ mp>n $ or $ m=n $ and $ p=1 $, then \[ W^{j+m,p}(\Omega) \hookrightarrow C^{j}_{b}(\Omega) ~~~~~\text{ and } ~~~~~ W^{j+m,p}(\Omega) \hookrightarrow W^{j,q}(\Omega) \text{ for } 1 \leq q \leq \infty.   \]
		\item[ii)] If $ mp=n$ then $$ W^{j+m,p}(\Omega) \hookrightarrow W^{j,q}(\Omega) \text{ for } 1 \leq q < \infty. $$
		\item[iii)] If $mp<n $ or $ p=1 $ then $$ W^{j+m,p}(\Omega) \hookrightarrow W^{j,q}(\Omega) \text{ for } 1 \leq q \leq p^{\ast}=\nicefrac{np}{(n-mp)}. $$ 
	\end{itemize}
	The embedding constants for the embeddings above depend only on $ n,\,m,\, p,\, q,\, j $ and the cone $ C $ in the cone condition. 
	
	\noindent \textbf{PART II}  Suppose that $ \Omega $ has a Lipschitz boundary satisfies a strong local Lipschitz condition then in PART I i) $ C^{j}_{b}(\Omega) $ can be replaced by $ C^{j}(\overline{\Omega}) $ and the embedding can be further refined as follows:
	\begin{itemize}
		\item[i)] If $ mp>n>(m-1)p $ then, \[  W^{j+m,p}(\Omega) \hookrightarrow C^{j,\phi}(\overline{\Omega}) ~~~~~ \text{ for }0\leq \phi \leq m-\nicefrac{n}{p}.\]
		\item[ii)] If $ (m-1)p=n $  then \[  W^{j+m,p}(\Omega) \hookrightarrow C^{j,\phi}(\overline{\Omega}) ~~~~~ \text{ for }0\leq \phi <1 .\]
		If furthermore $ n=m-1 $ and $ p=1 $ then 
		\[  W^{j+m,p}(\Omega) \hookrightarrow C^{j,\phi}(\overline{\Omega}) ~~~~~ \text{ for }0\leq \phi \leq 1 .\]
	\end{itemize}
\end{thm}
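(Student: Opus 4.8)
The plan is to reduce every assertion to a corresponding inequality on all of $\R{n}$ and then to invoke the three classical core estimates — the Gagliardo--Nirenberg--Sobolev inequality, its iterated borderline version, and Morrey's inequality — together with an induction on the order of differentiation.

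\textbf{Reduction to $\R{n}$.} Here I would distinguish the two hypotheses. If $\Omega$ has a Lipschitz boundary (the setting of PART II, and enough for PART I in that case), the Stein Extension Theorem \ref{App: Stein Extension Thm} provides a total extension operator $E\colon W^{m,p}(\Omega)\to W^{m,p}(\R{n})$ which, by Definition \ref{App: Extension operators}, is simultaneously a bounded $(k,p)$-extension for every $0\le k\le m$. Composing the target embedding on $\R{n}$ with $E$ on the left and with the obviously bounded restriction map $W^{k,q}(\R{n})\to W^{k,q}(\Omega)$ on the right then yields the embedding on $\Omega$ with a constant controlled by $\Norm{E}{}$; the analogous Hölder embeddings follow from Lemma \ref{App:Lem: Hölder Extension Lemma}. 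For PART I under only the cone condition no extension need exist, so instead I would cover the bounded set $\overline{\Omega}$ by finitely many open sets $U_1,\dots,U_N$ on each of which $U_i\cap\Omega$ satisfies the cone property with one fixed standard cone $C$, prove the Sobolev estimate directly on each standard--cone piece (translating and rotating $C$ and integrating along its rays), and patch the local estimates with a subordinate partition of unity; finiteness of the cover keeps the constant dependent only on $n,m,p,q,j$ and $C$.

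\textbf{The core estimates on $\R{n}$.} Work first with $u\in C_c^\infty(\R{n})$, then pass to $W^{1,p}$ by density and to $W^{j+m,p}$ by applying the $j=0$, $m=1$ statement to each $D^\alpha u$ and iterating. Case $mp<n$, $m=1$: writing $u(x)=\int_{-\infty}^{x_i}\partial_i u\,dt$ in each coordinate, taking the product of the $n$ one-dimensional bounds and integrating successively with the generalized Hölder (Loomis--Whitney) inequality gives $\Norm{u}{L^{n/(n-1)}}\le \prod_{i=1}^n \Norm{\partial_i u}{L^1}^{1/n}$; replacing $u$ by $|u|^\gamma$ for a suitable $\gamma$ and one more application of Hölder upgrades this to $\Norm{u}{L^{p^*}}\le C\,\Norm{\nabla u}{L^p}$ with $p^*=np/(n-mp)$. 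Case $mp=n$: iterate the subcritical inequality, each step trading a derivative for integrability, to land in $L^q$ for every finite $q$; the endpoint $m=n$, $p=1$ arises from the same iteration terminating in $L^\infty$. Case $mp>n$: Morrey's inequality — estimate $|u(x)-u(y)|$ by an average of $|\nabla u|$ over a ball containing both points (fundamental theorem of calculus in polar coordinates plus Hölder), giving $|u(x)-u(y)|\le C\,|x-y|^{m-n/p}\,\Norm{u}{W^{1,p}}$ for $m=1$, and higher $m$ by iteration. This simultaneously proves the $W^{j,q}$ embeddings of PART I and the $C^j_b$ (hence, with a Lipschitz boundary, $C^j(\overline{\Omega})$) embedding.

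\textbf{PART II.} This is the bookkeeping of Morrey's inequality: when $mp>n>(m-1)p$, one derivative short of $m$ already lands in a space of continuous functions and the surplus $m-n/p$ remaining after the integer drop is exactly the attainable Hölder exponent; when $(m-1)p=n$ one obtains every Hölder exponent below $1$, and the degenerate case $n=m-1$, $p=1$ gives Lipschitz continuity since $W^{m,1}$ embeds into $W^{1,\infty}$ in each direction. The replacement of $C^j_b(\Omega)$ by $C^j(\overline{\Omega})$ uses that a strong local Lipschitz boundary lets one extend a uniformly continuous function from $\Omega$ to $\overline{\Omega}$.

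I expect the main obstacle to be the reduction step under the bare cone condition: unlike the Lipschitz case there is no extension operator to lean on, so the covering--plus--partition--of--unity argument must be executed carefully and the local Sobolev estimate has to be verified on an abstract cone-shaped region (not merely on a ball), which is where the integration-along-rays computation genuinely enters. The borderline exponents ($mp=n$, and $n=m-1$ with $p=1$) are the other delicate points, since the iteration there changes character and the estimates degenerate logarithmically if handled naively.
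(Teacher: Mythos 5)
The paper does not prove this theorem at all: it is quoted verbatim (for bounded domains) from Adams--Fournier, with the proof delegated entirely to that reference, so there is no in-paper argument to compare yours against. Your sketch is, in outline, exactly the standard proof underlying the cited result: Gagliardo--Nirenberg--Sobolev via the Loomis--Whitney iteration for $mp<n$, the iterated borderline argument for $mp=n$, Morrey's estimate for $mp>n$ together with the Hölder bookkeeping of PART II, and the reduction of the domain case to these core estimates. You also correctly identify the one genuinely delicate point, namely that under the bare cone condition no extension operator is available, so the estimate must be proved directly on cone-shaped regions (finite cover with a fixed standard cone, integration along rays, partition of unity); this is precisely how the cone-condition case is handled in the cited source, whereas the Lipschitz case may be run through the Stein extension operator as you propose. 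So the proposal is consistent with the literature the paper relies on; the only caveat is that it remains a sketch, and the two places where the details genuinely require care are the ones you already flag (the direct cone estimate and the borderline exponents $mp=n$, $n=m-1$ with $p=1$).
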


\begin{thm}[Rellich-Kondrachov Theorem] \cite[Theorem 6.3]{AdamsFournier} \label{App: Rellich Kondrachov}
	Let $ \Omega $ be a bounded domains in $ \R{n} $ satisfying a cone condition and let $ j\geq 0 $, $ m \geq 1 $ be integers and $ 1 \leq p < \infty $.
	
	\noindent \textbf{PART I}:
	\begin{itemize}
		\item[i)] If $ mp\leq n $  then the embeddings 
		\[ W^{j+m,p}(\Omega) \hookrightarrow W^{j,q}(\Omega) ~~~~~ \text{ for }1 \leq q < \nicefrac{np}{n-mp} ,\, mp<n\] 
		and 
		\[ W^{j+m,p}(\Omega) \hookrightarrow W^{j,q}(\Omega) ~~~~~ \text{ for }1 \leq q < \infty,\, mp=n\] 
		are compact.
		\item[ii)] If $ mp>n $ then the embeddings 
		\[ W^{j+m,p}(\Omega) \hookrightarrow C^{j}_{b}(\Omega)~~~~~ \text{ and } ~~~~~ W^{j+m,p}(\Omega) \hookrightarrow W^{j,q}(\Omega), \, 1 \leq q < \infty \] are compact.
	\end{itemize}	
	
	\noindent \textbf{PART II}:
	\begin{itemize}
		\item[i)] If $ \Omega $ has a Lipschitz boundary then 
		\begin{eqnarray}
		W^{j+m,p}(\Omega) &\hookrightarrow& C^{j}(\overline{\Omega}) ~~~ \text{ for } mp>n \\
		W^{j+m,p}(\Omega) & \hookrightarrow& C^{j,\phi}(\overline{\Omega}) ~~~ \text{ for } mp>n\geq (m-1)p,\, 0< \phi < m-\nicefrac{n}{p}.
		\end{eqnarray} 
	\end{itemize}
\end{thm}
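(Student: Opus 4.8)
The plan is to obtain the compact embeddings from the corresponding \emph{continuous} ones of the Sobolev Embedding Theorem \ref{App: Sobolev embedding} by two classical compactness criteria: the Riesz--Fréchet--Kolmogorov theorem for precompactness in $L^q$, and the Arzelà--Ascoli theorem for precompactness in spaces of (Hölder) continuous functions. First I would reduce to the case $j=0$. If $(u_k)_k$ is bounded in $W^{j+m,p}(\Omega)$, then for every multi-index with $|\alpha|\le j$ the sequence $(D^\alpha u_k)_k$ is bounded in $W^{m,p}(\Omega)$; granting the theorem for $j=0$ one extracts, for each such $\alpha$ in turn, a subsequence convergent in $W^{0,q}(\Omega)=L^q(\Omega)$ (respectively in $C^0(\overline\Omega)$), and a diagonal argument yields a single subsequence along which all derivatives up to order $j$ converge, i.e. $u_k$ converges in $W^{j,q}(\Omega)$ (resp. $C^j(\overline\Omega)$). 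So from now on $j=0$.

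For the sub-critical cases $mp\le n$: using the cone condition (or, in Part II, the Lipschitz boundary) together with the Stein Extension Theorem \ref{App: Stein Extension Thm}, extend a bounded family in $W^{m,p}(\Omega)$ to a bounded family in $W^{m,p}(\R{n})$ supported in a fixed bounded $\Omega'$ with $\Omega\Subset\Omega'$. By Theorem \ref{App: Sobolev embedding} one has the continuous chain $W^{m,p}\hookrightarrow W^{1,r}\hookrightarrow L^{p^\ast}$ with $\tfrac1r=\tfrac1p-\tfrac{m-1}{n}$ and $p^\ast=\nicefrac{np}{(n-mp)}$ (and $p^\ast$ arbitrarily large when $mp=n$). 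Hence the family is bounded in $L^{p^\ast}$, and for $W^{1,r}$-functions the translation estimate $\Norm{\tau_h u-u}{L^r(\R{n})}\le |h|\,\Norm{\nabla u}{L^r}$ gives equi-smallness of translates in $L^r$; interpolating this with the uniform $L^{p^\ast}$-bound yields $\sup_k\Norm{\tau_h u_k-u_k}{L^q}\to 0$ as $h\to 0$ for every $q<p^\ast$. Combined with the uniform $L^q$-bound and the fixed bounded support, the Riesz--Fréchet--Kolmogorov criterion produces a subsequence convergent in $L^q(\R{n})$, a fortiori in $L^q(\Omega)$, proving i).

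For the super-critical case $mp>n$: Theorem \ref{App: Sobolev embedding} already gives the continuous embedding $W^{m,p}(\Omega)\hookrightarrow C^0_b(\Omega)$ and, when $\Omega$ is Lipschitz, $W^{m,p}(\Omega)\hookrightarrow C^{0,\phi}(\overline\Omega)$ for some $\phi>0$ (with $\phi<m-\nicefrac{n}{p}$ when $mp>n\ge(m-1)p$). A bounded set in $C^{0,\phi}(\overline\Omega)$ is uniformly bounded and equicontinuous, so Arzelà--Ascoli makes $C^{0,\phi}(\overline\Omega)\hookrightarrow C^0(\overline\Omega)$ compact; composing gives the compact embedding $W^{m,p}(\Omega)\hookrightarrow C^0(\overline\Omega)$, and since $\Omega$ is bounded, $C^0(\overline\Omega)\hookrightarrow L^q(\Omega)$ is continuous for every $q$, so the embedding into $L^q(\Omega)$ is compact, as is the one into $C^j(\overline\Omega)$ (and $C^{j,\phi'}(\overline\Omega)$ for $\phi'<\phi$) after the $j$-reduction. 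Without the Lipschitz hypothesis one replaces this by a local Arzelà--Ascoli argument on an exhaustion of $\Omega$ by compact subsets, so as to land in $C^0_b(\Omega)$.

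The step I expect to be the main obstacle is the Riesz--Fréchet--Kolmogorov argument in the sub-critical regime: one must (a) dispose of a genuine extension operator so that translations are defined and boundary layers are controlled — this is precisely where the cone/Lipschitz regularity of $\Omega$ is used — and (b) carry out the interpolation between the $L^r$-equicontinuity of translates and the uniform $L^{p^\ast}$-bound so as to reach \emph{every} exponent strictly below the critical one, uniformly over the bounded family. The remaining ingredients are the already-cited Sobolev embedding, Arzelà--Ascoli, and a routine diagonal extraction.
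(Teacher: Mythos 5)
The paper does not actually prove this statement: it is quoted, with a citation, from Adams--Fournier (Theorem 6.3), so there is no internal proof to compare against. Measured against the standard textbook argument, your skeleton (reduction to $j=0$ by a diagonal argument, Fr\'echet--Kolmogorov plus interpolation in the subcritical range, Arzel\`a--Ascoli in the supercritical range) is the classical one, and it works as written whenever $\Omega$ has a Lipschitz boundary, i.e.\ for Part II and for the Lipschitz reading of Part I.

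The genuine gap sits exactly where you flagged the main obstacle: you invoke the Stein Extension Theorem \ref{App: Stein Extension Thm} under a mere cone condition. That theorem requires a (locally) Lipschitz boundary, and a bounded domain satisfying only the cone condition in the Adams--Fournier sense (which is the hypothesis of the quoted theorem) need not admit \emph{any} bounded extension operator $W^{m,p}(\Omega)\to W^{m,p}(\R{n})$: a disk with a radial slit removed satisfies the cone condition, yet a $W^{1,p}$ function with different traces on the two sides of the slit has no $W^{1,p}$ extension. (The paper's own, stronger cone property of Definition \ref{PDE_Systems:Def:Cone} is a different hypothesis; the cited theorem uses the weaker one.) Without extension your translation estimates are undefined near $\partial\Omega$, so Part I is not covered. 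The classical repair avoids extension altogether: for $q<p^\ast$ split $\int_\Omega |u_k|^q$ into the contribution of a boundary strip $\Omega\setminus\Omega_\delta$, which is uniformly small by H\"older's inequality with exponent $p^\ast/q>1$, the uniform $L^{p^\ast}$ bound from Theorem \ref{App: Sobolev embedding}, and the smallness of $|\Omega\setminus\Omega_\delta|$, and the contribution of $\Omega_\delta\Subset\Omega$, where translations and mollifications are defined and your $W^{1,r}$ estimate applies; this is the local form of the Fr\'echet--Kolmogorov criterion (Adams--Fournier additionally decompose cone-condition domains into finitely many better pieces). The same issue reappears in your last sentence: locally uniform convergence on an exhaustion by compact subsets does not give convergence in $C^{0}_{b}(\Omega)$, so the compactness into $C^{j}_{b}(\Omega)$ under the cone condition alone again needs uniform control up to the boundary rather than an interior Arzel\`a--Ascoli argument.
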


\begin{thm}[Properties of the Trace Operator] \label{App: Trace Operator} \cite[Theorem 5.36]{AdamsFournier}
	Let $ \Omega  $ be a domain in $ \R{n} $ and $ m \geq 1 $ an integer.
	Let $ 1 \leq p <\infty $ and $ \Omega $ satisfy have a $ C^{m} $- boundary
	\begin{itemize}
		\item[i)] $ mp<n $ and $ p\leq q \leq p^{\#}:= (n-1)p/(n-mp) $ then \[ \mathbf{T_{\Gamma}} \in \mathcal{L}(W^{m,p}(\Omega), L^{q}(\Gamma)) ~~~~~\text{ for } \frac{1}{p^{\#}} = \frac{1}{p} -\frac{p-1}{(n-1)p}. \]
		\item[ii)] If $ mp=n$  then \[ \mathbf{T_{\Gamma}} \in \mathcal{L}(W^{m,p}(\Omega), L^{q}(\Gamma)) ~~~~~\text{ for } 1 \leq q < \infty . \]
		\item[iii)] If $ mp>n$  then $ \mathbf{T_{\Gamma}} \in \mathcal{L}(W^{m,p}(\Omega), C^{0}(\Gamma))$ by the Sobolev embedding Theorem.  
	\end{itemize}	
\end{thm}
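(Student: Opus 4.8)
The statement is a localized Sobolev trace theorem, so the plan is the classical one: flatten the boundary, reduce to a half-space model inequality, prove that inequality for smooth functions, iterate in the order of differentiation, and then transfer back by density together with a partition of unity. First I would use the $C^{m}$-regularity of $\Gamma$ to cover it by finitely many coordinate balls $U_1,\dots,U_N$ (compactness of $\Gamma$, or uniform regularity) with $C^{m}$-charts $\Psi_i$ straightening $U_i\cap\Omega$ onto a half-ball $\R{n}_{+}\cap B$ and $U_i\cap\Gamma$ onto $\{x_n=0\}\cap B$, and take a subordinate partition of unity $\{\chi_i\}\subset C_0^{\infty}$. Since each $\Psi_i$ is a $C^{m}$-diffeomorphism, the chain rule shows that $u\mapsto (\chi_i u)\circ\Psi_i^{-1}$ maps $W^{m,p}(\Omega)$ boundedly into $W^{m,p}(\R{n}_{+})$, with norm controlled by $\Norm{\Psi_i}{C^{m}}$ and $\Norm{\chi_i}{C^{m}}$ — this is precisely where the $C^{m}$ hypothesis enters. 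Hence it suffices to prove, for $u\in C_0^{\infty}(\overline{\R{n}_{+}})$, the model estimate $\Norm{u(\cdot,0)}{L^{q}(\R{n-1})}\le C\,\Norm{u}{W^{m,p}(\R{n}_{+})}$ in the asserted range of $q$, and then sum the $N$ local contributions.

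For the model estimate I would first do $m=1$. Writing $|u(x',0)|^{\gamma}=-\int_0^{\infty}\partial_t|u(x',t)|^{\gamma}\,dt\le\gamma\int_0^{\infty}|u(x',t)|^{\gamma-1}|\partial_t u(x',t)|\,dt$, integrating over $x'\in\R{n-1}$, and applying Hölder first in $t$ and then in $x'$ gives, with $\gamma=p^{\#}=(n-1)p/(n-p)$, a bound by $\Norm{u}{L^{p^{*}}(\R{n}_{+})}^{\gamma-1}\Norm{\nabla u}{L^{p}(\R{n}_{+})}$ with $p^{*}=np/(n-p)$; the Sobolev embedding $W^{1,p}(\R{n}_{+})\hookrightarrow L^{p^{*}}(\R{n}_{+})$ (Theorem~\ref{App: Sobolev embedding}) then yields $\mathbf{T_{\Gamma}}\in\mathcal{L}(W^{1,p}(\R{n}_{+}),L^{p^{\#}}(\R{n-1}))$. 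Taking instead $\gamma=p$ in the same computation needs no Sobolev embedding and gives the endpoint $\mathbf{T_{\Gamma}}\in\mathcal{L}(W^{1,p},L^{p}(\R{n-1}))$, and the intermediate range $p\le q\le p^{\#}$ then follows from log-convexity of $L^{q}$-norms.

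For general $m\ge2$ with $mp<n$ I would iterate: $W^{m,p}(\R{n}_{+})\hookrightarrow W^{1,r}(\R{n}_{+})$ with $\tfrac{1}{r}=\tfrac{1}{p}-\tfrac{m-1}{n}$ (admissible since $(m-1)p<n$), and composing with the $m=1$ result gives the trace into $L^{r^{\#}}(\R{n-1})$, $r^{\#}=(n-1)r/(n-r)$; a short computation shows $\tfrac{1}{r^{\#}}=\tfrac{n-mp}{(n-1)p}$, i.e. $r^{\#}=p^{\#}$, which is (i). The borderline case $mp=n$ reduces similarly to $W^{1,n}(\R{n}_{+})$, and here the same elementary identity with $\gamma$ taken arbitrarily large, combined with $W^{1,n}(\R{n}_{+})\hookrightarrow L^{s}(\R{n}_{+})$ for all finite $s$, produces the trace into $L^{q}(\R{n-1})$ for every finite $q$, which is (ii); case (iii), $mp>n$, is immediate from $W^{m,p}(\Omega)\hookrightarrow C^{0}(\overline{\Omega})$ (Theorem~\ref{App: Sobolev embedding}) followed by restriction to $\Gamma$.

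Finally I would pass from smooth data to all of $W^{m,p}(\Omega)$: the Stein extension theorem (Theorem~\ref{App: Stein Extension Thm}) and mollification give density of $C^{\infty}(\overline{\Omega})$ in $W^{m,p}(\Omega)$, so the operator $\mathbf{T_{\Gamma}}$ — defined by pointwise restriction on $C^{\infty}(\overline{\Omega})$ and shown above to be bounded in the relevant norms — extends uniquely to an element of $\mathcal{L}(W^{m,p}(\Omega),L^{q}(\Gamma))$, independent of the approximating sequence and agreeing with pointwise restriction for continuous representatives. The main obstacle I expect is the half-space model inequality itself: pinning down the sharp exponent $p^{\#}$ and carrying the Hölder bookkeeping correctly in the $m=1$ step, and separately the borderline $mp=n$ case, where the nominal exponent $(n-1)p/(n-mp)$ degenerates to $\infty$ but $L^{\infty}(\Gamma)$ is \emph{not} attained. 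A secondary point is keeping all constants uniform when re-assembling the local pieces, which is exactly what forces the hypotheses that the boundary charts be genuinely $C^{m}$ and that finitely many of them suffice.
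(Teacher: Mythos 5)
Your proof is correct; it is the classical localization argument (flatten the boundary with the $C^{m}$-charts and a partition of unity, prove the half-space inequality $\Norm{u(\cdot,0)}{L^{p^{\#}}}^{p^{\#}}\le C\Norm{u}{L^{p^{*}}}^{p^{\#}-1}\Norm{\nabla u}{L^{p}}$ by the fundamental-theorem-plus-H\"older computation, iterate via $W^{m,p}\hookrightarrow W^{1,r}$, and conclude by density through the Stein extension), and this is essentially the proof in the cited source. The paper itself gives no proof of this statement — it is quoted from \cite[Theorem 5.36]{AdamsFournier} — so there is nothing in the text to compare against; the only remark worth adding is that the displayed relation $\frac{1}{p^{\#}}=\frac{1}{p}-\frac{p-1}{(n-1)p}$ in the paper's statement is consistent with $p^{\#}=(n-1)p/(n-mp)$ only when $m=1$ (a typo of the paper), whereas your computation $r^{\#}=p^{\#}$ uses the correct definitional form.
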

\vspace*{1ex}

\begin{defn}[Besov Spaces]\cite[Definition 7.32 and Theorem 7.16]{AdamsFournier} \label{App: Besov Spaces}
	Let $\Omega  $ ba a domain in $ \R{n} $ and $ 1 \leq q \leq \infty $ an integer. For $ 0<s<\infty $ let $ m \in \N{} $ be the smallest integer larger than $ s $. Then we define the Besov space $ B^{s,p,q} (\Omega)$ to be the intermediate space between $ L^{p}(\Omega) $ and $ W^{m,p}(\Omega
	) $, i.e.
	\[ B^{s,p,q}(\Omega): = \{ u \in L^{p}(\Omega) + W^{m,p}(\Omega)\, \vert t \mapsto t^{-\nicefrac{s}{m}} K(t,u) \in L^{q}((0,\infty),\tfrac{dt}{t})\} \]
	where $ \frac{dt}{t} $ is the Haar measure, see \cite{diestel2014joys}, and \[ K(t,u) := \inf\{\Norm{u_0}{L^{p}(\Omega)} + t \Norm{u_1}{W^{m,p}(\Omega)} ,\, u = u_0 + u_1 \in L^{p}(\Omega) + W^{m,p}(\Omega)\}.\]
\end{defn}

\begin{thm}\cite[Theorem 7.29]{AdamsFournier} \label{App: Besov Trace Theorem}
	If $ 1<p<\infty $ the following conditions on a measurable function $ u $ on $ \R{n-1} $, $ n \geq 1 $ are equivalent:
	\begin{itemize}
		\item[(a)] There is an a function $ U \in W^{m,p}(\R{n}) $ such that $ u $ is the trace of $ U $ .
		\item[(b)] $ u \in B^{m-\nicefrac{1}{p},p,p}(\R{n-1}):=  W^{m-\nicefrac{1}{p},p}(\R{n-1}) $ .
	\end{itemize}
\end{thm}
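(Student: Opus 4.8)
The plan is to prove the equivalence (a)$\Leftrightarrow$(b) by reducing everything to the model half-space $\R{n}_{+} = \R{n-1}\times(0,\infty)$ and then establishing the two implications separately. First I would observe that a function on $\R{n-1}$ is the trace of some $U \in W^{m,p}(\R{n})$ if and only if it is the trace of some $\tilde U \in W^{m,p}(\R{n}_{+})$: restriction $W^{m,p}(\R{n}) \to W^{m,p}(\R{n}_{+})$ is continuous, and conversely the Stein extension operator of Theorem \ref{App: Stein Extension Thm}, applied to the Lipschitz domain $\R{n}_{+}$, produces an extension without altering the boundary trace. So it suffices to characterize the trace space of $W^{m,p}(\R{n}_{+})$.

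For (a)$\Rightarrow$(b), given $U \in W^{m,p}(\R{n}_{+})$ with trace $u$, I would first peel off the integer part of the smoothness index: for every purely tangential multi-index $\beta \in \N{n-1}_{0}$ with $|\beta|\le m-1$, the derivative $D^\beta U$ lies in $W^{m-|\beta|,p}(\R{n}_{+}) \subset W^{1,p}(\R{n}_{+})$ and has trace $D^\beta u$, so it is enough to treat $m=1$, $s = 1-\nicefrac{1}{p} \in (0,1)$, and to bound the Gagliardo seminorm of $u$ on $\R{n-1}$. This is the classical path estimate: one writes $u(x)-u(y)$ as the line integral of $\nabla U$ along a polygonal path from $(x,0)$ up to height $r=|x-y|$, across to $(y,r)$, and back down, then applies Hölder's inequality in the normal variable followed by a Hardy-type inequality to get $[u]_{W^{1-1/p,p}(\R{n-1})} \le C\|\nabla U\|_{L^p(\R{n}_{+})}$; reassembling the tangential pieces into the $K$-functional norm of Definition \ref{App: Besov Spaces} is then a routine rescaling, with lower-order terms absorbed via the elementary bound $\|u\|_{L^p(\R{n-1})} \le C\|U\|_{W^{1,p}(\R{n}_{+})}$ already implicit in Theorem \ref{App: Trace Operator}.

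For (b)$\Rightarrow$(a), given $u \in B^{m-\nicefrac{1}{p},p,p}(\R{n-1})$ I would construct an explicit extension by normal mollification, $U(x,t) := \int_{\R{n-1}} \phi_t(x-z)\, u(z)\, dz$ with $\phi_t(z) = t^{-(n-1)}\phi(z/t)$ for a fixed $\phi \in C_0^\infty(\R{n-1})$ satisfying $\int \phi = 1$ and $\int z^\gamma \phi(z)\,dz = 0$ for $1\le|\gamma|\le m$. Using these vanishing moments one rewrites each derivative $D^\alpha U$ with $|\alpha|=m$ as an integral of $u(z)$ minus a Taylor jet of $u$ against a rescaled kernel; Jensen's inequality, Hölder, and Fubini then give $\|D^\alpha U\|_{L^p(\R{n}_{+})} \le C\|u\|_{B^{m-\nicefrac{1}{p},p,p}(\R{n-1})}$, while lower-order derivatives are controlled by $\|u\|_{L^p}$. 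Truncating in $t$ and reflecting evenly (or again using Theorem \ref{App: Stein Extension Thm}) promotes $U$ to a genuine element of $W^{m,p}(\R{n})$, and $U(\cdot,0^+)=u$ because $\phi_t * u \to u$ in $L^p(\R{n-1})$.

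The step I expect to be the main obstacle is matching exponents exactly: showing that the quantity produced by these trace/extension estimates is literally the $K$-functional norm of $B^{m-\nicefrac{1}{p},p,p}(\R{n-1})$ from Definition \ref{App: Besov Spaces}, and carrying out the Hardy inequality sharply for all $1<p<\infty$ without the Plancherel shortcut available only at $p=2$. A clean organizing principle is the Lions--Peetre trace method of interpolation: by Definition \ref{App: Besov Spaces}, $B^{m-\nicefrac{1}{p},p,p}(\R{n-1})$ is the real interpolation space of $L^p(\R{n-1})$ and $W^{m,p}(\R{n-1})$ with the appropriate parameter, and one identifies $W^{m,p}(\R{n-1}\times\R{})$ by Fubini with a curve space $t \mapsto w(t) \in W^{m,p}(\R{n-1})$ carrying normal-direction Sobolev smoothness; the abstract trace theorem for interpolation couples then yields (a)$\Leftrightarrow$(b) once this mixed-norm identification and the rescaling converting the interpolation weight into the Besov weight are verified. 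That verification is where the real work sits, and it is the last thing I would carry out.
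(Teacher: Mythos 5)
The thesis itself contains no proof of this statement: it is quoted from Adams--Fournier (their Theorem 7.29), and the proof given there runs through the Lions--Peetre trace method of real interpolation, which is the route forced by Definition \ref{App: Besov Spaces}, where $B^{s,p,q}$ is \emph{defined} as the intermediate space of $L^{p}$ and $W^{m,p}$ via the $K$-functional. Your primary argument is therefore a genuinely different one: the classical direct proof, with Gagliardo-type path estimates for (a)$\Rightarrow$(b) and a tangential mollification extension with vanishing moments for (b)$\Rightarrow$(a). That route is sound and buys explicitness -- an explicit extension operator, elementary tools, no interpolation machinery -- but what it actually delivers is control of the Slobodeckij--Gagliardo seminorm of $u$, not of the $K$-functional norm of Definition \ref{App: Besov Spaces}. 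With the definition used in this thesis you therefore still owe the norm equivalence between $B^{m-\nicefrac{1}{p},p,p}(\R{n-1})$, i.e.\ the real interpolation space of $L^{p}(\R{n-1})$ and $W^{m,p}(\R{n-1})$, and the intrinsically defined fractional Sobolev space with Gagliardo seminorm. This is exactly the step you flag and postpone, and it is not a ``routine rescaling'': it is the substantive content of the interpolation chapter of Adams--Fournier from which the theorem is cited, and your reduction of (a)$\Rightarrow$(b) to the case $m=1$ by tangential differentiation also tacitly presupposes this intrinsic description. Your closing suggestion -- identify $W^{m,p}(\R{n-1}\times\R{})$ with a curve space over $W^{m,p}(\R{n-1})$ and invoke the abstract trace theorem for interpolation couples -- is essentially the cited proof, so carrying it out means reproducing Adams--Fournier rather than replacing it.

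Two smaller points. Even reflection does not preserve $W^{m,p}$ across the boundary for $m\ge 2$, so in the final extension step you must indeed fall back on Theorem \ref{App: Stein Extension Thm} (or a higher-order Hestenes-type reflection), as you already hedge. And the three-segment path estimate in (a)$\Rightarrow$(b) needs an averaging over intermediate points or heights before Hardy's inequality applies cleanly for all $1<p<\infty$; this is standard but should not be elided if the direct route is the one you commit to.
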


The previous Theorem combined with Theorem \ref{App: Stein Extension Thm} justifies the following definition:

\begin{defn}[The Trace Spaces] \cite{Agm64} \label{App: Trace Space}
	Let $ \Omega  $ be a domain in $ \R{n} $ with Lipschitz boundary and $ m \geq 1 $ an integer. Then
	$ W^{m-1/p,p}(\Gamma)=\mathbf{T_{\Gamma}}(W^{m,p}(\Omega)) =\{\mathbf{T_{\Gamma}}(u) \vert u \in W^{m,p}(\Omega) \} $ where $ \mathbf{T_{\Gamma}} $ is the trace operator on $ W^{k,p}(\Omega) $.
	
	This class of functions is normed by $ \Norm{g}{W^{m - \nicefrac{1}{p},p}(\Omega)}= \inf\{ \Norm{\tilde{g}}{W^{m,p}(\Omega)} \, | \, g = \mathbf{T_{\Gamma}}(\tilde{g}) \} $
\end{defn}

\begin{lem}
	Let $ u,v \in H^{1}(\Omega,\R{n}) $ for some domain $ \Omega \subset \R{n} $. Then
	\begin{flalign}
	i) &\int_{\Omega} |\tr(D_{u}^{\top} D_v)| dx \leq \Norm{u}{H^{1}(\Omega,\R{n})} \Norm{v}{H^{1}(\Omega,\R{n})} \label{App: Norm_H1_trDuTDv}\\
	ii) & \int_{\Omega} |\tr(D_{u}D_{v})| dx \leq \Norm{u}{H^{1}(\Omega,\R{n})} \Norm{v}{H^{1}(\Omega,\R{n})} \label{App: Norm_H1_trDuDv}\\
	iii) &\int_{\Omega} |\tr(\varepsilon(u)\varepsilon(v))| dx \leq \Norm{u}{H^{1}(\Omega,\R{n})}\Norm{v}{H^{1}(\Omega,\R{n})} \label{App: Norm_H1_tr_epsuepsv}\\
	iv) &\int_{\Omega} |\Div(u)\Div(v)| dx \leq n \Norm{u}{H^{1}(\Omega,\R{n})}\Norm{v}{H^{1}(\Omega,\R{n})}.\label{App: Norm_H1_Divu_Divv}
	\end{flalign}
\end{lem}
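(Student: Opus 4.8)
The plan is to reduce every one of the four estimates to the Cauchy--Schwarz inequality in $L^{2}(\Omega)$ after first establishing the corresponding pointwise algebraic bound in terms of the Frobenius norm $\Norm{A}{F}=\sqrt{\tr(A^{\top}A)}=\sqrt{A:A}$ of a matrix $A\in\R{n\times n}$. I would begin by recording two elementary facts that will carry the whole argument: for any $A,B\in\R{n\times n}$ the Cauchy--Schwarz inequality for the Frobenius scalar product gives $|A:B|\le\Norm{A}{F}\Norm{B}{F}$, and since $\tr(AB)=A^{\top}:B$ one also has $|\tr(AB)|\le\Norm{A^{\top}}{F}\Norm{B}{F}=\Norm{A}{F}\Norm{B}{F}$; and by definition of the Sobolev norm (with the Euclidean norm on $\R{n\times n}$ fixed in the Notation) one has $\Norm{Du}{L^{2}(\Omega)}\le\Norm{u}{H^{1}(\Omega,\R{n})}$ for any $u\in H^{1}(\Omega,\R{n})$.

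For \eqref{App: Norm_H1_trDuTDv} I would use $\tr(Du^{\top}Dv)=Du:Dv$, so that the pointwise bound $|Du:Dv|\le\Norm{Du}{F}\Norm{Dv}{F}$ combined with the integral Cauchy--Schwarz inequality yields $\int_{\Omega}|Du:Dv|\,dx\le\Norm{Du}{L^{2}(\Omega)}\Norm{Dv}{L^{2}(\Omega)}\le\Norm{u}{H^{1}}\Norm{v}{H^{1}}$. For \eqref{App: Norm_H1_trDuDv} the only modification is to invoke $|\tr(DuDv)|\le\Norm{Du}{F}\Norm{Dv}{F}$ and then argue identically. For \eqref{App: Norm_H1_tr_epsuepsv} I would first observe that $\varepsilon(u)=\tfrac12(Du+Du^{\top})$ is the symmetric part of $Du$; since the symmetric and skew-symmetric parts of a matrix are Frobenius-orthogonal (their Frobenius product vanishes, as $\tr(SK)=-\tr(SK)$ for $S$ symmetric and $K$ skew), one gets $\Norm{\varepsilon(u)}{F}\le\Norm{Du}{F}$, hence $|\tr(\varepsilon(u)\varepsilon(v))|\le\Norm{\varepsilon(u)}{F}\Norm{\varepsilon(v)}{F}\le\Norm{Du}{F}\Norm{Dv}{F}$, which integrates to the claimed bound. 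Finally, for \eqref{App: Norm_H1_Divu_Divv} I would use $\Div(u)=\tr(Du)=\sum_{i=1}^{n}(Du)_{ii}$ together with Cauchy--Schwarz in $\R{n}$ to obtain the pointwise estimate $|\Div(u)|\le\sqrt{n}\,\Norm{Du}{F}$, so that $|\Div(u)\Div(v)|\le n\,\Norm{Du}{F}\Norm{Dv}{F}$ and one last Cauchy--Schwarz in $L^{2}(\Omega)$ produces the factor $n$ in front of $\Norm{u}{H^{1}}\Norm{v}{H^{1}}$.

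There is no real obstacle here: the statement is pure bookkeeping with matrix norms plus two applications of Cauchy--Schwarz. The only points that require a little care are the normalizations — one must make sure the chosen norm on $\R{n\times n}$ is precisely the Frobenius norm induced by the Euclidean $2$-norm on $\R{n}$, so that $\Norm{Du}{L^{2}(\Omega)}^{2}=\int_{\Omega}\sum_{i,j}(\partial_{j}u_{i})^{2}\,dx\le\Norm{u}{H^{1}(\Omega,\R{n})}^{2}$ holds with constant $1$ — and, in \eqref{App: Norm_H1_Divu_Divv}, one must not drop the dimensional factor $\sqrt{n}$ incurred when estimating the trace by the Frobenius norm, which is exactly why the constant there is $n$ and not $1$.
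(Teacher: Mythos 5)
Your proof is correct and takes essentially the same route as the paper: a pointwise Cauchy--Schwarz estimate in the Frobenius (Euclidean) inner product on matrices followed by Cauchy--Schwarz in $L^{2}(\Omega)$, with the constant $n$ in iv) arising exactly as you describe from bounding the trace by the Frobenius norm. The only minor variation is in iii), where the paper reduces to i) and ii) through the identity $\tr(\varepsilon(u)\varepsilon(v))=\tfrac{1}{2}\tr(D_{u}D_{v})+\tfrac{1}{2}\tr(D_{u}^{\top}D_{v})$, while you instead use the Frobenius-orthogonality of the symmetric and skew parts to get $\Norm{\varepsilon(u)}{F}\leq\Norm{Du}{F}$; both arguments are equally valid.
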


\begin{proof}
	\vspace*{-1ex}
	\begin{eqnarray*} 
		i) ~~~\int_{\Omega} |\tr(D_{u}^{\top} D_v)| dx 
		& \leq &\int_{\Omega}  \sum_{i,k=1}^{n}\left| (D_{u})_{k,i} (D_{v})_{k,i} \right | \, dx 
		\\
		& \underset{C.S.}{\leq} & \int_{\Omega}  \left(\sum_{i,k=1}^{n}\left| (D_{u})_{k,i} \right|^2\right)^{\nicefrac{1}{2}}  \left(\sum_{i,k=1}^{n}\left | (D_{v})_{k,i} \right |^2 \right)^{\nicefrac{1}{2}}\, dx \\
		& \underset{C.S.}{\leq} & \left( \int_{\Omega}  \sum_{i,k=1}^{n}\left| (D_{u})_{k,i} \right|^2\, dx\right)^{\nicefrac{1}{2}} \left(\int_{\Omega} \sum_{i,k=1}^{n}\left | (D_{v})_{k,i} \right |^2 \, dx \right)^{\nicefrac{1}{2}}\\
		& \leq & \Norm{u}{H^{1}(\Omega,\R{n})} \Norm{v}{H^{1}(\Omega,\R{n})}
	\end{eqnarray*} 
	$ ii) $ Analogously to $ i) $. \\[1ex]
	$ iii) $ Follows directly from 
	$$ \tr(\varepsilon(u)\varepsilon(v))  =\frac{1}{4}\tr((D_{u}+D_{u}^{\top})(D_{v}+D_{v}^{\top}))=\frac{1}{2}\tr(D_{u}D_{v}) + \frac{1}{2} \tr(D_{u}^{\top}D_{v}). $$
	\begin{eqnarray*} 
		iv) \int_{\Omega} |\Div(u)\Div(v)| \, dx 
		& \leq    &
		\int _{\Omega} \sum_{i=1}^{n}\left|(D_{u})_{i,i}\right| \sum_{j=1}^{n}\left|(D_{v})_{j,j}\right| \, dx 
		\\
		&\underset{C.S.}{\leq} &
		\int_{\Omega} n^{\nicefrac{1}{2}}  \left(\sum_{i=1}^{n} \left|(D_{u})_{i,i}\right|^2\right)^{\nicefrac{1}{2}} n^{\nicefrac{1}{2}} \left(\sum_{j=1}^{n} \left|(D_{v})_{j,j}\right|^2\right)^{\nicefrac{1}{2}} \,dx 
		\\
		& \leq &n \left(\int_{\Omega}  \sum_{i,j=1}^{n} \left|(D_{u})_{i,j}\right|^2\,dx\right)^{\nicefrac{1}{2}} \left(\int_{\Omega}\sum_{i,j=1}^{n} \left|(D_{v})_{i,j}\right|^2\,dx\right)^{\nicefrac{1}{2}}  \\[1ex]
		& \leq & n \Norm{u}{H^{1}(\Omega,\R{n})} \Norm{v}{H^{1}(\Omega,\R{n})} 
	\end{eqnarray*}
\end{proof}

\begin{lem}
	Let $ \Omega \subset \R{n} $ be a bounded domain of class $ C^1 $ and $ \Gamma \subset \partial \Omega $ a connected subset of $ \partial \Omega $ with $ |\Gamma|>0 $. 
	\begin{itemize}
		\item[i)] 
		Let $ u,v \in L^2(\Omega,\R{n}) $ then 
		\begin{equation}\label{Eq: Absch Skp u,v L1}
		\Norm{\langle u,v \rangle}{L^1(\Omega)}=\int_{\Omega} |\langle u,v \rangle| \, dx \leq \Norm{u}{L^2(\Omega,\R{n})}\Norm{v}{L^2(\Omega,\R{n})}. 
		\end{equation}
		\item[ii)]
		Let $ f\in C(\overline{\Omega},\R{n}) $ and $ u \in L^2(\Omega,\R{n}) $ then
		\begin{equation}\label{Eq: <f,u> L^1_est Vol}
		\Norm{\skp{f}{u}}{L^{1}(\Omega)} \leq \Norm{f}{C(\Omega,\R{n})}\sqrt{ \vert \Omega\vert} \Norm{u}{L^2(\Omega,\R{n})}.
		\end{equation}
		If $ u \in L^2(\Gamma,\R{n}) $, then
		\begin{equation}\label{Eq: <f,u> L^1_est Sur}
		\Norm{\skp{f}{u}}{L^{1}(\Gamma)} \leq \Norm{f}{C(\Gamma,\R{n})} \sqrt{\vert \partial\Omega\vert} \Norm{u}{L^2(\Gamma,\R{n})}.
		\end{equation}
		\item[iii)] Let $ f\in C(\overline{\Omega},\R{n}) $ and $ u \in L^2(\Omega) $, then
		\begin{equation}\label{Eq: <f,u> L^2_est Vol}
		\begin{split}
		\Norm{\langle f,u\rangle}{L^2(\Omega)}\leq\Norm{f}{C(\Omega,\R{n})} \Norm{u}{L^2(\Omega,\R{n})} . 
		\end{split}
		\end{equation}
		\item[iv)]
		For $ u,v \in H^1(\Omega,\R{n}) $ it holds
		\begin{equation}\label{Eq: Absch skp Du,Dv L1} 
		\Bigg\Vert \sum_{k,l=1}^{n} \Norm{\nabla u_k}{2} \Norm{\nabla v_l}{2} \Bigg\Vert_{L^1(\Omega)} \leq n\Norm{u}{H^1(\Omega,\R{n})} \Norm{v}{H^1(\Omega,\R{n})}.
		\end{equation}
	\end{itemize}
\end{lem}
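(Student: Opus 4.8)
The plan is to reduce all four inequalities to two elementary applications of the Cauchy--Schwarz inequality: first pointwise in $\R{n}$ (or in the finite index set $\{1,\ldots,n\}$), then in $L^2$ over $\Omega$ or $\Gamma$, combined with the trivial identity $\int_{\Omega} 1 \, dx = |\Omega|$ and the analogous $\int_{\Gamma} 1 \, dS = |\Gamma|$. No genuine obstacle arises; the only point requiring a word of care is the surface version in ii), where one must invoke that $|\partial\Omega| < \infty$, which holds since $\Omega$ is a bounded $C^1$-domain.

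For i), I would start from the pointwise bound $|\skp{u(x)}{v(x)}| \leq \Norm{u(x)}{}\Norm{v(x)}{}$, valid for a.e.\ $x\in\Omega$, integrate over $\Omega$, and apply Hölder's inequality with exponents $2,2$ to the product $\Norm{u(\cdot)}{}\,\Norm{v(\cdot)}{}\in L^1(\Omega)$, obtaining
\[
\int_{\Omega} |\skp{u}{v}|\,dx \leq \Big(\int_{\Omega}\Norm{u}{}^2\,dx\Big)^{1/2}\Big(\int_{\Omega}\Norm{v}{}^2\,dx\Big)^{1/2} = \Norm{u}{L^2(\Omega,\R{n})}\Norm{v}{L^2(\Omega,\R{n})}.
\]
For ii), I would specialise i) to $v=f$ and use $\Norm{f(x)}{}\leq \Norm{f}{C(\Omega,\R{n})}$ for every $x\in\overline{\Omega}$, so that $\Norm{f}{L^2(\Omega,\R{n})}^2 = \int_{\Omega}\Norm{f}{}^2\,dx \leq \Norm{f}{C(\Omega,\R{n})}^2\,|\Omega|$, which gives \eqref{Eq: <f,u> L^1_est Vol}; the boundary estimate \eqref{Eq: <f,u> L^1_est Sur} is obtained verbatim with $\Omega$ replaced by $\Gamma$, $dx$ by the surface element $dS$, and $|\Omega|$ by $|\partial\Omega| < \infty$. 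For iii), I would bound $|\skp{f(x)}{u(x)}| \leq \Norm{f}{C(\Omega,\R{n})}\Norm{u(x)}{}$ pointwise, square both sides, integrate over $\Omega$, and take square roots, which yields \eqref{Eq: <f,u> L^2_est Vol} directly.

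For iv), I would treat each pair $(k,l)$ separately: since $\Norm{\nabla u_k(\cdot)}{}\,\Norm{\nabla v_l(\cdot)}{}\in L^1(\Omega)$, Hölder in $L^2$ gives $\int_{\Omega}\Norm{\nabla u_k}{}\Norm{\nabla v_l}{}\,dx \leq \Norm{\nabla u_k}{L^2(\Omega,\R{n})}\Norm{\nabla v_l}{L^2(\Omega,\R{n})}$. Summing over $k,l=1,\ldots,n$, factoring the double sum as $\big(\sum_{k}\Norm{\nabla u_k}{L^2}\big)\big(\sum_{l}\Norm{\nabla v_l}{L^2}\big)$, and applying the discrete Cauchy--Schwarz inequality $\sum_{k=1}^n a_k \leq \sqrt{n}\,\big(\sum_{k=1}^n a_k^2\big)^{1/2}$ to each factor, I obtain the bound $n\big(\sum_k\Norm{\nabla u_k}{L^2}^2\big)^{1/2}\big(\sum_l\Norm{\nabla v_l}{L^2}^2\big)^{1/2}$; finally $\sum_k\Norm{\nabla u_k}{L^2(\Omega,\R{n})}^2 \leq \Norm{u}{H^1(\Omega,\R{n})}^2$ (and likewise for $v$) closes the argument with the factor $n$ as claimed. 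All steps are routine; the main thing to keep straight is bookkeeping of the two levels of Cauchy--Schwarz (pointwise versus $L^2$, and over $\R{n}$ versus over the index set), and remembering that the finiteness of $|\Omega|$ and $|\partial\Omega|$ is exactly what makes the constants $\sqrt{|\Omega|}$ and $\sqrt{|\partial\Omega|}$ in ii) meaningful.
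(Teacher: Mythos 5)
Your proposal is correct and follows essentially the same route as the paper: pointwise Cauchy--Schwarz in $\R{n}$ combined with Cauchy--Schwarz/H\"older in $L^2$ over $\Omega$ or $\Gamma$, with the sup bound on $f$ and the constants $\sqrt{|\Omega|}$, $\sqrt{|\partial\Omega|}$ arising from integrating the constant function $1$. The only cosmetic difference is in iv), where you apply the integral Cauchy--Schwarz pairwise in $(k,l)$ and then the discrete one, while the paper does it in the reverse order; both give the same constant $n$.
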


\begin{proof}
	i) The Euclidean scalar product on $ \R{n} $ is given by $ |<u,v>|=|\sum_{i=1}^{n} u_{i}v_{i}| $ which is again an element in $ L^2(\Omega) $ as well as $ \Norm{u}{2}, \Norm{v}{2} \in L^2(\Omega) $. These functions exist almost everywhere (except for sets of measure zero) and from Cauchy-Schwarz inequality on $ \R{n} $ and $ L^p(\Omega) $ follows 
	\begin{equation*}
	\Norm{\langle u,v \rangle}{L^1(\Omega)}=\int_{\Omega} |\langle u,v \rangle| \, dx \leq \int_{\Omega}\Norm{u}{2}\Norm{v}{2} \, dx \leq \Norm{u}{L^2(\Omega,\R{n})}\Norm{v}{L^2(\Omega,\R{n})}. 
	\end{equation*}
	ii) For any $ x \in \Omega $ $ \Norm{f(x)}{2} \leq \Norm{f}{C(\Omega,\R{n})}$ and thus
	\begin{equation}
	\Norm{\langle f,u \rangle}{L^1(\Omega)}=\int_{\Omega} |\langle f,u \rangle| \, dx \leq \int_{\Omega}\Norm{f}{2}\Norm{u}{2} \, dx \leq \Norm{f}{\C{}(\Omega,\R{n})} \int_{\Omega}\Norm{u}{2} \, dx
	\end{equation}
	where 
	\[ \int_{\Omega}\Norm{u}{2} \, dx \leq \Norm{1}{L^2(\Omega)} \Norm{\Norm{u}{2}}{L^2(\Omega)}  =\sqrt{\vert \Omega\vert} \Norm{u}{L^2(\Omega,\R{n})}.\]
	The second statement follows analogously with $ \Norm{f(x)}{2} \leq \Norm{f}{C(\Omega,\R{n})} $ for any $ x \in \Gamma $.\\[1ex]
	iii) Cauchy-Schwarz inequality implies
	\begin{align*}
	\Norm{\langle f,u \rangle}{L^2(\Omega)}^2 
	&=\int_{\Omega} |\langle f,u \rangle|^2 \, dx \leq \int_{\Omega}\Norm{f}{}^2\Norm{u}{}^2 \, dx  
	= \Norm{f}{C(\Omega,\R{n})}^2 \Norm{u}{L^2(\Omega,\R{n})}^2. 
	\end{align*}
	iv) We apply Cauchy-Schwarz inequalty:
	\begin{equation}
	\begin{split}
	\Norm{\sum_{k,l=1}^{n} \Norm{\nabla u_k}{2} \Norm{\nabla v_l}{2}}{L^1(\Omega)} 
	&\leq \Norm{\sum_{k=1}^{n} \Norm{\nabla u_k}{2}}{L^2(\Omega)}\Norm{\sum_{l=1}^{n} \Norm{\nabla v_l}{2}}{L^2(\Omega)} \\
	&\leq n\Norm{u}{H^1(\Omega,\R{n})} \Norm{v}{H^1(\Omega,\R{n})}
	\end{split}
	\end{equation}
	since $$ \Norm{\sum_{k=1}^{n} \Norm{\nabla u_k}{2}}{L^2(\Omega)}^2 = \int_{\Omega} \left(\sum_{k=1}^{n} \Norm{\nabla u_k}{2}\right)^2 \, dx  \underset{C.S.}{\leq} n \int_{\Omega} \sum_{k=1}^{n} \Norm{\nabla u_k}{2}^2 \,dx = n\Norm{u}{H^1(\Omega,\R{n})}^2. $$
\end{proof}

\vspace*{-1em}

\begin{lem}\cite[Therem 3.41]{AdamsFournier}\label{Lem: H1_Estimate_u}
	Let $ T \in C^{1}(\overline{\Omega},\overline{ \Omega}' ) $ be a $ C^1 $-diffeomorphism and $ \Omega, \Omega'\subset \R{n} $ bounded domains of class $ C^1 $. Then there are constants $ C_1(T),\, C_2(T), C_3(T)>0 $ such that 
	\begin{align}
	\begin{array}{ll}
	\Norm{u \circ T^{-1}}{L^2(T(\Omega),\R{3})}& \leq C_1 \Norm{u}{L^2(\Omega,\R{3})},\,u \in L^2(\Omega,\R{n})\\
	\Norm{u \circ T^{-1}}{H^{1}(\O{t},\R{3})}&\leq C_2 \Norm{u}{H^{1}(\Omega,\R{3})},\, u \in H^1(\Omega,\R{n})\\
	\Norm{u \circ T^{-1}}{L^2(\Gamma',\R{3})}&\leq C_3 \Norm{u}{L^2(\Gamma,\R{3})},\,u \in L^2(\Gamma,\R{n}).
	\end{array}
	\end{align}
	These inequalities also hold for other $ 1\leq p < \infty $.
\end{lem}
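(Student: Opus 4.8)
The plan is to derive all three estimates from the change-of-variables formula, proving them first for smooth functions and then passing to general functions by density. Throughout, the crucial structural point is that $T\in C^{1}(\overline{\Omega},\overline{\Omega}')$ with $\overline{\Omega}$ compact, so that $\Norm{DT}{C^{0}(\overline{\Omega},\R{n\times n})}$ and $\Norm{(DT)^{-1}}{C^{0}(\overline{\Omega},\R{n\times n})}$ are finite and, since $DT(x)$ is everywhere invertible, $\gamma_{T}:=\det(DT)$ satisfies $0<\min_{\overline{\Omega}}|\gamma_{T}|\le\max_{\overline{\Omega}}|\gamma_{T}|<\infty$; the composite quantities appearing below inherit this boundedness. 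It suffices to treat scalar functions, since for an $\R{3}$-valued $u$ one simply applies the scalar inequality to the Euclidean norm $\Norm{u(\cdot)}{}$ (or argues componentwise).

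First I would prove the volume estimate. For $u\in L^{p}(\Omega)$ the substitution $y=T(x)$ gives
\[ \int_{T(\Omega)}|u(T^{-1}(y))|^{p}\,dy=\int_{\Omega}|u(x)|^{p}\,|\gamma_{T}(x)|\,dx\le\Norm{\gamma_{T}}{C^{0}(\overline{\Omega})}\,\Norm{u}{L^{p}(\Omega)}^{p}, \]
hence $\Norm{u\circ T^{-1}}{L^{p}(T(\Omega))}\le C_{1}(T)\,\Norm{u}{L^{p}(\Omega)}$ with $C_{1}(T)=\Norm{\gamma_{T}}{C^{0}(\overline{\Omega})}^{1/p}$; the case $p=2$ is the first assertion. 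For the $W^{1,p}$ (in particular $H^{1}$) estimate I would first take $u\in C^{1}(\overline{\Omega})$, so that $u\circ T^{-1}\in C^{1}(\overline{\Omega}')$ and $D(u\circ T^{-1})(y)=Du(T^{-1}(y))\,DT^{-1}(y)$ by the chain rule. Applying the same substitution and using $DT^{-1}(T(x))=(DT(x))^{-1}$,
\[ \int_{T(\Omega)}\Norm{D(u\circ T^{-1})(y)}{}^{p}\,dy=\int_{\Omega}\Norm{Du(x)(DT(x))^{-1}}{}^{p}\,|\gamma_{T}(x)|\,dx\le\Norm{(DT)^{-1}}{C^{0}(\overline{\Omega})}^{p}\,\Norm{\gamma_{T}}{C^{0}(\overline{\Omega})}\,\Norm{Du}{L^{p}(\Omega)}^{p}. \]
Adding this to the volume estimate yields $\Norm{u\circ T^{-1}}{W^{1,p}(T(\Omega))}\le C_{2}(T)\,\Norm{u}{W^{1,p}(\Omega)}$ for $u\in C^{1}(\overline{\Omega})$, with $C_{2}(T)$ depending only on $\Norm{\gamma_{T}}{C^{0}}$ and $\Norm{(DT)^{-1}}{C^{0}}$. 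Since $C^{1}(\overline{\Omega})$ is dense in $W^{1,p}(\Omega)$ and the composition map is bounded on this dense subspace, it extends to all of $W^{1,p}(\Omega)$ with the same constant, which is the second assertion.

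For the boundary estimate I would use the transformation rule for the surface measure. As $T$ restricts to a $C^{1}$-diffeomorphism $\Gamma\to\Gamma'$ of the $C^{1}$ hypersurfaces, one has $dS_{\Gamma'}=\omega_{T}\,dS_{\Gamma}$ with the Gram-type density $\omega_{T}=|\gamma_{T}|\,\Norm{(DT)^{-\top}\vec{n}}{}$ on $\Gamma$ (this is exactly the computation carried out in Lemma~\ref{Transf_shape_opt:Lem: Properties omegat}), which is continuous and bounded on $\Gamma$ because $T\in C^{1}(\overline{\Omega})$ and $\Gamma$ is of class $C^{1}$. Therefore, for $u\in L^{p}(\Gamma)$,
\[ \int_{\Gamma'}|u(T^{-1}(y))|^{p}\,dS_{\Gamma'}(y)=\int_{\Gamma}|u(x)|^{p}\,\omega_{T}(x)\,dS_{\Gamma}(x)\le\Norm{\omega_{T}}{C^{0}(\Gamma)}\,\Norm{u}{L^{p}(\Gamma)}^{p}, \]
giving the third inequality with $C_{3}(T)=\Norm{\omega_{T}}{C^{0}(\Gamma)}^{1/p}$.

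The argument never used $p=2$, so all three inequalities hold verbatim for every $1\le p<\infty$. The only points requiring a little care — and they are the main, though modest, obstacle — are the rigorous justification of the surface-measure transformation formula for a merely $C^{1}$ diffeomorphism of $C^{1}$ hypersurfaces (done via local parametrisations and the area formula) and the density step in $W^{1,p}$; neither poses a real difficulty, and the finiteness of all constants rests precisely on the compactness of $\overline{\Omega}$ together with the $C^{1}$ regularity of $T$ and of the boundary.
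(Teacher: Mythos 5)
Your proof is correct and follows essentially the same route as the paper's: change of variables with the bounded Jacobian $\det(DT)$ for the volume terms, the chain rule together with boundedness of $(DT)^{-1}$ for the gradient term, and the Gram-type surface density $\omega_T=|\det(DT)|\,\Vert (DT)^{-\top}\vec{n}\Vert$ for the boundary integral. The only cosmetic difference is that you establish the $W^{1,p}$ bound first on $C^{1}(\overline{\Omega})$ and extend by density, whereas the paper computes directly with Sobolev functions (relying on the change-of-variables/chain-rule statement of Adams--Fournier, Thm.\ 3.41); both variants are sound.
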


\begin{proof}
	The existence of such constants $ C_1,\, C_2 $ and $ C_3 $ depending on $ \Omega' $ follows from \cite[Theorem 3.41]{AdamsFournier} but since we will need a detailed description and independence from the domain we will carry out the calculation in detail. 
	
	\noindent Set $ \gamma=|\det(DT)| $ and $ \omega=\Norm{\mathcal{M}(T)}{2}$ where $\mathcal{M}(T)=\gamma\left((DT)^{-1}\right)^{\top}\vec{n} $ with outward normal vector field $ \vec{n} $ on $ \Gamma $. For any $ u \in H^1(\Omega,\R{3}) $ the squared $ H^1(\Omega,\R{3}) $-norm is given by
	$$ 
	\Norm{u}{H^{1}(\Omega;\R{3})}^2 
	= \Norm{u}{L^{2}(\Omega,\R{3})}^2 + \sum_{i=1}^{3}\Norm{\nabla u_i}{L^{2}(\Omega,\R{n})}^2
	= \Norm{u}{L^{2}(\Omega,\R{3})}^2 + \sum_{i,j=1}^{3} \Norm{\tfrac{\partial u_{i}}{\partial x_{j}}}{L^2(\Omega)}^2.
	$$
	Let $ u \in L^2(\Omega) $. Then change of coordinates for $ L^p$-spaces to $ u \circ T^{-1} $  leads to
	\begin{align*}
	\Norm{u \circ T^{-1}}{L^{2}(\Omega')}^2 
	=\int_{\Omega'}  |(u \circ T^{-1})|^2 \,dx 
	= \int_{\Omega}  |u|^2 |\gamma| \, dx 
	\leq \Norm{\gamma}{\infty,\Omega} \Norm{u}{L^{2}(\Omega)}^2
	=C_1(T)\Norm{u}{L^{2}(\Omega)}^2.
	\end{align*}
	Now let $ u \in H^1(\Omega,\R{3}) $. In the same manner, additionally using $D(u \circ T^{-1}) \circ T=Du\,( D(T^{-1}) \circ T)  $, we obtain  
	\begin{align*}
	\Norm{D(u \circ T^{-1})_{i,j}}{L^{2}(\Omega')}^2 
	&\leq \Norm{\gamma}{\infty} \Norm{(Du \cdot D(T)^{-1} \circ T)_{i,j}}{L^2(\Omega)}^2 = \Norm{\gamma}{\infty} \Norm{(Du \cdot (DT)^{-1})_{i,j}}{L^2(\Omega)}^2
	\end{align*}
	where
	\begin{align*} 
	\Norm{(Du \cdot (DT)^{-1})_{i,j}}{L^2(\Omega)}^2 
	&= \Norm{\left\langle \nabla u_i, \tfrac{\partial T^{-1}}{\partial x_{j}} \circ T \right\rangle}{L^2(\Omega)}^2  \underset{\eqref{Eq: <f,u> L^2_est Vol}}{\leq} \Norm{\tfrac{\partial T^{-1}}{\partial x_{j}} \circ T}{C(\Omega,\R{3})}^2 \Norm{\nabla u_i}{L^2(\Omega,\R{3})}^2\\
	&=  \Norm{\tfrac{\partial T^{-1}}{\partial x_{j}}}{C(\Omega',\R{3})}^2 	  \Norm{\nabla u_i}{L^2(\Omega,\R{3})}^2.
	\end{align*} 
	According to the last equality, summing up over all $ i,j $ gives 
	\begin{align*}
	\sum_{i,j=1}^{3}\Norm{\left(D(u \circ T^{-1})_{i,j}\right) }{L^{2}(\Omega',\R{})}^2 
	\leq & \Norm{\gamma}{\infty} \sum_{i,j=1}^{3}  \Norm{\tfrac{\partial T^{-1}}{\partial x_{j}}}{C(\Omega',\R{3})}^2 \Norm{\nabla u_i}{L^2(\Omega,\R{3})}^2 \\
	\leq &  \Norm{\gamma}{\infty} \Hnorm{T^{-1}}{C^{1}(\Omega',\R{3})}^2  \Norm{u}{H^{1}(\Omega,\R{3})}^2.
	\end{align*}
	Assembling these estimates yields 
	\begin{align*}
	\Norm{u \circ T^{-1}}{H^{1}(\Omega';\R{3})}^2 
	\leq &\Norm{\gamma}{\infty} \Norm{u}{L^{2}(\Omega,\R{3})}^2 
	+\Hnorm{T^{-1}}{C^{1}(\Omega',\R{3})}^2  \Norm{\gamma}{\infty}   \Norm{u}{H^{1}(\Omega,\R{3})}^2  \Norm{u}{H^{1}(\Omega,\R{3})}^2 
	\\[1ex]
	\leq & \Norm{\gamma}{\infty} \left(1+\Hnorm{T^{-1}}{C^{1}(\Omega',\R{3})}^2\right) \Norm{u}{H^{1}(\Omega,\R{3})}^2 .
	\end{align*}
	Since $ T $ is a diffeomorphism on the compact set $ \Omega $ the term 
	$$ \left(\Norm{\gamma}{\infty} \left(1+ \Hnorm{T^{-1}}{C^{1}(\Omega',\R{3})}^2\right)\right)^{\nicefrac{1}{2}} $$ 
	can be bounded from above by $ C_2(T) $ and thus
	$\Norm{u \circ T^{-1}}{H^{1}(\Omega';\R{n})}  \leq C_2(T)  \Norm{u}{H^{1}(\Omega,\R{3})}.$
	Analogously we obtain
	\begin{align*}
	\Norm{u \circ T^{-1}}{L^{2}(\Gamma')}^2 
	=\int_{\Gamma'} |(u \circ T^{-1})|^2 \,dS 
	= \int_{\Gamma}  |u|^2 |\omega| \, dx 
	\leq \Norm{\omega}{\infty,\overline{\Gamma}} \Norm{u}{L^{2}(\Gamma)}^2
	= C_3\Norm{u}{L^{2}(\Gamma)}^2 .
	\end{align*}
\end{proof}

\noindent \textbf{Integration by Parts in the Volume}\\[1em]
The divergence theorem \cite{Heuser_Analysis2} for a vector field $ a \in C^{l}(D,\R{n}) $ and a scalar function $ \zeta \in C^{l}(D,\R{n}) $ with $ l\geq 1 $ on a domain  $ D \subset \R{n} $ with piece wise $ C^{1} $-boundary reads
\begin{align*}
\int_{D} \skp{a }{\nabla \zeta} \, dx = -\int_{D} \zeta\Div(a)+\int_{\partial D}  \skp{\zeta a}{\vec{n}} \, dS
\end{align*}
with $ \vec{n} $ denoting the outward normal vector field on $ \partial D $.

By $ A_{.,k} $ we denote the $ k $-th sparse of a matrix $ A \in \R{n \times n} $ and by $ A_{k,.} $ the $ k $-th row. With $ \tr(A(x) Dz(x))= \sum_{k=1}^{n} \skp{A_{.,k}(x)}{\nabla z_{k}(x)} $  we thus obtain for any matrix valued $ C^{k} $ function $ A $ on $ \R{n\times n} $ and any vector field $ z \in C^{k}(D,\R{n}) $
\begin{align}\label{App: Eq: divergence  matrix valued functions}
\int_{D} \tr(A Dz) \, dx 
&=  \int_{D} \sum_{k=1}^{n} \skp{A_{.,k}}{\nabla z_{k}} \, dx = - \int_{D} \sum_{k=1}^{n} z_k \Div(A_{.,k})\, dx +  \int_{\partial D} \sum_{k=1}^{n}  \skp{A_{.,k}z_k}{\vec{n}} \nonumber\\
&=  - \int_{D} \skp{\Div(A)}{z} \, dx +  \int_{\partial D} \skp{Az}{\vec{n}} \, dS\\
&=\int_{D} \skp{-\Div(A)}{z} \, dx +  \int_{\partial D} \skp{A^{\top} \vec{n}}{z} \, dS \nonumber
\end{align}
In the last step we applied 
$ \Div(A)= \begin{pmatrix} \Div(A_{.,1}), & \cdots, & \Div(A_{.,n}) \end{pmatrix}^{\top} $.
\vspace{1em}

\noindent \textbf{Integration by Parts on the Boundary} \\[1ex]
The divergence theorem \cite{SokZol92} for a vector field $ a \in C^{l}(D,\R{n}) $ and a scalar function $ \zeta \in C^{l}(D,\R{n}) $ with $ l\geq 1 $ on the boundary of a domain  $ D \subset \R{n} $ with piece wise $ C^{1} $-boundary reads
\begin{align*}
\int_{\partial D} \skp{a }{\nabla_{\partial D} \zeta} \, dx = \int_{\partial D} - \zeta\Div_{\partial D}(a)+\kappa  \zeta\skp{ a}{\vec{n}} \, dS
\end{align*}
with $ \vec{n} $ denoting the outward normal vector field on $ \partial D $ and where $ \kappa=\Div_{\Gamma}\vec{n} $ denotes the mean curvature of $ \partial D $. With $ \Div_{\partial D}(A)= \begin{pmatrix} \Div_{\partial D}(A_{.,1}), & \cdots, & \Div_{\partial D}(A_{.,n}) \end{pmatrix}^{\top} $ we obtain (analogously to \ref{App: Eq: divergence  matrix valued functions})
\begin{align}\label{Anh:Eq: div_thm_matrix_ vector_fields_boundary}
\int_{\partial D} \tr(A D_{\partial D}z) \, dx =   \int_{\partial D} \skp{-\Div_{\partial D}(A)}{z} + \kappa \skp{A^{\top}\vec{n}}{z} \, dS.
\end{align}

\begin{lem}\label{App:Transf_shape_opt:Lem:Prepartations}
	Let $ \Omega \subset \R{3} $ be a bounded domain with boundary $ \Gamma $ of class $ C^1 $, $\vec{n} $ the unity outward normal vector field and $ v \in C^{1}(\overline{\Omega},\R{3}) $. Then the following holds on $ \Gamma $:
	\begin{align*}
	\begin{array}{l l}
	i)& Dv_{\vec{n}}= \frac{1}{\mu}[\sigma(v)_{\vec{n}} -\lambda\Div(v)I_{\vec{n}}] -(Dv^{\top})_{\vec{n}}, 
	\\[1ex]
	ii)&\tr(Dv_{\vec{n}})=\frac{1}{\lambda+2\mu}(\langle \sigma(v)\vec{n},\vec{n} \rangle - \lambda \Div_{\Gamma}(v)),
	\\[1ex] 
	iii)& \tr\left((Dv_{\vec{n}})^{\top}\, \leftidx{_\Gamma}{M}\right) = \frac{1}{\mu} \left\langle \sigma(v)\vec{n},\,\leftidx{_\Gamma}{M}\vec{n} \right\rangle - \tr\left(M_{\vec{n}}D_{\Gamma}v\right),
	\\[1ex]
	iv) & \sigma_{\Gamma}(v) = \lambda \Div_{\Gamma}(v)I_{\Gamma} + \mu (D_{\Gamma}v + D_{\Gamma}v^{\top} I_{\Gamma}) +  \lambda \tr\left(Dv_{\vec{n}}\right)I_{\Gamma} +   \mu(Dv_{\vec{n}})^{\top} I_{\Gamma}, 
	\\[1ex]
	v)&	\tr(M\sigma_{\Gamma}(v)) =
	\left(\lambda - \frac{\lambda^2}{\lambda+2\mu}\right)\tr\left(M_{\Gamma}\right) \Div_{\Gamma}(v) + \mu \tr\left([M+M^{\top}]_{\Gamma} D_{\Gamma}v\right) \\[1ex]
	&\hspace{2.2cm}+ \left\langle \sigma(v)\vec{n}, \left(\frac{\lambda}{\lambda+2\mu}\tr\left(M_{\Gamma}\right)I + \,\leftidx{_\Gamma}{M} \right)\vec{n}\right\rangle.
	\end{array}
	\end{align*}
\end{lem}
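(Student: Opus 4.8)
All five assertions are pointwise algebraic identities at a fixed boundary point, relating $\sigma(v)=\lambda\Div(v)\mathrm{I}+\mu(Dv+Dv^{\top})$ (equivalently $\sigma(v)=\lambda\Div(v)\mathrm{I}+2\mu\varepsilon(v)$), the normal $\vec{n}$, and the complementary orthogonal projectors $I_{\vec{n}}=\vec{n}\vec{n}^{\top}$ and $I_{\Gamma}=\mathrm{I}-\vec{n}\vec{n}^{\top}$. The plan is to prove them in the order i)--v), each one feeding the next, using only: $I_{\vec{n}}+I_{\Gamma}=\mathrm{I}$, $I_{\vec{n}}^{2}=I_{\vec{n}}$, $I_{\Gamma}^{2}=I_{\Gamma}$, $I_{\vec{n}}I_{\Gamma}=0$, symmetry of $I_{\vec{n}},I_{\Gamma},\sigma(v)$, invariance of the trace under cyclic permutation and transposition together with $\tr(AB^{\top})=\tr(A^{\top}B)$, and the identity $\Div(v)=\Div_{\Gamma}(v)+\langle Dv\,\vec{n},\vec{n}\rangle=\Div_{\Gamma}(v)+\tr(Dv_{\vec{n}})$ coming from the definition of the tangential operators in Section~\ref{Transf_shape_opt:Sec: Transformation along Vector fields}.

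\textbf{Identities i)--iv).} For i) I would right-multiply $\sigma(v)=\lambda\Div(v)\mathrm{I}+\mu(Dv+Dv^{\top})$ by $I_{\vec{n}}$ and solve for $Dv_{\vec{n}}=Dv\,I_{\vec{n}}$; this is immediate. For ii) I would contract $\sigma(v)=\lambda\Div(v)\mathrm{I}+2\mu\varepsilon(v)$ with $\vec{n}$ from both sides, using $\vec{n}^{\top}Dv^{\top}\vec{n}=\vec{n}^{\top}Dv\,\vec{n}$, to get $\langle\sigma(v)\vec{n},\vec{n}\rangle=\lambda\Div(v)+2\mu\,\tr(Dv_{\vec{n}})$, then substitute $\Div(v)=\Div_{\Gamma}(v)+\tr(Dv_{\vec{n}})$ and solve for $\tr(Dv_{\vec{n}})$, which produces the factor $(\lambda+2\mu)^{-1}$. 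For iii) I would transpose i), rewrite $(\sigma(v)_{\vec{n}})^{\top}={}_{\vec{n}}\sigma(v)$ and $((Dv^{\top})_{\vec{n}})^{\top}={}_{\vec{n}}(Dv)$ (symmetry of $\sigma(v)$), right-multiply by $I_{\Gamma}M$, take the trace, and simplify: the $I_{\vec{n}}I_{\Gamma}=0$ term drops, the $\sigma(v)$-term becomes $\tfrac{1}{\mu}\langle\sigma(v)\vec{n},{}_{\Gamma}M\vec{n}\rangle$ after a cyclic permutation and the observation $I_{\Gamma}MI_{\vec{n}}=({}_{\Gamma}M\vec{n})\vec{n}^{\top}$, and the $Dv$-term becomes $\tr(M_{\vec{n}}D_{\Gamma}v)$. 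For iv) I would expand $\sigma_{\Gamma}(v)=\sigma(v)I_{\Gamma}=\lambda\Div(v)I_{\Gamma}+\mu(Dv+Dv^{\top})I_{\Gamma}$, insert $I=I_{\Gamma}+I_{\vec{n}}$ on the left of $Dv^{\top}I_{\Gamma}$ to split it as $(D_{\Gamma}v)^{\top}I_{\Gamma}+(Dv_{\vec{n}})^{\top}I_{\Gamma}$, and decompose $\lambda\Div(v)I_{\Gamma}$ via $\Div(v)=\Div_{\Gamma}(v)+\tr(Dv_{\vec{n}})$.

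\textbf{Identity v) and the main obstacle.} The real work is v). I would substitute iv) into $\tr(M\sigma_{\Gamma}(v))$, producing five terms; eliminate $\tr(Dv_{\vec{n}})$ from the term $\lambda\,\tr(Dv_{\vec{n}})\tr(M_{\Gamma})$ using ii); and rewrite $\mu\,\tr(M(Dv_{\vec{n}})^{\top}I_{\Gamma})$ using iii) (after cycling $I_{\Gamma}$ to the front). Collecting: the two resulting $\Div_{\Gamma}(v)\tr(M_{\Gamma})$ contributions combine to $\bigl(\lambda-\tfrac{\lambda^{2}}{\lambda+2\mu}\bigr)\tr(M_{\Gamma})\Div_{\Gamma}(v)=\tilde{\lambda}\,\tr(M_{\Gamma})\Div_{\Gamma}(v)$; the two boundary contributions combine to $\bigl\langle\sigma(v)\vec{n},\bigl(\tfrac{\lambda}{\lambda+2\mu}\tr(M_{\Gamma})\mathrm{I}+{}_{\Gamma}M\bigr)\vec{n}\bigr\rangle$; and the remaining three $\mu$-terms $\mu\,\tr(MD_{\Gamma}v)-\mu\,\tr(M_{\vec{n}}D_{\Gamma}v)+\mu\,\tr(M(D_{\Gamma}v)^{\top}I_{\Gamma})$ reassemble — using $M-M_{\vec{n}}=MI_{\Gamma}=M_{\Gamma}$, $(D_{\Gamma}v)^{\top}I_{\Gamma}=I_{\Gamma}Dv^{\top}I_{\Gamma}$, cyclicity and $\tr(AB^{\top})=\tr(A^{\top}B)$ — into $\mu\,\tr(MI_{\Gamma}Dv\,I_{\Gamma})+\mu\,\tr(M^{\top}I_{\Gamma}Dv\,I_{\Gamma})=\mu\,\tr\bigl([M+M^{\top}]_{\Gamma}D_{\Gamma}v\bigr)$. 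I expect the only genuine difficulty to be the bookkeeping in this last step: keeping straight the left- versus right-multiplications by the projectors and the transposes so that the three intermediate $\mu$-terms really recombine into the single symmetric expression. I would control this by consistently routing every manipulation through the Frobenius pairing $A:B=\tr(A^{\top}B)$ and recording at each step on which side $I_{\Gamma}$ acts.
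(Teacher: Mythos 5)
Your proposal is correct and follows essentially the same route as the paper's proof: i) by multiplying $\sigma(v)$ by $I_{\vec{n}}$ and solving, ii) via $\Div(v)=\Div_{\Gamma}(v)+\tr(Dv_{\vec{n}})$, iii) by transposing i) against $\leftidx{_\Gamma}{M}$ with $I_{\vec{n}}I_{\Gamma}=0$, iv) by splitting $\sigma_{\Gamma}(v)=\sigma(v)I_{\Gamma}$ with $I=I_{\Gamma}+I_{\vec{n}}$, and v) by substituting iv) and eliminating the normal-derivative terms through ii) and iii), recombining exactly as in the paper (which collects the $\mu$-terms as $\mu\,\tr\bigl([M+(\leftidx{_\Gamma}{M})^{\top}-M_{\vec{n}}]D_{\Gamma}v\bigr)=\mu\,\tr\bigl([M+M^{\top}]_{\Gamma}D_{\Gamma}v\bigr)$). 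No gaps; the bookkeeping you flag in v) is precisely the only delicate point and your handling of it matches the paper's.
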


\begin{proof}
	\begin{itemize}
		\item[i)] 
		$ \sigma(v)\vec{n}\vec{n}^{\top}
		= (\lambda\Div(v)I+\mu(Dv+Dv^{\top}))\vec{n}\vec{n}^{\top}
		=\lambda\Div(v)I_{\vec{n}} + \mu (Dv_{\vec{n}} + (Dv^{\top})_{\vec{n}}).
		$ 
		Solving for $Dv_{\vec{n}}$ proofs the assertion.
		\item[ii)] Since $ \langle \sigma(v)\vec{n},\vec{n} \rangle  =  \tr(\sigma(v)\vec{n}\vec{n}^{\top}) =\tr(\sigma(v)_{\vec{n}}) $ and 
		\[ \tr((Dv^{\top})_{\vec{n}})=\tr(Dv^{\top}\vec{n}\vec{n}^{\top}) = \tr(\vec{n}\vec{n}^{\top}Dv)=\tr(Dv\,\vec{n}\vec{n}^{\top})=\tr(Dv_{\vec{n}})\] the proof of i) implies
		\begin{align*}
		\langle \sigma(v)\vec{n},\vec{n} \rangle 
		&=\lambda \Div_{\Gamma}(v)\tr(I_{\vec{n}})+ \lambda\tr(Dv_{\vec{n}})\tr(I_{\vec{n}}) + \mu\tr(Dv_{\vec{n}}) + \mu \tr(Dv_{\vec{n}}) \\
		&= \lambda \Div_{\Gamma}(v) + (\lambda+2\mu)\tr(Dv_{\vec{n}}).
		\end{align*}
		\item[iii)] With $ I_{\vec{n}}I_{\Gamma}=\vec{n}\vec{n}^{\top} (I -\vec{n}\vec{n}^{\top} ) = \vec{n}\vec{n}^{\top} -\vec{n}\vec{n}^{\top} =0 $ and $ \tr(ab^{\top}) =\left\langle a , b\right \rangle \, \forall a,\, b \in \R{n} $ we derive
		\begin{align*}
		\tr\left[(Dv_{\vec{n}})^{\top}\,\leftidx{_\Gamma}{M}\right]
		&= \frac{1}{\mu}\tr\left[(\sigma(v)_{\vec{n}})^{\top}\,\leftidx{_\Gamma}{M}\right]  
		- \frac{\lambda}{\mu}\, \tr(Dv)\tr\left[(I_{\vec{n}})^{\top}\,\leftidx{_\Gamma}{M}\right] 
		- \tr\left[( Dv^{\top})_{\vec{n}})^{\top}\,\leftidx{_\Gamma}{M}\right]
		\\
		&=\frac{1}{\mu}\tr\left[\,\leftidx{_\Gamma}{M} \vec{n} \,\vec{n}^{\top} \,\sigma(v)\right]   - \frac{\lambda}{\mu} \tr(Dv)\tr\left(I_{\vec{n}}I_{\Gamma}M\right)-\tr\left(I_{\vec{n}} DvI_{\Gamma}\,M\right)
		\\
		&=\frac{1}{\mu}\left\langle\sigma(v)\vec{n},\,\leftidx{_\Gamma}{M}\vec{n}\right\rangle  
		- \tr\left(M_{\vec{n}} D_{\Gamma}v\right)
		\end{align*}
		\item[iv)] Since $ \sigma_{\Gamma}(v)  = \sigma(v)-\sigma(v)_{\vec{n}}  $ we obtain
		\begin{align*}
		\sigma_{\Gamma}(v)  
		&\underset{}{=} \lambda \Div(v)I+\mu (Dv+Dv^{\top}) - [\lambda\Div(v)I_{\vec{n}} + 
		\mu (Dv_{\vec{n}} + (Dv^{\top})_{\vec{n}})]
		\\
		&= \lambda \Div(v)(I-I_{\vec{n}})+ \mu(Dv-Dv_{\vec{n}}+Dv^{\top}-Dv^{\top}_{\vec{n}}) 
		\\
		&\underset{}{=}  \lambda \Div(v)I_{\Gamma}+ \mu(D_{\Gamma}v+Dv^{\top}I_{\Gamma}) 
		\\
		&\underset{}{=} \lambda [\Div_{\Gamma}(v) + \tr(Dv_{\vec{n}})]I_{\Gamma}+ \mu(D_{\Gamma}v+[D_{\Gamma}v + Dv_{\vec{n}}]^{\top}I_{\Gamma}) 
		\\
		&\underset{}{=} \lambda\Div_{\Gamma}(v)I_{\Gamma} + \lambda\tr(Dv_{\vec{n}})I_{\Gamma}+ \mu(D_{\Gamma}v+D_{\Gamma}v^{\top}I_{\Gamma}) +\mu (Dv_{\vec{n}})^{\top}I_{\Gamma}) 
		\end{align*}
		\item[v)] We have $ \tr(D_{\Gamma}v M)=\tr(M D_{\Gamma}v) $  and $ \tr((D_{\Gamma}v)^{\top}\,\leftidx{_\Gamma}{M})=\tr((\,\leftidx{_\Gamma}{M})^{\top}D_{\Gamma}v)$. \begin{align*}
		&\tr(\sigma_{\Gamma}(v)M) \\
		&=\tr\left(\lambda\Div_{\Gamma}(v)\,\leftidx{_\Gamma}{M} +\lambda \tr(Dv_{\vec{n}})\,\leftidx{_\Gamma}{M} + \mu D_{\Gamma}v M +\mu D_{\Gamma}v^{\top}\,\leftidx{_\Gamma}{M}  + \mu (Dv_{\vec{n}})^{\top} \,\leftidx{_\Gamma}{M}\right)\\
		&=\lambda\Div_{\Gamma}(v)\tr\left(\,\leftidx{_\Gamma}{M}\right) + \mu \tr\left(\left[M+(\,\leftidx{_\Gamma}{M})^{\top}\right] D_{\Gamma}v\right)+ \mu \tr\left((Dv_{\vec{n}})^{\top} \,\leftidx{_\Gamma}{M}\right)\\
		&~~~+\lambda \tr\left(Dv_{\vec{n}}\right)\tr\left(\,\leftidx{_\Gamma}{M}\right),
		\end{align*}
		Using ii) and iii) we get 
		\begin{align*}
		\tr(M\sigma_{\Gamma}(v)) 
		=& \lambda\Div_{\Gamma}(v)\tr\left(\,\leftidx{_\Gamma}{M}\right)  + \mu \tr\left(\left[M+(\,\leftidx{_\Gamma}{M})^{\top}\right] D_{\Gamma}v\right)  + \left\langle \sigma(v)\vec{n},\,\leftidx{_\Gamma}{M}\vec{n} \right\rangle
		\\
		& +  \frac{\lambda}{\lambda+2\mu} \left[\langle \sigma(u)\vec{n},\vec{n} \rangle - \lambda\Div_{\Gamma}(v)\right]\tr\left(\,\leftidx{_\Gamma}{M}\right)
		-\mu\tr\left(M_{\vec{n}}D_{\Gamma}(v)\right)
		\\
		=&\left(\lambda-\frac{\lambda^2}{\lambda+2\mu}\right)\Div_{\Gamma}(v)\tr(\,\leftidx{_\Gamma}{M})+\mu\tr\left(\left[M+(\,\leftidx{_\Gamma}{M})^{\top}-M_{\vec{n}}\right]D_{\Gamma}v\right)
		\\
		&+\frac{\lambda}{\lambda+2\mu}\langle \sigma(v)\vec{n},\vec{n}\rangle\tr\left(\,\leftidx{_\Gamma}{M}\right) + \left\langle \sigma(v)\vec{n}, \,\leftidx{_\Gamma}{M}\vec{n}\right\rangle
		\\
		=&\left(\lambda-\frac{\lambda^2}{\lambda+2\mu}\right)\Div_{\Gamma}(v)\tr(\,\leftidx{_\Gamma}{M})
		+\mu\tr\left(\left[M+M^{\top}\right]_{\Gamma}D_{\Gamma}v\right)
		\\
		&+\left\langle \sigma(v)\vec{n},\frac{\lambda}{\lambda+2\mu}\left[\tr\left(\,\leftidx{_\Gamma}{M}\right)I+\,\leftidx{_\Gamma}{M}\right]\vec{n}\right\rangle 
		\end{align*}
	\end{itemize}
\end{proof}

\backmatter

\bibliographystyle{plain}
\bibliography{quellen}

\end{document}